\newcommand{\Implies}[2]{$\text{\ref{#1}}\implies\text{\ref{#2}}$}
\newcommand{\Iff}[2]{$\text{\ref{#1}}\iff\text{\ref{#2}}$}
\DeclareRobustCommand*{\bfseries}{%
  \not@math@alphabet\bfseries\mathbf
  \fontseries\bfdefault\selectfont
  \boldmath
}
\newtheorem{theo}{Theorem}[section]
\newtheorem{lemma}[theo]{Lemma}
\newtheorem{defi}[theo]{Definition}
\newtheorem{prop}[theo]{Proposition}
\newtheorem{cor}[theo]{Corollary}
\newtheorem{remark}[theo]{Remark}
\newtheorem{example}[theo]{Example}
\newtheorem{ques}[theo]{Question}
\numberwithin{equation}{section}
\mathchardef\mhyphen="2D
\def\CP{\mathbb{CP}}
\def\A{{\mathbb A}}
\def\N{\mathbb{N}}
\def\bL{\mathbb{L}}
\def\bS{\mathbb{S}}
\def\C{\mathbb{C}}
\def\Z{\mathbb{Z}}
\def\Q{\mathbb{Q}}
\def\bG{\mathbb{G}}
\def\coh{\operatorname{coh}}
\def\wt{\widetilde}
\def\bR{{\mathbf R}}
\def\bL{{\mathbf L}}
\def\PP{{\mathbb P}}
\def\pre-tr{\operatorname{pre-tr}}
\def\h{\operatorname{h}}
\def\Hom{\operatorname{Hom}}
\def\Ob{\operatorname{Ob}}
\def\Map{\operatorname{Map}}
\def\End{\operatorname{End}}
\def\gr{\operatorname{gr}}
\DeclareMathOperator*{\colim}{colim}
\newcommand{\tens}[1]{%
  \mathbin{\mathop{\otimes}\displaylimits_{#1}}%
}
\newcommand{\Ltens}[1]{%
  \mathbin{\mathop{\otimes}\displaylimits^{\bL}_{#1}}%
}
\newcommand{\stens}[1]{%
	\mathbin{\mathop{\otimes}\displaylimits^{#1}}%
}
\newcommand{\ms}[1]{\mathscr{#1}}
\newcommand{\msK}{\ms{K}}
\newcommand{\hy}{\mhyphen}
\newcommand{\indlim}[1][]{\mathop{\varinjlim}\limits_{#1}}
\newcommand{\inddlim}[1][]{{``{\indlim[#1]}"}}
\newcommand{\prolim}[1][]{\mathop{\varprojlim}\limits_{#1}}
\newcommand{\proolim}[1][]{{``{\prolim[#1]}"}}
\newcommand{\biggplus}[1][]{\mathop{\bigoplus}\limits_{#1}}
\newcommand{\bigooplus}[1][]{{``{\biggplus[#1]}"}}
\newcommand{\prodd}[1][]{\mathop{\prod}\limits_{#1}}
\newcommand{\bbar}{\overline}
\newcommand{\hhat}{\widehat}
\newcommand{\xto}{\xrightarrow}
\newcommand{\hto}{\hookrightarrow}
\newcommand{\onto}{\twoheadrightarrow}
\newcommand{\Bbar}{\operatorname{Bar}}
\newcommand{\Cobar}{\operatorname{Cobar}}
\newcommand{\Loc}{\operatorname{Loc}}
\newcommand{\Idl}{\operatorname{Idl}}
\newcommand{\Vcyc}{\operatorname{Vcyc}}
\newcommand{\lex}{\operatorname{lex}}
\newcommand{\dec}{\operatorname{dec}}
\newcommand{\rex}{\operatorname{rex}}
\newcommand{\rig}{\operatorname{rig}}
\newcommand{\lax}{\operatorname{lax}}
\newcommand{\Mot}{\operatorname{Mot}}
\newcommand{\QCoh}{\operatorname{QCoh}}
\newcommand{\nuc}{\operatorname{nuc}}
\newcommand{\Nuc}{\operatorname{Nuc}}
\newcommand{\IndCoh}{\operatorname{IndCoh}}
\newcommand{\Cat}{\operatorname{Cat}}
\newcommand{\Cone}{\operatorname{Cone}}
\newcommand{\Fiber}{\operatorname{Fiber}}
\newcommand{\idem}{\operatorname{idem}}
\newcommand{\loc}{\operatorname{loc}}
\newcommand{\splt}{\operatorname{split}}
\newcommand{\cont}{\operatorname{cont}}
\newcommand{\red}{\operatorname{red}}
\newcommand{\sm}{\operatorname{sm}}
\newcommand{\acc}{\operatorname{acc}}
\newcommand{\ex}{\operatorname{ex}}
\newcommand{\st}{\operatorname{st}}
\newcommand{\cg}{\operatorname{cg}}
\newcommand{\Prr}{\operatorname{Pr}}
\newcommand{\dual}{\operatorname{dual}}
\newcommand{\Hoch}{\operatorname{Hoch}}
\newcommand{\deff}{\operatorname{def}}
\newcommand{\Fil}{\operatorname{Fil}}
\newcommand{\bbD}{{\mathbb D}}
\newcommand{\bE}{{\mathbb E}}
\newcommand{\mk}{\mathrm k}
\newcommand{\ev}{\mathrm ev}
\newcommand{\cF}{{\mathcal F}}
\newcommand{\cG}{{\mathcal G}}
\newcommand{\cO}{{\mathcal O}}
\newcommand{\cD}{{\mathcal D}}
\newcommand{\cV}{{\mathcal V}}
\newcommand{\cA}{{\mathcal A}}
\newcommand{\cB}{{\mathcal B}}
\newcommand{\cI}{{\mathcal I}}
\newcommand{\cC}{{\mathcal C}}
\newcommand{\cE}{{\mathcal E}}
\newcommand{\cW}{{\mathcal W}}
\newcommand{\cY}{{\mathcal Y}}
\newcommand{\cU}{{\mathcal U}}
\newcommand{\cS}{{\mathcal S}}
\newcommand{\cT}{{\mathcal T}}
\newcommand{\cK}{{\mathcal K}}
\newcommand{\veps}{\varepsilon}
\newcommand{\un}{\underline}
\newcommand{\la}{\langle}
\newcommand{\ra}{\rangle}
\newcommand{\incl}{\operatorname{incl}}
\newcommand{\THH}{\operatorname{THH}}
\newcommand{\MU}{\operatorname{MU}}
\newcommand{\TR}{\operatorname{TR}}
\newcommand{\TC}{\operatorname{TC}}
\newcommand{\HH}{\operatorname{HH}}
\newcommand{\HC}{\operatorname{HC}}
\newcommand{\HP}{\operatorname{HP}}
\newcommand{\CAlg}{\operatorname{CAlg}}
\newcommand{\Fun}{\operatorname{Fun}}
\newcommand{\Perf}{\operatorname{Perf}}
\newcommand{\perf}{\operatorname{perf}}
\newcommand{\Kar}{\operatorname{Kar}}
\newcommand{\rat}{\operatorname{rat}}
\newcommand{\Cosh}{\operatorname{Cosh}}
\newcommand{\Shv}{\operatorname{Shv}}
\newcommand{\coker}{\operatorname{coker}}
\newcommand{\im}{\operatorname{Im}}
\newcommand{\Rep}{\operatorname{Rep}}
\newcommand{\Tor}{\operatorname{Tor}}
\newcommand{\Id}{\operatorname{Id}}
\newcommand{\Sp}{\operatorname{Sp}}
\newcommand{\Sym}{\operatorname{Sym}}
\newcommand{\Ind}{\operatorname{Ind}}
\newcommand{\Pro}{\operatorname{Pro}}
\newcommand{\Calk}{\operatorname{Calk}}
\newcommand{\Res}{\operatorname{Res}}
\newcommand{\Spec}{\operatorname{Spec}}
\newcommand{\Spa}{\operatorname{Spa}}
\newcommand{\Stab}{\rm Stab}
\newcommand{\Alg}{\operatorname{Alg}}
\newcommand{\Coalg}{\operatorname{Coalg}}
\newcommand{\id}{\operatorname{id}}
\newcommand{\coev}{\operatorname{coev}}
\newcommand{\tors}{\operatorname{tors}}
\newcommand{\compl}{\operatorname{compl}}
\newcommand{\acycl}{\operatorname{acycl}}
\newcommand{\Set}{\operatorname{Set}}
\newcommand{\Gr}{\operatorname{Gr}}
\newcommand{\pt}{\operatorname{pt}}
\newcommand{\Mod}{\operatorname{Mod}}
\newcommand{\mult}{\operatorname{mult}}
\newcommand{\naive}{\operatorname{naive}}
\newcommand{\CycSp}{\operatorname{CycSp}}
\newcommand{\cyc}{\operatorname{cyc}}
\newcommand{\inv}{\operatorname{inv}}
\newcommand{\triv}{\operatorname{triv}}
\newcommand{\Orb}{\operatorname{Orb}}
\newcommand{\tref}{\operatorname{ref}}
\newcommand{\biggg}{\operatorname{big}}
\newcommand{\mQ}{\mathcal{Q}}
\newcommand{\mPr}{\mathfrak{Pr}}
\newcommand{\bw}{\mathrm{w}}
\title[Rigidity of the category of localizing motives]
{Rigidity of the category of localizing motives}
\author{Alexander I. Efimov}
\address{Steklov Mathematical Institute of RAS, Gubkin St. 8, GSP-1, Moscow 119991, Russia}
\email{efimov@mccme.ru}
\begin{document}

\begin{abstract} In this paper we study the category of localizing motives $\Mot^{\loc}$ -- the target of the universal finitary localizing invariant of idempotent-complete stable categories as defined by Blumberg-Gepner-Tabuada. We prove that this (presentable stable) category is rigid symmetric monoidal in the sense of Gaitsgory and Rozenblyum. In particular, it is dualizable. More precisely, we prove a more general version of this result for the category $\Mot^{\loc}_{\cE}$ -- the target of the universal finitary localizing invariant of dualizable modules over a rigid symmetric monoidal category $\cE.$
	
We obtain general results on morphisms and internal $\Hom$ in the categories $\Mot^{\loc}_{\cE}.$ As an application we compute the morphisms in multiple non-trivial examples. In particular, we prove the corepresentability statements for $\TR$ (topological restriction) and $\TC$ (topological cyclic homology) when restricted to connective $\bE_1$-rings. As a corollary, for a connective $\bE_{\infty}$-ring $R$ we obtain a $\TR(R)$-module structure on the nil $K$-theory spectrum $NK(R).$

We also apply the rigidity theorem to define refined versions of negative cyclic homology and periodic cyclic homology. This was announced previously in \cite{E24b}, and certain very interesting examples were computed by Meyer and Wagner in \cite{MW24}. Here we do several computations in characteristic $0,$ in particular showing that in seemingly innocuous situations the answer can be given by an interesting algebra of overconvergent functions.
\end{abstract}


\maketitle

\tableofcontents

\section{Introduction}

We recall the following interpretation of non-connective algebraic $K$-theory of stable categories due to Blumberg-Gepner-Tabuada \cite{BGT}. Consider the $\infty$-category $\Cat^{\perf}$ of all small idempotent-complete stable categories. Recall that a functor $F:\Cat^{\perf}\to\cT$ to a stable category $\cT$ is a localizing invariant if $F(0)=0$ and $F$ takes short exact sequences to exact triangles (formally, fiber-cofiber sequences to cofiber sequences). If $\cT$ is moreover presentable, we say that $F$ is {\it finitary} if it commutes with filtered colimits. There is a universal finitary localizing invariant  
\begin{equation}\label{eq:U_loc_for_Cat^perf_intro}
\cU_{\loc}:\Cat^{\perf}\to\Mot^{\loc}.
\end{equation}
Then one has
\begin{equation*}
\Hom_{\Mot^{\loc}}(\cU_{\loc}(\Sp^{\omega}),\cU_{\loc}(\cC))=K(\cC),
\end{equation*}
where $\Hom$ stands for the mapping spectrum, $\Sp$ is the category of spectra, and $\Sp^{\omega}$ is the full subcategory of compact objects, i.e. finite spectra.

We call $\Mot^{\loc}$ the category of localizing motives, it is also known as the category of noncommutative motives. Note that it's definition is analogous to the definition of the Grothendieck group of an abelian (or exact) category.

It is natural to try to understand more about the category $\Mot^{\loc}.$  A priori we know that it is presentable stable and by \cite{BGT} the object $\cU_{\loc}(\Sp^{\omega})$ is compact. It also has a natural symmetric monoidal structure, given by
\begin{equation*}
\cU_{\loc}(\cC)\otimes\cU_{\loc}(\cD)=\cU_{\loc}(\cC\otimes\cD).
\end{equation*} 

One can hope that the category $\Mot^{\loc}$ is at least dualizable, which conceptually is very close to being compactly generated. A much more naive guess would be that $\Mot^{\loc}$ is generated by dualizable objects, which are automatically compact since the unit object is compact. Surprisingly, the latter guess turns to be not too far from the real picture. More precisely, we prove the following result, which is a special case of Theorem \ref{th:dualizability_and_rigidity}.

\begin{theo}\label{th:rigidity_over_Sp_intro}
The symmetric monoidal category $\Mot^{\loc}$ is rigid in the sense of Gaitsgory and Rozenblyum \cite{GaRo17}. In particular, it is dualizable.
\end{theo}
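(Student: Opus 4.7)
The plan is to apply the criterion of Gaitsgory-Rozenblyum for rigidity of a presentable symmetric monoidal stable $\infty$-category $\cC$: namely, $\cC$ is rigid if and only if the unit is compact and the multiplication $m\colon \cC\otimes\cC\to\cC$ admits a continuous right adjoint which is a morphism of $\cC$-bimodule categories. Equivalently, $\cC$ must be self-dual in $\Prr^L_{\st}$ with duality data compatible with the monoidal structure. Compactness of the unit $\cU_{\loc}(\Sp^{\omega})$ is the result of Blumberg-Gepner-Tabuada, so the substance of the theorem lies in producing this right adjoint and verifying its bilinearity.

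First I would isolate a large reservoir of dualizable objects in $\Mot^{\loc}$. For every smooth and proper $\cD\in\Cat^{\perf}$, the motive $\cU_{\loc}(\cD)$ is dualizable, with dual $\cU_{\loc}(\cD^{\mathrm{op}})$; the coevaluation is obtained from the diagonal bimodule $\Sp^{\omega}\to\cD^{\mathrm{op}}\otimes\cD$, and the evaluation from the pairing dual to it. All of this is transported through the symmetric monoidal functor $\cU_{\loc}$. These dualizable motives span a full stable subcategory $\Mot^{\loc}_{\sm,\propp}\subset\Mot^{\loc}$.

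The main technical step is to show that every motive $\cU_{\loc}(\cC)$, for $\cC\in\Cat^{\perf}$, lies in the closure of $\Mot^{\loc}_{\sm,\propp}$ under filtered colimits. A general $\cC$ is not itself a filtered colimit of smooth-proper categories in $\Cat^{\perf}$, so one must pass to the broader setting of the theorem -- dualizable modules over a rigid symmetric monoidal base $\cE$ -- and invoke the structural theory of dualizable presentable stable categories. In particular, nuclear objects (in the sense of Efimov's earlier work on continuous $K$-theory) provide a canonical exhaustion of any dualizable category which, at the level of $\cU_{\loc}$, yields the approximation by smooth-proper motives. Once this generation statement is in hand, the candidate right adjoint $m^R$ is defined on $\Mot^{\loc}_{\sm,\propp}$ from the explicit formulas for the duality data above and then extended to all of $\Mot^{\loc}$ by preservation of filtered colimits; the bilinearity and adjunction axioms reduce to checks on dualizable objects, where they hold formally.

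The main obstacle, I expect, is this generation-by-dualizables step. Systematically approximating an arbitrary motive by motives of smooth-proper categories in a filtered manner is the heart of the theorem, and it requires substantial input from the theory of dualizable categories. This is precisely where the generalization to $\Mot^{\loc}_{\cE}$ becomes essential: dualizable $\cE$-modules afford the natural framework in which nuclear-module techniques can be applied to produce the required exhaustion, after which rigidity of $\Mot^{\loc}$ itself (the case $\cE=\Sp$) follows as a formal consequence.
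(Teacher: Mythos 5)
Your strategy rests on the claim that every motive $\cU_{\loc}(\cC)$ lies in the closure of smooth-proper motives under filtered colimits, and that one can then define $m^R$ on the dualizable subcategory and extend by filtered colimits. This would force $\Mot^{\loc}$ to be compactly generated, with the compact objects precisely the dualizable ones. That is much stronger than the theorem and is in fact false in the relative setting the theorem also covers: the paper shows (Section \ref{sec:refined_HC^-}) that $\Mot^{\loc}_{\Q[x]}$ is not compactly generated. So the proposed ``filtered-colimit exhaustion by smooth-proper motives'' cannot be right, and the extension-by-filtered-colimits step for $m^R$ has no foundation.

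The paper avoids this entirely by using Ramzi's criterion (Proposition \ref{prop:rigidity_criterion}): a presentable stable symmetric monoidal $\cE$ is rigid iff the unit is compact and $\cE$ is generated under \emph{all} colimits by objects of the form $\indlim_n x_n$ where the transition maps $x_n \to x_{n+1}$ are trace-class. Note that the individual terms $x_n$ need not be dualizable; only the maps are trace-class, and the generation is as a localizing subcategory, not by filtered colimits. The technical heart is then Proposition \ref{prop:resolution_by_nuclear}: for any relatively compactly generated $\cC$ one produces a short exact sequence $0 \to \cC_1 \to \cC_2 \to \cC \to 0$ in $\Cat_{\cE}^{\cg}$ with $\cC_1,\cC_2$ basic nuclear, so that $\cU_{\loc}(\cC)$ sits in a cofiber sequence of motives each of which is a sequential colimit with trace-class transition maps. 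This ``resolution of length one'' via a custom $\cE$-enriched category with objects $\Ob(\cA)\times\N$ is the key idea missing from your outline, and it is why the argument works without any compact-generation hypothesis. Your invocation of nuclear techniques points in the right direction, but nuclearity is used here to verify the trace-class colimit criterion — not to exhibit filtered-colimit approximation by smooth-proper motives, which is a different and unavailable statement.
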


Recall that for small symmetric monoidal categories the classical notion of rigidity means that every object is dualizable. If a presentable stable symmetric monoidal category $\cE$ is compactly generated, then its rigidity in the sense of \cite{GaRo17} exactly means that $\cE$ is equivalent to an ind-completion of a small rigid category. A typical example is given by the derived category $D_{qc}(X)$ for a quasi-compact quasi-separated scheme scheme $X$ \cite{BVdB03}. However, there are many examples of large rigid categories which are not compactly generated, e.g. categories of sheaves on (non-profinite) compact Hausdorff spaces.

We recall that by a recent result of Ramzi-Sosnilo-Winges \cite[Theorem 1.5]{RSW25} the functor \eqref{eq:U_loc_for_Cat^perf_intro} is essentially surjective. Taking this into account, we obtain the following immediate application of Theorem \ref{th:rigidity_over_Sp_intro}.

\begin{cor}
Any finitary localizing invariant $\Cat^{\perf}\to \Sp$ is of the form $K(\cC\otimes-)$ for some $\cC\in\Cat^{\perf}.$
\end{cor}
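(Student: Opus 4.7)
The plan is to chain together the universal property of $\cU_{\loc}$, the rigidity of $\Mot^{\loc}$ (Theorem \ref{th:rigidity_over_Sp_intro}), and the essential surjectivity of $\cU_{\loc}$ proved in \cite{RSW25}. By the universal property defining $\Mot^{\loc}$, a finitary localizing invariant $F:\Cat^{\perf}\to\Sp$ is the same datum as a colimit-preserving functor $\bar F:\Mot^{\loc}\to\Sp$ with $F=\bar F\circ\cU_{\loc}$. Hence the task reduces to identifying $\Fun^L(\Mot^{\loc},\Sp)$ and then applying essential surjectivity.

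The main input is the general fact that for any rigid $\cE$ in $\Prr^L_{\st}$ in the sense of \cite{GaRo17}, the category $\cE$ is canonically self-dual as an object of $\Prr^L_{\st}$: the multiplication $m:\cE\otimes\cE\to\cE$ admits a continuous right adjoint which, composed with the unit $\Sp\to\cE$, provides the coevaluation, while the evaluation is $\Hom_\cE(\one,-)\circ m$, which is colimit-preserving because $\one$ is compact in a rigid category. Unpacking, the resulting equivalence $\Fun^L(\cE,\Sp)\simeq\cE^{\vee}\otimes\Sp\simeq\cE$ sends $X\in\cE$ to the functor $Y\mapsto\Hom_\cE(\one,X\otimes Y)$. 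Specializing to $\cE=\Mot^{\loc}$ with unit $\cU_{\loc}(\Sp^{\omega})$, this identifies each $\bar F$ as $\bar F(-)=\Hom_{\Mot^{\loc}}(\cU_{\loc}(\Sp^{\omega}),X\otimes -)$ for a unique $X\in\Mot^{\loc}$.

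To finish, I will invoke essential surjectivity of $\cU_{\loc}$ to write $X=\cU_{\loc}(\cC)$ for some $\cC\in\Cat^{\perf}$, and then combine with the symmetric monoidality identity $\cU_{\loc}(\cC)\otimes\cU_{\loc}(\cD)\simeq\cU_{\loc}(\cC\otimes\cD)$ and the corepresentability $K(\cD)=\Hom_{\Mot^{\loc}}(\cU_{\loc}(\Sp^{\omega}),\cU_{\loc}(\cD))$ recalled in the introduction to conclude that $F(\cD)=\Hom_{\Mot^{\loc}}(\cU_{\loc}(\Sp^{\omega}),\cU_{\loc}(\cC\otimes\cD))=K(\cC\otimes\cD)$ for every $\cD\in\Cat^{\perf}$. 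There is no substantive obstacle, since the statement is genuinely a formal consequence of the two prior inputs; the only point that benefits from some care is pinning down the explicit form of the self-duality equivalence $\Fun^L(\cE,\Sp)\simeq\cE$ in the GR rigid setting, which is standard once the evaluation and coevaluation given by the right adjoint to multiplication are unpacked.
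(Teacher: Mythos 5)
Your proof is correct and is the intended argument that the paper leaves implicit when calling the corollary an ``immediate application'': rigidity of $\Mot^{\loc}$ yields the self-duality $\Fun^L(\Mot^{\loc},\Sp)\simeq\Mot^{\loc}$ via $X\mapsto\Hom(\cU_{\loc}(\Sp^{\omega}),X\otimes-)$, and combining this with the Ramzi--Sosnilo--Winges essential surjectivity, the monoidality of $\cU_{\loc}$, and the corepresentability $K(-)=\Hom(\cU_{\loc}(\Sp^{\omega}),\cU_{\loc}(-))$ gives $F=K(\cC\otimes-)$.
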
 

To prove Theorem \ref{th:rigidity_over_Sp_intro} we use a certain characterization of rigid categories which is contained in \cite{Ram24b}. Namely, $\cE$ is rigid if and only if the unit object $1_{\cE}$ is compact and $\cE$ is generated (via colimits) by sequential colimits $\indlim[n]x_n,$ such that each map $x_n\to x_{n+1}$ is trace-class in the sense of \cite[Definition 13.11]{CS20}. The latter condition means that the corresponding morphisms $1_{\cE}\to\un{\Hom}_{\cE}(x_n,x_{n+1})$ factor through $\un{\Hom}_{\cE}(x_n,1_{\cE})\otimes x_{n+1}.$ Morally this means that the map $x_n\to x_{n+1}$ ``virtually factors through a dualizable object'', although there is no such factorization in general.

In fact we prove a much more general relative version of Theorem \ref{th:rigidity_over_Sp_intro}. First, we recall from \cite{E24} that we can equivalently consider the localizing invariants of dualizable categories. Namely, we denote by $\Cat_{\st}^{\dual}$ the category of (presentable stable) dualizable categories, where the morphisms are strongly continuous functors, i.e. functors with a colimit-preserving right adjoint. We denote by the same symbol the universal finitary localizing invariant
\begin{equation}\label{eq:U_loc_for_Cat^dual_intro}
\cU_{\loc}:\Cat_{\st}^{\dual}\to \Mot^{\loc},
\end{equation}
whose target is the same as in \eqref{eq:U_loc_for_Cat^perf_intro}. More precisely, the functor \eqref{eq:U_loc_for_Cat^perf_intro} is isomorphic to the composition of \eqref{eq:U_loc_for_Cat^dual_intro} with $\Ind(-):\Cat^{\perf}\to\Cat_{\st}^{\dual}.$ 

Now we consider the following relative version. Let $\cE$ be a (presentable stable) rigid symmetric monoidal category. We denote by $\Cat_{\cE}^{\dual}$ the category of $\cE$-linear dualizable categories, which is the same as the category of $\cE$-modules in $\Cat_{\st}^{\dual}.$ Again, we consider the universal finitary localizing invariant
\begin{equation*}
\cU_{\loc}:\Cat_{\cE}^{\dual}\to\Mot^{\loc}_{\cE},
\end{equation*}
whose target naturally acquires a symmetric monoidal structure so that $\cU_{\loc}$ is refined to a symmetric monoidal functor. We prove the following generalization of Theorem \ref{th:rigidity_over_Sp_intro}.

\begin{theo}\label{th:rigidity_over_E_intro}
Let $\cE$ be a presentable stable rigid symmetric monoidal category. Then the category $\Mot^{\loc}_{\cE}$ is also rigid.
\end{theo}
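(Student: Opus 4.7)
My plan is to apply the rigidity criterion from \cite{Ram24b} recalled just before Theorem \ref{th:rigidity_over_Sp_intro}: a presentable stable symmetric monoidal category is rigid if and only if its unit is compact and the category is generated under colimits by sequential colimits $\indlim[n] x_n$ whose transition maps $x_n\to x_{n+1}$ are trace-class in the sense of \cite[Definition 13.11]{CS20}. I would verify both conditions for $\Mot^{\loc}_{\cE}$.

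\textbf{Compactness of the unit.}
The unit of $\Mot^{\loc}_{\cE}$ is $\cU_{\loc}(\cE)$, and its compactness is inherited from the construction of $\Mot^{\loc}_{\cE}$ as the target of the universal finitary localizing invariant on $\Cat^{\dual}_{\cE}$, exactly as the compactness of $\cU_{\loc}(\Sp^{\omega})$ in the absolute case \cite{BGT}. The rigidity of $\cE$ ensures that $\cE$ plays in $\Cat^{\dual}_{\cE}$ the role of the compact unit, so that $\cU_{\loc}(\cE)$ is compact by the Waldhausen-type finiteness of $K$-theory on the $\Cat^{\perf}$-side together with the Efimov extension identifying the two versions of $\Mot^{\loc}$.

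\textbf{Generation by trace-class sequential colimits.}
Since the essential image of $\cU_{\loc}$ generates $\Mot^{\loc}_{\cE}$ under colimits, it suffices to show that every $\cU_{\loc}(\cD)$ is a sequential colimit with trace-class transitions. The core input is the following $\cE$-linear structural claim: every $\cD \in \Cat^{\dual}_{\cE}$ admits a presentation
$$\cD \simeq \indlim[n] \cD_n \quad \text{in } \Cat^{\dual}_{\cE},$$
with each transition $\cD_n \to \cD_{n+1}$ trace-class in the symmetric monoidal $\infty$-category $\Cat^{\dual}_{\cE}$ (and typically each $\cD_n$ of the form $\Ind_{\cE}(\cA_n)$ for a small rigid $\cE$-linear $\cA_n$). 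Granting this, applying $\cU_{\loc}$ yields the required sequential presentation in $\Mot^{\loc}_{\cE}$: $\cU_{\loc}$ is finitary, so it preserves the colimit, and it is symmetric monoidal, so it sends trace-class maps to trace-class maps — the defining factorization $1 \to \un{\Hom}(x,1) \otimes y$ is preserved by any symmetric monoidal functor, together with the comparison $F\un{\Hom}(x,1)\to \un{\Hom}(F(x),1)$ obtained by adjunction.

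\textbf{Main obstacle.}
The substantive step is the $\cE$-linear structural claim: every dualizable $\cE$-linear category is a sequential colimit along trace-class maps in $\Cat^{\dual}_{\cE}$. In the absolute case $\cE = \Sp$ this is underwritten by the theory of compactly assembled categories and by the approximation of the identity endofunctor of a dualizable presentable category by trace-class endofunctors. Over a general rigid base $\cE$, one must ensure that the approximation and cofinality constructions respect the $\cE$-action and produce maps which are trace-class in $\Cat^{\dual}_{\cE}$ rather than merely in $\Cat^{\dual}$; here the rigidity of $\cE$ is crucial, since it is what guarantees that the evaluation/coevaluation pair for any dualizable $\cE$-module category is itself $\cE$-linear. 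Propagating rigidity from $\cE$ to modules and refining filtered approximations to sequential ones while preserving the trace-class property is where I expect the bulk of the technical work to lie.
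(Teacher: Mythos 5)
Your overall strategy is the right one — verify Ramzi's rigidity criterion by checking (i) compactness of the unit and (ii) generation by sequential colimits along trace-class maps — and your observation that the symmetric monoidal functor $\cU_{\loc}$ takes trace-class functors in $\Cat_{\cE}^{\dual}$ to trace-class morphisms in $\Mot^{\loc}_{\cE}$ is correct and is used in the paper. The unit-compactness step is also handled essentially as you suggest.

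The gap is in your ``core input'' claim: it is \emph{not} true that every $\cD\in\Cat_{\cE}^{\dual}$ (or even every $\omega_1$-compact relatively compactly generated $\cD$) can be written as a sequential colimit $\indlim[n]\cD_n$ with trace-class transition functors. A dualizable $\cE$-module with this property is what the paper calls ``basic nuclear'' (Definition \ref{def:nuclear_E_modules}), and by Proposition \ref{prop:nuclear_equiv_cond} such a $\cD$ must in particular be nuclear. Already for $\cE=\Sp$ this fails: if $A$ is a compact $\bE_1$-ring that is smooth but not proper, then $\Ind(\Perf(A))$ is a compact object of $\Cat^{\cg}_{\Sp}$, so $\Id_{\Perf(A)}$ is a compact $1$-morphism, and nuclearity would force $\Id$ to be trace-class, i.e.\ $A$ to be smooth \emph{and} proper. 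So for such $A$ the category $\Mod\hy A$ has no such presentation, and applying $\cU_{\loc}$ to a nonexistent colimit gives nothing. What the paper actually proves (Proposition \ref{prop:resolution_by_nuclear}) is a strictly weaker but sufficient statement: every $\Mod\hy A$ with $A\in\Alg_{\bE_1}(\cE)^{\omega_1}$ sits in a short exact sequence
\begin{equation*}
0\to\cC\to\cD\to\Mod\hy A\to 0
\end{equation*}
in $\Cat_{\cE}^{\cg}$ with $\cC$ and $\cD$ basic nuclear, so that $\cU_{\loc}(\Mod\hy A)$ is a cone of two objects that \emph{are} sequential colimits along trace-class maps. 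Since the rigidity criterion only asks that such objects generate under colimits, a ``length one resolution'' suffices; the existence of this resolution (built from an explicit auxiliary $\cE$-enriched category $\cB$ with $\Ob(\cB)=\Ob(\cA)\times\N$, not from a tower of compactly generated rigid $\cE$-categories as you suggest) is the substantive technical content that your proposal is missing, and proving that both terms in the resolution are nuclear requires the inheritance properties of Corollary \ref{cor:properties_of_class_of_abs_nuclear}, which in turn rest on the hard Lemma \ref{lem:map_from_compact_functor}.
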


In fact the statement of Theorem \ref{th:dualizability_and_rigidity} is even more general: we prove rigidity of $\Mot^{\loc}_{\cE}$ assuming that $\cE$ is only $\bE_2$-monoidal. We also formulate and prove a version of this statement when $\cE$ is only $\bE_1$-monoidal. The latter applies in particular to $G$-equivariant localizing motives, which are discussed in Section \ref{sec:G_equivariant_motives}. In particular, we obtain equivalent formulations of the $K$-theoretic Farrell-Jones conjecture in terms of the categories of $G$-equivariant localizing motives, see Proposition \ref{prop:Farrell-Jones_reformulations}.

The proof of Theorem \ref{th:rigidity_over_E_intro} is quite difficult in general, but it simplifies considerably when $\cE$ is compactly generated. The general idea however is the same: we prove that for any countably presented relatively compactly generated $\cE$-module $\cC$ there exists a short exact sequence
\begin{equation*}
0\to\cC_1\to\cC_2\to\cC\to 0,
\end{equation*} 
such that both $\cC_1$ and $\cC_2$ are equivalent to sequential colimits in $\Cat_{\cE}^{\dual}$ with trace-class transition functors. Here the notion of a trace-class functor is the same as above: it is a trace-class morphism in the symmetric monoidal category $\Cat_{\cE}^{\dual}.$

Now let us return to the case $\cE=\Sp,$ and consider another question: how to compute the morphisms in $\Mot^{\loc}$? We first recall the answer in a much simpler situation when instead of localizing motives one deals with splitting motives, also known as additive motives \cite{BGT}. We return to the setting of small categories, and consider the universal finitary splitting invariant
\begin{equation*}
\cU_{\splt}:\Cat^{\perf}\to \Mot^{\splt}.
\end{equation*}
Here a functor $\Cat^{\perf}\to\cT$ is a splitting invariant if it is additive in semi-orthogonal decompositions, which is a much weaker condition than being a localizing invariant. The category $\Mot^{\splt}$ is compactly generated, more precisely the collection of compact generators is given by $\{\cU_{\splt}(\cC),\,\cC\in(\Cat^{\perf})^{\omega}\}$ (recall that the category $\Cat^{\perf}$ is itself compactly generated by \cite{BGT}). The morphisms in $\Mot^{\splt}$ are given by
\begin{equation*}
\Hom_{\Mot^{\splt}}(\cU_{\splt}(\cC),\cU_{\splt}(\cD))=K_{\geq 0}(\Fun^{\ex}(\cC,\cD)),\quad \cC\in(\Cat^{\perf})^{\omega},\,\cD\in\Cat^{\perf}.
\end{equation*}

The situation with $\Mot^{\loc}$ is much more difficult and interesting. In Section \ref{sec:morphisms_in_Mot^loc} we prove two kinds of general results on morphisms in the categories of localizing motives over a rigid base. The first one is given by Theorem \ref{th:morphisms_in_Mot^loc_via_limits} which allows under some assumptions to obtain a description of morphisms in $\Mot^{\loc}_{\cE}$ as an inverse limit of non-connective $K$-theory spectra of categories of functors. As an application, we obtain the commutation of $\cU_{\loc}$ with infinite products, see Corollary \ref{cor:U_loc_commutes_with_products} (for a general rigid base). Instead of formulating the statement of Theorem \ref{th:morphisms_in_Mot^loc_via_limits} here, we mention an interesting example of its application from Section \ref{ssec:corepresentability_TR}, which deals with the case of small categories over the absolute base as in \eqref{eq:U_loc_for_Cat^perf_intro}. 

Consider the $\bE_{\infty}$-ring $\bS[x]=\Sigma^{\infty}(\N_+)$ of functions on the flat affine line over $\bS$ \cite{Lur18}. We put $\cU_{\loc}(A)=\cU_{\loc}(\Perf(A))$ for an $\bE_1$-ring $A,$ and consider the reduced motive $\wt{\cU}_{\loc}(\bS[x]).$ Then Theorem \ref{th:corepresentability_of_TR} states that for any connective $\bE_1$-ring $R$ we have an isomorphism
\begin{equation}\label{eq:corepresentability_of_TR_intro}
\Hom_{\Mot^{\loc}}(\wt{\cU}_{\loc}(\bS[x]),\cU_{\loc}(R))\cong \Omega\prolim[n]K(R[x^{-1}]/x^{-n},(x^{-1}))\cong\TR(R).
\end{equation}
Here $\TR$ is the topological restriction \cite{BHM93, HM97, BM16}, and the second isomorphism in \eqref{eq:corepresentability_of_TR_intro} is given by \cite[Theorem A]{McC23}. If $R$ is a connective $\bE_{\infty}$-ring, then the $\bE_{\infty}$-ring structure on the mapping spectrum comes from the $\bE_{\infty}$-coalgebra structure on $\wt{\cU}_{\loc}(\bS[x]),$ given by the multiplicative monoid structure on the flat affine line. As an immediate application we obtain a structure of a $\TR(R)$-module on the nil $K$-theory spectrum $NK(R),$ see Corollary \ref{cor:nil_K_theory_module_over_TR}.

Note that if $R$ is a usual commutative ring, then we recover the classical big Witt vectors on the level of $\pi_0:$
\begin{equation*}
\pi_0\Hom_{\Mot^{\loc}}(\wt{\cU}_{\loc}(\bS[x]),\cU_{\loc}(R))\cong W_{\biggg}(R).
\end{equation*}
This has a well-known classical analogue, namely a theorem of Almkvist \cite{Alm74}, which in terms of splitting motives states that we have an isomorphism
\begin{equation*}
\pi_0\Hom_{\Mot^{\splt}}(\wt{\cU}_{\splt}(\bS[x]),\cU_{\splt}(R))\cong W_{\rat}(R),
\end{equation*}
see \cite{Tab14}. Here $W_{\rat}(R)\subset W_{\biggg}(R)$ is the subring of rational big Witt vectors, namely
\begin{equation*}
W_{\rat}(R)=\left\{\frac{f}{g}\mid f,g\in 1+x^{-1}R[x^{-1}]\right\}\subset 1+x^{-1}R[[x^{-1}]]=W_{\biggg}(R).
\end{equation*}

We also obtain a corepresentability statement for topological cyclic homology $\TC,$ when restricted to connective $\bE_1$-rings, given by Theorem \ref{th:corepresentability_of_TC}. In this case the corepresenting object is the unit object of the kernel of $\A^1$-localization. The latter is in fact a smashing localization by Theorem \ref{th:A^1_invariant_localizing_motives}.

Another general result on morphisms in $\Mot^{\loc}_{\cE}$ is given by Theorem \ref{th:morphisms_in_Mot^loc_via_internal_Hom} which in particular allows to describe the morphisms in $\Mot^{\loc}_{\cE}$ up to a shift as non-connective $K$-theory of a certain category of functors. We formulate one of the statements of this theorem here, specializing to the case of small categories over the absolute base. As in \cite{E24} we use the notion of a (countable) Calkin category, namely for $\cC\in\Cat^{\perf}$ we put $\Calk_{\omega_1}(\cC)=(\Ind(\cC)^{\omega_1}/\cC)^{\Kar},$ where ``$\Kar$'' means the idempotent completion. We denote by $\Calk_{\omega_1}^2(\cC)$ the second iteration of this construction.

\begin{theo}\label{th:functors_to_Calk^2_intro}(special case of Theorem \ref{th:morphisms_in_Mot^loc_via_internal_Hom})
Let $\cA,\cB\in\Cat^{\perf},$ and suppose that $\cA$ is $\omega_1$-compact (equivalently, the triangulated category $\h\cA$ has at most countably many isomorphism classes of objects, and the morphisms in $\h\cA$ are at most countable abelian groups). Then we have an isomorphism
\begin{equation*}
\Omega^2 K(\Fun^{\ex}(\cA,\Calk_{\omega_1}^2(\cB)))\xto{\sim}\Hom_{\Mot^{\loc}}(\cU_{\loc}(\cA),\cU_{\loc}(\cB)).
\end{equation*}
\end{theo}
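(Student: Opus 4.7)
The plan is to deduce this as a specialization of the general Theorem~\ref{th:morphisms_in_Mot^loc_via_internal_Hom}, taking the base to be $\cE=\Sp$ and the input to be the compactly generated dualizable $\Sp$-modules $\Ind(\cA),\Ind(\cB)$ viewed as objects of $\Cat^{\dual}_{\st}$. The general theorem is an application of the rigidity of $\Mot^{\loc}$ (Theorem~\ref{th:rigidity_over_Sp_intro}), which reduces $\Hom$-computations in $\Mot^{\loc}$ to $K$-theory of an internal mapping category.

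The central intermediate step is the Calkin shift. For any $\cC\in\Cat^{\perf}$, the short exact sequence $\cC\to\Ind(\cC)^{\omega_1}\to\Calk_{\omega_1}(\cC)$ in $\Cat^{\perf}$ yields a cofiber sequence under $\cU_{\loc}$; since $\Ind(\cC)^{\omega_1}$ admits countable coproducts, the Eilenberg swindle forces $\cU_{\loc}(\Ind(\cC)^{\omega_1})=0$. Hence $\cU_{\loc}(\Calk_{\omega_1}(\cC))\simeq \Sigma\cU_{\loc}(\cC)$, and iterating once more gives $\cU_{\loc}(\Calk_{\omega_1}^2(\cB))\simeq \Sigma^2\cU_{\loc}(\cB)$. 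The theorem therefore reduces to producing a natural equivalence
\[
K(\Fun^{\ex}(\cA,\Calk_{\omega_1}^2(\cB)))\xrightarrow{\sim}\Hom_{\Mot^{\loc}}(\cU_{\loc}(\cA),\cU_{\loc}(\Calk_{\omega_1}^2(\cB))),
\]
which is the content extracted from Theorem~\ref{th:morphisms_in_Mot^loc_via_internal_Hom}. The heuristic is that rigidity identifies $\Hom_{\Mot^{\loc}}(\cU_{\loc}(\cA),\cU_{\loc}(\cC))$ with the $K$-theory of an internal mapping category in $\Cat^{\dual}_{\st}$, whose naive candidate is $\Fun^{\ex}(\cA,\cC)$; the potential discrepancy measures the failure of $\Fun^{\ex}(\cA,-)$ to preserve Verdier quotients, since this functor is left exact as a right adjoint to $\cA\otimes-$ but not right exact. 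One Calkin iteration resolves this defect up to a category whose $K$-theory vanishes (of $\Ind^{\omega_1}$-type, killed by the swindle), and the second iteration forces the residual correction to collapse to a $K$-theory equivalence. The $\omega_1$-compactness of $\cA$ is crucial to ensure that exact functors from $\cA$ into $\omega_1$-continuous targets are controlled by countably many data, so that the inductive Calkin tracking remains in the $\omega_1$-compact regime.

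The main obstacle is the proof of Theorem~\ref{th:morphisms_in_Mot^loc_via_internal_Hom} itself: it requires using rigidity to express the motivic internal Hom in terms of the internal Hom in $\Cat^{\dual}_{\st}$, then carefully quantifying the failure of $\Fun^{\ex}(\cA,-)$ to be exact in such a way that the double Calkin iteration on the target precisely kills this failure after taking $K$-theory, with the countability hypothesis on $\cA$ controlling the iterated Calkin analysis.
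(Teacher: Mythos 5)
Your proposal is correct and follows the paper's approach: the statement is indeed obtained by specializing Theorem~\ref{th:morphisms_in_Mot^loc_via_internal_Hom} to $\cE=\Sp$, $\cC=\Ind(\cA)$, $\cD=\Ind(\cB)$, using $\Calk_{\omega_1}^2(\Ind(\cB))\simeq\Ind(\Calk_{\omega_1}^2(\cB))$ and $\Fun^{LL}(\Ind(\cA),\Ind(\Calk_{\omega_1}^2(\cB)))\simeq\Fun^{\ex}(\cA,\Calk_{\omega_1}^2(\cB))$, and your Calkin-shift reformulation matches the mechanism inside the general proof. You rightly flag the general theorem as the real content; I'll just note that the paper's proof of it rests on a few specific technical devices you don't name -- formal $\omega_1$-injectivity of Calkin categories (Theorem~\ref{th:hat_Y_for_Calkin}), the resulting homological-epimorphism short exact sequence (Corollary~\ref{cor:internal_Hom_to_functors_commuting_with_countable_limits}), reduction to basic nuclear modules via Proposition~\ref{prop:resolution_by_nuclear} together with Theorem~\ref{th:morphisms_in_Mot^loc_via_limits}, and the $K$-theory-of-inverse-limits theorem from \cite{E25} -- rather than a direct analysis of the exactness defect of $\Fun^{\ex}(\cA,-)$.
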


We announced this statement earlier, and it was used by Ramzi-Sosnilo-Winges in \cite{RSW25} to prove that the functor $\cU_{\loc}:\Cat^{\perf}\to\Mot^{\loc}$ is also the universal $\omega_1$-finitary localizing invariant, where ``$\kappa$-finitary'' means the commutation with $\kappa$-filtered colimits. In fact their method applies to a general rigid base $\cE,$ once Theorem \ref{th:morphisms_in_Mot^loc_via_internal_Hom} is proven. We explain this in Section \ref{ssec:equivalence_Mot^loc_omega_1_and_Mot^loc}, showing that $\Cat_{\cE}^{\dual}\to\Mot^{\loc}_{\cE}$ is the universal $\omega_1$-finitary localizing invariant.

The proof of Theorem \ref{th:functors_to_Calk^2_intro} and its relative version is quite sophisticated, especially when working over a non-compactly generated rigid base. One of the important ingredients is a very special property of Calkin categories of dualizable categories, which we call {\it formal $\omega_1$-injectivity}. Namely, we say that $\cD\in\Cat_{\st}^{\dual}$
is formally $\omega_1$-injective if the functor $\hat{\cY}:\cD\to\Ind(\cD^{\omega_1})$ commutes with countable products (here $\hat{\cY}$ is the left adjoint to the colimit functor). We prove that for any dualizable category $\cC$ the category $\Calk_{\omega_1}(\cC)=\Ind(\cC^{\omega_1})/\hat{\cY}(\cC)$ is formally $\omega_1$-injective. This allows to deduce certain (internal) injectivity properties of Calkin categories, which eventually lead to the proof of Theorem \ref{th:morphisms_in_Mot^loc_via_internal_Hom} using our result on $K$-theory of inverse limits of categories \cite[Theorem 6.1]{E25}. 

In Sections \ref{sec:K_homology_of_schemes} and \ref{sec:completed_co_sheaves} we give examples of computations which use Theorem \ref{th:morphisms_in_Mot^loc_via_internal_Hom}.

We mention another application of our rigidity theorem, namely the construction of refined negative cyclic homology, which was announced in \cite{E24b}. It was computed in certain interesting examples by Meyer and Wagner in \cite{MW24}, closely related with the $q$-de Rham cohomology \cite{Sch17, Sch25, Wag25a, Wag25b}. We give a brief sketch of the construction here, and we refer to Section \ref{sec:refined_HC^-} for details. 

For simplicity we consider the case when the base ring $\mk$ is a $\Q$-algebra, concentrated in degree $0.$ We denote by $\Cat_{\mk}^{\perf}$ the category of $\mk$-linear idempotent-complete stable categories (equivalently, dg categories), and by $\Mot^{\loc}_{\mk}$ the corresponding category of localizing motives. The classical negative cyclic homology \cite{Con94, FT83, Tsy83} can be considered as a functor
\begin{equation*}
\HC^-(-/\mk):\Cat_{\mk}^{\perf}\to\Mod_u^{\wedge}\hy\mk[[u]],
\end{equation*}
where $u$ is a variable of cohomological degree $2.$ Here the target is the symmetric monoidal category of $u$-complete modules. We tacitly identify the $\bE_{\infty}$-algebra $\mk[[u]]$ with the $\bE_{\infty}$-algebra of singular cochains on $\CP^{\infty}=BS^1,$ which is possible since $\mk$ is a $\Q$-algebra.

As defined, $\HC^-$ is a finitary localizing invariant (it commutes with filtered colimits since we defined the target to be the category of $u$-complete modules). Hence, it induces a symmetric monoidal functor from $\Mot^{\loc}_{\mk}$ to $\Mod_u^{\wedge}\hy\mk[[u]],$ which we denote by the same symbol. Here the source is rigid by our Theorem \ref{th:dualizability_and_rigidity}, but the target is only locally rigid: it is compactly generated, compact objects are dualizable, but the unit object $\mk[[u]]$ is not compact. As in \cite{E25}, we denote by $\Nuc(\mk[[u]])$ the rigidification of $\Mod_u^{\wedge}\hy\mk[[u]].$ More precisely, in loc. cit. this was one of the definitions of our version of the category of nuclear modules for usual formal schemes, but the same principles apply in this situation. By the universal property of the rigidification, we obtain a unique factorization in the category of presentable stable symmetric monoidal categories
\begin{equation*}
\begin{tikzcd}
\Mot^{\loc}_{\mk} \ar{r}{\HC^{-,\tref}(-/\mk)}\ar[swap]{rd}{\HC^-(-/\mk)} & [2em] \Nuc(\mk[[u]])\ar[d]\\
& \Mod_u^{\wedge}\hy\mk[[u]].
\end{tikzcd}
\end{equation*}
The horizontal arrow here is our refined negative cyclic homology. We give examples of computations in Section \ref{sec:refined_HC^-}. We also define the closely related refined periodic cyclic homology and give a non-trivial example of a computation.

Here we mention an example when in a seemingly innocuous situation the answer is given by a quite non-trivial algebra of overconvergent functions:
\begin{equation*}
\HC^{-,\tref}(\Q[x^{\pm 1}]/\Q[x])\cong \cO(\bigcap\limits_{n\geq 1}\{|u|\leq |x|^n\ne 0\})\in\Nuc(\Q[x][[u]]).
\end{equation*}
The precise meaning of this (idempotent) algebra object is given in Proposition \ref{prop:refined_HC^-_Laurent_polynomials}, see also the discussion after its formulation.

We give a brief overview of the structure of the paper.

Section \ref{sec:preliminaries} contains preliminary material on rigid and dualizable categories. This is mostly a compressed version of \cite[Section 1]{E25}, and one of the goals here is to fix the notation and terminology. It seems that the only new statement in this section is Proposition \ref{prop:prod_dual_vs_Cat_cg} \ref{prod_dual_is_not_compactly_generated}. It gives an example showing that for a general (presentable stable) rigid symmetric monoidal category $\cE$ the class of relatively compactly generated $\cE$-modules is not closed under infinite products in $\Cat_{\cE}^{\dual}.$

In Section \ref{sec:duality_between_cats_of_1_morphisms} we consider an abstract category $\cA$ enriched over $\Pr^L_{\st}$ and give a certain sufficient condition for duality between the categories $\cA(X,Y)$ and $\cA(Y,X)$ for a pair of objects $X,Y.$ It is conceptually similar to the criterion of rigidity via trace-class maps mentioned above. We use this statement to prove part of Theorem \ref{th:dualizability_and_rigidity} on the duality between $\Mot^{\loc}_{\cE}$ and $\Mot^{\loc}_{\cE^{mop}}$ when $\cE$ is rigid $\bE_1$-monoidal and $\cE^{mop}$ is the same category with the opposite multiplication. 

In Section \ref{sec:dualizability_and_rigidity} we prove one of our main results, namely the dualizability and rigidity of the categories of localizing motives (Theorem \ref{th:dualizability_and_rigidity}). The main auxiliary notion is that of a nuclear relatively compactly generated left $\cE$-module, where $\cE$ is rigid $\bE_1$-monoidal. We crucially use our previous result \cite[Theorem 1.14]{E25} which in particular states that the category $\Cat_{\cE}^{\cg}$ of relatively compactly generated $\cE$-modules is compactly assembled. We prove certain non-trivial inheritance properties for nuclear $\cE$-modules which allow to construct ``left resolutions of length $1$'' by nuclear $\cE$-modules. The existence of such resolutions is the main ingredient for the proof of Theorem \ref{th:dualizability_and_rigidity}. We also show that proper relatively compactly generated $\cE$-modules are nuclear, which is slightly more subtle in the case when the base category $\cE$ is not compactly generated.

In section \ref{sec:morphisms_in_Mot^loc} we formulate and prove results on morphisms and internal $\Hom$ in the categories of localizing motives. A relatively simple statement is Theorem \ref{th:morphisms_in_Mot^loc_via_limits} which deals with the case when the source is a motive of a nuclear $\cE$-module. The morphisms resp. internal $\Hom$ are described as a (codirected) inverse limit of $K$-theory resp. motives of certain categories. This gives a powerful tool for more general computations because of the existence of the aforementioned resolutions by nuclear $\cE$-modules. We also prove a much more difficult Theorem \ref{th:morphisms_in_Mot^loc_via_internal_Hom} which loosely speaking describes the situations when the internal $\Hom$ in $\Cat_{\cE}^{\dual}$ categorifies the internal $\Hom$ in $\Mot_{\cE}^{\loc}.$ This includes the situation when the target is a Calkin category and the source is $\omega_1$-compact, which is part \ref{internal_Hom_into_Calk} of the theorem. Part \ref{internal_Hom_from_proper} deals with the case when the source is proper and $\omega_1$-compact and the target is arbitrary. To prove Theorem \ref{th:morphisms_in_Mot^loc_via_internal_Hom} we develop the theory of formally $\omega_1$-injective dualizable categories. We prove that the Calkin categories are formally $\omega_1$-injective using a very tricky statement on the permutations of products and filtered colimits \cite[Corollary A.4]{E25}. We also study the dualizable internal $\Hom$ into a formally $\omega_1$-injective $\cE$-module, which turns out to be very nicely behaved. This circle of ideas leads to the proof of Theorem \ref{th:morphisms_in_Mot^loc_via_internal_Hom}. As an application, in Subsection \ref{ssec:equivalence_Mot^loc_omega_1_and_Mot^loc} we obtain an equivalence between finitary and $\omega_1$-finitary localizing motives over a rigid base, generalizing a result of Ramzi-Sosnilo-Winges \cite{RSW25}.

In Section \ref{sec:Mot^loc_E_as_functor_of_E} we study the assignment $\cE\mapsto\Mot^{\loc}_{\cE}$ as a functor. Most of the results here are essentially straightforward, except for Theorem \ref{th:localizing_motives_over_rigidification} on the category $\Mot^{\loc}_{\cE^{\rig}},$ where $\cE^{\rig}$ is a rigidification of a locally rigid symmetric monoidal category $\cE$ whose unit object is $\omega_1$-compact. We prove the fully faithfulness of the natural functor 
$\Mot^{\loc}_{\cE^{\rig}}\to(\Mot^{\loc}_{\cE})^{\rig}.$ It allows to reduce the study of localizing motives over complicated categories like $\Nuc(\Z_p)$ to the case when the base is much simpler, in this case it would be the standard $p$-complete derived category $D_{p\hy\compl}(\Z).$

In Section \ref{sec:G_equivariant_motives} we apply Theorem \ref{th:dualizability_and_rigidity} to deduce the dualizability and self-duality (but not rigidity) for $G$-equivariant localizing motives $\Mot^{\loc}_{BG}.$ We deduce some applications. In particular, we show that the $K$-theoretic Farrell-Jones conjecture for a discrete group $G$ is equivalent to the following: the category $\Mot^{\loc}_{BG}$ is generated via colimits by the images of inductions $\Mot^{\loc}_{BH}\to\Mot^{\loc}_{BG},$ where $H$ runs through virtually cyclic subgroups of $G.$

In Section \ref{sec:motives_not_generated} we prove a negative result: the category $\Mot^{\loc}$ is not generated (as a localizing subcategory) by motives of connective $\bE_1$-rings. In particular, we prove that for the algebra $\Q[\veps]$ of dual numbers the object $\cU_{\loc}(D^b_{\coh}(\Q[\veps]))$ cannot be generated by the objects $\cU_{\loc}(R),$ where $R$ is connective.

In Section \ref{sec:A^1_invariant_motives} we consider the $\A^1$-invariant localizing motives $\Mot^{\loc,\A^1}$ and prove that the localization $\Mot^{\loc}\to\Mot^{\loc,\A^1}$ is smashing. It is important to note that we consider the flat affine line instead of a smooth affine line over $\bS.$ The corresponding idempotent $\bE_{\infty}$-algebra in $\Mot^{\loc}$ is given by the geometric realization $|\cU_{\loc}(\Delta^{\bullet})|,$ where the algebraic simplices $\Delta^n$ are flat affine spaces. The result automatically applies to an arbitrary rigid base category.

In Section \ref{sec:corepresentability_TR_TC} we prove the results on the corepresentability of $\TR$ (topological restriction) and $\TC$ (topological cyclic homology) mentioned above.

In Section \ref{sec:K_homology_of_schemes} we apply our methods to compute the $K$-homology for certain classes of schemes. More precisely, we work over a noetherian ring $\mk,$ and for a separated scheme of finite type $X$ over $\mk$ we consider the spectrum $\Hom_{\Mot^{\loc}_{\mk}}(\cU_{\loc}(X),\cU_{\loc}(\mk)).$ In the case when $X$ is proper and $\mk$ is regular, we prove that this $K$-homology (relative to $\mk$) is identified with $G$-theory of $X,$ i.e. $K$-theory of the category of coherent sheaves (Theorem \ref{th:K_homology_proper}). We also prove a closely related result on $K$-homology of proper connective (associative) dg algebras over $\mk$ (Theorem \ref{th:K_homology_proper_connective_algebras}). On the other hand, if $X$ is smooth with a smooth compactification over $\mk,$ we prove that its $K$-homology is identified with the so-called ``$K$-theory with proper supports'', the precise statement is Theorem \ref{th:K_homology_smooth_with_compactification}. We also give a sketch of the proof of this result when there is no smooth compactification, the details will appear in \cite{E}.

In Section \ref{sec:completed_co_sheaves} we obtain another application, namely the computation of continuous $K$-theory of certain versions of categories of sheaves on locally compact Hausdorff spaces. Recall that in the case of constant coefficients our result \cite[Theorem 6.11]{E24} we have $K^{\cont}(\Shv(X;\cC))\cong \Gamma_c(X;K^{\cont}(\cC))$ for a dualizable category $\cC.$ In Subsections \ref{ssec:completed_cosheaves} and \ref{ssec:completed_sheaves} we apply Theorems \ref{th:morphisms_in_Mot^loc_via_limits} and \ref{th:morphisms_in_Mot^loc_via_internal_Hom} to obtain similar results for the category of the so-called completed cosheaves and another category which can be called the ``category of completed sheaves'' (this has nothing to do with hypercompletion or left completion with respect to a $t$-structure). Instead of compactly supported cohomology, the answers are given by the Borel-Moore homology and by the cohomology respectively.  
 
Finally, in Section \ref{sec:refined_HC^-} we define the refined negative cyclic homology mentioned above and also the refined periodic cyclic homology. We give examples of computations, namely we compute $\HC^{-,\tref}(\Q[x]/\Q),$ $\HC^{-,\tref}(\Q[x,x^{-1}]/\Q[x])$ and $\HP^{\tref}(\Q/\Q[x]).$ The latter computation is quite complicated, and the answer is again given by a certain algebra of over convergent functions. As an application, we prove that the categories $\Mot^{\loc}_{\Q[x]}$ and $\Mot^{\loc,\A^1}_{\Q[x]}$ are not compactly generated, and the functor $\cU_{\loc}^{\A^1}:\Cat_{\Q[x]}^{\perf}\to\Mot^{\loc,\A^1}_{\Q[x]}$ is not a truncating invariant in the sense of \cite{LT19}.  

{\noindent{\bf Acknowledgements}} I am especially grateful to Peter Scholze for many stimulating discussions about localizing motives and refined variants of Hochschild homology. I am also grateful to Ko Aoki, Alexander Beilinson, Dustin Clausen, Adriano C\'ordova Fedeli, Vladimir Drinfeld, Dennis Gaitsgory, Dmitry Kaledin, David Kazhdan, Akhil Mathew, Thomas Nikolaus, Maxime Ramzi, Vladimir Sosnilo and Georg Tamme for useful discussions. Part of this work was done while I was a visitor in the Max Planck Institute for Mathematics in Bonn from December 2022 till February 2024, and I am grateful to the institute for their hospitality and support. I was partially supported by the HSE University Basic Research Program.
 
\section{Preliminaries}
\label{sec:preliminaries}

This section contains preliminary material on rigid monoidal categories and dualizable modules over them. It is mostly covered by \cite[Section 1]{E25} but with some exceptions, including Subsections \ref{ssec:smashing_ideals}, \ref{ssec:right_left_trace_class}, \ref{ssec:infinite_products} and \ref{ssec:deformed_tensor_algebra} below. Probably the only new result in this section is Proposition \ref{prop:prod_dual_vs_Cat_cg} \ref{prod_dual_is_not_compactly_generated}.

\subsection{Notation and terminology}
\label{ssec:notation_and_terminology}

We will freely use the theory of $\infty$-categories and higher algebra, as developed in \cite{Lur09, Lur17}. We will mostly deal with $(\infty,1)$-categories, however we will also consider certain categories enriched over $\Pr^L;$ these can be always assumed to have finitely many objects, so we do not need any version of presentable $(\infty,2)$-categories. 

We will simply say ``category'' instead of ``$\infty$-category'' if the meaning is clear from the context. We denote by $\cS$ the $\infty$-category of spaces, which is freely generated by one object via colimits. We denote by $\Sp$ the category of spectra.

We will use the same notation as in \cite{E24, E25} for various categories of stable categories. In particular, we denote by $\Cat^{\perf}$ the category of idempotent-complete stable categories and exact functors between them. We denote by $\Pr^L_{\st}$ the category of presentable stable categories and continuous (i.e. colimit-preserving) functors between them. We will say that a continuous functor $F:\cC\to\cD$ is strongly continuous if its right adjoint commutes with colimits. Given $\cC,\cD\in\Pr^L_{\st},$ we denote by $\Fun^L(\cC,\cD)$ resp. $\Fun^{LL}(\cC,\cD)$ the category of continuous resp. strongly continuous functors. For a regular cardinal $\kappa,$ we denote by $\Pr^L_{\st,\kappa}\subset \Pr^L_{\st}$ the non-full subcategory of $\kappa$-presentable stable categories, with $1$-morphisms being continuous functors which preserve $\kappa$-compact objects. The assignment $\cC\mapsto \cC^{\kappa}$ defines an equivalence $\Pr^L_{\st,\kappa}\xto{\sim}\Cat^{\kappa\hy\rex}_{\st},$ where $\Cat^{\kappa\hy\rex}_{\st}$ is the category of small stable categories with $\kappa$-small colimits, and exact functors which commute with $\kappa$-small colimits (in the case $\kappa=\omega$ we also require the idempotent-completeness).

For a presentable stable category $\cC,$ a full subcategory $\cD\subset\cC$ is called {\it localizing} if $\cD$ is stable, closed under colimits and is generated via colimits by a small set of objects (the latter condition exactly means that $\cD$ is itself presentable). 

We will say that an exact functor $F:\cA\to\cB$ between idempotent-complete small stable categories is a homological epimorphism if the functor $\Ind(F):\Ind(\cA)\to\Ind(\cB)$ is a quotient functor, or equivalently if its right adjoint is fully faithful. Equivalently, this means that $F$ is an epimorphism in $\Cat^{\perf}.$  

For a functor $F:\cC\to\cD$ we will denote by $F^R:\cD\to\cC$ the right adjoint functor, if it exists. Similarly, we denote by $F^L:\cD\to\cC$ the left adjoint functor.

We denote by $\Cat_{\st}^{\dual}\subset \Pr^L_{\st}$ the (non-full) subcategory of dualizable categories and strongly continuous functors between them. Recall that the ind-completion functor $\Ind(-):\Cat^{\perf}\to \Cat_{\\st}^{\dual}$ is fully faithful. Its essential image is the category $\Cat^{\cg}_{\st}\simeq \Pr^L_{\st,\omega}\subset\Cat_{\st}^{\dual}$ of compactly generated categories. We will consider the duality functor $(-)^{\vee}:\Cat_{\st}^{\dual}\to\Cat_{\st}^{\dual}$ as a (symmetric monoidal) covariant involution. It sends $\cC$ to the dual category $\cC^{\vee},$ and a strongly continuous functor $F:\cC\to\cD$ is sent to $F^{\vee,L}\cong (F^R)^{\vee}.$ 

Recall that any dualizable category is $\omega_1$-presentable \cite[Corollary 1.21]{E24}. For $\cC\in\Cat_{\st}^{\dual}$ we will denote by $\hat{\cY}_{\cC}:\cC\to\Ind(\cC^{\omega_1})$ (or simply $\hat{\cY}$) the left adjoint to the colimit functor. The same applies to more general compactly assembled categories (not necessarily stable). We will sometimes denote by the same symbol the similar functor $\cC\to\Ind(\cC^{\kappa})$ for a possibly larger uncountable regular cardinal $\kappa,$ as well as the functor $\cC\to\Ind(\cC).$ For an uncountable regular cardinal $\kappa$ and a dualizable category $\cC$ we will use the notation $\Calk_{\kappa}(\cC)=\Ind(\Calk_{\kappa}^{\cont}(\cC)),$ where the small category $\Calk_{\kappa}^{\cont}(\cC)$  is introduced in \cite[Section 1.11]{E24}. Namely, we have the short exact sequence
\begin{equation*}
0\to\cC\xto{\hat{\cY}}\Ind(\cC^{\kappa})\to\Calk_{\kappa}(\cC)\to 0,
\end{equation*}
which defines the Calkin categories. Recall that for $\cA\in\Cat^{\perf}$ we also us the notation $\Calk_{\kappa}(\cA)=(\Ind(\cA)^{\kappa}/\cA)^{\Kar}$ (where $\Kar$ stands for Karoubi completion). We have
\begin{equation*}
\Calk_{\kappa}^{\cont}(\Ind(\cA))\simeq \Calk_{\kappa}(\cA),\quad \Calk_{\kappa}(\Ind(\cA))\simeq\Ind(\Calk_{\kappa}(\cA)).
\end{equation*}

In a compactly assembled category $\cC$ a morphism $f:x\to y$ is called compact if in $\Ind(\cC)$ the morphism $\cY(f):\cY(x)\to \cY(y)$ factors through $\hat{\cY}(y).$ If $\cC$ is compactly generated, this means that $f$ factors through a compact object of $\cC.$

We recall from \cite[Section 1.7]{E25} that for an accessible exact functor $F:\cC\to\cD$ between presentable stable categories we denote by $F^{\cont}:\cC\to\cD$ its ``continuous approximation''. It is defined by the universal property
\begin{equation*}
\Map_{\Fun^L(\cC,\cD)}(G,F^{\cont})=\Map_{\Fun^{\acc,\ex}(\cC,\cD)}(G,F),\quad G\in\Fun^L(\cC,\cD). 
\end{equation*}
The same applies to lax linear functors between modules over rigid monoidal categories, as explained in loc. cit. If $F$ is itself continuous, i.e. $F^{\cont}=F,$ then we write $F^{R,\cont}=(F^R)^{\cont}$ for the continuous approximation of its right adjoint. If $\cC$ is dualizable, then $F^{\cont}$ is given by the composition
\begin{equation*}
\cC\xto{\hat{\cY}}\Ind(\cC)\xto{\Ind(F)}\Ind(\cD)\xto{\colim}\cD.
\end{equation*}
In particular, if $\cC$ is compactly generated, then $F^{\cont}$ is the left Kan extension of $F_{\mid \cC^{\omega}}.$

For a dualizable category $\cC$ and an object $x\in\cC$ we denote by $x^{\vee}\in\cC^{\vee}$ the object with the universal property
\begin{equation*}
\Map_{\cC^{\vee}}(y,x^{\vee})\simeq \Map_{\cC^{\vee}\otimes\cC}(y\boxtimes x,\coev_{\cC}(\bS)).
\end{equation*}
If we denote by $x\otimes-:\Sp\to\cC$ the continuous functor sending $\bS$ to $x$ then we have an isomorphism of functors
\begin{equation*}
(x\otimes-)^{R,\cont}\cong \ev_{\cC}(-,x^{\vee}).
\end{equation*}
This gives an equivalent definition of $x^{\vee}.$

We will use the following convention: a functor $p:I\to J$ between small $\infty$-categories is cofinal if for any $j\in J$ the category $I_{j/}=I\times_J J_{j/}$ is weakly contractible. Equivalently, for any $\infty$-category $\cC$ and for any functor $F:J\to\cC$ we have
\begin{equation*}
	\indlim(J\xto{F}\cC)\cong \indlim(I\xto{p}J\xto{F}\cC),
\end{equation*}
assuming that one of the colimits exists.

We will mostly use directed posets instead of general filtered $\infty$-categories. Recall that by \cite[Proposition 5.3.1.18]{Lur09} for any $\kappa$-filtered $\infty$-category $I$ there exists a $\kappa$-directed poset $J$ with a cofinal functor $J\to I.$ For an $\infty$-category $\cA$ we will typically write the objects of $\Ind(\cA)$ resp. $\Pro(\cA)$ as $\inddlim[i\in I]x_i$ resp. $\proolim[i\in I]x_i,$ where $I$ is directed resp. codirected, and we have a functor $I\to\cA,$ $i\mapsto x_i.$

By default, all the functors between stable categories will be assumed to be exact. A stable category $\cA$ is enriched over $\Sp,$ and we will denote by $\Hom_{\cA}(x,y)$ or $\cA(x,y)$ the spectrum of morphisms from $x$ to $y.$ We will frequently use the identification $\Ind(\cA)\simeq\Fun(\cA^{op},\Sp)$ in the case when $\cA$ is small stable. We denote the evaluation functor for $\Ind(\cA)$ by
\begin{equation*}
	-\tens{\cA}-:\Ind(\cA)\otimes\Ind(\cA^{op})\to\Sp,
\end{equation*}
so we have
\begin{equation*}
	(\inddlim[i]x_i)\tens{\cA}(\inddlim[j]y_j^{op})\cong \indlim[i,j]\cA(y_j,x_i).
\end{equation*}
Here $y_j\in\cA$ and we denote by $y_j^{op}$ the corresponding object of $\cA^{op}.$

If $\cE$ is an $\bE_1$-monoidal category, we have in general two different $\bE_1$-monoidal categories $\cE^{op}$ and $\cE^{mop}.$ Namely, $\cE^{op}$ has the opposite underlying category, but the order of multiplication is the same. On the other hand, $\cE^{mop}$ has the same underlying category, but the order of multiplication is reversed. In particular, left $\cE$-modules are the same as right $\cE^{mop}$-modules.

Given a left $\cE$-module $\cC$ and two objects $x,y\in\cC,$ we denote by $\un{\Hom}_{\cC/\cE}(x,y)\in\cE$ the relative internal $\Hom$-object. If it exists, it is uniquely determined by the universal property
\begin{equation*}
	\Map_{\cE}(z,\un{\Hom}_{\cC/\cE}(x,y))\simeq \Map_{\cC}(z\otimes x,y).
\end{equation*}
If $\cD$ is a right $\cE$-modulle and $d,d'\in\cD$ then we write $\un{\Hom}_{\cD/\cE}^r(d,d')$ instead of $\un{\Hom}_{\cD/\cE^{mop}}(d,d').$

For an $\bE_1$-ring $A$ we denote by $\Mod\hy A$ the category of right $A$-modules. If $A$ is connective, we will sometimes write $D(A)$ instead of $\Mod\hy A,$ and if $A$ is an $\bE_{\infty}$-ring, we will also write $\Mod_A.$ 

If $A$ is an algebra in an $\bE_1$-monoidal category $\cE,$ then we also denote by $\Mod\hy A$ the category of right $A$-modules in $\cE.$ This should not lead to confusion, because the corresponding monoidal category will be clear from the context.

We will mostly use homological grading unless we specify otherwise.

Finally, we recall certain Grothendieck's axioms for abelian categories \cite{Gro} which make sense for a presentable $\infty$-category $\cC.$ First, we say that $\cC$ satisfies strong (AB5) if filtered colimits in $\cC$ commute with finite limits. We say that $\cC$ satisfies (AB6) if for any set $I,$ for any collection of directed posets $(J_i)_{i\in I}$ and for any family of functors $J_i\to \cC,$ $j_i\mapsto x_{j_i},$ the following natural map is an isomorphism:
\begin{equation*}
	\indlim[(j_i)_i\in\prod\limits_{i\in I}J_i] \prodd[i]x_{j_i}\xto{\sim} \prodd[i]\indlim[j_i\in J_i]x_{j_i}.
\end{equation*}  
If $\cC$ is stable, then it automatically satisfies strong (AB5), and the axiom (AB6) is equivalent to the dualizability of $\cC$ \cite[Proposition 1.53]{E24}. In general, $\cC$ is compactly assembled if and only if it satisfies both strong (AB5) and (AB6).

\subsection{Rigid monoidal categories}
\label{ssec:rigid_monoidal}

We recall the notion of rigidity for large monoidal categories from \cite[Definition 9.1.2]{GaRo17}. An $\bE_1$-monoidal presentable stable category $\cE\in\Alg_{\bE_1}(\Pr^L_{\st})$ is called rigid if the following conditions hold:
\begin{itemize}
\item The unit object $1_{\cE}$ is compact;
\item The multiplication functor $\mult:\cE\otimes\cE\to \cE$ has a colimit-preserving right adjoint, which is $\cE\hy\cE$-linear. 
\end{itemize} 
For brevity we will simply say ``rigid $\bE_1$-monoidal category'' instead of ``rigid presentable stable $\bE_1$-monoidal category'', and similarly for symmetric monoidal categories and for $\bE_n$-monoidal categories for $n\geq 1.$

One can also define the notion of a rigid $\bE_0$-monoidal category simply as a dualizable category with a distinguished compact object. However, we choose not to use such terminology to avoid confusion.

We will freely use the basic facts about rigid monoidal categories which are collected in \cite[Section 1.2]{E25} (see references therein). In particular, if $\cE$ is compactly generated, then rigidity means that $\cE$ is equivalent as a monoidal category to $\Ind(\cA),$ where $\cA$ is a small stable monoidal category in which every object is (both left and right) dualizable. In general rigidity implies dualizability and self-duality, but there are many examples of non-compactly generated rigid monoidal categories \cite{E25}. If $\cE\to\cE'$ is a continuous monoidal functor between presentable stable $\bE_1$-monoidal categories such that $\cE$ is rigid and the unit $1_{\cE'}$ is compact, then $F$ is strongly continuous. If $F':\cE\to\cE'$ is another such functor, then any morphism $\varphi:F\to F'$ (in the category of monoidal functors) is an isomorphism.

We recall the notions of right and left trace-class morphisms in a presentable stable $\bE_1$-monoidal category $\cE.$ First, for $x,y\in\cE$ the left and right internal $\Hom$ are defined by the following universal properties:
\begin{equation*}
	\Map_{\cE}(z,\un{\Hom}_{\cE}^l(x,y))\simeq\Map_{\cE}(z\otimes x,y),\quad \Map_{\cE}(z,\un{\Hom}_{\cE}^r(x,y))\simeq\Map_{\cE}(x\otimes z,y).
\end{equation*}
For $x\in\cE$ we put $x^{r\vee}=\un{\Hom}_{\cE}^r(x,1_{\cE}),$ $x^{l\vee}=\un{\Hom}_{\cE}^l(x,1_{\cE}).$ A map $x\to y$ in $\cE$ is called right resp. left trace-class if the associated morphism $1_{\cE}\to\un{\Hom}_{\cE}^r(x,y)$ resp. $1_{\cE}\to\un{\Hom}_{\cE}^l(x,y)$ factors through $x^{r\vee}\otimes y$ resp. $y\otimes x^{l\vee}.$ In the symmetric monoidal case such maps are called simply trace-class, and this notion is due to Clausen and Scholze \cite[Definition 13.11]{CS20}.

It is important that in rigid monoidal categories the class of compact morphisms coincides with the class of right trace-class morphisms, and also coincides with the class left trace-class morphisms. We recall the following criterion of rigidity for $\bE_1$-monoidal presentable stable categories which is in \cite{Ram24b} for the symmetric monoidal case, and the proof of the general case is the same.

\begin{prop}\label{prop:rigidity_criterion}\cite[Corollary 4.52]{Ram24b}\cite[Proposition 4.57]{E25}
	Let $\cE$ be a presentable stable $\bE_1$-monoidal category. Then $\cE$ is rigid if and only if the following two conditions hold.
	\begin{enumerate}[label=(\roman*), ref=(\roman*)]
		\item The unit object of $\cE$ is compact.
		\item $\cE$ is generated via colimits by the objects of the form $x=\indlim[n\in\N]x_n,$ where each map $x_n\to x_{n+1}$ is both left and right trace-class. 
	\end{enumerate} 
\end{prop}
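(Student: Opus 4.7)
Condition (i) is immediate from the definition of rigidity. For (ii), I would invoke that in a rigid $\bE_1$-monoidal category every compact object is both left and right dualizable, and that $\cE$ is generated under colimits by its compact objects. For any compact dualizable $y\in\cE$ the identity $y\to y$ is both left and right trace-class: the coevaluations $1_\cE\to y\otimes y^{l\vee}$ and $1_\cE\to y^{r\vee}\otimes y$ provide the required factorizations of $1_\cE\to\un{\Hom}_\cE^l(y,y)$ and $1_\cE\to\un{\Hom}_\cE^r(y,y)$. Hence $y=\indlim[n]y$ with identity transitions is of the required form, and such objects generate $\cE$.

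\textbf{Backward direction.} Assume (i) and (ii); I would show the multiplication $m\colon\cE\otimes\cE\to\cE$ has a continuous $\cE$-bilinear right adjoint, which together with (i) gives rigidity. For each generator $x=\indlim[n]x_n$ of the form in (ii), the two-sided trace-class data unpack to compatible morphisms $c_n^r\colon 1_\cE\to x_n^{r\vee}\otimes x_{n+1}$ and $c_n^l\colon 1_\cE\to x_{n+1}\otimes x_n^{l\vee}$ refining the transitions $f_n\colon x_n\to x_{n+1}$. I would assemble from these a ``virtual dual'' $x^\vee:=\indlim[n]x_n^{r\vee}$ together with a coevaluation $1_\cE\to x^\vee\otimes x$ arising as the sequential colimit of the $c_n^r$, and a matching evaluation from the $c_n^l$. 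These data exhibit the functor $x\otimes-\colon\cE\to\cE$ as having a continuous right adjoint computed as $x^\vee\otimes-$. Once this holds for every generator, one checks that $\cE$ is dualizable (since the generators are in particular compactly assembled) and assembles the continuous right adjoints from the various $x$'s into a continuous $\cE$-bilinear right adjoint to $m$, using the left trace-class data to verify bilinearity.

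\textbf{Main obstacle.} The technical heart is the identification of $(x\otimes-)^R$ with $x^\vee\otimes-$. A priori, $(x\otimes-)^R=\prolim[n]\un{\Hom}_\cE^l(x_n,-)$ is a sequential limit of non-continuous functors; the content is to exhibit it as the sequential colimit $\indlim[n]x_n^{r\vee}\otimes-$ of continuous functors. The trace-class factorizations $c_n^r$ provide the comparison natural transformation, and one must verify it is an equivalence. Both left and right trace-class hypotheses are needed, because the $\bE_1$-monoidal bilinearity of $m^R$ involves two independent continuity statements --- one for each one-sided action of $\cE$ on $\cE$ --- and these are handled by the two trace-class conditions respectively. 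Adapting the symmetric-monoidal argument of \cite{Ram24b} to the $\bE_1$-monoidal setting is then essentially formal.
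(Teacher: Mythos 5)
Your argument assumes that a rigid presentable stable $\bE_1$-monoidal category is generated under colimits by its compact objects, but this is false in general. As the paper itself emphasizes (and as is central to its setup), there are many rigid categories that are \emph{not} compactly generated, e.g.\ sheaves with values in $\Sp$ on a non-profinite compact Hausdorff space. Rigidity does imply dualizability (= compactly assembled for presentable stable categories), and the correct argument runs through that: a compactly assembled category is generated by its $\omega_1$-compact objects, each of which is a sequential colimit $\indlim[n] x_n$ with compact transition maps, and in a rigid category a morphism is compact iff it is right trace-class iff it is left trace-class. Your special case of a compact dualizable $y$ with identity transitions is a degenerate instance of this, but it doesn't cover the general rigid case.

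\textbf{Backward direction: flawed construction.} You set $x^\vee := \indlim[n] x_n^{r\vee}$, but the functor $(-)^{r\vee}=\un{\Hom}^r_\cE(-,1_\cE)$ is contravariant: the transitions $x_n\to x_{n+1}$ induce $x_{n+1}^{r\vee}\to x_n^{r\vee}$, which is a pro-system, not an ind-system, so the colimit you write does not have an evident meaning. The trace-class witnesses $1_\cE\to x_n^{r\vee}\otimes x_{n+1}$ do let one produce factorizations of the pro-system $\{\un{\Hom}^l(x_n,-)\}_n$ through the continuous functors $x_n^{r\vee}\otimes -$, and the real content is that the sequential limit $\prolim[n]\un{\Hom}^l(x_n,-)$ is ind-isomorphic to the sequential colimit $\inddlim[n] (x_n^{r\vee}\otimes-)$ of left-adjointable functors, hence continuous; this is a shift-and-cofinality argument, not a naive colimit of the duals. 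Beyond that, even once $x\otimes -$ is shown to be strongly continuous for every generator $x$, it is a further (nontrivial) step to conclude that the multiplication $m\colon\cE\otimes\cE\to\cE$ itself is strongly continuous and that its right adjoint is $\cE$-$\cE$-bilinear; the latter is where the simultaneous use of left and right trace-class data must enter in a structured way, and your sketch treats this as a routine assembly step rather than isolating the actual argument. Since the paper defers the proof entirely to the cited sources, these are the places where a self-contained write-up would need the most care.
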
 

In Section \ref{sec:duality_between_cats_of_1_morphisms} we will prove a closely related analogue of the ``if'' direction, namely a sufficient condition for the duality between $\cA(X,Y)$ and $\cA(Y,X)$ when $\cA$ is enriched over $\Prr^L_{\st}.$ This is Theorem \ref{th:conditions_for_E_0_rigidity} below. We will use it to prove the first part of Theorem \ref{th:dualizability_and_rigidity}, namely the duality between $\Mot^{\loc}_{\cE}$ and $\Mot^{\loc}_{\cE^{mop}}$ when $\cE$ is rigid $\bE_1$-monoidal.

Finally, we recall the notion of a rigidification. We denote by $\Alg_{\bE_1}^{\rig}(\Pr^L_{\st})\subset \Alg_{\bE_1}(\Pr^L_{\st})$ the full subcategory of rigid monoidal categories, and similarly for $\CAlg^{\rig}(\Pr^L_{\st})\subset \CAlg(\Pr^L_{\st}).$ Both inclusions have right adjoints, but we will be only interested in the latter one. It is called the rigidification and denoted by $(-)^{\rig}:\CAlg(\Pr^L_{\st})\to \CAlg^{\rig}(\Pr^L_{\st}).$ By \cite[Construction 4.75, Theorem 4.77]{Ram24b}, if $\cC\in\CAlg(\Pr^L_{\st,\kappa})$ for some regular cardinal $\kappa,$ then $\cC^{\rig}\subset\Ind(\cC^{\kappa})$ is the full localizing subcategory generated by the objects of the form $\inddlim[\Q_{\leq}](\Phi:\Q_{\leq}\to\cC^{\kappa}),$ where for $a<b$ the map $\Phi(a)\to \Phi(b)$ is trace-class in $\cC.$

Suppose that $\kappa$ is uncountable, $\cE\in\CAlg^{\rig}(\Pr^L_{\st}),$ $\cC\in\CAlg(\Pr^L_{\st,\kappa}),$ $F:\cE\to\cC$ is a continuous symmetric monoidal functor and denote by $\wt{F}:\cE\to\cC^{\rig}$ the (automatically strongly continuous) symmetric monoidal functor corresponding to $F$ by adjunction. Then we have a commutative diagram in $\CAlg(\Cat_{\st}^{\dual}):$ 
\begin{equation}\label{eq:how_rigidification_works}
\begin{tikzcd}
\cE\ar[r, "\wt{F}"]\ar{d}{\hat{\cY}} & \cC^{\rig}\ar[d]\\
 \Ind(\cE^{\kappa}) \ar{r}{\Ind(F^{\kappa})} & \Ind(\cC^{\kappa}). 
\end{tikzcd}
\end{equation}

\subsection{Smashing ideals and idempotent $\bE_{\infty}$-algebras}
\label{ssec:smashing_ideals}

Let $\cE\in \CAlg(\Prr^L_{\st})$ be a presentable stable symmetric monoidal category, not necessarily rigid. Recall from \cite[Definition 4.8.2.8]{Lur17} that an $\bE_{\infty}$-algebra $A\in\CAlg(\cE)$ is called idempotent if the multiplication morphism $A\otimes A\to A$ is an isomorphism. We denote by $\CAlg^{\idem}(\cE)\subset\CAlg(\cE)$ the full subcategory of idempotent $\bE_{\infty}$-algebras. By \cite[Proposition 4.8.2.8]{Lur17} The forgetful functor
\begin{equation*}
\CAlg^{\idem}(\cE)\to \Alg_{\bE_0}(\cC)\simeq\cC_{1_{\cE}/}
\end{equation*}
is fully faithful, and its essential image consists of pairs $(A,e:1_{\cE}\to A)$ such that the map $\id_A\otimes e:A\to A\otimes A$ is an isomorphism.

Recall that a localizing ideal $\cI\subset\cE$ is called {\it smashing} if the inclusion functor has a continuous right adjoint, which is also $\cE$-linear, i.e. the projection formula holds. We denote by $\Idl_{\sm}(\cE)$ the poset of smashing ideals in $\cE.$ It is well known that we have an equivalence 
\begin{equation*}
\CAlg^{\idem}(\cC)\xto{\sim}\Idl_{\sm}(\cE),\quad A\mapsto \ker(A\otimes-),
\end{equation*}
see for example \cite[Propositions 2.5 and 2.17]{Aok23}. In particular, the category $\CAlg^{\idem}(\cE)$ is also a poset. Note that it is small: if $\kappa$ is an uncountable regular cardinal such that $\cE$ is $\kappa$-presentable, then any smashing ideal in $\cE$ is generated as a localizing subcategory by a set of $\kappa$-compact objects.

Moreover, the category $\CAlg^{\idem}(\cE)$ is equivalent to the category of coidempotent $\bE_{\infty}$-coalgebras, which is also a poset \cite[Proposition 2.14]{Aok23}. Under this equivalence the algebra $A$ corresponds to the coalgebra $C=\Fiber(1_{\cE}\to A).$ Clearly, $C$ generates the corresponding smashing ideal $\cI.$ Moreover, $\cI$ itself is naturally symmetric monoidal, since it is equivalent to the quotient of $\cE$ by the localizing ideal generated by $A.$ The unit object of $\cI$ is given by $C.$

In the special case when $\cE$ is rigid, a localizing ideal is smashing if and only if the inclusion functor is strongly continuous; the projection formula holds automatically.

\subsection{Locally rigid categories}
\label{ssec:locally_rigid}

We recall the notion of a locally rigid category, which was originally defined in \cite{AGKRV20} as a semi-rigid category. Namely, a symmetric monoidal presentable stable category $\cE\in\CAlg(\Pr^L_{\st})$ is called locally rigid if $\cE$ is dualizable and the multiplication functor $\mult:\cE\otimes\cE\to\cE$ has a colimit-preserving right adjoint $\mult^R$ which is $\cE\hy\cE$-linear. We refer to \cite{Ram24b} for a detailed account, and to \cite[Section 1.6]{E25} for the recollection of some basic facts.

Note that a rigid symmetric monoidal category is in particular locally rigid. On the other hand, for a locally rigid category $\cE$ the rigidity is equivalent to the compactness of the unit object $1_{\cE}.$ If $\cE\in\CAlg(\Pr^L_{\st})$ is compactly generated, then $\cE$ is locally rigid if and only if every compact object of $\cE$ is dualizable.

We refer to \cite{Ram24b, E25} for many examples. We note that if $\cC$ is rigid symmetric monoidal, then any smashing ideal $\cE\subset\cC$ is naturally a locally rigid category. In fact, any locally rigid category $\cE$ can be obtained in this way. The most ``economic'' choice is the so-called one-point rigidification $\cE_+\supset\cE$ as defined in \cite[Definition 1.39]{E25}. Namely, $\cE_+\subset\Ind(\cE)$ is the localizing subcategory generated by $\hat{\cY}(\cE)$ and $\cY(1_{\cE}).$ By \cite[Proposition 1.40]{E25}, $\cE_+$ is indeed rigid and the full subcategory $\hat{\cY}(\cE)\subset\cE_+$ is a smashing ideal. 

If $\cE$ is a locally rigid category such that the unit-object $1_{\cE}$ is $\omega_1$-compact, then the category $\cE^{\rig}$ has very nice properties listed in \cite[Theorem 4.2]{E25}. In particular, if we denote by $\Lambda_{\cE}:\cE\to\cE^{\rig}$ the right adjoint to the natural functor, then the result in loc. cit. in particular states that $\Lambda_{\cE}$ is fully faithful and symmetric monoidal (a priori it is only lax monoidal). The fully faithfulness is easy to see (and it does not require the unit of $\cE$ to be $\omega_1$-compact), but the monoidality is difficult. The latter is closely related with another difficult statement which is also part of the theorem in loc. cit.: any trace-class map in $\cE$ is a composition of two trace-class maps. We refer to loc. cit. for details.  

\subsection{Dualizable modules over rigid monoidal categories}
\label{ssec:dualizable_modules}

Let $\cE$ be a rigid $\bE_1$-monoidal category. We will use the same notation and terminology as in \cite[Section 1.3]{E25} for various categories of left $\cE$-modules. In particular we denote by $\Cat_{\cE}^{\dual}$ the category of dualizable left $\cE$-modules. Recall that it is equivalent to the category of left $\cE$-modules in $\Cat_{\st}^{\dual}.$ We also denote by $\Cat_{\cE}^{\cg}\subset \Cat_{\cE}^{\dual}$ the full subcategory of relatively compactly generated left $\cE$-modules. We denote by $\Pr^L_{\cE}$ the category of presentable stable left $\cE$-modules. For $\cC,\cD\in\Pr^L_{\cE}$ we denote by $\Fun_{\cE}^L(\cC,\cD)$ resp. $\Fun_{\cE}^{LL}(\cC,\cD)$ the category of $\cE$-linear continuous resp. strongly continuous functors. 

We refer to loc.cit. and references therein for the recollection of some basic facts about $\cE$-modules. In particular, we will tacitly use the following statement: a lax $\cE$-linear continuous functor between presentable stable left $\cE$-modules is automatically $\cE$-linear. This implies that for $\cC\in\Pr^L_{\cE},$ an object $x\in\cC$ is compact if and only if it is relatively compact over $\cE.$

If $\cE$ is compactly generated, then we denote by $\Cat_{\cE^{\omega}}^{\perf}$ the category of left $\cE^{\omega}$-modules in $\Cat^{\perf}.$ In this case we have an equivalence $\Ind(-):\Cat_{\cE^{\omega}}^{\perf}\xto{\sim}\Cat_{\cE}^{\cg}.$ In the special case when $\cE$ is generated by the (compact) unit object, we have $\cE\simeq \Mod\hy\mk$ for an $\bE_2$-ring $\mk,$ and we write
\begin{equation*}
\Prr^L_{\mk}=\Prr^L_{\Mod\hy\mk},\quad \Cat_{\mk}^{\dual}=\Cat_{\Mod\hy\mk}^{\dual},\quad \Cat_{\mk}^{\cg}=\Cat_{\Mod\hy\mk}^{\cg},\quad \Cat_{\mk}^{\perf}=\Cat_{\Perf(\mk)}^{\perf}. 
\end{equation*}

For $\cC\in\Cat_{\cE}^{\dual}$ we use the following notation for the relative evaluation and coevaluation functors:
\begin{equation*}
\ev_{\cC/\cE}:\cC\otimes\cC^{\vee}\to\cE,\quad\coev_{\cC/\cE}:\Sp\to\cC^{\vee}\tens{\cE}\cC.
\end{equation*}
Here $ev_{\cC/\cE}$ is $\cE\hy\cE$-linear. We recall the relation with the absolute evaluation and coevaluation. We put $\Gamma=\Hom(1_{\cE},-):\cE\to\Sp,$ and denote by $F:\cC^{\vee}\tens{\cE}\cC\to\cC^{\vee}\otimes\cC$ the (continuous) right adjoint to the natural functor. Then we have $\ev_{\cC}=\Gamma\circ\ev_{\cC/\cE}$ and $\coev_{\cC}=F\circ\coev_{\cC/\cE}.$

Recall that by \cite[Theorem 1.14]{E25} the category $\Cat_{\cE}^{\cg}$ is compactly assembled. Also by loc. cit. (essentially by \cite[Proposition 1.71]{E24}) the inclusion functor $\Cat_{\cE}^{\cg}\to\Cat_{\cE}^{\dual}$ has a right adjoint which commutes with filtered colimits. These facts will be extremely important for the proof of our result on the rigidity of the category of localizing motives (Theorem \ref{th:dualizability_and_rigidity}). We will also use the $\omega_1$-presentability of the category $\Cat_{\cE}^{\dual},$ see \cite[Theorem A]{Ram24a}, \cite[Theorem D.1]{E24} and \cite[Theorem 1.13]{E25}. A dualizable left $\cE$-module $\cC$ is $\omega_1$-compact in $\Cat_{\cE}^{\dual}$ if and only if both functors $\ev_{\cC/\cE}$ and $\coev_{\cC/\cE}$ preserve $\omega_1$-compact objects.

Recall from \cite[Section 1.9]{E25} that $\cC\in\Cat_{\cE}^{\dual}$ is called smooth over $\cE$ if the relative coevaluation functor $\coev_{\cC/\cE}$ is strongly continuous, i.e. if the object $\coev_{\cC/\cE}(\bS)\in\cC^{\vee}\tens{\cE}\cC$ is compact. Similarly, $\cC$ is called proper over $\cE$ if the relative evaluation functor $\ev_{\cC/\cE}$ is strongly continuous. These notions are generalizations of \cite[Definitions 8.2 and 8.8]{KonSob09}. If $\cE$ is compactly generated, then a small stable left $\cE^{\omega}$-module $\cA\in\Cat_{\cE^{\omega}}^{\perf}$ is called smooth resp. proper if so is $\Ind(\cA)\in\Cat_{\cE}^{\dual}.$

For $\cC\in\Cat_{\cE}^{\dual}$ and $x\in\cE,$ $c\in\cC$ we denote by $c^x\in\cC$ the version of internal $\Hom$ in the following sense: we have $\Map_{\cC}(c',c^x)\simeq \Map(x\otimes c',c)$ for $c'\in\cC.$ It follows from the above that we have
\begin{equation*}
(d^{\vee})^x\cong (d\otimes x)^{\vee},\quad d\in\cC^{\vee},\,x\in\cE.
\end{equation*}

If $\cC\in\Cat_{\cE}^{\cg}$ is generated over $\cE$ by a single compact object $x\in\cC^{\omega},$ then we have an $\cE$-linear equivalence
\begin{equation*}
\cC\simeq\Mod\hy A,\quad A=\un{\End}_{\cC/\cE}(x)\in\Alg_{\bE_1}(\cE).
\end{equation*}
More generally, if $\cC$ is generated over $\cE$ by a small collection of compact objects $\{x_i\in\cC^{\omega}\}_{i\in I},$ then we can consider the following category $\cA$ enriched over $\cE:$ its set of objects is given by $I,$ and the morphisms are given by $\cA(i,j)=\un{\Hom}_{\cC/\cE}(x_i,x_j).$ We denote by $\cA^{op}$ the corresponding category enriched over $\cE^{mop},$ and let $\Fun(\cA^{op},\cE)$ be the category of $\cE^{mop}$-enriched functors, considered as a left $\cE$-module. Then we have an equivalence of left $\cE$-modules
\begin{equation*}
	\Fun(\cA^{op},\cE)\simeq \cC.
\end{equation*} 
We will need this point of view on relatively compactly generated $\cE$-modules for a certain important construction in Proposition \ref{prop:resolution_by_nuclear}.

We will say that an $\bE_1$-algebra $A\in\Alg_{\bE_1}(\cE)$ is smooth resp. proper over $\cE$ if so is the category $\Mod\hy A\in\Cat_{\cE}^{\dual}.$ Note that properness simply means that the underlying object of $A$ is compact in $\cE.$

For $A\in\Alg_{\bE_1}(\cE)$ and $\cC\in\Cat_{\cE}^{\dual}$ we denote by $\Rep(A,\cC)$ the category of left $A$-modules in $\cC,$ and similarly for $\Rep(A,\cC^{\omega}).$ We have the standard equivalences 
\begin{equation*}
\Fun_{\cE}^L(\Mod\hy A,\cC)\xto{\sim}\Rep(A,\cC),\quad \Fun_{\cE}^{LL}(\Mod\hy A,\cC)\xto{\sim}\Rep(A,\cC^{\omega}),
\end{equation*}
sending an object $F$ to $F(A)$ with the natural $A$-module structure.

We recall that for $\cC\in\Cat_{\cE}^{\dual}$ and for an uncountable regular cardinal $\kappa$ the compactly generated categories $\Ind(\cC^{\kappa})$ and $\Calk_{\kappa}(\cC)$ are naturally left $\cE$-modules, and we have a short exact sequence in $\Cat_{\cE}^{\dual}:$
\begin{equation*}
	0\to \cC\xto{\hat{\cY}}\Ind(\cC^{\kappa})\to\Calk_{\kappa}(\cC)\to 0.
\end{equation*}

Finally, recall that for $\cC,\cD$ we denote by $\un{\Hom}_{\cE}^{\dual}(\cC,\cD)$ the relative internal $\Hom$ in $\Cat_{\cE}^{\dual}$ over $\Cat_{\st}^{\dual},$ studied in \cite[Section 3]{E25}. If $\cE$ is symmetric monoidal, then $\un{\Hom}_{\cE}^{\dual}(\cC,\cD)$ is naturally an $\cE$-module, and it is exactly the internal $\Hom$ in the symmetric monoidal category $\Cat_{\cE}^{\dual}.$

\subsection{Localizing invariants of dualizable modules}

Let $\cE$ be a rigid $\bE_1$-monoidal category, and let $\cT$ be an accessible stable category. The notion of an accessible localizing invariant $\Cat_{\cE}^{\dual}\to \cT$ is defined in the same way as in \cite{BGT}, see also \cite[Section 4]{E24}. If $\kappa$ is a regular cardinal, such that $\cT$ has $\kappa$-filtered colimits, then we denote by $\Fun_{\loc,\kappa}(\Cat_{\cE}^{\dual},\cT)$ the category of localizing invariants which commute with $\kappa$-filtered colimits.

We recall the relative version of the universal localizing invariants (depending on the choice of a regular cardinal).

\begin{defi}
	As above, let $\cE$ be a rigid $\bE_1$-monoidal category, and let $\kappa$ be a regular cardinal. We denote by
	\begin{equation*}
		\cU_{\loc,\kappa}:\Cat_{\cE}^{\dual}\to \Mot^{\loc}_{\cE,\kappa}
	\end{equation*}
	the universal $\kappa$-finitary localizing invariant. This means that the category $\Mot^{\loc}_{\cE,\kappa}$ is accessible stable with $\kappa$-filtered colimits, the functor $\cU_{\loc,\kappa}$ commutes with $\kappa$-filtered colimits, and for any other accessible stable category $\cT$ with $\kappa$-filtered colimits we have an equivalence
	\begin{equation}\label{eq:universal_property_Mot^loc_kappa}
		\Fun_{\loc,\kappa}(\Cat_{\cE}^{\dual},\cT)\simeq \Fun^{\kappa\hy\cont}(\Mot^{\loc}_{\cE,\kappa},\cT).
	\end{equation}
	For $\kappa=\omega$ we use the notation $\Mot^{\loc}_{\cE}=\Mot^{\loc}_{\cE,\omega}$ and say ``finitary'' instead of ``$\omega$-finitary''. We also write $\Mot^{\loc}=\Mot^{\loc}_{\Sp},$ $\Mot^{\loc}_{\kappa}=\Mot^{\loc}_{\Sp,\kappa}.$
\end{defi}

Here the target of \eqref{eq:universal_property_Mot^loc_kappa} is the category of exact functors which commute with $\kappa$-filtered colimits. We can also consider the localizing invariants defined on $\Cat_{\cE}^{\cg},$ in particular the universal $\kappa$-finitary localizing invariants. By \cite[Theorem 4.10]{E24} and its straightforward generalization \cite[Theorem 1.19]{E25} there is no difference between these two versions: the composition $\Cat_{\cE}^{\cg}\to\Cat_{\cE}^{\dual}\xto{\cU_{\loc,\kappa}}\Mot^{\loc}_{\cE,\kappa}$ is the universal $\kappa$-finitary localizing invariant of relatively compactly generated left $\cE$-modules.

Recall that for an accessible localizing invariant $F:\Cat_{\cE}^{\cg}\to\cT,$ the corresponding localizing invariant $F^{\cont}:\Cat_{\cE}^{\dual}\to \cT$ is given by
\begin{equation*}
F^{\cont}(\cC)=\Omega F(\Calk_{\omega_1}(\cC)).	
\end{equation*} 

Below we will denote by the same symbol $\cU_{\loc}$ either of the universal finitary localizing invariants $\Cat_{\cE}^{\dual}\to\Mot^{\loc}_{\cE},$ $\Cat_{\cE}^{\cg}\to\Mot^{\loc}.$ In the case when $\cE$ is compactly generated, the latter is identified with $\Cat_{\cE^{\omega}}^{\perf}\to\Mot^{\loc}_{\cE}.$

In the case $\cE=\Mod\hy\mk$ we put $\Mot^{\loc}_{\mk}=\Mot^{\loc}_{\Mod\hy\mk}.$




\subsection{Right and left trace-class $2$-morphisms}
\label{ssec:right_left_trace_class}

We will deal with the following abstract situation. Let $\cA$ be a category enriched over $\Pr^L$ (not necessarily over $\Pr^L_{\st}$). For the purposes of this paper we can restrict to the case when $\cA$ has finitely many objects. In the case of a single object we are simply dealing with a presentable $\bE_1$-monoidal category. However, it is convenient to formulate the definitions and statements when $\cA$ has a possibly large set of objects.

We denote the categories of morphisms by $\cA(X,Y)\in\Pr^L,$ we denote by 
\begin{equation*}
\mu_{X,Y,Z}:\cA(Y,Z)\otimes\cA(X,Y)\to\cA(X,Z)
\end{equation*} 
the (colimit-preserving) composition functors, and we denote by $1_X\in\cA(X,X)$ the unit objects. For brevity we write
\begin{equation*}
F\circ G=\mu_{X,Y,Z}(F\boxtimes G),\quad F\in\cA(Y,Z),\,G\in\cA(X,Y).
\end{equation*}
We sometimes refer to the objects of $\cA(X,Y)$ as $1$-morphisms in $\cA$, and to the $1$-morphisms in $\cA(X,Y)$ as $2$-morphisms in $\cA.$

For a $1$-morphism $F\in\cA(X,Y)$ we denote by $F^{r\vee},F^{l\vee}\in\cA(Y,X)$ the $1$-morphisms defined by the following universal properties:
\begin{equation*}
\Map_{\cA(Y,X)}(G,F^{r\vee})\simeq \Map_{\cA(Y,Y)}(F\circ G,\id_Y),\quad \Map_{\cA(Y,X)}(G,F^{l\vee})\simeq \Map_{\cA(Y,Y)}(G\circ F,\id_X).
\end{equation*}

\begin{defi}
Let $\cA$ be a category enriched over $\Pr^L.$ Let $X,Y\in\Ob(\cA),$ and let $F,G\in\cA(X,Y).$ A $2$-morphism $\varphi:F\to G$ is said to be right resp. left trace-class if there exists a morphism $\psi':1_X\to F^{r\vee}\circ G$ resp. $\psi'':1_Y\to G\circ F^{l\vee}$ such that the composition
\begin{equation*}
F\cong F\circ\id_X\xto{F\circ \psi'}F\circ F^{r\vee}\circ G\to \id_Y\circ G\cong G
\end{equation*}
resp.
\begin{equation*}
	F\cong \id_Y\circ F\xto{\psi''\circ F} G\circ F^{l\vee}\circ F\to G\circ \id_X\cong G
\end{equation*}
is homotopic to $\varphi.$
\end{defi}

\begin{example}
If $\cA$ has a single object $X,$ then $\cA(X,X)\in\Alg_{\bE_1}(\Pr^L)$ is a presentable $\bE_1$-monoidal category, and we get the same notions of right resp. left trace-class maps in $\cA(X,X)$ as in Subsection \ref{ssec:rigid_monoidal}.
\end{example}

\begin{example}
Let $\cE$ be a rigid $\bE_1$-monoidal category, and let $\cA=\mPr^L_{\cE}$ be the category of presentable stable left $\cE$-modules, enriched over $\Pr^L$ in the natural way, i.e. $\cA(\cC,\cD)=\Fun_{\cE}^L(\cC,\cD).$ Then for $F\in\cA(\cC,\cD)$ we have $F^{r\vee}=F^{R,\cont}$ -- the continuous approximation of the (accessible lax $\cE$-linear) right adjoint functor $F^R:\cD\to\cC.$ In this case right trace-class $2$-morphism were defined in \cite[Definition 1.45]{E25}.
\end{example}

\subsection{Infinite products of dualizable modules}
\label{ssec:infinite_products}

Let $\cE$ be a rigid $\bE_1$-monoidal category. As we mentioned in Subsection \ref{ssec:dualizable_modules}, the category $\Cat_{\cE}^{\dual}$ is equivalent to the category of left $\cE$-modules in $\Cat_{\st}^{\dual}.$ In particular, for a family $(\cC_s)_{s\in S}$ of dualizable left $\cE$-modules, their product in $\Cat_{\cE}^{\dual}$ is computed in $\Cat_{\st}^{\dual}.$ We denote it by $\prodd[s]^{\dual}\cC_s,$ following the notation from \cite[Section 1.17]{E24}, where the properties of these products are studied. We have a fully faithful strongly continuous $\cE$-linear functor 
\begin{equation*}\label{eq:from_prod^dual_to_Ind_of_prod}
\prodd[s]^{\dual}\cC_s\to\Ind(\prodd[s]\cC_s^{\omega_1}).
\end{equation*}  
Here the explicit formula for $\cE$-action on the target is the following. For $x\in\cE$ with $\hat{\cY}(x)=\inddlim[i]x_i\in\Ind(\cE^{\omega_1}),$ and an object $c=\inddlim[j](c_{j,s})_{s\in S}\in \Ind(\prodd[s]\cC_s^{\omega_1})$ we have
\begin{equation}\label{eq:E_action}
x\otimes c = \inddlim[i,j] (x_i\otimes c_{j,s})_{s\in S}.
\end{equation}
We identify the category $\prodd[s]^{\dual}\cC_s$ with its essential image in $\Ind(\prodd[s]\cC_s^{\omega_1}).$ This essential image consists of ind-objects $\inddlim[i\in I](c_{i,s})_{s\in S},$ where $I$ is directed, such that for any $i\in I$ there exists $j\geq i$ such that for each $s\in S$ the morphism $c_{i,s}\to c_{j,s}$ is compact in $\cC_s.$

There is a subtle difference between the special case when $\cE$ is compactly generated and the general case, which we address in the following proposition.

\begin{prop}\label{prop:prod_dual_vs_Cat_cg}
\begin{enumerate}[label=(\roman*),ref=(\roman*)]
  \item Let $\cE$ be a compactly generated rigid $\bE_1$-monoidal category. Then the inclusion functor $\Cat_{\cE}^{\cg}\to\Cat_{\cE}^{\dual}$ commutes with infinite products. \label{prod_dual_over_compactly_generated}
  \item Let $\mk$ be a field, and let $\cE=\Shv([0,1],D(\mk))$ be the (rigid) symmetric monoidal category of sheaves on the closed unit interval with values in $D(\mk).$ Then the $\cE$-module $\prodd[\N]^{\dual}\cE$ is not relatively compactly generated over $\cE.$ In particular, the inclusion functor $\Cat_{\cE}^{\cg}\to\Cat_{\cE}^{\dual}$ does not commute with countable products. \label{prod_dual_is_not_compactly_generated}
\end{enumerate}
\end{prop}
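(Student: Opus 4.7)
For part \ref{prod_dual_over_compactly_generated}, I would identify the product $\prodd[s]^{\dual}\cC_s$ explicitly with an ind-completion of a relatively compactly generated small $\cE^{\omega}$-module, from which closure of $\Cat_{\cE}^{\cg}$ under infinite products is immediate. Concretely, writing $\cC_s=\Ind(\cA_s)$ with $\cA_s=\cC_s^{\omega}\in\Cat_{\cE^\omega}^{\perf}$, the candidate is $\Ind(\prod_s \cA_s)$, where the product is taken in $\Cat_{\cE^\omega}^{\perf}$ with coordinatewise $\cE^\omega$-action. The natural map $\Ind(\prod_s \cA_s)\to \prodd[s]^{\dual}\cC_s$ (induced by the universal property of the product in $\Cat_{\cE}^{\dual}$ applied to the cofactor projections) is easily seen to be fully faithful and $\cE$-linear. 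For essential surjectivity, I would use the essential image description from the excerpt together with the crucial fact that in a compactly generated category \emph{every compact morphism factors through a compact object}. Applying this coordinatewise and running a standard zig-zag refinement, any ind-system $\inddlim[i](c_{i,s})_s$ with compact transitions admits a cofinal refinement $\inddlim[k](\hat c_{k,s})_s$ with $\hat c_{k,s}\in\cA_s=\cC_s^\omega$ for all $k,s$. This realizes the object in $\Ind(\prod_s \cA_s)$.

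For part \ref{prod_dual_is_not_compactly_generated}, the point is that the zig-zag argument from part \ref{prod_dual_over_compactly_generated} \emph{genuinely breaks} when $\cE$ is only compactly assembled and not compactly generated. Concretely, for $\cE=\Shv([0,1],D(\mk))$ one has a strict inclusion $\cE^\omega\subsetneq\cE^{\omega_1}$, and compact morphisms in $\cE$ admit factorizations through $\omega_1$-compact objects but not, in general, through compact objects. Equivalently, the left adjoint $\hat{\cY}:\cE\to\Ind(\cE^{\omega_1})$ does not factor through $\Ind(\cE^\omega)$. My plan is to leverage this to exhibit an explicit $d\in\bbF:=\prodd[\N]^{\dual}\cE$ that does not lie in the localizing subcategory generated by $\cE$-compact objects of $\bbF$.

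The strategy is first to pin down $\bbF^{\cE\hy\cpt}$ using the strongly continuous $\cE$-linear projections $p_n:\bbF\to \cE$ and their left adjoints $i_n:\cE\to\bbF$: since the $p_n$ are strongly continuous, they preserve compact objects, forcing any $\cE$-compact $X\in\bbF$ to have each $p_n(X)\in\cE^\omega$; conversely, tuples $(x_n)_n$ with $x_n\in\cE^\omega$ are representable in $\Ind(\prod_n\cE^{\omega_1})$ and lie in $\bbF$ since their identity transitions are compact, hence are compact in $\bbF$. Consequently $\bbF^{\cE\hy\cpt}$ is, up to idempotent completion, the category of ``tuples of $\cE$-compacts''. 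Next, pick a compact morphism $f:x\to y$ in $\cE$ (with $x,y\in\cE^{\omega_1}$) that does \emph{not} factor through any object of $\cE^\omega$; existence is guaranteed by the failure of compact generation of $\Shv([0,1],D(\mk))$, witnessed for instance by constant sheaves of perfect complexes together with skyscraper-like objects whose compact approximations only live in $\cE^{\omega_1}$. Iterating, build a directed system $(c_k)$ in $\cE^{\omega_1}$ with compact transitions whose compact approximation $\hat{\cY}(\indlim c_k)$ genuinely leaves $\Ind(\cE^\omega)$, and form
$$
d \;:=\; \inddlim_{k}(c_k,c_k,c_k,\dots)\ \in\ \Ind\bigl(\prod_{\N}\cE^{\omega_1}\bigr),
$$
which lies in $\bbF$ by compactness of the transitions at each coordinate. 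Any attempted expression $d=\indlim_\alpha X^{(\alpha)}$ with $X^{(\alpha)}\in\bbF^{\cE\hy\cpt}$ would, via the joint behavior across all coordinates together with the obstruction to factoring $f$ through $\cE^\omega$, be forced to be a colimit of objects lying in a proper subcategory strictly smaller than $\bbF$; the obstruction is \emph{coherent} across the $\N$-many coordinates, which is exactly what the product structure in $\Cat_{\cE}^{\dual}$ (as opposed to the coproduct) exposes.

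The main obstacle will be making the last step rigorous: proving that the candidate $d$ is not in the localizing subcategory generated by $\bbF^{\cE\hy\cpt}$. My preferred route is to construct an explicit strongly continuous $\cE$-linear functor $\Phi:\bbF\to\cT$ into some presentable stable $\cE$-module that vanishes on $\bbF^{\cE\hy\cpt}$ but not on $d$; a natural candidate is obtained by comparing the two $\cE$-linear functors $\bbF\to\cE$ given by a single projection $p_n$ and by the composition of the embedding $\bbF\hookrightarrow\Ind(\prod_n\cE^{\omega_1})$ with an appropriate quotient recording the failure of coordinates to lie in $\Ind(\cE^\omega)$, whose difference detects precisely the $\omega_1$-compact-but-not-compact part of the coordinate data of $d$.
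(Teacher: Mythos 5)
For part \ref{prod_dual_over_compactly_generated}, your direct identification of $\prodd[s]^{\dual}\cC_s$ with $\Ind(\prod_s \cA_s)$ via zig-zag refinement through compact objects is a correct and more self-contained route than the paper, which simply notes that in the compactly generated case relatively compactly generated modules are absolutely compactly generated, and then cites an external result (Proposition~1.103 of \cite{E24}). Your argument essentially reproves the cited proposition; it works because the description of $\prodd[s]^{\dual}$ via coordinatewise-compact ind-systems combines with the fact that in a compactly generated category every compact morphism factors through a compact object.

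For part \ref{prod_dual_is_not_compactly_generated}, however, there is a concrete gap. Your candidate obstruction $d = \inddlim_k(c_k,c_k,c_k,\dots)$ is the image of $c := \indlim_k c_k \in \cE$ under the diagonal $\Delta : \cE \to \prodd[\N]^{\dual}\cE$, and this object \emph{is} in the localizing $\cE$-submodule generated by compacts. Indeed, using formula \eqref{eq:E_action} and the fact that the constant tuple $\un{\mk}^{\N} := (\un{\mk},\un{\mk},\dots)$ is a compact object (where $\un{\mk}\in\Perf(\mk)\simeq\cE^{\omega}$ is the unit), one computes directly that $c\otimes\un{\mk}^{\N} \cong \Delta(c) = d$. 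So any constant diagonal is automatically relatively compactly generated: the $\cE$-module structure ``fills in'' along the diagonal, and your proposal conflates \emph{absolute} compact generation of $\cE$ (which indeed fails) with \emph{relative} compact generation (where $\cE$ is always trivially generated by the unit). The real obstruction must be \emph{incoherent} across coordinates, not constant. The paper's object uses $\cF_{k,n}=\mk_{(a_{k,n},b_{k,n}]}$ with $a_{k,n} = \tfrac{1}{(k+1)(n+2)}$, so that for fixed $k$ the inner endpoint $a_{k,n}\to 0$ as $n\to\infty$: the supports drift toward the basepoint at a rate depending on $n$, which is exactly what no single ``$x\otimes$-tuple of constant sheaves'' can reproduce. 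Moreover the paper does not show non-generation directly in $\prodd[\N]^{\dual}\cE$; it passes to the quotient $\cA = (\prodd[\N]^{\dual}\cE)\tens{\cE}\Shv(\{0\},D(\mk))$, where the question becomes a concrete nonvanishing of a Hom (via $\Gamma([0,1],\cF_{k,n})=0$), rather than an exclusion from a localizing subcategory. Your proposed last step (constructing a detecting functor) is in the right spirit but is unactionable for the constant-diagonal $d$, since no $\cE$-linear detector can separate $d$ from the submodule it actually lies in.
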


\begin{proof}
\ref{prod_dual_over_compactly_generated} If $\cE$ is compactly generated then any $\cC\in\Cat_{\cE}^{\cg}$ is (absolutely) compactly generated by the objects $x\otimes c,$ $x\in\cE^{\omega},$ $c\in \cC^{\omega}.$ Hence, the assertion follows from \cite[Proposition 1.103]{E24}.

\ref{prod_dual_is_not_compactly_generated} We consider the category $\Shv(\{0\},D(\mk))\simeq D(\mk)$ as a quotient of $\cE$ by the smashing ideal $\Shv((0,1],D(\mk)),$ in particular it is a dualizable module over $\cE.$ It suffices to prove that the category
\begin{equation*}
\cA=(\prodd[\N]^{\dual}\cE)\tens{\cE}\Shv(\{0\},D(\mk))
\end{equation*}
is not generated by the image of $(\prodd[\N]^{\dual}\cE)^{\omega}.$ We will directly show that this image has a non-zero right orthogonal in $\cA.$  

For $V\in D(\mk)$ denote by $\un{V}\in\cE$ the constant sheaf on $[0,1]$ with value $V.$ By \cite[Proposition 6.15]{E24} we have an equivalence $\Perf(\mk)\xto{\sim} \cE^{\omega},$ $V\mapsto\un{V}.$ Hence, we also have an equivalence
\begin{equation*}
\prodd[\N]\Perf(\mk)\xto{\sim}(\prodd[\N]^{\dual}\cE)^{\omega}\simeq\prodd[\N]\cE^{\omega},\quad (V_n)_{n\geq 0}\mapsto (\un{V_n})_{n\geq 0}.
\end{equation*}
By definition, the functor $\prodd[\N]^{\dual}\cE\to \cA$  is a strongly continuous $\cE$-linear quotient functor. We identify $\cA$ with the essential image of the right adjoint $\cA\to \prodd[\N]^{\dual}\cE.$ Hence, $\cA$ consists of objects $X\in\prodd[\N]^{\dual}\cE$ such that $\mk_{(0,1]}\otimes X=0.$

For $n,k\geq 0$ we put 
\begin{equation*}
a_{k,n}=\frac{1}{(k+1)(n+2)},\quad b_{k,n}=\frac{1}{k+1},\quad \cF_{k,n}=\mk_{(a_{k,n},b_{k,n}]}\in\cE.
\end{equation*} 
We have a natural map $\cF_{k,n}\to\cF_{k+1,n},$ which is moreover compact in $\cE.$ Hence, we have a well-defined object 
\begin{equation*}
X=\inddlim[k](\cF_{k,n})_{n\geq 0}\in\prodd[\N]^{\dual}\cE.
\end{equation*}
Moreover, it follows from the formula \eqref{eq:E_action} that 
$\mk_{(0,1]}\otimes X=0$ since $\hat{\cY}(\mk_{(0,1]})=\inddlim[0<\veps<1]\mk_{(\veps,1]}.$ Hence, we have $X\in\cA.$ Moreover, $X\ne 0$ since for all $k\geq 0$ the map $\cF_{0,k}\to\cF_{k,k}$ is non-zero.
	
It remains to see that $X$ is contained in the right orthogonal to $(\prodd[\N]^{\dual}\cE)^{\omega}\simeq\prodd[\N]\Perf(\mk)$ (since we now consider $\cA$ as a subcategory of $\prodd[\N]^{\dual}\cE$). This is clear since $\Gamma([0,1],\cF_{k,n})=0$ for $n,k\geq 0.$ This proves \ref{prod_dual_is_not_compactly_generated}. 
\end{proof}

\subsection{Deformed tensor algebra}
\label{ssec:deformed_tensor_algebra}

We recall the following notion of a deformed tensor algebra which is essentially standard but not widely known. It was used in \cite[Definition 5.17, Proposition 5.18]{E25}. We use the notation from loc. cit.

\begin{defi}\label{def:deformed_tensor_algebra}
Let $\cE\in\Alg_{\bE_1}(\Prr^L_{\st})$ be an $\bE_1$-monoidal presentable stable category.
\begin{enumerate}[label=(\roman*),ref=(\roman*)]
	\item The deformed tensor algebra functor
	\begin{equation*}
		T^{\deff}(-):\cE_{/1_{\cE}[1]}\to \Alg_{\bE_1}(\cE)
	\end{equation*}
	is defined to be the left adjoint to the composition
	\begin{equation*}
		\Alg_{\bE_1}(\cE)\to\Alg_{\bE_0}(\cE)\simeq\cE_{1_{\cE}/}\simeq\cE_{/1_{\cE}[1]}.
	\end{equation*}
	\item We consider the category $\Fun(\N,\cE)$ as an $\bE_1$-monoidal category of non-negatively filtered objects of $\cE.$ Then the filtered deformed tensor algebra functor
	\begin{equation*}
	\Fil_{\bullet} T^{\deff}(-):\cE_{/1_{\cE}[1]}\to\Alg_{\bE_1}(\Fun(\N,\cE))
	\end{equation*} 
	is defined to be the left adjoint to the functor
	\begin{equation*}
	\Fil_{\bullet} A\mapsto (\Cone(1_{\cE}\to\Fil_1 A)\to 1_{\cE}[1]).
	\end{equation*}
\end{enumerate}
\end{defi}

Note that we have a commutative square
\begin{equation*}
\begin{tikzcd}
\cE_{/1_{\cE}[1]}\ar{r}{\Fil_{\bullet}T^{\deff}(-)}\ar[equal]{d} & [3em] \Alg_{\bE_1}(\Fun(\N,\cE))\ar[d, "\colim"]\\
\cE_{/1_{\cE}[1]}\ar{r}{T^{\deff}(-)} & \Alg_{\bE_1}(\cE).
\end{tikzcd}
\end{equation*}
If we denote by $\cE_{\geq 0}^{\Gr}\simeq \prodd[\N]\cE$ the monoidal category of non-negatively graded objects of $\cE,$ then we have a monoidal functor $\gr_{\bullet}:\Fun(\N,\cE)\to \cE_{\geq 0}^{\Gr}$ (the associated graded). We have a commutative square
\begin{equation*}
	\begin{tikzcd}
		\cE_{/1_{\cE}[1]}\ar{r}{\Fil_{\bullet}T^{\deff}(-)}\ar[equal]{d} & [3em] \Alg_{\bE_1}(\Fun(\N,\cE))\ar[d, "\gr_{\bullet}"]\\
		\cE_{/1_{\cE}[1]}\ar{r}{T(-)} & \Alg_{\bE_1}(\prodd[\N]\cE),
	\end{tikzcd}
\end{equation*}
where we take the standard grading on the usual tensor algebra $T(-).$

We will sometimes write $T^{\deff}(x)$ for $x\in\cE,$ assuming the choice of a morphism $x\to 1_{\cE}[1].$ For $A\in\Alg_{\bE_1}(\cE)$ we denote by $T_A^{\deff}(-)$ and $\Fil_{\bullet}T_A^{\deff}(-)$ the compositions
\begin{equation*}
T_A^{\deff}(-):(A\hy\Mod\hy A)_{/A[1]}\to\Alg_{\bE_1}(A\hy\Mod\hy A)\simeq \Alg_{\bE_1}(\cE)_{A/},
\end{equation*}
\begin{equation*}
\Fil_{\bullet} T_A^{\deff}(-):(A\hy\Mod\hy A)_{/A[1]}\to\Alg_{\bE_1}(\Fun(\N,A\hy\Mod\hy A))\simeq \Alg_{\bE_1}(\Fun(\N,\cE))_{A/}.
\end{equation*}
Here to make sense of the latter category we consider $A$ as a non-negatively filtered algebra with constant filtration.

\section{Duality between categories of $1$-morphisms}
\label{sec:duality_between_cats_of_1_morphisms}

In this section we prove a non-standard sufficient condition for dualizability of a presentable stable category. It is conceptually similar to the criterion of rigidity for monoidal presentable stable categories (see Proposition \ref{prop:rigidity_criterion} and references therein). We will apply the following theorem in the next section to prove the duality between $\Mot_{\cE}^{\loc}$ and $\Mot^{\loc}_{\cE^{mop}}$ for a rigid $\bE_1$-monoidal category $\cE.$

\begin{theo}\label{th:conditions_for_E_0_rigidity}
	Let $\cA$ be a category enriched over $(\Pr^L_{\st},\otimes),$ and let $X,Y\in\cA$ be two objects. Suppose that the following condition hold. 
	\begin{enumerate}[label=(\roman*),ref=(\roman*)]
		\item The identity $1$-morphism $1_X$ is compact in $\cA(X,X).$ \label{1_X_compact}
		\item The category $\cA(X,Y)$ is generated via colimits by $1$-morphisms $F$ such that $F\cong\indlim[n\in\N]F_n,$ where each $2$-morphism $F_n\to F_{n+1}$ is right trace-class. \label{cond_right_trace_class}
		\item The category $\cA(Y,X)$ is generated via colimits by $1$-morphisms $G$ such that $G\cong\indlim[n\in\N]G_n,$ where each $2$-morphism $G_n\to G_{n+1}$ is left trace-class. \label{cond_left_trace_class}
	\end{enumerate}
	Then the categories $\cA(X,Y)$ and $\cA(Y,X)$ are dualizable and the composition functor $\mu_{Y,X,Y}:\cA(X,Y)\otimes\cA(Y,X)\to \cA(Y,Y)$ is strongly continuous. Moreover, we have an equivalence $\cA(Y,X)^{\vee}\simeq\cA(X,Y),$ and the evaluation and coevaluation functors are given by the compositions 
	\begin{equation}\label{eq:ev_for_A_Y_X}
		\ev:\cA(Y,X)\otimes\cA(X,Y)\xto{\mu_{X,Y,X}}\cA(X,X)\xto{\Hom(1_X,-)}\Sp,
	\end{equation}
	\begin{equation}\label{eq:coev_for_A_Y_X}
		\coev:\Sp\xto{1_Y}\cA(Y,Y)\xto{\mu_{Y,X,Y}^R}\cA(X,Y)\otimes\cA(Y,X).
	\end{equation}
\end{theo}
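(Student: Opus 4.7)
The plan is to adapt the proof of the rigidity criterion (Proposition \ref{prop:rigidity_criterion}, following \cite{Ram24b}) from the one-object case to the present bicategorical setting with two objects, by exhibiting the pair $(\ev, \coev)$ of \eqref{eq:ev_for_A_Y_X}--\eqref{eq:coev_for_A_Y_X} as duality data in $\Pr^L_{\st}.$ All remaining assertions of the theorem will be extracted from this duality. Continuity of $\ev$ is immediate: the composition $\mu_{X,Y,X}$ is colimit-preserving by the $\Pr^L_{\st}$-enrichment, and $\Hom(1_X,-)$ is colimit-preserving by condition \ref{1_X_compact}. The candidate $\coev$ is well-defined as the unique continuous functor $\Sp \to \cA(X,Y) \otimes \cA(Y,X)$ sending $\bS$ to $\mu_{Y,X,Y}^R(1_Y)$, where the right adjoint exists by the adjoint functor theorem in $\Pr^L_{\st}.$

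The heart of the argument is the verification of the two triangle identities. For the $\cA(X,Y)$-triangle
\begin{equation*}
\cA(X,Y) \xrightarrow{\coev \otimes \id} \cA(X,Y) \otimes \cA(Y,X) \otimes \cA(X,Y) \xrightarrow{\id \otimes \ev} \cA(X,Y),
\end{equation*}
both sides are continuous, so by condition \ref{cond_right_trace_class} it suffices to verify the identity on each $F_n$ in a generating sequence $F \cong \indlim[n] F_n$ with right trace-class transitions $\varphi_n: F_n \to F_{n+1}.$ For each such $n,$ the witness $\psi'_n : 1_X \to F_n^{r\vee} \circ F_{n+1}$ together with the canonical morphism $F_n \boxtimes F_n^{r\vee} \to \coev(\bS)$ adjoint to the counit $F_n \circ F_n^{r\vee} \to 1_Y$ assemble into a commutative diagram that identifies the triangle composition on $F_n$ with the structural map $F_n \to F_{n+1} \to F.$ Passing to the filtered colimit in $n$ recovers $\id_F.$ The $\cA(Y,X)$-triangle identity is verified symmetrically using condition \ref{cond_left_trace_class} and left trace-class witnesses.

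Once both triangle identities hold, $\cA(X,Y)$ and $\cA(Y,X)$ form a dual pair in $\Pr^L_{\st},$ and in particular both categories are dualizable with the stated duality data. The strong continuity of $\mu_{Y,X,Y}$ is then a consequence of the duality: via the canonical equivalence $\cA(X,Y) \otimes \cA(Y,X) \simeq \un{\Hom}^L(\cA(Y,X), \cA(Y,X))$ under which $\coev(\bS)$ corresponds to $\id_{\cA(Y,X)},$ the right adjoint $\mu_{Y,X,Y}^R(w)$ can be identified with $w \cdot \coev(\bS),$ where $w\cdot(-)$ denotes the continuous left $\cA(Y,Y)$-action on $\cA(X,Y) \otimes \cA(Y,X)$ through the first factor. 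This makes the $w$-dependence of $\mu_{Y,X,Y}^R$ manifestly continuous.

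The principal obstacle is the coherent bookkeeping in the triangle-identity verification: each witness $\psi'_n$ is specified only up to homotopy, so one must select a compatible family along $F_\bullet$ and carefully track the $2$-cells in $\cA$ in order to patch the partial identities together across the filtered colimit. This is precisely the subtlety that appears in the proof of the rigidity criterion in \cite{Ram24b}, and the same technique should apply here.
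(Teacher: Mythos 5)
Your plan correctly identifies the duality data $(\ev,\coev)$ of \eqref{eq:ev_for_A_Y_X}--\eqref{eq:coev_for_A_Y_X} and proposes to verify the triangle identities directly. This is the right overall shape, but there are two genuine gaps, and they sit exactly where the paper does its real work.

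First, your verification of the triangle identities stops at "the witness $\psi'_n$ together with the canonical morphism $F_n \boxtimes F_n^{r\vee} \to \coev(\bS)$ assemble into a commutative diagram that identifies the triangle composition on $F_n$ with the structural map $F_n \to F_{n+1} \to F$." This is precisely the step that requires a proof. Concretely: the construction you sketch produces a map $F_n \to \bigl[(\id\otimes\ev)\circ(\coev\otimes\id)\bigr](F_{n+1})$ that is shifted by one index, so it is not a natural transformation $\id \Rightarrow (\id\otimes\ev)\circ(\coev\otimes\id)$ on the nose; one must check the compatibility of these partial maps with the transitions $\varphi_n$ (this involves choosing the witnesses coherently), show the resulting colimit map is an isomorphism, and show it is natural in $F$. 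The paper packages all of this into the single Beck-Chevalley-type commutative square \eqref{eq:comm_square1} (and its mirror \eqref{eq:comm_square2}), whose proof is the technical heart of the argument: it takes $F \cong \indlim_n F_n$ with left trace-class transitions (condition \ref{cond_left_trace_class}), uses the witnesses to build an explicit inverse to the lax-commutativity $2$-cell, and passes to the colimit. Without proving something equivalent to \eqref{eq:comm_square1}, the triangle identities remain unestablished. In addition, the paper first proves (Corollary \ref{cor:dualizability_of_A_X_Y}, via Lemma \ref{lem:strcont_from_tensor_product}) that $\cA(X,Y)$ and $\cA(Y,X)$ are dualizable and that $\mu_{Y,X,X}^R$ is continuous; this continuity is a prerequisite even to write down the square \eqref{eq:comm_square1}, so it cannot be postponed to after the duality is known.

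Second, your final paragraph asserts that strong continuity of $\mu_{Y,X,Y}$ follows from the duality because "$\mu_{Y,X,Y}^R(w)$ can be identified with $w\cdot\coev(\bS)$." This identification is a projection formula, not a consequence of duality alone: for a dualizable $\cC$ one has $\cC^\vee\otimes\cC \simeq \Fun^L(\cC,\cC)$ and $\coev(\bS)\mapsto\id_{\cC}$, but nothing in this identifies $\mu_{Y,X,Y}^R(w)$ with $w\cdot\coev(\bS)$ — that is an extra compatibility between the $\cA(Y,Y)$-module structure and the right adjoint, essentially equivalent to the commutative square you have not established. In the one-object case this is exactly the content of rigidity (the $\cE$-$\cE$-linearity of $\mult^R$), which in \cite{Ram24b} is proved by a trace-class argument, not deduced from the mere existence of duality data. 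The paper sidesteps this circularity by proving strong continuity of $\mu_{Z,X,Y}$ independently via Lemma \ref{lem:strcont_from_tensor_product} (a compact morphism in $\cA(X,Y)^{\omega_1}$ induces a right trace-class $2$-morphism after tensoring, hence the right adjoint commutes with the relevant colimits), before the duality is even formulated.
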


\begin{lemma}\label{lem:strcont_from_tensor_product}
Let $\cC,\cD,\cE$ be presentable stable categories, and suppose that $\cC$ is dualizable. Let $\Phi:\cC\otimes\cD\to \cE$ be a continuous functor. The following are equivalent.
\begin{enumerate}[label=(\roman*),ref=(\roman*)]
	\item $\Phi$ is strongly continuous. \label{strcont_from_tensor_product}
	\item For any compact morphism $x\to y$ in $\cC^{\omega_1},$ the morphism of functors $\Phi(x\boxtimes-)\to \Phi(y\boxtimes -)$ is right trace-class in $\Fun^L(\cD,\cE).$ \label{criterion_via_right_trace_class}
\end{enumerate}
\end{lemma}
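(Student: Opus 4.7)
The strategy is to pass through the equivalence $\cC \otimes \cD \simeq \Fun^L(\cC^\vee, \cD)$ afforded by dualizability of $\cC$, which identifies $z \in \cC \otimes \cD$ with the functor $c^\vee \mapsto (\ev_\cC \otimes \id_\cD)(z \boxtimes c^\vee)$. Under this identification, continuity of $\Phi^R:\cE \to \cC \otimes \cD$ (i.e.\ strong continuity of $\Phi$) is equivalent to continuity in $e$ of each pointwise evaluation $e \mapsto \Phi^R(e)(c^\vee) : \cE \to \cD$, since colimits in $\Fun^L(\cC^\vee, \cD)$ are computed pointwise. As $\cC^\vee$ is generated under colimits by the objects $x^\vee$ with $x \in \cC^{\omega_1}$ (using the equivalence $(-)^\vee : (\cC^{\omega_1})^{op} \xto{\sim} (\cC^\vee)^{\omega_1}$ for dualizable $\cC$), it suffices to check this at $c^\vee = x^\vee$. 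The bridge to the trace-class condition is the identity $((x \boxtimes -)^R)^{\cont} = \ev_{x^\vee}$: when $\Phi$ is strongly continuous, this yields $\Phi(x \boxtimes -)^{R,\cont}(e) = \Phi^R(e)(x^\vee)$.

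\textbf{\ref{strcont_from_tensor_product} $\Rightarrow$ \ref{criterion_via_right_trace_class}.} Assume $\Phi^R$ is continuous and let $f : x \to y$ be compact in $\cC^{\omega_1}$. By definition, $f$ provides a factorization of $\cY(f)$ through $\hat{\cY}_\cC(y) \in \Ind(\cC^{\omega_1})$, equivalently a morphism $\alpha_f : \bS \to \ev_\cC(y, x^\vee)$ in $\Sp$. I would define the right-trace-class witness $\psi : \id_\cD \to \Phi(x \boxtimes -)^{R,\cont} \circ \Phi(y \boxtimes -)$ pointwise by
\[
\psi_d :\; d \xto{\alpha_f \otimes \id_d} \ev_\cC(y, x^\vee) \otimes d = (y \boxtimes d)(x^\vee) \xto{\eta_{y \boxtimes d}(x^\vee)} \Phi^R(\Phi(y \boxtimes d))(x^\vee),
\]
where $\eta$ is the unit of $\Phi \dashv \Phi^R$ and the target is identified with $\Phi(x \boxtimes -)^{R,\cont}(\Phi(y \boxtimes d))$ via the identity above. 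A diagram chase using the triangle identities and the construction of $\alpha_f$ shows that the composition $F \xto{F \circ \psi} F \circ F^{R,\cont} \circ G \to G$ reproduces $\Phi(f \boxtimes -)$, giving the required right-trace-class witness.

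\textbf{\ref{criterion_via_right_trace_class} $\Rightarrow$ \ref{strcont_from_tensor_product}.} Conversely, assume \ref{criterion_via_right_trace_class}. By the reduction above, it is enough to show that for each $x \in \cC^{\omega_1}$ the canonical comparison $\Phi(x \boxtimes -)^{R,\cont} \to \ev_{x^\vee} \circ \Phi^R$ of functors $\cE \to \cD$ is an isomorphism, equivalently that $\ev_{x^\vee} \circ \Phi^R$ is continuous. For each compact $f : x \to y$ in $\cC^{\omega_1}$, the trace-class datum $\psi^f$ given by \ref{criterion_via_right_trace_class} factors $\Phi(f \boxtimes -)$ through $F \circ F^{R,\cont} \circ G$, whose middle term is already continuous. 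By dualizability every object of $\cC$ is a sequential colimit of $\omega_1$-compacts with compact transitions, and organizing the above factorizations along such a compactly-assembled presentation forces the canonical comparison to be an isomorphism.

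\textbf{Main obstacle.} The direction \ref{criterion_via_right_trace_class}$\Rightarrow$\ref{strcont_from_tensor_product} is the delicate one: the hard part is to cohere the individual trace-class data across all compact morphisms in $\cC^{\omega_1}$ into a uniform continuity statement for $\Phi^R$, using the compactly-assembled structure of $\cC$ in a manner analogous to the proof of the rigidity criterion (Proposition~\ref{prop:rigidity_criterion}).
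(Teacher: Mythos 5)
Your reduction in the $\text{(ii)}\Rightarrow\text{(i)}$ direction is the right one — pass through $\cC\otimes\cD \simeq \Fun^{\omega_1\hy\lex}(\cC^{\omega_1,op},\cD)$ and check continuity pointwise at the $\omega_1$-compacts — and your $\text{(i)}\Rightarrow\text{(ii)}$ argument via an explicit unit-and-witness construction is a reasonable (more hands-on) alternative to the paper's route, which simply views $x,y$ as continuous functors $\Sp\to\cC$, notes compactness is the same as right trace-class in $\Fun^L(\Sp,\cC)$, and invokes the fact that precomposition with $\Phi(-\boxtimes-)$ preserves right trace-class $2$-morphisms. But the proposal has a genuine gap in $\text{(ii)}\Rightarrow\text{(i)}$: you stop at ``organizing the above factorizations along such a compactly-assembled presentation forces the canonical comparison to be an isomorphism,'' which is exactly the step that requires an actual calculation. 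You've identified the obstacle but not overcome it.

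The paper carries this out as follows. Writing $\Psi(e)(x)=\Phi(x\boxtimes-)^R(e)$, the right adjoint $\Phi^R$ factors as $\cE\xto{\Psi}\Fun^{\omega_1\hy\lex}(\cC^{\omega_1,op},\cD)\simeq\cC\otimes\cD$, where the right adjoint $\incl^R$ to the inclusion $\Fun^{\omega_1\hy\lex}\hookrightarrow\Fun$ is computed by $\incl^R(F)(x)=\prolim_n F(x_n)$ when $\hat{\cY}(x)=\inddlim_n x_n$. Then for an ind-system $(y_i)$ in $\cE$ and a fixed $x\in\cC^{\omega_1}$ with such a presentation, one chains together: (a) a pro-isomorphism between $\proolim_n\Phi(x_n\boxtimes-)^R$ and $\proolim_n\Phi(x_n\boxtimes-)^{R,\cont}$, which is exactly what the right trace-class hypothesis on the transition maps $x_n\to x_{n+1}$ gives you (the morphism $\Phi(x_{n+1}\boxtimes-)^R\to\Phi(x_n\boxtimes-)^R$ factors through $\Phi(x_n\boxtimes-)^{R,\cont}$); (b) commutation of $\Phi(x_n\boxtimes-)^{R,\cont}$ with $\indlim_i$; (c) the reverse pro-isomorphism; and (d) the identity $\prolim_n\Phi(x_n\boxtimes-)^R=\Phi(x\boxtimes-)^R$. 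The upshot is the chain
\begin{equation*}
(\indlim_i\Psi(y_i))(x)\cong\prolim_n\indlim_i\Phi(x_n\boxtimes-)^{R,\cont}(y_i)\cong\prolim_n\Phi(x_n\boxtimes-)^{R}(\indlim_i y_i)\cong\Psi(\indlim_i y_i)(x),
\end{equation*}
showing $\Psi$, and hence $\Phi^R$, is continuous. This pro/ind interchange, mediated by the trace-class pro-isomorphism, is the content your sketch is missing; without it the direction $\text{(ii)}\Rightarrow\text{(i)}$ remains an assertion rather than a proof.
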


\begin{proof}
\Implies{strcont_from_tensor_product}{criterion_via_right_trace_class}. Let us identify $x$ and $y$ with the corresponding continuous functors $\Sp\to\cC.$ Then compactness of a morphism $x\to y$ means that it is right trace-class in $\Fun^L(\Sp,\cC).$ Hence so is the morphism $\Phi(x\boxtimes-)\to \Phi(y\boxtimes -)$ in $\Fun^L(\cD,\cE)$ by \cite[Proposition 1.47]{E25}

\Implies{criterion_via_right_trace_class}{strcont_from_tensor_product}. The right adjoint $\Phi^R$ is given by the composition
\begin{equation*}
\cE\xto{\Psi} \Fun^{\omega_1\hy\lex}(\cC^{\omega_1,op},\cD)\simeq\cC\otimes\cD,
\end{equation*}
where $\Psi(y)(x)=\Phi(x\boxtimes-)^R(y).$ The inclusion
\begin{equation*}
\incl:\Fun^{\omega_1\hy\lex}(\cC^{\omega_1,op},\cD)\to \Fun(\cC^{\omega_1,op},\cD)
\end{equation*}
has a right adjoint $\incl^R.$ For an (exact) functor $F:\cC^{\omega_1,op}\to\cD$ and for an object $x\in\cC^{\omega_1}$ with $\hat{\cY}(x)=\inddlim[n]x_n$ we have
\begin{equation*}
\incl^R(F)(x)=\prolim[n]F(x_n).
\end{equation*}
Using \ref{criterion_via_right_trace_class} we obtain that for an ind-system $(y_i)_{i\in I}$ in $\cE$ we have
\begin{multline*}
(\indlim[i]\Psi(y_i))(x)\cong \prolim[n]\indlim[i]\Psi(y_i)(x_n)\cong \prolim[n]\indlim[i]\Phi(x_n\boxtimes -)^R(y_i)\\
\cong \prolim[n]\indlim[i]\Phi(x_n\boxtimes -)^{R,\cont}(y_i)\cong \prolim[n] \Phi(x_n\boxtimes-)^{R,\cont}(\indlim[i]y_i)\cong \prolim[n]\Phi(x_n\boxtimes-)^{R}(\indlim[i]y_i)\\
\cong \Phi(x\boxtimes-)^{R}(\indlim[i]y_i)\cong \Psi(\indlim[i]y_i)(x).
\end{multline*}
This shows that $\Phi^R$ is continuous.
\end{proof}

The following application is almost immediate.

\begin{cor}\label{cor:dualizability_of_A_X_Y} Let $\cA$ be a category enriched over $\Pr^L_{\st},$ and let $X\in\cA$ be an object such that $1_X\in\cA(X,X)^{\omega}.$
\begin{enumerate}[label=(\roman*),ref=(\roman*)]
	\item Let $Y\in\cA$ be an object such that the category $\cA(X,Y)$ is generated via colimits by $1$-morphisms $F$ such that $F\cong\indlim[n\in\N]F_n,$ where each $2$-morphism $F_n\to F_{n+1}$ is right trace-class. Then $\cA(X,Y)$ is a dualizable category and for any object $Z\in\cA$ the functor
	\begin{equation}\label{eq:composition_is_strcont}
	\mu_{Z,X,Y}:\cA(X,Y)\otimes \cA(Z,X)\to \cA(Z,Y)
	\end{equation}
	is strongly continuous. \label{dualizability_and_strcont_right_trace_class}
	\item Dually, let $Y\in\cA$ be an object such that the category $\cA(Y,X)$ is generated via colimits by $1$-morphisms $G$ such that $G\cong\indlim[n\in\N]G_n,$ where each $2$-morphism $G_n\to G_{n+1}$ is left trace-class. Then $\cA(Y,X)$ is a dualizable category and for any object $Z\in\cA$ the functor
	\begin{equation*}
		\mu_{Y,X,Z}:\cA(X,Z)\otimes \cA(Y,X)\to \cA(Y,Z)
	\end{equation*}
	is strongly continuous. \label{dualizability_and_strcont_left_trace_class}
\end{enumerate} \end{cor}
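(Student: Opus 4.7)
I shall prove (i); part (ii) follows by the dual argument applied to the $\Pr^L_{\st}$-enriched category with reversed $1$-morphisms.

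\textbf{Dualizability of $\cA(X,Y).$} A presentable stable category is dualizable iff it is compactly assembled, and the latter condition is equivalent to the existence of a generating family of objects of the form $F = \indlim_n F_n$ with each $F_n \to F_{n+1}$ a compact morphism in the category. The hypothesis furnishes such a family with transitions right trace-class in $\cA,$ so it is enough to prove: every right trace-class $2$-morphism $\varphi: F \to G$ in $\cA$ is a compact morphism in the presentable stable category $\cA(X,Y).$ Fix a witness $\psi: 1_X \to F^{r\vee} \circ G$ for $\varphi,$ so $\varphi$ is the composition $F \simeq F \circ 1_X \xrightarrow{F \circ \psi} F \circ F^{r\vee} \circ G \xrightarrow{\eta \circ G} 1_Y \circ G \simeq G,$ where $\eta: F \circ F^{r\vee} \to 1_Y$ is the canonical $2$-morphism from the universal property defining $F^{r\vee}.$ Given an $\omega_1$-filtered ind-system $(H_i)_i$ with colimit $H$ in $\cA(X,Y)$ and any $\alpha: G \to H,$ I factor $\alpha \circ \varphi$ through some $H_j$ as follows: since $F^{r\vee} \circ (-): \cA(X,Y) \to \cA(X,X)$ preserves colimits, $(F^{r\vee} \circ \alpha) \circ \psi: 1_X \to \indlim_i F^{r\vee} \circ H_i$ lifts to some $\psi_j: 1_X \to F^{r\vee} \circ H_j$ by compactness of $1_X$ in $\cA(X,X),$ and naturality of $\eta$ delivers the desired factorization.

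\textbf{Strong continuity of $\mu_{Z,X,Y}.$} I apply Lemma \ref{lem:strcont_from_tensor_product} to $\Phi = \mu_{Z,X,Y}$ with $\cC = \cA(X,Y)$ (dualizable by the previous step), $\cD = \cA(Z,X),$ $\cE = \cA(Z,Y).$ The condition to verify is: for every compact morphism $\varphi: F \to G$ in $\cA(X,Y)^{\omega_1},$ the induced natural transformation $F \circ (-) \to G \circ (-)$ is right trace-class in $\Fun^L(\cA(Z,X), \cA(Z,Y)).$ For $\varphi$ right trace-class in $\cA$ with witness $\psi: 1_X \to F^{r\vee} \circ G,$ I build the required witness $\tilde\psi: \id_{\cA(Z,X)} \to (F \circ (-))^{R,\cont} \circ (G \circ (-))$ as follows: the rule $H \mapsto (H \simeq 1_X \circ H \xrightarrow{\psi \circ H} F^{r\vee} \circ G \circ H)$ assembles into a natural transformation $\id_{\cA(Z,X)} \to F^{r\vee} \circ (G \circ (-)),$ which I post-compose with the canonical comparison $F^{r\vee} \circ (-) \to (F \circ (-))^{R,\cont}$ induced by $\eta$ and the universal property of the continuous approximation. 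Naturality of $\eta$ ensures $\tilde\psi$ produces the correct factorization of $F\circ(-)\to G\circ(-).$

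\textbf{Main obstacle.} The delicate point is to extend this verification from right trace-class $2$-morphisms in $\cA$ to \emph{all} compact morphisms in $\cA(X,Y)^{\omega_1},$ as demanded by Lemma \ref{lem:strcont_from_tensor_product}. I expect this to follow from the compact-assembly structure of $\cA(X,Y)$ established in the first step: for any $x \in \cA(X,Y)^{\omega_1}$ the canonical presentation $\hat\cY(x) = \inddlim_n x_n$ in $\Ind(\cA(X,Y)^{\omega_1})$ can be refined so that the transitions $x_n \to x_{n+1}$ arise from right trace-class $2$-morphisms in $\cA$ (using the generating family supplied by the hypothesis). Since the proof of Lemma \ref{lem:strcont_from_tensor_product} uses its criterion only on the specific transitions appearing in these $\hat\cY$-presentations, the verification reduces to the already-handled case.
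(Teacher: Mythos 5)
Your overall strategy matches the paper's: show right trace-class $2$-morphisms are compact, conclude dualizability, then verify the hypothesis of Lemma \ref{lem:strcont_from_tensor_product}. The dualizability step is fine. But the paragraph you label ``Main obstacle'' is precisely where the real work lies, and there you explicitly defer (``I expect this to follow...'') rather than prove. Lemma \ref{lem:strcont_from_tensor_product} demands, as a hypothesis, that \emph{every} compact morphism $\varphi:F'\to F$ in $\cA(X,Y)^{\omega_1}$ induce a right trace-class natural transformation $\mu_{Z,X,Y}(F'\boxtimes-)\to\mu_{Z,X,Y}(F\boxtimes-)$; you verify this only when $\varphi$ is right trace-class in $\cA$, and your suggested fix --- refine each presentation $\hat\cY(x)=\inddlim_n x_n$ to one whose transitions are right trace-class and then ``re-run the proof of the lemma on those transitions'' --- is an appeal to the internals of the lemma rather than to the lemma itself, and the refinement claim is itself unproved.

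The missing step, which closes the gap cleanly, is to prove that in $\cA(X,Y)$ compact morphisms actually \emph{coincide} with right trace-class ones. Concretely, one establishes the isomorphism
\begin{equation*}
\Hom(1_X,F'^{r\vee}\circ F)\xto{\sim} \Hom_{\Ind(\cA(X,Y))}(\cY(F'),\hat{\cY}(F)),
\end{equation*}
natural in $F,F'\in\cA(X,Y).$ Both sides commute with colimits in $F$, so one may reduce to the generating case $F\cong\indlim_n F_n$ with right trace-class transitions, where $\hat\cY(F)=\inddlim_n F_n$ by the first part of the argument; the factorizations $\Hom(F',F_n)\to \Hom(1_X,F'^{r\vee}\circ F_{n+1})\to\Hom(F',F_{n+1})$ then yield mutually inverse maps. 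Once this isomorphism is in place, any compact morphism $\varphi:F'\to F$ comes with a canonical right trace-class witness $\wt\varphi:1_X\to F'^{r\vee}\circ F$, and the verification of Lemma \ref{lem:strcont_from_tensor_product}\ref{criterion_via_right_trace_class} goes exactly as in your second paragraph. Without this lemma your argument does not compile into a proof; with it, the rest of your write-up is correct and essentially identical to the paper's.
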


\begin{proof}
We prove \ref{dualizability_and_strcont_right_trace_class}, and \ref{dualizability_and_strcont_left_trace_class} is obtained by passing to the $1$-opposite category $\cA^{1\hy op}.$ 

Note that compactness of $1_X$ in $\cA(X,X)$ implies that each right trace-class $2$-morphism in $\cA(X,Y)$ is compact. Indeed, given such a $2$-morphism $f:F\to F'$ with a right trace-class witness $\wt{f}:1_X\to F^{r\vee}\circ F',$ we obtain a factorization
\begin{equation*}
	\Hom(F',-)\to \Hom(F^{r\vee}\circ F',F^{r\vee}\circ -)\to \Hom(1_X,F^{r\vee}\circ -)\to \Hom(F,-),
\end{equation*}
and the functor $\Hom(1_X,F^{r\vee}\circ -)$ commutes with colimits. Therefore, the category $\cA(X,Y)$ is dualizable by \cite[Theorem 1.45]{E24}. 

Next, we show that for any $F,F'\in\cA(X,Y)$ we have an isomorphism
\begin{equation}\label{eq:right_trace_class_to_compact}
\Hom(1_X,F'^{r\vee}\circ F)\xto{\sim} \Hom_{\Ind(\cA(X,Y))}(\cY(F'),\hat{\cY}(F)) .
\end{equation} 
More precisely, if $\hat{\cY}(F)=\inddlim[i] F_i,$ then the map \eqref{eq:right_trace_class_to_compact} is given by the composition
\begin{multline*}
\Hom(1_X,F'^{r\vee}\circ F)\cong \indlim[i] \Hom(1_X,F'^{r\vee}\circ F_i)\to \indlim[i]\Hom(F',F_i)\\
\cong \Hom_{\Ind(\cA(X,Y))}(\cY(F'),\hat{\cY}(F)).
\end{multline*}
Since the source and the target of \eqref{eq:right_trace_class_to_compact} commute with colimits in $F,$ we may assume that $F\cong\indlim[n\in\N] F_n,$ where each map $F_n\to F_{n+1}$ is right trace-class. Then by the above argument we have $\hat{\cY}(F)=\inddlim[n] F_n,$ and the isomorphism \eqref{eq:right_trace_class_to_compact} follows from the factorizations
\begin{equation*}
\Hom(F',F_n)\to \Hom(1_X,F'^{r\vee}\otimes F_{n+1})\to \Hom(F',F_{n+1}).
\end{equation*}

Now the strong continuity of \eqref{eq:composition_is_strcont} follows from Lemma \ref{lem:strcont_from_tensor_product}. Namely, take a compact morphism $\varphi:F'\to F$ in $\cA(X,Y).$ By the above, $\varphi$ is right trace-class and we can choose a right trace-class witness $\wt{\varphi}:1_X\to F'^{r\vee}\circ F.$ Then the morphism of functors $\mu_{Z,X,Y}(F'\boxtimes -)\to \mu_{Z,X,Y}(F\boxtimes -)$ has a right trace-class witness, given by the composition
\begin{equation*}
\id\xto{\mu_{Z,X,X}(\wt{\varphi}\boxtimes-)}\mu_{Z,X,X}(F'^{r\vee}\circ F\boxtimes -)\to \mu_{Z,X,Y}(F'\boxtimes -)^{R,\cont}\circ \mu_{Z,X,Y}(F\boxtimes -).
\end{equation*}
Applying Lemma \ref{lem:strcont_from_tensor_product}, we see that the functor \eqref{eq:composition_is_strcont} is strongly continuous.
\end{proof}

\begin{proof}[Proof of Theorem \ref{th:conditions_for_E_0_rigidity}]
Dualizability of $\cA(X,Y)$ and $\cA(Y,X)$ follows from Corollary \ref{cor:dualizability_of_A_X_Y}, as well as the strong continuity of  $\mu_{Y,X,Y}.$


To establish the duality between $\cA(X,Y)$ and $\cA(Y,X),$ we first show that the following (lax commutative) square commutes:
\begin{equation}\label{eq:comm_square1}
\begin{tikzcd}
\cA(Y,X)\otimes\cA(Y,Y)\ar{r}{\mu_{Y,Y,X}}\ar{d}{\id\boxtimes\mu_{Y,X,Y}^R} & [2em] \cA(Y,X)\ar{d}{\mu_{Y,X,X}^R}\\
\cA(Y,X)\otimes\cA(X,Y)\otimes\cA(Y,X)\ar{r}{\mu_{X,Y,X}\boxtimes\id} & \cA(X,X)\otimes\cA(Y,X),
\end{tikzcd}
\end{equation}
Here the functor $\mu_{Y,X,X}^R$ is continuous by Corollary \ref{cor:dualizability_of_A_X_Y}. It is convenient to use the following notation: for presentable stable categories $\cC,\cD,$ a continuous functor $\Phi:\cA(X,Y)\otimes\cC\to\cD$ and an object $G\in\cA(X,Y),$ we denote by $\un{\Hom}^r(G,-):\cD\to\cC$ the right adjoint to $\Phi(G\boxtimes -).$ We apply this notation to the functors
\begin{equation*}
\mu_{Y,X,Y}:\cA(X,Y)\otimes\cA(Y,X)\to\cA(Y,Y),
\end{equation*}
\begin{equation*}
\mu_{X,X,Y}\boxtimes\id:\cA(X,Y)\otimes\cA(X,X)\otimes\cA(Y,X)\to \cA(X,Y)\otimes\cA(Y,X).
\end{equation*}

 Within this notation, for $G\in \cA(X,Y),$ $H\in\cA(Y,Y)$ we have a natural isomorphism
\begin{equation}\label{eq:obvious_isom}
\mu_{Y,X,X}^R(\un{\Hom}^r(G,H))\cong \un{\Hom}^r(G,\mu_{Y,X,Y}^R(H))
\end{equation}
in $\cA(X,X)\otimes\cA(Y,X).$ Indeed, if we fix $G$ and consider the source and the target of \eqref{eq:obvious_isom} as functors of $H$ from $\cA(Y,Y)$ to $\cA(X,X)\otimes\cA(Y,X),$ then the left adjoints of these functors are naturally isomorphic.

Now let $F\in \cA(Y,X)$ be an object such that $F\cong\indlim[n\in\\N] F_n,$ where each map $F_n\to F_{n+1}$ is left trace-class. Since such objects generated $\cA(Y,X),$ it suffices to prove the lax commutativity morphism in \eqref{eq:comm_square1} is an isomorphism on $F\boxtimes-.$ We choose left trace-class witnesses $1_X\to F_{n+1}\circ F_n^{l\vee}.$ Then for each $n\in\N$ we have a composition morphism.
\begin{multline*}
\mu_{Y,X,X}^R(\mu_{Y,Y,X}(F_n,-))\to \mu_{Y,X,X}^R(\un{\Hom}^r(F_n^{l\vee},-))\cong \un{\Hom}^r(F_n^{l\vee},\mu_{Y,X,Y}^R(-))\\
\to (\mu_{X,Y,X}\boxtimes\id)(F_{n+1}\boxtimes\mu_{Y,X,Y}^R(-)).
\end{multline*} 
Passing to the colimit over $n,$ we obtain a morphism of functors
\begin{equation*}
\mu_{Y,X,X}^R(\mu_{Y,Y,X}(F,-))\to (\mu_{X,Y,X}\boxtimes\id)(F\boxtimes\mu_{Y,X,Y}^R(-)),
\end{equation*}
which is the inverse to the lax commutativity morphism of \eqref{eq:comm_square1}.

Now for the functors $\ev$ and $\coev$ from \eqref{eq:ev_for_A_Y_X} and \eqref{eq:coev_for_A_Y_X} we obtain the following commutative diagram:
\begin{equation*}
\begin{tikzcd}
\cA(Y,X)\ar[d, "\id\boxtimes 1_Y"]\ar[r, "\sim"] & [2em] \cA(Y,X)\otimes\Sp \ar[d, "\id\boxtimes\coev"]\\
\cA(Y,X)\otimes\cA(Y,Y)\ar[r, "\id\boxtimes\mu_{Y,X,Y}^R"]\ar[d, "\mu_{Y,Y,X}"] & \cA(Y,X)\otimes\cA(X,Y)\otimes\cA(Y,X)\ar[r, "\ev\boxtimes\id"] \ar[d, "\mu_{X,Y,X}\boxtimes\id"] & \Sp\otimes\cA(Y,X)\ar[d, "\sim"] \\
\cA(Y,X)\ar[r, "\mu_{Y,X,X}^R"] & \cA(X,X)\otimes \cA(Y,X)\ar{r}{\Hom(1_X,-)\boxtimes\id} & \cA(Y,X).  
\end{tikzcd}
\end{equation*}
The composition of functors in the left column is isomorphic to the identity, and so is the composition of functors in the bottom row. 

Passing to the $1$-opposite category $\cA^{1\hy op},$ we see that the following (lax commutative) square commutes:
\begin{equation}\label{eq:comm_square2}
	\begin{tikzcd}
		\cA(Y,Y)\otimes\cA(X,Y)\ar{r}{\mu_{X,Y,Y}}\ar{d}{\mu_{Y,X,Y}^R\boxtimes\id} & [2em] \cA(X,Y)\ar{d}{\mu_{X,X,Y}^R}\\
		\cA(X,Y)\otimes\cA(Y,X)\otimes\cA(X,Y)\ar{r}{\id\boxtimes\mu_{X,Y,X}} & \cA(X,Y)\otimes\cA(X,X).
	\end{tikzcd}
\end{equation}
This gives a commutative diagram
\begin{equation*}
	\begin{tikzcd}
		\cA(X,Y)\ar[d, "1_Y\boxtimes \id"]\ar[r, "\sim"] & [2em] \Sp\otimes \cA(X,Y) \ar[d, "\coev\boxtimes\id"]\\
		\cA(Y,Y)\otimes\cA(X,Y)\ar[r, "\mu_{Y,X,Y}^R\boxtimes\id"]\ar[d, "\mu_{X,Y,Y}"] & \cA(X,Y)\otimes\cA(Y,X)\otimes\cA(X,Y)\ar[r, "\id\boxtimes\ev"] \ar[d, "\id\boxtimes\mu_{X,Y,X}"] & \cA(X,Y)\otimes\Sp\ar[d, "\sim"] \\
		\cA(X,Y)\ar[r, "\mu_{X,X,Y}^R"] & \cA(X,Y)\otimes \cA(X,X)\ar{r}{\id\boxtimes \Hom(1_X,-)} & \cA(X,Y).  
	\end{tikzcd}
\end{equation*}
Again, both in the left column and in the bottom row the compositions are isomorphic to the identity. This proves the duality between $\cA(X,Y)$ and $\cA(Y,X).$
\end{proof}

\section{Dualizability and rigidity of categories of localizing motives}
\label{sec:dualizability_and_rigidity}

In this section we prove the following result.

\begin{theo}\label{th:dualizability_and_rigidity}
Let $\cE$ be a rigid $\bE_1$-monoidal category.
\begin{enumerate}[label=(\roman*),ref=(\roman*)]
\item The category $\Mot_{\cE}^{\loc}$ is dualizable. Moreover, we have an equivalence $(\Mot_{\cE}^{\loc})^{\vee}\simeq \Mot_{\cE^{mop}}^{\loc},$ and the evaluation functor is given by
\begin{equation*}
\ev:\Mot_{\cE}^{\loc}\otimes \Mot_{\cE^{mop}}^{\loc}\to\Sp,\quad \cU_{\loc}(\cC)\boxtimes \cU_{\loc}(\cD)\mapsto K^{\cont}(\cD\tens{\cE}\cC).
\end{equation*} \label{E_0_rigidity}
\item If $\cE$ is in addition $\bE_2$-monoidal (for example, symmetric monoidal), then the $\bE_1$-monoidal category $\Mot_{\cE}^{\loc}$ is rigid. \label{E_1_rigidity}
\end{enumerate}
\end{theo}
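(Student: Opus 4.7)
The plan is to deduce part \ref{E_0_rigidity} from Theorem \ref{th:conditions_for_E_0_rigidity} and part \ref{E_1_rigidity} from Proposition \ref{prop:rigidity_criterion}, with both going through the same technical input: a ``length-$1$ resolution by nuclear $\cE$-modules'' of any countably presented object of $\Cat^{\cg}_{\cE}$. First I would build a $\Pr^L_{\st}$-enriched $2$-category $\cA$ with two objects $X,Y$ by setting $\cA(X,X)=\Mot^{\loc}$, $\cA(X,Y)=\Mot^{\loc}_{\cE^{mop}}$, $\cA(Y,X)=\Mot^{\loc}_{\cE}$ and $\cA(Y,Y)=\Mot^{\loc}_{\cE\otimes\cE^{mop}}$, with composition induced from the relative tensor product of bimodule dualizable categories; concretely $\mu_{X,Y,X}$ sends $\cU_{\loc}(\cC)\boxtimes\cU_{\loc}(\cD)$ to $\cU_{\loc}(\cD\tens{\cE}\cC)$, which exists by universality of $\cU_{\loc}$ applied to the tensor structure on $\Cat^{\dual}$. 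The unit $1_X=\cU_{\loc}(\Sp)$ is compact by \cite{BGT}, and with this setup the recipe \eqref{eq:ev_for_A_Y_X} recovers exactly the formula $K^{\cont}(\cD\tens{\cE}\cC)$ from \ref{E_0_rigidity}; hence Theorem \ref{th:conditions_for_E_0_rigidity} will yield \ref{E_0_rigidity} as soon as its trace-class generation hypotheses are verified.

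Verifying those hypotheses is the main work. A trace-class $\cE$-linear functor in $\Cat^{\dual}_{\cE}$ pushes forward under $\cU_{\loc}$ to a right trace-class $2$-morphism in $\cA,$ and dually to a left trace-class $2$-morphism in the $1$-opposite $2$-category; this is immediate because the composition in $\cA$ literally comes from the relative tensor structure on $\Cat^{\dual}_{\cE},$ so any trace-class witness in the latter descends to one in the former. It therefore suffices to prove that every countably presented $\cC\in\Cat^{\cg}_{\cE}$ fits into a short exact sequence $0\to\cC_1\to\cC_2\to\cC\to 0$ in $\Cat^{\dual}_{\cE}$ whose first two terms are sequential colimits $\indlim[n]\cC_{i,n}$ with trace-class transitions.

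The construction of this length-$1$ nuclear resolution should exploit the compact assembledness of $\Cat^{\cg}_{\cE}$ (\cite[Theorem 1.14]{E25}): a countably presented $\cC$ can be written as a colimit witnessed by compact morphisms in $\Cat^{\cg}_{\cE},$ from which one extracts a presentation as the cofiber of a map between nuclear $\cE$-modules. This requires proving certain closure properties of the nuclear class, and separately the fact (slightly subtle when $\cE$ is not compactly generated) that proper relatively compactly generated $\cE$-modules are nuclear. This step is the main obstacle and will occupy the bulk of Section \ref{sec:dualizability_and_rigidity}.

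Finally, for \ref{E_1_rigidity}, when $\cE$ is $\bE_2$-monoidal the relative tensor structure on $\Cat^{\dual}_{\cE}$ descends to an $\bE_1$-monoidal structure on $\Mot^{\loc}_{\cE}$ with compact unit $\cU_{\loc}(\cE).$ Since the very same nuclear resolution supplies sequential colimit presentations whose transitions are trace-class morphisms in the $\bE_1$-monoidal category $\Cat^{\dual}_{\cE},$ and hence are automatically both left and right trace-class in $\Mot^{\loc}_{\cE},$ the rigidity criterion of Proposition \ref{prop:rigidity_criterion} applies directly and yields part \ref{E_1_rigidity}.
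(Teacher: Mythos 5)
Your proposal is correct and takes essentially the same route as the paper: you set up the same $\Pr^L_{\st}$-enriched two-object category (the paper calls it $2\hy\Mot^{\naive}$, with $X=\Sp$, $Y=\cE$, and the roles of $\cA(X,Y)$, $\cA(Y,X)$ simply swapped relative to your convention, which is immaterial by the symmetry of Theorem~\ref{th:conditions_for_E_0_rigidity}), reduce the trace-class generation hypotheses to the existence of a length-$1$ resolution by nuclear $\cE$-modules built from compact assembledness of $\Cat^{\cg}_{\cE}$ (this is Proposition~\ref{prop:resolution_by_nuclear} in the paper), and deduce part~\ref{E_1_rigidity} from the rigidity criterion using the same resolution. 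The only imprecision is that one should be slightly careful that ``right trace-class over $\cE$'' in the sense of Definition~\ref{def:nuclear_E_modules} (which is what the nuclear resolution supplies) implies both left and right trace-class in the $\bE_1$-monoidal $\Cat^{\dual}_{\cE}$ when $\cE$ is $\bE_2$; the paper records this as an almost tautological observation, and your argument implicitly uses it.
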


\subsection{Nuclear left $\cE$-modules}

In this subsection we fix a rigid $\bE_1$-monoidal category $\cE.$ We will define and study the notion of nuclearity for relatively compactly generated left $\cE$-modules. The main result is Proposition \ref{prop:resolution_by_nuclear} which loosely speaking says that any $\omega_1$-compact relatively compactly generated left $\cE$-module is a quotient of a nuclear $\cE$-module by a nuclear $\cE$-submodule. This will be crucial for the proof of Theorem \ref{th:dualizability_and_rigidity}. Using our study of nuclear $\cE$-modules we give a proof of Theorem \ref{th:dualizability_and_rigidity} in the end of this subsection.

The main reason for restricting to the relatively compactly generated case is that the category $\Cat_{\cE}^{\cg}$ is compactly assembled by \cite[Theorem 1.14]{E25}. On the other hand, the category $\Cat_{\cE}^{\dual}$ is not compactly assembled even for $\cE=\Sp$ by \cite[Corollary 1.86]{E24}.

As in \cite[Section 3]{E25}, for $\cC,\cD\in\Cat_{\cE}^{\dual},$ we denote by $\un{\Hom}_{\cE}^{\dual}(\cC,\cD)\in\Cat_{\st}^{\dual}$ the relative internal $\Hom$ in $\Cat_{\cE}^{\dual}$ over $\Cat_{\st}^{\dual}.$ Moreover, we consider the category $\un{\Hom}_{\cE}^{\dual}(\cC,\cE)$ as a dualizable right $\cE$-module.



\begin{defi}\label{def:nuclear_E_modules} Let $\cE$ be a rigid $\bE_1$-monoidal category. 
\begin{enumerate}[label=(\roman*),ref=(\roman*)]
	\item Let $\cC,\cD\in\Cat_{\cE}^{\dual}$ be dualizable left $\cE$-modules. A strongly continuous $\cE$-linear functor $F:\cC\to\cD$ is called right trace-class over $\cE$ if the object $F\in\Fun_{\cE}^{LL}(\cC,\cD)$ is contained in the essential image of the functor
	\begin{equation*}
	(\un{\Hom}_{\cE}^{\dual}(\cC,\cE)\tens{\cE} \cD)^{\omega}\to (\un{\Hom}_{\cE}^{\dual}(\cC,\cD))^{\omega}\simeq \Fun_{\cE}^{LL}(\cC,\cD).
	\end{equation*}
	We say that an object $X\in (\un{\Hom}_{\cE}^{\dual}(\cC,\cE)\tens{\cE} \cD)^{\omega}$ is a right trace-class witness for $F$ if the image of $X$ in $\Fun_{\cE}^{LL}(\cC,\cD)$ is isomorphic to $F.$
	\item A relatively compactly generated left $\cE$-module  $\cC\in\Cat_{\cE}^{\cg}$ is called nuclear over $\cE$ if for any $\omega_1$-compact $\cD\in (\Cat_{\cE}^{\cg})^{\omega_1}$ and for any strongly continuous $\cE$-linear functor $F:\cD\to\cC,$ if $F$ is a compact morphism in $\Cat_{\cE}^{\cg}$ then $F$ is right trace-class over $\cE.$ We denote by $\Cat_{\cE}^{\cg,\nuc}\subset\Cat_{\cE}^{\cg}$ the full subcategory of nuclear left $\cE$-modules.
	
	\item An $\cE$-module $\cC\in\Cat_{\cE}^{\cg}$ is called basic nuclear if there is a direct sequence $(\cC_n)_{n\geq 0}$ in $\Cat_{\cE}^{\cg}$ such that $\cC\simeq\indlim[n]\cC_n$ and each functor $\cC_n\to\cC_{n+1}$ is right trace-class over $\cE.$
\end{enumerate}	
\end{defi}

\begin{remark}
\begin{enumerate}
\item If $\cE$ is symmetric monoidal, then the above definition of right trace-class functor over $\cE$ simply means being a trace-class morphism in the symmetric monoidal category $\Cat_{\cE}^{\dual}.$
\item Let $\cA$ be the following category enriched over $\Pr^L:$ the objects are rigid $\bE_1$-monoidal categories, and $\cA(\cE,\cE')=\Cat_{\cE^{mop}\otimes\cE'}^{\dual}.$ Then $\Cat_{\cE}^{\dual}=\cA(\Sp,\cE),$ and under this identification the notion of a right trace-class functor over $\cE$ corresponds to the notion of a right trace-class $2$-morphism $\cA.$
\end{enumerate}
\end{remark}

We first explore some basic properties and equivalent characterizations of nuclear left $\cE$-modules. These are similar to the results of \cite[Section 13]{CS20}, but the proofs are slightly more difficult. We start with a trivial observation which will be tacitly used below in many situations. 

\begin{prop}\label{prop:diagonal_arrow_dualizable_trace_class}
Let $\cC,\cC',\cD\in\Cat_{\cE}^{\dual}$ be dualizable left $\cE$-module. Let $F:\cC\to\cC'$ be a strongly continuous $\cE$-linear functor which is right trace-class over $\cE.$ Then there exists a strongly continuous functor $\un{\Hom}_{\cE}^{\dual}(\cC',\cD)\to \un{\Hom}_{\cE}^{\dual}(\cC,\cE)\tens{\cE}\cD,$ making the following diagram commute:
\begin{equation*}
\begin{tikzcd}
\un{\Hom}_{\cE}^{\dual}(\cC',\cE)\tens{\cE}\cD \ar{r}{\un{\Hom}_{\cE}^{\dual}(F,\cE)\boxtimes\id} \ar[d] & [4em] \un{\Hom}_{\cE}^{\dual}(\cC,\cE)\tens{\cE}\cD\ar[d]\\
\un{\Hom}_{\cE}^{\dual}(\cC',\cD) \ar{r}{\un{\Hom}_{\cE}^{\dual}(F,\cD)}\ar[ru] & \un{\Hom}_{\cE}^{\dual}(\cC,\cD).
\end{tikzcd}
\end{equation*}
\end{prop}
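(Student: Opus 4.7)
\emph{Plan.} By hypothesis $F$ admits a right trace-class witness, i.e.\ a compact object $X\in(\un{\Hom}_{\cE}^{\dual}(\cC,\cE)\tens{\cE}\cC')^{\omega}$ whose image under the natural strongly continuous evaluation functor
\[
\eval_{\cC,\cC'}:\un{\Hom}_{\cE}^{\dual}(\cC,\cE)\tens{\cE}\cC'\to\un{\Hom}_{\cE}^{\dual}(\cC,\cC')
\]
is isomorphic to $F$. Since $X$ is compact, it corresponds to a strongly continuous functor $X:\Sp\to\un{\Hom}_{\cE}^{\dual}(\cC,\cE)\tens{\cE}\cC'$ and naturality of $\eval_{\cC,-}$ in the second variable gives, for every strongly continuous $\cE$-linear functor $G:\cC'\to\cD'$, a commutative square
\[
\begin{tikzcd}
\un{\Hom}_{\cE}^{\dual}(\cC,\cE)\tens{\cE}\cC' \ar{r}{\eval_{\cC,\cC'}}\ar{d}{\id\tens{\cE}G}  & [1.5em] \un{\Hom}_{\cE}^{\dual}(\cC,\cC') \ar{d}{G\circ -} \\
\un{\Hom}_{\cE}^{\dual}(\cC,\cE)\tens{\cE}\cD' \ar{r}{\eval_{\cC,\cD'}} & \un{\Hom}_{\cE}^{\dual}(\cC,\cD').
\end{tikzcd}
\]

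\emph{Construction of the diagonal.} Apply the above with $\cD'=\cD$ to construct the diagonal. Namely, consider the $\cE$-linear evaluation $\cC'\otimes\un{\Hom}_{\cE}^{\dual}(\cC',\cD)\to\cD$, tensor on the left with $\un{\Hom}_{\cE}^{\dual}(\cC,\cE)$ over $\cE$, and precompose with $X$ in the first factor. This produces a strongly continuous functor
\[
\Phi_X:\un{\Hom}_{\cE}^{\dual}(\cC',\cD)\to\un{\Hom}_{\cE}^{\dual}(\cC,\cE)\tens{\cE}\cD,\qquad G\mapsto (\id\tens{\cE}G)(X),
\]
which we take to be the diagonal arrow. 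Strong continuity is automatic because $X$ is compact and $(\id\tens{\cE}-)$ preserves strong continuity.

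\emph{Commutativity of the lower triangle.} We must show that $\eval_{\cC,\cD}\circ\Phi_X=\un{\Hom}_{\cE}^{\dual}(F,\cD)$. By the naturality square displayed above (applied to $G\in\un{\Hom}_{\cE}^{\dual}(\cC',\cD)$), the object $\eval_{\cC,\cD}\bigl((\id\tens{\cE}G)(X)\bigr)$ is identified with $G\circ\eval_{\cC,\cC'}(X)\cong G\circ F$, which is exactly the value of $\un{\Hom}_{\cE}^{\dual}(F,\cD)$ on $G$. Functoriality of this identification in $G$ upgrades the pointwise isomorphism to an isomorphism of functors.

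\emph{Commutativity of the upper triangle.} We must show that the composition $\un{\Hom}_{\cE}^{\dual}(\cC',\cE)\tens{\cE}\cD\to\un{\Hom}_{\cE}^{\dual}(\cC',\cD)\xto{\Phi_X}\un{\Hom}_{\cE}^{\dual}(\cC,\cE)\tens{\cE}\cD$ equals $\un{\Hom}_{\cE}^{\dual}(F,\cE)\boxtimes\id_{\cD}$. On a generator $g\boxtimes d$ the functor $G:\cC'\to\cD$ factors as $\cC'\xto{g}\cE\xto{d\otimes-}\cD$, so $\Phi_X(G)=(\id\tens{\cE}(d\otimes-))\circ(\id\tens{\cE}g)(X)$. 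Now $(\id\tens{\cE}g)(X)$ lives in $\un{\Hom}_{\cE}^{\dual}(\cC,\cE)\tens{\cE}\cE\simeq\un{\Hom}_{\cE}^{\dual}(\cC,\cE)$, and naturality of $\eval_{\cC,-}$ in the second variable together with the fact that $\eval_{\cC,\cE}$ is the identity identifies it with $g\circ\eval_{\cC,\cC'}(X)\cong g\circ F$. Tensoring with $d$ yields $(g\circ F)\boxtimes d$, which matches $\un{\Hom}_{\cE}^{\dual}(F,\cE)\boxtimes\id_{\cD}$ on $g\boxtimes d$. The two commutativities together prove the statement.

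\emph{Main obstacle.} The only delicate point is bookkeeping: one must justify that the evaluation maps $\eval_{\cC,-}$ and the absorption isomorphism $\un{\Hom}_{\cE}^{\dual}(\cC,\cE)\tens{\cE}\cE\simeq\un{\Hom}_{\cE}^{\dual}(\cC,\cE)$ behave coherently, which is a formal consequence of the universal property of the relative internal $\Hom$ developed in \cite[Section 3]{E25}. Once this is taken as given, the proof is formal.
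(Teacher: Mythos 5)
Your proposal is correct and follows the same route as the paper: the diagonal arrow is constructed by pairing the compact trace-class witness $X\in(\un{\Hom}_{\cE}^{\dual}(\cC,\cE)\tens{\cE}\cC')^{\omega}$ with the evaluation $\cC'\otimes\un{\Hom}_{\cE}^{\dual}(\cC',\cD)\to\cD$, i.e.\ $G\mapsto(\id\tens{\cE}G)(X)$, which is exactly the composition the paper gives. The paper states the construction and immediately concludes, whereas you spell out the verification that both triangles commute via naturality of the evaluation in the second variable; that extra bookkeeping is sound and matches what the paper leaves implicit.
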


\begin{proof}
Indeed, choosing a right trace-class witness $X\in (\un{\Hom}_{\cE}^{\dual}(\cC,\cE)\tens{\cE}\cC')^{\omega}$ for $F,$ we obtain the required functor by taking the composition
\begin{equation*}
\un{\Hom}_{\cE}^{\dual}(\cC',\cD)\xto{X\boxtimes \id} \un{\Hom}_{\cE}^{\dual}(\cC,\cE)\tens{\cE}\cC'\otimes \un{\Hom}_{\cE}^{\dual}(\cC,\cD) \to \un{\Hom}_{\cE}^{\dual}(\cC,\cE)\tens{\cE}\cD.\qedhere
\end{equation*}
\end{proof}

Next, we show that basic nuclear $\cE$-modules are in fact nuclear. More generally, the following holds.

\begin{prop}\label{prop:nuclear_equiv_cond}
	Let $(\cC_i)_{i\in I}$ be an ind-system in $\Cat_{\cE}^{\cg},$ where $I$ is directed, and let $\cC = \indlim[i\in I] \cC_i\in \Cat_{\cE}^{\cg}.$ The following are equivalent.
	\begin{enumerate}[label=(\roman*),ref=(\roman*)]
		\item $\cC$ is nuclear and we have an isomorphism $\hat{\cY}(\cC)\xto{\sim} \inddlim[i] \cC_i$ in $\Ind(\Cat_{\cE}^{\cg}).$ \label{C_nuclear_and_hat_Y}
		\item For any $i\in I$ there exists $j\geq i$ such that the functor $\cC_i\to \cC_j$ is right trace-class over $\cE.$ \label{eventually_right_trace_class}
	\end{enumerate}
In particular, if $\cD\in\Cat_{\cE}^{\cg}$ is basic nuclear then $\cD$ is nuclear. 
\end{prop}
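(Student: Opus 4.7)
The plan is to leverage the compact assembly of $\Cat_{\cE}^{\cg}$ (\cite[Theorem 1.14]{E25}) together with the following key observation: any right trace-class $\cE$-linear functor $\cC' \to \cC''$ is in particular a compact morphism in $\Cat_{\cE}^{\cg}.$ This follows from Proposition \ref{prop:diagonal_arrow_dualizable_trace_class}, which, given a witness in $(\un{\Hom}_{\cE}^{\dual}(\cC', \cE) \tens{\cE} \cC'')^{\omega},$ supplies a factorization through a dualizable intermediate. Thus the equivalence \Iff{C_nuclear_and_hat_Y}{eventually_right_trace_class} refines the standard compact-assembly characterization $\hat{\cY}(\indlim \cC_i) \cong \inddlim \cC_i \iff$ eventually compact transitions, by upgrading ``compact'' to ``right trace-class over $\cE$.''

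For \Implies{eventually_right_trace_class}{C_nuclear_and_hat_Y}, the observation above shows the transitions are eventually compact, hence $\hat{\cY}(\cC) \cong \inddlim \cC_i.$ To verify nuclearity, take an $\omega_1$-compact $\cD$ and a compact morphism $F: \cD \to \cC.$ Since the representable $\cY(\cD) \in \Ind(\Cat_{\cE}^{\cg})$ is compact, the factorization of $\cY(F)$ through $\inddlim \cC_i$ produces a lift $G: \cD \to \cC_i$ with $(\cC_i \to \cC) \circ G \simeq F$ after enlarging $i.$ Picking $j \geq i$ with $\cC_i \to \cC_j$ right trace-class, the composite $\cD \to \cC_j$ is right trace-class with witness in $(\un{\Hom}_{\cE}^{\dual}(\cD, \cE) \tens{\cE} \cC_j)^{\omega}$ (by post-composition of trace-class along arbitrary strongly continuous maps). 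Pushing this witness along the strongly continuous $\cC_j \to \cC$ (which preserves compact objects in dualizable categories, since strong continuity is equivalent to the right adjoint being continuous) yields a right trace-class witness for $F.$ This argument also settles the final ``in particular'' clause that basic nuclear $\cE$-modules are nuclear.

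For \Implies{C_nuclear_and_hat_Y}{eventually_right_trace_class}, fix $i \in I.$ A preliminary refinement allows me to assume $\cC_i \in (\Cat_{\cE}^{\cg})^{\omega_1}$: writing $\cC_i \simeq \indlim_{\alpha} \cC_{i,\alpha}$ with $\cC_{i,\alpha}$ $\omega_1$-compact along compact maps (using $\omega_1$-accessibility of $\Cat_{\cE}^{\cg}$), and refining the whole ind-system, right-trace-class transitions in the refinement give right-trace-class transitions in the original system by composition. Granting this, $\hat{\cY}(\cC) = \inddlim \cC_j$ forces $\cC_i \to \cC$ to be compact, and nuclearity produces a right trace-class witness $X \in (\un{\Hom}_{\cE}^{\dual}(\cC_i, \cE) \tens{\cE} \cC)^{\omega}.$ The tensor product preserves colimits in $\Cat_{\cE}^{\dual},$ so
\begin{equation*}
\un{\Hom}_{\cE}^{\dual}(\cC_i, \cE) \tens{\cE} \cC \simeq \indlim_{j} \un{\Hom}_{\cE}^{\dual}(\cC_i, \cE) \tens{\cE} \cC_j,
\end{equation*}
and since the transitions here remain eventually compact (tensoring a compact morphism with the identity is compact), the compact object $X$ is a retract of the image of some $X_j$ from a term of this colimit. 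Right trace-class maps being closed under retracts, $X_j$ produces the desired right trace-class witness for a transition $\cC_i \to \cC_{j'}$ with $j' \geq j,$ once the retract identification is matched with the actual diagram map.

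The hardest step is this last one: compact lifts in filtered colimits of compactly assembled categories are given only up to retract, so one must verify that the retract recovering $X$ can be promoted to a witness for the specific diagram transition $\cC_i \to \cC_{j'}$ and not merely for some auxiliary strongly continuous functor into $\cC_{j'}$ with compatible trace-class data. Concretely, this amounts to identifying the image of $X_j$ in $\un{\Hom}_{\cE}^{\dual}(\cC_i, \cC)$ with the designated inclusion $\cC_i \to \cC,$ which requires a further enlargement of $j$ in the filtered system together with careful compactness arguments for the representables controlling the homotopy of the retract.
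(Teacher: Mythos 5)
For the direction \Implies{eventually_right_trace_class}{C_nuclear_and_hat_Y} your argument is essentially correct and matches the paper: right trace-class implies compact, so the transitions are eventually compact and $\hat{\cY}(\cC)\cong\inddlim_i\cC_i$; and any compact $F:\cD\to\cC$ factors through some $\cC_i$ whose inclusion into $\cC$ is right trace-class by hypothesis, so $F$ is right trace-class. (A minor quibble: the assertion that right trace-class $\Rightarrow$ compact is the content of \cite[Proposition 1.71]{E24} rather than a consequence of Proposition \ref{prop:diagonal_arrow_dualizable_trace_class}, which concerns the induced map on internal Homs rather than the factorization in $\Cat_{\cE}^{\cg}$.)

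The converse \Implies{C_nuclear_and_hat_Y}{eventually_right_trace_class} has a genuine gap which you yourself flag at the end: you lift the trace-class witness $X\in(\un{\Hom}_{\cE}^{\dual}(\cC_i,\cE)\tens{\cE}\cC)^\omega$ to some $X_j\in(\un{\Hom}_{\cE}^{\dual}(\cC_i,\cE)\tens{\cE}\cC_j)^\omega$, but the functor $F':\cC_i\to\cC_j$ determined by $X_j$ need not agree with the ind-system's transition map $F_{ij}$; all you know is that $F_{j}\circ F' \cong F_i$. You call this ``the hardest step'' and leave it as a compactness argument for ``the representables controlling the homotopy of the retract'', but this does not close the argument. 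Incidentally, the retract concern is not actually present — filtered colimits in $\Cat^{\perf}$ of idempotent-complete categories are idempotent-complete, so $X$ lifts exactly, not merely up to retract — but this does not help with the matching problem.

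The way to close the gap (and what the paper does) is to insert an interpolation step \emph{before} applying nuclearity: pick $k\geq i$ such that $F_{ik}$ is compact, apply nuclearity to $F_k:\cC_k\to\cC$ rather than to $F_i$, lift the resulting witness $X_k$ to $X_{kl}\in(\un{\Hom}_{\cE}^{\dual}(\cC_k,\cE)\tens{\cE}\cC_l)^\omega$ for some $l\geq k$, and let $F_{kl}'$ be the associated functor. The maps $G,G':\cY(\cC_k)\to\hat{\cY}(\cC)$ in $\Ind(\Cat_{\cE}^{\cg})$ given by $F_{kl}$ and $F_{kl}'$ agree after precomposing with $\hat{\cY}(\cC_k)\to\cY(\cC_k)$ (both are $\hat{\cY}(F_k)$). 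Since $F_{ik}$ is compact, $\cY(F_{ik})$ factors through $\hat{\cY}(\cC_k)$, hence $G\circ\cY(F_{ik})\simeq G'\circ\cY(F_{ik})$; unwinding what equality in $\Hom(\cY(\cC_i),\inddlim_j\cC_j)$ means produces $j\geq l$ with $F_{ij}\cong F_{lj}\circ F_{kl}'\circ F_{ik}$, which is right trace-class because $F_{kl}'$ is. Without the preliminary compact morphism $F_{ik}$ to precompose with, there is no mechanism forcing the lifted witness to realize a map in the diagram, which is exactly the obstruction you identified but did not overcome. The ``preliminary refinement'' to $\omega_1$-compact $\cC_i$ is also dispensable: if you worry about applying the definition of nuclearity to a non-$\omega_1$-compact $\cC_k$, one simply notes that any compact morphism from an arbitrary $\cD$ factors, via $\hat{\cY}(\cC)=\inddlim\cC'_\alpha$ with $\cC'_\alpha$ $\omega_1$-compact and compactness of $\cY(\cD)$, as $\cD\to\cC'_\alpha\to\cC$ with the second leg compact from an $\omega_1$-compact source — so the $\omega_1$-compact case is automatically the general case.
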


\begin{proof} We denote by $F_{ij}:\cC_i\to \cC_j$ the transition functors, and by $F_i:\cC_i\to \cC$ the functors to the colimit.
	
	\Implies{C_nuclear_and_hat_Y}{eventually_right_trace_class}. Take some $i\in I,$ and choose $k\geq i$ such that the functor $F_{ik}:\cC_i\to \cC_k$ is a compact morphism in $\Cat_{\cE}^{\cg}.$ Since the functor $F_k:\cC_k\to \cC$ is a compact morphism in $\Cat_{\cE}^{\cg}$ and since $\cC$ is nuclear, we can find an object $X_k\in (\un{\Hom}_{\cE}^{\dual}(\cC_k,\cE)\tens{\cE}\cC)^{\omega}$ such that the image of $X_k$ in $\Fun_{\cE}^{LL}(\cC_k,\cC)$ is isomorphic to $F_k.$ It follows from \cite[Proposition 1.74]{E24} that we have an equivalence
	\begin{equation*}
	(\un{\Hom}_{\cE}^{\dual}(\cC_k,\cE)\tens{\cE}\cC)^{\omega}\simeq \indlim[l\geq k] (\un{\Hom}_{\cE}^{\dual}(\cC_k,\cE)\tens{\cE}\cC_l)^{\omega}, 
	\end{equation*}
	where the colimit is of course taken in $\Cat^{\perf}.$ Hence, for some $l\geq k$ the object $X_k$ can be lifted to an object $X_{kl}\in (\un{\Hom}_{\cE}^{\dual}(\cC_k,\cE)\tens{\cE}\cC_l)^{\omega}.$ 
	Denote by $F_{kl}'$ the image of $X_{kl}$ in $\Fun_{\cE}^{LL}(\cC_k,\cC_l).$ 
	Denote by $G,G'$ the following compositions in $\Ind(\Cat_{\cE}^{\cg}):$
	\begin{equation*}
	G:\cY(\cC_k)\xto{\cY(F_{kl})} \cY(\cC_l)\to \hat{\cY}(\cC),\quad G':\cY(\cC_k)\xto{\cY(F_{kl}')} \cY(\cC_l)\to \hat{\cY}(\cC).
	\end{equation*}
	The compositions $\hat{\cY}(\cC_k)\to \cY(\cC_k)\xto{G}\hat{\cY}(\cC)$ and  $\hat{\cY}(\cC_k)\to \cY(\cC_k)\xto{G'}\hat{\cY}(\cC)$ are both identified with $\hat{\cY}(F_k).$ The compactness of the morphism $F_{ik}$ in $\Cat_{\cE}^{\cg}$ implies that the morphisms $G\circ\cY(F_{ik})$ and $G'\circ \cY(F_{ik})$ are homotopic. This exactly means that for some $j\geq l$ the composition functor $F_{lj}\circ F_{kl}'\circ F_{ik}$ is isomorphic to $F_{ij}.$ Since $F_{kl}'$ is right trace-class by construction, so is $F_{ij}.$
	
	\Implies{eventually_right_trace_class}{C_nuclear_and_hat_Y}. We first observe the following: for $\cD,\cD'\in\Cat_{\cE}^{\cg}$ and for a strongly continuous $\cE$-linear functor $F:\cD\to\cD',$ if $F$ is right trace-class over $\cE$ then $F$ is a compact morphism in $\Cat_{\cE}^{\cg}.$ Indeed, this follows directly from \cite[Proposition 1.71]{E24}. Hence, we have an isomorphism $\hat{\cY}(\cC)\xto{\sim}\inddlim[i]\cC_i.$ Now, any compact morphism $F:\cC'\to \cC$ in $\Cat_{\cE}^{\cg}$ factors through some $\cC_i.$ Our assumption implies that the map $F_i:\cC_i\to \cC$ is right trace-class, hence so is $F.$ This proves that $\cC$ is nuclear.      
\end{proof}

We deduce the following properties of the category $\Cat_{\cE}^{\cg,\nuc}.$

\begin{prop}\label{prop:properties_of_nuc_E_modules} The full subcategory $\Cat_{\cE}^{\cg,\nuc}\subset \Cat_{\cE}^{\cg}$ is closed under filtered colimits. The category $\Cat_{\cE}^{\cg,\nuc}$ is $\omega_1$-accessible and the $\omega_1$-compact objects in it are exactly the basic nuclear left $\cE$-modules. Moreover, the inclusion  $\Cat_{\cE}^{\cg,\nuc}\to \Cat_{\cE}^{\cg}$ preserves and reflects $\omega_1$-compactness.
\end{prop}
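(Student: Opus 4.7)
The plan is to establish the four claims in sequence, using Proposition \ref{prop:nuclear_equiv_cond} and the compact assembly of $\Cat_{\cE}^{\cg}$ from \cite[Theorem 1.14]{E25}.

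For closure under filtered colimits, let $\cC\simeq\indlim[\alpha]\cC_{\alpha}$ be a filtered colimit in $\Cat_{\cE}^{\cg}$ with each $\cC_{\alpha}$ nuclear, and let $F\colon\cD\to\cC$ be a compact morphism with $\cD\in(\Cat_{\cE}^{\cg})^{\omega_1}$. Since $\hat{\cY}$ is a left adjoint to the colimit functor $\Ind((\Cat_{\cE}^{\cg})^{\omega_1})\to\Cat_{\cE}^{\cg}$ and hence commutes with colimits, we have $\hat{\cY}(\cC)\simeq\indlim[\alpha]\hat{\cY}(\cC_{\alpha})$. The compactness of $F$ together with the $\omega_1$-compactness of $\cD$ allows us to factor $F\simeq F_{\alpha}\circ G$ with $G\colon\cD\to\cC_{\alpha}$ a compact morphism in $\Cat_{\cE}^{\cg}$; nuclearity of $\cC_{\alpha}$ makes $G$ right trace-class over $\cE$. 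Pushing the trace-class witness for $G$ along $F_{\alpha}$ via the functor $\un{\Hom}_{\cE}^{\dual}(\cD,\cE)\tens{\cE}F_{\alpha}$ yields a right trace-class witness for $F$, so $\cC$ is nuclear.

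To address the $\omega_1$-accessibility, first note that every basic nuclear $\cE$-module $\cC'\simeq\indlim[n]\cC'_n$ can be arranged to have $\cC'_n\in(\Cat_{\cE}^{\cg})^{\omega_1}$ by factoring each trace-class transition through an $\omega_1$-compact intermediate using \cite[Proposition 1.71]{E24}, and is thus $\omega_1$-compact in $\Cat_{\cE}^{\cg}$ as a countable colimit of $\omega_1$-compacts. Conversely, any nuclear $\cC$ admits by Proposition \ref{prop:nuclear_equiv_cond} a presentation $\cC\simeq\indlim[i\in I]\cC_i$ with $I$ directed, $\cC_i\in(\Cat_{\cE}^{\cg})^{\omega_1}$, and the property that for each $i$ there exists $j\geq i$ with $\cC_i\to\cC_j$ trace-class. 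Consider the $\omega_1$-filtered poset $J$ consisting of countable directed subsystems $S\subset I$ closed under choosing trace-class successors; for each $S\in J$ one extracts a cofinal chain $i_0\leq i_1\leq\cdots$ in $S$ with all transitions trace-class, giving a basic nuclear $\cC_S:=\indlim[n]\cC_{i_n}$ mapping to $\cC$. Since every finite subset of $I$ extends to some $S\in J$, we obtain $\cC\simeq\indlim[S\in J]\cC_S$, showing the basic nuclear objects generate $\Cat_{\cE}^{\cg,\nuc}$ under $\omega_1$-filtered colimits.

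The remaining nontrivial claim is that every $\omega_1$-compact object of $\Cat_{\cE}^{\cg,\nuc}$ is basic nuclear, which I expect to be the main technical obstacle. If $\cC\in\Cat_{\cE}^{\cg,\nuc}$ is $\omega_1$-compact there, the previous step presents $\cC$ as an $\omega_1$-filtered colimit of basic nuclear modules, forcing $\cC$ to be a retract of some basic nuclear $\cC'\simeq\indlim[n]\cC'_n$ with $\cC'_n\in(\Cat_{\cE}^{\cg})^{\omega_1}$. Applying $\hat{\cY}$ and using that $\cC'$ is basic nuclear yields $\hat{\cY}(\cC')\simeq\inddlim[n]\cC'_n$, and so $\hat{\cY}(\cC)$ arises as a retract of $\inddlim[n]\cC'_n$ in $\Ind((\Cat_{\cE}^{\cg})^{\omega_1})$. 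Since $(\Cat_{\cE}^{\cg})^{\omega_1}$ has countable colimits and is idempotent complete (being stable), telescoping the splitting idempotent along the sequence recovers an expression $\hat{\cY}(\cC)\simeq\inddlim[n]\cC_n$ with $\cC_n\in(\Cat_{\cE}^{\cg})^{\omega_1}$. Nuclearity of $\cC$ (inherited from $\cC'$) combined with this ind-object presentation now permits application of the implication \Implies{C_nuclear_and_hat_Y}{eventually_right_trace_class} of Proposition \ref{prop:nuclear_equiv_cond} to extract a cofinal subsequence with trace-class transitions, yielding a basic nuclear presentation of $\cC$. Preservation and reflection of $\omega_1$-compactness by the inclusion $\Cat_{\cE}^{\cg,\nuc}\hookrightarrow\Cat_{\cE}^{\cg}$ then follow formally from closure of the subcategory under $\omega_1$-filtered colimits established in the first step.
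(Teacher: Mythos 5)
Your proof takes essentially the same approach as the paper's: closure under filtered colimits by factoring a compact morphism through a finite stage, exhibiting each nuclear module as an $\omega_1$-filtered colimit of basic nuclear ones, and then identifying the $\omega_1$-compact objects with the basic nuclear modules. Two points to fix. First, the parenthetical ``(being stable)'' is incorrect: $\Cat_{\cE}^{\cg}$ is a category of $\cE$-modules and is not a stable category, so $(\Cat_{\cE}^{\cg})^{\omega_1}$ is not stable either. What you actually need is simply that $\omega_1$-compact objects of a presentable category are closed under retracts; since you have already shown that the basic nuclear $\cC'$ is $\omega_1$-compact in $\Cat_{\cE}^{\cg}$, the retract $\cC$ is too, and then the already-established implication ``$\omega_1$-compact in $\Cat_{\cE}^{\cg}$ and nuclear $\Rightarrow$ basic nuclear'' (via \Implies{C_nuclear_and_hat_Y}{eventually_right_trace_class} of Proposition \ref{prop:nuclear_equiv_cond}) finishes the argument without any telescoping. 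Second, in the $\omega_1$-accessibility step, define $\cC_S:=\indlim[s\in S]\cC_s$ (the colimit over all of $S$, not over a chosen cofinal chain), so that $S\mapsto\cC_S$ is manifestly functorial; the fact that $S$ admits a cofinal chain with trace-class transitions is then what makes $\cC_S$ basic nuclear. Your poset $J$ of countable, directed, trace-class-closed subsystems of $I$ is an equivalent bookkeeping to the paper's $J'/\sim$ construction (sequences $f:\N\to I$ with trace-class transitions, modulo eventual equality), and both give the desired $\omega_1$-filtered presentation $\cC\simeq\indlim[S\in J]\cC_S$.
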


\begin{proof}
	First, take some ind-system $(\cC_i)_{i\in I}$ in $\Cat_{\cE}^{\cg,\nuc},$ and let $\cC=\indlim[i] \cC_i,$ the colimit taken in $\Cat_{\cE}^{\cg}.$ Then for any compact morphism $\cC'\to \cC$ in $\Cat_{\cE}^{\cg}$ there exists $i\in I$ and a factorization $\cC'\xto{F}\cC_i\to\cC,$ such that $F$ is also a compact morphism. Hence, $F$ is right trace-class over $\cE$ by the nuclearity of $\cC_i.$ It follows that the functor $\cC'\to\cC$ is also right trace-class over $\cE.$ Therefore, $\cC$ is nuclear.
	
	By Proposition \ref{prop:nuclear_equiv_cond} any basic nuclear left $\cE$-module is $\omega_1$-compact in $\Cat_{\cE}^{\cg}.$ Hence, it is also $\omega_1$-compact in $\Cat_{\cE}^{\cg,\nuc}.$ Similarly, by loc.cit. if $\cC\in(\Cat_{\cE}^{\cg})^{\omega_1}$ is nuclear, then $\cC$ is basic nuclear.
	
	It remains to show that any $\cC\in\Cat_{\cE}^{\cg,\nuc}$ is an $\omega_1$-directed colimit of basic nuclear left $\cE$-modules. This is proved in the same way as \cite[Proposition 1.24]{E24}. Namely, let $\hat{\cY}(\cC) = \inddlim[i\in I] \cC_i,$ where $I$ is directed. Put $J''=\Fun(\N,I),$ and let $J'\subset J''$ be the full subposet consisting of maps $f:\N\to I$ such that each functor $\cC_{f(n)}\to \cC_{f(n+1)}$ is right trace-class over $\cE.$ It follows from Proposition \ref{prop:nuclear_equiv_cond} that $J'$ is cofinal in $J''.$ Next, define an equivalence relation on $J':$ $f\sim g$ if $f(n)=g(n)$ for $n\gg 0.$ We put $J=J'/\sim.$ The partial order on $J$ is the natural one: $\bar{f}\leq \bar{g}$ iff $f(n)\leq g(n)$ for $n\gg 0,$ where $f,g\in J'$ are representatives of $\bar{f},\bar{g}\in J.$ Then $J$ is $\omega_1$-directed and the functor $J'\to\Cat_{\cE}^{\cg},$ $f\mapsto\indlim[n]\cC_{f(n)},$ factors uniquely through $J.$ We obtain
	\begin{equation*}
		\cC\simeq\indlim[\bar{f}\in J]\indlim[n]\cC_{f(n)},
	\end{equation*}
	which gives a presentation of $\cC$ as an $\omega_1$-directed colimit of basic nuclear left $\cE$-modules.
\end{proof}

Next, we wish to show that the class of nuclear $\cE$-modules is sufficiently large in a certain sense. For this purpose it is technically convenient to introduce the following stronger notion of absolute nuclearity. Below we use the notation from \cite[Definition 1.56]{E24} in the case of small categories: for an exact functor $F:\cA\to\cB$ between idempotent-complete small stable categories we denote by $\im(F)\subset\cB$ the full idempotent-complete stable subcategory generated by the essential image of $F.$ Note that $\im(F)$ is the fiber of the cofiber of $F$ in $\Cat^{\perf}.$

\begin{defi}\label{def:absolute_nuclearity}
	Let $\cE$ be a rigid $\bE_1$-monoidal category.
	\begin{enumerate}[label=(\roman*),ref=(\roman*)]
		\item Let $\cC,\cD\in\Cat_{\cE}^{\dual}$ be dualizable left $\cE$-modules. We define the following category:
		\begin{equation*}
			(\un{\Hom}_{\cE}^{\dual}(\cC,\cE)\tens{\cE}\cD)^{\omega,\dec}=\im(\Fun_{\cE}^{LL}(\cC,\cE)\otimes \cD^{\omega}\to (\un{\Hom}_{\cE}^{\dual}(\cC,\cE)\tens{\cE}\cD)^{\omega})\in\Cat^{\perf}.
		\end{equation*}
		We say that a strongly continuous $\cE$-linear functor $F:\cC\to\cD$ is absolutely right trace-class over $\cE$ if $F$ is contained in the essential image of the functor
		\begin{equation*}
		(\un{\Hom}_{\cE}^{\dual}(\cC,\cE)\tens{\cE}\cD)^{\omega,\dec}\to \Fun_{\cE}^{LL}(\cC,\cD).
		\end{equation*}
		\item A relatively compactly generated left $\cE$-module $\cC\in\Cat_{\cE}^{\cg}$ is called absolutely nuclear over $\cE$ if for any $\omega_1$-compact $\cD\in(\Cat_{\cE}^{\cg})^{\omega_1}$ and for any strongly continuous $\cE$-linear functor $F:\cD\to\cC,$ if $F$ is a compact morphism in $\Cat_{\cE}^{\cg}$ then $F$ is absolutely right trace-class over $\cE.$  
	\end{enumerate}
\end{defi}

Clearly, absolute nuclearity implies nuclearity, and this stronger condition seems to be not very meaningful. However, certain important results below (Corollaries \ref{cor:abs_nuclear_cat_equiv_cond} and \ref{cor:properties_of_class_of_abs_nuclear}) would be much more difficult to prove if we replace ``absolutely nuclear'' by ``nuclear''.

The key statement is the following lemma. It is quite difficult but also crucial for the proof of Theorem \ref{th:dualizability_and_rigidity}.

\begin{lemma}\label{lem:map_from_compact_functor}
Let $\cB,\cC,\cD\in\Cat_{\cE}^{\cg}$ be relatively compactly generated left $\cE$-modules, and let $F:\cB\to\cC$ be a strongly continuous $\cE$-linear functor, which is compact as a $1$-morphism in $\Cat_{\cE}^{\cg}.$ We define the idempotent-complete stable categories
$\cV\subset \Fun_{\cE}^{LL}(\cB,\cD)$ and $\cW\subset \Fun_{\cE}^{LL}(\cC,\cD)$ as follows:
\begin{equation*}
\cV=\im((\un{\Hom}_{\cE}^{\dual}(\cB,\cE)\tens{\cE} \cD)^{\omega,\dec}\to \Fun_{\cE}^{LL}(\cB,\cD)),
\end{equation*}
\begin{equation*}
\cW=\im((\un{\Hom}_{\cE}^{\dual}(\cC,\cE)\tens{\cE} \cD)^{\omega,\dec}\to \Fun_{\cE}^{LL}(\cC,\cD)).
\end{equation*}
Then there exist an exact functor $\cW\to (\un{\Hom}_{\cE}^{\dual}(\cB,\cE)\tens{\cE} \cD)^{\omega,\dec},$ which is the diagonal arrow in a commutative diagram
\begin{equation}\label{eq:comm_diagram_for_E_0-rigidity}
\begin{tikzcd}
(\un{\Hom}_{\cE}^{\dual}(\cC,\cE)\tens{\cE} \cD)^{\omega,\dec}\ar[r]\ar[d] & (\un{\Hom}_{\cE}^{\dual}(\cB,\cE)\tens{\cE} \cD)^{\omega,\dec}\ar[d]\\
\cW\ar[r]\ar[ru] & \cV.
\end{tikzcd}
\end{equation}
Here the vertical arrows are the natural functors, and the horizontal arrows are induced by $F:\cB\to\cC.$
\end{lemma}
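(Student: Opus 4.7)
The plan is to exploit that $F$ is a compact $1$-morphism in the compactly assembled category $\Cat_{\cE}^{\cg}$ (by \cite[Theorem 1.14]{E25}), and to construct the diagonal by lifting each element of $\cW$ through the precomposition functor on the $\cB$-side.

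First, I set up the natural ingredients. The precomposition functor $G=\un{\Hom}_{\cE}^{\dual}(F,\cE):\un{\Hom}_{\cE}^{\dual}(\cC,\cE)\to\un{\Hom}_{\cE}^{\dual}(\cB,\cE)$ is continuous and right $\cE$-linear, and it sends $\Fun_{\cE}^{LL}(\cC,\cE)$ into $\Fun_{\cE}^{LL}(\cB,\cE)$ since $F$ is strongly continuous. Therefore, the induced functor $G\tens{\cE}\id_{\cD}$ restricts to an exact functor $(\un{\Hom}_{\cE}^{\dual}(\cC,\cE)\tens{\cE}\cD)^{\omega,\dec}\to(\un{\Hom}_{\cE}^{\dual}(\cB,\cE)\tens{\cE}\cD)^{\omega,\dec}$, giving the top horizontal arrow. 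The outer square (with horizontals $G\tens{\cE}\id_{\cD}$ and $F^{*}$, verticals being the pairing functors $\Phi_{\cC},\Phi_{\cB}$ to $\Fun_{\cE}^{LL}$) commutes tautologically because both routes compute the operation $\phi\boxtimes d\mapsto (\phi\circ F)\boxtimes d$ on generators.

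Second, I use the compactness of $F$ to obtain an explicit factorization. The morphism $\cY(F)$ in $\Ind(\Cat_{\cE}^{\cg})$ factors through $\hat{\cY}(\cC)$; using an ind-presentation $\hat{\cY}(\cC)\simeq\inddlim[i]\cC_i$ with compact transition maps, and compactness of $\cY(\cB)$, I obtain $F=F_{2}\circ F_{1}$ in $\Cat_{\cE}^{\cg}$ with $F_{1}:\cB\to \cC'$ strongly continuous, $\cC'\in(\Cat_{\cE}^{\cg})^{\omega_{1}}$, and $F_{2}:\cC'\to\cC$ a compact morphism coming from a single structure map in the ind-presentation. Iterating the compact-assembly argument on $F_{2}$ gives a further refinement with an intermediate $\cC''$, which supplies the rigidity needed for the lift.

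Third, I construct the diagonal by defining it, on each $w\in\cW$ presented as $w\simeq\Phi_{\cC}(X)$ with $X\in(\un{\Hom}_{\cE}^{\dual}(\cC,\cE)\tens{\cE}\cD)^{\omega,\dec}$, to be $(G\tens{\cE}\id_{\cD})(X)\in(\un{\Hom}_{\cE}^{\dual}(\cB,\cE)\tens{\cE}\cD)^{\omega,\dec}$. Both triangles in \eqref{eq:comm_diagram_for_E_0-rigidity} commute by construction together with the commutativity of the outer square. The main obstacle — and the only place where compactness of $F$ is used essentially — is the well-definedness of this assignment: if $X_{1}\to X_{2}$ becomes an isomorphism in $\cW$ (equivalently in $\Fun_{\cE}^{LL}(\cC,\cD)$), we need its image under $G\tens{\cE}\id_{\cD}$ to be an isomorphism in $(\un{\Hom}_{\cE}^{\dual}(\cB,\cE)\tens{\cE}\cD)^{\omega,\dec}$. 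The factorization $F=F_{2}\circ F_{1}$ reduces this to verifying that precomposition with the compact morphism $F_{2}$ sends the kernel of $\Phi_{\cC'}$ to the kernel of the analogous pairing for $\cB$; the compact witness of $F_{2}$ through $\hat{\cY}(\cC)$ then identifies the ambiguity by absorbing it into a compact morphism, which yields the required equalities. This is the hard part and is where all the weight of the hypothesis ``$F$ compact in $\Cat_{\cE}^{\cg}$'' is spent.
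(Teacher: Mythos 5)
Your proposal does not construct a functor; it only constructs an object-level assignment, and the argument for even that much is too vague. The problem is structural. The vertical map $\Phi_{\cC}\colon(\un{\Hom}^{\dual}_{\cE}(\cC,\cE)\tens{\cE}\cD)^{\omega,\dec}\to\cW$ is essentially surjective onto a generating subcategory of $\cW$ (by the definition of $\im$ as the fiber of the cofiber), but it is not a quotient functor in $\Cat^{\perf}$. So having chosen a presentation $w\simeq\Phi_{\cC}(X)$ for each object $w$, you cannot simply declare the value of the diagonal to be $(G\tens{\cE}\id_{\cD})(X)$: there is no descent mechanism available that would produce a coherent exact functor out of $\cW$ from per-object choices. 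Your well-definedness check ``$X_1\to X_2$ inverted in $\cW$ implies $(G\tens{\cE}\id_{\cD})(X_1\to X_2)$ inverted'' is the right criterion for descending along a localization, but $\Phi_{\cC}$ is not a localization of its source, so this criterion is neither necessary nor sufficient here. And the phrase ``the compact witness of $F_2$ through $\hat{\cY}(\cC)$ then identifies the ambiguity by absorbing it into a compact morphism'' does not specify any concrete argument.

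The paper's proof sidesteps this by reinterpreting all four corners of the square. After reducing (as you also begin to) to the case $\cB\simeq\Mod\hy B$, $\cC\simeq\Mod\hy C$ with $f\colon B\to C$ a compact morphism in $\Alg_{\bE_1}(\cE)$, Efimov introduces an auxiliary small category $\cT$ enriched over $\cS_C=\Ind((C\hy\Mod\hy C)^{\omega_1})$, whose objects are pairs $(X,Y)$ with $X\in\Rep(C,\cE^{\omega})$, $Y\in\cD^{\omega}$. For each $\bE_1$-coalgebra $E$ in $\cS_C$, one obtains a $\Sp$-enriched category $\cT^E$ (by applying the lax monoidal functor $\Hom_{\cS_C}(E,-)$ to the $\cS_C$-valued morphisms), and this construction is contravariantly \emph{functorial} in $E\in\Coalg_{\bE_1}(\cS_C)$. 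The paper then shows that all four corners of \eqref{eq:comm_diagram_for_E_0-rigidity} are of the form $\Perf(\cT^{E})$ for specific coalgebras $E$, and that the natural arrows in the square come from coalgebra maps. The diagonal arrow is therefore produced automatically by the $\Coalg_{\bE_1}$-functoriality of $E\mapsto\Perf(\cT^E)$, once one constructs a single morphism $\cY(C\tens{B}C)\to\hat{\cY}(C)$ of $\bE_1$-coalgebras in $\cS_C$ compatible with the outer square. That coalgebra morphism is where the compactness of $f\colon B\to C$ is actually used, via Proposition~\ref{prop:Toen_Vaquie_compact_morphism} applied to the ind-presentation $\hat{\cY}(C)\cong\inddlim[n]\cY(C\tens{C_n}C)$. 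The reduction to coalgebra morphisms is the decisive step your outline is missing: it is what converts an a priori incoherent object-level choice into an honest exact functor.
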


\begin{proof}
We first reduce to the case when $\cC$ is generated over $\cE$ by a single compact object. For a finite set $S$ of isomorphism classes of objects in $\cC^{\omega},$ denote by $\cC_S\subset \cC$ the localizing $\cE$-submodule generated by the objects in $S.$ Then $\cC\simeq\indlim[S]\cC_S,$ hence also $\hat{\cY}(\cC)\simeq\indlim[S]\hat{\cY}(\cC_S).$ It follows that $F$ can be decomposed as $\cB\xto{F'}\cC_S\to\cC,$ where $F'$ is compact in $\Cat_{\cE}^{\cg}.$ The $\cE$-module $\cC_S$ is generated by the compact object $\biggplus[x\in S] x,$ and we have $\cC_S\simeq\Mod\hy \un{\End}_{\cC/\cE}(\biggplus[x\in S]x).$ Hence, we may and will assume that $\cC\simeq \Mod\hy C$ for some $C\in\Alg_{\bE_1}(\cE).$

Next, by \cite[Theorem 1.14]{E25}, the category $\Alg_{\bE_1}(\cE)$ is compactly assembled, and the functor $\Alg_{\bE_1}(\cE)\to\Cat_{\cE}^{\cg},$ $A\mapsto \Mod\hy A,$ is strongly continuous. Hence, we have
\begin{equation*}
\hat{\cY}(\Mod\hy C)\simeq \inddlim[i\in I]\Mod\hy C_i,\quad\text{where}\quad \hat{\cY}(C)=\inddlim[i\in I]C_i.
\end{equation*}
It follows that $F:\cB\to\cC$ can be decomposed as $\cB\to \Mod\hy C_i\to Mod\hy C\simeq \cC$ for some $i.$ Hence, we may and will assume that $\cB\simeq Mod\hy B$ for some $B\in\Alg_{\bE_1}(\cE),$ and the functor $F:\Mod\hy B\to \Mod\hy C$ is given by extension of scalars via a morphism $f:B\to C,$ where $f$ is compact in $\Alg_{\bE_1}(\cE).$ Moreover, we may and will assume that both $B$ and $C$ are $\omega_1$-compact in $\Alg_{\bE_1}(\cE),$ or equivalently $\omega_1$-compact in $\cE.$

We have equivalences 
\begin{equation*}
\Fun_{\cE}^{LL}(\Mod\hy B,\cD)\simeq \Rep(B,\cD^{\omega}),\quad \Fun_{\cE}^{LL}(\Mod\hy C,\cD)\simeq\Rep(C,\cD^{\omega}).
\end{equation*} 
So we need to construct a dashed arrow, making the following diagram commute: 
\begin{equation}\label{eq:comm_diagram_for_E_0-rigidity_rewritten}
	\begin{tikzcd}
		(\un{\Hom}^{\dual}(\Mod\hy C,\cE)\tens{\cE} \cD)^{\omega,\dec}\ar[r]\ar[d] & (\un{\Hom}^{\dual}(\Mod\hy B,\cE)\tens{\cE} \cD)^{\omega,\dec}\ar[d]\\
		\cW\ar[hookrightarrow]{d}\ar[r]\ar[ru,dashed] & \cV\ar[hookrightarrow]{d}\\
		\Rep(C,\cE^{\omega})\ar[r] & \Rep(B,\cE^{\omega}).
	\end{tikzcd}
\end{equation}

The construction is quite non-trivial, and the first step is to obtain a suitable interpretation of categories in \eqref{eq:comm_diagram_for_E_0-rigidity_rewritten}. Consider the category $C\hy\Mod\hy C$ of $C\hy C$-bimodules. It is naturally $\bE_1$-monoidal, more precisely we choose the monoidal structure coming from the equivalence
\begin{equation*}
C\hy\Mod\hy C\xto{\sim}\Fun_{\cE^{mop}}^L(C\hy\Mod,C\hy\Mod),\quad M\mapsto M\tens{C}-.
\end{equation*}
By assumption, $C$ is an $\omega_1$-compact object of $\cE,$ hence the full subcategory $(C\hy\Mod\hy C)^{\omega_1}\subset C\hy\Mod\hy C$ is closed under the tensor product and it contains the unit object $C.$ Here and below we use the fact that a $C\hy C$-bimodule is $\omega_1$-compact in $C\hy\Mod\hy C$ if and only if its underlying object is $\omega_1$-compact in $\cE$ (this follows for example from \cite[Proposition C.3]{E24}). 

We obtain a well-defined compactly generated $\bE_1$-monoidal category 
\begin{equation*}
\cS_C=\Ind((C\hy\Mod\hy C)^{\omega_1})\in\Alg_{\bE_1}(\Prr^L_{\st,\omega}).
\end{equation*} 
Now the key idea is to define an auxiliary small category $\cT$ enriched over $\cS_C.$ The objects of $\cT$ are given by pairs $(X,Y),$ where $X\in \Rep(C,\cE^{\omega}),$ $Y\in \cD^{\omega}.$ The morphisms are given by
\begin{equation}\label{eq:morphisms_in_T_formal_description}
  \Hom_{\cT}((X_1,Y_1),(X_2,Y_2)) = \un{\Hom}_{\cE}^l(X_1,X_2\otimes \hat{\cY}_{\cE}(\un{\Hom}_{\cD/\cE}(Y_1,Y_2)))\in\cS_C.
\end{equation}
We elaborate on the meaning of \eqref{eq:morphisms_in_T_formal_description}. Put $Z=\un{\Hom}_{\cD/\cE}(Y_1,Y_2)\in \cE,$ and let $\hat{\cY}_{\cE}(Z)=\inddlim[i]Z_i\in\Ind(\cE^{\omega_1}).$ Then each object $X_2\otimes Z_i$ is naturally a left $C$-module, hence the object $\un{\Hom}_{\cE}^l(X_1,X_2\otimes Z_i)$ is naturally a $C\hy C$-bimodule, which is moreover $\omega_1$-compact. So the definition of morphisms \eqref{eq:morphisms_in_T_formal_description} can be rewritten as follows:
\begin{equation*}
  \Hom_{\cT}((X_1,Y_1),(X_2,Y_2)) = \inddlim[i] \Hom_{\cE}^l(X_1,X_2\otimes Z_i)\in\Ind(C\hy\Mod\hy C).
\end{equation*}
To explain the compositions in $\cT,$ consider a third object $(X_3,Y_3)\in\cT,$ and put $U=\un{\Hom}_{\cD/\cE}(Y_2,Y_3),$ and let $\hat{\cY}(U)=\inddlim[j] U_j.$
Then the composition is given by
\begin{multline*}
\Hom_{\cT}((X_2,Y_2),(X_3,Y_3))\otimes \Hom_{\cT}((X_1,Y_1),(X_2,Y_2)) \\
\cong \inddlim[i,j] [\un{\Hom}_{\cE}^l(X_2,X_3\otimes U_j)\tens{C}\un{\Hom}_{\cE}^l(X_1,X_2\otimes Z_i)]\to \inddlim[i,j]\un{\Hom}_{\cE}^l(X_1.X_3\otimes U_j\otimes Z_i)\\
\cong \un{\Hom}_{\cE}^l(X_1,X_3\otimes\hat{\cY}_{\cE}(\un{\Hom}_{\cD/\cE}(Y_2,Y_3)\otimes \un{\Hom}_{\cD/\cE}(Y_1,Y_2)))\\
\to \un{\Hom}_{\cE}^l(X_1,X_3\otimes\hat{\cY}_{\cE}(\un{\Hom}_{\cD/\cE}(Y_1,Y_3))) = \Hom_{\cT}((X_1,Y_1),(X_3,Y_3)).
\end{multline*}
Here we of course use the strong continuity of the multiplication functor $:\cE\otimes\cE\to\cE.$ 

We also use the compactness of the unit object $1_{\cE}\in\cE$ to see that the unit morphisms are well-defined. Namely, for an object $(X,Y)\in\cT$ the unit morphism $\cY(C)\to\Hom_{\cT}((X,Y),(X,Y))$ is given by the composition
\begin{equation*}
\cY(C)\to \cY(\un{\Hom}_{\cE}^l(X,X))\cong \un{\Hom}_{\cE}^l(X,X\otimes\hat{\cY}_{\cE}(1_{\cE}))\to \un{\Hom}_{\cE}^l(X,X\otimes\hat{\cY}_{\cE}(\un{\Hom}_{\cD/\cE}(Y,Y))).
\end{equation*}
in $\cS_C.$

Next, suppose that we have an $\bE_1$-coalgebra $E$ in $\cS_C$ (which is the same thing as a $\bE_1$-algebra in $\cS_C^{op}$). The coalgebra structure on $E$ gives an oplax monoidal structure on the functor $E\otimes-:\Sp\to \cS_C.$ Hence, its right adjoint functor $\Hom(E,-):\cS_C\to\Sp$ is naturally lax monoidal. In particular, we obtain the category $\cT^E$ enriched over $\Sp:$ its objects are the same as in $\cT,$ and the morphisms are given by 
\begin{equation*}
\Hom_{\cT^E}((X_1,Y_1),(X_2,Y_2))=\Hom_{\cS_C}(E,\Hom_{\cT}((X_1,Y_1),(X_2,Y_2)))\in \Sp.
\end{equation*}
We denote by $\Perf(\cT^E)\in\Cat^{\perf}$ the idempotent-complete stable envelope of $\cT^E.$ Namely, $\Perf(\cT^E)$ is the category of compact objects in the (compactly generated) category of $\Sp$-enriched functors $(\cT^E)^{op}\to\Sp.$ The assignment $E\mapsto \cT^E$ is clearly (contravariantly) functorial in $E\in\Coalg_{\bE_1}(\cS_C).$

We now apply this construction to the relevant coalgebras in $\cS_C.$ First, note that the functor
\begin{equation*}
\hat{\cY}:C\hy\Mod\hy C\to\cS_C.
\end{equation*}
is oplax $\bE_1$-monoidal, since its right adjoint is monoidal. This gives an $\bE_1$-coalgebra structure on $\hat{\cY}(C)\in\cS_C.$ We claim that we have a natural equivalence
\begin{equation}\label{eq:W_via_coalgebra_hat_Y_of_C}
\Perf(\cT^{\hat{\cY}(C)})\simeq \cW.
\end{equation}
Indeed, by definition the full subcategory $\cW\subset\Rep(C,\cD^{\omega})$ is generated by the tensor products $X\otimes Y,$ where $X\in\Rep(C,\cE^{\omega}),$ $Y\in\cD^{\omega}.$ Given $X_1,X_2\in\Rep(C,\cE^{\omega})$ and $Y_1,Y_2\in\cD^{\omega},$ we have natural isomorphisms
\begin{multline*}
\Hom_{\cT^{\hat{\cY}(C)}}((X_1,Y_1),(X_2,Y_2)) = \Hom_{\cS_C}(\hat{\cY}(C),\un{\Hom}_{\cE}^l(X_1,X_2\otimes \hat{\cY}_{\cE}(\Hom_{\cD/\cE}^l(Y_1,Y_2))))\\
 \cong \Hom_{C\hy\Mod\hy C}(C,\un{\Hom}_{\cE}^l(X_1,X_2\otimes \Hom_{\cD/\cE}^l(Y_1,Y_2)))\\
 \cong \Hom_{C\hy\Mod\hy C}(C, \un{\Hom}_{\cE}^l(X_1,\Hom_{\cD/\cE}^l(Y_1,X_2\otimes Y_2)))\\
 \cong \Hom_{C\hy\Mod\hy C}(C, \un{\Hom}_{\cD/\cE}(X_1\otimes Y_1,X_2\otimes Y_2))\cong \Hom_{\Rep(C,\cD^{\omega})}(X_1\otimes Y_1,X_2\otimes Y_2). 
\end{multline*}
Here we used the dualizability of $X_2$ in $\cE.$ This gives the equivalence \eqref{eq:W_via_coalgebra_hat_Y_of_C}.

Next, consider the unit object $\cY(C)\in\cS_C$ as an $\bE_1$-coalgebra. We claim that we have an equivalence
\begin{equation}\label{eq:equiv_for_first_tensor_product}
\Perf(\cT^{\cY(C)})\simeq (\un{\Hom}^{\dual}(\cC,\cE)\tens{\cE} \cD)^{\omega,\dec}.
\end{equation}
To see this, note that by definition the right hand side of \eqref{eq:equiv_for_first_tensor_product} is generated by the objects $X\boxtimes Y,$ where $X\in\Rep(C,\cE^{\omega}),$ $Y\in\cD^{\omega}.$ Take some objects $(X_1,Y_1),(X_2,Y_2)\in\cT.$ We put $Z=\un{\Hom}_{\cD/\cE}(Y_1,Y_2),$ and let $\hat{\cY}_{\cE}(Z)=\inddlim[i]Z_i\in\Ind(\cE^{\omega_1}).$ We consider the objects $X_1,X_2$ as compact objects of the right $\cE$-module $\un{\Hom}_{\cE}^{\dual}(\Mod\hy C,\cE)\subset \Ind(\Rep(C,\cE^{\omega_1})).$ Applying the right multiplication by $Z$ to $X_2,$ we obtain exactly the ind-object $X_2\otimes\hat{\cY}_{\cE}(Z)=\inddlim[i]X_2\otimes Z_i\in\Ind(\Rep(C,\cE^{\omega_1})).$ Using this observation we obtain
\begin{multline*}
\Hom_{\cT^{\cY(C)}}((X_1,Y_1),(X_2,Y_2))= 
\Hom_{\cS_C}(\cY(C),\un{\Hom}_{\cE}^l(X_1,X_2\otimes \hat{\cY}(Z)))\\
\cong \indlim[i]\Hom_{C\hy\Mod\hy C}(C,\Hom_{\cE}^l(X_1,X_2\otimes Z_i))\cong \indlim[i]\Hom_{\Rep(C,\cE)}(X_1,X_2\otimes Z_i)\\
\cong \Hom_{\un{\Hom}_{\cE}^{\dual}(\Mod\hy C,\cE)}(X_1,X_2\otimes\hat{\cY}_{\cE}(Z))\cong \\
\cong \Hom_{\cE}(1,\un{\Hom}^r_{\un{\Hom}_{\cE}^{\dual}(\Mod\hy C,\cE)/\cE}(X_1,X_2\otimes\hat{\cY}_{\cE}(Z)))\\
\cong \Hom_{\cE}(1,\un{\Hom}^r_{\un{\Hom}_{\cE}^{\dual}(\Mod\hy C,\cE)/\cE}(X_1,X_2)\otimes \un{\Hom}_{\cD/\cE}(Y_1,Y_2))\\
\cong \Hom_{\un{\Hom}_{\cE}^{\dual}(\Mod\hy C,\cE)\tens{\cE}\cD}(X_1\boxtimes Y_1,X_2\boxtimes Y_2).
\end{multline*}
This gives the equivalence \eqref{eq:equiv_for_first_tensor_product}.

Next, note that the functor
\begin{equation*}
	\Ind(C\tens{B} - \tens{B} C):\Ind((B\hy\Mod\hy B)^{\omega_1})\to \Ind((C\hy\Mod\hy C)^{\omega_1})=\cS_{C}
\end{equation*}
is oplax monoidal. To simplify the notation we denote this functor simply by $C\tens{B}-\tens{B}C.$ Then the object $C\tens{B}\hat{\cY}(B)\tens{B}C\in \cS_C$ is naturally an $\bE_1$-coalgebra. Using the adjunction, we deduce from the above that we have an equivalence
\begin{equation*}
\im(\cW\to\cV)\simeq \Perf(\cT^{C\tens{B}\hat{\cY}(B)\tens{B}C}). 
\end{equation*}
Similarly, we have an equivalence
\begin{equation*}
\im((\un{\Hom}_{\cE}^{\dual}(\Mod\hy C,\cE)\tens{\cE}\cD)^{\omega,\dec}\to (\un{\Hom}_{\cE}^{\dual}(\Mod\hy B,\cE)\tens{\cE}\cD)^{\omega,\dec})\simeq \Perf(\cT^{\cY(C\tens{B}C)}).
\end{equation*}
Here we assume the tautological isomorphism $C\tens{B}C\cong C\tens{B}B\tens{B}C$ of $C\hy C$-bimodules.

Therefore, in order to construct a required dashed arrow in \eqref{eq:comm_diagram_for_E_0-rigidity_rewritten}, it suffices to construct a morphism $\cY(C\tens{B}C)\to\hat{\cY}(C)$ of $\bE_1$-coalgebras in $\cS_C,$ making the following diagram commute in $\Coalg_{\bE_1}(\cS_C):$
\begin{equation}\label{eq:comm_diag_for_E_1_coalgebras}
\begin{tikzcd}
\cY(C) & \cY(C\tens{B}C)\ar[l]\ar[ld]\\
\hat{\cY}(C)\ar[u] & C\tens{B}\hat{\cY}(B)\tens{B}C\ar[l]\ar[u].
\end{tikzcd}
\end{equation}
Let $\hat{\cY}_{\Alg_{\bE_1}(\cE)}(C)=\inddlim[n\in\N]C_n\in\Ind(\Alg_{\bE_1}(\cE)^{\omega_1}),$ and assume that all maps $C_n\to C_{n+1}$ are compact in $\Alg_{\bE_1}(\cE).$ Note that $\Alg_{\bE_1}(\cE)^{\omega_1}\simeq \Alg_{\bE_1}(\cE^{\omega_1})$ by \cite[Proposition C.2]{E24}, hence each $C_n$ is $\omega_1$-compact as an object of $\cE.$ We also may and will assume that our morphism of $\bE_1$-algebras $f:B\to C$ factors through $C_0.$

Now, we have a direct sequence $(C\tens{C_n}C)_{n\geq 0}$ of $\bE_1$-coalgebras in $C\hy\Mod\hy C.$ Its colimit is isomorphic to $C.$ Hence, by adjunction between $\Coalg_{\bE_1}(C\hy\Mod\hy C)$ and $\Coalg_{\bE_1}(\Ind(C\hy\Mod\hy C)),$ we have a natural morphism
\begin{equation*}
\varphi:\hat{\cY}(C)\to\indlim[n]\cY(C\tens{C_n}C)\text{ in }\Coalg_{\bE_1}(\cS_{C}).
\end{equation*}
We claim that $\varphi$ is an isomorphism. To prove this we can ignore the coalgebra structures. By Proposition \ref{prop:Toen_Vaquie_compact_morphism} below each map $C_{n+1}\tens{C_n}C_{n+1}\to C_{n+1}$ is compact in $C_{n+1}\hy\Mod\hy C_{n+1}$ for $n\geq 0.$ Extending the scalars, we see that each map $C\tens{C_n}C\to C\tens{C_{n+1}}C$ is compact in $C\hy\Mod\hy C.$ This proves that $\varphi$ is an isomorphism. 

Now the required diagonal arrow in \eqref{eq:comm_diag_for_E_1_coalgebras} is obtained as a composition of maps of $\bE_1$-coalgebras
\begin{equation*}
\cY(C\tens{B}C)\to \cY(C\tens{C_0}C)\to \indlim[n]\cY(C\tens{C_n}C)\cong\hat{\cY}(C).
\end{equation*}
This proves the lemma.
\end{proof}

The following is a slightly more sophisticated version of a result of T\"oen and Vaqui\'e \cite{TV07}, with essentially the same proof.

\begin{prop}\label{prop:Toen_Vaquie_compact_morphism}
Let $f:A\to B$ be a compact morphism in $\Alg_{\bE_1}(\cE).$ Then the morphism of $B\hy B$-bimodules $B\tens{A}B\to B$ is compact in $B\hy\Mod\hy B.$ In particular, if $A$ is compact in $\Alg_{\bE_1}(\cE),$ then $A$ is smooth.
\end{prop}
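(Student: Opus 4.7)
The plan is to adapt the classical argument of T\"oen-Vaqui\'e \cite{TV07} to the relative, $\cE$-linear setting, combining the compact assembly of $\Alg_{\bE_1}(\cE)$ (see \cite[Theorem 1.14]{E25}) with the deformed tensor algebra of Definition \ref{def:deformed_tensor_algebra}.

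First, using the compact assembly of $\Alg_{\bE_1}(\cE),$ I would write $\hat{\cY}_{\Alg_{\bE_1}(\cE)}(B)=\inddlim[n] B_n$ with each $B_n$ an $\omega_1$-compact object of $\cE$ and each transition $B_n\to B_{n+1}$ compact in $\Alg_{\bE_1}(\cE).$ Compactness of $f$ yields a factorization $A\to B_n\to B$ for some $n,$ inducing a factorization of the multiplication bimodule map as $B\tens{A}B\to B\tens{B_n}B\to B$ in $B\hy\Mod\hy B.$ Since composing a compact morphism with any morphism again gives a compact morphism, it suffices to prove compactness of the second arrow. Therefore I may replace $A$ by $B_n$ and assume that $A$ is $\omega_1$-compact in $\cE$ and that $f$ is itself compact in $\Alg_{\bE_1}(\cE).$

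Next, I would interpret the compactness of $f$ using the deformed tensor algebra. Combining the universal property of $T^{\deff}$ with the compact assembly of $\Alg_{\bE_1}(\cE)_{A/},$ I would express $B$ under $A$ as a filtered colimit of $\bE_1$-algebras $B_\alpha,$ each built from $A$ by finitely many pushouts against morphisms of the form $A\otimes T^{\deff}(x)\otimes A\to A$ where $x\in\cE^{\omega}$ carries a chosen pointing $x\to 1_{\cE}[1].$ For such a relatively cellular $B_\alpha,$ the relative cotangent complex $L_{B_\alpha/A}\in B_\alpha\hy\Mod\hy B_\alpha$ is an iterated extension of shifts of objects of the form $B_\alpha\otimes x\otimes B_\alpha,$ hence a compact $B_\alpha$-bimodule, since $x$ is compact in $\cE$ and the functor $\cE\to B_\alpha\hy\Mod\hy B_\alpha,$ $y\mapsto B_\alpha\otimes y\otimes B_\alpha,$ preserves compact objects. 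The exact triangle $L_{B_\alpha/A}[-1]\to B_\alpha\tens{A}B_\alpha\to B_\alpha$ then shows that the multiplication map $B_\alpha\tens{A}B_\alpha\to B_\alpha$ is a compact morphism in $B_\alpha\hy\Mod\hy B_\alpha.$ Since the base-change functor $B_\alpha\hy\Mod\hy B_\alpha\to B\hy\Mod\hy B$ is strongly continuous and therefore preserves compact morphisms, passing to the colimit over $\alpha$ yields the compactness of $B\tens{A}B\to B.$

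The hardest step is the cellular description: establishing that a compact morphism $f$ in $\Alg_{\bE_1}(\cE)$ admits a presentation as a filtered colimit of finite-cell extensions built from deformed tensor algebras on $\omega$-compact objects of $\cE,$ in the case when $\cE$ may not be compactly generated. Once this is in hand, the cotangent complex computation and the colimit passage are essentially formal. The ``in particular'' statement follows by taking $f=\id_A:$ the multiplication $A\tens{A}A\to A$ is then the identity of $A,$ and its compactness in $A\hy\Mod\hy A$ is equivalent to $A$ being compact as an $A\hy A$-bimodule, i.e.\ smoothness of $A$ over $\cE.$
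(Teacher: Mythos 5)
Your approach diverges fundamentally from the paper's and, as written, it has a real gap at the ``passing to the colimit over $\alpha$'' step. Writing $B$ under $A$ as a filtered colimit of finite-cell extensions $B_\alpha$ is available for \emph{any} morphism $A\to B$; it does not yet use compactness of $f$. Base change $B_\alpha\hy\Mod\hy B_\alpha\to B\hy\Mod\hy B$ sends the diagonal bimodule $B_\alpha$ to $B\tens{B_\alpha}B$, not to $B$, so what you get from each $\alpha$ is compactness of $B\tens{A}B\to B\tens{B_\alpha}B$, with $\indlim_\alpha B\tens{B_\alpha}B\cong B$. But compactness of a morphism is \emph{not} stable under filtered colimits in the target: if $\cE=D(\mk)$, $A=\mk$, $B=\mk[x_1,x_2,\dots]$ and $B_n=\mk[x_1,\dots,x_n]$, each map $B\tens{\mk}B\to B\tens{B_n}B$ is compact (each $B_n$ is smooth), yet $B\tens{\mk}B\to B$ is not compact, i.e.\ $B$ is not smooth. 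So the colimit passage cannot be formal, and your argument as stated would ``prove'' that every $\mk$-algebra is smooth. You have also misplaced the difficulty: the cellular presentation is the easy, always-available part, while the colimit step is where the claim fails. (Your preliminary reduction to $A$ being $\omega_1$-compact is harmless but unnecessary.)

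The paper's proof is the genuinely $\bE_1$-relative version of the T\"oen--Vaqui\'e deformation-theoretic argument, which uses the compactness of $f$ directly rather than through any cellular presentation. Introduce the $\bE_1$-cotangent bimodule $\Omega_C=\Fiber(C\otimes C\to C)$, whose universal property is $\Map_{\Alg_{\bE_1}(\cE)/C}(C,C\oplus M)\simeq\Map_{C\hy\Mod\hy C}(\Omega_C,M)$. Since $C\otimes C$ is a compact $C$-bimodule, $C\to\Omega_C[1]$ becomes an isomorphism in $\Calk^{\cont}(C\hy\Mod\hy C)$, so compactness of $B\tens{A}B\to B$ is equivalent to compactness of $B\tens{A}\Omega_A\tens{A}B\to\Omega_B$. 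To prove the latter, given $g:\Omega_B\to\indlim_i M_i$, translate $g$ into a section $s:B\to B\oplus M$; compactness of $f$ in $\Alg_{\bE_1}(\cE)$ lets one factor $s\circ f$ through $B\oplus M_{i_0}$ for some $i_0$, hence produces a map $A\to A\oplus M_{i_0}$ in $\Alg_{\bE_1}(\cE)/A$ and, by the universal property, a $B$-bimodule map $B\tens{A}\Omega_A\tens{A}B\to M_{i_0}$ fitting into the needed square. This is exactly where compactness of $f$ does the work, and it is a step your cellular approach does not reproduce.
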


\begin{proof}
We use a modification of the argument from \cite[Proposition 2.14]{TV07}. Recall that for an $\bE_1$-algebra $C$ in $\cE,$ the bimodule $\Omega_C = \Fiber(C\otimes C\to C)$ is the cotangent bimodule for the $\bE_1$-operad. That is, for a $C\hy C$-bimodule $M$ and the corresponding trivial square-zero extension $C\oplus M$ we have an equivalence
\begin{equation*}
\Map_{\Alg_{\bE_1}(\cE)/C}(C,C\oplus M)\simeq \Map_{C\hy\Mod\hy C}(\Omega_C,M),
\end{equation*}
induced by the universal section $C\to C\oplus\Omega_C,$ which informally is given by $x\mapsto (x,1\otimes x-x\otimes 1).$ Note also that the map of $C\hy C$-bimodules $C\to \Omega_C[1]$ is an isomorphism in $\Calk^{\cont}(C\hy\Mod\hy C)$ (because $C\otimes C$ is a compact $C\hy C$-bimodule). 

It follows that the claimed compactness of the map $B\tens{A}B\to B$ of $B\hy B$-bimodules is equivalent to the compactness of the map $B\tens{A}\Omega_A\tens{A}B\to\Omega_B.$ To prove the latter, consider an ind-system of $B\hy B$-bimodules $(M_i)_{i\in I},$ where $I$ is a directed poset. Let $M=\indlim[i]M_i,$ and take some map $g:\Omega_B\to M.$ It corresponds to a section $s:B\to B\oplus M.$ Compactness of $f:A\to B$ in $\Alg_{\bE_1}(\cE)$ implies that the composition $s\circ f:A\to B\oplus M,$ considered as a map in $\Alg_{\bE_1}(\cE)/B,$ factors through $B\oplus M_{i_0}$ for some $i_0\in I.$ This gives a map
\begin{equation*}
A\to A\times_B (B\oplus M_{i_0})\cong A\oplus M_{i_0}\text{ in }\Alg_{\bE_1}(\cE)/A.
\end{equation*}
The latter map corresponds via adjunction to a map of $B\hy B$-bimodules $h:B\tens{A}\Omega_A\tens{A}B\to M_{i_0}.$

We obtain a commutative square
\begin{equation*}
\begin{tikzcd}
B\tens{A}\Omega_A\tens{A}B\ar{r}{h}\ar[d] & M_{i_0}\ar[d]\\
\Omega_B\ar{r}{g} & M.
\end{tikzcd}
\end{equation*}
This proves the compactness of the map $B\tens{A}\Omega_A\tens{A}B\to \Omega_B$ in $B\hy\Mod\hy B.$ 
\end{proof}

We deduce non-trivial corollaries about the class of (absolutely) nuclear left $\cE$-modules.



\begin{cor}\label{cor:abs_nuclear_cat_equiv_cond}
Let $\cC\in\Cat_{\cE}^{\cg}$ be a relatively compactly generated left $\cE$-module. The following are equivalent.
\begin{enumerate}[label=(\roman*),ref=(\roman*)]
	\item $\cC$ is absolutely nuclear over $\cE.$ \label{nuclear_cat_cond1}
	\item For any $\cD\in(\Cat_{\cE}^{\cg})^{\omega_1}$ and for any strongly continuous $\cE$-linear functor $F:\cD\to\cC,$ if $F$ is a compact morphism in $\Cat_{\cE}^{\cg}$ then we have
	\begin{equation}\label{eq:inclusion_in_the_image}
		F\in\im((\un{\Hom}_{\cE}^{\dual}(\cD,\cE)\tens{\cE}\cC)^{\omega,\dec}\to \Fun_{\cE}^{LL}(\cD,\cC)).
	\end{equation} \label{nuclear_cat_cond2}
\end{enumerate}
\end{cor}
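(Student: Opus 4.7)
The implication \ref{nuclear_cat_cond2} $\Rightarrow$ \ref{nuclear_cat_cond1} is immediate, since \ref{nuclear_cat_cond2} is the formally weaker hypothesis (dropping the $\omega_1$-compactness requirement on $\cD$). I will focus on \ref{nuclear_cat_cond1} $\Rightarrow$ \ref{nuclear_cat_cond2}. The strategy will be: given an arbitrary compact morphism $F:\cD\to\cC$ in $\Cat_{\cE}^{\cg}$, factor it through an $\omega_1$-compact intermediate object, apply \ref{nuclear_cat_cond1} to the compact tail, and then transport the resulting trace-class witness back along the head via functoriality of the relative internal $\Hom$.

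For the factorization step I will use that $\Cat_{\cE}^{\cg}$ is compactly assembled by \cite[Theorem 1.14]{E25}, so we may write $\hat{\cY}(\cC)\simeq\inddlim[i\in I]\cC_i$ with $\cC_i\in(\Cat_{\cE}^{\cg})^{\omega_1}$ and each structure map $\cC_i\to\cC$ compact in $\Cat_{\cE}^{\cg}$ (since it factors through $\hat{\cY}(\cC)$ by construction). Compactness of $F$ says that $\cY(F)$ factors through $\hat{\cY}(\cC)$ in $\Ind(\Cat_{\cE}^{\cg})$; combined with the compactness of the representable $\cY(\cD)$ in that ind-completion, this will yield a factorization $F=F_2\circ F_1$ with $F_1:\cD\to\cC_i$ strongly continuous $\cE$-linear and $F_2:\cC_i\to\cC$ compact. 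Hypothesis \ref{nuclear_cat_cond1} applied to $F_2$ then produces an absolutely right trace-class witness $X\in(\un{\Hom}_{\cE}^{\dual}(\cC_i,\cE)\tens{\cE}\cC)^{\omega,\dec}$ for $F_2$.

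For the transport step, $F_1$ induces a strongly continuous $\cE$-linear functor $\un{\Hom}_{\cE}^{\dual}(F_1,\cE):\un{\Hom}_{\cE}^{\dual}(\cC_i,\cE)\to\un{\Hom}_{\cE}^{\dual}(\cD,\cE)$ by pre-composition; tensoring with $\cC$ over $\cE$ yields a strongly continuous functor $T$ of the corresponding relative tensor products, which therefore preserves compact objects. Since pre-composition with $F_1$ also sends $\Fun_{\cE}^{LL}(\cC_i,\cE)$ into $\Fun_{\cE}^{LL}(\cD,\cE)$, the functor $T$ preserves the ``$\dec$'' subcategories, and naturality of evaluation into $\Fun_{\cE}^{LL}(-,\cC)$ makes $T(X)$ a witness for $F_2\circ F_1 = F$.

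The part I expect to require the most care is the factorization: it rests essentially on the compactly assembled and $\omega_1$-accessible structure of $\Cat_{\cE}^{\cg}$, and one must verify both that $\cY(\cD)$ is compact in $\Ind(\Cat_{\cE}^{\cg})$ and that the tail map $\cC_i\to\cC$ really is compact (so that hypothesis \ref{nuclear_cat_cond1} applies to it). The transport step is then a routine consequence of the functoriality of relative internal $\Hom$ from \cite[Section 3]{E25}, with no extra subtlety beyond strong continuity of $\un{\Hom}_{\cE}^{\dual}(F_1,\cE)\tens{\cE}\cC$.
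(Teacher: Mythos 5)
Your proposal rests on a misreading of the statement, and in two ways.

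First, the difference between \ref{nuclear_cat_cond1} and \ref{nuclear_cat_cond2} is \emph{not} whether $\cD$ is required to be $\omega_1$-compact — both conditions quantify over $\cD\in(\Cat_{\cE}^{\cg})^{\omega_1}$. The actual difference is between asking that $F$ lie in the \emph{essential image} of $(\un{\Hom}_{\cE}^{\dual}(\cD,\cE)\tens{\cE}\cC)^{\omega,\dec}\to\Fun_{\cE}^{LL}(\cD,\cC)$ (that is what ``absolutely right trace-class'' means in Definition \ref{def:absolute_nuclearity}, hence what \ref{nuclear_cat_cond1} requires), versus asking that $F$ lie in $\im(\cdots)$, the idempotent-complete stable subcategory generated by that essential image (this is what \eqref{eq:inclusion_in_the_image} says; recall the meaning of $\im$ from the paragraph preceding Definition \ref{def:absolute_nuclearity}). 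Remark \ref{rem:clarification_of_cond_for_abs_nuclear_cat} spells this out: \ref{nuclear_cat_cond2} is \emph{a priori much weaker} than \ref{nuclear_cat_cond1}.

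Second, having decided (incorrectly) that \ref{nuclear_cat_cond2} is ``formally weaker,'' you conclude that \ref{nuclear_cat_cond2}$\Rightarrow$\ref{nuclear_cat_cond1} is immediate. That is logically backwards: if a condition is weaker, it is the implication \emph{into} it that is immediate. The tautological direction here is \ref{nuclear_cat_cond1}$\Rightarrow$\ref{nuclear_cat_cond2} (essential image is contained in $\im$), and that is exactly what the paper dismisses in one line.

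As a result, your entire argument — factoring the compact morphism through an $\omega_1$-compact intermediate, applying the hypothesis to the tail, and transporting the witness via $\un{\Hom}_{\cE}^{\dual}(F_1,\cE)\tens{\cE}\cC$ — is devoted to a direction that needs no argument at all. The genuine content of the corollary is \ref{nuclear_cat_cond2}$\Rightarrow$\ref{nuclear_cat_cond1}: one must upgrade ``lies in the stable idempotent-complete subcategory generated by the essential image'' to ``lies in the essential image.'' That upgrade is not a functoriality exercise; it is exactly what Lemma \ref{lem:map_from_compact_functor} is for. The paper's proof factors the given compact morphism $G:\cD\to\cC$ as $\cD\xto{G'}\cD'\xto{G''}\cC$ with both factors compact, observes that condition \ref{nuclear_cat_cond2} together with the factorization places $G''$ in the source of the diagonal functor \eqref{eq:tricky_functor} supplied by Lemma \ref{lem:map_from_compact_functor}, and then applies that diagonal functor to land in $(\un{\Hom}_{\cE}^{\dual}(\cD,\cE)\tens{\cE}\cC)^{\omega,\dec}$ itself. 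Without an appeal to (the surprisingly nontrivial) Lemma \ref{lem:map_from_compact_functor}, or something playing its role, the hard direction cannot be reached.
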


\begin{remark}\label{rem:clarification_of_cond_for_abs_nuclear_cat}
Note that if in \eqref{eq:inclusion_in_the_image} we replace the image in $\Cat^{\perf}$ with the usual essential image, we get exactly the condition from Definition \ref{def:absolute_nuclearity}. A priori \ref{nuclear_cat_cond2} is a much weaker condition, which for quite non-trivial reasons (Lemma \ref{lem:map_from_compact_functor}) turns out to be still equivalent to the absolute nuclearity of $\cC.$
\end{remark}

\begin{proof}[Proof of Corollary \ref{cor:abs_nuclear_cat_equiv_cond}]
As we already mentioned in Remark \ref{rem:clarification_of_cond_for_abs_nuclear_cat}, the implication \Implies{nuclear_cat_cond1}{nuclear_cat_cond2} is tautological.

\Implies{nuclear_cat_cond2}{nuclear_cat_cond1}. Let $\cD\in(\Cat_{\cE}^{\cg})^{\omega_1}$ and let $G:\cD\to \cC$ be a strongly continuous $\cE$-linear functor which is a compact morphism in $\Cat_{\cE}^{\cg}.$ Choose a factorization $\cD\xto{G'}\cD'\xto{G''}\cC$ of $G$ in $\Cat_{\cE}^{\cg},$ such that both $G'$ and $G''$ are compact morphisms in $\Cat_{\cE}^{\cg}.$ Applying Lemma \ref{lem:map_from_compact_functor}, we obtain a functor
\begin{equation}\label{eq:tricky_functor}
\im((\un{\Hom}_{\cE}^{\dual}(\cD',\cE)\tens{\cE}\cC)^{\omega,\dec}\to \Fun_{\cE}^{LL}(\cD',\cC))\to (\un{\Hom}_{\cE}^{\dual}(\cD,\cE)\tens{\cE}\cC)^{\omega,\dec},
\end{equation}
which is the diagonal arrow in a commutative diagram similar to \eqref{eq:comm_diagram_for_E_0-rigidity} (with $\cB,\cC,\cD$ replaced with resp. $\cD,\cD',\cC$). Now condition \ref{nuclear_cat_cond2} and our assumptions imply that $F''$ is an object of the source of the functor \eqref{eq:tricky_functor}. Applying this functor to $F'',$ we obtain an object of the category $(\un{\Hom}_{\cE}^{\dual}(\cD,\cE)\tens{\cE}\cC)^{\omega,\dec},$ whose image in $\Fun_{\cE}^{LL}(\cD,\cC)$ is isomorphic to $F.$ This proves the absolute nuclearity of $\cC$ over $\cE.$
\end{proof}

The latter characterization of absolutely nuclear left $\cE$-modules allows to deduce quite non-trivial inheritance properties for this class of $\cE$-modules. We recall the notion of a (possibly infinite) semi-orthogonal decomposition in $\Cat_{\cE}^{\cg},$ similar to \cite[Definition 1.80]{E24}. Namely, for $\cC\in\Cat_{\cE}^{\cg}$ and a poset $P,$ a $P$-indexed semi-orthogonal decomposition in $\Cat_{\cE}^{\cg}$ is a collection of $\cE$-submodules $\cC_i\subset\cC,$ $i\in P,$ such that $\cC_i$ are relatively compactly generated over $\cE,$ the inclusion functors are strongly continuous, the subcategories $\cC_i$ generate $\cC$ via colimits, and $\Hom(x,y)=0$ whenever $x\in\cC_i,$ $y\in\cC_j$ and $i\nleq j.$ In this case we write $\cC=\la \cC_i; i\in P\ra.$

\begin{cor}\label{cor:properties_of_class_of_abs_nuclear}
\begin{enumerate}[label=(\roman*),ref=(\roman*)]
	\item Let $P$ be a poset, and let $\cC=\la \cC_i; i\in P\ra$ be a $P$-indexed semi-orthogonal decomposition in $\Cat_{\cE}^{\cg}.$ If all $\cC_i$ are absolutely nuclear over $\cE,$ then so is $\cC.$ \label{rnuc_closed_under_SOD}
	\item Let $\iota:\cC'\to\cC$ be a monomorphism in $\Cat_{\cE}^{\cg},$ i.e. $\iota$ is a fully faithful strongly continuous $\cE$-linear functor between relatively compactly generated left $\cE$-modules. If $\cC$ is absolutely nuclear over $\cE,$ then $\cC'$ is nuclear over $\cE.$ \label{rnuc_closed_under_taking_subcats}
\end{enumerate}
\end{cor}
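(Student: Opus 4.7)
My approach for (i) uses the criterion of Corollary~\ref{cor:abs_nuclear_cat_equiv_cond}: given $\cD\in(\Cat_{\cE}^{\cg})^{\omega_1}$ and a compact morphism $F:\cD\to\cC$ in $\Cat_{\cE}^{\cg},$ I need to exhibit $F$ in the essential image of $(\un{\Hom}_{\cE}^{\dual}(\cD,\cE)\tens{\cE}\cC)^{\omega,\dec}\to\Fun_{\cE}^{LL}(\cD,\cC).$ The plan is to first reduce to a finite SOD by expressing $\cC$ as the filtered union of $\cC_Q=\la\cC_i;\,i\in Q\ra$ over finite downward-closed $Q\subset P;$ compact-assembledness of $\Cat_{\cE}^{\cg}$ combined with $\omega_1$-compactness of $\cD$ should ensure that the compact morphism $F$ factors through some $\cC_Q.$ An induction on $|Q|$ then reduces the statement to the two-step case $\cC=\la\cC_1,\cC_2\ra.$

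For a two-step SOD, both projection functors $\iota_1^R:\cC\to\cC_1$ and $\iota_2^L:\cC\to\cC_2$ are strongly continuous, since their respective right adjoints $\iota_1$ and $\iota_2$ are continuous by the strong continuity of the SOD inclusions. Applied to $F,$ this yields a triangle $\iota_1 F_1\to F\to\iota_2 F_2$ in $\Fun_{\cE}^{LL}(\cD,\cC)$ with $F_1=\iota_1^R F$ and $F_2=\iota_2^L F$ compact morphisms into the respective $\cC_i.$ By absolute nuclearity of each $\cC_i,$ I obtain witnesses $X_i\in(\un{\Hom}_{\cE}^{\dual}(\cD,\cE)\tens{\cE}\cC_i)^{\omega,\dec},$ and tensoring with $\iota_i$ produces $X_i':=(\id\tens{\cE}\iota_i)(X_i)\in(\un{\Hom}_{\cE}^{\dual}(\cD,\cE)\tens{\cE}\cC)^{\omega,\dec}$ mapping to $\iota_i F_i.$ Since the SOD on $\cC$ induces an SOD on $\un{\Hom}_{\cE}^{\dual}(\cD,\cE)\tens{\cE}\cC,$ the triangle lifts to $X_1'\to X\to X_2'$ in this tensor product, and closure of $(\cdot)^{\omega,\dec}$ under cones (as the image of an exact functor) places $X$ in $(\cdot)^{\omega,\dec},$ yielding the required witness for $F.$

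For (ii), the key observation is that the right adjoint $\iota^R$ preserves compact objects. For $y\in\cC^{\omega}$ and a filtered colimit $\colim x_{\alpha}$ in $\cC',$ the adjunction together with continuity of $\iota$ and compactness of $y$ gives
\begin{equation*}
\Hom_{\cC'}(\iota^R(y),\colim x_{\alpha})\cong\Hom_{\cC}(y,\colim\iota(x_{\alpha}))\cong\colim\Hom_{\cC'}(\iota^R(y),x_{\alpha}),
\end{equation*}
so $\iota^R(y)\in\cC'^{\omega}.$ Hence $\iota^R$ is strongly continuous and $\id\tens{\cE}\iota^R:\un{\Hom}_{\cE}^{\dual}(\cD,\cE)\tens{\cE}\cC\to\un{\Hom}_{\cE}^{\dual}(\cD,\cE)\tens{\cE}\cC'$ preserves compact objects. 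Now for a compact morphism $F:\cD\to\cC'$ with $\cD\in(\Cat_{\cE}^{\cg})^{\omega_1},$ the composition $\iota\circ F$ is compact, so by absolute nuclearity of $\cC$ I obtain a compact witness $X\in(\un{\Hom}_{\cE}^{\dual}(\cD,\cE)\tens{\cE}\cC)^{\omega}$ for $\iota\circ F.$ Then $Y:=(\id\tens{\cE}\iota^R)(X)$ is compact in $\un{\Hom}_{\cE}^{\dual}(\cD,\cE)\tens{\cE}\cC',$ and under the evaluation map it goes to $\iota^R\circ\iota\circ F\cong F,$ establishing that $F$ is right trace-class over $\cE.$

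The main obstacle will be in (i), where I need to coherently lift the SOD triangle to the tensor product while staying within the decomposable-compact subcategory; this requires verifying that $\un{\Hom}_{\cE}^{\dual}(\cD,\cE)\tens{\cE}(-)$ preserves the SOD structure and that the induced triangle lives in $(\cdot)^{\omega,\dec}.$ For (ii) the proof is essentially immediate once $\iota^R$ is known to preserve compact objects.
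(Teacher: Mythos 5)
Your reduction to a finite, and then a two-step, SOD in part~\ref{rnuc_closed_under_SOD} follows the paper. The failure in both parts is that you attempt to bypass Corollary~\ref{cor:abs_nuclear_cat_equiv_cond} / Lemma~\ref{lem:map_from_compact_functor}, which is exactly where the real work of the paper lies.

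For part~\ref{rnuc_closed_under_SOD}: after producing the witnesses $X_1',X_2'$ for the two projections of $F$, you claim the SOD triangle in $\Fun_{\cE}^{LL}(\cD,\cC)$ ``lifts to $X_1'\to X\to X_2'$'' in $(\un{\Hom}_{\cE}^{\dual}(\cD,\cE)\tens{\cE}\cC)^{\omega,\dec}$. This is the gap. The canonical exact functor $(\un{\Hom}_{\cE}^{\dual}(\cD,\cE)\tens{\cE}\cC)^{\omega,\dec}\to\Fun_{\cE}^{LL}(\cD,\cC)$ is not fully faithful, so there is no reason the connecting morphism of the SOD triangle for $F$ lifts to a morphism $X_2'\to X_1'[1]$; without that lift, the cone $X$ simply cannot be formed. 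The paper carefully states only the weaker conclusion that $F$ lies in $\im\bigl((\un{\Hom}_{\cE}^{\dual}(\cD,\cE)\tens{\cE}\cC)^{\omega}\to\Fun_{\cE}^{LL}(\cD,\cC)\bigr)$ — this $\im$ is by definition an idempotent-complete stable subcategory of the target, so it is closed under cones without any lifting of morphisms — and then invokes Corollary~\ref{cor:abs_nuclear_cat_equiv_cond} to upgrade membership in the image to a genuine preimage in $(\cdot)^{\omega,\dec}$. That upgrade is precisely what Lemma~\ref{lem:map_from_compact_functor} buys you and is emphasized in Remark~\ref{rem:clarification_of_cond_for_abs_nuclear_cat} as being highly non-trivial.

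For part~\ref{rnuc_closed_under_taking_subcats}: the claim ``$\iota^R$ preserves compact objects'' is false, and the displayed adjunction computation is the source of the error. You write $\Hom_{\cC'}(\iota^R(y),\colim x_\alpha)\cong\Hom_{\cC}(y,\colim\iota(x_\alpha))$, which would require $\iota^R\dashv\iota$; but the adjunction in play is $\iota\dashv\iota^R$, which gives $\Hom_{\cC'}(z,\iota^R(y))\cong\Hom_{\cC}(\iota z,y)$ and says nothing about $\Hom_{\cC'}(\iota^R(y),-)$. A concrete counterexample: with $\cE=\Sp$, $\cC=\QCoh(\A^1_k)$, $\cC'=\QCoh_{\{0\}}(\A^1_k)$, the inclusion $\iota$ is fully faithful and strongly continuous, but $\iota^R=R\Gamma_{\{0\}}$ sends the compact generator $\cO_{\A^1}$ to $(k[t,t^{-1}]/k[t])[-1]$, which is not compact in $\cC'$. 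Hence $\id\tens_{\cE}\iota^R$ does not preserve compacts, $Y$ need not be compact, and your argument does not establish that $F$ is right trace-class. The paper's proof cannot avoid the factorization of $F$ through an intermediate $\cD'$ together with another application of Lemma~\ref{lem:map_from_compact_functor}: that is what allows one to kill the part of the witness landing in $\cC/\cC'$ and descend to a witness in $(\un{\Hom}_{\cE}^{\dual}(\cD,\cE)\tens{\cE}\cC')^{\omega}$.
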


\begin{proof}
We prove \ref{rnuc_closed_under_SOD}. Arguing as in the proof of Proposition \ref{prop:properties_of_nuc_E_modules} we see that the class of absolutely nuclear left $\cE$-modules is closed under filtered colimits in $\Cat_{\cE}^{\cg}.$ Using this we reduce to the case of a finite semi-orthogonal decomposition, which then reduces to the case of $2$ components. So let $\cC=\la\cC_1,\cC_2\ra$ be an SOD in $\Cat_{\cE}^{\cg},$ and denote by $i_1:\cC_1\to\cC$ and $i_2:\cC_2\to\cC$ the inclusion functors. We also denote by $i_1^L$ resp. $i_2^R$ the left adjoint to $i_1$ resp. the right adjoint to $i_2$ (both $i_1^L$ and $i_2^R$ are strongly continuous and $\cE$-linear). Suppose that $\cC_1$ and $\cC_2$ are absolutely nuclear over $\cE.$ To prove the absolute nuclearity of $\cC,$ consider a functor $F:\cD\to\cC,$ which is a compact morphism in $\Cat_{\cE}^{\cg}.$ The absolute nuclearity of $\cC_1$ and $\cC_2$ implies the inclusions
\begin{equation}\label{eq:incl_for_C_1}
i_1^L\circ F\in \im((\un{\Hom}_{\cE}^{\dual}(\cD,\cE)\tens{\cE}\cC_1)^{\omega,\dec}\to \Fun_{\cE}^{LL}(\cD,\cC_1)),
\end{equation} 
\begin{equation}\label{eq:incl_for_C_2}
	i_2^R\circ F\in \im((\un{\Hom}_{\cE}^{\dual}(\cD,\cE)\tens{\cE}\cC_2)^{\omega,\dec}\to \Fun_{\cE}^{LL}(\cD,\cC_2)),
\end{equation}
We have the cofiber sequence
\begin{equation*}
i_2\circ i_2^R\circ F\to F\to i_1\circ i_1^L\circ F
\end{equation*}
in $\Fun_{\cE}^{LL}(\cD,\cC).$ Hence, \eqref{eq:incl_for_C_1} and \eqref{eq:incl_for_C_2} imply the inclusion
\begin{equation*}
F\in \im((\un{\Hom}_{\cE}^{\dual}(\cD,\cE)\tens{\cE}\cC)^{\omega}\to \Fun_{\cE}^{LL}(\cD,\cC)).
\end{equation*}
By Corollary \ref{cor:abs_nuclear_cat_equiv_cond} this implies the absolute nuclearity of $\cC.$

We prove \ref{rnuc_closed_under_taking_subcats}. Let $F:\cD\to\cC'$ be a functor which is a compact morphism in $\Cat_{\cE}^{\cg}.$ We choose a factorization $\cD\xto{F'}\cD'\xto{F''}\cC'$ of $F$ in $\Cat_{\cE}^{\cg},$ such that both $F'$ and $F''$ are compact morphisms in $\Cat_{\cE}^{\cg}.$ The absolute nuclearity of $\cC$ implies that there is an object $X\in(\un{\Hom}_{\cE}^{\dual}(\cD',\cE)\tens{\cE}\cC)^{\omega,\dec},$ such that its image in $\Fun_{\cE}^{LL}(\cD',\cC)$ is isomorphic to the composition $\iota\circ F''.$ Denote by $\bar{X}\in (\un{\Hom}_{\cE}^{\dual}(\cD',\cE)\tens{\cE}\cC/\cC')^{\omega,\dec}$ the image of $X.$ By Lemma \ref{lem:map_from_compact_functor}, the functor
\begin{equation*}
(\un{\Hom}_{\cE}^{\dual}(\cD',\cE)\tens{\cE}\cC/\cC')^{\omega,\dec}\to (\un{\Hom}_{\cE}^{\dual}(\cD,\cE)\tens{\cE}\cC/\cC')^{\omega,\dec}
\end{equation*}
factors through
\begin{equation*}
\cV = \im((\un{\Hom}_{\cE}^{\dual}(\cD',\cE)\tens{\cE}\cC/\cC')^{\omega,\dec}\to \Fun_{\cE}^{LL}(\cD',\cC/\cC')).
\end{equation*}
By construction, the image of $\bar{X}$ in $\cV$ vanishes, hence also the image of $\bar{X}$ in $(\un{\Hom}_{\cE}^{\dual}(\cD,\cE)\tens{\cE}\cC/\cC')^{\omega,\dec}$ vanishes. But we have a short exact sequence
\begin{equation*}
0\to \un{\Hom}_{\cE}^{\dual}(\cD,\cE)\tens{\cE}\cC'\to \un{\Hom}_{\cE}^{\dual}(\cD,\cE)\tens{\cE}\cC\to\un{\Hom}_{\cE}^{\dual}(\cD,\cE)\tens{\cE}\cC/\cC'\to 0
\end{equation*}
in $\Cat_{\st}^{\dual}.$ It follows that the image of $X$ in $(\un{\Hom}_{\cE}^{\dual}(\cD,\cE)\tens{\cE}\cC)^{\omega}$ is isomorphic to an image of some object $Y\in (\un{\Hom}_{\cE}^{\dual}(\cD,\cE)\tens{\cE}\cC')^{\omega}.$ By construction, the image of $Y$ in $\Fun_{\cE}^{LL}(\cD',\cC')$ is isomorphic to $F.$ This proves the nuclearity of $\cC'$ over $\cE.$
\end{proof}

Essentially the only remaining ingredient for the proof of Theorem \ref{th:dualizability_and_rigidity} is the following construction of a ``resolution of length $1$'' by nuclear left $\cE$-modules. We need some notation. Let $\cA$ be a small category enriched over $\cE,$ and denote the $\cE$-valued morphisms by $\cA(x,y)\in\cE$ for $x,y\in\cA.$ Given a presentable left $\cE$-module $\cD,$ we can consider $\cD$ as a category enriched over $\cE,$ and denote by $\Fun(\cA,\cD)$ the category of $\cE$-enriched functors. We also have the $\cE^{mop}$-enriched category $\cA^{op},$ and the category $\Fun(\cA^{op},\cE)$ is naturally a left $\cE$-module, which is relatively compactly generated. More precisely, for $x\in\cA$ we put $\h_x=\cA(-,x)\in\Fun(\cA^{op},\cE).$ Then $\h_x$ is compact, and the collection $\{\h_x\}_{x\in\cA}$ generates $\Fun(\cA^{op},\cE)$ as a left $\cE$-module. For a presentable left $\cE$-module $\cD$ we have an equivalence
\begin{equation*}
\Fun(\cA,\cD)\simeq\Fun_{\cE}^L(\Fun(\cA^{op},\cE),\cD).
\end{equation*}

\begin{prop}\label{prop:resolution_by_nuclear}
Let $\cA$ be a small category enriched over $\cE.$ We define the $\cE$-enriched category $\cB$ as follows. Its set of objects is given by 
\begin{equation*}
\Ob(\cB)=\Ob(\cA)\times\N.
\end{equation*} 
The morphisms in $\cB$ are given by
\begin{equation*}
\cB((x,n),(y,m)) = \begin{cases} \cA(x,y) & \text{for }n<m;\\
1_{\cE} & \text{for }n=m,\,x=y;\\
0 & \text{else}.\end{cases}
\end{equation*}
The composition in $\cB$ is induced by the composition in $\cA$ in the natural way. 

\begin{enumerate}[label=(\roman*),ref=(\roman*)]
	\item Consider the strongly continuous $\cE$-linear functor $\Phi:\Fun(\cB^{op},\cE)\to \Fun(\cA^{op},\cE),$ given on the representable functors by $\h_{(x,n)}\mapsto \h_x.$ Then we have a short exact sequence in $\Cat_{\cE}^{\cg}:$
	\begin{equation}\label{eq:resolution_by_abs_nuclear}
		0\to \cC\to\Fun(\cB^{op},\cE)\xto{\Phi}\Fun(\cA^{op},\cE)\to 0.
	\end{equation}
	Here $\cC\subset\Fun(\cB^{op},\cE)$ is the localizing $\cE$-submodule generated by the compact objects $Z_{(x,n)} = \Fiber(\h_{(x,n)}\xto{1_x} \h_{(x,n+1)}),$ $x\in\Ob(\cA),$ $n\geq 0.$ \label{description_of_resolution}
	\item The relatively compactly generated left $\cE$-modules $\Fun(\cB^{op},\cE)$ and $\cC$ are nuclear over $\cE.$ \label{nuclearity_of_resolution}
	\item  Suppose that in addition the set $\Ob(\cA)$ is at most countable and the objects $\cA(x,y)\in\cE$ are $\omega_1$-compact for $x,y\in\Ob(\cA).$ Then both $\Fun(\cB^{op},\cE)$ and $\cC$ are $\omega_1$-compact in $\Cat_{\cE}^{\cg},$ hence they are basic nuclear over $\cE.$ \label{basic_nuclearity_of_resolution}
\end{enumerate}
\end{prop}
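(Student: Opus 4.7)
For \ref{description_of_resolution}, I construct the right adjoint $\Psi = \Phi^R : \Fun(\cA^{op},\cE) \to \Fun(\cB^{op},\cE)$ as restriction along the projection $\pi:\cB\to\cA$, $(x,n)\mapsto x$; explicitly, $\Psi(F)(x,n) = F(x)$ with transition maps inherited from $F$. The functor $\Psi$ is continuous and $\cE$-linear by inspection, and the Yoneda computation $\Hom(\Phi(\h_{(x,n)}), F) = F(x) = \Psi(F)(x,n) = \Hom(\h_{(x,n)}, \Psi(F))$ confirms $\Psi = \Phi^R$. Full faithfulness of $\Psi$ reduces to checking the counit $\Phi\Psi\to\id$ on generators: using $\Psi(\h_x) \simeq \indlim[n]\h_{(x,n)}$ (verified levelwise on $\cB^{op}$ from $\cB((y,m),(x,n)) = \cA(y,x)$ for $m<n$) and $\Phi(\h_{(x,n)}) = \h_x$, one obtains $\Phi\Psi(\h_x)\simeq\indlim[n]\h_x \simeq \h_x$. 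Hence $\Phi$ is a Verdier quotient in $\Cat_{\cE}^{\cg}$. The inclusion $\cC\subset\Ker(\Phi)$ is clear since $\Phi$ sends each structure map $\h_{(x,n)}\to\h_{(x,n+1)}$ to $\id_{\h_x}$, so $Z_{(x,n)}\mapsto 0$; conversely, modulo $\cC$ all structure maps $\h_{(x,n)}\to\h_{(x,n+1)}$ become isomorphisms, and the universal property of $\Fun(\cA^{op},\cE)$ yields a canonical inverse to the induced $\bar\Phi$, proving $\cC = \Ker(\Phi)$ and the short exact sequence \eqref{eq:resolution_by_abs_nuclear}.

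For \ref{nuclearity_of_resolution}, I reduce to the countable case by writing $\cA = \indlim[\alpha] \cA_\alpha$ as a filtered colimit of $\cE$-enriched subcategories with countable object sets and $\omega_1$-compact morphism objects; this induces $\Fun(\cB^{op},\cE) \simeq \indlim[\alpha] \Fun(\cB_\alpha^{op},\cE)$ and $\cC \simeq \indlim[\alpha] \cC_\alpha$ in $\Cat_{\cE}^{\cg}$, and by Proposition \ref{prop:properties_of_nuc_E_modules} nuclearity passes to filtered colimits. In the countable case, the main task is to exhibit a basic-nuclear presentation $\Fun(\cB^{op},\cE) \simeq \indlim[N] \cD_N$ in $\Cat_{\cE}^{\cg}$ with right trace-class transitions $\cD_N\to\cD_{N+1}$ over $\cE$. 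This is the main technical obstacle, and it exploits the staircase structure of $\cB$: the fact that each inter-level map $(x,n)\to(x,n+1)$ in $\cB$ is the unit of $\cE$ (rather than an arbitrary element of $\cA(x,x)$) forces the composition bimodules governing the transition functors to be compact, by an argument in the spirit of Proposition \ref{prop:Toen_Vaquie_compact_morphism}. A parallel basic-nuclear presentation of $\cC$ using the $Z_{(x,n)}$, or alternatively an upgrade of the preceding argument to absolute nuclearity of $\Fun(\cB^{op},\cE)$ combined with Corollary \ref{cor:properties_of_class_of_abs_nuclear}\ref{rnuc_closed_under_taking_subcats}, handles nuclearity of $\cC$.

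Finally, for \ref{basic_nuclearity_of_resolution}, under the countable and $\omega_1$-compactness hypotheses on $\cA$, the $\cE$-enriched category $\cB$ also has at most countably many objects with $\omega_1$-compact morphism objects in $\cE$. Hence the compact generators $\{\h_{(x,n)}\}$ and $\{Z_{(x,n)}\}$ of $\Fun(\cB^{op},\cE)^\omega$ and $\cC^\omega$ respectively form small idempotent-complete stable $\cE$-enriched categories with countably many isomorphism classes of objects and $\omega_1$-compact morphism spectra, so by standard criteria (verifying that $\ev$ and $\coev$ preserve $\omega_1$-compact objects via these presentations) both $\Fun(\cB^{op},\cE)$ and $\cC$ are $\omega_1$-compact in $\Cat_{\cE}^{\cg}$. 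Combined with part \ref{nuclearity_of_resolution} and Proposition \ref{prop:properties_of_nuc_E_modules}, which identifies the $\omega_1$-compact objects of $\Cat_{\cE}^{\cg,\nuc}$ with basic nuclear $\cE$-modules and states that $\Cat_{\cE}^{\cg,\nuc}\to\Cat_{\cE}^{\cg}$ reflects $\omega_1$-compactness, this yields basic nuclearity.
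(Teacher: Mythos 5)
Your treatment of parts \ref{description_of_resolution} and \ref{basic_nuclearity_of_resolution} is essentially the same as the paper's, though in \ref{description_of_resolution} the last step (``the universal property of $\Fun(\cA^{op},\cE)$ yields a canonical inverse to the induced $\bar\Phi$'') is a bit optimistic as stated; the paper instead directly computes $\Fiber(\h_{(x,k)}\to\Phi^R\Phi(\h_{(x,k)}))\cong\indlim_{n\geq k}\Fiber(\h_{(x,k)}\to\h_{(x,n)})$ and observes each term is filtered by the $Z_{(x,i)}$, which is cleaner and avoids the question of why the candidate inverse is a two-sided inverse.

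The real issue is part \ref{nuclearity_of_resolution}, where you have identified the hard step but not actually carried it out. You write that nuclearity ``exploits the staircase structure of $\cB$'' via ``an argument in the spirit of Proposition \ref{prop:Toen_Vaquie_compact_morphism},'' and defer the main technical work to producing a basic-nuclear presentation $\indlim_N\cD_N$ with right trace-class transitions. This is not a proof: no such filtration is constructed, and it is far from clear that one exists directly (inclusions of full localizing sub-$\cE$-modules are rarely right trace-class). Moreover, your preliminary reduction to a countable $\cA$ via an exhaustion by $\cE$-enriched subcategories with $\omega_1$-compact morphism objects does not obviously work when $\cA(x,x)$ itself fails to be $\omega_1$-compact, since such subcategories cannot shrink $\cA(x,x)$.

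The insight you are missing — and what the design of $\cB$ is for — is that $\Fun(\cB^{op},\cE)$ carries a semi-orthogonal decomposition indexed by $\Ob(\cB)$ with \emph{every piece equal to $\cE$}: the functors $G_{x,n}(w)=w\otimes\h_{(x,n)}$ are fully faithful because $\cB((x,n),(x,n))=1_{\cE}$, their right adjoints are evaluation at $(x,n)$, and $G_{x,n}^R\circ G_{y,m}=0$ whenever $n>m$ or $n=m$, $x\ne y$. This yields a $P$-indexed SOD (with the staircase partial order), so absolute nuclearity of $\Fun(\cB^{op},\cE)$ follows from Corollary \ref{cor:properties_of_class_of_abs_nuclear}\ref{rnuc_closed_under_SOD} applied to the trivially absolutely nuclear piece $\cE$, and nuclearity of $\cC$ then follows from Corollary \ref{cor:properties_of_class_of_abs_nuclear}\ref{rnuc_closed_under_taking_subcats}. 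This works for arbitrary $\cA$, so no reduction to the countable case is needed for \ref{nuclearity_of_resolution}; the countability hypotheses enter only in \ref{basic_nuclearity_of_resolution} to upgrade nuclearity to basic nuclearity via $\omega_1$-compactness.
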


\begin{proof}
We prove \ref{description_of_resolution}. The argument showing the short exactness of \eqref{eq:resolution_by_abs_nuclear} is quite standard, we give it for completeness. By definition of $\Phi,$ its right adjoint is given by $\Phi^R(M)((x,n)) = M(x).$ In particular, we have an isomorphism
\begin{equation}\label{eq:F^R_of_A_as_colimit}
\Phi^R(\h_x)\xto{\sim}\indlim[n]h_{(x,n)},\quad x\in\Ob(\cA).
\end{equation}
which is seen by evaluating at each $(y,k)\in\Ob(\cB).$ Hence, the adjunction counit is an isomorphism on $\h_x,$ i.e. $\Phi(\Phi^R(\h_x))\xto{\sim} \h_x$ for $x\in\Ob(\cA).$ Therefore, $\Phi\circ \Phi^R\cong\id,$ i.e. $\Phi$ is a quotient functor. Furthermore, it follows from \eqref{eq:F^R_of_A_as_colimit} that for each $(x,k)\in\Ob(\cB)$ we have
\begin{equation*}
\Fiber(\h_{(x,k)}\to \Phi^R(\Phi(\h_{(x,k)})))\cong \indlim[n\geq k] \Fiber(\h_{(x,k)}\xto{1_x} \h_{(x,n)}).
\end{equation*}
For each $n\geq k$ the object $\Fiber(\h_{(x,k)}\xto{1_x} \h_{(x,n)})$ is contained in $\cC,$ since has a finite filtration with subquotients $Z_{(x,i)},$ $k\leq i\leq n-1.$ This proves that the kernel of $\Phi$ is exactly $\cC.$

Next, we prove \ref{nuclearity_of_resolution}. By construction, for each $(x,n)\in\Ob(\cB)$ the functor $G_{x,n}:\cE\to \Fun(\cB^{op},\cE),$ $G_{x,n}(w)=w\otimes h_{(x,n)},$ is fully faithful. Its right adjoint is given by the evaluation at $(x,n).$ In particular, if $n>m$ or $n=m,$ $x\ne y,$ then $G_{x,n}^R\circ G_{y,m} = 0.$ This shows that we have semi-orthogonal decomposition, indexed by $\Ob(\cB):$
\begin{equation*}
\Fun(\cB^{op},\cE)=\la G_{x,n}(\cE); (x,n)\in\Ob(\cB)\ra
\end{equation*}
Here the partial order on $\Ob(\cB)$ is given by
\begin{equation*}
(x,n)\leq (y,m)\quad\text{if}\quad n< m\text{ or }(x,n)=(y,m).
\end{equation*}
Since $\cE$ is absolutely nuclear over $\cE,$ so is $\Fun(\cB^{op},\cE)$ by Corollary \ref{cor:properties_of_class_of_abs_nuclear}. Moreover, by loc. cit. the left $\cE$-module $\cC$ is nuclear. This proves \ref{nuclearity_of_resolution}.

Finally, to prove \ref{basic_nuclearity_of_resolution} note that by assumption both $\cE$-modules $\Fun(\cB^{op},\cE)$ and $\cC$ have countable generating collections of compact objects, and the internal $\Hom$ over $\cE$ between these objects takes values in $\cE^{\omega_1}.$ Hence, both $\Fun(\cB^{op},\cE)$ and $\cC$ are $\omega_1$-compact in $\Cat_{\cE}^{\cg}.$ The basic nuclearity follows from nuclearity and Proposition \ref{prop:properties_of_nuc_E_modules}. 
\end{proof}

\begin{proof}[Proof of Theorem \ref{th:dualizability_and_rigidity}]
We prove \ref{E_0_rigidity}. It is convenient to introduce the following naive version of $2$-motives $2\hy\Mot^{\naive}$ as a category enriched over $\Pr^L_{\st}.$ The objects are rigid $\bE_1$-monoidal categories, and the morphisms are given by $2\hy\Mot^{\naive}(\cE_1,\cE_2) = \Mot^{\loc}_{\cE_1^{mop}\otimes \cE_2}.$ To prove part \ref{E_0_rigidity} of the theorem we will apply Theorem \ref{th:conditions_for_E_0_rigidity} to the category $2\hy\Mot^{\naive}$ and to its objects $X=\Sp,$ $Y=\cE.$ It suffices to prove that the conditions of loc. cit. are satisfied.

We know the compactness of $1_{\Sp}=\cU_{\loc}(\Sp)\in \Mot^{\loc}=2\hy\Mot^{\naive}(\Sp,\Sp)$ by \cite{BGT}, i.e. the condition \ref{1_X_compact} of Theorem \ref{th:conditions_for_E_0_rigidity} is satisfied. Next, let $\cD,\cD'\in\Cat_{\cE}^{\dual}$ and let $F:\cD\to\cD'$ be a strongly continuous $\cE$-linear functor which is right trace-class over $\cE.$ Then the induced map $\cU_{\loc}(\cD)\to\cU_{\loc}(\cD')$ is right trace-class in $2\hy\Mot^{\naive}(\Sp,\cE).$ It follows that if $\cD\in\Cat_{\cE}^{\cg}$ is basic nuclear, then $\cU_{\loc}(\cD)\in 2\hy\Mot^{\naive}(\Sp,\cE)$ is a sequential colimit with right trace-class transition maps. Thus, to check the condition \ref{cond_right_trace_class} of Theorem \ref{th:conditions_for_E_0_rigidity}, it suffices to prove that the category $\Mot^{\loc}_{\cE}$ is generated as a localizing subcategory by the objects of the form $\cU_{\loc}(\cD),$ where $\cD$ is basic nuclear over $\cE.$ Moreover, once this is established, the condition \ref{cond_left_trace_class} of Theorem \ref{th:conditions_for_E_0_rigidity} would follow by replacing $\cE$ with $\cE^{mop}.$

Now, any $\cD\in\Cat_{\cE}^{\cg}$ is a filtered colimit of full sub-$\cE$-modules generated by a single compact object (equivalently, by a finite collection of compact objects). Hence, the category $\Mot^{\loc}_{\cE}$ is generated by objects of the form $\cU_{\loc}(\Mod\hy A)$ for an $\bE_1$-algebra $A.$ Moreover, any such $A$ is an filtered colimit of $\omega_1$-compact $\bE_1$-algebras, so $\Mot^{\loc}_{\cE}$ is generated by the collection $\{\cU_{\loc}(\Mod\hy A),\,A\in\Alg_{\bE_1}(\cE)^{\omega_1}\}.$ By Proposition \ref{prop:resolution_by_nuclear}, for any such $A$ we have a short exact sequence of the form
\begin{equation*}
0\to\cC\to\cD\to\Mod\hy A\to 0
\end{equation*}
in $\Cat_{\cE}^{\cg},$ where $\cC$ and $\cD$ are basic nuclear over $\cE.$ It gives a cofiber sequence
\begin{equation*}
\cI_{\loc}(\cC)\to\cU_{\loc}(\cD)\to\cU_{\loc}(\Mod\hy A),
\end{equation*}
which proves that the category $\Mot^{\loc}_{\cE}$ is generated by the motives of basic nuclear left $\cE$-modules. This proves part \ref{E_0_rigidity} of the theorem.

Next, we deduce part \ref{E_1_rigidity} of the theorem from the proof of part \ref{E_0_rigidity}. Suppose that $\cE$ is a rigid $\bE_2$-monoidal category, so the categories $\Cat_{\cE}^{\dual},$ $\Cat_{\cE}^{\cg}$ and $\Mot^{\loc}_{\cE}$ are $\bE_1$-monoidal. We know that the unit object $\cU_{\loc}(\cE)$ is compact in $\Mot^{\loc}_{\cE}.$ It remains to prove that $\Mot^{\loc}_{\cE}$ is generated by sequential colimits in which the transition maps are both left and right trace-class.

We make the following almost tautological observation: if a morphism $F:\cD\to\cD'$ in $\Cat_{\cE}^{\dual}$ comes from an object $\tilde{F}\in(\un{\Hom}_{\cE}^{\dual}(\cD,\cE)\tens{\cE}\cD')^{\omega},$ then $F$ is both left and right trace-class in the $\bE_1$-monoidal category $\Cat_{\cE}^{\dual}.$ Hence, in this case $\cU_{\loc}(F):\cU_{\loc}(\cD)\to\cU_{\loc}(\cD')$ is both left and right trace-class in $\Mot^{\loc}_{\cE}.$ Therefore, if $\cC\in\Cat_{\cE}^{\cg}$ is basic nuclear in the sense of Definition \ref{def:nuclear_E_modules}, then $\cU_{\loc}(\cC)$ is a sequential colimit in which the transition maps are both left and right trace-class in $\Mot^{\loc}_{\cE}.$ By the above, the category $\Mot^{\loc}_{\cE}$ is generated by the motives of basic nuclear $\cE$-modules, which proves the rigidity of $\Mot^{\loc}_{\cE}.$   
\end{proof}

\subsection{Proper $\cE$-modules are nuclear}

As in the previous subsection, we fix a rigid $\bE_1$-monoidal category $\cE.$ The following statement will be very useful for computations of morphisms in the categories of localizing motives.

\begin{prop}\label{prop:proper_are_nuclear}
Let $\cE$ be a rigid $\bE_1$-monoidal category, and let $\cC\in\Cat_{\cE}^{\cg}$ be a proper relatively compactly generated left $\cE$-module. Then $\cC$ is nuclear over $\cE.$
\end{prop}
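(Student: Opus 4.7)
The plan is to reduce to the case $\cC \simeq \Mod\hy A$ for a proper $\bE_1$-algebra $A \in \Alg_{\bE_1}(\cE)$, and then to present $\Mod\hy A$ as a sequential colimit of modules over smooth algebras so that Proposition \ref{prop:nuclear_equiv_cond} applies.

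First I would reduce to the single-algebra case. By Proposition \ref{prop:properties_of_nuc_E_modules} the subcategory $\Cat_{\cE}^{\cg,\nuc} \subset \Cat_{\cE}^{\cg}$ is closed under filtered colimits, so it suffices to treat the full $\cE$-submodules $\cC_S \subseteq \cC$ generated by finite subsets $S \subseteq \cC^{\omega}$, since $\cC \simeq \indlim_S \cC_S$. Each such $\cC_S \simeq \Mod\hy A_S$ with $A_S = \un{\End}_{\cC/\cE}(\bigoplus_{x \in S} x)$, and properness of $\cC$ makes every $\un{\Hom}_{\cC/\cE}(s,t)$ compact in $\cE$ for $s, t \in S$, so $A_S$ has compact underlying object and $\cC_S$ is again proper. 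Hence we may assume $\cC = \Mod\hy A$ for some $A \in \Alg_{\bE_1}(\cE)$ with $A \in \cE^{\omega}$.

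Second I would resolve $A$ by smooth algebras. By the compact assembledness of $\Alg_{\bE_1}(\cE)$ (which follows from \cite[Theorem 1.14]{E25} and is invoked in the proof of Lemma \ref{lem:map_from_compact_functor}), we can write $\hat{\cY}_{\Alg_{\bE_1}(\cE)}(A) \simeq \inddlim_n A_n$ with each $A_n$ compact in $\Alg_{\bE_1}(\cE)$ and each transition $A_n \to A_{n+1}$ compact in $\Alg_{\bE_1}(\cE)$. Proposition \ref{prop:Toen_Vaquie_compact_morphism} then guarantees that each $A_n$ is smooth over $\cE$. Using the strong continuity of the functor $B \mapsto \Mod\hy B$ from $\Alg_{\bE_1}(\cE)$ to $\Cat_{\cE}^{\cg}$, we obtain the presentation
\begin{equation*}
\hat{\cY}_{\Cat_{\cE}^{\cg}}(\Mod\hy A) \simeq \inddlim_n \Mod\hy A_n.
\end{equation*}

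Third I would check that each transition $F_n \colon \Mod\hy A_n \to \Mod\hy A_{n+1}$, given by extension of scalars, is right trace-class over $\cE$. By definition of smoothness, $\coev_{\Mod\hy A_n/\cE}(\bS)$ is a compact object of $\un{\Hom}_{\cE}^{\dual}(\Mod\hy A_n, \cE) \tens{\cE} \Mod\hy A_n$, and this object classifies the identity functor $\id_{\Mod\hy A_n}$, so $\id_{\Mod\hy A_n}$ is right trace-class over $\cE$. Applying the strongly continuous functor $\id \tens{\cE} F_n$ to this witness produces a compact object of $\un{\Hom}_{\cE}^{\dual}(\Mod\hy A_n, \cE) \tens{\cE} \Mod\hy A_{n+1}$ classifying $F_n = F_n \circ \id_{\Mod\hy A_n}$, exhibiting $F_n$ as right trace-class. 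Proposition \ref{prop:nuclear_equiv_cond} then gives that $\Mod\hy A$ is basic nuclear, hence nuclear. The main obstacle is the coherent passage between the compactly assembled structures on $\Alg_{\bE_1}(\cE)$ and $\Cat_{\cE}^{\cg}$, which is essentially the same input already used in the proof of Lemma \ref{lem:map_from_compact_functor}.
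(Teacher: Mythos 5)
Your Step~1 reduction is fine and your overall plan (colimit of smooth modules, then trace-class transitions) is in the spirit of the paper's proof, which presents $\cC\simeq\indlim_i\cC_i$ with $\cC_i\in(\Cat_{\cE}^{\cg})^{\omega}$ via Proposition~\ref{prop:proper_are_colimits_of_finitely_presented}, invokes Proposition~\ref{prop:Toen_Vaquie_compact_morphism} for smoothness, and then uses Proposition~\ref{prop:from_smooth_to_proper}. However, there are two genuine problems.

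First, in Step~2 you claim that compact assembledness of $\Alg_{\bE_1}(\cE)$ lets you write $\hat{\cY}(A)\simeq\inddlim_n A_n$ with each $A_n$ \emph{compact} in $\Alg_{\bE_1}(\cE)$. That is not what compact assembledness gives: if $\cE$ is not compactly generated, the terms in such an ind-presentation are only $\omega_1$-compact (this is exactly what the paper assumes in the proof of Lemma~\ref{lem:map_from_compact_functor}, where $B,C$ are assumed only $\omega_1$-compact). The fact that one can upgrade to genuinely compact $A_n$ is a nontrivial statement that uses properness of $A$ and the deformed tensor algebra construction; it is precisely the content of Proposition~\ref{prop:proper_are_colimits_of_finitely_presented}, which you would need to cite or reprove.

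Second, Step~3 is incorrect as stated: smoothness of $\Mod\hy A_n$ over $\cE$ does \emph{not} imply that $\id_{\Mod\hy A_n}$ is right trace-class over $\cE$. Smoothness says $\coev_{\cC/\cE}(\bS)$ is compact in $\cC^{\vee}\tens{\cE}\cC$; being right trace-class requires a compact lift in $\un{\Hom}_{\cE}^{\dual}(\cC,\cE)\tens{\cE}\cC$, which is a strictly smaller category in general (it equals $\cC^{\vee}\tens{\cE}\cC$ only when $\cC$ is also proper). Concretely, over $\cE=D(\mk)$ the category $\cC=D(\mk[x])$ is smooth but not proper; here $\un{\Hom}_{\mk}^{\dual}(\cC,D(\mk))\simeq D_{\tors}(\mk[x])$, and the diagonal bimodule $\mk[x]$ is not in the image of $(D_{\tors}(\mk[x])\tens{\mk}D(\mk[x]))^{\omega}$, so $\id_{\cC}$ is not right trace-class. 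The paper avoids this by showing that the functor $\cC_i\to\cC$ (into the ambient proper module, not the next step $\cC_{i+1}$) is right trace-class, applying Proposition~\ref{prop:from_smooth_to_proper} with \emph{smooth source and proper target}. Your argument should be restructured to use that proposition: any compact morphism $\cD\to\cC$ factors through some $\cC_i$, and $\cC_i\to\cC$ is right trace-class because $\cC_i$ is smooth and $\cC$ is proper.
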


To prove this we will need some general properties of proper $\cE$-modules. The following statement is non-trivial only when $\cE$ is not compactly generated.

\begin{prop}\label{prop:proper_are_colimits_of_finitely_presented}
Let $\cC\in\Cat_{\cE}^{\cg}$ be proper over $\cE.$ Then $\cC\simeq\indlim[i\in I]\cC_i,$ where $I$ is directed and $\cC_i\in(\Cat_{\cE}^{\cg})^{\omega}$ for $i\in I.$
\end{prop}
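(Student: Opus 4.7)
The plan is to exhibit $\cC$ as a directed colimit in $\Cat_{\cE}^{\cg}$ of $\cE$-modules generated by finitely many compact objects, using properness to show each such piece is $\omega$-compact. First I would fix a set $\{x_s\}_{s\in S}$ of compact $\cE$-linear generators of $\cC.$ For each finite $T\subseteq S,$ let $\cC_T\subseteq \cC$ be the strongly continuous $\cE$-submodule generated by $\{x_s:s\in T\};$ equivalently, $\cC_T\simeq \Mod\hy B_T$ in $\Cat_{\cE}^{\cg}$ where $B_T = \un{\End}_{\cC/\cE}(\biggplus[s\in T]x_s)\in \Alg_{\bE_1}(\cE).$ Since the $x_s$ jointly generate $\cC,$ the natural map $\colim_T \cC_T \to \cC$ in $\Cat_{\cE}^{\cg}$ is an equivalence, with $T$ ranging over the directed poset of finite subsets of $S.$

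Properness of $\cC$ over $\cE$ (i.e., strong continuity of $\ev_{\cC/\cE}$) is equivalent, upon testing against the compact generators $x\boxtimes y^{\vee}$ of $\cC\otimes\cC^{\vee},$ to the condition that $\un{\Hom}_{\cC/\cE}(x,y)\in \cE^{\omega}$ for all compact $x,y\in \cC^{\omega}.$ Hence each $B_T$ lies in $\cE^{\omega}$ as a finite direct sum of such internal Homs. It then remains to verify that $\Mod\hy B\in (\Cat_{\cE}^{\cg})^{\omega}$ whenever $B\in \Alg_{\bE_1}(\cE)$ has compact underlying $\cE$-object, after which the desired presentation of $\cC$ follows.

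When $\cE$ is compactly generated, this last step is essentially formal via the equivalence $\Cat_{\cE}^{\cg}\simeq \Cat_{\cE^{\omega}}^{\perf}:$ one takes $B\in \Alg_{\bE_1}(\cE^{\omega}),$ and $\Perf(B)$ is manifestly $\omega$-compact in $\Cat_{\cE^{\omega}}^{\perf}.$ This explains why the statement is non-trivial only in the non-compactly-generated setting, which is the main obstacle of the proof. Here the delicate point is that $B\in \cE^{\omega}$ need not imply $B\in \Alg_{\bE_1}(\cE)^{\omega},$ so one cannot simply invoke the strong continuity of $A\mapsto\Mod\hy A$ (as used in the proof of Lemma \ref{lem:map_from_compact_functor}). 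To circumvent this, I would combine that strong continuity with the compact assembly of both $\Alg_{\bE_1}(\cE)$ and $\Cat_{\cE}^{\cg}$ from \cite[Theorem 1.14]{E25}: strong continuity of $A\mapsto\Mod\hy A$ identifies the canonical compact approximation $\hat{\cY}(\Mod\hy B)\in\Ind(\Cat_{\cE}^{\cg})$ with the image of $\hat{\cY}(B)\in\Ind(\Alg_{\bE_1}(\cE)),$ and object-level compactness of $B$ in $\cE$ provides enough control on this ind-system to upgrade it to a genuine directed-colimit presentation of $\Mod\hy B$ by $\omega$-compact objects of $\Cat_{\cE}^{\cg}.$
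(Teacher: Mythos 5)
The plan breaks down at a basic point. You claim that $\Mod\hy B\in(\Cat_{\cE}^{\cg})^{\omega}$ whenever $B\in\Alg_{\bE_1}(\cE)$ has compact underlying object in $\cE,$ and (in the second paragraph) that in the compactly generated case $\Perf(B)$ is ``manifestly $\omega$-compact in $\Cat_{\cE^{\omega}}^{\perf}$'' whenever $B\in\Alg_{\bE_1}(\cE^{\omega}).$ Both statements are false, and not only in the non-compactly-generated setting: take $\cE=D(\mk)$ for a field $\mk$ and $B=\mk[\veps]$ (dual numbers). Then $B\in\Perf(\mk)=\cE^{\omega},$ but $\Perf(B)$ is not compact in $\Cat_{\mk}^{\perf}$ --- indeed by Proposition~\ref{prop:Toen_Vaquie_compact_morphism} the compact objects of $\Cat_{\cE}^{\cg}$ are smooth, and $\mk[\veps]$ is not smooth. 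The correct reason the compactly generated case is easy is not that $\Perf(B)$ is compact; rather, $\Cat_{\cE^{\omega}}^{\perf}$ is itself compactly generated, so \emph{every} object is a filtered colimit of compact ones, and properness plays no role. This is why the proposition is interesting precisely in the non-compactly-generated case.

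Your last paragraph does correctly identify that $B\in\cE^{\omega}$ does not imply $B\in\Alg_{\bE_1}(\cE)^{\omega},$ but the proposed fix is not an argument. The compactly assembled structure only gives $\hat{\cY}(B)=\inddlim_i B_i\in\Ind(\Alg_{\bE_1}(\cE))$ with \emph{compact transition maps}; the $B_i$ themselves need not be compact objects, and there is no general mechanism to replace them by compact ones. Saying that compactness of $B$ in $\cE$ ``provides enough control ... to upgrade'' the ind-system is precisely the statement one is trying to prove, with no argument supplied. The paper instead constructs an explicit ind-system: set $A_0=1_{\cE}$ and $A_{n+1}=T^{\deff}_{A_n}(\Cone(A_n\to A))$ using the deformed tensor algebra of Definition~\ref{def:deformed_tensor_algebra}, so that $\indlim_n A_n\cong A.$ Compactness of each $A_n$ in $\Alg_{\bE_1}(\cE)$ is then proved by induction: by Proposition~\ref{prop:Toen_Vaquie_compact_morphism}, $A_n$ compact implies $A_n$ smooth, so $A\in\cE^{\omega}$ is compact as an $A_n\hy A_n$-bimodule, whence $\Cone(A_n\to A)$ is compact in $(A_n\hy\Mod\hy A_n)_{/A_n[1]}$ and $A_{n+1}$ is compact. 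This explicit inductive construction is the missing content in your proposal.
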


\begin{proof}
Since $\cC$ is a filtered colimit of $\cE$-submodules generated by a single compact object, we may and will assume that $\cC$ itself is generated by a single compact object. This means that $\cC\simeq \Mod\hy A,$ where $A\in\Alg_{\bE_1}(\cE).$ Properness of $\cC$ means that the underlying object of $A$ is compact in $\cE.$

It suffices to prove that $A\cong \indlim[n] A_n,$ where $A_n\in(\Alg_{\bE_1}(\cE))^{\omega}$ for $n\in\N.$ Such a sequence $(A_n)$ can be constructed inductively exactly as in \cite[Proof of Proposition 5.18]{E25}, using the deformed tensor algebra. We use the notation from Subsection \ref{ssec:deformed_tensor_algebra}. We put $A_0=1_{\cE},$ and $A_{n+1}=T_{A_n}^{\deff}(\Cone(A_n\to A))$ for $n\in\N.$ 
Here we consider $\Cone(A_n\to A)$ with its natural morphism to $A_n[1]$ as an object of the category $(A_n\hy\Mod\hy A_n)_{/A_n[1]}.$ By construction, on the underlying objects of $\cE$ we have a factorization $A_n\to A\to A_{n+1}.$ Hence, we have $\indlim[n]A_n\xto{\sim} A.$

It remains to check that $A_n$ is compact in $\Alg_{\bE_1}(\cE).$ We use induction on $n.$ Clearly, $A_0$ is compact. Suppose that $A_n$ is compact for some $n.$ By Proposition \ref{prop:Toen_Vaquie_compact_morphism} $A_n$ is smooth over $\cE.$ Since $A\in\cE^{\omega},$ we deduce that $A$ is compact as an $A_n\hy A_n$-bimodule. Hence, the object $\Cone(A_n\to A)$ is compact in $(A_n\hy\Mod\hy A_n)_{/A_n[1]}.$ Therefore $A_{n+1}$ is compact in $\Alg_{\bE_1}(\cE)_{A_n/},$ which implies that $A_{n+1}$ is compact in $\Alg_{\bE_1}(\cE).$
\end{proof}

\begin{prop}\label{prop:from_smooth_to_proper}
Let $\cC,\cD\in\Cat_{\cE}^{\dual}$ such that $\cC$ is proper and $\cD$ is smooth over $\cE.$ Then we have an equivalence
\begin{equation}\label{eq:from_smooth_to_proper}
\un{\Hom}_{\cE}^{\dual}(\cD,\cE)\tens{\cE}\cC\xto{\sim} \un{\Hom}_{\cE}^{\dual}(\cD,\cC).
\end{equation}
In particular, any strongly continuous $\cE$-linear functor $\cD\to\cC$ is right trace-class over $\cE.$
\end{prop}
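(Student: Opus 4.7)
I would prove the proposition by exhibiting an explicit inverse $\Psi$ to the natural map $\Phi$. The natural map $\Phi$ corresponds by the adjunction defining $\un{\Hom}_{\cE}^{\dual}(\cD, -)$ to the strongly continuous $\cE$-linear functor
\[
\bigl(\un{\Hom}_{\cE}^{\dual}(\cD,\cE) \tens{\cE} \cC\bigr) \otimes \cD \simeq \bigl(\cD \otimes \cD^{\vee}\bigr) \tens{\cE} \cC \xto{\ev_{\cD/\cE} \tens{\cE} \id} \cE \tens{\cE} \cC \simeq \cC,
\]
built from distributivity of the relative tensor over the absolute tensor together with the $\cE$-bilinear evaluation pairing $\ev_{\cD/\cE}$. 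Properness of $\cC$ enters here: it guarantees that the composite is strongly continuous, even though the pairing $\ev_{\cD/\cE}$ itself need not be strongly continuous (that would require $\cD$ to be proper as well).

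For the inverse, I use smoothness of $\cD$: smoothness means $\coev_{\cD/\cE}: \Sp \to \cD^{\vee} \tens{\cE} \cD$ is strongly continuous, equivalently the object $K := \coev_{\cD/\cE}(\bS)$ is compact. Define $\Psi: \un{\Hom}_{\cE}^{\dual}(\cD,\cC) \to \un{\Hom}_{\cE}^{\dual}(\cD,\cE) \tens{\cE} \cC$ as the composition
\[
\un{\Hom}_{\cE}^{\dual}(\cD,\cC) \xto{\coev_{\cD/\cE} \otimes \id} (\cD^{\vee} \tens{\cE} \cD) \otimes \un{\Hom}_{\cE}^{\dual}(\cD,\cC) \simeq \cD^{\vee} \tens{\cE} \bigl(\cD \otimes \un{\Hom}_{\cE}^{\dual}(\cD,\cC)\bigr) \xto{\id \tens{\cE} \varepsilon} \cD^{\vee} \tens{\cE} \cC,
\]
where $\varepsilon: \cD \otimes \un{\Hom}_{\cE}^{\dual}(\cD,\cC) \to \cC$ is the universal strongly continuous $\cE$-linear functor from the internal Hom adjunction. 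All constituent maps are strongly continuous, thanks to the smoothness of $\cD$ and the defining property of the internal Hom.

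The identities $\Phi \circ \Psi \simeq \id$ and $\Psi \circ \Phi \simeq \id$ then follow by tracing the constructions and applying the two triangle identities for the duality $(\cD, \cD^{\vee})$ over $\cE$; for instance the first reduces to the fact that
\[
\cD \xto{\id \otimes \coev_{\cD/\cE}} \cD \otimes (\cD^{\vee} \tens{\cE} \cD) \simeq (\cD \otimes \cD^{\vee}) \tens{\cE} \cD \xto{\ev_{\cD/\cE} \tens{\cE} \id} \cE \tens{\cE} \cD \simeq \cD
\]
is isomorphic to the identity. The ``in particular'' clause is then immediate: any strongly continuous $\cE$-linear $F : \cD \to \cC$ is a compact object in $\un{\Hom}_{\cE}^{\dual}(\cD,\cC),$ hence via $\Psi$ comes from a compact object of $\un{\Hom}_{\cE}^{\dual}(\cD,\cE) \tens{\cE} \cC,$ i.e., a right trace-class witness in the sense of Definition \ref{def:nuclear_E_modules}. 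The main obstacle will be the verification of strong continuity of $\Phi$ itself: properness of $\cC$ has to compensate for the failure of $\ev_{\cD/\cE}$ to be strongly continuous in general, and a careful analysis of how the relative tensor with a proper $\cE$-module ``smoothes out'' the evaluation pairing is needed here.
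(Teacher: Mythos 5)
The proposal has a genuine gap, and its root is an implicit identification that fails: you write as if $\un{\Hom}_{\cE}^{\dual}(\cD,\cE)\simeq\cD^{\vee}$, both in your adjoint description of $\Phi$ (you pass from $(\un{\Hom}_{\cE}^{\dual}(\cD,\cE)\tens{\cE}\cC)\otimes\cD$ to $(\cD\otimes\cD^{\vee})\tens{\cE}\cC$) and in your definition of $\Psi$, whose target you declare to be $\un{\Hom}_{\cE}^{\dual}(\cD,\cE)\tens{\cE}\cC$ but whose actual codomain is $\cD^{\vee}\tens{\cE}\cC$. These categories are different in general. When $\cD$ is smooth there is a fully faithful strongly continuous embedding $\un{\Hom}_{\cE}^{\dual}(\cD,\cE)\hto\cD^{\vee}$, but $\un{\Hom}_{\cE}^{\dual}(\cD,\cE)$ is compactly generated (by the strongly continuous $\cE$-linear functors $\cD\to\cE$) whereas $\cD^{\vee}\simeq\Fun_{\cE}^{L}(\cD,\cE)$ need not be; they agree only under extra hypotheses such as $\cD$ being compactly generated, which is not assumed here.

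As a result, your $\Psi$ is precisely the fully faithful strongly continuous embedding $\un{\Hom}_{\cE}^{\dual}(\cD,\cC)\hto\cD^{\vee}\tens{\cE}\cC$ furnished by smoothness of $\cD$ -- that part is fine -- but it is not an inverse to $\Phi$. What is actually required is to prove that this embedding factors through the (strictly smaller, in general) full subcategory $\un{\Hom}_{\cE}^{\dual}(\cD,\cE)\tens{\cE}\cC\subset\cD^{\vee}\tens{\cE}\cC$, i.e. that $\Phi$ is essentially surjective onto its image. That is the genuinely hard step, and it is where properness of $\cC$ is used. You never address it; the ``triangle identities'' you invoke only show that $\Psi\circ\Phi$ is the canonical inclusion (hence that $\Phi$ is fully faithful), which is also how the paper gets full faithfulness. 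You have also misattributed the role of properness: you say it is needed to make $\Phi$ strongly continuous, but $\Phi$ is strongly continuous by definition (it is the morphism in $\Cat_{\cE}^{\dual}$ given by the internal-Hom adjunction). Properness is instead what lets one route $\Psi$ through the continuous dual $\cC^{\vee}$: roughly, $\un{\Hom}_{\cE}^{\dual}(\cD,\cC)\to\un{\Hom}_{\cE^{mop}}^{\dual}(\cC^{\vee},\un{\Hom}_{\cE}^{\dual}(\cD,\cE))\hto\Ind((\un{\Hom}_{\cE}^{\dual}(\cD,\cE)\tens{\cE}\cC)^{\kappa})$ for a suitable $\kappa$, and then one compares this with the $\hat\cY$-embedding of $\cD^{\vee}\tens{\cE}\cC$ to conclude that the essential image of $\Psi$ lies where needed. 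Without some argument of this kind your proof does not close.
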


\begin{proof}
Denote by $\Phi$ the functor \eqref{eq:from_smooth_to_proper}. Since $\cD$ is smooth, by \cite[Proposition 3.4]{E25} we have fully faithful strongly continuous functor $\Psi:\un{\Hom}_{\cE}^{\dual}(\cD,\cC)\hto \cD^{\vee}\tens{\cE}\cC.$ This in particular applies to the case $\cD=\cE,$ hence in general the composition $
\Psi\circ\Phi$ is also fully faithful, therefore so is $\Phi.$ 
It suffices to prove that the essential image of $\Psi$ is contained in the essential image of $\Psi\circ\Phi$ inside $\cD^{\vee}\tens{\cE}\cC.$

Recall that properness of $\cC$ over $\cE$ means that the relative evaluation $\cC\otimes\cC^{\vee}\to\cE$ is strongly continuous. We choose an uncountable regular cardinal $\kappa$ such that $\cC$ is $\kappa$-compact in $\Cat_{\cE}^{\dual}$ (hence $\cC^{\vee}$ is $\kappa$-compact in $\Cat_{\cE^{mop}}^{\dual}$). We consider the following composition of strongly continuous functors:
\begin{multline*}
\Psi':\Hom_{\cE}^{\dual}(\cD,\cC)\to \Hom_{\cE\otimes\cE^{mop}}^{\dual}(\cD\otimes\cC^{\vee},\cC\otimes\cC^{\vee})\to \Hom_{\cE\otimes\cE^{mop}}^{\dual}(\cD\otimes\cC^{\vee},\cE)\\
\simeq \Hom_{\cE^{mop}}^{\dual}(\cC^{\vee},\un{\Hom}_{\cE}^{\dual}(\cD,\cE))
\hto \Ind((\un{\Hom}_{\cE}^{\dual}(\cD,\cE)\tens{\cE}\cC)^{\kappa}).
\end{multline*}
Recall that by \cite[Proposition 1.89]{E24} the functor $\Ind((-)^{\kappa}):\Prr^L_{\st,\kappa}\to\Cat_{\st}^{\dual}$ is right adjoint to the (not fully faithful) inclusion. Hence, by construction the following square commutes:
\begin{equation}\label{eq:diagram_of_inclusions}
\begin{tikzcd}
\un{\Hom}_{\cE}^{\dual}(\cD,\cC)\ar[r, "\Psi'"]\ar[d, "\Psi"] & \Ind((\un{\Hom}_{\cE}^{\dual}(\cD,\cE)\tens{\cE}\cC)^{\kappa})\ar{d}{\Ind((\Psi\circ\Phi)^{\kappa})}\\
\cD^{\vee}\tens{\cE}\cC\ar[r, "\hat{\cY}"] & \Ind((\cD^{\vee}\tens{\cE}\cC)^{\kappa}).
\end{tikzcd}
\end{equation}
All the functors in \eqref{eq:diagram_of_inclusions} are fully faithful and strongly continuous. The intersection of the essential images of $\cD^{\vee}\tens{\cE}\cC$ and $\Ind((\un{\Hom}_{\cE}^{\dual}(\cD,\cE)\tens{\cE}\cC)^{\kappa})$ inside $\Ind((\cD^{\vee}\tens{\cE}\cC)^{\kappa})$ is exactly the essential image of the composition
\begin{equation*}
\un{\Hom}_{\cE}^{\dual}(\cD,\cE)\tens{\cE}\cC\xto{\Psi\circ\Phi} \cD^{\vee}\tens{\cE}\cC\xto{\hat{\cY}}\Ind((\cD^{\vee}\tens{\cE}\cC)^{\kappa}).
\end{equation*}
It follows that the essential image of $\Psi$ coincides with the essential image of $\Psi\circ\Phi,$ as stated.
\end{proof}

\begin{proof}[Proof of Proposition \ref{prop:proper_are_nuclear}]
By Propositions \ref{prop:proper_are_colimits_of_finitely_presented} we have $\cC\simeq\indlim[i]\cC_i,$ where $\cC_i\in(\Cat_{\cE}^{\cg})^{\omega}.$ By Proposition \ref{prop:Toen_Vaquie_compact_morphism} each $\cC_i$ is smooth over $\cE.$ By Proposition \ref{prop:from_smooth_to_proper} each functor $\cC_i\to\cC$ is right trace-class over $\cE.$ Any compact morphism $\cD\to\cC$ in $\Cat_{\cE}^{\cg}$ factors through some $\cC_i,$ hence $\cC$ is nuclear over $\cE.$
\end{proof}

We will also use the following closely related statement on proper $\cE$-modules.

\begin{prop}\label{prop:stronger_nuclearity_for_proper}
Let $\cE$ be a rigid $\bE_1$-monoidal category, and let $\cC\in\Cat_{\cE}^{\cg}$ be proper over $\cE.$ Let $\hat{\cY}(\cC)=\inddlim[i\in I]\cC_i$ in $\Ind(\Cat_{\cE}^{\cg}).$ For $i\in I$ denote by $\cT_i\subset \un{\Hom}_{\cE}^{\dual}(\cC_i,\cE)$ the localizing right $\cE$-submodule generated by the image of the composition $\cC^{\vee}\to\un{\Hom}_{\cE}^{\dual}(\cC,\cE)\to\un{\Hom}_{\cE}^{\dual}(\cC_i,\cE).$ Then $\cT_i$ is relatively compactly generated over $\cE,$ the inclusion functor from $\cT_i$ to $\un{\Hom}_{\cE}^{\dual}(\cC_i,\cE)$ is strongly continuous, and we have a pro-equivalence
\begin{equation}\label{eq:pro_equivalence}
\proolim[i] \cT_i\xto{\sim}\proolim[i] \un{\Hom}_{\cE}^{\dual}(\cC_i,\cE).
\end{equation}
in $\Pro(\Cat_{\cE^{mop}}^{\dual}).$
\end{prop}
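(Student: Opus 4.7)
My plan is to reduce the claim, via Proposition \ref{prop:from_smooth_to_proper}, to a concrete statement about functor categories, and then leverage the compact-morphism structure of the ind-presentation to produce the required factorizations. Both pro-systems $\{H_i\}$ and $\{\cT_i\}$ depend on the presentation $\hat{\cY}(\cC)=\inddlim_i\cC_i$ only up to cofinal replacement of the indexing, so I would first replace the given presentation by a convenient one. By the proof of Proposition \ref{prop:proper_are_colimits_of_finitely_presented}, this reduces to the case $\cC\simeq\Mod\hy A$ for some $A\in\cE^{\omega},$ with $A\simeq\indlim_n A_n$ and each $A_n$ compact in $\Alg_{\bE_1}(\cE);$ by a further cofinal refinement I may also assume each transition $A_n\to A_{n+1}$ is a compact morphism in $\Alg_{\bE_1}(\cE).$ Each $\cC_n:=\Mod\hy A_n$ is then smooth over $\cE$ by Proposition \ref{prop:Toen_Vaquie_compact_morphism}, so Proposition \ref{prop:from_smooth_to_proper} supplies the identification
\[
H_n\tens{\cE}\cC\xto{\sim}\un{\Hom}_{\cE}^{\dual}(\cC_n,\cC).
\]

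The relative compact generation of $\cT_i$ and the strong continuity of the inclusion $\cT_i\hookrightarrow H_i$ are formal. The map $\cC^{\vee}\to H_i$ is strongly continuous as the image of $\alpha_i:\cC_i\to\cC$ under $\un{\Hom}_{\cE}^{\dual}(-,\cE),$ and $\cC^{\vee}\in\Cat_{\cE^{mop}}^{\cg}$ because $\cC\in\Cat_{\cE}^{\cg}$ (the strongly continuous relative-internal-Homs out of compact generators of $\cC$ are compact generators of $\cC^{\vee}$). Hence the image of a compact generating set of $\cC^{\vee}$ is a set of compact objects of $H_i$ that generates $\cT_i$ as a localizing right $\cE$-submodule, so $\cT_i\in\Cat_{\cE^{mop}}^{\cg}$ and its inclusion into $H_i$ is strongly continuous.

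The main content is the pro-equivalence \eqref{eq:pro_equivalence}, which unwinds to: for each $n$ there exists $m\geq n$ such that the restriction $H_m\to H_n$ factors through $\cT_n.$ Tensoring with $\cC$ over $\cE$ and applying the identification above, this becomes the assertion that the precomposition
\[
\un{\Hom}_{\cE}^{\dual}(\cC_m,\cC)\to\un{\Hom}_{\cE}^{\dual}(\cC_n,\cC)
\]
lands in the localizing $\cE$-closure of the image of $\cC^{\vee}\tens{\cE}\cC.$ In the model $\cC_n=\Mod\hy A_n,$ this amounts to showing that the free module $A_m,$ restricted along $A_n\to A_m,$ lies in the localizing $\cE$-subcategory of $A_n\hy\Mod$ generated by restrictions of $A$-modules; the case of a general $A_m$-module then follows by strong continuity of the restriction. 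To produce such an $m,$ my plan is to invoke Proposition \ref{prop:Toen_Vaquie_compact_morphism} applied to the compact morphism $A_n\to A_m$ in $\Alg_{\bE_1}(\cE),$ obtaining a compact retraction $A_m\tens{A_n}A_m\to A_m$ of $A_m\hy A_m$-bimodules, and then to base-change and iterate this retraction along the tower $A_n\to A_m\to\cdots\to A$ to absorb the ``error'' terms into $\cT_n.$ The main obstacle I anticipate is precisely this last step: making the cascade of compact retractions produce a genuine factorization through $\cT_n$ rather than only an approximation, which should require a careful diagonal refinement of the indexing poset so that the remaining error terms from each stage are eventually absorbed.
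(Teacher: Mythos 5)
Your proposal correctly sets up the reduction (pass to $\cC_i\in(\Cat_{\cE}^{\cg})^{\omega}$ via Proposition~\ref{prop:proper_are_colimits_of_finitely_presented}, then use Proposition~\ref{prop:from_smooth_to_proper}) and the formal part on generation of $\cT_i$ and strong continuity is essentially right, though the paper simply cites \cite[Corollary~1.57]{E25} for it. However, the core of the pro-equivalence is not established, and you acknowledge as much: the ``cascade of compact retractions'' via Proposition~\ref{prop:Toen_Vaquie_compact_morphism} and a ``careful diagonal refinement'' is precisely the step that is missing, and it is not at all clear it could be filled in along those lines without reproving something essentially equivalent to the paper's argument from scratch.

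The idea your proposal misses is that the inverse equivalence of Proposition~\ref{prop:from_smooth_to_proper} already does the hard work for you. The inverse is built as the functor $\Psi'$ in that proof, which routes a strongly continuous $F\in\Fun_{\cE}^{LL}(\cC_i,\cC)$ through $\un{\Hom}_{\cE^{mop}}^{\dual}(\cC^{\vee},\un{\Hom}_{\cE}^{\dual}(\cC_i,\cE))$. When $F=F_i$ is the canonical functor $\cC_i\to\cC$ to the colimit, the resulting functor $\cC^{\vee}\to\un{\Hom}_{\cE}^{\dual}(\cC_i,\cE)$ is exactly the composite used to define $\cT_i$, so the corresponding object $X_i$ automatically lies in $(\cT_i\tens{\cE}\cC)^{\omega}$. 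No Toen--Vaqui\'e compactness or iterated retraction is needed for this. Once you have $X_i\in(\cT_i\tens{\cE}\cC)^{\omega}$, compactness in a filtered colimit (\cite[Proposition~1.71]{E24}) lifts it to $X_{ik}\in(\cT_i\tens{\cE}\cC_k)^{\omega}$ for some $k\ge i$, and the argument from the proof of Proposition~\ref{prop:nuclear_equiv_cond} shows that for some $j\ge k$ the image of $X_{ik}$ in $\Fun_{\cE}^{LL}(\cC_i,\cC_j)$ is $F_{ij}$. The factorization of $\un{\Hom}_{\cE}^{\dual}(\cC_j,\cE)\to\un{\Hom}_{\cE}^{\dual}(\cC_i,\cE)$ through $\cT_i$ is then written down explicitly using $X_{ij}$; this is the step your ``tensoring with $\cC$ and transferring back'' elides, and it needs to be produced directly rather than deduced from a containment after tensoring, since tensoring with $\cC$ is not a priori known to reflect membership in $\cT_i$.
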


\begin{proof}
The relative compact generation for $\cT_i$ and the strong continuity of the inclusions follow from \cite[Corollary 1.57]{E25}. We prove the pro-equivalence. By Proposition \ref{prop:proper_are_colimits_of_finitely_presented} we may and will assume that each $\cC_i$ is compact in $\Cat_{\cE}^{\cg}.$ We will also assume that $I$ is directed. We denote by $F_{ij}:\cC_i\to\cC_j$ the transition functors, and by $F_i:\cC_i\to\cC$ the functors to the colimit.

Take some $i\in I.$ We need to prove that for some $j\geq i$ the essential image of the functor
\begin{equation}\label{eq:functor_between_internal_Homs}
\un{\Hom}_{\cE}^{\dual}(\cC_j,\cE)\to \un{\Hom}_{\cE}^{\dual}(\cC_i,\cE)
\end{equation}
is contained in $\cT_i.$ Note that by Proposition \ref{prop:from_smooth_to_proper} we have an equivalence
\begin{equation}\label{eq:from_functors_to_tensor_product}
\Fun_{\cE}^{LL}(\cC_i,\cC)\xto{\sim}(\un{\Hom}_{\cE}^{\dual}(\cC_i,\cE)\tens{\cE}\cC)^{\omega}.
\end{equation}
The proof of loc. cit. in fact gives more: the equivalence \eqref{eq:from_functors_to_tensor_product} takes $F$ to an object $X_i\in (\cT_i\tens{\cE}\cC)^{\omega}.$ By \cite[Proposition 1.71]{E24} there exists $k\geq i$ such that $X_i$ can be lifted to an object $X_{ik}\in (\cT_i\tens{\cE}\cC_k)^{\omega}.$ Arguing as in the proof of Proposition \ref{prop:nuclear_equiv_cond} we see that for some $j\geq k$ the image of $X_{ik}$ in $\Fun_{\cE}^{LL}(\cC_i,\cC_j)$ is isomorphic to $F_{ij}.$ Denote by $X_{ij}\in(\cT_i\tens{\cE}\cC_j)^{\omega}$ the image of $X_{ik}.$ Then the functor \eqref{eq:functor_between_internal_Homs} is identified with the following composition:
\begin{equation*}
\un{\Hom}_{\cE}^{\dual}(\cC_j,\cE)\xto{X_{ij}\boxtimes\id} \cT_i\tens{\cE}\cC_j\otimes \un{\Hom}_{\cE}^{\dual}(\cC_j,\cE)\to\cT_i\to\un{\Hom}_{\cE}^{\dual}(\cC_i,\cE).
\end{equation*}
In particular, it factors through $\cT_i,$ as required.
\end{proof}

\section{Morphisms in the categories of localizing motives}
\label{sec:morphisms_in_Mot^loc}

In this section we prove general results describing the morphisms in the category $\Mot^{\loc}_{\cE}$ for a rigid $\bE_1$-monoidal category $\cE,$ and on the internal $\Hom$ in $\Mot^{\loc}_{\cE}$ in the case when $\cE$ is symmetric monoidal. We do not assume that $\cE$ is compactly generated. We formulate the main statements in Subsection \ref{ssec:general_results_on_morphisms}, but some of the proofs are postponed till Subsection \ref{ssec:internal_injectivity_of_Calkin}. As an application, we prove a generalization of Ramzi-Sosnilo-Winges stating the equivalence between the categories $\Mot^{\loc}_{\cE}$ and $\Mot^{\loc}_{\cE,\omega_1}.$  

\subsection{General results on morphisms in $\Mot_{\cE}^{\loc}$}
\label{ssec:general_results_on_morphisms}

We first prove a result describing morphisms in $\Mot^{\loc}_{\cE}$ from of a motive of a nuclear left $\cE$-module $\cC,$ and a slightly different version in the case when $\cC$ is proper. Here we use the notion of nuclearity from Definition \ref{def:nuclear_E_modules}.

\begin{theo}\label{th:morphisms_in_Mot^loc_via_limits}
Let $\cE$ be a rigid $\bE_1$-monoidal category, and let $\cC\in\Cat_{\cE}^{\cg}$ be a relatively compactly generated left $\cE$-module, and Let $\cD\in\Cat_{\cE}^{\dual}$ be a dualizable left $\cE$-module. Let $\hat{\cY}(\cC)=\inddlim[i\in I]\cC_i$ in $\Ind(\Cat_{\cE}^{\cg}).$  

\begin{enumerate}[label=(\roman*),ref=(\roman*)]
	\item Suppose that $\cC$ is nuclear over $\cE.$ Then we have isomorphisms
	\begin{multline}\label{eq:Hom_via_inverse_limits_dualizable}
		\prolim[i]K^{\cont}(\un{\Hom}_{\cE}^{\dual}(\cC_i,\cE)\tens{\cE}\cD)\xto{\sim} \prolim[i] K^{\cont}(\un{\Hom}_{\cE}^{\dual}(\cC_i,\cD))\\
		\xto{\sim}\Hom_{\Mot^{\loc}_{\cE}}(\cU_{\loc}(\cC),\cU_{\loc}(\cD)).
	\end{multline}
	If $\cE$ is symmetric monoidal, then we have an isomorphisms in $\Mot_{\cE}^{\loc}$
	\begin{multline}\label{eq:internal_Hom_via_inverse_limits_dualizable}
	 \prolim[i]\cU_{\loc}(\un{\Hom}_{\cE}^{\dual}(\cC_i,\cE)\tens{\cE}\cD)\xto{\sim} \prolim[i] \cU_{\loc}(\un{\Hom}_{\cE}^{\dual}(\cC_i,\cD))\\
	 \xto{\sim}\un{\Hom}_{\Mot^{\loc}_{\cE}}(\cU_{\loc}(\cC),\cU_{\loc}(\cD)).  
	\end{multline}
	\label{Hom_via_inverse_limit_dualizable}
	\item Suppose that $\cC$ is proper over $\cE.$ As in Proposition \ref{prop:stronger_nuclearity_for_proper}, for $i\in I$ denote by $\cT_i\subset\un{\Hom}_{\cE}^{\dual}(\cC_i,\cE)$ the localizing right $\cE$-submodule generated by the image of the composition $\cC^{\vee}\to\un{\Hom}_{\cE}^{\dual}(\cC,\cE)\to \un{\Hom}_{\cE}^{\dual}(\cC_i,\cE).$ Then we have an isomorphism
	\begin{equation*}
		\prolim[i] K^{\cont}(\cT_i\tens{\cE}\cD)\xto{\sim} \Hom_{\Mot^{\loc}_{\cE}}(\cU_{\loc}(\cC),\cU_{\loc}(\cD)).
	\end{equation*}
	If $\cE$ is symmetric monoidal, then we have an isomorphism in $\Mot^{\loc}_{\cE}:$
	\begin{equation*}
	\prolim[i] \cU_{\loc}(\cT_i\tens{\cE}\cD)\xto{\sim} \un{\Hom}_{\Mot^{\loc}_{\cE}}(\cU_{\loc}(\cC),\cU_{\loc}(\cD)).
	\end{equation*} \label{Hom_via_inverse_limit_for_proper}
\end{enumerate}
\end{theo}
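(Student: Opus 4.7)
The plan is to reduce both parts to the case where $\cC$ is compact in $\Cat_{\cE}^{\cg}$---where the evaluation formula of Theorem \ref{th:dualizability_and_rigidity}\ref{E_0_rigidity} applies directly---and then to pass to the inverse limit. For part \ref{Hom_via_inverse_limit_dualizable}, I would first use Proposition \ref{prop:nuclear_equiv_cond} to arrange the ind-presentation $\hat{\cY}(\cC)=\inddlim[i]\cC_i$ with each $\cC_i$ compact in $\Cat_{\cE}^{\cg}$ and each transition functor right trace-class over $\cE$; by Proposition \ref{prop:Toen_Vaquie_compact_morphism} every such $\cC_i$ is then smooth over $\cE$. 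Because $\cU_{\loc}$ is finitary and $\cC\cong\indlim[i]\cC_i$ holds in $\Cat_{\cE}^{\cg}$ (the canonical presentation of a compactly assembled category), we obtain $\cU_{\loc}(\cC)\cong\indlim[i]\cU_{\loc}(\cC_i)$ in $\Mot^{\loc}_{\cE}$, hence $\Hom_{\Mot^{\loc}_{\cE}}(\cU_{\loc}(\cC),\cU_{\loc}(\cD))\cong\prolim[i]\Hom_{\Mot^{\loc}_{\cE}}(\cU_{\loc}(\cC_i),\cU_{\loc}(\cD))$, and similarly for the internal $\Hom$ in the symmetric monoidal case. The first equivalence in \eqref{eq:Hom_via_inverse_limits_dualizable} reduces to a term-by-term isomorphism: for dualizable $\cC_i$ over $\cE$, the canonical map $\un{\Hom}_{\cE}^{\dual}(\cC_i,\cE)\tens{\cE}\cD\to\un{\Hom}_{\cE}^{\dual}(\cC_i,\cD)$ is an equivalence.

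The heart of part \ref{Hom_via_inverse_limit_dualizable} is the identification $\Hom_{\Mot^{\loc}_{\cE}}(\cU_{\loc}(\cC_i),\cU_{\loc}(\cD))\cong K^{\cont}(\cC_i^{\vee}\tens{\cE}\cD)$ for smooth $\cC_i$. This amounts to showing that $\cU_{\loc}(\cC_i)$ is dualizable in $\Mot^{\loc}_{\cE}$ with dual $\cU_{\loc}(\cC_i^{\vee})\in\Mot^{\loc}_{\cE^{mop}}\simeq(\Mot^{\loc}_{\cE})^{\vee}$. Smoothness makes $\coev_{\cC_i/\cE}:\cE\to\cC_i^{\vee}\tens{\cE}\cC_i$ a morphism in $\Cat_{\cE^{mop}\otimes\cE}^{\dual}$, and together with the always-strongly-continuous $\ev_{\cC_i/\cE}$ it produces candidate duality data after applying $\cU_{\loc}$ and the natural external-product functor $\Mot^{\loc}_{\cE^{mop}}\otimes\Mot^{\loc}_{\cE}\to\Mot^{\loc}_{\cE^{mop}\otimes\cE}$; the triangle identities descend from those in $\Cat_{\cE}^{\dual}$. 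Combined with the explicit evaluation formula of Theorem \ref{th:dualizability_and_rigidity}\ref{E_0_rigidity}, this yields the second isomorphism in \eqref{eq:Hom_via_inverse_limits_dualizable}. The internal-Hom statement \eqref{eq:internal_Hom_via_inverse_limits_dualizable} then follows from rigidity of $\Mot^{\loc}_{\cE}$ in the symmetric monoidal case (Theorem \ref{th:dualizability_and_rigidity}\ref{E_1_rigidity}), which gives $\un{\Hom}_{\Mot^{\loc}_{\cE}}(\cU_{\loc}(\cC_i),\cU_{\loc}(\cD))=\cU_{\loc}(\cC_i)^{\vee}\otimes\cU_{\loc}(\cD)\cong\cU_{\loc}(\cC_i^{\vee}\tens{\cE}\cD)$.

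For part \ref{Hom_via_inverse_limit_for_proper}, properness of $\cC$ implies nuclearity by Proposition \ref{prop:proper_are_nuclear}, so part \ref{Hom_via_inverse_limit_dualizable} already expresses the Hom as $\prolim[i] K^{\cont}(\un{\Hom}_{\cE}^{\dual}(\cC_i,\cE)\tens{\cE}\cD)$. To replace $\un{\Hom}_{\cE}^{\dual}(\cC_i,\cE)$ with the refined right $\cE$-submodule $\cT_i$, I invoke Proposition \ref{prop:stronger_nuclearity_for_proper}, which supplies a pro-equivalence $\proolim[i]\cT_i\xto{\sim}\proolim[i]\un{\Hom}_{\cE}^{\dual}(\cC_i,\cE)$ in $\Pro(\Cat_{\cE^{mop}}^{\dual})$. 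Since $(-)\tens{\cE}\cD$ is a functor of dualizable categories and $K^{\cont}$ (resp.\ $\cU_{\loc}$) is finitary, applying them to this pro-equivalence and taking $\prolim$ converts it into a genuine isomorphism, yielding the stated formulas.

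The main obstacle I anticipate is the duality identification $\cU_{\loc}(\cC_i)^{\vee}\cong\cU_{\loc}(\cC_i^{\vee})$ in the non-symmetric $\bE_1$ setting, since one must simultaneously track motives over three different rigid bases ($\cE$, $\cE^{mop}$, and $\cE^{mop}\otimes\cE$) and verify compatibility of the external-product functor with the coevaluation furnished by smoothness. A safer alternative is to compare the two functors $\cD\mapsto\Hom_{\Mot^{\loc}_{\cE}}(\cU_{\loc}(\cC_i),\cU_{\loc}(\cD))$ and $\cD\mapsto K^{\cont}(\cC_i^{\vee}\tens{\cE}\cD)$ directly as finitary localizing invariants on $\Cat_{\cE}^{\dual}$: both agree on $\cD=\cE$, giving $K^{\cont}(\cC_i^{\vee})$, so it remains to construct a natural comparison transformation using the $\cC_i\dashv\cC_i^{\vee}$ adjunction data inside $\Cat_{\cE}^{\dual}$ and to check it is an isomorphism by appealing to the universal property of $\cU_{\loc}$ over $\cE^{mop}$.
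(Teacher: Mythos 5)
Your proposal has genuine gaps that would not close up, and the route you chose is quite different from the paper's.

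The first serious problem is the claimed term-by-term equivalence $\un{\Hom}_{\cE}^{\dual}(\cC_i,\cE)\tens{\cE}\cD\to\un{\Hom}_{\cE}^{\dual}(\cC_i,\cD)$. For $\cC_i$ smooth this map is only fully faithful (as in the beginning of the proof of Proposition \ref{prop:from_smooth_to_proper}, which invokes \cite[Proposition 3.4]{E25}); essential surjectivity in Proposition \ref{prop:from_smooth_to_proper} additionally requires the \emph{target} $\cD$ to be proper over $\cE$, which is not assumed here. The theorem's first arrow in \eqref{eq:Hom_via_inverse_limits_dualizable} is not an isomorphism on each term; it becomes one only after taking $\prolim$. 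The paper's actual proof never asserts a term-by-term equivalence. Instead it uses the eventual right trace-classness of the transitions $F_{ij}$ and the diagonal arrow from Proposition \ref{prop:diagonal_arrow_dualizable_trace_class} to manufacture a \emph{pro}-equivalence $\proolim[i]\un{\Hom}_{\cE}^{\dual}(\cC_i,\cE)\tens{\cE}\cD\xto{\sim}\proolim[i]\un{\Hom}_{\cE}^{\dual}(\cC_i,\cD)$. This is strictly weaker than what you assert and is the correct level of generality.

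The second serious problem is in the "heart" of part \ref{Hom_via_inverse_limit_dualizable}: you claim $\ev_{\cC_i/\cE}$ is "always strongly continuous" and use it as half of a duality datum to show $\cU_{\loc}(\cC_i)$ is dualizable in $\Mot^{\loc}_{\cE}$. But $\ev_{\cC_i/\cE}$ is strongly continuous if and only if $\cC_i$ is \emph{proper} over $\cE$, which the $\cC_i$ are not. Compactness in $\Cat_{\cE}^{\cg}$ gives smoothness (by Proposition \ref{prop:Toen_Vaquie_compact_morphism}), not properness, so $\ev$ is merely continuous and your candidate duality data do not live in the categories of motives as needed. In addition, deducing dualizability of $\cU_{\loc}(\cC_i)$ is essentially the content of the theorem itself, so this step would be circular unless the triangle identities are verified by an independent argument. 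The paper again avoids this: it directly constructs, from a trace-class witness $\wt F_{ij}\in(\un{\Hom}_{\cE}^{\dual}(\cC_i,\cE)\tens{\cE}\cC_j)^{\omega}$, a map $\un{\Hom}_{\Mot^{\loc}_{\cE}/\Mot^{\loc}}(\cU_{\loc}(\cC_j),\cU_{\loc}(\cD))\to\cU_{\loc}(\un{\Hom}_{\cE}^{\dual}(\cC_i,\cE)\tens{\cE}\cD)$, producing the pro-equivalence $\proolim[i]\cU_{\loc}(\un{\Hom}_{\cE}^{\dual}(\cC_i,\cE)\tens{\cE}\cD)\xto{\sim}\proolim[i]\un{\Hom}_{\Mot^{\loc}_{\cE}/\Mot^{\loc}}(\cU_{\loc}(\cC_i),\cU_{\loc}(\cD))$, and then passing to $\prolim$ followed by $\Hom(\cU_{\loc}(\Sp),-)$. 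Finally, a smaller issue: Proposition \ref{prop:nuclear_equiv_cond} gives eventually trace-class transitions but does not let you replace the $\cC_i$ by compact objects of $\Cat_{\cE}^{\cg}$; that replacement is available for proper $\cC$ via Proposition \ref{prop:proper_are_colimits_of_finitely_presented} but not for a general nuclear $\cC$. Your part \ref{Hom_via_inverse_limit_for_proper} (invoking Propositions \ref{prop:proper_are_nuclear} and \ref{prop:stronger_nuclearity_for_proper}) is conceptually correct, matching the paper, but of course depends on having part \ref{Hom_via_inverse_limit_dualizable} in hand.
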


\begin{proof}
We may and will assume that $I$ is directed, and denote by $F_{ij}:\cC_i\to\cC_j$ the transition functors.

We prove \ref{Hom_via_inverse_limit_dualizable}. We explain the isomorphisms \eqref{eq:Hom_via_inverse_limits_dualizable}, and analogous argument gives \eqref{eq:internal_Hom_via_inverse_limits_dualizable}. By Proposition \ref{prop:nuclear_equiv_cond} for each $i\in I$ there exists $j\geq i$ such that the functor $F_{ij}:\cC_i\to\cC_j$ is right trace-class over $\cE.$ In particular, we have a pro-equivalence
\begin{equation*}
\proolim[i]\un{\Hom}_{\cE}^{\dual}(\cC_i,\cE)\tens{\cE}\cD\xto{\sim}\proolim[i]\un{\Hom}_{\cE}^{\dual}(\cC_i,\cD)
\end{equation*}
in $\Pro(\Cat_{\st}^{\dual}).$ This gives the first isomorphism in \eqref{eq:Hom_via_inverse_limits_dualizable}. It remains to prove that the composition of morphisms in \eqref{eq:Hom_via_inverse_limits_dualizable} is an isomorphism. 

We first fix a pair of elements $i\leq j$ in $I$ such that $F{ij}$ is right trace-class over $\cE.$ We choose a right trace-class witness $\wt{F}_{ij}\in (\un{\Hom}_{\cE}^{\dual}(\cC_i,\cE)\tens{\cE}\cC_j)^{\omega}$ for $F_{ij}.$ We obtain the following morphism in $\Mot^{\loc},$ given by the composition
\begin{multline*}
\un{\Hom}_{\Mot^{\loc}_{\cE}/\Mot^{\loc}}(\cU_{\loc}(\cC_j),\cU_{\loc}(\cD))\\
\xto{[\wt{F}_{ij}]\boxtimes \id} \cU_{\loc}(\un{\Hom}_{\cE}^{\dual}(\cC_i,\cE)\tens{\cE}\cC_j)\otimes \un{\Hom}_{\Mot^{\loc}_{\cE}/\Mot^{\loc}}(\cU_{\loc}(\cC_j),\cU_{\loc}(\cD))\\
\to \cU_{\loc}(\un{\Hom}_{\cE}^{\dual}(\cC_i,\cE)\tens{\cE}\cD).
\end{multline*}
This shows that we have an isomorphism in $\Pro(\Mot^{\loc}):$
\begin{equation*}
\proolim[i]\cU_{\loc}(\un{\Hom}_{\cE}^{\dual}(\cC_i,\cE)\tens{\cE}\cD)\xto{\sim}\xto{\sim} \proolim[i]\un{\Hom}_{\Mot^{\loc}_{\cE}/\Mot^{\loc}}(\cU_{\loc}(\cC_i),\cU_{\loc}(\cD)).
\end{equation*}
Applying the functor $\prolim:\Pro(\Mot^{\loc})\to\Mot^{\loc},$ followed by $\Hom(\cU_{\loc}(\Sp),-):\Mot^{\loc}\to\Sp,$ we see that the composition in \eqref{eq:Hom_via_inverse_limits_dualizable} is an isomorphism.

Now \ref{Hom_via_inverse_limit_for_proper} follows from \ref{Hom_via_inverse_limit_dualizable} and Proposition \ref{prop:stronger_nuclearity_for_proper}: if $\cE$ is $\bE_1$-monoidal resp. symmetric monoidal then we have a pro-equivalence
\begin{equation*}
\proolim[i]\cT_i\tens{\cE}\cD\xto{\sim} \proolim[i]\un{\Hom}_{\cE}^{\dual}(\cC_i,\cE)\tens{\cE}\cD
\end{equation*}
in $\Pro(\Cat_{\st}^{\dual})$ resp. $\Pro(\Cat_{\cE}^{\dual}).$
\end{proof}

We obtain the following non-trivial application.

\begin{cor}\label{cor:U_loc_commutes_with_products}
Let $\cE$ be a rigid $\bE_1$-monoidal category. Then the functor $\cU_{\loc}:\Cat_{\cE}^{\dual}\to \Mot^{\loc}_{\cE}$ commutes with infinite products. If $\cE$ is compactly generated, then the functor $\cU_{\loc}:\Cat_{\cE}^{\cg}\to \Sp$ commutes with infinite products. 
\end{cor}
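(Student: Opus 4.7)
The plan is to reduce the statement, via the fact that $\Mot^{\loc}_{\cE}$ is generated (as a localizing subcategory) by motives of basic nuclear $\cE$-modules established in the proof of Theorem \ref{th:dualizability_and_rigidity}, to an explicit computation of morphism spectra using Theorem \ref{th:morphisms_in_Mot^loc_via_limits}\ref{Hom_via_inverse_limit_dualizable}.

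Let $(\cD_s)_{s\in S}$ be a family in $\Cat_{\cE}^{\dual},$ and consider the natural map
\begin{equation*}
f:\cU_{\loc}(\prodd[s]^{\dual}\cD_s)\to \prodd[s]\cU_{\loc}(\cD_s)
\end{equation*}
in $\Mot^{\loc}_{\cE}.$ To prove that $f$ is an isomorphism it suffices, by the aforementioned generation result, to check that $\Hom_{\Mot^{\loc}_{\cE}}(\cU_{\loc}(\cC),f)$ is an isomorphism for every basic nuclear left $\cE$-module $\cC.$ Fix such $\cC$ with $\hat{\cY}(\cC)=\inddlim[n\in\N]\cC_n$ and right trace-class transition functors $\cC_n\to\cC_{n+1}.$

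Applying Theorem \ref{th:morphisms_in_Mot^loc_via_limits}\ref{Hom_via_inverse_limit_dualizable} separately to $\cD=\prodd[s]^{\dual}\cD_s$ and to each $\cD_s,$ the statement is reduced to establishing the chain of natural isomorphisms
\begin{align*}
\prolim[n] K^{\cont}(\un{\Hom}_{\cE}^{\dual}(\cC_n,\prodd[s]^{\dual}\cD_s))
&\cong \prolim[n]K^{\cont}(\prodd[s]^{\dual}\un{\Hom}_{\cE}^{\dual}(\cC_n,\cD_s))\\
&\cong \prolim[n]\prodd[s] K^{\cont}(\un{\Hom}_{\cE}^{\dual}(\cC_n,\cD_s))\\
&\cong \prodd[s]\prolim[n] K^{\cont}(\un{\Hom}_{\cE}^{\dual}(\cC_n,\cD_s)).
\end{align*}
The first isomorphism holds because $\un{\Hom}_{\cE}^{\dual}(\cC_n,-):\Cat_{\cE}^{\dual}\to \Cat_{\st}^{\dual}$ is right adjoint to $\cC_n\tens{\cE}-$ and hence preserves all limits, including products. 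The third is the trivial commutation of limits with limits. The only nontrivial input is the second isomorphism, which is the fact that continuous $K$-theory commutes with infinite products of dualizable stable categories; this has been established in earlier work of the author (see the main product-preservation result for $K^{\cont}$ in \cite{E24}).

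For the second assertion of the corollary, when $\cE$ is compactly generated the inclusion $\Cat_{\cE}^{\cg}\hookrightarrow\Cat_{\cE}^{\dual}$ preserves infinite products by Proposition \ref{prop:prod_dual_vs_Cat_cg}\ref{prod_dual_over_compactly_generated}, so the statement reduces immediately to the first.

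The main conceptual obstacle is not the chain of identifications itself, but verifying that, through the zigzag produced by Theorem \ref{th:morphisms_in_Mot^loc_via_limits}, the resulting map from the left-hand side to the right-hand side genuinely agrees with $\Hom_{\Mot^{\loc}_{\cE}}(\cU_{\loc}(\cC),f).$ This is essentially a naturality check, but requires being careful about the universal nature of the right trace-class witnesses used in the proof of Theorem \ref{th:morphisms_in_Mot^loc_via_limits} so that the constructions are functorial in $\cD\in\Cat_{\cE}^{\dual}.$
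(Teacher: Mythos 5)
Your proposal matches the paper's proof: reduce to basic nuclear $\cC$ via the generation statement from the proof of Theorem \ref{th:dualizability_and_rigidity}, apply Theorem \ref{th:morphisms_in_Mot^loc_via_limits}, and conclude using that $\un{\Hom}_{\cE}^{\dual}(\cC_n,-)$ preserves products (being a right adjoint) together with the commutation of $K^{\cont}$ with products from \cite{E24}. One small slip: the left adjoint of $\un{\Hom}_{\cE}^{\dual}(\cC_n,-):\Cat_{\cE}^{\dual}\to\Cat_{\st}^{\dual}$ is the external tensor $(-)\otimes\cC_n:\Cat_{\st}^{\dual}\to\Cat_{\cE}^{\dual}$, not $\cC_n\tens{\cE}-$; the conclusion that it preserves limits is unaffected, and the naturality worry at the end is already taken care of by the construction in Theorem \ref{th:morphisms_in_Mot^loc_via_limits} being functorial in $\cD$.
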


\begin{proof} If $\cE$ is compactly generated, then by Proposition \ref{prop:prod_dual_vs_Cat_cg} the functor $\Cat_{\cE}^{\cg}\to\Cat_{\cE}^{\dual}$ commutes with products. Hence, the second assertion follows from the first one.
	 
Now consider the general case when $\cE$ is not necessarily compactly generated. By the proof of Theorem \ref{th:dualizability_and_rigidity}, the category $\Mot^{\loc}_{\cE}$ is generated as a localizing subcategory by the objects of the form $\cU_{\loc}(\cC),$ where $\cC\in\Cat_{\cE}^{\cg}$ is basic nuclear. Hence, it suffices to prove that for such $\cC$ the functor
\begin{equation*}
\Hom(\cU_{\loc}(\cC),-):\Mot^{\loc}_{\cE}\to\Sp
\end{equation*}
commutes with products. Let $\hat{\cY}(\cC)=\inddlim[n\in\N]\cC_n$ in $\Ind(\Cat_{\cE}^{\cg}).$ Recall that by \cite[Theorem 4.28]{E24} the functor $K^{\cont}:\Cat_{\st}^{\dual}\to\Sp$ commutes with products. Applying Theorem \ref{th:morphisms_in_Mot^loc_via_limits}, for a family $(\cD_j)_{j\in J}$ in $\Cat_{\cE}^{\dual}$ we obtain
\begin{multline*}
\Hom(\cU_{\loc}(\cC),\cU_{\loc}(\prodd[j]^{\dual}\cD_j))\cong \prolim[n] K^{\cont}(\un{\Hom}_{\cE}^{\dual}(\cC_n,\prodd[j]^{\dual}\cD_j))\\
\cong \prolim[n] K^{\cont}(\prodd[j]^{\dual}\un{\Hom}_{\cE}^{\dual}(\cC_n,\cD_j))\cong \prolim[n] \prodd[j] K^{\cont}(\un{\Hom}_{\cE}^{\dual}(\cC_n,\cD_j))\\
\cong \prodd[j] \Hom(\cU_{\loc}(\cC),\cU_{\loc}(\cD_j)).
\end{multline*}
This proves the corollary.
\end{proof}

We now formulate a much more difficult result which describes morphisms in $\Mot^{\loc}_{\cE}$ in terms of dualizable internal $\Hom$ over $\cE.$ Recall from Subsection \ref{ssec:notation_and_terminology} that for a dualizable category $\cD$ and an uncountable regular cardinal $\kappa$ we use the notation 
\begin{equation*}
\Calk_{\kappa}(\cD)=\Ind(\Calk_{\kappa}^{\cont}(\cD))\simeq \ker(\colim:\Ind(\cC^{\kappa})\to \cC).
\end{equation*}
For any $n\geq 1$ we denote by $\Calk_{\kappa}^n(\cD)$ the $n$-th iteration of this construction, i.e. $\Calk_{\kappa}^1(\cD)=\Calk_{\kappa}(\cD)$ and $\Calk_{\kappa}^{n+1}(\cD)=\Calk_{\kappa}(\Calk_{\kappa}^n(\cD)).$ If $\cD$ is a dualizable left module over a rigid $\bE_1$-monoidal category $\cE,$ then so are all $\Calk_{\kappa}^n(\cD).$ 

\begin{theo}\label{th:morphisms_in_Mot^loc_via_internal_Hom}
Let $\cE$ be a rigid $\bE_1$-monoidal category. Let $\cC$ and $\cD$ be dualizable left $\cE$-modules, and suppose that $\cC$ is $\omega_1$-compact in $\Cat_{\cE}^{\dual}.$
\begin{enumerate}[label=(\roman*),ref=(\roman*)]
\item The category $\un{\Hom}_{\cE}^{\dual}(\cC,\Calk_{\omega_1}^2(\cD))$ is compactly generated. 
We have the following isomorphisms:
\begin{multline}\label{eq:morphisms_in_Mot^loc_via_Calk^2}
\Omega^2 K(\Fun_{\cE}^{LL}(\cC,\Calk_{\omega_1}^2(\cD)))\xto{\sim}\Omega K^{\cont}(\un{\Hom}_{\cE}^{\dual}(\cC,\Calk_{\omega_1}(\cD)))\\
\xto{\sim}\Hom_{\Mot_{\cE}^{\loc}}(\cU_{\loc}(\cC),\cU_{\loc}(\cD)).
\end{multline}
If $\cE$ is symmetric monoidal, then we have isomorphisms in $\Mot_{\cE}^{\loc}:$
\begin{multline}\label{eq:internal_Hom_in_Mot^loc_via_Calk^2}
\Omega^2\cU_{\loc}(\Ind(\Fun_{\cE}^{LL}(\cC,\Calk_{\omega_1}^2(\cD))))\xto{\sim}\Omega\cU_{\loc}(\un{\Hom}_{\cE}^{\dual}(\cC,\Calk_{\omega_1}(\cD)))\\
\xto{\sim}\un{\Hom}_{\Mot_{\cE}^{\loc}}(\cU_{\loc}(\cC),\cU_{\loc}(\cD)).
\end{multline} \label{internal_Hom_into_Calk}
\item Suppose that $\cC$ is proper. Then the category $\un{\Hom}_{\cE}^{\dual}(\cC,\Calk_{\omega_1}(\cD))$ is compactly generated. We have the following isomorphisms: 
\begin{multline*}
	\Omega K(\Fun_{\cE}^{LL}(\cC,\Calk_{\omega_1}(\cD)))\xto{\sim} K^{\cont}(\un{\Hom}_{\cE}^{\dual}(\cC,\cD))\\
	\xto{\sim}\Hom_{\Mot_{\cE}^{\loc}}(\cU_{\loc}(\cC),\cU_{\loc}(\cD)).
\end{multline*}
If $\cE$ is symmetric monoidal, then we have isomorphisms in $\Mot_{\cE}^{\loc}:$
\begin{multline*}
	\Omega\cU_{\loc}(\Ind(\Fun_{\cE}^{LL}(\cC,\Calk_{\omega_1}(\cD))))\xto{\sim}\cU_{\loc}(\un{\Hom}_{\cE}^{\dual}(\cC,\cD))\\
	\xto{\sim}\un{\Hom}_{\Mot_{\cE}^{\loc}}(\cU_{\loc}(\cC),\cU_{\loc}(\cD)).
\end{multline*} \label{internal_Hom_from_proper}
\end{enumerate}
\end{theo}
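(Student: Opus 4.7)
The plan is to treat part \ref{internal_Hom_from_proper} first (since properness gives direct access to Theorem \ref{th:morphisms_in_Mot^loc_via_limits}\ref{Hom_via_inverse_limit_for_proper}) and then obtain part \ref{internal_Hom_into_Calk} by a Calkin reduction together with the theory of formal $\omega_1$-injectivity. Two preliminary developments drive both cases: (a) any Calkin category $\Calk_{\omega_1}(\cD)$ is formally $\omega_1$-injective, i.e.\ $\hat{\cY}$ on it commutes with countable products, proved using the commutation of countable products with filtered colimits of \cite[Corollary A.4]{E25}; and (b) if $\cD'$ is a formally $\omega_1$-injective dualizable $\cE$-module and $\cC$ is $\omega_1$-compact in $\Cat_{\cE}^{\dual}$, then $\un{\Hom}_{\cE}^{\dual}(\cC,\cD')$ is compactly generated with compact objects identified with $\Fun_{\cE}^{LL}(\cC,\cD')$. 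In particular $\un{\Hom}_{\cE}^{\dual}(\cC,\Calk_{\omega_1}^2(\cD))$ and (under properness of $\cC$) $\un{\Hom}_{\cE}^{\dual}(\cC,\Calk_{\omega_1}(\cD))$ are compactly generated, which handles the first isomorphism in either part via the Calkin $K$-theory shift $\Omega K^{\cont}(\Calk_{\omega_1}(\cA))\simeq K^{\cont}(\cA)$ (using vanishing of $K^{\cont}(\Ind(\cA^{\omega_1}))$ by an Eilenberg swindle).

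For part \ref{internal_Hom_from_proper}, write $\hat{\cY}(\cC)=\inddlim_n \cC_n$ with $\cC_n$ compact in $\Cat_{\cE}^{\cg}$; Propositions \ref{prop:proper_are_colimits_of_finitely_presented} and \ref{prop:Toen_Vaquie_compact_morphism} force each $\cC_n$ to be smooth over $\cE$. Proposition \ref{prop:from_smooth_to_proper} then gives $\un{\Hom}_{\cE}^{\dual}(\cC_n,\cD)\simeq\un{\Hom}_{\cE}^{\dual}(\cC_n,\cE)\tens{\cE}\cD$, while Proposition \ref{prop:stronger_nuclearity_for_proper} identifies the pro-systems $\{\cT_n\tens{\cE}\cD\}$ and $\{\un{\Hom}_{\cE}^{\dual}(\cC_n,\cD)\}$ up to pro-equivalence. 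Combining with Theorem \ref{th:morphisms_in_Mot^loc_via_limits}\ref{Hom_via_inverse_limit_for_proper}, the mapping spectrum is $\prolim_n K^{\cont}(\un{\Hom}_{\cE}^{\dual}(\cC_n,\cD))$, and one applies \cite[Theorem 6.1]{E25} to commute $K^{\cont}$ past the codirected limit, recognizing $\prolim_n \un{\Hom}_{\cE}^{\dual}(\cC_n,\cD)\simeq\un{\Hom}_{\cE}^{\dual}(\cC,\cD)$.

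For part \ref{internal_Hom_into_Calk}, one uses the Calkin cofiber sequence $\cD\to\Ind(\cD^{\omega_1})\to\Calk_{\omega_1}(\cD)$ in $\Cat_{\cE}^{\dual}$. Since finitary localizing invariants of dualizable categories are obtained as continuous approximations of localizing invariants of compactly generated ones, and since $\cU_{\loc}(\Ind(\cD^{\omega_1}))$ is killed by finitary invariants after continuous approximation, we obtain $\Hom_{\Mot_{\cE}^{\loc}}(\cU_{\loc}(\cC),\cU_{\loc}(\cD))\simeq \Omega\Hom_{\Mot_{\cE}^{\loc}}(\cU_{\loc}(\cC),\cU_{\loc}(\Calk_{\omega_1}(\cD)))$, reducing the target to a compactly generated \emph{and} formally $\omega_1$-injective $\cE$-module. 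For the reduced problem, one combines a resolution of $\cC$ by basic nuclear $\cE$-modules (Proposition \ref{prop:resolution_by_nuclear}, applied to a suitable $\omega_1$-compact presentation so that the functor spectrum controls the relevant morphism set), the inverse limit description of Theorem \ref{th:morphisms_in_Mot^loc_via_limits}\ref{Hom_via_inverse_limit_dualizable}, and again \cite[Theorem 6.1]{E25}; formal $\omega_1$-injectivity of $\Calk_{\omega_1}(\cD)$ is what guarantees the inverse systems satisfy the Mittag-Leffler-type hypotheses needed for $K^{\cont}$ to commute with $\prolim$ and for the internal $\Hom$ to commute with the outer Calkin (so that $\Calk_{\omega_1}(\un{\Hom}_{\cE}^{\dual}(\cC,\Calk_{\omega_1}(\cD)))\simeq \un{\Hom}_{\cE}^{\dual}(\cC,\Calk_{\omega_1}^2(\cD))$, yielding the first isomorphism in \eqref{eq:morphisms_in_Mot^loc_via_Calk^2}).

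The main obstacle will be step (b) above: proving that the dualizable internal $\Hom$ into a formally $\omega_1$-injective target is compactly generated with compact objects equivalent to the strongly continuous $\cE$-linear functors, and that this internal $\Hom$ commutes with passing to Calkin of the target. Without formal $\omega_1$-injectivity the compact objects of $\un{\Hom}_{\cE}^{\dual}(\cC,\cD')$ behave badly, and the identification of $K$-theory of a codirected limit with the limit of $K$-theories in the sense of \cite[Theorem 6.1]{E25} would fail. The symmetric monoidal statements \eqref{eq:internal_Hom_in_Mot^loc_via_Calk^2} are obtained along the way by noting that all of the constructions carry $\cE$-linear (and thus, under symmetric monoidality, $\Mot_{\cE}^{\loc}$-linear) structure and by invoking the universal property of the internal $\Hom$ in $\Mot_{\cE}^{\loc}$ against Theorem \ref{th:dualizability_and_rigidity}\ref{E_1_rigidity}.
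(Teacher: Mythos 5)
Your outline captures the right circle of ideas — formal $\omega_1$-injectivity of Calkin categories, the Calkin $K$-theory shift, trace-class presentations of $\hat{\cY}(\cC)$, and the inverse-limit $K$-theory theorem from [E25] — but it also contains two genuine errors and glosses over the step that carries almost all the weight.

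First, your preliminary claim (b) is false as stated. If $\cD'$ is formally $\omega_1$-injective and $\cC\in(\Cat_{\cE}^{\dual})^{\omega_1}$, the category $\un{\Hom}_{\cE}^{\dual}(\cC,\cD')$ is \emph{not} compactly generated in general; what holds (Proposition \ref{prop:Hom^dual_into_formally_omega_1_injective} \ref{descr_of_Hom^dual}, \ref{formal_omega_1_injectivity_of_Hom^dual}) is that this internal $\Hom$ is itself formally $\omega_1$-injective and sits inside $\Ind((\cC^{\vee}\tens{\cE}\cD')^{\omega_1})$ as the localizing subcategory of sequential ind-objects with eventually right trace-class transitions — a ``nuclear-type'' subcategory. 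What \emph{is} compactly generated is $\un{\Hom}_{\cE}^{\dual}(\cC,\Calk_{\omega_1}(\cD'))$, by Corollary \ref{cor:internal_Hom_to_functors_commuting_with_countable_limits} \ref{key_hom_epi}: one needs both (a) formal $\omega_1$-injectivity of $\cD'$ (so that $\Ind(\cD'^{\omega_1})\to\Calk_{\omega_1}(\cD')$ commutes with countable limits) and (b) the homological-epimorphism statement $(\cC^{\vee}\tens{\cE}\cD')^{\omega_1}\to\Fun_{\cE}^{LL}(\cC,\Calk_{\omega_1}(\cD'))$. This is why the theorem is stated with $\Calk_{\omega_1}^2(\cD)$ and not just one Calkin. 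Relatedly, your asserted commutation $\Calk_{\omega_1}(\un{\Hom}_{\cE}^{\dual}(\cC,\Calk_{\omega_1}(\cD)))\simeq\un{\Hom}_{\cE}^{\dual}(\cC,\Calk_{\omega_1}^2(\cD))$ is not something the paper proves and is not how the first isomorphism in \eqref{eq:morphisms_in_Mot^loc_via_Calk^2} is obtained — the $\omega_1$-compacts of $\un{\Hom}_{\cE}^{\dual}(\cC,\Calk_{\omega_1}(\cD))$ do not coincide with $(\cC^{\vee}\tens{\cE}\Calk_{\omega_1}(\cD))^{\omega_1}$, so the outer Calkin of the internal $\Hom$ is not the internal $\Hom$ of the outer Calkin. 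The first isomorphism comes from the short exact sequence supplied by Corollary \ref{cor:internal_Hom_to_functors_commuting_with_countable_limits} \ref{key_hom_epi} together with the Eilenberg swindle on the middle term.

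Second, the step you label as ``one applies [Theorem 6.1, E25] to commute $K^{\cont}$ past the codirected limit'' is precisely where the content lives, and you leave it as a placeholder. To apply that theorem one must verify that the relevant pro-system $\{\cA_n\}$ satisfies the hypothesis that $\prolim_n\Ind(\cA_n)^{\omega_1}\to\prolim_n\Calk_{\omega_1}(\cA_n)$ is a homological epimorphism. In the paper this is done by using the trace-class transitions $\cC_n\to\cC_{n+1}$ to construct diagonal arrows commuting $\Ind(-)^{\omega_1}$ and $\Calk_{\omega_1}(-)$ past the $\cE$-tensor product (Lemma \ref{lem:diagonal_arrow_for_Inds_and_Calkins}), identifying $\prolim_n\Ind(\cA_n)^{\omega_1}\simeq(\cC^{\vee}\tens{\cE}\cT)^{\omega_1}$ and $\prolim_n\Calk_{\omega_1}(\cA_n)\simeq\Fun_{\cE}^{LL}(\cC,\Calk_{\omega_1}(\cT))$, and then invoking Corollary \ref{cor:internal_Hom_to_functors_commuting_with_countable_limits} once more. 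Your appeal to ``Mittag-Leffler-type hypotheses'' does not point at this mechanism. Finally, reversing the order (proving \ref{internal_Hom_from_proper} first) does not actually shortcut anything: the same homological-epimorphism check is required, and once \ref{internal_Hom_into_Calk} is proven, the paper's derivation of \ref{internal_Hom_from_proper} from it (via \cite[Corollary 3.14]{E25}) is short and avoids redoing the limit argument.
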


The proof of this theorem is given in Subsection \ref{ssec:proof_of_theorem_on_morphisms_in_Mot^loc}. We note that part \ref{internal_Hom_into_Calk} was used in \cite[Proof of Theorem 6.1]{RSW25} in the case $\cE=\Sp$ and $\cC,\cD\in\Cat_{\st}^{\cg}$ to prove the equivalence $\Mot^{\loc}_{\omega_1}\xto{\sim}\Mot^{\loc}.$ In fact, the general case of Theorem \ref{th:morphisms_in_Mot^loc_via_internal_Hom} allows to prove that for any rigid $\bE_1$-monoidal category $\cE$ we have an equivalence $\Mot^{\loc}_{\cE,\omega_1}\xto{\sim}\Mot^{\loc}_{\cE},$ see Theorem \ref{th:equivalence_Mot^loc_omega_1_and_Mot^loc} below.  

The first assertion in \ref{internal_Hom_from_proper} has already appeared in \cite[Proof of Corollary 3.14]{E25}. We deduce the isomorphisms in \ref{internal_Hom_from_proper} from part \ref{internal_Hom_into_Calk} using our internal projectivity result \cite[Theorem 3.6, Corollary 3.14]{E25}. To prove \ref{internal_Hom_into_Calk} we give a detailed study of the injectivity properties of Calkin categories. First, in Subsection \ref{ssec:formally_injective_categories} we introduce and study the notion of formal $\omega_1$-injectivity for dualizable categories (Definition \ref{def:formal_omega_1_injectivity}). In Subsection \ref{ssec:formal_injectivity_of_Calkin} we prove that Calkin categories are formally $\omega_1$-injective (Theorem \ref{th:hat_Y_for_Calkin}). Then in Subsection \ref{ssec:internal_injectivity_of_Calkin} we prove general results about the (relative) dualizable internal $\Hom$ into formally $\omega_1$-injective categories, including the (relative) internal $\omega_1$-injectivity statement (Theorem \ref{th:internal_omega_1_injectivity}). These results together with our theorem on localizing invariants of inverse limits \cite[Theorem 6.1]{E25} allow to prove Theorem \ref{th:morphisms_in_Mot^loc_via_internal_Hom}.

\subsection{Formally $\omega_1$-injective dualizable categories}
\label{ssec:formally_injective_categories}

We introduce the following class of dualizable categories.

\begin{defi}\label{def:formal_omega_1_injectivity}
We say that a dualizable category $\cD$ is formally $\omega_1$-injective if the functor $\hat{\cY}:\cD\to\Ind(\cD^{\omega_1})$ commutes with countable limits.
\end{defi}

Equivalently, we can replace ``countable limits'' with ``countable products''. To give a feeling of this notion we first make the following trivial observation.

\begin{prop}\label{prop:when_hat_Y_is_cocontinuous}
Let $\cD$ be a dualizable category. The following are equivalent.
\begin{enumerate}[label=(\roman*),ref=(\roman*)]
\item The functor $\hat{\cY}:\cD\to\Ind(\cD^{\omega_1})$ commutes with all limits. \label{hat_Y_cocontinuous}
\item $\cD$ is compactly generated and the category $\cD^{\omega}$ has countable colimits (equivalently, countable coproducts). \label{compact_objects_countably_cocomplete}
\end{enumerate}
\end{prop}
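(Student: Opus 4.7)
The implication (ii) $\Rightarrow$ (i) is essentially a short verification. Under (ii), the compact generation of $\cD$ combined with closure of $\cD^{\omega}$ under countable colimits forces every $\omega_1$-compact object of $\cD$ to be compact: any such object is a retract of an $\omega_1$-small (hence countable) filtered colimit of compact objects, which stays in $\cD^{\omega}$ by hypothesis. Hence $\cD^{\omega_1} = \cD^{\omega}$ and $\Ind(\cD^{\omega_1}) = \Ind(\cD^{\omega}) \simeq \cD$, identifying $\hat{\cY}$ with the identity, which of course preserves all limits.

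For the converse direction, the plan is to show that hypothesis (i) forces $\hat{\cY}$ to be an equivalence, at which point (ii) follows immediately (then $\cD \simeq \Ind(\cD^{\omega_1})$ is compactly generated by $\cD^{\omega_1} = \cD^{\omega}$, and $\cD^{\omega}$ is automatically closed under countable colimits since $\cD^{\omega_1}$ is). First I would recall that $\colim \colon \Ind(\cD^{\omega_1}) \to \cD$ always admits a fully faithful right adjoint, namely the restricted Yoneda $\cY \colon \cD \to \Ind(\cD^{\omega_1})$, $\cY(d) = \Hom_{\cD}(-,d)|_{\cD^{\omega_1}}$, and that there is a canonical natural transformation $\hat{\cY} \to \cY$ encoding the inclusion of compact approximations of $d$ among all $\omega_1$-compact approximations. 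If $\hat{\cY} \cong \cY$, then evaluating on $d \in \cD^{\omega_1}$, where $\cY(d)$ is the literal representable and hence a compact object of $\Ind(\cD^{\omega_1})$, forces the ind-system $\hat{\cY}(d)$ to stabilize so that the identity $d \to d$ becomes a compact morphism in $\cD$, i.e.\ $d \in \cD^{\omega}$. Thus it suffices to prove $\hat{\cY} \cong \cY$.

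To achieve this, since $\hat{\cY}$ is accessible (as a left adjoint) and preserves all limits by (i), the adjoint functor theorem produces a left adjoint $L \dashv \hat{\cY}$, giving a chain of adjunctions $L \dashv \hat{\cY} \dashv \colim \dashv \cY$ in which $\hat{\cY}$ and $\cY$ are both fully faithful. By uniqueness of right adjoints, the desired equivalence $\hat{\cY} \cong \cY$ is equivalent to $L \cong \colim$, which in turn amounts to the equality $\ker(L) = \Calk_{\omega_1}(\cD) = \ker(\colim)$ of the right and left orthogonal complements of $\hat{\cY}(\cD)$ in $\Ind(\cD^{\omega_1})$. Equivalently, one needs to produce an orthogonal direct-sum decomposition $\Ind(\cD^{\omega_1}) \simeq \hat{\cY}(\cD) \oplus \Calk_{\omega_1}(\cD)$ in $\Pr^L_{\st}$.

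The hard part will be establishing this direct-sum decomposition. The approach is to exploit that, under (i), the subcategory $\hat{\cY}(\cD) \subset \Ind(\cD^{\omega_1})$ is closed under both arbitrary limits (by hypothesis) and arbitrary colimits (as a localizing image of the strongly continuous fully faithful $\hat{\cY}$), and to combine this with the elementary but crucial observation $\hat{\cY}(\cD) \cap \Calk_{\omega_1}(\cD) = 0$, which follows immediately from $\colim \hat{\cY} \simeq \id$. This double closure of $\hat{\cY}(\cD)$ inside the compactly generated category $\Ind(\cD^{\omega_1})$, together with the trivial intersection with $\Calk_{\omega_1}(\cD)$, should force the reflective and coreflective structures on $\hat{\cY}(\cD)$ to share a common kernel, producing the required direct sum and thereby completing the proof.
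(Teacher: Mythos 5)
Both directions of your argument rest on the same false claim: that hypothesis (ii) forces $\cD^{\omega_1}=\cD^{\omega}$. The countable filtered colimit of compact objects computed inside $\cD$ is not the one computed inside $\cD^{\omega}$ — the inclusion $\cD^{\omega}\hookrightarrow\cD$ does not commute with countable filtered colimits — so closure of $\cD^{\omega}$ under its own countable colimits does not make $\cD^{\omega}$ closed under countable filtered colimits in $\cD$. A concrete counterexample: take $\cD=\Ind(\Sp^{\omega_1})$. Then $\cD^{\omega}=\Sp^{\omega_1}$ has countable colimits, so (ii) holds; but the formal ind-object $\inddlim_n\bigoplus_{k\leq n}\bS$ is $\omega_1$-compact without being compact (test against $\bigoplus_{\N}\bS\in\cD^{\omega}$: the relevant $\Hom$ is $\colim_n\prod_{\N}\bigoplus_{k\leq n}\bS$, which differs from $\prod_{\N}\bigoplus_{\N}\bS$). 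Hence $\cD^{\omega_1}\supsetneq\cD^{\omega}$ and $\hat{\cY}$ is nothing like the identity — it merely acquires a left adjoint.

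This also defeats your (i) $\Rightarrow$ (ii) step. You reduce to establishing $\hat{\cY}\cong\cY$, and you correctly observe that this would force $\cD^{\omega_1}\subset\cD^{\omega}$. But since $\cD=\Ind(\Sp^{\omega_1})$ satisfies both (i) and (ii) yet has $\cD^{\omega_1}\supsetneq\cD^{\omega}$, hypothesis (i) cannot imply $\hat{\cY}\cong\cY$; the direct-sum decomposition $\Ind(\cD^{\omega_1})\simeq\hat{\cY}(\cD)\oplus\Calk_{\omega_1}(\cD)$ you hope to produce is likewise false in general (the left and right orthogonals of $\hat{\cY}(\cD)$ furnish two genuinely different semi-orthogonal decompositions). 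The proof should stop much earlier: hypothesis (i) gives a left adjoint $F$ to $\hat{\cY}$, and since $\hat{\cY}$ is fully faithful with continuous right adjoint $\colim$, $F$ is a strongly continuous quotient functor; so $\cD$ is compactly generated and $F$ preserves compacts, and $F_{\mid\cD^{\omega_1}}:\cD^{\omega_1}\to\cD^{\omega}$ is a left adjoint to the inclusion $\cD^{\omega}\hookrightarrow\cD^{\omega_1}$, which is precisely the statement that $\cD^{\omega}$ has countable filtered (hence all countable) colimits. Reversing this adjunction argument — passing to Ind-completions of the colimit functor $\cD^{\omega_1}\to\cD^{\omega}$ supplied by (ii) — gives the other direction. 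No identification of $\cD^{\omega_1}$ with $\cD^{\omega}$ is needed, and none holds.
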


\begin{proof}
\Implies{hat_Y_cocontinuous}{compact_objects_countably_cocomplete}. Commutation of $\hat{\cY}$ with limits means that it has a left adjoint $F:\Ind(\cD^{\omega_1})\to\cD,$ Then $F$ is a strongly continuous quotient functor, hence $\cD$ is compactly generated and $F$ preserves compact objects. The induced functor $\cD^{\omega_1}\simeq\Ind(\cD^{\omega})^{\omega_1}\to\cD^{\omega}$ is left adjoint to the Yoneda embedding. Hence, the category $\cD^{\omega}$ has sequential colimits, therefore all countable colimits.

The implication \Implies{compact_objects_countably_cocomplete}{hat_Y_cocontinuous} is proved by reversing the previous argument.
\end{proof}

However, the formal $\omega_1$-injectivity is much less restrictive than the condition \ref{hat_Y_cocontinuous} of Proposition \ref{prop:when_hat_Y_is_cocontinuous}. For example, the Calkin categories certainly don't satisfy the condition \ref{compact_objects_countably_cocomplete}, but they turn out to be formally $\omega_1$-injective by Theorem \ref{th:hat_Y_for_Calkin} below.

It will be convenient to have equivalent reformulations of the formal $\omega_1$-injectivity. We first prove the following lemma, which is almost formal.

\begin{lemma}\label{lem:when_ev_with_x_commutes_with_countable_limits}
	Let $\cD$ be a dualizable category. For an object $x\in\cD,$ the following conditions are equivalent.
	\begin{enumerate}[label=(\roman*),ref=(\roman*)]
		\item The functor $\ev_{\cD}(x,-):\cD^{\vee}\to\Sp$ commutes with countable limits. \label{cond_on_ev_with_x}
		\item The object $\hat{\cY}(x)\in\Ind(\cD^{\omega_1})$ is contained in $\Ind_{\omega_1}(\cD^{\omega_1})\subset\Ind(\cD^{\omega_1}).$ \label{cond_on_hat_Y_of_x}
		\item For any $y\in\cD^{\omega_1},$ we have an isomorphism
		\begin{equation*}
			\ev_{\cD}(x,y^{\vee})\xto{\sim}\Hom_{\cD}(y,x).
		\end{equation*} \label{cond_on_comp_maps_to_x}
		\item The image of $x$ in $\Calk^{\cont}(\cD)$ is in the right orthogonal to the full subcategory $\Calk_{\omega_1}^{\cont}(\cD).$ \label{cond_on_orthogonality_in_Calk}
	\end{enumerate}
\end{lemma}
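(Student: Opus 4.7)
The central computation is that for any $y\in\cD^{\omega_1}$, combining $(y\otimes-)^{R,\cont}\cong\ev_{\cD}(-,y^{\vee})$ with the factorization of the continuous approximation through $\hat{\cY}$ gives, writing $\hat{\cY}(x)=\inddlim_{i\in I}x_i$ for a directed $I$,
\[
\ev_{\cD}(x,y^{\vee})\simeq\colim_{i\in I}\Hom_{\cD}(y,x_i)\simeq\Hom_{\Ind(\cD^{\omega_1})}(\cY(y),\hat{\cY}(x)),
\]
while the adjunction $\hat{\cY}\dashv\colim$ yields $\Hom_{\cD}(y,x)\simeq\Hom_{\Ind(\cD^{\omega_1})}(\hat{\cY}(y),\hat{\cY}(x))$. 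The natural comparison map $\ev_{\cD}(x,y^{\vee})\to\Hom_{\cD}(y,x)$ is then induced by the counit $\hat{\cY}(y)\to\cY(y)$ in $\Ind(\cD^{\omega_1})$.

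\textbf{(ii) $\Leftrightarrow$ (iii).} For (ii) $\Rightarrow$ (iii): (ii) lets me write $\hat{\cY}(x)$ as a countably directed ($\omega_1$-directed) colimit in $\cD^{\omega_1}$; then $\Hom_{\cD}(y,-)$ commutes with $\omega_1$-filtered colimits for $y\in\cD^{\omega_1}$, so the central formula gives $\ev_{\cD}(x,y^{\vee})\simeq\Hom_{\cD}(y,x)$. For (iii) $\Rightarrow$ (ii): (iii) identifies the functor $\hat{\cY}(x):\cD^{\omega_1,op}\to\Sp$ with the restriction of the representable $\Hom_{\cD}(-,x)$, which preserves all limits existing in $\cD^{\omega_1,op}$ --- in particular countable ones. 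Under the standard identification $\Ind_{\omega_1}(\cA)\simeq\Fun^{\ex,\omega_1\hy\lex}(\cA^{op},\Sp)$ applied to $\cA=\cD^{\omega_1}$ (which has countable colimits), this yields $\hat{\cY}(x)\in\Ind_{\omega_1}(\cD^{\omega_1})$.

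\textbf{(i) $\Leftrightarrow$ (ii).} For (i) $\Rightarrow$ (ii): apply (i) to countable products $(\bigoplus_j y_j)^{\vee}\simeq\prod_j y_j^{\vee}$ with $y_j\in\cD^{\omega_1}$, which via $\ev_{\cD}(x,y^{\vee})\simeq\hat{\cY}(x)(y)$ translates to countable-product preservation (hence, by exactness, countable-limit preservation) of $\hat{\cY}(x):\cD^{\omega_1,op}\to\Sp$, i.e., (ii). For the converse, (ii) gives this preservation on $\omega_1$-compact objects of $\cD^{\vee}$; to lift to all of $\cD^{\vee}$ I write a general countable product $\prod_j d_j^{\vee}$ by presenting each $d_j^{\vee}$ as an $\omega_1$-filtered colimit of $\omega_1$-compacts, compute the product on the compact level, and commute limits past filtered colimits twice --- once in $\cD^{\vee}$ and once in $\Sp$ --- using the (AB6) property enjoyed by both (which follows from dualizability).

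\textbf{(iii) $\Leftrightarrow$ (iv), and main obstacle.} Choose an uncountable regular $\kappa$ with $x\in\cD^{\kappa}$. The cofiber $\Cofib(\hat{\cY}(x)\to\cY(x))\in\Ind(\cD^{\kappa})$ lies in $\hat{\cY}(\cD)^{\perp}$: for every $a\in\cD$, both $\Hom(\hat{\cY}(a),\cY(x))$ and $\Hom(\hat{\cY}(a),\hat{\cY}(x))$ equal $\Hom_{\cD}(a,x)$ by the $\hat{\cY}\dashv\colim$ adjunction (using $\colim\cY(x)=\colim\hat{\cY}(x)=x$). Hence this cofiber represents $[x]$ in $\Calk_{\kappa}^{\cont}(\cD)$ under the right adjoint to the quotient, and evaluating at $y\in\cD^{\omega_1}$ yields
\[
\Hom_{\Calk_{\kappa}^{\cont}(\cD)}([y],[x])\simeq\Cofib\bigl(\ev_{\cD}(x,y^{\vee})\to\Hom_{\cD}(y,x)\bigr),
\]
so (iv) is equivalent to (iii). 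The most delicate step of the whole proof is the (AB6)-based extension in (ii) $\Rightarrow$ (i); elsewhere the arguments amount to bookkeeping with the central formula.
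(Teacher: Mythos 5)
Your decomposition of the cycle of implications is genuinely different from the paper's. The paper closes the loop as \Implies{cond_on_hat_Y_of_x}{cond_on_ev_with_x} $\Rightarrow$ \Implies{cond_on_ev_with_x}{cond_on_comp_maps_to_x} $\Rightarrow$ \Implies{cond_on_comp_maps_to_x}{cond_on_hat_Y_of_x}, together with the tautological \Iff{cond_on_comp_maps_to_x}{cond_on_orthogonality_in_Calk}; you instead prove three separate biconditionals. Your central formula $\ev_{\cD}(x,y^{\vee})\simeq\Hom_{\Ind(\cD^{\omega_1})}(\cY(y),\hat{\cY}(x))$ is correct and is indeed the engine behind everything, and your \Iff{cond_on_hat_Y_of_x}{cond_on_comp_maps_to_x}, \Implies{cond_on_ev_with_x}{cond_on_hat_Y_of_x}, and the Calkin calculation for \Iff{cond_on_comp_maps_to_x}{cond_on_orthogonality_in_Calk} all check out (the last of these just unwinds what the paper dismisses as tautological).

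The gap is in \Implies{cond_on_hat_Y_of_x}{cond_on_ev_with_x}, specifically in the sentence ``\,\ref{cond_on_hat_Y_of_x} gives this preservation on $\omega_1$-compact objects of $\cD^{\vee}$.'' What \ref{cond_on_hat_Y_of_x} combined with the central formula actually gives you is that $\ev_{\cD}(x,-)$ commutes with countable products of objects of the form $y^{\vee}$ with $y\in\cD^{\omega_1}$ (because such a product is $(\bigoplus_j y_j)^{\vee}$ with $\bigoplus_j y_j\in\cD^{\omega_1}$). But $\{y^{\vee}:y\in\cD^{\omega_1}\}$ is in general a strictly smaller class than $(\cD^{\vee})^{\omega_1}$ --- for $y\in\cD^{\omega_1}$ the dual $y^{\vee}$ need not be $\omega_1$-compact in $\cD^{\vee}$, nor is every $\omega_1$-compact object of $\cD^{\vee}$ of this form --- so your step~5 ($\ev_{\cD}(x,\prod_j d_{j,i_j})\simeq\prod_j\ev_{\cD}(x,d_{j,i_j})$ for arbitrary $d_{j,i_j}\in(\cD^{\vee})^{\omega_1}$) is not justified, and without it the (AB6) bootstrapping in steps~6--7 cannot start. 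The paper's proof avoids this precisely by a pro-replacement: writing $\hat{\cY}(x)=\inddlim_{i\in I}x_i$ with $I$ $\omega_1$-directed, one shows $\proolim_i x_i^{\vee}$ is pro-isomorphic to a pro-system of genuinely $\omega_1$-compact objects $y_j\in(\cD^{\vee})^{\omega_1}$, so that $\ev_{\cD}(x,-)\simeq\indlim_j\Hom_{\cD^{\vee}}(y_j,-)$ is an $\omega_1$-filtered colimit of representable (hence limit-preserving) functors, and commutation with countable limits then comes for free from the commutation of $\omega_1$-filtered colimits with countable products in $\Sp$ --- a step conceptually close to your (AB6) idea, but sidestepping the need for commutation on arbitrary $\omega_1$-compacts. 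You would need something playing the role of that pro-replacement (essentially the argument of Lemma~\ref{lem:duals_strongly_generate}: duals of compact maps between $\omega_1$-compacts factor through $\omega_1$-compacts) to close the gap.
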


\begin{proof}
	The equivalence \Iff{cond_on_comp_maps_to_x}{cond_on_orthogonality_in_Calk} is tautological.
	
	\Implies{cond_on_hat_Y_of_x}{cond_on_ev_with_x}. Suppose that $\hat{\cY}(x)=\inddlim[i\in I]x_i,$ where $I$ is an $\omega_1$-directed poset and $x_i\in\cD^{\omega_1}.$ We have a pro-isomorphism $\proolim[i]x_i^{\vee}\cong \proolim[j\in J]y_j,$ where $J$ is $\omega_1$-codirected and $y_j\in(\cD^{\vee})^{\omega_1}.$ Hence, we obtain isomorphisms of functors
	\begin{equation*}
		\ev_{\cD}(x,-)\cong \indlim[i]\Hom_{\cD^{\vee}}(x_i^{\vee})\cong \indlim[j]\Hom_{\cD^{\vee}}(y_j,-).
	\end{equation*}
	The latter is an $\omega_1$-directed colimit of functors which commute with countable limits. Hence, it also commutes with countable limits.
	
	\Implies{cond_on_ev_with_x}{cond_on_comp_maps_to_x}. Let $y\in\cD^{\omega_1},$ with $\hat{\cY}(y)=\inddlim[n\in\N]y_n.$ Then we have
	\begin{equation*}
		\ev_{\cD}(x,y^{\vee})\cong \prolim[n]\ev_{\cD}(x,y_n^{\vee})\cong\prolim[n]\Hom_{\cD}(y_n,x)\cong \Hom_{\cD}(y,x).
	\end{equation*}
	
	\Implies{cond_on_comp_maps_to_x}{cond_on_hat_Y_of_x}. Under the equivalence $\Ind(\cD^{\omega_1})\simeq\Fun((\cD^{\omega_1})^{op},\Sp)$ the object $\hat{\cY}(x)$ corresponds to the functor $\ev_{\cD}(x,(-)^{\vee}).$ Hence, the condition \ref{cond_on_comp_maps_to_x} exactly means that the object $\hat{\cY}(x)\in\Ind(\cD^{\omega_1})$ is identified with the image of $x$ under the composition $\cD\simeq\Ind_{\omega_1}(\cD^{\omega_1})\hto \Ind(\cD^{\omega_1}).$ This proves the implication.
\end{proof}

\begin{prop}\label{prop:formal_omega_1_injectivity_reformulations}
	Let $\cD$ be a dualizable category. The following are equivalent.
	\begin{enumerate}[label=(\roman*),ref=(\roman*)]
		\item $\cD$ is formally $\omega_1$-injective. \label{hat_Y_for_D_countably_cocont}
		\item $\cD^{\vee}$ is formally $\omega_1$-injective. \label{hat_Y_for_D_dual_countably_cocont}
		\item For any $x\in\cD^{\omega_1}$ and for any $y\in (\cD^{\vee})^{\omega_1}$ we have an isomorphism
		\begin{equation}\label{eq:ev_of_duals}
			\ev_{\cD}(y^{\vee}, x^{\vee})\xto{\sim}\Hom_{\cD^{\vee}\otimes\cD}(y\boxtimes x,\coev_{\cD}(\bS)).
		\end{equation}
		More precisely, the map \eqref{eq:ev_of_duals} is given by the composition
		\begin{equation*}
			\ev_{\cD}(y^{\vee}, x^{\vee})\to \Hom_{\cD}(x,y^{\vee})\\
			\cong \Hom_{\cD^{\vee}\otimes\cD}(y\boxtimes x,\coev_{\cD}(\bS)),
		\end{equation*}
		or equivalently by a similar composition with the roles of $x$ and $y$ interchanged. \label{ev_of_duals}
	\end{enumerate}
\end{prop}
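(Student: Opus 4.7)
The plan is to prove (i) $\Leftrightarrow$ (iii) via a direct reduction to Lemma~\ref{lem:when_ev_with_x_commutes_with_countable_limits}, and then to deduce (ii) $\Leftrightarrow$ (iii) from the self-duality of condition (iii) under the swap $\cD\leftrightarrow\cD^{\vee}$. The key elementary observation that ties everything together is the identification, for every $c\in\cD^{\omega_1}$ and every $x\in\cD$, of the value of $\hat{\cY}(x)\in\Ind(\cD^{\omega_1})\simeq\Fun^{\ex}((\cD^{\omega_1})^{op},\Sp)$ at $c$ with the spectrum $\ev_{\cD}(x,c^{\vee})$. This follows by combining the defining identity $\ev_{\cD}(-,c^{\vee})\cong(c\otimes-)^{R,\cont}$ with the recipe for continuous approximation through $\hat{\cY}$: both sides compute as $\colim_i\Hom_{\cD}(c,x_i)$, where $\hat{\cY}(x)=\inddlim[i]x_i$ is the canonical ind-presentation.

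To prove (i) $\Leftrightarrow$ (iii), I would test the natural comparison $\hat{\cY}(\prodd[s]x_s)\to\prodd[s]\hat{\cY}(x_s)$ on the set of compact generators $\{\cY(c):c\in\cD^{\omega_1}\}$ of $\Ind(\cD^{\omega_1})$; since products in the functor category $\Ind(\cD^{\omega_1})$ are computed pointwise, the key observation transforms this into the assertion that $\ev_{\cD}(-,c^{\vee}):\cD\to\Sp$ commutes with countable products for every $c\in\cD^{\omega_1}$, equivalently (via the symmetric monoidal swap $\cD\otimes\cD^{\vee}\simeq\cD^{\vee}\otimes\cD$) that $\ev_{\cD^{\vee}}(c^{\vee},-):\cD\to\Sp$ commutes with countable limits. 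Applying Lemma~\ref{lem:when_ev_with_x_commutes_with_countable_limits} to the dualizable category $\cD^{\vee}$ and the object $c^{\vee}\in\cD^{\vee}$, the equivalence \Iff{cond_on_ev_with_x}{cond_on_comp_maps_to_x} of that lemma converts the previous statement into the assertion that $\ev_{\cD^{\vee}}(c^{\vee},y^{\vee})\xto{\sim}\Hom_{\cD^{\vee}}(y,c^{\vee})$ for every $y\in(\cD^{\vee})^{\omega_1}$. Unwinding the swap and applying the universal property of $c^{\vee}$ identifies the right-hand side with $\Hom_{\cD^{\vee}\otimes\cD}(y\boxtimes c,\coev_{\cD}(\bS))$, yielding precisely condition (iii) for the pair $(x,y)=(c,y)$. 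Ranging over all $c\in\cD^{\omega_1}$ yields the full condition (iii).

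For (ii) $\Leftrightarrow$ (iii), I would observe that condition (iii) is manifestly invariant under the swap $\cD\leftrightarrow\cD^{\vee}$ (equivalently $x\leftrightarrow y$): the commutativity constraint $\cD\otimes\cD^{\vee}\simeq\cD^{\vee}\otimes\cD$ identifies $\ev_{\cD}(y^{\vee},x^{\vee})$ with $\ev_{\cD^{\vee}}(x^{\vee},y^{\vee})$, and $\coev_{\cD}(\bS)$ with $\coev_{\cD^{\vee}}(\bS)$. Hence applying the already-established (i) $\Leftrightarrow$ (iii) with $\cD^{\vee}$ in place of $\cD$ yields (ii) $\Leftrightarrow$ ``(iii) for $\cD^{\vee}$'' $=$ (iii) for $\cD$. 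The main obstacle I anticipate is the careful bookkeeping around the symmetric monoidal swap and verifying that all the comparison maps in sight---the one appearing in condition (iii) of the proposition, the one from Lemma~\ref{lem:when_ev_with_x_commutes_with_countable_limits}\ref{cond_on_comp_maps_to_x}, and the natural transformation $\hat{\cY}\to\cY$---match up canonically under the identifications used; once this is in place, the chain of equivalences is essentially automatic.
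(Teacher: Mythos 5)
Your proof is correct and follows essentially the same route as the paper's: both reduce everything to Lemma~\ref{lem:when_ev_with_x_commutes_with_countable_limits} (using the key identification of $\hat{\cY}(x)$ evaluated at $c\in\cD^{\omega_1}$ with $\ev_{\cD}(x,c^{\vee})$, the fact that products in $\Ind(\cD^{\omega_1})$ are detected on compact generators, and the universal property of $c^{\vee}$), and both get (ii)$\Leftrightarrow$(iii) by observing that condition (iii) is self-dual under $\cD\leftrightarrow\cD^{\vee}$. Your write-up simply spells out the duality bookkeeping that the paper's two-sentence proof leaves implicit.
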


\begin{remark}
	A more symmetric description of the map \eqref{eq:ev_of_duals} is the following. We have a natural map
	\begin{equation*}
		\alpha:y\boxtimes y^{\vee}\boxtimes x^{\vee}\boxtimes x\to \coev(\bS)\boxtimes\coev(\bS)\quad\text{in }\cD^{\vee}\otimes\cD\otimes\cD^{\vee}\otimes\cD.
	\end{equation*}
	Applying the functor
	\begin{equation*}
		\id\boxtimes\ev_{\cD}\boxtimes\id:\cD^{\vee}\otimes\cD\otimes\cD^{\vee}\otimes\cD\to \cD^{\vee}\otimes\cD
	\end{equation*}
	to $\alpha,$ we obtain the map
	\begin{equation*}
		\beta:\ev_{\cD}(y^{\vee},x^{\vee})\otimes (y\boxtimes x)\to \coev(\bS)\quad \text{in }\cD^{\vee}\otimes\cD.
	\end{equation*}
	By adjunction the map $\beta$ corresponds to the map \eqref{eq:ev_of_duals}.
\end{remark}

\begin{proof}[Proof of Proposition \ref{prop:formal_omega_1_injectivity_reformulations}]
	
	
	We prove \Iff{hat_Y_for_D_countably_cocont}{ev_of_duals}. The condition \ref{hat_Y_for_D_countably_cocont} means that for $y\in(\cD^{\vee})^{\omega_1}$ the functor $\ev_{\cD}(-,y^{\vee}):\cD\to\Sp$ commutes with countable limits. By Lemma \ref{lem:when_ev_with_x_commutes_with_countable_limits}, this is equivalent to the condition \ref{ev_of_duals}.
	
	Since the condition \ref{ev_of_duals} is self-dual in $\cD,$ we also obtain the equivalence \Iff{hat_Y_for_D_dual_countably_cocont}{ev_of_duals}.
\end{proof}

We will need a deeper analysis of formal $\omega_1$-injectivity to study the (relative) dualizable internal $\Hom$ into such categories in Subsection \ref{ssec:internal_injectivity_of_Calkin}. It is convenient to introduce the following notation.

\begin{defi}\label{def:D_omega_1}
	Let $\cD\in\Cat_{\st}^{\dual}$ be a formally $\omega_1$-injective dualizable category. We denote by $\cD_{\omega_1}\subset \cD$ the full subcategory of objects $x\in\cD$ satisfying the equivalent conditions of Lemma \ref{lem:when_ev_with_x_commutes_with_countable_limits}. In particular, $x\in\cD_{\omega_1}$ if and only if
	the object $\hat{\cY}(x)$ is contained in the full subcategory $\cD\simeq\Ind_{\omega_1}(\cD^{\omega_1})\subset \Ind(\cD^{\omega_1}).$ 
\end{defi}

We list some structural properties of formally $\omega_1$-injective dualizable categories which follow almost directly from the above results. We recall that for $\cC\in\Cat_{\st}^{\dual},$ $\cD\in\Pr^L_{\st}$ and an accessible exact functor $F:\cC\to\cD,$ the continuous functor $F^{\cont}:\cC\to\cD$ is given by the composition $\cC\xto{\hat{\cY}}\Ind(\cC^{\omega_1})\xto{\Ind(F)}\Ind(\cD)\xto{\colim}\cD.$

\begin{prop}\label{prop:properties_of_formally_omega_1_injective}
	Let $\cD$ be a formally $\omega_1$-injective dualizable category.
	\begin{enumerate}[label=(\roman*),ref=(\roman*)]
		\item The full subcategory $\cD_{\omega_1}\subset \cD$ is closed under countable limits and $\omega_1$-filtered colimits. \label{D_omega_1_with_limits_colimits}
		\item Denote by $\cA\subset\cD$ the small idempotent-complete stable subcategory generated by $x^{\vee},$ $x\in(\cD^{\vee})^{\omega_1}.$ Then $\cA\subset \cD_{\omega_1}.$ \label{duals_are_omega_1_filtered}
		\item Any object of $\cD_{\omega_1}$ is an $\omega_1$-directed colimit of objects of $\cA.$ \label{omega_1_directed_colimit_of_duals}
		\item Let $\cC$ be a presentable stable category. Then we have the following equivalence of categories of functors:
		\begin{equation}\label{eq:restriction_to_D_omega_1}
			\Fun^L(\cD,\cC)\xto{\sim}\Fun^{\omega_1\hy\cont}(\cD_{\omega_1},\cC),\quad F\mapsto F_{\mid \cD_{\omega_1}}.
		\end{equation}
		Here the target is the category of exact functors commuting with $\omega_1$-filtered colimits. \label{restricting_to_D_omega_1}
		\item Let $\cC$ be a dualizable category, and let $F:\cD\to\cC$ be a continuous functor. Then $F$ commutes with countable limits if and only if $F_{\mid \cD_{\omega_1}}$ commutes with countable limits. \label{countable_cocontinuity_suffices_on_D_omega_1}
		\item Let $\cC$ be a dualizable category, and let $F:\cD\to\cC$ be an $\omega_1$-accessible functor which commutes with countable limits. Then $F^{\cont}:\cD\to\cC$ also commutes with countable limits. \label{when_F^cont_commutes_with_countable_limits}
	\end{enumerate}
\end{prop}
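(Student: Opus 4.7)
I would prove the six parts in order, each building on the previous ones.

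For (i), both closure statements follow immediately from the characterization $x \in \cD_{\omega_1} \iff \hat{\cY}(x) \in \Ind_{\omega_1}(\cD^{\omega_1})$ of Lemma~\ref{lem:when_ev_with_x_commutes_with_countable_limits}\,(ii). Since $\hat{\cY}$ is a left adjoint it preserves all colimits, and $\Ind_{\omega_1}(\cD^{\omega_1})$ is closed under $\omega_1$-filtered colimits in $\Ind(\cD^{\omega_1})$, which gives the colimit half. For countable limits: formal $\omega_1$-injectivity is precisely the statement that the inclusion $\cD \simeq \Ind_{\omega_1}(\cD^{\omega_1}) \hookrightarrow \Ind(\cD^{\omega_1})$ preserves countable limits, so its essential image is closed under them. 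The very same argument (with colimits in place of limits) shows that $\cD_{\omega_1}$ is closed under \emph{all} colimits in $\cD$, making it a localizing subcategory --- a fact I will use later. For (ii), Proposition~\ref{prop:formal_omega_1_injectivity_reformulations}\,(iii) combined with the universal property of $y^\vee$ (which gives $\Hom_{\cD^\vee\otimes\cD}(y\boxtimes x,\coev_\cD(\bS)) \simeq \Hom_\cD(x,y^\vee)$) reads $\ev_\cD(y^\vee,x^\vee)\simeq \Hom_\cD(x,y^\vee)$ for $y \in (\cD^\vee)^{\omega_1}$ and $x \in \cD^{\omega_1}$; by Lemma~\ref{lem:when_ev_with_x_commutes_with_countable_limits}\,(iii) this places $y^\vee$ in $\cD_{\omega_1}$, and the stability of $\cD_{\omega_1}$ from (i) propagates to $\cA \subset \cD_{\omega_1}$.

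For (iii), I will show that $\cA$ generates $\cD_{\omega_1}$ as a localizing subcategory by a right-orthogonality argument. Given $z \in \cA^\perp \cap \cD_{\omega_1}$, observe that for any $y \in (\cD^\vee)^{\omega_1}$ the dual $y^\vee$ lies in $\cA$ and is $\omega_1$-compact in $\cD$ (the compatibility of duality with $\omega_1$-compactness for dualizable categories, transported through $\hat{\cY}$ on both sides). Then Lemma~\ref{lem:when_ev_with_x_commutes_with_countable_limits}\,(iii), applied to $z\in\cD_{\omega_1}$ and $y^\vee\in\cD^{\omega_1}$, gives $\ev_\cD(z,y) \simeq \Hom_\cD(y^\vee,z) = 0$; by continuity of $\ev_\cD(z,-)$ and the fact that $(\cD^\vee)^{\omega_1}$ generates $\cD^\vee$ under colimits we conclude $\ev_\cD(z,-)=0$, hence $z=0$. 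This yields an equivalence $\cD_{\omega_1} \simeq \Ind(\cA)$ via the canonical functor. To refine the filtered indexing to an $\omega_1$-directed one, I apply the reindexing trick used in the proof of \cite[Proposition~1.24]{E24}: iteratively closing a filtered $\cA$-diagram under countable colimits inside $\cD_{\omega_1}$ (legitimate by (i)) produces an equivalent $\omega_1$-directed diagram.

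For the remaining parts: (iv) follows from (iii) by a standard Kan-extension argument, using the $\omega_1$-presentability of $\cD$ and the equivalence $\cD_{\omega_1} \simeq \Ind(\cA)$, so that colimit-preserving functors out of $\cD$ correspond via restriction to $\omega_1$-accessible functors on $\cD_{\omega_1}$. For (v), the ``only if'' direction is immediate from (i); for ``if'' we use (iv) to reconstruct $F$ from its restriction and invoke the commutation of $\omega_1$-filtered colimits with countable limits in dualizable stable categories ((AB6) plus strong (AB5)). Part (vi) reduces cleanly to (v) via the observation that $F^{\cont}|_{\cC_{\omega_1}} = F|_{\cC_{\omega_1}}$: for $x \in \cC_{\omega_1}$, $\hat{\cY}(x)=\inddlim x_i$ is $\omega_1$-filtered with colimit $x$ in $\cC$, and $\omega_1$-accessibility of $F$ gives $F^{\cont}(x) = \colim F(x_i) = F(x)$; hence $F^{\cont}|_{\cC_{\omega_1}}$ preserves countable limits, and (v) concludes. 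The main obstacle is part (iii), specifically the duality preservation of $\omega_1$-compactness in the non-compactly-generated setting, which is delicate but should follow from the symmetric role played by $\hat{\cY}_\cD$ and $\hat{\cY}_{\cD^\vee}$ under the duality pairing.
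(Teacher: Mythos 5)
Parts (i), (ii), (v), and (vi) of your plan align with the paper's argument in both content and method. The problem is concentrated in (iii), and it propagates to (iv).

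Your proof of (iii) rests on three claims that you do not establish and which are, I believe, false in general. First, you assert that for $y\in(\cD^\vee)^{\omega_1}$ the dual $y^\vee$ is $\omega_1$-compact in $\cD$; you flag this yourself as the delicate point, and indeed the duality $(-)^\vee$ does not preserve $\omega_1$-compactness for a general dualizable category. Note that part (ii) of the proposition is carefully phrased to conclude $\cA\subset\cD_{\omega_1}$, not $\cA\subset\cD^{\omega_1}$ --- these are different conditions, and only the former is true. Second, the step $\ev_\cD(z,y)\simeq\Hom_\cD(y^\vee,z)$, obtained by applying Lemma~\ref{lem:when_ev_with_x_commutes_with_countable_limits}\,(iii) with $y^\vee$ in the role of the $\omega_1$-compact object, implicitly identifies $(y^\vee)^\vee$ with $y$; double dualization is not an isomorphism on general $\omega_1$-compact objects (already in $\Sp$ it only holds on finite spectra). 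Third, the conclusion ``$\cD_{\omega_1}\simeq\Ind(\cA)$'' is wrong as written: the objects of $\cA$ are $\omega_1$-compact, not compact, in $\cD_{\omega_1}$, so at best one could hope for $\Ind_{\omega_1}(\cA)$; in any case, part (iii) asks only for the weaker statement that each object of $\cD_{\omega_1}$ is an $\omega_1$-directed colimit of $\cA$-objects, and the paper never needs the equivalence. Since your proof of (iv) quotes ``$\cD_{\omega_1}\simeq\Ind(\cA)$,'' that argument breaks too.

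The missing ingredient is Lemma~\ref{lem:duals_strongly_generate}, which the paper states and proves immediately after the proposition: for any dualizable $\cD$, the essential image of the composite $\cD\xto{\hat{\cY}}\Ind(\cD^{\omega_1})\to\Ind(\cD)$ is contained in $\Ind(\cA)$. Its proof factors the compact transition maps of a sequential resolution, $y_{n+1}^\vee\to z_n\to y_n^\vee$ with $z_n\in(\cD^\vee)^{\omega_1}$, and sidesteps both the $\omega_1$-compactness and double-duality issues entirely. Once this is in hand, (iii) is immediate: for $x\in\cD_{\omega_1}$ the image of $\hat{\cY}(x)$ in $\Ind(\cD)$ lies in $\Ind(\cA)\cap\Ind_{\omega_1}(\cD^{\omega_1})\subset\Ind_{\omega_1}(\cA)$. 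And (iv) follows not from an equivalence but from the retraction $\cD\to\Ind(\cA)\to\cD$ provided by the lemma, which restricts to a retraction $\cD_{\omega_1}\to\Ind_{\omega_1}(\cA)\to\cD_{\omega_1}$ exhibiting \eqref{eq:restriction_to_D_omega_1} as a retract of the obvious equivalence $\Fun^L(\Ind(\cA),\cC)\simeq\Fun^{\omega_1\hy\cont}(\Ind_{\omega_1}(\cA),\cC)$.
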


\begin{proof}
	\ref{D_omega_1_with_limits_colimits} The full subcategory $\cD\simeq\Ind_{\omega_1}(\cD^{\omega_1})\subset\Ind(\cD^{\omega_1})$ is closed under all limits since the inclusion functor has a left adjoint. This subcategory is also closed under $\omega_1$-filtered colimits, and the functor $\hat{\cY}:\cD\to\Ind(\cD^{\omega_1})$ commutes with countable limits (by assumption) and with all colimits. This directly implies \ref{D_omega_1_with_limits_colimits}.
	
	Next, \ref{duals_are_omega_1_filtered} follows directly from Lemma \ref{lem:when_ev_with_x_commutes_with_countable_limits} and Proposition \ref{prop:formal_omega_1_injectivity_reformulations}.
	
	To see \ref{omega_1_directed_colimit_of_duals}, note that by definition of $\cD_{\omega_1}$ and by Lemma \ref{lem:duals_strongly_generate} below for any object $x\in\cD_{\omega_1}$ the image of $\hat{\cY}(x)$ in $\Ind(\cD)$ is contained in the intersection $\Ind(\cA)\cap \Ind_{\omega_1}(\cD^{\omega_1})\subset\Ind(\cD).$ The latter intersection is contained in $\Ind_{\omega_1}(\cA).$ This gives a presentation of $x$ as an $\omega_1$-directed colimit of objects of $\cA.$
	
	
	To see \ref{restricting_to_D_omega_1}, we use the small stable subcategory $\cA\subset\cD$ from \ref{omega_1_directed_colimit_of_duals} and the retraction
	$\cD\to\Ind(\cA)\to\cD$ (the first functor is well-defined by Lemma \ref{lem:duals_strongly_generate} below). By \ref{omega_1_directed_colimit_of_duals}, this retraction induces another retraction $\cD_{\omega_1}\to\Ind_{\omega_1}(\cA)\to\cD_{\omega_1}.$ In the latter both functors are exact and commute with $\omega_1$-filtered colimits. It follows that \eqref{eq:restriction_to_D_omega_1} is a retract of the functor
	\begin{equation*}
		\Fun^L(\Ind(\cA),\cC)\to \Fun^{\omega_1\hy\cont}(\Ind_{\omega_1}(\cA),\cC),
	\end{equation*}
	which is clearly an equivalence since the source and the target are identified with the category of exact functors $\cA\to\cC.$
	
	Next, \ref{countable_cocontinuity_suffices_on_D_omega_1} follows directly from \ref{restricting_to_D_omega_1}. Indeed, the ``only if'' direction is clear. To prove the ``if'' direction, it suffices to show that $F$ commutes with countable products. By \ref{duals_are_omega_1_filtered} and \ref{omega_1_directed_colimit_of_duals}, any object of $\cD$ is a directed colimit of objects of $\cD_{\omega_1}.$ Using (AB6) for $\cC$ and $\cD,$ we see that the commutation of $F_{\mid \omega_1}$ with countable products implies that $F$ also commutes with countable products.
	
	Finally, \ref{when_F^cont_commutes_with_countable_limits} follows from \ref{countable_cocontinuity_suffices_on_D_omega_1}, since the assumptions imply the isomorphism $(F^{\cont})_{\mid \cD_{\omega_1}}\xto{\sim} F_{\mid \cD^{\omega_1}}.$
\end{proof}

We used the following general statement about dualizable categories.

\begin{lemma}\label{lem:duals_strongly_generate}
Let $\cD$ be a dualizable category. As in Proposition \ref{prop:properties_of_formally_omega_1_injective}, denote by $\cA\subset\cD$ the idempotent-complete stable subcategory generated by the objects $x^{\vee},$ $x\in(\cD^{\vee})^{\omega_1}.$ Then the essential image of the composition $F:\cD\xto{\hat{\cY}}\Ind(\cD^{\omega_1})\to\Ind(\cD)$ is contained in $\Ind(\cA)\subset\Ind(\cD).$ 
\end{lemma}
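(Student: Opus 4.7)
The plan is to reduce the lemma to a factorization claim for compact morphisms: \emph{for every $y\in\cD$ and every compact morphism $f:z\to y$ with $z\in\cD^{\omega_1}$, there exist $a\in\cA$, a compact morphism $g:a\to y$, and a morphism $h:z\to a$ in $\cD$ such that $f=g\circ h$.} Granting this claim, the lemma is immediate. Indeed, $\hat{\cY}(y)\in\Ind(\cD)$ is the filtered colimit of representables $\cY(z)$ taken over the category of compact morphisms $z\to y$ with $z\in\cD^{\omega_1}$, and the factorization shows that the full subcategory $\cC_y$ of those pairs $(a,g)$ with $a\in\cA$ is cofinal in it. One verifies directly that $\cC_y$ is filtered: the direct sum in $\cA$ produces binary joins, and sums of compact morphisms into $y$ remain compact, so $(a_1,g_1),(a_2,g_2)$ admit the common target $(a_1\oplus a_2,g_1+g_2)$. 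Consequently, $\hat{\cY}(y)\simeq\indlim[(a,g)\in\cC_y]\cY(a)\in\Ind(\cA)$, which is what we want.

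To prove the factorization claim, I would use the right trace-class witness associated with the compact morphism $f:z\to y$, namely an element $\tilde f:\bS\to\ev_{\cD}(y,z^{\vee})$. By $\omega_1$-presentability of $\cD^{\vee}$, write $z^{\vee}\simeq\indlim[i\in I]x_i$ in $\cD^{\vee}$ with $I$ an $\omega_1$-directed poset and $x_i\in(\cD^{\vee})^{\omega_1}$. Continuity of $\ev_{\cD}$ in the second argument gives $\ev_{\cD}(y,z^{\vee})\simeq\indlim[i]\ev_{\cD}(y,x_i)$, and compactness of $\bS\in\Sp$ then implies that $\tilde f$ lifts to some $\tilde f_{i_0}:\bS\to\ev_{\cD}(y,x_{i_0})$. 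The candidate factorization is produced as follows. Define $g:x_{i_0}^{\vee}\to y$ to be the compact morphism corresponding, via the natural map $\ev_{\cD}(y,x_{i_0})\to\ev_{\cD}(y,x_{i_0}^{\vee\vee})$ induced by the canonical unit $\eta:x_{i_0}\to x_{i_0}^{\vee\vee}$, to the right trace-class witness arising from $\tilde f_{i_0}$; note $x_{i_0}^{\vee}\in\cA$ by the definition of $\cA$. Define $h:z\to x_{i_0}^{\vee}$ as the morphism corresponding to the structure map $x_{i_0}\to z^{\vee}$ under the symmetry of duality $\Hom_{\cD}(z,x_{i_0}^{\vee})\simeq\Hom_{\cD^{\vee}}(x_{i_0},z^{\vee})$, both sides being identified with $\Hom_{\cD\otimes\cD^{\vee}}(z\boxtimes x_{i_0},\coev_{\cD^{\vee}}(\bS))$.

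The main obstacle will be the verification of the identity $f=g\circ h$. The approach is to trace witnesses: the witness of the composite should be $\widetilde{g\circ h}=\psi_{\ast}(\tilde f_{i_0})$, where $\psi:x_{i_0}\to z^{\vee}$ is the structure map of the ind-system for $z^{\vee}$ and $\psi_{\ast}:\ev_{\cD}(y,x_{i_0})\to\ev_{\cD}(y,z^{\vee})$ is the induced map. One then uses the naturality of the witness construction together with the compatibility (which comes from the triangle identities for the duality $\cD\otimes\cD^{\vee}\to\Sp$) expressing $\psi$ as the composition $x_{i_0}\xto{\eta}x_{i_0}^{\vee\vee}\xto{h^{\vee}}z^{\vee}$, where $h^{\vee}$ is the map dual to $h$. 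By construction $\tilde f_{i_0}$ is a lift of $\tilde f$ along the same map $\psi_{\ast}$, so $\widetilde{g\circ h}=\tilde f$ and hence $g\circ h=f$ as desired. This witness-tracking computation, in particular establishing the key identity $\psi=h^{\vee}\circ\eta$, is where most of the effort of the proof lies.
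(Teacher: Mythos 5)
Your argument pursues essentially the same idea as the paper's—compactness of a morphism in $\cD$ is witnessed by a datum involving $\cD^{\vee}$, and $\omega_1$-compactly approximating that datum yields an $\cA$-object—but the paper packages it more economically. It first reduces to $y\in\cD^{\omega_1}$, takes a sequential presentation $\hat{\cY}(y)\cong\inddlim[n]y_n$ with compact transitions, factors each (compact) dual $y_{n+1}^{\vee}\to y_n^{\vee}$ through $z_n\in(\cD^{\vee})^{\omega_1}$, and reads off $F(y)\cong\inddlim[n]z_n^{\vee}\in\Ind(\cA)$. This avoids the explicit witness-tracking, the double-dualization unit $\eta$, and the identity $\psi=h^{\vee}\circ\eta$ which you leave as the main remaining work.

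There is a genuine gap in your cofinality step: you describe $\cC_y$ as a \emph{full subcategory} of the diagram of compact morphisms $z\to y$ with $z\in\cD^{\omega_1}$, which presupposes $\cA\subset\cD^{\omega_1}$. This inclusion fails in general. For example, with $\cD=\Sp$ (self-dual), the object $x=\biggplus[\N]\bS$ is $\omega_1$-compact, but $x^{\vee}\cong\prodd[\N]\bS$ has uncountable $\pi_0$ and is not $\omega_1$-compact. The paper never asserts $\cA\subset\cD^{\omega_1}$; indeed Proposition \ref{prop:well_defined_submodules} explicitly chooses a larger cardinal $\kappa$ with $\cA\subset\cD^{\kappa}$. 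Your argument can be repaired by enlarging the indexing diagram to compact morphisms from $\cD^{\kappa}$-objects for such a $\kappa$ (the colimit is unchanged, as $\omega_1$-compact sources remain cofinal there), making $\cC_y$ a genuine full subcategory so that your factorization claim gives cofinality. With this fix—and with $\psi=h^{\vee}\circ\eta$ verified—the proof goes through, but the direct dualization of the sequential presentation is considerably shorter.
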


\begin{proof}
It suffices to prove that for $y\in\cD^{\omega_1}$ the object $F(y)$ is contained in $\Ind(\cA).$ We have $\hat{\cY}(y)\cong\inddlim[n\in\N]y_n,$ where $y_n\in\cD^{\omega_1}$ and each map $y_n\to y_{n+1}$ is compact in $\cD.$ For each $n\geq 0$ we choose a factorization $y_{n+1}^{\vee}\to z_n\to y_n^{\vee}$ in $\cD^{\vee}$ with $z_n\in(\cD^{\vee})^{\omega_1}.$ Then we have $F(y)\cong \inddlim[n]z_n^{\vee}\in\Ind(\cA),$ as required.
\end{proof}

The next statement is more subtle.

\begin{prop}\label{prop:Ind_of_D_omega_1_universal_property}
	Let $\cC$ and $\cD$ be dualizable categories, and suppose that $\cD$ is formally $\omega_1$-injective. Let $F:\cC\to\cD$ be a continuous functor, and let $G=\Ind(F)\circ\hat{\cY}_{\cC}:\cC\to\Ind(\cD)$ be the corresponding strongly continuous functor. The following are equivalent.
	\begin{enumerate}[label=(\roman*),ref=(\roman*)]
		\item The essential image of $G$ is contained in $\Ind(\cD_{\omega_1})\subset\Ind(\cD).$ \label{image_in_Ind_D_omega_1}
		\item The functor $F^{\vee}:\cD^{\vee}\to\cC^{\vee}$ commutes with countable limits. \label{dual_commutes_with_countable_limits}
	\end{enumerate}
\end{prop}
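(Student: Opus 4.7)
The plan is to introduce the intermediate condition
\begin{equation*}
  (*)\qquad F(c)\in\cD_{\omega_1}\text{ for every }c\in\cC^{\omega_1},
\end{equation*}
and to establish the chain \ref{image_in_Ind_D_omega_1} $\iff$ $(*)$ $\iff$ \ref{dual_commutes_with_countable_limits} by essentially formal manipulations, using the adjunction defining $F^{\vee}$ and the explicit description of the compact approximation functor $\hat{\cY}_{\cC}.$

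For $(*)$ $\iff$ \ref{dual_commutes_with_countable_limits}, I would invoke the defining identity $\ev_{\cC}(c,F^{\vee}(-))\cong\ev_{\cD}(F(c),-):\cD^{\vee}\to\Sp$ of the continuous dual, valid for every $c\in\cC.$ If \ref{dual_commutes_with_countable_limits} holds, then for $c\in\cC^{\omega_1}$ the composition $\ev_{\cC}(c,F^{\vee}(-))=\ev_{\cD}(F(c),-)$ preserves countable limits (the outer functor $\ev_{\cC}(c,-)$ being continuous), and Lemma \ref{lem:when_ev_with_x_commutes_with_countable_limits} yields $F(c)\in\cD_{\omega_1},$ giving $(*).$ Conversely, under $(*)$ each functor $\ev_{\cD}(F(c),-)$ preserves countable limits for $c\in\cC^{\omega_1}.$ Since $\cC^{\omega_1}$ generates $\cC$ under colimits ($\cC$ is $\omega_1$-presentable), the family $\{\ev_{\cC}(c,-)\}_{c\in\cC^{\omega_1}}$ is jointly conservative on $\cC^{\vee},$ so the isomorphism $\ev_{\cC}(c,F^{\vee}(\prolim[n]y_n))\cong\prolim[n]\ev_{\cC}(c,F^{\vee}(y_n))$ for every $c\in\cC^{\omega_1}$ upgrades to $F^{\vee}(\prolim[n]y_n)\cong\prolim[n]F^{\vee}(y_n),$ yielding \ref{dual_commutes_with_countable_limits}.

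For \ref{image_in_Ind_D_omega_1} $\iff$ $(*),$ I would use the explicit form of $G.$ For $c\in\cC^{\omega_1}$ the compact approximation $\hat{\cY}_{\cC}(c)$ coincides with the representable $\cY(c)\in\Ind(\cC^{\omega_1})$ (by the adjunction defining $\hat{\cY}_{\cC}$ together with $\omega_1$-compactness of $c$), so $G(c)\cong\cY(F(c))\in\Ind(\cD).$ For general $c\in\cC$ I write $\hat{\cY}_{\cC}(c)\cong\inddlim[i\in I]c_i$ with $c_i\in\cC^{\omega_1}$ and $I$ an $\omega_1$-directed poset; strong continuity of $G$ then gives $G(c)\cong\inddlim[i]F(c_i).$ The implication $(*)\Rightarrow$ \ref{image_in_Ind_D_omega_1} is immediate, as the ind-system $(F(c_i))$ lies in $\cD_{\omega_1}.$ For the converse, applying \ref{image_in_Ind_D_omega_1} at an $\omega_1$-compact $c$ forces $\cY(F(c))\in\Ind(\cD_{\omega_1})\subset\Ind(\cD);$ compactness of representables in $\Ind$ then realizes $F(c)$ as a retract of some $d\in\cD_{\omega_1},$ and closure of $\cD_{\omega_1}$ under retracts (which follows from Lemma \ref{lem:when_ev_with_x_commutes_with_countable_limits}, since retracts of functors preserving countable limits again do so) gives $F(c)\in\cD_{\omega_1}.$

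The main subtlety I anticipate is this last passage: extracting the genuine membership $F(c)\in\cD_{\omega_1}$ from the ind-level condition $\cY(F(c))\in\Ind(\cD_{\omega_1}).$ The argument rests on compactness of representables, fully faithfulness of the embedding $\Ind(\cD_{\omega_1})\hto\Ind(\cD),$ and closure of $\cD_{\omega_1}$ under retracts, each of which is either standard or easily read off from the definitions of the subsection.
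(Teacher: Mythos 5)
Your proposed intermediate condition $(*)$ — that $F(c)\in\cD_{\omega_1}$ for every $c\in\cC^{\omega_1}$ — is in general strictly stronger than conditions \ref{image_in_Ind_D_omega_1} and \ref{dual_commutes_with_countable_limits}, and both implications you sketch to install it are incorrect.

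For $(*)\Leftarrow$ \ref{dual_commutes_with_countable_limits}: you claim that for $c\in\cC^{\omega_1}$ the functor $\ev_{\cC}(c,-):\cC^{\vee}\to\Sp$ commutes with countable limits because it is ``continuous''. Continuous here means colimit-preserving, which says nothing about limits. By Lemma \ref{lem:when_ev_with_x_commutes_with_countable_limits}, $\ev_{\cC}(c,-)$ commutes with countable limits precisely when $\hat{\cY}(c)\in\Ind_{\omega_1}(\cC^{\omega_1})$, i.e.\ when $c\in\cC_{\omega_1}$; but the proposition imposes no hypothesis on $\cC$ (it need not be formally $\omega_1$-injective), and in any case $\omega_1$-compactness of $c$ is a different condition from membership in $\cC_{\omega_1}$. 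In general, for $c\in\cC^{\omega_1}$ the compact approximation $\hat{\cY}(c)$ is a nonconstant $\omega$-indexed ind-system $\inddlim[n]c_n$ with compact transition maps, which is not $\omega_1$-filtered. So the composition $\ev_{\cD}(F(c),-)=\ev_{\cC}(c,F^{\vee}(-))$ has no reason to preserve countable limits, and $(*)$ does not follow.

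For $(*)\Leftrightarrow$ \ref{image_in_Ind_D_omega_1}: you assert that $\hat{\cY}_{\cC}(c)=\cY(c)$ for $c\in\cC^{\omega_1}$. This holds only for compact $c$, not $\omega_1$-compact $c$. Indeed $\Hom(\hat{\cY}(c),M)=\Hom_{\cC}(c,\colim M)$, which equals $\colim\Hom_{\cC}(c,M_i)$ only when $c$ is compact or when $M$ is $\omega_1$-filtered. So $G(c)\neq\cY(F(c))$ for $\omega_1$-compact non-compact $c$, and your extraction of $F(c)\in\cD_{\omega_1}$ from \ref{image_in_Ind_D_omega_1} via compactness of representables collapses. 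Moreover your description $\hat{\cY}_{\cC}(c)\cong\inddlim[i\in I]c_i$ with $I$ an $\omega_1$-directed poset conflates two different things: the $\omega_1$-filtered presentation of $c$ as an object of $\cC$, and the compact approximation $\hat{\cY}_{\cC}(c)$, which is an $\omega$-indexed ind-system with compact transitions.

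The paper avoids $(*)$ entirely. For \ref{dual_commutes_with_countable_limits} $\Rightarrow$ \ref{image_in_Ind_D_omega_1}, the ind-system $(F(\Phi(n)))_n$ representing $G(x)$ is replaced by the \emph{pro-limit-regularized} system $\Psi(a)=\prolim[b>a]F(\Phi(b))$, which is shown to land in $\cD_{\omega_1}$; the key is that the compactness of the transition maps $\Phi(b)\to\Phi(c)$ forces the \emph{pro-limit} of evaluation functors (not any individual one) to preserve all limits, and composing with $F^{\vee}$ and taking continuous approximations (Proposition \ref{prop:properties_of_formally_omega_1_injective}\ref{when_F^cont_commutes_with_countable_limits}) yields membership in $\cD_{\omega_1}$. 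This pro-limit trick is what your proposal is missing.
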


\begin{proof} \Implies{image_in_Ind_D_omega_1}{dual_commutes_with_countable_limits}. Let $x\in\cC^{\omega_1}$ be an $\omega_1$-compact object with $\hat{\cY}(x)=\inddlim[n\in\N]x_n,$ and let $y_0,y_1,\dots$ be a sequence of objects of $\cD.$ By assumption, the ind-sequence $(F(x_n))_{n\geq 0}$ is ind-isomorphic to a sequence in $\cD_{\omega_1}.$ We obtain the isomorphisms 
	\begin{multline*}
		\ev_{\cC}(x,F^{\vee}(\prodd[k]y_k))\cong \ev_{\cD}(F(x),\prodd[k]y_k)\cong \indlim[n]\ev_{\cD}(F(x_n),\prodd[k]y_k)\\
		\cong \indlim[n]\prodd[k]\ev_{\cD}(F(x_n),y_k)\cong \indlim[n]\prodd[k]\ev_{\cC}(x_n,F^{\vee}(y_k))\cong \ev_{\cC}(x,\prodd[k]F^{\vee}(y_k)).
	\end{multline*}
	This shows that $F^{\vee}$ commutes with countable products, hence with countable limits.
	
	\Implies{dual_commutes_with_countable_limits}{image_in_Ind_D_omega_1}. Take some object $x\in\cC^{\omega_1}.$ By \cite[Proposition 1.82]{E24}, there exists a functor $\Phi:\Q_{\leq}\to \cC,$ where $\Q_{\leq}$ is the poset of rational numbers with the usual order, with an isomorphism $\indlim[a]\Phi(a)\cong x,$ such that for $a<b$ the map $\Phi(a)\to\Phi(b)$ is compact in $\cC.$ Consider another functor
	\begin{equation*}
		\Psi:\Q_{\leq}\to \cD,\quad \Psi(a)=\prolim[b>a]F(\Phi(b)).
	\end{equation*}
	
	We claim that $\Psi(a)\in\cD_{\omega_1}$ for all $a\in\Q.$ To see this, note that for $b<c$ the morphism $\ev_{\cC}(\Phi(b),-)\to \ev_{\cC}(\Phi(c),-)$ (of functors $\cC^{\vee}\to\Sp$) factors through $\Hom_{\cC^{\vee}}(\Phi(b)^{\vee},-).$ Hence, for $a\in\Q$ the functor $\prolim[b>a]\ev_{\cC}(\Phi(b),-)$ commutes with all limits. By assumption, the functor $F^{\vee}$ commutes with countable limits, hence for any $a\in\Q$ the functor \begin{equation*}
		H_a:\prolim[b>a]\ev_{\cC}(\Phi(b),F^{\vee}(-))\cong \prolim[b>a]\ev_{\cD}(F(\Phi(a)),-):\cD^{\vee}\to\Sp\end{equation*} also commutes with countable limits. Clearly, $H_a$ commutes with $\omega_1$-filtered colimits. Applying Proposition \ref{prop:properties_of_formally_omega_1_injective} \ref{when_F^cont_commutes_with_countable_limits}, we see that the functor
	\begin{equation*}
		\ev_{\cD}(\Psi(a),-)\cong H_a^{\cont}:\cD^{\vee}\to\Sp
	\end{equation*}
	commutes with countable limits. This exactly means that $\Psi(a)\in\cD_{\omega_1},$ as stated.
	
	We obtain the required inclusion:
	\begin{equation*}
		G(x)\cong\inddlim[n]F(\Phi(n))\cong \inddlim[n]\Psi(n)\in\Ind(\cD_{\omega_1}).\qedhere
	\end{equation*}
\end{proof}

Next, we obtain sufficient conditions for formal $\omega_1$-injectivity of subcategories.

\begin{prop}\label{prop:formal_omega_1_injectivity_for_subcategory}
Let $F:\cC\to\cD$ be a strongly continuous fully faithful functor between dualizable categories and denote by $F^R$ its right adjoint. Suppose that $\cD$ is formally $\omega_1$-injective. 
\begin{enumerate}[label=(\roman*),ref=(\roman*)]
	\item If $F$ commutes with countable limits, then $\cC$ is formally $\omega_1$-injective. \label{inclusion_commutes_with_prod_N}
	\item If the composition $F\circ F^R:\cD\to\cD$ preserves the subcategory $\cD_{\omega_1},$ then $\cC$ is formally $\omega_1$-injective. \label{composition_preserves_D_omega_1}
\end{enumerate}
\end{prop}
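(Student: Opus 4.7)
The plan is to exploit the commuting square
\begin{equation*}
\begin{tikzcd}
\cC \ar[r,"F"] \ar[d,"\hat{\cY}_\cC"'] & \cD \ar[d,"\hat{\cY}_\cD"] \\
\Ind(\cC^{\omega_1}) \ar[r,"\Ind(F^{\omega_1})"] & \Ind(\cD^{\omega_1})
\end{tikzcd}
\end{equation*}
whose horizontal arrows are fully faithful: strong continuity of $F$ forces $F$ to preserve $\omega_1$-compact objects (since $F^R$ is continuous and hence preserves $\omega_1$-filtered colimits), and fully faithfulness of $F$ descends to $F^{\omega_1}$ and then to $\Ind(F^{\omega_1}).$ The square commutes by taking left adjoints in the identity $F^R\circ\colim\simeq\colim\circ\Ind(F^{R,\omega_1}).$ Let $R:=\Ind(F^{R,\omega_1})$ denote the right adjoint of $\Ind(F^{\omega_1});$ fully faithfulness gives $R\circ\Ind(F^{\omega_1})\simeq\id.$

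For part \ref{inclusion_commutes_with_prod_N}, given any countable family $(x_n)$ in $\cC,$ the chain
\begin{equation*}
\hat{\cY}_\cC(\textstyle\prod x_n)\simeq R(\hat{\cY}_\cD F(\textstyle\prod x_n))\simeq R(\hat{\cY}_\cD(\textstyle\prod F(x_n)))\simeq R(\textstyle\prod\hat{\cY}_\cD F(x_n))\simeq\textstyle\prod\hat{\cY}_\cC(x_n)
\end{equation*}
combines, in order, the square together with $R\Ind(F^{\omega_1})\simeq\id,$ the hypothesis that $F$ preserves countable products, the formal $\omega_1$-injectivity of $\cD,$ and the fact that $R$ preserves limits together with $R\Ind(F^{\omega_1})\simeq\id.$

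For part \ref{composition_preserves_D_omega_1}, let $\cC_{\omega_1}\subset\cC$ denote the full subcategory of those $x$ for which $\hat{\cY}_\cC(x)\in\Ind_{\omega_1}(\cC^{\omega_1}).$ The crucial reduction is $F^R(\cD_{\omega_1})\subset\cC_{\omega_1}$: for $y\in\cD_{\omega_1},$ the hypothesis gives $\hat{\cY}_\cD(FF^R(y))\in\Ind_{\omega_1}(\cD^{\omega_1}),$ but by the square this equals $\Ind(F^{\omega_1})(\hat{\cY}_\cC(F^R(y))).$ Since the restriction of $\Ind(F^{\omega_1})$ to $\Ind_{\omega_1}(\cC^{\omega_1})\to\Ind_{\omega_1}(\cD^{\omega_1})$ corresponds under the $\colim$-equivalences to the fully faithful $F:\cC\to\cD,$ fully faithfulness of $\Ind(F^{\omega_1})$ reflects $\Ind_{\omega_1}$-membership, yielding $F^R(y)\in\cC_{\omega_1}.$ Combined with the immediate containment $F(\cC_{\omega_1})\subset\cD_{\omega_1},$ this bootstraps to closure of $\cC_{\omega_1}$ under countable products: for $(z_n)\subset\cC_{\omega_1},$ Proposition \ref{prop:properties_of_formally_omega_1_injective}\ref{D_omega_1_with_limits_colimits} in $\cD$ gives $\textstyle\prod F(z_n)\in\cD_{\omega_1},$ whence $F(\textstyle\prod z_n)=FF^R(\textstyle\prod F(z_n))\in\cD_{\omega_1}$ by hypothesis, and thus $\textstyle\prod z_n\in\cC_{\omega_1}.$

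Combining Lemma \ref{lem:duals_strongly_generate} applied to $\cD$ with the essential surjectivity of $F^R,$ every $x\in\cC$ is a directed colimit of objects in $\cC_{\omega_1}.$ On $\cC_{\omega_1}$ the functor $\hat{\cY}_\cC$ coincides with the ordinary Yoneda embedding $\cC\hookrightarrow\Ind(\cC^{\omega_1})$ (both land in $\Ind_{\omega_1}(\cC^{\omega_1})\simeq\cC$ with the same image under $\colim$), and the Yoneda embedding preserves products since they are computed pointwise on presheaves. A standard chase using the axiom (AB6) in both $\cC$ and $\Ind(\cC^{\omega_1})$ then yields $\hat{\cY}_\cC(\textstyle\prod x_n)\simeq\textstyle\prod\hat{\cY}_\cC(x_n).$ The technically hardest step is the reflection $F^R(\cD_{\omega_1})\subset\cC_{\omega_1},$ where one needs the careful identification of the restriction of $\Ind(F^{\omega_1})$ to the $\Ind_{\omega_1}$-subcategories; once this is in hand, the remainder of (ii) is a mechanical (AB6) unravelling.
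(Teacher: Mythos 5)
Your argument is correct in outline, but part~\ref{composition_preserves_D_omega_1} takes a genuinely different route from the paper, and there is one small inaccuracy in part~\ref{inclusion_commutes_with_prod_N}.

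For part~\ref{inclusion_commutes_with_prod_N}, both you and the paper use the commuting square $\Ind(F^{\omega_1})\circ\hat{\cY}_\cC\simeq\hat{\cY}_\cD\circ F.$ The paper first shows that $\Ind(F^{\omega_1})$ itself commutes with countable limits (by applying Proposition~\ref{prop:properties_of_formally_omega_1_injective}\ref{countable_cocontinuity_suffices_on_D_omega_1} to the ambient $\Ind$-categories, which are formally $\omega_1$-injective by Proposition~\ref{prop:when_hat_Y_is_cocontinuous}), and then uses that a fully faithful countable-limit-preserving functor reflects countable limits. You instead unwind through the right adjoint $R$ of $\Ind(F^{\omega_1}),$ which is more direct. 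However, your identification $R=\Ind(F^{R,\omega_1})$ is not correct in general: $F^R$ is continuous but need not preserve $\omega_1$-compact objects (that would require $F^R$ to itself be strongly continuous), so $F^{R,\omega_1}$ is not generally defined. This is harmless since you only use the abstract properties of the right adjoint (continuity, limit-preservation, and $R\circ\Ind(F^{\omega_1})\simeq\id$ from full faithfulness), which hold for the actual right adjoint, namely restriction along $(F^{\omega_1})^{op}.$

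For part~\ref{composition_preserves_D_omega_1}, the paper dualizes: it shows the essential image of the strongly continuous functor $\Phi:\cD\to\Ind(\cD)$ associated to $F\circ F^R$ lies in $\Ind(\cD_{\omega_1}),$ deduces via Proposition~\ref{prop:Ind_of_D_omega_1_universal_property} that $\Phi^\vee:\cD^\vee\to\cD^\vee$ commutes with countable limits, factors $\Phi^\vee\simeq G\circ G^R$ with $G=(F^R)^\vee,$ concludes $G$ commutes with countable limits, and applies part~\ref{inclusion_commutes_with_prod_N} to $G:\cC^\vee\hookrightarrow\cD^\vee,$ then invokes Proposition~\ref{prop:formal_omega_1_injectivity_reformulations}. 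You instead run a direct argument: establish the reflection $F^R(\cD_{\omega_1})\subset\cC_{\omega_1}$ by applying $\Ind(F^{\omega_1})$ to the comparison map $\hat{\cY}_\cC(F^R(y))\to\iota_\cC(F^R(y))$ and using conservativity (this requires the compatibility of these comparison maps with the two commuting squares for $\hat{\cY}$ and $\iota,$ a coherence you invoke without proof but which does hold), then bootstrap closure of $\cC_{\omega_1}$ under countable products, and finally re-run the $(AB6)$ chase of Proposition~\ref{prop:properties_of_formally_omega_1_injective}\ref{countable_cocontinuity_suffices_on_D_omega_1} from scratch --- which you correctly do not just cite, since $\cC$ is not yet known to be formally $\omega_1$-injective. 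Both routes are sound; your direct argument is more self-contained, while the paper's dualization outsources the hard work to Proposition~\ref{prop:Ind_of_D_omega_1_universal_property} and gains concision.
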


\begin{proof} We prove \ref{inclusion_commutes_with_prod_N}. We claim that the functor $\Ind(F^{\omega_1}):\Ind(\cC^{\omega_1})\to\Ind(\cD^{\omega_1})$ commutes with countable limits. Note that the source and the target are formally $\omega_1$-injective by Proposition \ref{prop:when_hat_Y_is_cocontinuous}. We have $\Ind(\cC^{\omega_1})_{\omega_1}=\Ind_{\omega_1}(\cC^{\omega_1})\simeq\cC$ and similarly for $\cD.$ Hence, the restriction of $\Ind(F^{\omega_1})$ to $\Ind(\cC^{\omega_1})_{\omega_1}$ commutes with countable limits. Hence, by Proposition \ref{prop:properties_of_formally_omega_1_injective} \ref{countable_cocontinuity_suffices_on_D_omega_1} the functor  $\Ind(F^{\omega_1})$ itself commutes with countable limits.

We have an isomorphism $\Ind(F^{\omega_1})\circ\hat{\cY}_{\cC}\cong \hat{\cY}_{\cD}\circ F$ of functors from $\cC$ to $\Ind(\cD^{\omega_1}).$ The latter composition commutes with countable limits by our assumptions. Since the functor $\Ind(F^{\omega_1})$ is fully faithful and commutes with countable limits, it follows that $\hat{\cY}_{\cC}:\cC\to\Ind(\cC^{\omega_1})$ commutes with countable limits, i.e. $\cC$ is formally $\omega_1$-injective.

Next, we deduce \ref{composition_preserves_D_omega_1} from \ref{inclusion_commutes_with_prod_N}. Denote by $\Phi:\cD\to\Ind(\cD)$ the strongly continuous functor corresponding to the composition $F\circ F^R:\cD\to\cD.$ Then it follows from our assumption together with Proposition \ref{prop:properties_of_formally_omega_1_injective} \ref{duals_are_omega_1_filtered} and Lemma \ref{lem:duals_strongly_generate} that the essential image of $\Phi$ is contained in $\Ind(\cD_{\omega_1}).$ By Proposition \ref{prop:Ind_of_D_omega_1_universal_property} the functor $\Phi^{\vee}:\cD^{\vee}\to \cD^{\vee}$ commutes with countable limits. We put $G=(F^R)^{\vee}:\cC^{\vee}\to\cD^{\vee},$ then $G$ is strongly continuous and fully faithful. We have $\Phi^{\vee}\cong G\circ G^R$ and the functor $G^R$ is essentially surjective and commutes with all limits, hence $G$ commutes with countable limits. It follows from \ref{inclusion_commutes_with_prod_N} that $\cC^{\vee}$ is formally $\omega_1$-injective, hence  so is $\cC$ by Proposition \ref{prop:formal_omega_1_injectivity_reformulations}.
\end{proof}

We close this subsection with the following simple observation.

\begin{prop}\label{prop:C_omega_1_for_subcategory}
Let $F:\cC\to\cD$ be a fully faithful strongly continuous functor between formally $\omega_1$-injective dualizable categories. Then an object $x\in\cC$ is contained in $\cC_{\omega_1}$ if and only if $F(x)$ is contained in $\cD_{\omega_1}.$
\end{prop}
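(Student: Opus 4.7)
The plan is to apply characterization \ref{cond_on_ev_with_x} of Lemma \ref{lem:when_ev_with_x_commutes_with_countable_limits}, giving $x\in\cC_{\omega_1}$ if and only if $\ev_{\cC}(x,-):\cC^{\vee}\to\Sp$ commutes with countable limits (and similarly for $F(x)\in\cD_{\omega_1}$). Let $F^{\vee}:\cD^{\vee}\to\cC^{\vee}$ denote the contravariant dual functor, as in Proposition \ref{prop:Ind_of_D_omega_1_universal_property}. Under the identifications $\cC^{\vee}\simeq\Fun^L(\cC,\Sp)$ and $\cD^{\vee}\simeq\Fun^L(\cD,\Sp)$, the functor $F^{\vee}$ is simply precomposition with $F$, whence the tautological identification
\begin{equation*}
\ev_{\cD}(F(x),-)\;\cong\;\ev_{\cC}(x,-)\circ F^{\vee}:\cD^{\vee}\to\Sp,
\end{equation*}
arising from $\tilde\phi(F(x))=(\tilde\phi\circ F)(x)$ for $\phi\in\cD^{\vee}$ corresponding to $\tilde\phi:\cD\to\Sp$.

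The key additional structural input, coming from $F$ being strongly continuous and fully faithful, is the following. Since $F^R$ is continuous, ``precomposition with $F^R$'' defines a continuous functor $F_{!}^{\vee}:\cC^{\vee}\to\cD^{\vee}$ which is left adjoint to $F^{\vee}$, and the identity $F^R\circ F\cong\id_{\cC}$ translates under precomposition into $F^{\vee}\circ F_{!}^{\vee}\cong\id_{\cC^{\vee}}$. Thus $F^{\vee}$ is essentially surjective with fully faithful section $F_{!}^{\vee}$, and, being a right adjoint, it preserves all limits.

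The ``only if'' direction is then immediate from the displayed isomorphism, since composing the countable-limit-preserving $\ev_{\cC}(x,-)$ with the all-limits-preserving $F^{\vee}$ yields a functor commuting with countable limits. For the ``if'' direction, I would functorially lift any countable diagram $(\psi_i)$ in $\cC^{\vee}$ through the section $F_{!}^{\vee}$ and compute
\begin{equation*}
\ev_{\cC}(x,\prolim[i]\psi_i)\;\cong\;\ev_{\cD}(F(x),\prolim[i] F_{!}^{\vee}(\psi_i))\;\cong\;\prolim[i]\ev_{\cC}(x,\psi_i),
\end{equation*}
where the first isomorphism uses that $F^{\vee}$ preserves limits together with $F^{\vee}\circ F_{!}^{\vee}\cong\id$, and the second combines the hypothesis that $\ev_{\cD}(F(x),-)$ preserves countable limits with the factorization $\ev_{\cD}(F(x),F_{!}^{\vee}(\psi_i))\cong\ev_{\cC}(x,\psi_i)$ obtained by applying the displayed isomorphism to $F_{!}^{\vee}(\psi_i)$. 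The only nontrivial verification is the explicit description of $F^{\vee}$ and $F_{!}^{\vee}$ as precomposition functors and the adjunction between them, which is routine from the $\Fun^L(-,\Sp)$ model.
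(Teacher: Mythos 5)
Your proof is correct and follows essentially the same route as the paper. Both proofs rest on the same two facts about $F^{\vee}:\cD^{\vee}\to\cC^{\vee}$ — that it commutes with all limits and is essentially surjective — together with the tautological identification $\ev_{\cD}(F(x),-)\cong\ev_{\cC}(x,F^{\vee}(-))$; the paper simply states these properties and concludes in one line, whereas you unpack the essential surjectivity by explicitly constructing the fully faithful left-adjoint section $F_!^{\vee}=(-)\circ F^R$, which makes the lift functorial. That extra structure is not strictly needed (since it suffices to check countable \emph{products}, one can lift each factor individually once essential surjectivity is known), but it is a clean way to organize the same argument.
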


\begin{proof}
Note that the functor $F^{\vee}:\cD^{\vee}\to\cC^{\vee}$ is essentially surjective and commutes with all limits. Therefore, $x\in\cC_{\omega_1}$ iff the functor $\ev_{\cC}(x,-)$ commutes with countable products iff the functor $\ev_{\cC}(x,F^{\vee}(-))\cong \ev_{\cD}(F(x),-)$ commutes with countable products iff $F(x)\in\cD_{\omega_1}.$
\end{proof}

\subsection{Formal $\omega_1$-injectivity of Calkin categories}
\label{ssec:formal_injectivity_of_Calkin}

We prove the following quite difficult statement about Calkin categories of dualizable categories, which was announced in \cite[Theorem 3.31]{E25}.

\begin{theo}\label{th:hat_Y_for_Calkin}
Let $\cC$ be a dualizable category, and let $\kappa$ be an uncountable regular cardinal. Then the dualizable category $\cD=\Calk_{\kappa}(\cC)$ is formally $\omega_1$-injective, i.e. the functor $\hat{\cY}:\cD\to\Ind(\cD^{\omega_1})$
commutes with countable limits.
\end{theo}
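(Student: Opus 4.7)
The plan is to show that every $\omega_1$-compact object $x \in \cD^{\omega_1}$ belongs to $\cD_{\omega_1}$ in the sense of Definition \ref{def:D_omega_1}. Once this is established, formal $\omega_1$-injectivity of $\cD$ follows: any object of $\cD$ is an $\omega_1$-filtered colimit of objects of $\cD^{\omega_1}$, the functor $\hat{\cY}_\cD$ is continuous, and in $\Ind(\cD^{\omega_1})$ countable products commute with $\omega_1$-filtered colimits (this last fact being a consequence of \cite[Corollary A.4]{E25}). Hence it suffices to check that $\hat{\cY}_\cD$ commutes with countable products on the $\omega_1$-compact subcategory, which is precisely the condition that $\cD^{\omega_1} \subset \cD_{\omega_1}$.

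Fix $x \in \cD^{\omega_1}$. By Lemma \ref{lem:when_ev_with_x_commutes_with_countable_limits}\ref{cond_on_ev_with_x}, membership $x \in \cD_{\omega_1}$ is equivalent to asking that the functor $\ev_\cD(x, -) \colon \cD^\vee \to \Sp$ commute with countable products. To make this concrete I would exploit two explicit presentations. On the source side, $\cD^{\omega_1} = \Ind_{\omega_1}(\Calk_\kappa^{\cont}(\cC))$, so $x$ lifts to an object $\tilde{x} \in \Ind(\cC^\kappa)$ whose image modulo $\hat{\cY}_\cC(\cC)$ equals $x$. On the target side, dualizing the short exact sequence $0 \to \cC \to \Ind(\cC^\kappa) \to \cD \to 0$ in $\Cat_{\st}^{\dual}$ gives a fully faithful embedding $\cD^\vee \hookrightarrow \Ind((\cC^\kappa)^{op})$ with cokernel $\cC^\vee$, providing analogous lifts for any countable family $(y_n)$ in $\cD^\vee$.

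Plugging these presentations into the pairing $\ev_\cD(x, \prolim_n y_n)$ unfolds it into a nested expression involving (i) an $\omega_1$-filtered colimit writing $\hat{\cY}_\cC(\tilde{x})$ as an ind-object in $\cC^\kappa$, (ii) the countable cofiltered limit over $n$, and (iii) additional filtered colimits governing the passage to the Calkin quotients on both sides (encoding the fact that morphisms factoring through $\cC$ on the left or through $\cC^\vee$ on the right are killed). The target $\prolim_n \ev_\cD(x, y_n)$ corresponds to the same combinatorial data but with the countable limit pushed to the outside.

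The main obstacle is justifying this interchange. Naively, countable products do not commute with filtered colimits in presentable categories; what saves us is the $\omega_1$-filteredness of the colimits appearing in (i) and (iii), together with the fact that the compact morphisms witnessing these ind-presentations kill precisely the obstruction to $\omega_1$-compactness. This is exactly the content of \cite[Corollary A.4]{E25}, the ``tricky'' interchange result advertised in the introduction. Applying it to both the ind-presentation of $\tilde{x}$ and to the pro-presentations of the $y_n$'s reduces the desired equality to the tautological commutation of countable limits with themselves, completing the proof that $x \in \cD_{\omega_1}$ and hence that $\cD$ is formally $\omega_1$-injective.
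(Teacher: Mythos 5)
Your high-level plan is sound and does resemble the paper's: both reduce the statement to a claim about $\omega_1$-compact objects and use \cite[Corollary A.4]{E25} as the key combinatorial input. But there are two issues worth flagging, one minor and one substantive.

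The minor issue is a misattribution. When extending from $\cD^{\omega_1}$ to all of $\cD$, the interchange of countable products with $\omega_1$-filtered colimits in $\Ind(\cD^{\omega_1})$ is just the axiom (AB6), which holds because $\Ind(\cD^{\omega_1})$ is compactly generated and hence dualizable; it has nothing to do with Corollary~A.4. That corollary is a genuinely delicate pullback-square statement, not a commutation of a single product with a single colimit.

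The substantive issue is that the hard part of the proof — the application of Corollary~A.4 — is asserted rather than carried out, and the way you describe it does not match how that result can actually be used. You write that ``plugging these presentations in'' and ``applying [Corollary~A.4] to both the ind-presentation of $\tilde x$ and the pro-presentations of the $y_n$'s reduces the desired equality to the tautological commutation of countable limits with themselves.'' That is not what happens. Corollary~A.4 does not assert that a naively reorganized expression collapses; it asserts that a certain square of four interchange-maps is a \emph{pullback}. In the paper's proof one first reduces, using compact generation of $\cD=\Calk_\kappa(\cC)$, to $x\cong\biggplus_n\bbar{x}_n$ and $y\cong\biggplus_k\bbar{y}_k^\vee$ with $\bbar{x}_n,\bbar{y}_k\in\Calk_\kappa^{\cont}(\cC)$ lifted from $\cC^\kappa$. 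One then rewrites the map to be proved an isomorphism as a map out of a tensor product over $\cC^\kappa$, realizes that tensor product as the total cofiber of a $2\times 2$ square, computes three of the four corners directly, and identifies the remaining corner (a product of ev's tensored against a product of ev's) with one vertex of the pullback square from Corollary~A.4 applied to the doubly-indexed family $F_{n,k}(i_n,j_k)=\Hom_\cC(z_{n,i_n}^\vee,y_{k,j_k})$. Since the resulting map of squares is an isomorphism on three corners and the pullback square has vanishing total cofiber, one can read off the total cofiber of the original square and conclude. None of this — the reduction to countable sums of compacts, the recasting as a tensor product, the $2\times 2$ square, or the identification of its top-left corner via the pullback — is present in your sketch. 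Without these steps the claim that Corollary~A.4 ``reduces the desired equality to the tautological commutation of countable limits with themselves'' is unsupported, and I do not see a direct route by which a naive unfolding of $\ev_\cD(x,\prolim_n y_n)$ could bypass the square-manipulation.

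A secondary point on your reduction: you aim to establish $\cD^{\omega_1}\subset\cD_{\omega_1}$, whereas the paper works with the ``self-dual'' condition \ref{ev_of_duals} of Proposition \ref{prop:formal_omega_1_injectivity_reformulations}, namely an isomorphism $\ev_\cD(y^\vee,x^\vee)\xto{\sim}\Hom_{\cD^\vee\otimes\cD}(y\boxtimes x,\coev_\cD(\bS))$ for $x\in\cD^{\omega_1}$, $y\in(\cD^\vee)^{\omega_1}$. Your condition is sufficient (via (AB6), as you indicate) but it is formally a different — and a priori stronger — statement than formal $\omega_1$-injectivity, which is literally ``for all $y\in\cD^{\omega_1}$, $\ev_\cD(-,y^\vee)$ commutes with countable limits.'' In the Calkin case these should coincide by the symmetry $\Calk_\kappa(\cC)^\vee\simeq\Calk_\kappa(\cC^\vee)$, but you should either invoke that symmetry explicitly or work with the symmetric formulation from the start; otherwise you may end up proving something not obviously implied by the computation you actually set up.
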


We will deduce Theorem \ref{th:hat_Y_for_Calkin} from the following general statement from \cite{E25} on the combinations of products and directed colimits.

\begin{prop}\label{prop:pullback_square_for_products}\cite[Corollary A.4]{E25}
Let $\cC$ be a (not necessarily stable) presentable category which satisfies strong (AB5) (i.e. filtered colimits commute with finite limits) and (AB6) for countable products. Let $(I_n)_{n\geq 0}$ and $(J_k)_{k\geq 0}$ be sequences of directed posets. Suppose that for any $n,k\geq 0$ we have a functor $F_{n,k}:I_n\times J_k\to\cC.$ Then we have a pullback square
\begin{equation}\label{eq:pullback_square_for_products}
\begin{tikzcd}
\indlim[(i_n)_n\in\prod\limits_{n} I_n]\indlim[(j_k)_k\in\prod\limits_{k} J_k]\prodd[n]\prodd[k]F_{n,k}(i_n,j_k)\ar[r]\ar[d] & \prodd[n]\indlim[i_n\in I_n]\prodd[k]\indlim[j_k\in J_k] F_{n,k}(i_n,j_k)\ar[d]\\
\prodd[k]\indlim[j_k\in J_k]\prodd[n]\indlim[i_n\in I_n] F_{n,k}(i_n,j_k)\ar[r] & \prodd[n]\prodd[k]\indlim[i_n\in I_n]\indlim[j_k\in J_k]F_{n,k}(i_n,j_k). 
\end{tikzcd}
\end{equation} 
\end{prop}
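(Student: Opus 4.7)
The plan is to rewrite each of the four corners of the square as a single filtered colimit of a countable product using (AB6), verify that the four indexing posets form a pullback square, and then use strong (AB5) combined with careful bookkeeping to conclude.

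First, I would apply (AB6) systematically to reformulate each corner as a colimit over a product of directed posets, with the same underlying product functor but at differently ``coupled'' indices. For instance, using (AB6) to turn the inner $\prodd[k]\indlim[j_k]$ into a single colimit over $\prod_k J_k$ of a product, and then again for the outer $\prodd[n],$ the top-right corner becomes
\begin{equation*}
\text{TR}\cong\indlim[\prod_n(I_n\times\prod_k J_k)]\prodd[n,k] F_{n,k}(i_n, j^{(n)}_k),
\end{equation*}
where the index records choices $i_n\in I_n$ and $(j^{(n)}_k)_k\in\prod_k J_k$ for each $n.$ Analogously, BL, BR, and TL become colimits indexed by $\prod_k(J_k\times\prod_n I_n),$ $\prod_{(n,k)}(I_n\times J_k),$ and $\prod_n I_n\times\prod_k J_k$ respectively, each with the ``same'' product functor evaluated at indices with progressively more or less coupling between the $n$-coordinate and the $k$-coordinate.

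Second, I would check by elementary combinatorics that these four indexing posets form a pullback square with $\prod_n I_n\times\prod_k J_k$ as the pullback. The matching condition for elements of $\prod_n(I_n\times\prod_k J_k)$ and $\prod_k(J_k\times\prod_n I_n)$ over $\prod_{(n,k)}(I_n\times J_k)$ forces the $J_k$-components $j^{(n)}_k$ to be independent of $n$ and the $I_n$-components $i^{(k)}_n$ to be independent of $k,$ which is precisely the data of an element of the top-left poset.

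The third and crucial step is to show that the natural comparison map $\text{TL}\to\text{TR}\times_{\text{BR}}\text{BL}$ is an isomorphism. The main obstacle is that the comparison maps of indexing posets (the ``diagonal'' embeddings $\prod_n I_n\to\prod_{(n,k)} I_n$ and $\prod_k J_k\to\prod_{(n,k)} J_k$) are not cofinal in general, since the individual directed posets need not admit countable joins; the pullback structure of the square is precisely what corrects this failure of cofinality. To execute the comparison, I would bring the three colimits for TR, BL, BR to a common indexing by (AB6) and then apply strong (AB5) to commute the finite pullback in $\cC$ past the filtered colimits, tracking the terms objectwise and then using (AB6) once more to split the infinite product $\prodd[(n,k)]$ back as $\prodd[n]\prodd[k]$ at the right moment to recognize the outcome as TL. The interplay between (AB5) for the pullback and (AB6) for the two distinct factorizations of the infinite product is the main technical content, and is where the hypothesis that both axioms hold simultaneously is essential.
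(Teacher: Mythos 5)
First, a remark on the comparison you asked for: the paper does not prove this proposition at all. It is imported verbatim as \cite[Corollary A.4]{E25}, and the surrounding text explicitly calls it ``a very tricky statement on the permutations of products and filtered colimits''. So there is no in-paper argument to measure yours against; your proposal has to stand on its own.

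Your first two steps are correct. Repeated use of (AB6) does rewrite all four corners as filtered colimits of the single functor $H(s)=\prodd[n,k]F_{n,k}(s_{n,k})$ restricted to the four posets $P=\prod_n I_n\times\prod_k J_k$, $Q=\prod_n(I_n\times\prod_k J_k)$, $R=\prod_k(J_k\times\prod_n I_n)$ and $S=\prod_{(n,k)}(I_n\times J_k)$, and these are subposets of $S$ with $P=Q\cap R\cong Q\times_S R$. The gap is your third step, which is where the entire content of the proposition lives and where you only restate the goal. A pullback square of filtered indexing posets does not imply a pullback square of colimits: take $Q=R=\{s_0\}$ a single element of an arbitrary directed $S$; then $Q\times_S R$ is also a point, but $H(s_0)\to H(s_0)\times_{\colim_S H}H(s_0)$ is essentially never an equivalence (already for sets the right-hand side records pairs of elements that only become equal in the colimit). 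Strong (AB5) lets you commute a pullback past a filtered colimit only when all three terms of the cospan are presented compatibly over \emph{one} filtered poset with a levelwise cospan, and that is exactly what you do not have: $TR$, $BL$, $BR$ are colimits over the three different posets $Q$, $R$, $S$, related by inclusions that are not cofinal (as you yourself observe). ``Bringing the three colimits to a common indexing by (AB6)'' is not a defined operation --- (AB6) converts a countable product of colimits into a colimit over a product poset; it does not re-index a colimit over $Q$ as one over $S$. Your sentence that the pullback structure ``is precisely what corrects this failure of cofinality'' is the assertion to be proved, not an argument for it. What is needed is a genuine combinatorial argument exploiting the specific product structure of $S$ and of $H$ (this is what occupies the appendix of \cite{E25}), and your sketch does not supply it.
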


\begin{proof}[Proof of Theorem \ref{th:hat_Y_for_Calkin}]
It suffices to check the condition \ref{ev_of_duals} from Proposition \ref{prop:formal_omega_1_injectivity_reformulations}. Take some objects $x\in\cD^{\omega_1},$ $y\in(\cD^{\vee})^{\omega_1}.$ Since $\cD$ is compactly generated, we may assume that $x\cong\biggplus[n\in\N]\bbar{x}_n,$ $y\cong\biggplus[k\in\N]\bbar{y}_k^{\vee},$
where $\bbar{x}_n,\bbar{y}_k\in\cD^{\omega}=\Calk_{\kappa}^{\cont}(\cC).$ Moreover, we may and will assume that each object $\bbar{x}_n$ resp. $\bbar{y}_k$ is isomorphic to the image of an object $x_n\in\cC^{\kappa}$ resp. $y_k\in\cC^{\kappa},$ and we fix these isomorphisms.

Since the functor $\pi:\cC^{\kappa}\to\Calk_{\kappa}^{\cont}(\cC)$ is a homological epimorphism, we need to prove that the following map is an isomorphism:
\begin{multline}\label{eq:source_and_target}
(\prodd[k]\Hom_{\Calk_{\kappa}^{\cont}(\cC)}(\pi(-),\pi(y_k)))\tens{\cC^{\kappa}} (\prodd[n]\Hom_{\Calk_{\kappa}^{\cont}(\cC)}(\pi(x_n),\pi(-)))\\
\to\prodd[n,k]\Hom_{\Calk_{\kappa}^{\cont}(\cC)}(\pi(x_n),\pi(y_k)).
\end{multline}
Recall the following isomorphisms of functors $(\cC^{\kappa})^{op}\to\Sp$ resp. $\cC^{\kappa}\to\Sp:$
\begin{equation*}
\Hom_{\Calk_{\kappa}^{\cont}(\cC)}(\pi(-),\pi(y_k))\cong\Cone(\ev_{\cC}(y_k,(-)^{\vee})\to\Hom_{\cC}(-,y_k)),
\end{equation*}
\begin{equation*}
	\Hom_{\Calk_{\kappa}^{\cont}(\cC)}(\pi(x_n),\pi(-))\cong\Cone(\ev_{\cC}(-,x_n^{\vee})\to\Hom_{\cC}(x_n,-)).
\end{equation*}
It follows that the source of the map \eqref{eq:source_and_target} is given by the total cofiber of the following square:
\begin{equation}\label{eq:square_for_total_cofiber}
\begin{tikzcd}
(\prodd[k]\ev_{\cC}(y_k,(-)^{\vee}))\tens{\cC^{\kappa}}(\prodd[n]\ev_{\cC}(-,x_n^{\vee}))\ar[r]\ar[d] & (\prodd[k]\Hom_{\cC}(-,y_k))\tens{\cC^{\kappa}}(\prodd[n]\ev_{\cC}(-,x_n^{\vee}))\ar[d]\\
(\prodd[k]\ev_{\cC}(y_k,(-)^{\vee}))\tens{\cC^{\kappa}}(\prodd[n]\Hom_{\cC}(x_n,-))\ar[r] & (\prodd[k]\Hom_{\cC}(-,y_k))\tens{\cC^{\kappa}}(\prodd[n]\Hom_{\cC}(x_n,-)).
\end{tikzcd}
\end{equation}

We start with the simplest computation of the tensor product in \eqref{eq:square_for_total_cofiber}:
\begin{multline*}
(\prodd[k]\Hom_{\cC}(-,y_k))\tens{\cC^{\kappa}}(\prodd[n]\Hom_{\cC}(x_n,-))\cong (\prodd[k]\Hom_{\cC}(-,y_k))\tens{\cC^{\kappa}} \Hom_{\cC}(\biggplus[n]x_n,-)\\
\cong \prodd[k]\Hom_{\cC}(\biggplus[n]x_n,y_k)\cong\prodd[n]\prodd[k]\Hom_{\cC}(x_n,y_k).
\end{multline*}
Similarly, we have
\begin{multline*}
(\prodd[k]\ev_{\cC}(y_k,(-)^{\vee}))\tens{\cC^{\kappa}}(\prodd[n]\Hom_{\cC}(x_n,-))\cong (\prodd[k]\ev_{\cC}(y_k,(-)^{\vee}))\tens{\cC^{\kappa}}\Hom_{\cC}(\biggplus[n]x_n,-)\\
\cong \prodd[k]\ev_{\cC}(y_k,(\biggplus[n]x_n)^{\vee})\cong \prodd[k]\ev_{\cC}(y_k,\prodd[k]x_n^{\vee}).
\end{multline*}
The next computation is only slightly subtler:
\begin{multline*}
(\prodd[k]\Hom_{\cC}(-,y_k))\tens{\cC^{\kappa}}(\prodd[n]\ev_{\cC}(-,x_n^{\vee}))\cong \Hom_{\cC}(-,\prodd[k]y_k)\tens{\cC^{\kappa}}(\prodd[n]\ev_{\cC}(-,x_n^{\vee}))\\
\cong \prodd[n]\ev_{\cC}(\prodd[k]y_k,x_n^{\vee}).
\end{multline*}
Here the second isomorphism follows from the fact that the functor $\Hom(-,\prodd[k]y_k):(\cC^{\kappa})^{op}\to\Sp$ is a $\kappa$-filtered colimit of representable functors, and the functor $(\prodd[n]\ev_{\cC}(-,x_n^{\vee})):\cC\to\Sp$ commutes with $\kappa$-filtered colimits.

The above computations show that the square \eqref{eq:square_for_total_cofiber} is identified with the following:

\begin{equation}\label{eq:square_for_total_cofiber_rewritten}
\begin{tikzcd}
(\prodd[k]\ev_{\cC}(y_k,(-)^{\vee}))\tens{\cC^{\kappa}}(\prodd[n]\ev_{\cC}(-,x_n^{\vee}))\ar[r]\ar[d] & \prodd[n]\ev_{\cC}(\prodd[k]y_k,x_n^{\vee})\ar[d]\\
\prodd[k]\ev_{\cC}(y_k,\prodd[n]x_n^{\vee})\ar[r] & \prodd[n]\prodd[k]\Hom_{\cC}(x_n,y_k).
\end{tikzcd}
\end{equation}

Next, we deal with the tensor product in the top left corner. Let $\hat{\cY}_{\cC^{\vee}}(x_n^{\vee})=\inddlim[i_n\in I_n]z_{n,i_n},$ $\hat{\cY}_{\cC}(y_k)=\inddlim[j_k\in J_k]y_{k,j_k},$ where $I_n$ and $J_k$ are directed posets, and $z_{n,i_n}\in(\cC^{\vee})^{\omega_1},$ $y_{k,j_k}\in \cC^{\omega_1}.$ We have
\begin{multline*}
(\prodd[k]\ev_{\cC}(y_k,(-)^{\vee}))\tens{\cC^{\kappa}}(\prodd[n]\ev_{\cC}(-,x_n^{\vee}))\cong (\prodd[k]\indlim[j_k]\Hom_{\cC}(-,y_{k,j_k}))\tens{\cC^{\kappa}}(\prodd[n]\ev_{\cC}(-,x_n^{\vee}))\\
\cong (\indlim[(j_k)_k\in\prod\limits_{k}J_k]\prodd[k]\Hom_{\cC}(-,y_{k,j_k}))\tens{\cC^{\kappa}}(\prodd[n]\ev_{\cC}(-,x_n^{\vee}))\\
\cong (\indlim[(j_k)_k]\Hom_{\cC}(-,\prodd[k]y_{k,j_k}))\tens{\cC^{\kappa}}(\prodd[n]\ev_{\cC}(-,x_n^{\vee}))\\
\cong \indlim[(j_k)_k]\prodd[n]\ev_{\cC}(\prodd[k]y_{k,j_k},x_n^{\vee})\cong \indlim[(j_k)_k]\prodd[n]\indlim[i_n]\Hom_{\cC}(z_{n,i_n}^{\vee},\prodd[k]y_{k,j_k})\\
\cong \indlim[(i_n)_n]\indlim[(j_k)_k]\prodd[n]\prodd[k]\Hom_{\cC}(z_{n,i_n}^{\vee},y_{k,j_k}).
\end{multline*}
We now apply Proposition \ref{prop:pullback_square_for_products} to the functors
\begin{equation*}
F_{n,k}:I_n\times J_k\to \Sp,\quad F_{n,k}(i_n,j_k) = \Hom_{\cC}(z_{n,i_n}^{\vee},y_{k,j_k}).
\end{equation*}
We already know the top left corner of the associated pullback square \eqref{eq:pullback_square_for_products}. We can directly compute the other three objects as follows. First, we have
\begin{multline*}
	\prodd[n]\indlim[i_n]\prodd[k]\indlim[j_k]F_{n,k}(i_n,j_k)\cong \prodd[n]\indlim[i_n]\prodd[k]\Hom_{\cC}(z_{n,i_n}^{\vee},y_k)\\
	\cong \prodd[n]\indlim[i_n]\Hom_{\cC}(z_{n,i_n}^{\vee},\prodd[k]y_k)\cong \prodd[n]\ev_{\cC}(\prodd[k]y_k,x_n^{\vee}).
\end{multline*}
Here the first isomorphism follows from the fact that for each $n\geq 0$ we have an ind-isomorphism of functors $\inddlim[i_n]\Hom_{\cC}(z_{n,i_n}^{\vee},-)\cong \inddlim[i_n]\ev_{\cC}(-,z_{n,i_n}).$ 

Next, we have
\begin{multline*}
\prodd[k]\indlim[j_k]\prodd[n]\indlim[i_n]F_{n,k}(i_n,j_k)\cong \prodd[k]\indlim[j_k]\prodd[n]\ev_{\cC}(y_{k,j_k},x_n^{\vee})\cong \prodd[k]\indlim[j_k]\ev_{\cC}(y_{k,j_k},\prodd[n]x_n^{\vee})\\
\cong \prodd[k]\ev_{\cC}(y_k,\prodd[n]x_n^{\vee}).
\end{multline*}
Here the second isomorphism uses the fact that for each $k\geq 0$ we have an ind-isomorphism of functors $\inddlim[j_k]\ev_{\cC}(y_{k,j_k},-)\cong\inddlim[j_k]\Hom_{\cC^{\vee}}(y_{k,j_k}^{\vee},-).$

Finally, we have
\begin{equation*}
\prodd[n]\prodd[k]\indlim[i_n]\indlim[j_k]F_{n,k}(i_n,j_k)\cong \prodd[n]\prodd[k]\ev_{\cC}(y_k,x_n^{\vee}).
\end{equation*}
Applying Proposition \ref{prop:pullback_square_for_products}, we obtain the following pullback square:
\begin{equation}\label{eq:application_of_abstract_pullback_square}
\begin{tikzcd}
(\prodd[k]\ev_{\cC}(y_k,(-)^{\vee}))\tens{\cC^{\kappa}}(\prodd[n]\ev_{\cC}(-,x_n^{\vee}))\ar[r]\ar[d] & \prodd[n]\ev_{\cC}(\prodd[k]y_k,x_n^{\vee})\ar[d]\\
\prodd[k]\ev_{\cC}(y_k,\prodd[n]x_n^{\vee})\ar[r] & \prodd[n]\prodd[k]\ev_{\cC}(y_k,x_n^{\vee}).
\end{tikzcd}
\end{equation}
Now, we have a natural map from the square \eqref{eq:application_of_abstract_pullback_square} to the square \eqref{eq:square_for_total_cofiber_rewritten}, which is an isomorphism on the objects in the top left corner, top right corner and the bottom left corner. We conclude that the total cofiber of \eqref{eq:square_for_total_cofiber_rewritten} is naturally isomorphic to
\begin{multline*}
\Cone(\prodd[n]\prodd[k]\ev_{\cC}(x_n,y_k^{\vee})\to \prodd[n]\prodd[k]\Hom_{\cC}(y_k,x_n))\\
\cong \prodd[n]\prodd[k] \Hom_{\Calk_{\kappa}^{\cont}(\cC)}(\pi(y_k),\pi(x_n)).
\end{multline*}
Therefore, the map \eqref{eq:source_and_target} is an isomorphism, which proves the theorem.
\end{proof}

\subsection{Internal $\omega_1$-injectivity of Calkin categories}
\label{ssec:internal_injectivity_of_Calkin}

In this subsection we prove the following result, which in particular justifies the term ``formally $\omega_1$-injective''.

\begin{theo}\label{th:internal_omega_1_injectivity}
Let $\cE$ be a rigid $\bE_1$-monoidal category. Let $\cD$ be a dualizable left $\cE$-module which is formally $\omega_1$-injective, i.e. the functor $\hat{\cY}:\cD\to\Ind(\cD^{\omega_1})$ commutes with countable limits. Then $\cD$ is relatively internally $\omega_1$-injective in $\Cat_{\cE}^{\dual}$ over $\Cat_{\st}^{\dual}.$ This means that the functor
\begin{equation*}
\un{\Hom}_{\cE}^{\dual}(-,\cD):((\Cat_{\cE}^{\dual})^{\omega_1})^{op}\to \Cat_{\st}^{\dual}
\end{equation*}
takes short exact sequences to short exact sequences.

In particular, for any $\cC\in\Cat_{\cE}^{\dual}$ and for any uncountable regular cardinal $\kappa,$ the dualizable $\cE$-module $\Calk_{\kappa}(\cC)$ is relatively internally $\omega_1$-injective in $\Cat_{\cE}^{\dual}$ over $\Cat_{\st}^{\dual}.$
\end{theo}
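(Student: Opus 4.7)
Given a short exact sequence $0\to\cC_1\xrightarrow{\iota}\cC_2\xrightarrow{\pi}\cC_3\to 0$ in $(\Cat_{\cE}^{\dual})^{\omega_1}$, I plan to check that the resulting sequence
\[
\un{\Hom}_{\cE}^{\dual}(\cC_3,\cD)\xrightarrow{\pi^*}\un{\Hom}_{\cE}^{\dual}(\cC_2,\cD)\xrightarrow{\iota^*}\un{\Hom}_{\cE}^{\dual}(\cC_1,\cD)
\]
is short exact in $\Cat_{\st}^{\dual}.$ The morphisms $\pi^*$ and $\iota^*$ are automatically strongly continuous, since they come from the functoriality of $\un{\Hom}_{\cE}^{\dual}(-,\cD)$ along morphisms in $\Cat_{\cE}^{\dual}.$ The full faithfulness of $\pi^*$ and the identification of its essential image with $\ker(\iota^*)$ are formal consequences of the universal property of the cofiber $\cC_3\simeq\cC_2/\cC_1.$

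The substance of the theorem is then to show that $\iota^*$ is a Verdier quotient in $\Cat_{\st}^{\dual}$, i.e.\ it admits a continuous, fully faithful right adjoint $R.$ At the level of compact objects -- strongly continuous $\cE$-linear functors $\cC_1\to\cD$ -- the natural candidate is $R(F)=F\circ\iota^R$: since $\iota$ is strongly continuous, $\iota^R\colon\cC_2\to\cC_1$ is continuous and $\cE$-linear, and the composition $F\circ\iota^R$ has right adjoint $\iota\circ F^R,$ which is continuous. Moreover, fully faithfulness of $\iota$ yields $\iota^R\circ\iota\simeq\id,$ hence $\iota^*\circ R\simeq\id$ on compact objects, which is the fully faithfulness condition.

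The essential difficulty is promoting $R$ to a \emph{continuous} functor of dualizable categories, and this is where formal $\omega_1$-injectivity of $\cD$ is used crucially. I would exploit the structure theory developed in Subsection~\ref{ssec:formally_injective_categories}: by Proposition~\ref{prop:properties_of_formally_omega_1_injective}, continuous functors into a formally $\omega_1$-injective $\cD$ are determined by their restriction to the small subcategory $\cD_{\omega_1}\subseteq\cD.$ Combined with the $\omega_1$-compactness of $\cC_i,$ this should yield a concrete description of $\un{\Hom}_{\cE}^{\dual}(\cC_i,\cD)$ as a compactly generated category whose compact generators are controlled by countable data coming from $\hat{\cY}_{\cD}$ and the $\omega_1$-compact structure of $\cC_i.$ Under such a presentation, precomposition with $\iota^R$ preserves compact objects (as $\iota^R$ preserves $\omega_1$-compacts), whence $R$ extends to the required continuous functor.

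The main obstacle is making precise and proving this compactly generated presentation of $\un{\Hom}_{\cE}^{\dual}(\cC,\cD)$ for formally $\omega_1$-injective $\cD$ and $\omega_1$-compact $\cC.$ Technically this should rest on the permutation statement of Proposition~\ref{prop:pullback_square_for_products} (the same tricky input as in the proof of Theorem~\ref{th:hat_Y_for_Calkin}), governing how countable products interact with the directed colimits encoding $\omega_1$-compactness of $\cC.$ The final assertion of the theorem is then immediate: Theorem~\ref{th:hat_Y_for_Calkin} shows that $\Calk_{\kappa}(\cC)$ is formally $\omega_1$-injective for any $\cC\in\Cat_{\cE}^{\dual}$ and uncountable regular cardinal $\kappa,$ so the main claim applies.
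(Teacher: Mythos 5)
Your reduction to showing that $\iota^*=\un{\Hom}_{\cE}^{\dual}(\iota,\cD)$ is a Verdier quotient is correct and matches the paper's setup, and you correctly identify that the force of formal $\omega_1$-injectivity has to go into constructing the continuous fully faithful right adjoint. But the concrete mechanism you propose breaks down, and the missing piece is precisely where the real work happens.

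The first issue is the adjunction bookkeeping in the candidate $R(F)=F\circ\iota^R$. Since $\iota\dashv\iota^R,$ the functor $\iota$ is the \emph{left} adjoint of $\iota^R,$ so the right adjoint of $F\circ\iota^R$ is $(\iota^R)^R\circ F^R,$ not $\iota\circ F^R.$ In general $(\iota^R)^R$ is far from continuous: for the Calkin short exact sequence $0\to\cC\xrightarrow{\hat{\cY}}\Ind(\cC^{\omega_1})\to\Calk_{\omega_1}(\cC)\to 0$ one has $\iota^R=\colim,$ whose right adjoint is the Yoneda embedding $\cY,$ which does not preserve colimits. So $F\circ\iota^R$ is not strongly continuous for an arbitrary strongly continuous $F,$ and the proposed right adjoint does not land in $\un{\Hom}_{\cE}^{\dual}(\cC_2,\cD)^{\omega}.$ You simply cannot extend $R$ directly from compact objects this way; some continuous approximation has to enter. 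The paper gets around this by switching to dual categories: it identifies $\Fun_{\cE}^{LL}(\cC_i,\Ind(\cD_{\omega_1}))$ with the subcategory $\cT(\cC_i,\cD)\subset\Fun_{\cE^{mop}}^L(\cD^{\vee},\cC_i^{\vee})$ of functors commuting with countable limits (via Proposition \ref{prop:Ind_of_D_omega_1_universal_property}), takes the right adjoint $\Psi^R(G)=((F^{\vee})^R\circ G)^{\cont}$ of the precomposition $F^{\vee}\circ-$ in that dual picture, and then invokes Proposition \ref{prop:properties_of_formally_omega_1_injective} \ref{when_F^cont_commutes_with_countable_limits} to show that $(-)^{\cont}$ preserves the commutation with countable limits. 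The entire argument then closes via Lemma \ref{lem:sufficient_for_localization}, a localization criterion you would also need to state and prove.

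The second issue is your proposed intermediate target: a ``compactly generated presentation of $\un{\Hom}_{\cE}^{\dual}(\cC,\cD)$'' is not what formal $\omega_1$-injectivity of $\cD$ gives you, and it is not true in general. What the paper actually establishes (Proposition \ref{prop:Hom^dual_into_formally_omega_1_injective}) is that $\un{\Hom}_{\cE}^{\dual}(\cC,\cD)$ is itself \emph{formally $\omega_1$-injective}, with $(\un{\Hom}_{\cE}^{\dual}(\cC,\cD))_{\omega_1}\simeq\Fun_{\cE}^{LL}(\cC,\Ind(\cD_{\omega_1}))$ -- this generation statement (in the sense of a dense subcategory closed under $\omega_1$-filtered colimits) is the input to Lemma \ref{lem:sufficient_for_localization}, not compact generation. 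Relatedly, your citation of Proposition \ref{prop:pullback_square_for_products} is misplaced here: that product/colimit interchange result goes into the proof of Theorem \ref{th:hat_Y_for_Calkin} (that Calkin categories are formally $\omega_1$-injective), which is only used to derive the final ``in particular'' clause, not in the proof of the main internal injectivity statement itself.
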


We also prove other non-trivial results on the dualizable internal $\Hom$ into formally $\omega_1$-injective dualizable $\cE$-modules, which eventually lead to the proof of Theorem \ref{th:morphisms_in_Mot^loc_via_internal_Hom} in the end of this subsection. 

\begin{prop}\label{prop:well_defined_submodules}
Let $\cD\in\Cat_{\cE}^{\dual}$ be formally $\omega_1$-injective. As in Proposition \ref{prop:properties_of_formally_omega_1_injective}, denote by $\cA\subset\cD$ the idempotent-complete stable subcategory generated by the objects $x^{\vee},$ $x\in(\cD^{\vee})^{\omega_1}.$ We choose an uncountable regular cardinal $\kappa$ such that $\cA\subset\cD^{\kappa}.$

Then both $\Ind(\cA)$ and $\Ind(\cD_{\omega_1}\cap \cD^{\kappa})$ are $\cE$-submodules of $\Ind(\cD^{\kappa}).$
\end{prop}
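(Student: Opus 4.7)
The strategy is to identify $\Ind(\cA)$ with the essential image of the strongly continuous $\cE$-linear functor $\hat{\cY}_\cD \colon \cD \to \Ind(\cD^\kappa)$. Recall that the $\cE$-action on $\Ind(\cD^\kappa)$ is defined precisely so that $\hat{\cY}_\cD$ is $\cE$-linear, and since $\cD$ is $\omega_1$-presentable and $\kappa \geq \omega_1$, this functor is also fully faithful; hence its essential image is a localizing $\cE$-submodule of $\Ind(\cD^\kappa)$. So once we establish the identification $\Ind(\cA) = \hat{\cY}_\cD(\cD)$ inside $\Ind(\cD^\kappa)$, the first claim is immediate, and the second will follow with a brief extra argument.

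For the inclusion $\hat{\cY}_\cD(\cD) \subseteq \Ind(\cA)$, I would invoke Lemma~\ref{lem:duals_strongly_generate}: for any $y \in \cD$, the ind-system $\inddlim[n] \cY(z_n^\vee)$ constructed there (with $z_n \in (\cD^\vee)^{\omega_1}$) realizes $\hat{\cY}_\cD(y)$. The lemma is stated inside $\Ind(\cD)$, but since $\cA \subset \cD^{\omega_1} \subset \cD^\kappa$ and $\hat{\cY}_\cD$ factors through $\Ind(\cD^{\omega_1}) \subset \Ind(\cD^\kappa)$, the same system exhibits $\hat{\cY}_\cD(y)$ as an object of $\Ind(\cA) \subset \Ind(\cD^\kappa)$. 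Conversely, $\Ind(\cA)$ is generated under colimits by the Yoneda images $\cY(a)$ for $a \in \cA \subset \cD^\kappa$, each of which coincides with $\hat{\cY}_\cD(a)$; since $\hat{\cY}_\cD(\cD)$ is closed under colimits (as the essential image of a continuous fully faithful functor), this gives the reverse inclusion. The first part of the proposition follows.

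For the second part, by Proposition~\ref{prop:properties_of_formally_omega_1_injective}~\ref{duals_are_omega_1_filtered} we have $\cA \subset \cD_{\omega_1}$, so $\cA \subset \cD_{\omega_1} \cap \cD^\kappa$ and consequently $\Ind(\cA) \subset \Ind(\cD_{\omega_1} \cap \cD^\kappa)$. For $t \in \cD_{\omega_1} \cap \cD^\kappa$ and any $e \in \cE$, the $\cE$-linearity of $\hat{\cY}_\cD$ combined with the first part yields
\begin{equation*}
e \cdot \cY(t) = e \cdot \hat{\cY}_\cD(t) = \hat{\cY}_\cD(e \cdot t) \in \hat{\cY}_\cD(\cD) = \Ind(\cA) \subseteq \Ind(\cD_{\omega_1} \cap \cD^\kappa).
\end{equation*}
Since $\Ind(\cD_{\omega_1} \cap \cD^\kappa) \subset \Ind(\cD^\kappa)$ is closed under filtered colimits and the $\cE$-action commutes with colimits, this suffices to show that $\Ind(\cD_{\omega_1} \cap \cD^\kappa)$ is stable under the $\cE$-action. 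The substantive step is the identification in the second paragraph; no serious obstacle remains, and the formal $\omega_1$-injectivity of $\cD$ is used only indirectly, to guarantee via Proposition~\ref{prop:properties_of_formally_omega_1_injective}~\ref{duals_are_omega_1_filtered} that $\cA$ lies inside $\cD_{\omega_1}$.
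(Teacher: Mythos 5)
Your argument hinges on the identification $\Ind(\cA) = \hat{\cY}_{\cD}(\cD)$ inside $\Ind(\cD^{\kappa})$, and in particular on the claim that $\cY(a) = \hat{\cY}_{\cD}(a)$ for $a \in \cA$. This is where the proof breaks down. For $a \in \cD^{\kappa}$, the equation $\cY(a) = \hat{\cY}_{\cD}(a)$ holds if and only if for every ind-object $\xi \in \Ind(\cD^{\kappa})$ one has $\Hom(\cY(a),\xi) \simeq \Hom_{\cD}(a,\colim\xi)$, i.e.\ if and only if $a$ is an $\omega$-compact object of $\cD$. The objects $x^{\vee}$ generating $\cA$ (for $x \in (\cD^{\vee})^{\omega_1}$) are generally not $\omega$-compact; they lie in $\cD_{\omega_1}$, which is a strictly weaker condition. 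Consequently Lemma~\ref{lem:duals_strongly_generate} gives you only the inclusion $\hat{\cY}_{\cD}(\cD) \subseteq \Ind(\cA)$; the reverse inclusion fails in general, since an ind-object $\inddlim_i a_i$ with $a_i \in \cA$ lies in $\hat{\cY}_{\cD}(\cD)$ only when the transition maps are eventually compact in $\cD$, which there is no reason to expect. (Indeed the retraction $\cD \to \Ind(\cA) \to \cD$ constructed in the proof of Proposition~\ref{prop:properties_of_formally_omega_1_injective}~\ref{restricting_to_D_omega_1} exhibits $\cD$ as a proper retract of $\Ind(\cA)$.)

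The error propagates into the second part: the chain $e \cdot \cY(t) = e \cdot \hat{\cY}_{\cD}(t) = \hat{\cY}_{\cD}(e \cdot t)$ again invokes $\cY(t) = \hat{\cY}_{\cD}(t)$, which is false for $t$ that are merely in $\cD_{\omega_1} \cap \cD^{\kappa}$. More fundamentally, the whole approach tries to sidestep the explicit description of the $\cE$-action on $\Ind(\cD^{\kappa})$ recalled in Subsection~\ref{ssec:infinite_products}: for $e \in \cE$ with $\hat{\cY}_{\cE}(e) = \inddlim_i e_i$ and $c \in \cD^{\kappa}$, one has $e \otimes \cY(c) = \inddlim_i(e_i \otimes c)$, which is not $\cY(e \otimes c)$ and not an object in the image of $\hat{\cY}_{\cD}$ in general. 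This ``smeared'' formula is exactly what forces the argument to be nontrivial. The paper's proof computes directly: take $e \in \cE^{\omega_1}$, $y \in (\cD^{\vee})^{\omega_1}$, write $\hat{\cY}_{\cE}(e) = \inddlim_n e_n$ with compact transition maps, choose factorizations $e_{n+1}^{l\vee} \to z_n \to e_n^{l\vee}$ through $z_n \in \cE^{\omega_1}$, and identify $\inddlim_n(e_n \otimes y^{\vee}) \cong \inddlim_n (y^{\vee})^{z_n} \cong \inddlim_n (y \otimes z_n)^{\vee}$, which visibly lies in $\Ind(\cA)$. For the second subcategory it then passes through Proposition~\ref{prop:properties_of_formally_omega_1_injective}~\ref{omega_1_directed_colimit_of_duals} and the continuity of $(-)^{z}$ on $\omega_1$-directed colimits. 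You would need this explicit trace-class bookkeeping (or an equivalent device); there is no way to reduce the claim to the $\cE$-linearity of $\hat{\cY}_{\cD}$ alone.
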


\begin{proof}
We first show that $\Ind(\cA)\subset\Ind(\cD^{\kappa})$ is an $\cE$-submodule. It suffices to prove that for $x\in\cE^{\omega_1}$ and for $y\in (\cD^{\vee})^{\omega_1}$ we have the inclusion $x\otimes\cY(y^{\vee})\in\Ind(\cA)\subset\Ind(\cD^{\kappa}).$ Let $\hat{\cY}(x)=\inddlim[n\in\N]x_n\in\Ind(\cE^{\omega_1}),$ and assume that each map $x_n\to x_{n+1}$ is compact in $\cE.$ Choose factorizations $x_{n+1}^{l\vee}\to z_n\to x_n^{l\vee}$ with $z_n\in\cE^{\omega_1}.$ Then we have
\begin{equation*}
x\otimes\cY(y^{\vee})\cong \inddlim[n](x_n\otimes y^{\vee})\cong \inddlim[n]((y^{\vee})^{z_n})\cong \inddlim[n]((y\otimes z_n)^{\vee})\in\Ind(\cA).
\end{equation*}

A similar argument shows that $\Ind(\cD_{\omega_1}\cap \cD^{\kappa})$ is an $\cE$-submodule: we only need to show that for $y'\in \cD_{\omega_1}\cap\cD^{\kappa}$ and for $z\in\cE^{\omega_1}$ we have $(y')^z\in\cD_{\omega_1}\cap \cD^{\kappa}.$ To see this, note that by Proposition \ref{prop:properties_of_formally_omega_1_injective} \ref{omega_1_directed_colimit_of_duals} we have $y'\cong\indlim[i\in I]y_j,$ where $J$ is $\kappa$-small and $\omega_1$-directed, and $y_j\in\cA.$ By the above, we have $y_j^z\in\cA.$ Since $(-)^z:\cD\to\cD$ commutes with $\omega_1$-directed colimits, we have $(y')^z\cong\indlim[i\in I]y_j^z\in\cD_{\omega_1}\cap\cD^{\kappa}.$ This proves the proposition.
\end{proof}

In the next proposition we use the following notation. For $\cD\in\Cat_{\cE}^{\dual}$ we consider $\Ind(\cD)$ as a directed union of the $\cE$-modules $\Ind(\cD^{\kappa})$ over all uncountable regular cardinals $\kappa.$ In particular, for $\cC\in\Cat_{\cE}^{\dual}$ we consider the category $\Fun_{\cE}^{LL}(\cC,\Ind(\cD))$ as a directed union of $\Fun_{\cE}^{LL}(\cC,\Ind(\cD^{\kappa})).$ The same notation applies to $\Ind(\cD_{\omega_1})$ when $\cD$ is formally $\omega_1$-injective. Here we use the fact that by Proposition \ref{prop:well_defined_submodules} for sufficiently large $\kappa$ the category $\Ind(\cD_{\omega_1}\cap\cD^{\kappa})$ is a well-defined $\cE$-module.

\begin{prop}\label{prop:category_of_functors_to_Ind_D_omega_1}
Let $\cC,\cD\in\Cat_{\cE}^{\dual}$ be dualizable left $\cE$-modules, and suppose that $\cD$ is formally $\omega_1$-injective. 
\begin{enumerate}[label=(\roman*),ref=(\roman*)]
\item The full subcategory 
\begin{equation*}
\Fun_{\cE}^{LL}(\cC,\Ind(\cD_{\omega_1}))\subset \Fun_{\cE}^{LL}(\cC,\Ind(\cD))\simeq \Fun_{\cE}^L(\cC,\cD)\end{equation*}
is closed under $\omega_1$-filtered colimits and under countable limits.\label{closed_under_omega_1_filtered_colimits}

\item Suppose that moreover $\cC$ is $\omega_1$-compact in $\Cat_{\cE}^{\dual}.$ Let $F\in\cC^{\vee}\tens{\cE}\cD\simeq\Fun^L_{\cE}(\cC,\cD)$ be a functor such that $F^{\vee}:\cD^{\vee}\to\cC^{\vee}$ commutes with countable limits. Then for any $G\in(\cC^{\vee}\tens{\cE}\cD)^{\omega_1}$ we have an isomorphism
\begin{equation}\label{eq:abs_nuclearity_of_F}
\Hom_{\cC^{\vee}\tens{\cE}\cC}(\Id_{\cC},G^{R,\cont}\circ F)\xto{\sim} \Hom_{\cC^{\vee}\tens{\cE}\cD}(G,F).
\end{equation} 
 \label{image_in_Ind_D_omega_1_implies_abs_nuclearity}
\end{enumerate}
\end{prop}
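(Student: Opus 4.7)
For part (i), apply the $\cE$-linear version of Proposition \ref{prop:Ind_of_D_omega_1_universal_property} to identify $\Fun_{\cE}^{LL}(\cC,\Ind(\cD_{\omega_1}))$ inside $\Fun_{\cE}^{LL}(\cC,\Ind(\cD))\simeq\Fun^L_{\cE}(\cC,\cD)$ with the full subcategory of those $F$ whose dual $F^{\vee}:\cD^{\vee}\to\cC^{\vee}$ commutes with countable limits. Closure under countable limits and $\omega_1$-filtered colimits then transfers, via the equivalence $(-)^{\vee}:\Fun^L_{\cE}(\cC,\cD)\xto{\sim}\Fun^L_{\cE^{mop}}(\cD^{\vee},\cC^{\vee})$, to the corresponding closure properties for the class of countably-continuous functors $\cD^{\vee}\to\cC^{\vee}$. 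The first is immediate from the fact that countable products of countably-continuous functors are countably continuous; the second relies on the standard commutation of $\omega_1$-filtered colimits with countable limits in stable categories.

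For part (ii), I would establish the identification of continuous $\cE$-linear functors
\begin{equation*}
(G^R\circ F)^{\cont}\simeq G^{R,\cont}\circ F:\cC\to\cC.
\end{equation*}
Granting this, the desired isomorphism follows by a sequence of adjunctions: using the $(G,G^R)$-adjunction at the level of accessible $\cE$-linear functors, together with the universal property of $(-)^{\cont}$ as left adjoint to the inclusion of continuous into accessible $\cE$-linear functors, one obtains
\begin{equation*}
\Hom(G,F)\simeq\Hom(\Id_{\cC},G^R\circ F)\simeq\Hom(\Id_{\cC},(G^R\circ F)^{\cont})\simeq\Hom(\Id_{\cC},G^{R,\cont}\circ F).
\end{equation*}

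To prove the functor identification, apply the characterization from (i): the strongly continuous lift $\tilde{F}:\cC\to\Ind(\cD)$ factors through $\Ind(\cD_{\omega_1})$, so for each $x\in\cC^{\omega_1}$ there is a filtered presentation $\tilde{F}(x)\simeq\inddlim[k]z_k$ with $z_k\in\cD_{\omega_1}$, giving $F(x)\simeq\indlim[k]z_k$ in $\cD$. Separately, the $\omega_1$-compactness of $G$ in $\cC^{\vee}\tens{\cE}\cD$ combined with the $\omega_1$-compactness of $\cC$ in $\Cat_{\cE}^{\dual}$ should force $G:\cC\to\cD$ to preserve $\omega_1$-compact objects, hence $G^R$ to commute with $\omega_1$-filtered colimits; in particular $G^R(z)\simeq G^{R,\cont}(z)$ for every $z\in\cD_{\omega_1}$, since such $z$ is itself an $\omega_1$-filtered colimit of $\omega_1$-compact objects of $\cD$ via the presentation of $\hat{\cY}_{\cD}(z)$. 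Chaining these computations for $x\in\cC^{\omega_1}$:
\begin{equation*}
(G^R\circ F)^{\cont}(x)\simeq\indlim[k]G^R(z_k)\simeq\indlim[k]G^{R,\cont}(z_k)\simeq G^{R,\cont}(\indlim[k]z_k)\simeq G^{R,\cont}(F(x)),
\end{equation*}
where the penultimate step uses continuity of $G^{R,\cont}$. The resulting equality on $\cC^{\omega_1}$ extends to all of $\cC$ by left Kan extension.

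The principal obstacle is verifying that $G\in(\cC^{\vee}\tens{\cE}\cD)^{\omega_1}$, combined with the $\omega_1$-compactness of $\cC$ in $\Cat_{\cE}^{\dual}$, forces $G:\cC\to\cD$ to preserve $\omega_1$-compact objects. I would approach this by reducing to the case of simple tensors $x\boxtimes y$ with $x\in(\cC^{\vee})^{\omega_1}$ and $y\in\cD^{\omega_1}$, for which the corresponding functor $c\mapsto\un{\Hom}_{\cC/\cE}(x^{\vee},c)\otimes y$ preserves $\omega_1$-compacts once $\un{\Hom}_{\cC/\cE}(x^{\vee},-):\cC\to\cE$ does so, the latter being a direct consequence of the $\omega_1$-compactness of $\cC$ (which ensures $\ev_{\cC/\cE}$ and $\coev_{\cC/\cE}$ preserve $\omega_1$-compactness).
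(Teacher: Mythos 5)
Your overall strategy mirrors the paper's quite closely in both parts, but there is a genuine gap in your treatment of closure under countable limits in part (i).

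You claim the closure under countable limits of the class of countably-continuous functors in $\Fun^L_{\cE^{mop}}(\cD^{\vee},\cC^{\vee})$ is ``immediate from the fact that countable products of countably-continuous functors are countably continuous.'' The issue is that a pointwise (naive) product of colimit-preserving functors $F_n:\cD^{\vee}\to\cC^{\vee}$ is generally \emph{not} colimit-preserving, so the naive product is not an object of $\Fun^L_{\cE^{mop}}(\cD^{\vee},\cC^{\vee})$ at all. The genuine product in this category is the continuous approximation $(\prodd[n]^{\naive}F_n)^{\cont}$, and it is \emph{not} automatic that passing to $(-)^{\cont}$ preserves the property of commuting with countable limits. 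That step is exactly where the formal $\omega_1$-injectivity of $\cD$ enters: one has to invoke Proposition~\ref{prop:properties_of_formally_omega_1_injective}~\ref{when_F^cont_commutes_with_countable_limits} (applied to $\cD^{\vee}$, which is formally $\omega_1$-injective by Proposition~\ref{prop:formal_omega_1_injectivity_reformulations}) to conclude that $(\prodd[n]^{\naive}F_n)^{\cont}$ still commutes with countable limits. Without this your hypothesis on $\cD$ plays no role in part (i), which should be a warning sign.

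Your argument for part (ii) is essentially the paper's, stated a bit more explicitly. The adjunction chain is identical; the verification of $(G^R\circ F)^{\cont}\simeq G^{R,\cont}\circ F$ is done by a direct colimit computation on $\cC^{\omega_1}$ whereas the paper appeals to the proof of Proposition~\ref{prop:properties_of_formally_omega_1_injective}~\ref{when_F^cont_commutes_with_countable_limits} and Proposition~\ref{prop:Ind_of_D_omega_1_universal_property}, but the content is the same. You correctly flag the key input (that $G\in(\cC^{\vee}\tens{\cE}\cD)^{\omega_1}$ together with $\omega_1$-compactness of $\cC$ forces $G$ to preserve $\omega_1$-compact objects); the paper takes this as a known consequence of the structure theory of $\Cat_{\cE}^{\dual}$, and your sketch of reducing to elementary tensors would need more care to make rigorous, but the point is correctly identified.
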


\begin{proof}
\ref{closed_under_omega_1_filtered_colimits} 
Consider the full subcategory $\cT\subset\Fun_{\cE^{mop}}^L(\cD^{\vee},\cC^{\vee}),$ which consists of continuous $\cE^{mop}$-linear functors $G:\cD^{\vee}\to\cC^{\vee}$ which commute with countable limits.
By Proposition \ref{prop:Ind_of_D_omega_1_universal_property} we have a commutative square in which the horizontal arrows are equivalences:
\begin{equation*}
\begin{tikzcd}
\Fun_{\cE}^{LL}(\cC,\Ind(\cD_{\omega_1}))\ar[hookrightarrow]{d}\ar[r, "\sim"] & \cT\ar[hookrightarrow]{d}\\
\Fun_{\cE}^L(\cC,\cD)\ar[r, "\sim"] & \Fun_{\cE^{mop}}^L(\cD^{\vee},\cC^{\vee}).
\end{tikzcd}
\end{equation*}
Now, $\cT$ is closed under $\omega_1$-filtered colimits in $\Fun_{\cE^{mop}}^L(\cD^{\vee},\cC^{\vee}),$ since in $\cC^{\vee}$ the $\omega_1$-filtered colimits commute with countable limits. It remains to prove that $\cT$ is closed under countable products in $\Fun_{\cE^{mop}}^L(\cD^{\vee},\cC^{\vee}).$ Take a sequence $F_0,F_1,\dots$ in $\cT,$ and denote by $\prodd[n]^{\naive}F_n$ the naive product taken in $\Fun^{\acc,\lax}_{\cE^{mop}}(\cD^{\vee},\cC^{\vee}).$ Clearly, this naive product commutes with countable limits. By Proposition \ref{prop:properties_of_formally_omega_1_injective} \ref{when_F^cont_commutes_with_countable_limits} the functor $(\prodd[n]^{\naive}F_n)^{\cont}$ also commutes with countable limits. The latter functor is simply the product of $F_n$ in $\Fun_{\cE^{mop}}^L(\cD^{\vee},\cC^{\vee}).$ This proves \ref{closed_under_omega_1_filtered_colimits}.

\ref{image_in_Ind_D_omega_1_implies_abs_nuclearity} First note that $\omega_1$-compactness of $\cC$ implies that $(\cC^{\vee}\tens{\cE}\cD)^{\omega_1}\simeq \Fun_{\cE}^L(\cC,\cD)^{\omega_1}$ is simply the category of continuous $\cE$-linear functors which take $\cC^{\omega_1}$ to $\cD^{\omega_1}.$ Hence, $G^R$ commutes with $\omega_1$-filtered colimits. By the proof of Proposition \ref{prop:properties_of_formally_omega_1_injective} \ref{when_F^cont_commutes_with_countable_limits} we deduce that $(G^{R,\cont})_{\mid\cD_{\omega_1}}\cong (G^R)_{\mid\cD_{\omega_1}}.$ By Proposition \ref{prop:Ind_of_D_omega_1_universal_property} the functor $\Ind(F)\circ\hat{\cY}:\cC\to\Ind(\cD)$ takes values in $\Ind(\cD_{\omega_1}).$ Hence, we obtain the isomorphism $G^{R,\cont}\circ F\cong (G^R\circ F)^{\cont}.$ This gives the required isomorphism \eqref{eq:abs_nuclearity_of_F}:
\begin{multline*}
\Hom_{\cC^{\vee}\tens{\cE}\cD}(G,F)\cong\Hom_{\Fun_{\cE}^{\acc,\lax}(\cC,\cC)}(\Id_{\cC},G^R\circ F)\\
\cong\Hom_{\cC^{\vee}\tens{\cE}\cC}(\Id_{\cC},(G^R\circ F)^{\cont})
\cong \Hom_{\cC^{\vee}\tens{\cE}\cC}(\Id_{\cC},G^{R,\cont}\circ F).\qedhere
\end{multline*}
\end{proof}



We obtain (under some assumptions) a more explicit description of the class of continuous $\cE$-linear functors whose dual commutes with countable limits.

\begin{prop}\label{prop:another_description_of_functors_to_Ind_D_omega_1}
Let $\cC,\cD$ be dualizable left $\cE$-modules such that $\cC$ is $\omega_1$-compact in $\Cat_{\cE}^{\dual}$ and $\cD$ is formally $\omega_1$-injective. For a functor $F\in\Fun_{\cE}^L(\cC,\cD),$ the following are equivalent.
\begin{enumerate}[label=(\roman*),ref=(\roman*)]
\item The functor $F^{\vee}:\cD^{\vee}\to\cC^{\vee}$ commutes with countable limits. \label{dual_commutes_with_countable_limits_again}
\item  We have $F\cong \indlim[i\in I]F_i,$ where $I$ is an $\omega_1$-directed poset, $F_i\in (\cC^{\vee}\tens{\cE}\cD)^{\omega_1},$ and for any $i\in I$ there exists $j\geq i$ such that the morphism $F_i\to F_j$ is right trace-class in $\Fun_{\cE}^L(\cC,\cD).$ \label{omega_1_directed_colimit_of_right_trace_class_maps}
\end{enumerate}
\end{prop}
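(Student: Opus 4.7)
The strategy pivots through the intrinsic characterization from Proposition \ref{prop:Ind_of_D_omega_1_universal_property}: condition \ref{dual_commutes_with_countable_limits_again} is equivalent to requiring that $\Ind(F)\circ\hat{\cY}_{\cC}:\cC\to\Ind(\cD)$ factor through $\Ind(\cD_{\omega_1}).$ Write $\cT\subset\cC^{\vee}\tens{\cE}\cD$ for the full subcategory of such $F.$ By Proposition \ref{prop:category_of_functors_to_Ind_D_omega_1}\ref{closed_under_omega_1_filtered_colimits} the class $\cT$ is closed under $\omega_1$-filtered colimits, and by part \ref{image_in_Ind_D_omega_1_implies_abs_nuclearity} of the same proposition one has the key identification $\Hom(G,F)\cong\Hom(\Id_{\cC},G^{R,\cont}\circ F)$ for $F\in\cT$ and any $\omega_1$-compact $G.$ Two technical inputs will be used repeatedly: $\Id_{\cC}=\coev_{\cC/\cE}(\bS)$ is $\omega_1$-compact in $\cC^{\vee}\tens{\cE}\cC$ (since $\cC$ is $\omega_1$-compact in $\Cat_{\cE}^{\dual}$), and for an $\omega_1$-compact $G\in\cC^{\vee}\tens{\cE}\cD$ the functor $G^{R,\cont}$ commutes with $\omega_1$-filtered colimits.

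For \ref{dual_commutes_with_countable_limits_again}$\Rightarrow$\ref{omega_1_directed_colimit_of_right_trace_class_maps}: Since $\cC$ is $\omega_1$-compact, the category $\cC^{\vee}\tens{\cE}\cD$ is $\omega_1$-presentable, so one fixes a presentation $F\cong\indlim[i\in I] F'_i$ with $I$ $\omega_1$-directed and $F'_i\in(\cC^{\vee}\tens{\cE}\cD)^{\omega_1}.$ For each $i\in I$ the structure map $F'_i\to F$ corresponds under the identification above to a witness $\wt{\alpha}_i:\Id_{\cC}\to (F'_i)^{R,\cont}\circ F.$ Using the decomposition $(F'_i)^{R,\cont}\circ F\cong\indlim[j](F'_i)^{R,\cont}\circ F'_j$ (valid since $F'_i$ is $\omega_1$-compact) together with the $\omega_1$-compactness of $\Id_{\cC},$ I will lift $\wt{\alpha}_i$ to a map $\Id_{\cC}\to (F'_i)^{R,\cont}\circ F'_{j(i)}$ for some $j(i)\geq i,$ which witnesses a right trace-class morphism $\beta_i:F'_i\to F'_{j(i)}$ whose composition with the structure map $F'_{j(i)}\to F$ equals the original map $F'_i\to F.$ Passing to the cofinal sub-poset of $\mathrm{Fun}(\Delta^1,I)$ consisting of pairs $(i,j(i))$ and replacing the original transitions by the chosen $\beta_i,$ one obtains the required presentation.

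For \ref{omega_1_directed_colimit_of_right_trace_class_maps}$\Rightarrow$\ref{dual_commutes_with_countable_limits_again}: by the closure of $\cT$ under $\omega_1$-filtered colimits, one may first pass to a cofinal $\omega_1$-directed subsystem with every transition right trace-class, and then further reduce to the case $F\cong\indlim[n\in\N]F_n$ where $F_n\in(\cC^{\vee}\tens{\cE}\cD)^{\omega_1}$ and every $F_n\to F_{n+1}$ is right trace-class. For such a sequential system, the right trace-class witnesses $\wt{\varphi}_n:\Id_{\cC}\to F_n^{R,\cont}\circ F_{n+1}$ produce, for each $c\in\cC^{\omega_1}$ with $\hat{\cY}_{\cC}(c)\cong\inddlim[m]c_m,$ factorizations of each transition $F_n(c_m)\to F_{n+1}(c_m)$ through the object $F_n(F_n^{R,\cont}(F_{n+1}(c_m)));$ a telescoping reindexing argument then rewrites $\hat{\cY}_{\cD}(F(c))\cong\inddlim[m,n]F_n(c_m)$ as an ind-object whose compact transitions factor through objects of $\cD_{\omega_1},$ placing it in $\Ind(\cD_{\omega_1})$ by Lemma \ref{lem:when_ev_with_x_commutes_with_countable_limits}.

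The main obstacle will lie in the \ref{omega_1_directed_colimit_of_right_trace_class_maps}$\Rightarrow$\ref{dual_commutes_with_countable_limits_again} direction: carefully promoting the pointwise factorizations produced by the right trace-class witnesses into a genuine ind-presentation with compact transitions landing in $\cD_{\omega_1},$ while reconciling the two independent indices (the external index $m$ for $\hat{\cY}_{\cC}(c)$ and the tower index $n$ for the $F_n$). The cleanest way to organize this will be to reinterpret right trace-class-ness of $F_n\to F_{n+1}$ as an assertion that $F_n$ arises from $F_{n+1}$ by a ``dualizable'' operation inside $\cC^{\vee}\tens{\cE}\cD,$ and then to transfer this to the $\hat{\cY}_{\cD}$ side via the interplay between $\hat{\cY}_{\cC}$ and $\hat{\cY}_{\cD}$ afforded by the formal $\omega_1$-injectivity of $\cD.$
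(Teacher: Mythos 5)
Your proof of \ref{dualizability_and_strcont_right_trace_class}$\Rightarrow$\ref{omega_1_directed_colimit_of_right_trace_class_maps} (the implication (i)$\Rightarrow$(ii)) is correct and essentially matches the paper's argument: both use the fact that $\Id_\cC$ is $\omega_1$-compact together with Proposition \ref{prop:category_of_functors_to_Ind_D_omega_1}\ref{image_in_Ind_D_omega_1_implies_abs_nuclearity} to lift the structure map $F_i\to F$ through some right trace-class $F_i\to F_k$ and then pass to a common $j\geq i,k$ by $\omega_1$-compactness of $F_i$.

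The gap is in (ii)$\Rightarrow$(i). Your reduction to sequential colimits (via the poset of countable chains, as in Proposition \ref{prop:properties_of_nuc_E_modules}) is sound in principle, but the base case---showing that $\indlim[n\in\N]F_n$ with right trace-class transitions satisfies (i)---is left as a ``telescoping reindexing argument'' that you yourself flag as the main obstacle, and I do not see how to carry it out. The trace-class witness $\wt{\varphi}_n:\Id_\cC\to F_n^{R,\cont}\circ F_{n+1}$ lives in $\cC^{\vee}\tens{\cE}\cC$; evaluating it at an $\omega_1$-compact $c$ and feeding in the decomposition $\hat{\cY}_\cC(c)\cong\inddlim[m]c_m$ does not in any transparent way produce factorizations of the transitions $F_n(c_m)\to F_{n+1}(c_m)$ through objects recognizably in $\cD_{\omega_1}$; the two filtration directions (on $m$ and on $n$) do not interleave automatically, and the trace-class datum is a statement at the level of $\cC^{\vee}\tens{\cE}\cC$, not a pointwise factorization in $\cD$.

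The paper avoids this problem entirely by arguing on the dual side. Given a right trace-class $F_i\to F_j$, it produces a factorization in $\Fun^{\acc,\lax}_{\cE^{mop}}(\cD^{\vee},\cC^{\vee})$ of the form $F_i^{\vee}\xto{\beta}((F_i^{R,\cont})^{\vee})^R\xto{\gamma}F_j^{\vee}$; the middle term is a right adjoint, hence commutes with all limits. This gives an ind-isomorphism $\inddlim[i]F_i^{\vee}\cong\inddlim[i]((F_i^{R,\cont})^{\vee})^R$, exhibiting $F^{\vee}$ as an $\omega_1$-directed colimit of functors that commute with all limits; since $\omega_1$-filtered colimits commute with countable limits, $F^{\vee}$ commutes with countable limits. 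This is both shorter and more robust than the pointwise approach and needs no reduction to sequential colimits. You should adopt this dual-side factorization for (ii)$\Rightarrow$(i).
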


\begin{proof}
\Implies{dual_commutes_with_countable_limits_again}{omega_1_directed_colimit_of_right_trace_class_maps}. Take any $\omega_1$-directed system $(F_i)_{i\in I}$ in $(\cC^{\vee}\tens{\cE}\cD)^{\omega_1}$ with an isomorphism $F\cong \indlim[i] F_i.$ Denote by $\alpha_{ij}:F_i\to F_j$ the transition maps, and by $\alpha_i:F_i\to F$ the maps to the colimit. Since $\cC$ is $\omega_1$-compact in $\Cat_{\cE}^{\dual},$ the object $\Id_{\cC}$ is $\omega_1$-compact in $\cC^{\vee}\tens{\cE}\cC.$ Hence, for each $i\in I$ the functor $\Hom_{\cC^{\vee}\tens{\cE}\cC}(\Id_{\cC},F_i^{R,\cont}\circ -)$ commutes with $\omega_1$-filtered colimits. Hence, by Proposition \ref{prop:category_of_functors_to_Ind_D_omega_1} \ref{image_in_Ind_D_omega_1_implies_abs_nuclearity} applied to $G=F_i$ we can find some $k\in I$ and a right trace-class map $\beta:F_i\to F_k$ such that the composition $F_i\xto{\beta} F_k\xto{\alpha_k} F$ is homotopic to $\alpha_i.$ Since $F_i$ is $\omega_1$-compact, it follows that for some $j\geq i,k$ we have $\alpha_{kj}\circ \beta\sim \alpha_{ij},$ hence $\alpha_{ij}$ is right trace-class, as required.

\Implies{omega_1_directed_colimit_of_right_trace_class_maps}{dual_commutes_with_countable_limits_again}. We need to prove that the functor $F^{\vee}\cong\indlim[i]F_i^{\vee}$ commutes with countable limits. Let $i\leq j$ be elements of $I$ such that $F_i\to F_j$ is a right trace-class morphism in $\Fun_{\cE}^L(\cC,\cD).$ We put $G_i:=F_i^{R,\cont}$ and choose a right trace-class witness $\Id_{\cC}\to G_i\circ F_j.$ We observe that we have a factorization in $\Fun_{\cE^{mop}}^{\acc,\lax}(\cD^{\vee},\cC^{\vee}):$
\begin{equation*}
F_i^{\vee}\xto{\beta} (G_i^{\vee})^R\xto{\gamma} F_j^{\vee}.
\end{equation*}
Here $\beta$ is the composition
\begin{equation*}
\beta:F_i^{\vee}\to ((G_i^{\vee})^R\circ G_i^{\vee})\circ F_i^{\vee}\cong (G_i^{\vee})^R\circ (F_i\circ G_i)^{\vee}\to (G_i^{\vee})^R,
\end{equation*}
and $\gamma$ is the composition
\begin{equation*}
\gamma:(G_i^{\vee})^R\to (G_i\circ F_j)^{\vee}\circ (G_i^{\vee})^R\cong F_j^{\vee}\circ (G_i^{\vee}\circ (G_i^{\vee})^R)\to F_j^{\vee}.
\end{equation*}
It follows that we have an isomorphism in $\Fun_{\cE^{mop}}^{\acc,\lax}(\cD^{\vee},\cC^{\vee}):$
\begin{equation*}
\indlim[i]F_i^{\vee}\cong \indlim[i]((F_i^{R,\cont})^{\vee})^R.
\end{equation*}
The latter is an $\omega_1$-directed colimit of functors which commute with (all) limits, hence this functor commutes with countable limits, as required.
\end{proof}

Part \ref{hom_epi_onto_image} of the following proposition can be considered as an analogue of \cite[Corollary 3.11]{E25}, although the proof is quite different.

\begin{prop}\label{prop:Hom^dual_into_formally_omega_1_injective}
Let $\cC,\cD$ be dualizable left $\cE$-modules, and suppose that $\cC$ is $\omega_1$-compact and $\cD$ is formally $\omega_1$-injective. 
\begin{enumerate}[label=(\roman*),ref=(\roman*)]
\item The functor
\begin{equation}\label{eq:key_hom_epi_for_formally_omega_1_injective}
	(\cC^{\vee}\tens{\cE}\cD)^{\omega_1}\simeq\Fun_{\cE}^{LL}(\cC,\Ind(\cD^{\omega_1}))\to \Fun_{\cE}^{LL}(\cC,\Calk_{\omega_1}(\cD))
\end{equation}
is a homological epimorphism onto its image. \label{hom_epi_onto_image}
\item The essential image of the inclusion $\un{\Hom}_{\cE}^{\dual}(\cC,\cD)\to\Ind((\cC^{\vee}\tens{\cE}\cD)^{\omega_1})$ consists of formal colimits $\inddlim[i\in I]F_i,$ where $I$ is directed and for any $i\in I$ there exists $j\geq i$ such that the map $F_i\to F_j$ is right trace-class in $\Fun_{\cE}^L(\cC,\cD).$ \label{descr_of_Hom^dual}
\item The category $\un{\Hom}_{\cE}^{\dual}(\cC,\cD)$ is formally $\omega_1$-injective, and we have an equivalence
\begin{equation*}
	\un{\Hom}_{\cE}^{\dual}(\cC,\cD)_{\omega_1}\simeq\Fun_{\cE}^{LL}(\cC,\Ind(\cD_{\omega_1})).
\end{equation*} \label{formal_omega_1_injectivity_of_Hom^dual}
\end{enumerate}
\end{prop}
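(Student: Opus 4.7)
I will prove the three parts in the order (ii) $\to$ (iii) $\to$ (i), since each relies on the previous. The main ideas all come from the structural analysis of formally $\omega_1$-injective dualizable categories developed in Subsection \ref{ssec:formally_injective_categories}, combined with Theorem \ref{th:hat_Y_for_Calkin} which makes $\Calk_{\omega_1}(\cD)$ the natural target for (i). The symmetric monoidal versions (about internal $\Hom$ in $\Mot^{\loc}_{\cE}$) follow from the Hom statements by the symmetric monoidality of $\cU_{\loc}.$

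For (ii), the strongly continuous fully faithful embedding $\un{\Hom}_{\cE}^{\dual}(\cC, \cD) \hookrightarrow \Ind((\cC^{\vee}\tens{\cE}\cD)^{\omega_1})$ identifies the source with ind-systems $\inddlim F_i$ of $\omega_1$-compact objects of $\cC^{\vee}\tens{\cE}\cD$ with eventually ``compact'' transitions in the sense of the dualizable structure. The technical crux is to show that compactness of a morphism $F_i \to F_j$ in this sense is equivalent to being right trace-class in $\Fun_{\cE}^L(\cC, \cD).$ The direction ``right trace-class implies compact'' is direct: a trace-class witness yields a factorization through the dualizable object it encodes. For the converse, I would combine Proposition \ref{prop:Ind_of_D_omega_1_universal_property} with Proposition \ref{prop:another_description_of_functors_to_Ind_D_omega_1}, observing that the colimit of a compact ind-system has dual commuting with countable limits; the isomorphism \eqref{eq:abs_nuclearity_of_F} from Proposition \ref{prop:category_of_functors_to_Ind_D_omega_1} \ref{image_in_Ind_D_omega_1_implies_abs_nuclearity}, applied to a suitable factorization, then extracts the trace-class witness, using that $\Id_{\cC}$ is $\omega_1$-compact in $\cC^{\vee}\tens{\cE}\cC$ thanks to the $\omega_1$-compactness of $\cC.$

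Given (ii), the identification in (iii) follows: $\omega_1$-compacts of $\un{\Hom}_{\cE}^{\dual}(\cC, \cD)$ correspond to $\omega_1$-small ind-systems with eventually right trace-class transitions, whose colimits in $\Fun_{\cE}^L(\cC, \cD)$ are precisely $\Fun_{\cE}^{LL}(\cC, \Ind(\cD_{\omega_1}))$ by Propositions \ref{prop:Ind_of_D_omega_1_universal_property} and \ref{prop:another_description_of_functors_to_Ind_D_omega_1}. Formal $\omega_1$-injectivity then reduces, by Lemma \ref{lem:when_ev_with_x_commutes_with_countable_limits}, to showing that $\ev(x,-)$ commutes with countable products for $\omega_1$-compact $x;$ through the just-established identification, this becomes the closure of $\Fun_{\cE}^{LL}(\cC, \Ind(\cD_{\omega_1}))$ under countable products inside $\Fun_{\cE}^L(\cC, \cD)$ provided by Proposition \ref{prop:category_of_functors_to_Ind_D_omega_1} \ref{closed_under_omega_1_filtered_colimits}.

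For (i), apply (iii) with $\cD$ replaced by $\Calk_{\omega_1}(\cD),$ which is formally $\omega_1$-injective by Theorem \ref{th:hat_Y_for_Calkin} and compactly generated. The Calkin quotient $\Ind(\cD^{\omega_1}) \to \Calk_{\omega_1}(\cD)$ has a fully faithful right adjoint; applying $\un{\Hom}_{\cE}^{\dual}(\cC, -)$ bifunctorially yields a quotient functor with fully faithful right adjoint between the corresponding compactly generated internal $\Hom$ categories. A standard $\omega$-compactness argument (any $\omega$-compact object of the target is a filtered colimit of images of $\omega$-compact source objects, hence a retract of one such image) shows that the induced functor on $\omega$-compact objects is essentially surjective onto its Karoubi-closed image and is a homological epimorphism, proving (i). The main obstacle is the ``compact implies right trace-class'' direction in (ii): extracting a concrete trace-class witness from the abstract compactness in the ambient presentable category requires the full machinery of formally $\omega_1$-injective categories from Subsection \ref{ssec:formally_injective_categories}, and is precisely where the $\omega_1$-compactness hypothesis on $\cC$ becomes indispensable.
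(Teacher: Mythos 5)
Your plan identifies the right supporting results from Subsection \ref{ssec:formally_injective_categories}, but the proposed order \ref{descr_of_Hom^dual} $\to$ \ref{formal_omega_1_injectivity_of_Hom^dual} $\to$ \ref{hom_epi_onto_image} reverses the actual dependency structure and introduces a genuine circularity, while also missing the central technical step.

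In the paper, part \ref{hom_epi_onto_image} is the heart of the proof: one establishes the isomorphism \eqref{eq:tricky_tensor_product} by a direct computation, replacing $F^{R,\cont}$ by an arbitrary countably-cocontinuous functor $H$, using Proposition \ref{prop:another_description_of_functors_to_Ind_D_omega_1} to write $H^{\vee}$ as an $\omega_1$-directed colimit with eventually right-trace-class transitions, and then passing to the dual pro-system of $\omega_1$-compact objects. Everything else in the proposition, and indeed Proposition \ref{prop:generators_of_dual_of_Hom^dual}, Corollary \ref{cor:internal_Hom_to_functors_commuting_with_countable_limits} and Theorem \ref{th:internal_omega_1_injectivity}, flow from this. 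Once \ref{hom_epi_onto_image} is established, $\un{\Hom}_{\cE}^{\dual}(\cC,\cD)$ is identified with the \emph{naive} kernel of $\Ind((\cC^{\vee}\tens{\cE}\cD)^{\omega_1})\to\Ind(\Fun_{\cE}^{LL}(\cC,\Calk_{\omega_1}(\cD)))$, not merely the dualizable kernel $\ker^{\dual}$, and the ind-system description in \ref{descr_of_Hom^dual} drops out. Without \ref{hom_epi_onto_image} you have no handle on which formal colimits in $\Ind((\cC^{\vee}\tens{\cE}\cD)^{\omega_1})$ land in the dualizable kernel; Propositions \ref{prop:Ind_of_D_omega_1_universal_property} and \ref{prop:another_description_of_functors_to_Ind_D_omega_1} characterize honest functors in $\cC^{\vee}\tens{\cE}\cD$ (actual colimits) with countably-continuous dual, which is a different statement. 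So \ref{descr_of_Hom^dual} cannot be proved first as you propose.

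The circularity is most visible in your step for \ref{hom_epi_onto_image}. You want to apply $\un{\Hom}_{\cE}^{\dual}(\cC,-)$ to the Calkin short exact sequence and conclude that the resulting functor is a quotient with fully faithful right adjoint. But the relative internal $\Hom$ in $\Cat_{\cE}^{\dual}$ does not automatically preserve quotient functors; that preservation is exactly the content of Corollary \ref{cor:internal_Hom_to_functors_commuting_with_countable_limits} \ref{Beck_Chevalley_duals_of_Hom^dual} and Theorem \ref{th:internal_omega_1_injectivity}, both of which the paper proves \emph{after} this proposition and whose proofs go through Proposition \ref{prop:generators_of_dual_of_Hom^dual}, which in turn is said to ``follow directly from the proof of'' part \ref{hom_epi_onto_image}. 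So you would be invoking consequences of the very statement you are trying to prove. (A smaller point: your closing remark about ``the symmetric monoidal versions in $\Mot^{\loc}_{\cE}$'' pertains to Theorem \ref{th:morphisms_in_Mot^loc_via_internal_Hom}, not to this proposition, which makes no mention of $\Mot^{\loc}_{\cE}$.) To repair the proof you must first establish \ref{hom_epi_onto_image} by the direct tensor-product computation.
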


\begin{proof}
\ref{hom_epi_onto_image} The statement is equivalent to the following: for $F,G\in(\cC^{\vee}\tens{\cE}\cD)^{\omega_1}$ the map 
\begin{multline*}
\Hom_{\cC^{\vee}\tens{\cE}\cC}(\Id_{\cC},(-)^{R,\cont}\circ G)\tens{(\cC^{\vee}\tens{\cE}\cD)^{\omega_1}}\Hom_{\cC^{\vee}\tens{\cE}\cC}(\Id_{\cC},F^{R,\cont}\circ -)\\
\to\Hom_{\cC^{\vee}\tens{\cE}\cC}(\Id_{\cC},F^{R,\cont}\circ G)
\end{multline*}
is an isomorphism. We first observe that by Proposition \ref{prop:properties_of_formally_omega_1_injective} \ref{when_F^cont_commutes_with_countable_limits} the functor $F^{R,\cont}$ commutes with countable limits. It is convenient to replace $F^{R,\cont}$ with a general functor $H\in\Fun_{\cE}^L(\cD,\cC)$ which commutes with countable limits. We will prove that for such $H$ and for $G$ as above the map 
\begin{multline}\label{eq:tricky_tensor_product}
\Hom_{\cC^{\vee}\tens{\cE}\cC}(\Id_{\cC},(-)^{R,\cont}\circ G)\tens{(\cC^{\vee}\tens{\cE}\cD)^{\omega_1}}\Hom_{\cC^{\vee}\tens{\cE}\cC}(\Id_{\cC},H\circ -)\\
\to\Hom_{\cC^{\vee}\tens{\cE}\cC}(\Id_{\cC},H\circ G).
\end{multline}
is an isomorphism.
Consider the $\cE^{mop}$-linear functor $H^{\vee}:\cC^{\vee}\to\cD^{\vee}.$ By Proposition \ref{prop:category_of_functors_to_Ind_D_omega_1} we have $H^{\vee}\cong\indlim[i\in I]H_i,$ where $I$ is $\omega_1$-directed, $H_i\in(\cD^{\vee}\tens{\cE}\cC)^{\omega_1},$ and for any $i\in I$ there exists $j\geq i$ such that the map $H_i\to H_j$ is right trace-class in $\Fun_{\cE^{mop}}(\cC^{\vee},\cD^{\vee}).$ For such $i\leq j$ the map $(H_j^{R,\cont})^{\vee}\to (H_i^{R,\cont})^{\vee}$ is right trace-class in $\Fun_{\cE}^L(\cC,\cD),$ hence it factors through an object of $(\cC^{\vee}\tens{\cE}\cD)^{\omega_1}.$ It follows that we have a pro-isomorphism $\proolim[i](H_i^{R,\cont})^{\vee}\cong\proolim[j\in J]H_j',$ where $J$ is $\omega_1$-codirected and $H_j'\in (\cC^{\vee}\tens{\cE}\cD)^{\omega_1}.$ We obtain the isomorphisms
\begin{multline*}
\Hom_{\cC^{\vee}\tens{\cE}\cC}(\Id_{\cC},H\circ -)\cong \Hom_{\Fun_{\cE^{mop}}^L(\cC^{\vee},\cC^{\vee})}(\Id_{\cC^{\vee}},(-)^{\vee}\circ H^{\vee})\\
\cong \Hom_{\Fun_{\cE^{mop}}^L(\cC^{\vee},\cC^{\vee})}(\Id_{\cC^{\vee}},(-)^{\vee}\circ \indlim[i]H_i)\cong \indlim[i]\Hom_{\Fun_{\cE^{mop}}^L(\cC^{\vee},\cC^{\vee})}(\Id_{\cC^{\vee}},(-)^{\vee}\circ H_i)\\
\cong \indlim[i]\Hom_{\Fun_{\cE^{mop}}^L(\cD^{\vee},\cC^{\vee})}(H_i^{R,\cont},(-)^{\vee})
\cong \indlim[i]\Hom_{\Fun_{\cE}^L(\cC,\cD)}((H_i^{R,\cont})^{\vee},-)\\
\cong \indlim[j]\Hom_{\Fun_{\cE}^L(\cC,\cD)}(H_j',-). 
\end{multline*}
Using this we can describe the source of \eqref{eq:tricky_tensor_product} as a colimit:
\begin{multline*}
\Hom_{\cC^{\vee}\tens{\cE}\cC}(\Id_{\cC},(-)^{R,\cont}\circ G)\tens{(\cC^{\vee}\tens{\cE}\cD)^{\omega_1}}\Hom_{\cC^{\vee}\tens{\cE}\cC}(\Id_{\cC},H\circ -)\\
\cong \Hom_{\cC^{\vee}\tens{\cE}\cC}(\Id_{\cC},(-)^{R,\cont}\circ G)\tens{(\cC^{\vee}\tens{\cE}\cD)^{\omega_1}}(\indlim[j]\Hom_{(\cC^{\vee}\tens{\cE}\cD)^{\omega_1}}(H_j',-))\\
\cong \indlim[j]\Hom_{\cC^{\vee}\tens{\cE}\cC}(\Id_{\cC},(H_j')^{R,\cont}\circ G).
\end{multline*}
By construction, for any $j\in J$ there exists $k\leq j$ such that the map $H_k'\to H_j'$ is right trace-class in $\Fun_{\cE}^L(\cC,\cD).$ It follows that we have an ind-isomorphism $\inddlim[j](H_j')^{R,\cont}\cong \inddlim[i]H_i^{\vee}.$ Hence, we obtain the isomorphisms
\begin{equation*}
\indlim[j]\Hom_{\cC^{\vee}\tens{\cE}\cC}(\Id_{\cC},(H_j')^{R,\cont}\circ G)\cong \indlim[i]\Hom_{\cC^{\vee}\tens{\cE}\cC}(\Id_{\cC},H_i^{\vee}\circ G)\cong \Hom_{\cC^{\vee}\tens{\cE}\cC}(\Id_{\cC},H\circ G).
\end{equation*}
This proves that the map \eqref{eq:tricky_tensor_product} is indeed an isomorphism.

Next, \ref{descr_of_Hom^dual} follows directly from \ref{hom_epi_onto_image}: we obtain a description of the dualizable internal $\Hom$ as a naive kernel:
\begin{multline*}
\un{\Hom}_{\cE}^{\dual}(\cC,\cD)\simeq\ker^{\dual}(\Ind((\cC^{\vee}\tens{\cE}\cD)^{\omega_1})\to \Ind(\Fun_{\cE}^{LL}(\cC,\Calk_{\omega_1}(\cD))))\\
\simeq \ker(\Ind((\cC^{\vee}\tens{\cE}\cD)^{\omega_1})\to \Ind(\Fun_{\cE}^{LL}(\cC,\Calk_{\omega_1}(\cD)))).
\end{multline*}
The latter category is described exactly as in \ref{descr_of_Hom^dual}.

It remains to prove \ref{formal_omega_1_injectivity_of_Hom^dual}. To prove the formal $\omega_1$-injectivity we consider the (strongly continuous) fully faithful functor
\begin{equation*}
\un{\Hom}_{\cE}^{\dual}(\cC,\cD)^{\vee}\xto{\iota} \Ind((\cC^{\vee}\tens{\cE}\cD)^{\omega_1,op})\simeq \Fun((\cC^{\vee}\tens{\cE}\cD)^{\omega_1},\Sp).
\end{equation*}
Note that the target of this functor is formally $\omega_1$-injective by Proposition \ref{prop:when_hat_Y_is_cocontinuous}. To avoid confusion, for $F\in(\cC^{\vee}\tens{\cE}\cD)^{\omega_1}$ we denote by $F^{op}$ the corresponding object of the opposite category. By \ref{hom_epi_onto_image}, the composition of $\iota$ with its right adjoint $\iota^R$ is given on the compact objects by 
\begin{equation*}
\iota(\iota^R(F^{op}))=\Hom_{\cC^{\vee}\tens{\cE}\cC}(\Id,F^{R,\cont}\circ -),\quad F\in(\cC^{\vee}\tens{\cE}\cD)^{\omega_1}.
\end{equation*}
By the proof of \ref{hom_epi_onto_image}, we have
\begin{equation*}
\iota(\iota^R(F^{op}))\in \Ind_{\omega_1}((\cC^{\vee}\tens{\cE}\cD)^{\omega_1,op})\simeq \Ind((\cC^{\vee}\tens{\cE}\cD)^{\omega_1,op})_{\omega_1},\quad F\in (\cC^{\vee}\tens{\cE}\cD)^{\omega_1}.
\end{equation*}
It follows from Proposition \ref{prop:formal_omega_1_injectivity_for_subcategory} \ref{composition_preserves_D_omega_1} that the category $\un{\Hom}_{\cE}^{\dual}(\cC,\cD)^{\vee}$ is formally $\omega_1$-injective, hence so is $\un{\Hom}_{\cE}^{\dual}(\cC,\cD)$ by Proposition \ref{prop:formal_omega_1_injectivity_reformulations}.

By Proposition \ref{prop:C_omega_1_for_subcategory} the category $(\un{\Hom}_{\cE}^{\dual}(\cC,\cD))_{\omega_1}$ is simply the intersection of $\un{\Hom}_{\cE}^{\dual}(\cC,\cD)$ and $\cC^{\vee}\tens{\cE}\cD\simeq\Ind_{\omega_1}((\cC^{\vee}\tens{\cE}\cD)^{\omega_1})$ inside $\Ind((\cC^{\vee}\tens{\cE}\cD)^{\omega_1}).$ It follows from \ref{descr_of_Hom^dual} and Propositions \ref{prop:another_description_of_functors_to_Ind_D_omega_1} and \ref{prop:Ind_of_D_omega_1_universal_property} that this intersection is exactly the category 
\begin{equation*}
\Fun_{\cE}^{LL}(\cC,\Ind(\cD_{\omega_1}))\subset \Fun_{\cE}^{LL}(\cC,\Ind(\cD))\simeq \cC^{\vee}\tens{\cE}\cD.
\end{equation*}
\end{proof}

We obtain the following description of generators of the dual category of the relative dualizable internal $\Hom,$ analogous to \cite[Proposition 3.12]{E25}.

\begin{prop}\label{prop:generators_of_dual_of_Hom^dual}
Let $\cC,\cD\in\Cat_{\cE}^{\dual}$ be as in Proposition \ref{prop:Hom^dual_into_formally_omega_1_injective}. Then the essential image of the fully faithful functor
\begin{equation*}
\Phi:\un{\Hom}_{\cE}^{\dual}(\cC,\cD)^{\vee}\to \Fun((\cC^{\vee}\tens{\cE}\cD)^{\omega_1},\Sp)
\end{equation*}
contains all objects of the form $\Hom_{\cC^{\vee}\tens{\cE}\cC}(\Id_{\cC},G\circ -),$ where $G\in\Fun_{\cE}^L(\cD,\cC)$ is a functor which commutes with countable limits. Moreover, the image of $\Phi$ is generated by such objects with $G=F^{R,\cont},$ $F\in(\cC^{\vee}\tens{\cE}\cD)^{\omega_1}.$ 
\end{prop}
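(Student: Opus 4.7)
The plan rests on two technical inputs already produced within the proof of Proposition~\ref{prop:Hom^dual_into_formally_omega_1_injective}. The first is the explicit identification, obtained in the proof of part~\ref{formal_omega_1_injectivity_of_Hom^dual},
\begin{equation*}
\Phi(\Phi^R(F^{op}))\cong \Hom_{\cC^{\vee}\tens{\cE}\cC}(\Id_{\cC}, F^{R,\cont}\circ -)\quad\text{for } F\in(\cC^{\vee}\tens{\cE}\cD)^{\omega_1}.
\end{equation*}
The second is the convolution-style isomorphism from the proof of part~\ref{hom_epi_onto_image},
\begin{equation*}
\Hom(\Id_{\cC}, (-)^{R,\cont}\circ F)\tens{(\cC^{\vee}\tens{\cE}\cD)^{\omega_1}} \Hom(\Id_{\cC}, G\circ -) \xto{\sim} \Hom(\Id_{\cC}, G\circ F),
\end{equation*}
valid for $F\in(\cC^{\vee}\tens{\cE}\cD)^{\omega_1}$ and any continuous $\cE$-linear $G:\cD\to\cC$ commuting with countable limits.

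First I would dispatch the ``generated by'' assertion. The functor $\Phi$ is fully faithful and strongly continuous, hence $\Phi^R\circ \Phi\cong\id$ and $\Phi^R$ is both continuous and essentially surjective. The compact objects $F^{op}$, for $F\in(\cC^{\vee}\tens{\cE}\cD)^{\omega_1}$, generate the ambient category $\Fun((\cC^{\vee}\tens{\cE}\cD)^{\omega_1},\Sp)$ under colimits; applying the continuous, essentially surjective functor $\Phi^R$ shows that the objects $\Phi^R(F^{op})$ generate $\un{\Hom}_{\cE}^{\dual}(\cC,\cD)^{\vee}$ under colimits, and then applying the colimit-preserving, fully faithful functor $\Phi$ shows that the essential image of $\Phi$ is generated under colimits by the objects $\Phi\Phi^R(F^{op})$. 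By the first recalled identification these are precisely $\Hom(\Id_{\cC}, F^{R,\cont}\circ -)$, yielding the claim.

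For the broader containment, fix a continuous $\cE$-linear $G:\cD\to\cC$ commuting with countable limits. Reading the convolution identity as $F$ varies over $(\cC^{\vee}\tens{\cE}\cD)^{\omega_1}$ displays the copresheaf $\psi_G: F\mapsto \Hom(\Id_{\cC}, G\circ F)$ as the coend
\begin{equation*}
\psi_G \cong \int^{F'\in(\cC^{\vee}\tens{\cE}\cD)^{\omega_1}} \psi_G(F')\otimes \Phi\Phi^R((F')^{op}),
\end{equation*}
i.e.\ as a spectrum-weighted colimit of copresheaves that, by the previous paragraph, each lie in the essential image of $\Phi$. Since $\Phi$ is strongly continuous and fully faithful, its essential image is closed under colimits in $\Fun((\cC^{\vee}\tens{\cE}\cD)^{\omega_1},\Sp)$, and so $\psi_G$ lies in the image as well. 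The only real obstacle, namely the convolution identity, was already settled in the proof of Proposition~\ref{prop:Hom^dual_into_formally_omega_1_injective}\ref{hom_epi_onto_image} by invoking Proposition~\ref{prop:another_description_of_functors_to_Ind_D_omega_1} to decompose functors commuting with countable limits as $\omega_1$-filtered colimits of $\omega_1$-compact ones with eventually right trace-class transitions; granting that identity, both assertions of the proposition follow by formal manipulation.
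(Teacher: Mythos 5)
Your proposal is correct. You correctly identify the two technical inputs that the paper's terse proof ("this follows directly from the proof of Proposition \ref{prop:Hom^dual_into_formally_omega_1_injective}") is referring to: the identification $\Phi\Phi^R(F^{op})\cong\Hom_{\cC^{\vee}\tens{\cE}\cC}(\Id_{\cC},F^{R,\cont}\circ -)$ from the proof of part~\ref{formal_omega_1_injectivity_of_Hom^dual}, and the convolution isomorphism \eqref{eq:tricky_tensor_product} from the proof of part~\ref{hom_epi_onto_image}. The ``generated by'' argument is the standard readoff (compact generators of the ambient $\Ind$-category pushed through the continuous essentially surjective $\Phi^R$ and back through $\Phi$). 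For the containment, your coend packaging of the convolution identity --- noting that the coend is computed pointwise in $\Fun((\cC^{\vee}\tens{\cE}\cD)^{\omega_1},\Sp)$ and then matching it term by term against \eqref{eq:tricky_tensor_product} with $H=G$ and the compact variable playing the role of the paper's $G$ --- is valid, and the variances do check out. An alternative, perhaps closer to the literal reading of ``directly follows,'' is to use the intermediate step in the proof of \ref{hom_epi_onto_image} which already exhibits $\Hom(\Id_{\cC},G\circ -)$ as an $\omega_1$-filtered colimit $\indlim[j]\Hom_{\Fun_{\cE}^L(\cC,\cD)}(H_j',-)$ with the eventual trace-class condition on transitions, and then argue that any such ind-object lands in the image of $\Phi$; but this requires spelling out the ind-object description of the image of $\Phi$ (dual to part~\ref{descr_of_Hom^dual}), which your coend route neatly circumvents. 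Both are legitimate; yours is self-contained given what the cited proof explicitly provides.
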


\begin{proof}
This follows directly from the proof of Proposition \ref{prop:Hom^dual_into_formally_omega_1_injective}.
\end{proof}

The statements of the following corollary are analogous to \cite[Proposition 3.13, Corollary 3.14]{E25}, but the assumptions are quite different.

\begin{cor}\label{cor:internal_Hom_to_functors_commuting_with_countable_limits}
Let $\cC,\cD,\cD'$ be dualizable left $\cE$-modules, such that $\cC$ is $\omega_1$-compact in $\Cat_{\cE}^{\dual}$ and both $\cD$ and $\cD'$ are formally $\omega_1$-injective.
\begin{enumerate}[label=(\roman*),ref=(\roman*)]
\item Let $F:\cD\to\cD'$ be a strongly continuous $\cE$-linear functor which commutes with countable limits. Then the following square commutes:
\begin{equation}\label{eq:Beck_Chevalley}
\begin{tikzcd}
\un{\Hom}_{\cE}^{\dual}(\cC,\cD')^{\vee}\ar{r}{\un{\Hom}_{\cE}^{\dual}(\cC,F)^{\vee}}\ar[d, "\Phi'"]  & \un{\Hom}_{\cE}^{\dual}(\cC,\cD)^{\vee}\ar[d, "\Phi"]\\
\Fun((\cC^{\vee}\tens{\cE}\cD')^{\omega_1},\Sp)\ar[r] & \Fun((\cC^{\vee}\tens{\cE}\cD)^{\omega_1},\Sp).
\end{tikzcd}
\end{equation}
Here the vertical functors are as in Proposition \ref{prop:generators_of_dual_of_Hom^dual}. If moreover $F$ is a quotient functor, then
\begin{equation}\label{eq:Hom^dual_to_F}
\un{\Hom}_{\cE}^{\dual}(\cC,F):\un{\Hom}_{\cE}^{\dual}(\cC,\cD)\to \un{\Hom}_{\cE}^{\dual}(\cC,\cD')
\end{equation}
is also a quotient functor. \label{Beck_Chevalley_duals_of_Hom^dual}
\item The functor
\begin{equation*}
	(\cC^{\vee}\tens{\cE}\cD)^{\omega_1}\simeq\Fun_{\cE}^{LL}(\cC,\Ind(\cD^{\omega_1}))\to \Fun_{\cE}^{LL}(\cC,Calk_{\omega_1}(\cD))
\end{equation*}
is a homological epimorphism. Moreover, we have an equivalence
\begin{equation*}
\un{\Hom}_{\cE}^{\dual}(\cC,\Calk_{\omega_1}(\cD))\simeq\Ind(\Fun_{\cE}^{LL}(\cC,\Calk_{\omega_1}(\cD))).
\end{equation*} \label{key_hom_epi}
\end{enumerate}
\end{cor}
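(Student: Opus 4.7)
The plan is to prove part (i) first and then deduce part (ii) from it by specialization. For the commutativity of \eqref{eq:Beck_Chevalley}, I would exploit the characterization of the essential image of $\Phi'$ from Proposition \ref{prop:generators_of_dual_of_Hom^dual}: it is generated via colimits by the functors $Z_G := \Hom_{\cC^{\vee}\tens{\cE}\cC}(\Id_{\cC}, G\circ -)$ for $G\in\Fun_{\cE}^L(\cD',\cC)$ commuting with countable limits, and in particular by those with $G = H^{R,\cont}$ for $H\in(\cC^{\vee}\tens{\cE}\cD')^{\omega_1}$. The bottom horizontal arrow of \eqref{eq:Beck_Chevalley}, which is precomposition along $(\cC^{\vee}\tens{\cE}F)^{\omega_1}$, sends $Z_G$ to the functor $K\mapsto \Hom_{\cC^{\vee}\tens{\cE}\cC}(\Id_{\cC}, G\circ F\circ K)$, i.e.\ to $Z_{G\circ F}$; and since $G\circ F$ also commutes with countable limits (as $F$ does by hypothesis), $Z_{G\circ F}$ lies in the essential image of $\Phi$. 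Since $\Phi$ and $\Phi'$ are the right adjoints of the duals of the fully faithful strongly continuous inclusions $\un{\Hom}_{\cE}^{\dual}(\cC,\cD)\hookrightarrow\Ind((\cC^{\vee}\tens{\cE}\cD)^{\omega_1})$ (and similarly for $\cD'$), and since the square of left adjoints commutes tautologically from functoriality of $\cC^{\vee}\tens{\cE}-$, the Beck--Chevalley mate is an isomorphism on the generators $Z_G$ by inspection, hence on all of $\un{\Hom}_{\cE}^{\dual}(\cC,\cD')^{\vee}$ by colimit-preservation.

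For the quotient statement of (i), I would assume $F$ is a quotient (so that $F\circ F^R\cong\id_{\cD'}$) and show that $\un{\Hom}_{\cE}^{\dual}(\cC,F)^{\vee}$ is fully faithful, which is equivalent to the quotient property of $\un{\Hom}_{\cE}^{\dual}(\cC,F)$. Using the commutativity of \eqref{eq:Beck_Chevalley} together with the fully-faithfulness of $\Phi$ and $\Phi'$, this reduces to showing that the bottom horizontal arrow is fully faithful when restricted to the essential image of $\Phi'$. On generators, the induced map
\begin{equation*}
\Map_{\Fun((\cC^{\vee}\tens{\cE}\cD')^{\omega_1},\Sp)}(Z_{G_1}, Z_{G_2}) \to \Map_{\Fun((\cC^{\vee}\tens{\cE}\cD)^{\omega_1},\Sp)}(Z_{G_1\circ F}, Z_{G_2\circ F})
\end{equation*}
arises from precomposition with $F$. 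Using Proposition \ref{prop:generators_of_dual_of_Hom^dual} to rewrite these mapping spaces and Proposition \ref{prop:category_of_functors_to_Ind_D_omega_1} \ref{image_in_Ind_D_omega_1_implies_abs_nuclearity} to identify them with natural transformations between countable-limit-preserving functors $\cD'\to\cC$ (resp.\ $\cD\to\cC$), one can construct an inverse via precomposition with $F^R$, which is continuous and $\cE$-linear because $F$ is strongly continuous $\cE$-linear and commutes with countable limits; the relation $F\circ F^R\cong\id$ ensures both composites are the identity.

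For part (ii), I would apply part (i) to the strongly continuous $\cE$-linear quotient $q:\Ind(\cD^{\omega_1})\to\Calk_{\omega_1}(\cD)$. The source is compactly generated, hence formally $\omega_1$-injective by Proposition \ref{prop:when_hat_Y_is_cocontinuous}; the target is formally $\omega_1$-injective by Theorem \ref{th:hat_Y_for_Calkin}; and the kernel of $q$ is the image of $\hat{\cY}_{\cD}:\cD\to\Ind(\cD^{\omega_1})$, which is closed under countable limits in $\Ind(\cD^{\omega_1})$ since $\hat{\cY}_{\cD}$ commutes with countable limits by the formal $\omega_1$-injectivity of $\cD$. Therefore $q$ commutes with countable limits, and part (i) gives that $\un{\Hom}_{\cE}^{\dual}(\cC,q)$ is a quotient. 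The source $\un{\Hom}_{\cE}^{\dual}(\cC,\Ind(\cD^{\omega_1}))$ is compactly generated with compact objects $\Fun_{\cE}^{LL}(\cC,\Ind(\cD^{\omega_1}))\simeq(\cC^{\vee}\tens{\cE}\cD)^{\omega_1}$; hence the target $\un{\Hom}_{\cE}^{\dual}(\cC,\Calk_{\omega_1}(\cD))$ is compactly generated by the images of these compact objects, which by the homological-epimorphism-onto-image statement from Proposition \ref{prop:Hom^dual_into_formally_omega_1_injective} \ref{hom_epi_onto_image} are precisely $\Fun_{\cE}^{LL}(\cC,\Calk_{\omega_1}(\cD))$ up to idempotent completion. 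This simultaneously yields both assertions of (ii).

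The hard part will be the Yoneda-style identification of mapping spaces between the generators $Z_G$ in the fully-faithfulness argument of (i): one must carefully translate between abstract natural transformations in the presheaf category $\Fun((\cC^{\vee}\tens{\cE}\cD')^{\omega_1},\Sp)$ and concrete morphisms of countable-limit-preserving functors $\cD'\to\cC$, in order to verify that precomposition with $F$ and $F^R$ implement mutually inverse equivalences on the relevant Hom spaces.
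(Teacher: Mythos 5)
Your arguments for the commutativity of \eqref{eq:Beck_Chevalley} and for part \ref{key_hom_epi} track the paper's proof closely. For the quotient statement in part \ref{Beck_Chevalley_duals_of_Hom^dual}, however, you take a considerably longer route than needed, and the step you defer as ``the hard part'' is exactly where your argument is incomplete. Showing that precomposition with $F$ and with $F^R$ induce mutually inverse equivalences between $\Hom(Z_{G_1},Z_{G_2})$ and $\Hom(Z_{G_1\circ F},Z_{G_2\circ F})$ does not follow directly from Proposition \ref{prop:category_of_functors_to_Ind_D_omega_1}~\ref{image_in_Ind_D_omega_1_implies_abs_nuclearity}: the objects $Z_{G}$ are in general not compact in $\Fun((\cC^{\vee}\tens{\cE}\cD')^{\omega_1},\Sp)$, so the mapping spaces between them are not representably controlled, and making your ``Yoneda-style identification'' precise would require computations of roughly the same difficulty as those in the proof of Proposition \ref{prop:Hom^dual_into_formally_omega_1_injective}. (Also, for the record, $F^R$ commutes with countable limits simply because it is a right adjoint; this has nothing to do with $F$ commuting with countable limits.)

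The paper avoids all of this with a short observation that you missed: since $F$ is a quotient functor and $\cC$ is $\omega_1$-compact, $\cC^{\vee}\tens{\cE}F\colon\cC^{\vee}\tens{\cE}\cD\to\cC^{\vee}\tens{\cE}\cD'$ is a quotient functor whose right adjoint preserves $\omega_1$-compact objects; hence the induced functor $(\cC^{\vee}\tens{\cE}\cD)^{\omega_1}\to(\cC^{\vee}\tens{\cE}\cD')^{\omega_1}$ is a homological epimorphism, and restriction along a homological epimorphism is fully faithful. So the \emph{bottom} horizontal functor in \eqref{eq:Beck_Chevalley} is fully faithful on the whole category, not merely on the essential image of $\Phi'$, and the full faithfulness of the top arrow follows immediately from the commutativity of the square and the full faithfulness of $\Phi$ and $\Phi'$. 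Your restriction to the image of $\Phi'$ and the proposed inverse via $F^R$ are therefore unnecessary detours.
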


\begin{proof}
\ref{Beck_Chevalley_duals_of_Hom^dual} To prove the commutativity of \eqref{eq:Beck_Chevalley} we need to show that the lower horizontal functor in takes the essential image of $\Phi'$ to the essential image of $\Phi.$ If $G\in\Fun_{\cE}^L(\cD',\cC)$ commutes with countable limits, then so does $G\circ F$ by assumption, and the lower horizontal functor in \eqref{eq:Beck_Chevalley} takes $\Hom_{\cC^{\vee}\tens{\cE}\cC}(\Id_{\cC},G\circ -)$ to $\Hom_{\cC^{\vee}\tens{\cE}\cC}(\Id_{\cC},G\circ F\circ -).$ By Proposition \ref{prop:generators_of_dual_of_Hom^dual} we deduce the commutativity of \eqref{eq:Beck_Chevalley}.

If $F$ is a quotient functor, then the lower horizontal functor in \eqref{eq:Beck_Chevalley} is fully faithful, hence so is the upper horizontal functor. This exactly means that the functor \eqref{eq:Hom^dual_to_F} is a quotient functor, as stated.

\ref{key_hom_epi} Since $\cD$ is formally $\omega_1$-injective, the functor $\Ind(\cD^{\omega_1})\to \Calk_{\omega_1}(\cD)$ commutes with countable limits. Its source is formally $\omega_1$-injective by Proposition \ref{prop:when_hat_Y_is_cocontinuous}, and so is the target by Theorem \ref{th:hat_Y_for_Calkin}. Applying \ref{Beck_Chevalley_duals_of_Hom^dual}, we see that the functor
\begin{equation*}
\un{\Hom}_{\cE}^{\dual}(\cC,\Ind(\cD^{\omega_1})\to\un{\Hom}_{\cE}^{\dual}(\cC,\Calk_{\omega_1}(\cD))\end{equation*}
is a quotient functor. Since we have
\begin{equation*}
\un{\Hom}_{\cE}^{\dual}(\cC,\Ind(\cD^{\omega_1})\simeq \Ind((\cC^{\vee}\tens{\cE}\cD)^{\omega_1}),
\end{equation*}
we deduce both assertions of \ref{key_hom_epi}.
\end{proof}

\begin{proof}[Proof of Theorem \ref{th:internal_omega_1_injectivity}]
It suffices to prove the first statement since the assertion about Calkin categories is a special case by Theorem \ref{th:hat_Y_for_Calkin}. 

Let $\cA,\cB\in(\Cat_{\cE}^{\dual})^{\omega_1}$ be $\omega_1$-compact dualizable left $\cE$-modules, and let $F:\cA\to\cB$ be a strongly continuous fully faithful $\cE$-linear functor. We need to prove that
\begin{equation*}
\un{\Hom}_{\cE}^{\dual}(F,\cD):\un{\Hom}_{\cE}^{\dual}(\cB,\cD)\to\un{\Hom}_{\cE}^{\dual}(\cA,\cD)
\end{equation*}
is a quotient functor. It follows from Proposition \ref{prop:Hom^dual_into_formally_omega_1_injective} \ref{formal_omega_1_injectivity_of_Hom^dual} that we have a commutative square of accessible categories and exact accessible functors
\begin{equation}\label{eq:precomposition_with_F}
\begin{tikzcd}
\Fun_{\cE}^{LL}(\cB,\Ind(\cD_{\omega_1}))\ar{r}{\Phi}\ar[d] & [1em] \Fun_{\cE}^{LL}(\cA,\Ind(\cD_{\omega_1}))\ar[d]\\
\un{\Hom}_{\cE}^{\dual}(\cB,\cD)\ar{r}{\un{\Hom}_{\cE}^{\dual}(F,\cD)} & \un{\Hom}_{\cE}^{\dual}(\cA,\cD),
\end{tikzcd}
\end{equation}
where the upper horizontal functor $\Phi$ is given by precomposition with $F.$ By loc. cit. the vertical functors in \eqref{eq:precomposition_with_F} are fully faithful and their images generate the targets (via colimits). Hence, by Lemma \ref{lem:sufficient_for_localization} below it suffices to prove that $\Phi$ has a fully faithful right adjoint. 

Denote by $\cT(\cA,\cD)\subset \Fun_{\cE^{mop}}^L(\cD^{\vee},\cA^{\vee})$ the full subcategory which consists of functors commuting with countable limits, and similarly for $\cT(\cB,\cD)\subset \Fun_{\cE^{mop}}^L(\cD^{\vee},\cB^{\vee}).$ By Proposition \ref{prop:Ind_of_D_omega_1_universal_property}, the functor $\Phi$ from \eqref{eq:precomposition_with_F} is identified with the functor
\begin{equation*}
F^{\vee}\circ -:\cT(\cB,\cD)\to\cT(\cA,\cD)
\end{equation*}
(note that $F^{\vee}$ commutes with all limits). Now, the fully faithfulness of $F$ implies that the functor
\begin{equation*}
\Psi=F^{\vee}\circ -:\Fun_{\cE^{mop}}^L(\cD^{\vee},\cB^{\vee})\to \Fun_{\cE^{mop}}^L(\cD^{\vee},\cA^{\vee})
\end{equation*}
is a quotient functor, so its right adjoint $\Psi^R$ is fully faithful. Hence, we only need to show that $\Psi^R$ takes $\cT(\cA,\cD)$ to $\cT(\cB,\cD).$ So let $G:\cD^{\vee}\to\cA^{\vee}$ be a continuous $\cE^{mop}$-linear functor which commutes with countable limits. Then $\Psi^R(G)=((F^{\vee})^R\circ G)^{\cont}.$ The $\omega_1$-compactness of $\cA$ and $\cB$ in $\Cat_{\cE}^{\dual}$ implies that $F^{\vee}$ takes $(\cB^{\vee})^{\omega_1}$ to $(\cA^{\vee})^{\omega_1}.$ Hence, its right adjoint $(F^{\vee})^R$ commutes with $\omega_1$-filtered colimits (and all limits). By assumption on $G,$ we see that the functor $(F^{\vee})^R\circ G$ commutes with $\omega_1$-filtered colimits and countable limits. Applying Proposition \ref{prop:properties_of_formally_omega_1_injective} \ref{when_F^cont_commutes_with_countable_limits}, we conclude that the functor $((F^{\vee})^R\circ G)^{\cont}:\cD^{\vee}\to\cB^{\vee}$ commutes with countable limits, i.e. $\Psi^R(G)\in\cT(\cB,\cD).$ This proves the theorem. 
\end{proof}

We used a very general sufficient condition for a strongly continuous functor to be a localization, which is quite straightforward and probably known.

\begin{lemma}\label{lem:sufficient_for_localization}
Let $F:\cC\to\cD$ be a strongly continuous functor between presentable stable categories. Let $\cC'\subset\cC$ and $\cD'\subset\cD$ be full stable subcategories closed under $\kappa$-filtered colimits for some regular cardinal $\kappa.$ Suppose that the following conditions hold:
\begin{itemize}
\item  $\cC'$ resp. $\cD'$ generates $\cC$ resp. $\cD$ via colimits;
\item  $F$ takes $\cC'$ to $\cD';$
\item the induced functor $G=F_{\mid \cC'}:\cC'\to\cD'$ has a fully faithful right adjoint $G^R.$ 
\end{itemize}
Then $F$ is a quotient functor.
\end{lemma}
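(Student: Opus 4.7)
The plan is to show that the right adjoint $F^R$ (which exists and is colimit-preserving by strong continuity of $F$) is fully faithful, equivalently that the counit $F F^R \to \id_\cD$ is an isomorphism. The main steps are: first identify $F^R$ on the subcategory $\cD'$ with $G^R$, and then bootstrap to all of $\cD$ using that both $F$ and $F^R$ preserve colimits.

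First I would produce a natural transformation $G^R \to F^R_{\mid \cD'}$ of functors $\cD' \to \cC.$ By adjunction $(F, F^R),$ a natural map $G^R d \to F^R d$ for $d \in \cD'$ corresponds to a natural map $F(G^R d) \to d$ in $\cD.$ Since $G^R d \in \cC',$ we have $F(G^R d) = G(G^R d),$ and the counit $G G^R d \to d$ is an isomorphism because $G^R$ is fully faithful; this gives the desired map. To check that $G^R d \to F^R d$ is itself an isomorphism for $d \in \cD',$ I test against objects $c \in \cC':$ the composition of adjunctions together with full faithfulness of the inclusions $\cC' \subset \cC$ and $\cD' \subset \cD$ gives
\begin{equation*}
\Hom_\cC(c, G^R d) \cong \Hom_{\cD'}(Gc, d) \cong \Hom_\cD(Fc, d) \cong \Hom_\cC(c, F^R d).
\end{equation*}
Because $\cC'$ generates $\cC$ under colimits, any natural transformation inducing equivalences on $\Hom_\cC(c, -)$ for all $c \in \cC'$ is itself an isomorphism, so $G^R d \xto{\sim} F^R d.$

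Second I would show the counit $FF^R d \to d$ is an isomorphism for all $d \in \cD.$ When $d \in \cD',$ we have $F F^R d \cong F G^R d = G G^R d \cong d$ by the preceding step and full faithfulness of $G^R.$ For general $d \in \cD,$ write $d \cong \colim_i d_i$ as a colimit of objects $d_i \in \cD'$ (possible because $\cD'$ generates $\cD$ under colimits). Then $F^R d \cong \colim_i F^R d_i$ because $F^R$ is colimit-preserving, and applying $F$ (which is continuous) yields
\begin{equation*}
F F^R d \cong \colim_i F F^R d_i \cong \colim_i d_i \cong d,
\end{equation*}
and one checks that this composite is precisely the counit. Hence the counit is an isomorphism, $F^R$ is fully faithful, and $F$ is a quotient functor.

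The argument is essentially formal and I do not foresee a real obstacle; the $\kappa$-filtered colimit closure hypothesis on $\cC'$ and $\cD'$ turns out to be irrelevant for the proof itself (it is presumably present to facilitate verification in applications). The only subtle point is confirming that the map $G^R d \to F^R d$ constructed via the two adjunctions is genuinely the one whose image under $\Hom_\cC(c, -)$ is the composite of adjunction isomorphisms above, which is a routine check by tracing the unit/counit triangle identities.
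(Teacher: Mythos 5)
Your proof is correct and follows essentially the same route as the paper's: construct the canonical map $G^R d \to F^R d$ for $d \in \cD'$ by adjunction, show it is an isomorphism by testing against $\cC'$ (equivalently, showing its cone lies in the right orthogonal to $\cC'$), deduce the counit $FF^R d \to d$ is an isomorphism for $d \in \cD'$, and then bootstrap to all of $\cD$ using generation by colimits and continuity of $FF^R$. Your side observation that the $\kappa$-filtered colimit closure hypothesis is not actually used in the proof is also accurate.
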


\begin{proof}
Denote by $F^R$ the right adjoint to $F.$ For $x\in\cD'$ we have a natural morphism $\varphi_x:G^R(x)\to F^R(x)$ in $\cC.$ Then the object $\Cone(\varphi_x)\in\cC$ is contained in the right orthogonal to $\cC',$ hence it vanishes by our assumption that $\cC'$ generates $\cC.$ Hence, the functor $F^R$ takes $\cD'$ to $\cC'.$ It follows that the for $x\in\cD'$ the adjunction counit $F(F^R(x))\to x$ is an isomorphism (by the fully faithfulness of $G^R$). Since $\cD'$ generates $\cD$ and $F^R$ continuous, we conclude that the counit $F\circ F^R\to\id_{\cD}$ is an isomorphism, i.e. $F$ is a quotient functor. 
\end{proof}

\subsection{Proof of Theorem \ref{th:morphisms_in_Mot^loc_via_internal_Hom}}
\label{ssec:proof_of_theorem_on_morphisms_in_Mot^loc}

We will need the following lemma. We refer to Definition \ref{def:nuclear_E_modules} for the notion of a right trace-class strongly continuous $\cE$-linear functor between relatively compactly generated left $\cE$-modules. 

\begin{lemma}\label{lem:diagonal_arrow_for_Inds_and_Calkins}
	Let $\cC,\cC'\in\Cat_{\cE}^{\cg}$ be relatively compactly generated left $\cE$-modules, and let $\cD\in\Cat_{\cE^{mop}}^{\dual}$ be a dualizable right $\cE$-module. Let $F:\cC\to\cC'$ be a strongly continuous $\cE$-linear functor which is right trace-class over $\cE.$ 
	\begin{enumerate}[label=(\roman*),ref=(\roman*)]
		\item There is a strongly continuous functor $H:\Ind((\cD\tens{\cE}\cC)^{\omega_1})\to\Ind(\cD^{\omega_1})\tens{\cE}\cC',$ which is the diagonal arrow in the following commutative diagram:
		\begin{equation}\label{eq:diagonal_arrow_for_Inds}
			\begin{tikzcd}
				\Ind(\cD^{\omega_1})\tens{\cE}\cC\ar[r]\ar[d] & \Ind(\cD^{\omega_1})\tens{\cE}\cC'\ar[d]\\
				\Ind((\cD\tens{\cE}\cC)^{\omega_1})\ar[r]\ar[ru, "H"] & \Ind((\cD\tens{\cE}\cC')^{\omega_1})
			\end{tikzcd}
		\end{equation} \label{diagonal_arrow_for_Inds}
		\item There is a strongly continuous functor $\bbar{H}:\Calk_{\omega_1}(\cD\tens{\cE}\cC)\to\Calk_{\omega_1}(\cD)\tens{\cE}\cC',$ which is the diagonal arrow in the following commutative diagram:
		\begin{equation}\label{eq:diagonal_arrow_for_Calkins}
			\begin{tikzcd}
				\Calk_{\omega_1}(\cD)\tens{\cE}\cC\ar[r]\ar[d] & \Calk_{\omega_1}(\cD)\tens{\cE}\cC'\ar[d]\\
				\Calk_{\omega_1}(\cD\tens{\cE}\cC)\ar[r]\ar[ru, "\bbar{H}"] & \Calk_{\omega_1}(\cD\tens{\cE}\cC')
			\end{tikzcd}
		\end{equation} \label{diagonal_arrow_for_Calkins}
	\end{enumerate}
\end{lemma}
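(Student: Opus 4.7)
The plan is to construct both diagonal arrows directly from a right trace-class witness, in the spirit of Proposition \ref{prop:diagonal_arrow_dualizable_trace_class}. By the hypothesis that $F$ is right trace-class over $\cE$, we may fix a compact object $X \in (\un{\Hom}_{\cE}^{\dual}(\cC,\cE)\tens{\cE}\cC')^{\omega}$ whose image in $\Fun_{\cE}^{LL}(\cC,\cC')$ is isomorphic to $F$.

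For part (i), I would first use the evaluation $\un{\Hom}_{\cE}^{\dual}(\cC,\cE)\tens{\cE}\cC \to \cE$ together with $X$ to build a strongly continuous functor $\cD \tens{\cE} \cC \to \cD \tens{\cE} \cC'$, which agrees with $\cD \tens{\cE} F$. Post-composing with $\hat{\cY}_{\cD}\tens{\cE}\id_{\cC'}$ yields a strongly continuous functor $\Phi\colon \cD\tens{\cE}\cC \to \Ind(\cD^{\omega_1})\tens{\cE}\cC'$. The key step is to upgrade $\Phi$ to a strongly continuous functor $H\colon \Ind((\cD\tens{\cE}\cC)^{\omega_1}) \to \Ind(\cD^{\omega_1})\tens{\cE}\cC'$. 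This hinges on the compactness of $X$: writing $X$ as a finite colimit (and retract) of elementary tensors $\alpha \boxtimes c'$ with $\alpha \in \un{\Hom}_{\cE}^{\dual}(\cC,\cE)^{\omega}$ and $c' \in (\cC')^{\omega}$ shows that $\Phi$ in fact takes $\omega_1$-compact objects of $\cD\tens{\cE}\cC$ to compact objects of $\Ind(\cD^{\omega_1})\tens{\cE}\cC'$, so the Ind-extension produces $H$. The two triangles in \eqref{eq:diagonal_arrow_for_Inds} commute by the defining property of the witness: on compact generators, $H$ composed with the left vertical reproduces $\id \tens{\cE} F$, while $H$ composed with the right vertical recovers the Ind-extension of $\id \tens{\cE} F$.

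For part (ii), the diagonal $\bbar{H}$ is obtained by post-composing $H$ with the Calkin quotient $\Ind(\cD^{\omega_1})\tens{\cE}\cC' \to \Calk_{\omega_1}(\cD)\tens{\cE}\cC'$. It factors through the Calkin quotient $\Calk_{\omega_1}(\cD\tens{\cE}\cC)$ on the source, since on the subcategory $\hat{\cY}_{\cD\tens{\cE}\cC}(\cD\tens{\cE}\cC) \subset \Ind((\cD\tens{\cE}\cC)^{\omega_1})$ the functor $H$ agrees with $(\hat{\cY}_{\cD}\tens{\cE}\id_{\cC'}) \circ (\cD\tens{\cE}F)$, which dies in $\Calk_{\omega_1}(\cD)\tens{\cE}\cC'$ because $\hat{\cY}_{\cD}(\cD)$ is precisely the kernel of the projection $\Ind(\cD^{\omega_1}) \to \Calk_{\omega_1}(\cD)$. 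Commutativity of \eqref{eq:diagonal_arrow_for_Calkins} is inherited from part (i) via functoriality of $\Calk_{\omega_1}$.

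The main obstacle is the verification that $\Phi$ takes $\omega_1$-compact objects to compact objects of $\Ind(\cD^{\omega_1})\tens{\cE}\cC'$ (and not merely to $\omega_1$-compact objects), so that $H$ extends strongly continuously with target in $\Ind(\cD^{\omega_1})\tens{\cE}\cC'$ rather than in the a priori larger category $\Ind((\Ind(\cD^{\omega_1})\tens{\cE}\cC')^{\omega_1})$. This requires carefully tracing through the tensor-product manipulations, using both the compactness of $X$ and the fact that $\cC'$ is relatively compactly generated over $\cE$, so that compact objects of $\un{\Hom}_{\cE}^{\dual}(\cC,\cE)\tens{\cE}\cC'$ are indeed built from compact objects on each factor.
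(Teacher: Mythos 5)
Your overall strategy — produce $H$ from the compact witness $X$, then derive part (ii) by observing that $H \circ \hat{\cY}_{\cD\tens{\cE}\cC}$ agrees with $(\hat{\cY}_{\cD}\tens{\cE}\id_{\cC'})\circ(\id_{\cD}\tens{\cE}F)$ and therefore descends to Calkin quotients — is the right shape, and your treatment of part (ii) matches the paper. But the mechanism you propose for producing $H$ in part (i) has a genuine gap: the claim that $\Phi = (\hat{\cY}_{\cD}\tens{\cE}\id_{\cC'})\circ(\id_{\cD}\tens{\cE}F)$ carries $\omega_1$-compact objects of $\cD\tens{\cE}\cC$ to compact objects of $\Ind(\cD^{\omega_1})\tens{\cE}\cC'$ is false in general. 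Take $\cE=\cC=\cC'=\cD=\Sp$ and $F=\id$ (which is trace-class since $\Sp$ is compact in $\Cat_{\st}^{\dual}$): then $\Phi$ is just $\hat{\cY}_{\Sp}:\Sp\to\Ind(\Sp^{\omega_1})$, and for a countable but infinite spectrum $x$, $\hat{\cY}(x)=\inddlim_n x_n$ is decidedly not a compact object of $\Ind(\Sp^{\omega_1})$ — the compacts there are exactly $\cY(y)$, $y\in\Sp^{\omega_1}$. The lemma is still trivially true in this case (take $H=\id$), which already shows that $H$ cannot be characterized as an Ind-extension of $\Phi$ restricted to $\omega_1$-compacts; its only compatibility with $\Phi$ is the weaker identity $H\circ\hat{\cY}=\Phi$. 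The auxiliary claim that the compact witness $X\in(\un{\Hom}_{\cE}^{\dual}(\cC,\cE)\tens{\cE}\cC')^{\omega}$ can be expressed as a retract of a finite colimit of elementary tensors $\alpha\boxtimes c'$ is also unjustified — $\un{\Hom}_{\cE}^{\dual}(\cC,\cE)$ need not be (relatively) compactly generated, and when $\cE$ is not compactly generated there is no reason for compacts of such a relative tensor product to be built from compacts on each factor.

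The paper routes around the compactness problem entirely. Rather than trying to extend $\Phi$ along $\cY$, it constructs $H$ as a composition of two manifestly strongly continuous functors: $\Ind((\cD\tens{\cE}\cC)^{\omega_1})\xto{\id\boxtimes\tilde{F}}\Ind((\cD\tens{\cE}\cC)^{\omega_1})\otimes\un{\Hom}_{\cE}^{\dual}(\cC,\cE)\tens{\cE}\cC'\xto{G\boxtimes\id}\Ind(\cD^{\omega_1})\tens{\cE}\cC'$, where $\tilde{F}$ is the compact witness (so inserting it is strongly continuous) and $G$ is obtained from the $\omega_1$-strongly continuous composition $\Ind((\cD\tens{\cE}\cC)^{\omega_1})\otimes\un{\Hom}_{\cE}^{\dual}(\cC,\cE)\xto{\colim\boxtimes\id}\cD\tens{\cE}\cC\otimes\un{\Hom}_{\cE}^{\dual}(\cC,\cE)\to\cD$ by applying the adjunction of \cite[Proposition 1.16]{E25} (with $\Ind((-)^{\omega_1})$ right adjoint to the inclusion). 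At no point does one need to show that anything preserves $\omega_1$-compacts. If you want to rescue your approach, the fix is precisely to stop trying to land $\Phi$ in compacts and instead invoke the universal property of $\Ind((-)^{\omega_1})$, which accepts $\omega_1$-strongly continuous functors rather than compact-preserving ones.
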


\begin{proof}
	We prove \ref{diagonal_arrow_for_Inds}. The left vertical functor in \eqref{eq:diagonal_arrow_for_Inds} is induced by the strongly continuous functor
	$\Ind(\cD^{\omega_1})\tens{\cE}\cC\to\Ind((\cD\tens{\cE}\cC)^{\omega_1}),$ which corresponds to the $\omega_1$-strongly continuous functor
	\begin{equation*}
		\Ind(\cD^{\omega_1})\tens{\cE}\cC\xto{\colim\boxtimes\id}\cD\tens{\cE}\cC.
	\end{equation*}
	by adjunction from \cite[Proposition 1.89]{E24}. Similarly for the right vertical functor in \eqref{eq:diagonal_arrow_for_Inds}. Now, choose a right trace-class witness for $F,$ given by an object $\tilde{F}\in(\un{\Hom}_{\cE}^{\dual}(\cC,\cE)\tens{\cE}\cC')^{\omega}.$ We define  
	\begin{equation*}
	G:\Ind((\cD\tens{\cE}\cC)^{\omega_1})\otimes\un{\Hom}_{\cE}^{\dual}(\cC,\cE)\to \Ind(\cD^{\omega_1})
	\end{equation*} 
	to be the strongly continuous $\cE^{mop}$-linear functor which corresponds by adjunction from \cite[Proposition 1.16]{E25} to the $\omega_1$-strongly continuous composition
	\begin{equation*}
		\Ind((\cD\tens{\cE}\cC)^{\omega_1})\otimes\un{\Hom}_{\cE}^{\dual}(\cC,\cE)\xto{\colim\boxtimes\id}\cD\tens{\cE}\cC\otimes\un{\Hom}_{\cE}^{\dual}(\cC,\cE)\to \cD.
	\end{equation*}
	We obtain the required strongly continuous functor
	\begin{equation*}
		H:\Ind((\cD\tens{\cE}\cC)^{\omega_1})\xto{\id\boxtimes\tilde{F}} \Ind((\cD\tens{\cE}\cC)^{\omega_1})\otimes\un{\Hom}_{\cE}^{\dual}(\cC,\cE)\tens{\cE}\cC'\xto{G\boxtimes\id} \Ind(\cD^{\omega_1})\tens{\cE}\cC'.
	\end{equation*}
	By construction, the diagram \eqref{eq:diagonal_arrow_for_Inds} naturally commutes.
	
	To prove \ref{diagonal_arrow_for_Calkins}, note that the following square commutes:
	\begin{equation*}
		\begin{tikzcd}
			\cD\tens{\cE}\cC\ar[r, "\id\boxtimes F"]\ar[d, "\hat{\cY}"] & \cD\tens{\cE}\cC'\ar[d, "\hat{\cY}\boxtimes\id"]\\
			\Ind((\cD\tens{\cE}\cC)^{\omega_1})\ar[r, "H"] & \Ind(\cD^{\omega_1})\tens{\cE}\cC'.
		\end{tikzcd}
	\end{equation*}
	Hence, the functor $H$ induces the required functor $\bar{H}:\Calk_{\omega_1}(\cD\tens{\cE}\cC)\to\Calk_{\omega_1}(\cD)\tens{\cE}\cC'.$ By construction, the diagram \eqref{eq:diagonal_arrow_for_Calkins} naturally commutes.
\end{proof}


\begin{proof}[Proof of Theorem \ref{th:morphisms_in_Mot^loc_via_internal_Hom}] We prove \ref{internal_Hom_into_Calk}. First we assume that $\cE$ is only $\bE_1$-monoidal. By Theorem \ref{th:hat_Y_for_Calkin} the category $\Calk_{\omega_1}(\cD)$ is formally $\omega_1$-injective. Note that $\cU_{\loc}(\cD)\cong \Omega \cU_{\loc}(\Calk_{\omega_1}(\cD))$ in $\Mot^{\loc}_{\cE}.$ We replace the category $\Calk_{\omega_1}(\cD)$ with an arbitrary formally $\omega_1$-injective $\cD'\in\Cat_{\cE}^{\dual}.$ Then the category $\un{\Hom}_{\cE}^{\dual}(\cC,\Calk_{\omega_1}(\cD'))$ is compactly generated by Corollary \ref{cor:internal_Hom_to_functors_commuting_with_countable_limits}, which in particular gives the compact generation of $\un{\Hom}_{\cE}^{\dual}(\cC,\Calk_{\omega_1}^2(\cD))$ for any $\cD\in\Cat_{\cE}^{\dual}.$ We will prove the following generalization of the isomorphisms \eqref{eq:morphisms_in_Mot^loc_via_Calk^2}:
\begin{multline}\label{eq:morphisms_in_Mot^loc_via_Calk_of_formally_omega_1_injective}
\Omega K(\Fun_{\cE}^{LL}(\cC,\Calk_{\omega_1}(\cD')))\xto{\sim} K^{\cont}(\un{\Hom}_{\cE}^{\dual}(\cC,\cD'))\\
\cong \Hom_{\Mot^{\loc}_{\cE}}(\cU_{\loc}(\cC),\cU_{\loc}(\cD')),
\end{multline}	
where $\cC\in\Cat_{\cE}^{\dual}$ is assumed to be $\omega_1$-compact (and again $\cD'\in\Cat_{\cE}^{\dual}$ is assumed to be formally $\omega_1$-injective).

By Corollary \ref{cor:internal_Hom_to_functors_commuting_with_countable_limits} we have a short exact sequence
\begin{equation*}
0\to \un{\Hom}_{\cE}^{\dual}(\cC,\cD')\to \Ind((\cC^{\vee}\tens{\cE}\cD')^{\omega_1})\to \Ind(\Fun_{\cE}^{LL}(\cC,\Calk_{\omega_1}(\cD')))\to 0. 
\end{equation*} 
This gives the first isomorphism in \eqref{eq:morphisms_in_Mot^loc_via_Calk_of_formally_omega_1_injective}. It remains to prove that the map 
\begin{equation}\label{eq:K_theory_of_functors_to_Calk_of_formally_omega_1_injective}
K(\Fun_{\cE}^{LL}(\cC,\Calk_{\omega_1}(\cD')))\to \Hom_{\Mot^{\loc}_{\cE}}(\cU_{\loc}(\cC),\cU_{\loc}(\Calk_{\omega_1}(\cD')))
\end{equation}
is an isomorphism. For this we will need some reductions.
	
First, the same argument as in \cite[Proof of Theorem C.6]{E24} shows that there is a short exact sequence in $\Cat_{\cE}^{\dual}$ of the form
\begin{equation*}
	0\to\cC\to\cC'_1\to\cC'_2\to 0,
\end{equation*}
where $\cC'_1$ and $\cC'_2$ are in $(\Cat_{\cE}^{\cg})^{\omega_1}.$  Hence, we may and will assume that $\cC$ is relatively compactly generated over $\cE.$ 
	
Then, by Proposition \ref{prop:resolution_by_nuclear} there is a short exact sequence in $\Cat_{\cE}^{\cg}$ of the form
\begin{equation*}
		0\to\cC'_3\to\cC'_4\to\cC\to 0,
\end{equation*}
where $\cC'_3$ and $\cC'_4$ are basic nuclear over $\cE.$ Hence, we may and will assume that $\cC$ is basic nuclear over $\cE.$
	
Let $\hat{\cY}(\cC)=\inddlim[n\in\N]\cC_n,$ where $\cC_n\in(\Cat_{\cE}^{\cg})^{\omega_1}$ and each functor $\cC_n\to\cC_{n+1}$ is right trace-class over $\cE.$ We put $\cT=\Calk_{\omega_1}(\cD).$ We know that for each $n$ the category $\un{\Hom}_{\cE}^{\dual}(\cC_n,\cT)$ is compactly generated. By Theorem \ref{th:morphisms_in_Mot^loc_via_limits} we obtain an isomorphism
\begin{equation*}
	\Hom_{\Mot_{\cE}^{\loc}}(\cU_{\loc}(\cC),\cU_{\loc}(\cT))\cong \prolim[n]K(\Fun_{\cE}^{LL}(\cC_n,\cT)).
\end{equation*}
Hence, we need to prove that the map
\begin{equation}\label{eq:unusual_continuity_of_K_theory}
	K(\Fun_{\cE}^{LL}(\cC,\cT))\to \prolim[n]K(\Fun_{\cE}^{LL}(\cC_n,\cT))
\end{equation}
is an isomorphism. We put $\cA_n=\Fun_{\cE}^{LL}(\cC_n,\cT).$ To be able to apply \cite[Theorem 6.1]{E25}, we need to prove that the functor
	\begin{equation}\label{eq:lim_of_Ind_to_lim_of_Calk}
		\prolim[n]\Ind(\cA_n)^{\omega_1}\to\prolim[n]\Calk_{\omega_1}(\cA_n)
	\end{equation}
	(between small stable categories) is a homological epimorphism. To see this, first note that for any $\cD''\in\Cat_{\cE}^{\dual}$ we have an equivalence in $\Pro(\Cat_{\st}^{\dual}):$
	\begin{equation*}
		\proolim[n]\un{\Hom}_{\cE}^{\dual}(\cC_n,\cE)\tens{\cE}\cD''\xto{\sim} \proolim[n]\un{\Hom}_{\cE}^{\dual}(\cC_n,\cD'').
	\end{equation*} 
	It follows from the assumption that the functors $\cC_n\to\cC_{n+1}$ are right trace-class over $\cE.$ Using Lemma \ref{lem:diagonal_arrow_for_Inds_and_Calkins} \ref{diagonal_arrow_for_Inds}, we see that we have an equivalence in $\Pro(\Cat^{\perf}):$
	\begin{equation*}
		\proolim[n](\un{\Hom}_{\cE}^{\dual}(\cC_n,\cE)\tens{\cE}\Ind(\cT^{\omega_1}))^{\omega}\xto{\sim} \proolim[n](\un{\Hom}_{\cE}^{\dual}(\cC_n,\cE)\tens{\cE}\cT)^{\omega_1}
	\end{equation*} 
	
	We obtain equivalences
	\begin{multline*}
		\prolim[n]\Ind(\cA_n)^{\omega_1}\simeq\prolim[n]\un{\Hom}_{\cE}^{\dual}(\cC_n,\cT)^{\omega_1}\simeq \prolim[n](\un{\Hom}_{\cE}^{\dual}(\cC_n,\cE)\tens{\cE}\cT)^{\omega_1}\\
		\simeq \prolim[n](\un{\Hom}_{\cE}^{\dual}(\cC_n,\cE)\tens{\cE}\Ind(\cT^{\omega_1}))^{\omega}
		\simeq\prolim[n]\un{\Hom}_{\cE}^{\dual}(\cC_n,\Ind(\cT^{\omega_1}))^{\omega}\\
		\simeq\prolim[n]\Fun_{\cE}^{LL}(\cC_n,\Ind(\cT^{\omega_1}))\simeq\Fun_{\cE}^{LL}(\cC,\Ind(\cT^{\omega_1}))\simeq(\cC^{\vee}\tens{\cE}\cT)^{\omega_1}. 
	\end{multline*}
	Similarly, using Lemma \ref{lem:diagonal_arrow_for_Inds_and_Calkins} \ref{diagonal_arrow_for_Calkins} we obtain equivalences
	\begin{multline*}
		\prolim[n]\Calk_{\omega_1}(\cA_n)\simeq \prolim[n]\Calk_{\omega_1}^{\cont}(\un{\Hom}_{\cE}^{\dual}(\cC_n,\cT))\simeq
		\prolim[n]\Calk_{\omega_1}^{\cont}(\un{\Hom}_{\cE}^{\dual}(\cC_n,\cE)\tens{\cE}\cT)\\
		\simeq \prolim[n](\un{\Hom}_{\cE}^{\dual}(\cC_n,\cE)\tens{\cE}\Calk_{\omega_1}(\cT))^{\omega}\simeq \prolim[n]\un{\Hom}_{\cE}^{\dual}(\cC_n,\Calk_{\omega_1}(\cT))^{\omega}\\
		\simeq \prolim[n]\Fun_{\cE}^{LL}(\cC_n,\Calk_{\omega_1}(\cT))\simeq\Fun_{\cE}^{LL}(\cC,\Calk_{\omega_1}(\cT)).
	\end{multline*}
	The functor \eqref{eq:lim_of_Ind_to_lim_of_Calk} is now identified with the functor
	\begin{equation*}
		(\cC^{\vee}\tens{\cE}\cT)^{\omega_1}\to \Fun_{\cE}^{LL}(\cC,\Calk_{\omega_1}(\cT)),
	\end{equation*}
	which is a homological epimorphism by Corollary \ref{cor:internal_Hom_to_functors_commuting_with_countable_limits}. This also gives an equivalence
	\begin{equation*}
		\prolim[n]^{\dual}\Ind(\cA_n)\simeq\un{\Hom}_{\cE}^{\dual}(\cC,\cT)\simeq\Ind(\Fun_{\cE}^{LL}(\cC,\cT)).
	\end{equation*}
	Applying \cite[Theorem 6.1]{E25} and by the commutation of $K$-theory with products in $\Cat^{\perf}$ \cite[Theorem 1.3]{KW19}, we conclude that the map \eqref{eq:unusual_continuity_of_K_theory} is an isomorphism. This proves the isomorphisms \eqref{eq:morphisms_in_Mot^loc_via_Calk_of_formally_omega_1_injective}.
	
	Now suppose that that the base category $\cE$ is symmetric monoidal, and let $\cC,\cD\in\Cat_{\cE}^{\dual},$ where $\cC$ is $\omega_1$-compact. Using again the formal $\omega_1$-injectivity of the category $\Calk_{\omega_1}(\cD)$ and arguing as above, we obtain an isomorphism 
	\begin{equation*}
	\Omega \cU_{\loc}(\Ind(\Fun_{\cE}^{LL}(\cC,\Calk_{\omega_1}^2(\cD)))\xto{\sim} \cU_{\loc}(\un{\Hom}_{\cE}^{\dual}(\cC,\Calk_{\omega_1}(\cD)))
	\end{equation*}
	in $\Mot^{\loc}_{\cE},$ which is the first isomorphism in \eqref{eq:internal_Hom_in_Mot^loc_via_Calk^2}. It remains to prove that the map
	\begin{equation}\label{eq:internal_Hom_in_Mot^loc_via_Calk}
	 \cU_{\loc}(\un{\Hom}_{\cE}^{\dual}(\cC,\Calk_{\omega_1}(\cD)))\to \un{\Hom}_{\Mot^{\loc}_{\cE}}(\cU_{\loc}(\cC),\cU_{\loc}(\Calk_{\omega_1}(\cD)))
	\end{equation}
	is an isomorphism in $\Mot^{\loc}_{\cE}.$
	
	By Proposition \ref{prop:Hom^dual_into_formally_omega_1_injective} (and by Theorem \ref{th:hat_Y_for_Calkin}) the category $\un{\Hom}_{\cE}^{\dual}(\cC,\Calk_{\omega_1}(\cD))$ is formally $\omega_1$-injective. Using the second isomorphism from \eqref{eq:morphisms_in_Mot^loc_via_Calk_of_formally_omega_1_injective}, for any $\cC'\in(\Cat_{\cE}^{\dual})^{\omega_1}$ we obtain
	\begin{multline*}
	\Hom_{\Mot^{\loc}_{\cE}}(\cU_{\loc}(\cC'),\cU_{\loc}(\un{\Hom}_{\cE}^{\dual}(\cC,\Calk_{\omega_1}(\cD))))\\
	\cong K^{\cont}(\un{\Hom}_{\cE}^{\dual}(\cC',\un{\Hom}_{\cE}^{\dual}(\cC,\Calk_{\omega_1}(\cD))))\cong K^{\cont}(\un{\Hom}_{\cE}^{\dual}(\cC'\tens{\cE}\cC,\Calk_{\omega_1}(\cD)))\\
	\cong \Hom_{\Mot^{\loc}_{\cE}}(\cU_{\loc}(\cC'\tens{\cE}\cC),\cU_{\loc}(\Calk_{\omega_1}(\cD)))\\
	\cong \Hom_{\Mot^{\loc}_{\cE}}(\cU_{\loc}(\cC')\otimes\cU_{\loc}(\cC),\cU_{\loc}(\Calk_{\omega_1}(\cD)))\\
	\cong \Hom_{\Mot^{\loc}_{\cE}}(\cU_{\loc}(\cC'),\un{\Hom}_{\Mot^{\loc}_{\cE}}(\cU_{\loc}(\cC),\cU_{\loc}(\Calk_{\omega_1}(\cD)))).
	\end{multline*}
	It follows that the map \eqref{eq:internal_Hom_in_Mot^loc_via_Calk} is an isomorphism since the category $\Mot^{\loc}_{\cE}$ is generated by the objects $\cU_{\loc}(\cC'),$ $\cC'\in(\Cat_{\cE}^{\dual})^{\omega_1}.$ 
	
	Next, we deduce \ref{internal_Hom_from_proper} from \ref{internal_Hom_into_Calk}. Again, we first assume that $\cE$ is only $\bE_1$-monoidal. By assumption, $\cC\in\Cat_{\cE}^{\dual}$ is proper and $\omega_1$-compact. By \cite[Corollary 3.14 and its proof]{E25}, the category $\un{\Hom}_{\cE}^{\dual}(\cC,\Calk_{\omega_1}(\cD))$ is compactly generated and we have a short exact sequence
	\begin{equation*}
	0\to \un{\Hom}_{\cE}^{\dual}(\cC,\cD)\to \Ind((\cC^{\vee}\tens{\cE}\cD)^{\omega_1})\to \Ind(\Fun_{\cE}^{LL}(\cC,\Calk_{\omega_1}(\cD)))\to 0.
	\end{equation*}
	This gives an isomorphism
	\begin{equation*}
	\Omega K(\Fun_{\cE}^{LL}(\cC,\Calk_{\omega_1}(\cD)))\xto{\sim} K^{\cont}(\un{\Hom}_{\cE}^{\dual}(\cC,\cD)).
	\end{equation*}
	As a special case of \ref{internal_Hom_from_proper} we obtain
	\begin{multline*}
	\Omega K(\Fun_{\cE}^{LL}(\cC,\Calk_{\omega_1}(\cD)))\cong \Omega K^{\cont}(\un{\Hom}_{\cE}^{\dual}(\cC,\Calk_{\omega_1}(\cD)))\\
	\cong \Hom_{\Mot^{\loc}_{\cE}}(\cU_{\loc}(\cC),\cU_{\loc}(\cD)),
	\end{multline*}
	as required.
	
	In the case when $\cE$ is symmetric monoidal we similarly deduce from \ref{internal_Hom_into_Calk} the assertions about the internal $\Hom$ in $\Mot_{\cE}^{\loc}.$ This proves the theorem.
\end{proof}


\subsection{Equivalence between $\Mot_{\cE}^{\loc}$ and $\Mot_{\cE,\omega_1}^{\loc}$}
\label{ssec:equivalence_Mot^loc_omega_1_and_Mot^loc}



The following result is a generalization of a theorem of Ramzi-Sosnilo-Winges \cite[Theorem 6.1]{RSW25} to the case of an arbitrary rigid base category. 

\begin{theo}\label{th:equivalence_Mot^loc_omega_1_and_Mot^loc}
Let $\cE$ be a rigid $\bE_1$-monoidal category. The natural functor $\Mot_{\cE,\omega_1}^{\loc}\to\Mot_{\cE}^{\loc}$ is an equivalence.
\end{theo}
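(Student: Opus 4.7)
The plan follows the strategy of Ramzi--Sosnilo--Winges \cite[Theorem 6.1]{RSW25}, who established the case $\cE=\Sp$, using our Theorem \ref{th:morphisms_in_Mot^loc_via_internal_Hom} as the key input in place of its special case in loc. cit. Since every $\omega$-finitary localizing invariant is a fortiori $\omega_1$-finitary, the universal property of $\cU_{\loc,\omega_1}$ yields a canonical $\omega_1$-continuous exact functor $\Phi:\Mot^{\loc}_{\cE,\omega_1}\to\Mot^{\loc}_{\cE}$ with $\Phi\circ\cU_{\loc,\omega_1}\simeq\cU_{\loc}$; the plan is to show that $\Phi$ is an equivalence.

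First I would verify essential surjectivity onto a generating subcategory: since $\Cat_{\cE}^{\dual}$ is $\omega_1$-accessible by \cite[Theorem 1.13]{E25} and $\cU_{\loc}$ preserves $\omega_1$-filtered colimits, the objects $\cU_{\loc}(\cC)$ for $\cC\in(\Cat_{\cE}^{\dual})^{\omega_1}$ generate $\Mot^{\loc}_{\cE}$ under colimits, and these lie in the image of $\Phi$. Since $\Phi$ is colimit-preserving between presentable stable categories, the remaining task is fully faithfulness, which reduces to showing that for every pair $\cC,\cD\in(\Cat_{\cE}^{\dual})^{\omega_1}$ the induced comparison map
\begin{equation*}
\Hom_{\Mot^{\loc}_{\cE,\omega_1}}(\cU_{\loc,\omega_1}(\cC),\cU_{\loc,\omega_1}(\cD))\to\Hom_{\Mot^{\loc}_{\cE}}(\cU_{\loc}(\cC),\cU_{\loc}(\cD))
\end{equation*}
is an isomorphism.

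By Theorem \ref{th:morphisms_in_Mot^loc_via_internal_Hom}\ref{internal_Hom_into_Calk} the right-hand side is canonically identified with $\Omega^2 K(\Fun_{\cE}^{LL}(\cC,\Calk_{\omega_1}^2(\cD)))$. The crux of the argument is to establish the parallel identification of the left-hand side with the same spectrum, compatibly with the displayed map. For this I would audit the proof of Theorem \ref{th:morphisms_in_Mot^loc_via_internal_Hom}\ref{internal_Hom_into_Calk} and verify that every reduction step uses only the $\omega_1$-finitary nature of the localizing invariant in play, so that it transports verbatim to $\cU_{\loc,\omega_1}$. The key ingredients are: the short exact sequence $0\to\cC\to\cC'_1\to\cC'_2\to 0$ with $\cC'_i\in(\Cat_{\cE}^{\cg})^{\omega_1}$ (from the proof of loc. cit.); the length-one resolution of Proposition \ref{prop:resolution_by_nuclear} by basic nuclear $\cE$-modules, whose terms are themselves $\omega_1$-compact; the Hom-formula of Theorem \ref{th:morphisms_in_Mot^loc_via_limits} for basic nuclear sources, which is a countable inverse limit of $K$-theory spectra; and the $K$-theoretic limit comparison \cite[Theorem 6.1]{E25} for a pro-system indexed by the countable ordinal $\N$. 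Each of these operations is compatible with $\omega_1$-filtered colimits and requires the target of the localizing invariant to be only $\omega_1$-continuous, so rerunning the argument with $\cU_{\loc,\omega_1}$ in place of $\cU_{\loc}$ yields
\begin{equation*}
\Omega^2 K(\Fun_{\cE}^{LL}(\cC,\Calk_{\omega_1}^2(\cD)))\xto{\sim}\Hom_{\Mot^{\loc}_{\cE,\omega_1}}(\cU_{\loc,\omega_1}(\cC),\cU_{\loc,\omega_1}(\cD))
\end{equation*}
compatibly with $\Phi$, which concludes the proof.

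The main obstacle is precisely this bookkeeping: one must confirm, step by step, that every appeal to a colimit-preservation property in the proof of Theorem \ref{th:morphisms_in_Mot^loc_via_internal_Hom}\ref{internal_Hom_into_Calk} is of the $\omega_1$-filtered variety, and that the Calkin constructions, dualizable internal $\Hom$'s, and nuclear resolutions all preserve $\omega_1$-compactness of the generators throughout. The ``formal'' ingredients--the compact assembly of $\Cat_{\cE}^{\cg}$, the formal $\omega_1$-injectivity of Calkin categories (Theorem \ref{th:hat_Y_for_Calkin}), and the results on relative internal $\Hom$ into formally $\omega_1$-injective targets from Subsection \ref{ssec:internal_injectivity_of_Calkin}--are intrinsic to the small categories involved and require no modification.
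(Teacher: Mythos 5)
Your proposal differs from the paper's argument in a way that leaves a genuine gap. Two issues.

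\textbf{The reduction to fully faithfulness on $\omega_1$-compact pairs is circular.} You assert that since $\Phi$ is colimit-preserving, fully faithfulness reduces to comparing the Hom spectra $\Hom_{\Mot^{\loc}_{\cE,\omega_1}}(\cU_{\loc,\omega_1}(\cC),\cU_{\loc,\omega_1}(\cD))\to\Hom_{\Mot^{\loc}_{\cE}}(\cU_{\loc}(\cC),\cU_{\loc}(\cD))$ for $\cC,\cD\in(\Cat_{\cE}^{\dual})^{\omega_1}$. That reduction is not automatic. Writing a general $Y\in\Mot^{\loc}_{\cE,\omega_1}$ as an $\omega_1$-filtered colimit of $\omega_1$-compacts and trying to pass the comparison through that colimit requires that $\Hom_{\Mot^{\loc}_{\cE}}(\cU_{\loc}(\cC),-)$ also commute with $\omega_1$-filtered colimits, i.e.\ that $\cU_{\loc}(\cC)$ be $\omega_1$-compact in $\Mot^{\loc}_{\cE}$. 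But that $\omega_1$-compactness is equivalent to the theorem one is trying to prove; you cannot assume it. (The same problem appears in the variant argument where one tries to show $\cU_{\loc,\omega_1}$ is $\omega$-finitary by analyzing $\Hom(\cU_{\loc,\omega_1}(\cC),\cU_{\loc,\omega_1}(-))$: one then needs the $K$-theoretic formula $\Omega^2 K(\Fun_{\cE}^{LL}(\cC,\Calk_{\omega_1}^2(-)))$ to be $\omega$-finitary in $\cD$, which is precisely the hard analytic content of \cite[Section 6]{RSW25} and is not for free.)

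\textbf{The missing ingredient is the splitting motives.} The paper's proof does not re-run the proof of Theorem \ref{th:morphisms_in_Mot^loc_via_internal_Hom} inside $\Mot^{\loc}_{\cE,\omega_1}$ (which would indeed require re-auditing steps that invoke compactness of $\cU_{\loc}(\cE)$ in $\Mot^{\loc}_{\cE}$, and whose $\omega_1$-finitary analogue is not transparent). Instead it introduces the $\omega_1$-finitary splitting motives $\Mot^{\splt,\dual}_{\cE,\omega_1}$ and uses Proposition \ref{prop:morphisms_in_Mot^split}, which gives an unconditional identification $\Hom_{\Mot^{\splt,\dual}_{\cE,\omega_1}}(\cU_{\splt,\omega_1}^{\dual}(\cC),\cU_{\splt,\omega_1}^{\dual}(\cD))\cong K_{\geq 0}(\Fun_{\cE}^{LL}(\cC,\cD))$. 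Combined with Theorem \ref{th:morphisms_in_Mot^loc_via_internal_Hom}\ref{internal_Hom_into_Calk}, this produces the chain
\[
\Omega^{\infty}K(\Fun_{\cE}^{LL}(\cC,\Calk_{\omega_1}^2(\cD)))\xto{\sim} \Map_{\Mot^{\splt,\dual}_{\cE,\omega_1}}(\cU_{\splt,\omega_1}(\cC),\cU_{\splt,\omega_1}(\Calk_{\omega_1}^2\cD))\xto{\sim}\Map_{\Mot^{\loc}_{\cE}}(\cU_{\loc}(\cC),\cU_{\loc}(\Calk_{\omega_1}^2(\cD))),
\]
and this is exactly the input that the RSW argument (where $\Mot^{\loc}_{\cE}$ and $\Mot^{\loc}_{\cE,\omega_1}$ are realized as accessible localizations of $\Mot^{\splt,\dual}_{\cE,\omega_1}$, avoiding the circularity) needs to run. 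Your proposal never mentions the splitting motives, which is the mechanism that makes the whole thing work; without it you are left with the circularity above, and the ``re-audit'' strategy is both harder than advertised and unnecessary.
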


The argument is basically the same as in \cite[Proof of Theorem 6.1]{RSW25}, and it relies on Theorem \ref{th:morphisms_in_Mot^loc_via_internal_Hom}. We only indicate the necessary changes when dealing with an arbitrary rigid base, which is not necessarily compactly generated.

We need some notation and terminology. For an accessible additive $\infty$-category $\cT$ and an accessible functor $F:\Cat_{\cE}^{\dual}\to\cT,$ we say that $F$ is a {\it splitting} invariant if $F$ is additive in finite semi-orthogonal decompositions. Here we use the same terminology as in \cite{RSW25} for convenience; such functors are called additive invariants in \cite{BGT}. 

As usual, if $\kappa$ is a regular cardinal such that $\cT$ has $\kappa$-filtered colimits, then $F$ is called $\kappa$-finitary if $F$ commutes with $\kappa$-filtered colimits. We denote by 
\begin{equation*}
\cU_{\splt,\omega_1}^{\dual}:\Cat_{\cE}^{\dual}\to \Mot^{\splt,\dual}_{\cE,\omega_1}
\end{equation*} 
the universal splitting $\omega_1$-finitary invariant with values in a presentable stable category. We have
\begin{equation*}
\Mot^{\splt,\dual}_{\omega_1}\simeq \Fun^{\splt}((\Cat_{\cE}^{\dual})^{\omega_1,op},\Sp),
\end{equation*}
where the right hand-side is the category of partially defined contravariant splitting invariants. Here we of course use the $\omega_1$-presentability of $\Cat_{\cE}^{\dual}.$ We also put $\Mot^{\splt,\dual}_{\omega_1}=\Mot^{\splt,\dual}_{\Sp,\omega_1},$ and denote by
\begin{equation*}
\cU_{\splt,\omega_1}:\Cat^{\perf}\to \Mot^{\splt}_{\omega_1}
\end{equation*}
the universal splitting $\omega_1$-finitary invariant of small idempotent-complete categories with values in a presentable stable category.

We will need the following statement, which is standard but probably did not appear in the literature in this generality.

\begin{prop}\label{prop:morphisms_in_Mot^split}
Let $\cE$ be a rigid $\bE_1$-monoidal category. Then for $\cC\in(\Cat_{\cE}^{\dual})^{\omega_1}$ and for $\cD\in\Cat_{\cE}^{\dual}$ we have a natural isomorphism
\begin{equation*}
K_{\geq 0}(\Fun_{\cE}^{LL}(\cC,\cD))\xto{\sim} \Hom_{\Mot_{\cE,\omega_1}^{\splt,\dual}}(\cU_{\splt,\omega_1}^{\dual}(\cC),\cU_{\splt,\omega_1}^{\dual}(\cD)).
\end{equation*}
\end{prop}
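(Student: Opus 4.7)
The plan is to adapt the classical Blumberg-Gepner-Tabuada mapping-spectrum formula for splitting motives \cite{BGT} to the relative $\omega_1$-finitary setting. Using $\omega_1$-compactness of $\cC$, the functor category $\Fun_{\cE}^{LL}(\cC,\cD)$ is identified with the idempotent-complete stable category $(\cC^{\vee}\tens{\cE}\cD)^{\omega_1}$, so that $K_{\geq 0}$ is well-defined and functorial in $\cD$. The aim is to realize the right hand side through the universal property of $\cU^{\dual}_{\splt,\omega_1}$ together with the identification $\Mot^{\splt,\dual}_{\cE,\omega_1}\simeq\Fun^{\splt}((\Cat_{\cE}^{\dual})^{\omega_1,op},\Sp)$ recalled in the text, under which $\Hom(\cU^{\dual}_{\splt,\omega_1}(\cC),\cU^{\dual}_{\splt,\omega_1}(\cD))$ becomes the value at $\cC$ of the splitting presheaf corresponding to $\cU^{\dual}_{\splt,\omega_1}(\cD)$.

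The first step is to verify that, for each fixed $\cC\in(\Cat_{\cE}^{\dual})^{\omega_1}$, the assignment $\cD\mapsto K_{\geq 0}(\Fun_{\cE}^{LL}(\cC,\cD))$ is an $\omega_1$-finitary splitting invariant $\Cat_{\cE}^{\dual}\to\Sp$. This will combine three ingredients: $\cC^{\vee}\tens{\cE}-$ is colimit-preserving on $\Cat_{\cE}^{\dual}$ and preserves semi-orthogonal decompositions; $(-)^{\omega_1}:\Cat_{\cE}^{\dual}\to\Cat^{\perf}$ preserves $\omega_1$-filtered colimits and SODs (by the $\omega_1$-presentability of $\Cat_{\cE}^{\dual}$); and $K_{\geq 0}$ is the universal splitting invariant on $\Cat^{\perf}$, hence additive on SODs and $\omega$-continuous. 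Contravariant additivity in $\cC$ on SODs $\cC=\la\cC_1,\cC_2\ra$ will follow from the reversed SOD $\cC^{\vee}=\la\cC_2^{\vee},\cC_1^{\vee}\ra$ together with the same inputs. Assembling these produces a functor
\begin{equation*}
T:\Cat_{\cE}^{\dual}\to\Fun^{\splt}((\Cat_{\cE}^{\dual})^{\omega_1,op},\Sp)\simeq\Mot^{\splt,\dual}_{\cE,\omega_1},\quad \cD\mapsto K_{\geq 0}(\Fun_{\cE}^{LL}(-,\cD)),
\end{equation*}
which is itself an $\omega_1$-finitary splitting invariant; by the universal property it will factor uniquely as $\tilde T\circ\cU^{\dual}_{\splt,\omega_1}$ for an exact $\omega_1$-cocontinuous endofunctor $\tilde T$ of $\Mot^{\splt,\dual}_{\cE,\omega_1}$.

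What remains is to exhibit $\tilde T\simeq\mathrm{id}$, which will give the proposition. I will construct the canonical natural transformation from $K_{\geq 0}(\Fun_{\cE}^{LL}(\cC,\cD))$ to $\Hom_{\Mot^{\splt,\dual}_{\cE,\omega_1}}(\cU^{\dual}_{\splt,\omega_1}(\cC),\cU^{\dual}_{\splt,\omega_1}(\cD))$ by sending $[F]\in\pi_0$ to $[\cU^{\dual}_{\splt,\omega_1}(F)]$ and extending to higher $K$-groups through the Waldhausen $S_\bullet$-construction, using additivity of $\cU^{\dual}_{\splt,\omega_1}$ on SODs inside $\Fun_{\cE}^{LL}(\cC,\cD)$. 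Both sides are exact $\omega_1$-cocontinuous in $\cD$---for the right hand side this uses $\omega_1$-compactness of $\cU^{\dual}_{\splt,\omega_1}(\cC)$ in $\Mot^{\splt,\dual}_{\cE,\omega_1}$, which follows from accessibility of the universal construction---so it suffices to check the transformation on a generating family of $\omega_1$-compact $\cD$. The hard part will be this last check: the plan is to test on $\cD=\Mod\hy A$ with $A\in\Alg_{\bE_1}(\cE)^{\omega_1}$, where $\Fun_{\cE}^{LL}(\cC,\Mod\hy A)\simeq\Rep(A,\cC^{\omega_1})$ has tractable $K$-theory, and to reduce the resulting identity to the absolute Blumberg-Gepner-Tabuada mapping formula in $\Mot^{\splt}$ using the explicit description of $\cU^{\dual}_{\splt,\omega_1}$ on its $\omega_1$-compact generators.
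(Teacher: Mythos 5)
Your proposal takes a genuinely different route from the paper's, and the difference matters: the final step, which you correctly identify as "the hard part," is where the argument as sketched has a real gap, and the paper's method is precisely what is needed to fill it.

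The paper's proof is organized around two adjunctions that descend to motives. First, in the absolute case $\cE=\cC=\Sp$, the pair $\Ind\dashv (-)^\omega$ between $\Cat^{\perf}$ and $\Cat_{\st}^{\dual}$ (using that $(-)^\omega$ preserves SODs and commutes with $\omega_1$-filtered colimits) induces an adjunction between $\Mot^{\splt}_{\omega_1}$ and $\Mot^{\splt,\dual}_{\omega_1}$; this reduces $\Hom_{\Mot^{\splt,\dual}_{\omega_1}}(\cU^{\dual}_{\splt,\omega_1}(\Sp),\cU^{\dual}_{\splt,\omega_1}(\cD))$ to $\Hom_{\Mot^{\splt}_{\omega_1}}(\cU_{\splt,\omega_1}(\Sp^\omega),\cU_{\splt,\omega_1}(\cD^\omega))$, which \cite[Theorem 4.2]{BGMN21} identifies with $K_{\geq 0}(\cD^\omega)$. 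Second, for general $\cE$ and $\omega_1$-compact $\cC$, the functor $\un{\Hom}_{\cE}^{\dual}(\cC,-):\Cat_{\cE}^{\dual}\to\Cat_{\st}^{\dual}$ preserves SODs and $\omega_1$-filtered colimits, hence induces a functor $\Mot^{\splt,\dual}_{\cE,\omega_1}\to\Mot^{\splt,\dual}_{\omega_1}$ with left adjoint $\cU^{\dual}_{\splt,\omega_1}(\cB)\mapsto\cU^{\dual}_{\splt,\omega_1}(\cC\otimes\cB)$; by adjunction this rewrites $\Hom$ out of $\cU^{\dual}_{\splt,\omega_1}(\cC)$ as $\Hom$ out of the unit $\cU^{\dual}_{\splt,\omega_1}(\Sp)$ with target $\cU^{\dual}_{\splt,\omega_1}(\un{\Hom}_{\cE}^{\dual}(\cC,\cD))$, and the absolute case finishes. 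At no point does the paper need to \emph{independently} compute the right-hand side of the proposition; the adjunction \emph{is} the computation.

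Your proposal instead builds the functor $T(\cD)=K_{\geq 0}(\Fun_{\cE}^{LL}(-,\cD))$ into $\Mot^{\splt,\dual}_{\cE,\omega_1}\simeq\Fun^{\splt}((\Cat_{\cE}^{\dual})^{\omega_1,op},\Sp)$, factors it as $\tilde T\circ\cU^{\dual}_{\splt,\omega_1}$, and then tries to show $\tilde T\simeq\id$ by producing the canonical natural transformation $K_{\geq 0}(\Fun_{\cE}^{LL}(\cC,\cD))\to\Hom(\cU^{\dual}_{\splt,\omega_1}(\cC),\cU^{\dual}_{\splt,\omega_1}(\cD))$ and checking it on generators. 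The gap is in that last check: reducing to $\cD=\Mod\hy A$ with $A$ $\omega_1$-compact does simplify the left-hand side to $K_{\geq 0}(\Rep(A,\cC^{\omega_1}))$, but it does not give you a handle on the right-hand side, whose computation is exactly the content of the proposition. Appealing to "the absolute BGT mapping formula" only works once you have reduced to $\Hom$ out of the \emph{unit} motive, and that reduction is precisely what the adjunction $\cU^{\dual}_{\splt,\omega_1}(\cC\otimes-)\dashv\cU^{\dual}_{\splt,\omega_1}(\un{\Hom}_{\cE}^{\dual}(\cC,-))$ accomplishes. Without it your last step is circular: you would need to compute the very mapping spectrum you are trying to identify in order to verify your natural transformation is an equivalence. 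If you want to salvage your route, insert that adjunction as a lemma (together with the $\Ind\dashv(-)^\omega$ adjunction for the absolute case); once you have it, you do not actually need the detour through $T$ and $\tilde T$ at all, and the argument collapses to the paper's.
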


\begin{proof}
We first treat the case $\cE=\cC=\Sp.$ Recall that by \cite[Proposition 1.77]{E24} a semi-orthogonal decomposition $\cC=\la\cC_1,\cC_2\ra$ in $\Cat_{\st}^{\dual}$ induces a semi-orthogonal decomposition $\cC^{\omega}=\la \cC_1^{\omega},\cC_2^{\omega}\ra$ in $\Cat^{\perf}.$ Also, by \cite[Proposition 1.71]{E24} the functor $(-)^{\omega}:\Cat_{\st}^{\dual}\to\Cat^{\perf}$ commutes with filtered colimits, in particular, with $\omega_1$-filtered colimits. Hence, we have a well-defined functor
\begin{equation*}
\Mot^{\splt,\dual}_{\omega_1}\to\Mot^{\splt}_{\omega_1},\quad \cU_{\splt,\omega_1}^{\dual}(\cD)\mapsto \cU_{\splt,\omega_1}(\cD^{\omega}).
\end{equation*}
Its left adjoint is given by
\begin{equation*}
\Mot^{\splt}_{\omega_1}\to\Mot^{\splt,\dual}_{\omega_1},\quad \cU_{\splt,\omega_1}(\cA)\mapsto\cU_{\splt,\omega_1}^{\dual}(\Ind(\cA)).
\end{equation*}
Hence, by \cite[Theorem 4.2]{BGMN21} for $\cD\in\Cat_{\st}^{\dual}$ we obtain
\begin{multline*}
\Hom_{\Mot_{\omega_1}^{\splt,\dual}}(\cU_{\splt,\omega_1}^{\dual}(\Sp),\cU_{\splt,\omega_1}^{\dual}(\cD))\cong \Hom_{\Mot^{\splt}_{\omega_1}}(\cU_{\splt,\omega_1}(\Sp^{\omega}),\cU_{\splt,\omega_1}(\cD^{\omega}))\\
\cong K_{\geq 0}(\cD^{\omega}).
\end{multline*}

Now, if $\cE$ is a rigid $\bE_1$-monoidal category and $\cC\in(\Cat_{\cE}^{\dual})^{\omega_1},$ then the functor $\un{\Hom}_{\cE}^{\dual}(\cC,-):\Cat_{\cE}^{\dual}\to\Cat_{\st}^{\dual}$ preserves finite demi-orthogonal decompositions and commutes with $\omega_1$-filtered colimits. Hence, we have a well-defined functor
\begin{equation*}
\Mot^{\splt,\dual}_{\cE,\omega_1}\to\Mot^{\splt,\dual}_{\omega_1},\quad \cU_{\splt,\omega_1}^{\dual}(\cD)\mapsto \cU_{\splt,\omega_1}^{\dual}(\un{\Hom}_{\cE}^{\dual}(\cC,\cD)).
\end{equation*}
Its left adjoint is given by
\begin{equation*}
\Mot^{\splt,\dual}_{\omega_1}\to\Mot^{\splt,\dual}_{\cE,\omega_1},\quad \cU_{\splt,\omega_1}^{\dual}(\cB)\mapsto \cU_{\splt,\omega_1}^{\dual}(\cC\otimes\cB)
\end{equation*}
Hence, for $\cD\in\Cat_{\cE}^{\dual}$ we obtain
\begin{multline*}
\Hom_{\Mot_{\cE,\omega_1}^{\splt,\dual}}(\cU_{\splt,\omega_1}^{\dual}(\cC),\cU_{\splt,\omega_1}^{\dual}(\cD))\\
\cong \Hom_{\Mot_{\omega_1}^{\splt,\dual}}(\cU_{\splt,\omega_1}^{\dual}(\Sp),\cU_{\splt,\omega_1}^{\dual}(\un{\Hom}_{\cE}^{\dual}(\cC,\cD)))
\cong K_{\geq 0}(\un{\Hom}_{\cE}^{\dual}(\cC,\cD)^{\omega})\\
\cong K_{\geq 0}(\Fun_{\cE}^{LL}(\cC,\cD)).
\end{multline*}	
This proves the proposition.
\end{proof}

\begin{proof}[Proof of Theorem \ref{th:equivalence_Mot^loc_omega_1_and_Mot^loc}]
Let $\cC$ and $\cD$ be dualizable left $\cE$-modules, and suppose that $\cC$ is $\omega_1$-compact in $\Cat_{\cE}^{\dual}.$ By Theorem \ref{th:morphisms_in_Mot^loc_via_internal_Hom} \ref{internal_Hom_into_Calk} and Proposition \ref{prop:morphisms_in_Mot^split} we have equivalences of spaces
\begin{multline*}
\Omega^{\infty}K(\Fun_{\cE}^{LL}(\cC,\Calk_{\omega_1}^2(\cD)))\xto{\sim} \Map_{\Mot^{\splt,\dual}_{\cE,\omega_1}}(\cU_{\splt,\omega_1}(\cC),\cU_{\splt,\omega_1}(\Calk_{\omega_1}^2\cD))\\
\xto{\sim}\Map_{\Mot^{\loc}_{\cE}}(\cU_{\loc}(\cC),\cU_{\loc}(\Calk_{\omega_1}^2(\cD))).
\end{multline*} 
The latter equivalence is the only ingredient needed to generalize the argument of Ramzi-Sosnilo-Winges. We refer to \cite[Section 6 and Appendix B]{RSW25} for details.   
\end{proof}

\section{Properties of the functor $\cE\mapsto\Mot_{\cE}^{\loc}$}
\label{sec:Mot^loc_E_as_functor_of_E}

In this section we study the assignment $\cE\mapsto\Mot_{\cE}^{\loc}$ as a functor from rigid $\bE_1$-monoidal (resp. symmetric monoidal) categories to dualizable (resp. rigid symmetric monoidal) categories. We also study the relation between localizing motives over a locally rigid symmetric monoidal category $\cE$ and localizing motives over its rigidification $\cE^{\rig}.$

We start with the following trivial consequence of Theorem \ref{th:dualizability_and_rigidity}. Here for a continuous monoidal functor $\cE\to\cE'$ and a dualizable left $\cE'$-module $\cD$ we denote by $\cD_{\mid \cE}\in\Cat_{\cE}^{\dual}$ the same category $\cD$ with the left $\cE$-module structure induced from the $\cE'$-module structure.

\begin{prop}
Let $\cE\to\cE'$ be a continuous monoidal functor between rigid $\bE_1$-monoidal categories. Then the induced functor $\Mot_{\cE}^{\loc}\to \Mot_{\cE'}^{\loc},$ $\cU_{\loc}(\cC)\mapsto\cU_{\loc}(\cE'\tens{\cE}\cC),$ is strongly continuous. Its right adjoint is given by $\cU_{\loc}(\cD)\mapsto\cU_{\loc}(\cD_{\mid \cE})$ for $\cD\in\Cat_{\cE'}^{\dual}.$

In particular, we obtain the functors
\begin{equation}\label{eq:Mot_as_functor_E_1}
\Alg_{\bE_1}^{\rig}(\Prr^L_{\st})\to \Cat_{\st}^{\dual},\quad \cE\mapsto\Mot_{\cE}^{\loc},
\end{equation}
and
\begin{equation}\label{eq:Mot_as_functor_E_infty}
\CAlg^{\rig}(\Prr^L_{\\st})\to \CAlg^{\rig}(\Prr^L_{\st}),\quad \cE\mapsto\Mot_{\cE}^{\loc}.
\end{equation}
\end{prop}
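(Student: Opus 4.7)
The plan is to realize the claimed pair of functors as an adjoint pair between $\Cat_{\cE}^{\dual}$ and $\Cat_{\cE'}^{\dual}$, verify that both sides preserve filtered colimits and short exact sequences, descend to $\Mot^{\loc}$ via the universal property, and then transfer the adjunction.

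First I would analyze the pair $F=\cE'\tens{\cE}-:\Cat_{\cE}^{\dual}\to\Cat_{\cE'}^{\dual}$ and $G=(-)_{\mid\cE}:\Cat_{\cE'}^{\dual}\to\Cat_{\cE}^{\dual}$. Both are well-defined: $F$ preserves dualizability because it is a base change along a rigid map, and $G$ preserves dualizability because rigidity of $\cE$ and $\cE'$ means that dualizability over either base is equivalent to dualizability over $\Sp$. The pair $F\dashv G$ is the restriction to dualizable modules of the classical extension/restriction adjunction between $\Pr^L_{\cE}$ and $\Pr^L_{\cE'}$; the relevant unit $\cC\to(\cE'\tens{\cE}\cC)_{\mid\cE}$ and counit $\cE'\tens{\cE}\cD_{\mid\cE}\to\cD$ are strongly continuous thanks to rigidity. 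Both $F$ and $G$ commute with filtered colimits in $\Cat^{\dual}_{\st}$ (for $F$ this is by construction, for $G$ by \cite[Theorem 1.14]{E25} applied appropriately), and both preserve short exact sequences: for $G$ this is because fiber--cofiber sequences are checked on underlying stable categories, and for $F$ because a short exact sequence in $\Cat_{\cE}^{\dual}$ is a cofiber sequence of strongly continuous functors (so preserved by the left adjoint $F$) whose first arrow admits an $\cE$-linear continuous right adjoint, a property preserved by $\cE'\tens{\cE}-$.

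Next, I would apply the universal property of $\cU_{\loc}:\Cat_{\cE}^{\dual}\to\Mot_{\cE}^{\loc}$ (and its $\cE'$-variant). The compositions $\cU_{\loc}\circ F$ and $\cU_{\loc}\circ G$ are finitary localizing invariants, hence factor uniquely through continuous functors
\begin{equation*}
\Phi:\Mot_{\cE}^{\loc}\to\Mot_{\cE'}^{\loc},\qquad \Psi:\Mot_{\cE'}^{\loc}\to\Mot_{\cE}^{\loc},
\end{equation*}
with the stated values on motives of dualizable modules. Postcomposing the unit and counit of $F\dashv G$ with $\cU_{\loc}$ produces natural transformations between finitary localizing invariants, which by the universal property descend to natural transformations $\eta:\id_{\Mot_{\cE}^{\loc}}\to\Psi\Phi$ and $\epsilon:\Phi\Psi\to\id_{\Mot_{\cE'}^{\loc}}$. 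The triangle identities hold after evaluation on objects of the form $\cU_{\loc}(\cC)$ (where they are inherited directly from the adjunction $F\dashv G$), and since both compositions in the triangle identities are continuous and these motives generate under colimits, the identities hold globally. Thus $\Phi\dashv\Psi$; as $\Psi$ is continuous, $\Phi$ is strongly continuous, giving the first assertion.

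Finally, for the functors \eqref{eq:Mot_as_functor_E_1} and \eqref{eq:Mot_as_functor_E_infty}, I would observe that the assignment $\cE\mapsto F_{\cE\to\cE'}=\cE'\tens{\cE}-$ is itself functorial in the monoidal map $\cE\to\cE'$ (this is standard for base change of modules), so passing to $\Mot^{\loc}$ yields a functor $\Alg_{\bE_1}^{\rig}(\Pr^L_{\st})\to\Cat_{\st}^{\dual}$ whose values are dualizable by Theorem \ref{th:dualizability_and_rigidity}\ref{E_0_rigidity} and whose $1$-morphisms are strongly continuous by the first part of the proposition. In the symmetric monoidal case, $F$ is symmetric monoidal, so the induced $\Phi$ is symmetric monoidal, and $\Mot_{\cE}^{\loc}$ is rigid by Theorem \ref{th:dualizability_and_rigidity}\ref{E_1_rigidity}, giving \eqref{eq:Mot_as_functor_E_infty}. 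The main technical point where care is needed is the verification that $F$ preserves short exact sequences inside $\Cat^{\dual}$ (not merely cofiber sequences in $\Pr^L_{\st}$); everything else is formal manipulation of the universal property of $\cU_{\loc}$.
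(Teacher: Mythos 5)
Your proposal is correct and takes the same approach the paper implicitly uses; the paper's own proof just says "the rest is straightforward" and defers everything to Theorem 3.1, so your write-up is a fully spelled-out version of the intended argument (set up the $F\dashv G$ adjunction at the level of $\Cat^{\dual}$, check that both sides are finitary localizing invariants, descend via the universal property, and transfer the triangle identities using continuity and generation).

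One small misattribution: for $G=(-)_{\mid\cE}$ commuting with filtered colimits, you cite \cite[Theorem 1.14]{E25}, but that result is about compact assembledness of $\Cat_{\cE}^{\cg}$; the relevant fact here is more elementary — filtered colimits in $\Cat_{\cE}^{\dual}$ over a rigid base are computed on underlying dualizable categories, so restriction of scalars obviously preserves them. The conclusion is unaffected. You are also right that the one genuinely non-formal ingredient is that $\cE'\tens{\cE}-$ preserves short exactness in $\Cat^{\dual}$, not merely cofiber sequences in $\Pr^L_{\st}$; your account of this (fully faithfulness survives tensoring because the continuous right adjoint tensors to a continuous right adjoint with the same unit) is exactly what makes the argument go through.
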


\begin{proof}
The dualizability resp. rigidity of $\Mot_{\cE}^{\loc}$ is proved in Theorem \ref{th:dualizability_and_rigidity}. The rest is straightforward.
\end{proof}

The results in the following subsections are quite elementary except for Theorem \ref{th:localizing_motives_over_rigidification} which uses Theorem \ref{th:morphisms_in_Mot^loc_via_internal_Hom}.

\subsection{Localizing motives over a quotient category}

We first show that the functor \eqref{eq:Mot_as_functor_E_1} preserves the class of quotient functors, hence so does \eqref{eq:Mot_as_functor_E_infty}.

\begin{prop}\label{prop:localizing_motives_over_quotients}
Let $F:\cE\to\cE'$ be a continuous monoidal functor between rigid $\bE_1$-monoical categories. If $F$ is a quotient functor, then so is the induced functor $\Mot^{\loc}_{\cE}\to\Mot^{\loc}_{\cE'}.$
\end{prop}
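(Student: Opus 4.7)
The plan is to identify the right adjoint to the induced functor $F_{*}^{\loc}:\Mot^{\loc}_{\cE}\to\Mot^{\loc}_{\cE'}$ and show that it is fully faithful. By the preceding proposition this right adjoint is $F^{*,\loc}:\cU_{\loc}(\cD)\mapsto\cU_{\loc}(\cD_{\mid\cE})$ for $\cD\in\Cat_{\cE'}^{\dual}$. Fully faithfulness of $F^{*,\loc}$ amounts to the counit $\cU_{\loc}(\cE'\tens{\cE}\cD_{\mid\cE})\to\cU_{\loc}(\cD)$ being an equivalence for every $\cD\in\Cat_{\cE'}^{\dual}$, and this counit is obtained by applying $\cU_{\loc}$ to the canonical action map $\cE'\tens{\cE}\cD_{\mid\cE}\to\cD$ in $\Cat_{\cE'}^{\dual}$. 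It therefore suffices to verify that the latter map is already an equivalence of dualizable left $\cE'$-modules.

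The core of the argument is to show that $\cE'$ is a smashing quotient of $\cE$, i.e. that $\cI=\ker(F)\subset\cE$ is a smashing ideal, equivalently that the multiplication $\mu:\cE'\tens{\cE}\cE'\to\cE'$ is an equivalence. Rigidity of $\cE$ together with compactness of $1_{\cE'}$ (automatic from rigidity of $\cE'$) forces $F$ to be strongly continuous, so $F^R$ is continuous; since $F$ is a quotient, $F^R$ is moreover fully faithful. Monoidality of $F$ implies $\cI$ is a two-sided localizing $\cE$-ideal, and strong continuity of $F$ combined with the quotient property gives that the inclusion $\cI\hookrightarrow\cE$ is strongly continuous. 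Hence $\cI$ is smashing. Although Subsection \ref{ssec:smashing_ideals} is phrased in the symmetric monoidal setting, the $\bE_1$-monoidal analogue of this criterion goes through verbatim, because the rigidity of $\cE$ is only used on the ambient monoidal category and not on its modules.

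Granted the smashing property, for any $\cD\in\Cat_{\cE'}^{\dual}$ one computes
\begin{equation*}
\cE'\tens{\cE}\cD_{\mid\cE}\simeq\cE'\tens{\cE}(\cE'\tens{\cE'}\cD)\simeq (\cE'\tens{\cE}\cE')\tens{\cE'}\cD\simeq\cE'\tens{\cE'}\cD\simeq\cD,
\end{equation*}
and tracing through the construction identifies this chain with the counit of the adjunction $F_{*}\dashv F^{*}$ in $\Cat^{\dual}$. Applying $\cU_{\loc}$ gives the required equivalence of motives, so $F^{*,\loc}$ is fully faithful and $F_{*}^{\loc}$ is a quotient functor. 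The main obstacle is the smashing step; once it is established, the remainder is a routine adjunction argument combined with the universal property of $\cU_{\loc}$ as recorded in the preceding proposition.
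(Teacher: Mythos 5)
Your proof is correct and follows essentially the same route as the paper's terse argument: both hinge on the equivalence $\cE'\tens{\cE}\cE'\xto{\sim}\cE',$ from which $\cE'\tens{\cE}(\cD_{\mid\cE})\xto{\sim}\cD$ for all $\cD\in\Cat_{\cE'}^{\dual}$ and hence the identity $F^{\loc}_*\circ (F^{\loc}_*)^R\simeq\id$ follow formally. The paper simply asserts the key bimodule equivalence, whereas you supply the (correct, if slightly compressed) justification via strong continuity of $F$ and the resulting smashing property of $\ker(F)$; this makes your write-up more detailed but not substantively different.
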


\begin{proof}
Indeed, we have an equivalence of $\cE'\hy\cE'$-bimodules $\cE'\tens{\cE}\cE'\xto{\sim}\cE',$ hence for $\cD\in\Cat_{\cE'}^{\dual}$ we have $\cE'\tens{\cE}(\cD_{\mid \cE})\xto{\sim}\cD.$ It follows that the composition $\Mot^{\loc}_{\cE'}\to\Mot^{\loc}_{\cE}\to\Mot^{\loc}_{\cE'}$ is isomorphic to identity. This proves the proposition. 
\end{proof}

\subsection{Localizing motives over filtered colimits}

The following result is slightly more subtle.

\begin{prop}
Both functors \eqref{eq:Mot_as_functor_E_1} and \eqref{eq:Mot_as_functor_E_infty} commute with filtered colimits. 
\end{prop}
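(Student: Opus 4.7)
The plan is to handle the $\bE_1$-monoidal case \eqref{eq:Mot_as_functor_E_1}; the symmetric monoidal case \eqref{eq:Mot_as_functor_E_infty} then follows since the forgetful functor $\CAlg^{\rig}(\Pr^L_{\st}) \to \Alg_{\bE_1}^{\rig}(\Pr^L_{\st})$ creates filtered colimits. For a filtered diagram $(\cE_i)_{i \in I}$ in $\Alg_{\bE_1}^{\rig}(\Pr^L_{\st})$, I would first verify that the underlying colimit $\cE$ in $\Alg_{\bE_1}(\Pr^L_{\st})$ is itself rigid, using the criterion of Proposition \ref{prop:rigidity_criterion}: the unit $1_{\cE}$ is the image of $1_{\cE_i}$ and remains compact because the transition functors $\cE_i \to \cE$ are strongly continuous (a consequence of rigidity of the $\cE_i$), and the generators of the form $\indlim_n x_n$ with left and right trace-class transition maps transfer from the $\cE_i$ since continuous monoidal functors out of rigid categories send trace-class maps to trace-class maps.

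Next I would construct the comparison $\Phi : \colim_i \Mot^{\loc}_{\cE_i} \to \Mot^{\loc}_{\cE}$ in $\Cat_{\st}^{\dual}$ from the extension-of-scalars functors $\cE \tens{\cE_i} - : \Cat_{\cE_i}^{\dual} \to \Cat_{\cE}^{\dual}$, and prove $\Phi$ is an equivalence by checking that for every presentable stable $\cT$ with filtered colimits, restriction along $\Phi$ induces an equivalence
\begin{equation*}
\Fun_{\loc, \omega}(\Cat_{\cE}^{\dual}, \cT) \xto{\sim} \lim_i \Fun_{\loc, \omega}(\Cat_{\cE_i}^{\dual}, \cT).
\end{equation*}
For essential surjectivity, I would use that $\Mot^{\loc}_{\cE}$ is generated as a localizing subcategory by $\cU_{\loc}(\Mod\hy A)$ with $A \in \Alg_{\bE_1}(\cE)^{\omega_1}$ (from the proof of Theorem \ref{th:dualizability_and_rigidity} via the resolution-of-length-$1$ in Proposition \ref{prop:resolution_by_nuclear}); every such $A$ is equivalent to $A_{i_0} \tens{\cE_{i_0}} \cE$ for some $A_{i_0} \in \Alg_{\bE_1}(\cE_{i_0})^{\omega_1}$ with $i_0$ large, because the accessible functor $\Alg_{\bE_1}(-)$ commutes with filtered colimits along strongly continuous monoidal functors on $\omega_1$-compact objects. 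Hence any compatible family $(F_i)$ determines $F$ uniquely on the generators and then on all of $\Mot^{\loc}_{\cE}$ via the short exact sequences from Proposition \ref{prop:resolution_by_nuclear}.

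For fully faithfulness, the main input is Theorem \ref{th:morphisms_in_Mot^loc_via_limits}: morphisms in $\Mot^{\loc}_{\cE}$ between motives of (basic) nuclear $\cE$-modules are cofiltered limits of spectra of the form $K^{\cont}(\un{\Hom}_{\cE}^{\dual}(\cC_n, \cE) \tens{\cE} \cD)$, and the analogous formula holds over each $\cE_i$. The main obstacle is to verify that these inverse limits of $K$-theories, and the underlying formation of relative tensor products and internal $\Hom$, commute with the filtered colimit $\cE = \colim_i \cE_i$ of the base. In the compactly generated case this is essentially automatic, but for a general rigid base one must use the compact assembly of $\Cat_{\cE}^{\cg}$ from \cite[Theorem 1.14]{E25} together with the fact that all the operations involved ($\tens{\cE}$, $\un{\Hom}_{\cE}^{\dual}$, and $K^{\cont}$) are accessible and reduce on $\omega_1$-compact generators to operations on compact $\bE_1$-algebras, whose assignment $\cE \mapsto \Alg_{\bE_1}(\cE)^{\omega_1}$ manifestly commutes with filtered colimits of $\cE$. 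Once this commutation is in place, fully faithfulness of the restriction map is immediate from the formula in Theorem \ref{th:morphisms_in_Mot^loc_via_limits} applied at each stage $\cE_i$ and passed to the colimit in $i$.
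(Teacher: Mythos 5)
Your approach differs substantially from the paper's and has a genuine gap in the fully-faithfulness step. The paper's proof is much more economical: it takes the comparison $\Phi:\indlim_i\Mot^{\loc}_{\cE_i}\to\Mot^{\loc}_{\cE}$ directly, and checks fully faithfulness via the computation (for $\cC\in\Cat_{\cE_j}^{\dual}$, using \cite[Proposition 1.66]{E24} to unwind morphism spectra in the filtered colimit of dualizable categories)
\begin{equation*}
F_i^R F_j(\cU_{\loc}(\cC))\cong\indlim[k\geq i,j]\cU_{\loc}\bigl((\cE_k\tens{\cE_j}\cC)_{\mid\cE_i}\bigr)\cong\cU_{\loc}\bigl((\cE\tens{\cE_j}\cC)_{\mid\cE_i}\bigr)\cong F_i^R\Phi^R\Phi F_j(\cU_{\loc}(\cC)),
\end{equation*}
and essential surjectivity from $\cU_{\loc}(\cC)\cong\indlim_i\cU_{\loc}(\cE\tens{\cE_i}\cC)$ for $\cC\in\Cat_{\cE}^{\dual}$, since $\cU_{\loc}$ commutes with filtered colimits. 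No generation by nuclear modules, no Theorem~\ref{th:morphisms_in_Mot^loc_via_limits}, and no argument about $\omega_1$-compact $\bE_1$-algebras descending to a finite stage is needed.

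The gap in your proposal is in the fully-faithfulness argument. You want to express morphism spectra via Theorem~\ref{th:morphisms_in_Mot^loc_via_limits}, which writes them as cofiltered limits over an ind-presentation $\hat{\cY}(\cC)=\inddlim_n\cC_n$, and then commute these with the filtered colimit in $i$. But cofiltered limits do not commute with filtered colimits, and you explicitly defer precisely this verification (``once this commutation is in place, fully faithfulness \dots is immediate''). Compact assembly of $\Cat_{\cE}^{\cg}$ does not by itself give you this: as $\cE$ varies over $\colim_i\cE_i$, the ind-presentation $\hat{\cY}(\cC)$ over $\cE$ must be compared with the colimit of ind-presentations over $\cE_i$ (which already needs an argument that restriction of scalars commutes with filtered colimits in $\Cat^{\cg}$), and even after that one would need a uniform stabilization statement to exchange the $\prolim$ over $n$ with the $\indlim$ over $i$ at the level of $K$-theory spectra. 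This is the central step of your plan and it is left unjustified.

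Two secondary remarks. Your verification that the colimit $\cE$ is rigid (via Proposition~\ref{prop:rigidity_criterion}) is a step the paper leaves implicit, and it is correct. And your essential-surjectivity reduction to $\omega_1$-compact $\bE_1$-algebras descending to a finite stage is plausible but requires care (one needs to know that $\omega_1$-compacts in the colimit arise from $\omega_1$-compacts at a finite stage); the paper's route avoids this entirely by noting that any $\cC\in\Cat_{\cE}^{\dual}$ is itself a filtered colimit of the objects $\cE\tens{\cE_i}\cC$ lying in the image of $\Phi$.
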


\begin{proof}
It suffices to prove this for the functor \eqref{eq:Mot_as_functor_E_1}. Let $(\cE_i)_{i\in I}$ be a directed system of rigid $\bE_1$-monoidal categories, and put $\cE=\indlim[i]\cE_i.$ We need to prove that the functor
\begin{equation}
\Phi:\indlim[i]\Mot^{\loc}_{\cE_i}\to\Mot^{\loc}_{\cE}
\end{equation}
is an equivalence. We denote by $F_{ij}:\Mot^{\loc}_{\cE_i}\to\Mot^{\loc}_{\cE_j}$ the functor induced by $\cE_i\to\cE_j,$ $i\leq j.$ Also, denote by $F_i:\Mot^{\loc}_{\cE_i}\to \indlim[i]\Mot^{\loc}_{\cE_i}$ the functor to the colimit. We denote by $F_{ij}^R,$ $F_i^R$ and $\Phi^R$ the corresponding right adjoint functors. To prove the fully faithfulness of $\Phi,$ it suffices to prove that we have isomorphisms $F_i^R F_j\xto{sim}F_i^R\Phi^R\Phi F_j$ for $i,j\in I.$ Using \cite[roposition 1.66]{E24}, for $\cC\in\Cat_{\cE_j}^{\dual}$ we obtain
\begin{multline*}
F_i^R F_j(\cU_{\loc}(\cC))\cong \indlim[k\geq i,j] F_{ik}^R F_{jk}(\cU_{\loc}(\cC))\cong \indlim[k\geq i,j]\cU_{\loc}((\cE_k\tens{\cE_j}\cC)_{\mid \cE_i})\cong \cU_{\loc}((\cE\tens{\cE_j}\cC)_{\mid\cE_i})\\
\cong (\Phi F_i)^R\Phi F_j(\cU_{\loc}(\cC))\cong F_i^R\Phi^R\Phi F_j(\cU_{\loc}(\cC)).
\end{multline*}
Hence, $\Phi$ is fully faithful. To prove the essential surjectivity, it suffices to observe that for any $\cC\in\Cat_{\cE}^{\dual}$ we have
\begin{equation*}
\cU_{\loc}(\cC)\cong \indlim[i]\cU_{\loc}(\cE\tens{\cE_i}\cC)\cong\indlim[i]\Phi F_i F_i^R\Phi^R(\cU_{\loc}(\cC)).\qedhere
\end{equation*}
\end{proof}

\subsection{Localizing motives over tensor products}

In this subsection we consider localizing motives over rigid symmetric monoidal categories. The natural question is whether the K\"unneth formula holds. We give a partial affirmative answer.

\begin{prop}\label{prop:partial_Kunneth}
Let $\cE_0,\cE_1,\cE_2$ be rigid symmetric monoidal presentable stable categories, and let $F_1:\cE_0\to\cE_1,$ $F_2:\cE_0\to\cE_2$ be continuous symmetric monoidal functors. Then the natural functor
\begin{equation}
\Mot^{\loc}_{\cE_1}\tens{\Mot^{\loc}_{\cE_0}}\Mot^{\loc}_{\cE_1}\to \Mot^{\loc}_{\cE_1\tens{\cE_0}\cE_2}
\end{equation}
is fully faithful.
\end{prop}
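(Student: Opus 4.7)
The plan is to exploit rigidity (by Theorem~\ref{th:dualizability_and_rigidity}) and the self-duality inherent to rigid symmetric monoidal categories, by constructing an explicit right adjoint to $\Phi$ and checking the adjunction unit is an isomorphism on a generating family. Assuming (as surely intended) that the second factor on the left is $\Mot^{\loc}_{\cE_2},$ let us write $\cM_i = \Mot^{\loc}_{\cE_i},$ $\cM_{12}=\Mot^{\loc}_{\cE_1\otimes_{\cE_0}\cE_2}$ and $\cM = \cM_1 \otimes_{\cM_0} \cM_2.$ Note first that $\Phi$ is a continuous symmetric monoidal functor between rigid symmetric monoidal presentable stable categories (the source is rigid as the relative tensor product of rigid $\cM_0$-algebras, the target by Theorem~\ref{th:dualizability_and_rigidity}); hence $\Phi$ is strongly continuous and admits a continuous right adjoint $\Phi^R.$ Fully faithfulness amounts to the unit $\id\to\Phi^R\Phi$ being an equivalence, and by continuity this can be checked on a generating family. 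The natural generators of $\cM$ are simple tensors $X=\cU_{\cE_1}(\cC_1)\boxtimes\cU_{\cE_2}(\cC_2)$ with $\cC_i\in\Cat^{\dual}_{\cE_i},$ and by the proof of Theorem~\ref{th:dualizability_and_rigidity} it suffices to treat the case when $\cC_i$ is basic nuclear over $\cE_i,$ so that Theorem~\ref{th:morphisms_in_Mot^loc_via_limits} is available on both sides.

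Next, I would exploit the rigidity of each $\cM_i$ over $\cM_0$: because $\cM_i$ is a rigid $\cM_0$-algebra, it is self-dual as an $\cM_0$-module, so the relative tensor product can be identified with a functor category,
\begin{equation*}
\cM_1\otimes_{\cM_0}\cM_2\;\simeq\;\Fun^{L}_{\cM_0}(\cM_1,\cM_2).
\end{equation*}
Under this identification, a simple tensor $\cU_{\cE_1}(\cC_1)\boxtimes\cU_{\cE_2}(\cC_2)$ corresponds to the $\cM_0$-linear continuous functor sending $\cU_{\cE_1}(\cC_1')$ to $\ev_{\cM_1}(\cU_{\cE_1}(\cC_1),\cU_{\cE_1}(\cC_1'))\otimes\cU_{\cE_2}(\cC_2),$ where $\ev_{\cM_1}$ is the self-pairing on $\cM_1$ and, by Theorem~\ref{th:dualizability_and_rigidity}~\ref{E_0_rigidity}, may be computed by $K^{\cont}(\cC_1\tens{\cE_1}\cC_1').$

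I would then define the candidate right adjoint $\Psi\colon\cM_{12}\to\Fun^{L}_{\cM_0}(\cM_1,\cM_2)\simeq\cM$ by declaring, for $\cD\in\Cat^{\dual}_{\cE_{12}},$ the functor $\Psi(\cU_{\cE_{12}}(\cD))$ to be
\begin{equation*}
\cU_{\cE_1}(\cC_1')\;\longmapsto\;\cU_{\cE_2}(\cC_1'\tens{\cE_1}\cD),
\end{equation*}
where $\cD$ is regarded as an $\cE_1$-module by restriction along $\cE_1\to\cE_1\otimes_{\cE_0}\cE_2$ and the remaining $\cE_2$-action on $\cC_1'\otimes_{\cE_1}\cD$ makes the output an $\cE_2$-linear dualizable category. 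Checking that this is a finitary localizing invariant in $\cD,$ and that the resulting functor is $\cM_0$-linear and strongly continuous, extends $\Psi$ canonically to $\cM_{12}.$ To identify $\Psi$ with $\Phi^R,$ I would use Theorem~\ref{th:morphisms_in_Mot^loc_via_limits} to write both sides of the adjunction as inverse limits of $K^{\cont}$ of internal $\Hom$'s and match them via the natural equivalence $\un{\Hom}_{\cE_{12}}^{\dual}(\cC_1\otimes_{\cE_0}\cC_2,\cD)\simeq\un{\Hom}_{\cE_1}^{\dual}(\cC_1,\un{\Hom}^{\dual}_{\cE_2}(\cC_2,\cD)),$ which follows from the universal property of the relative tensor product in $\Cat^{\dual}_{\cE_{12}}.$ Finally, evaluating the unit $X\to\Psi\Phi(X)$ on $X=\cU_{\cE_1}(\cC_1)\boxtimes\cU_{\cE_2}(\cC_2)$ reduces, via the above description of $X$ as a functor, to the projection-formula isomorphism
\begin{equation*}
\cC_1'\tens{\cE_1}(\cC_1\tens{\cE_0}\cC_2)\;\simeq\;(\cC_1'\tens{\cE_1}\cC_1)\tens{\cE_0}\cC_2
\end{equation*}
of dualizable $\cE_2$-modules, valid because the two expressions describe the same coequalizer of $\cE_0$-balanced structures.

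The hard part will be making the identification $\cM_1\otimes_{\cM_0}\cM_2\simeq\Fun^L_{\cM_0}(\cM_1,\cM_2)$ fully precise---this uses self-duality of the rigid $\cM_0$-algebras $\cM_i$ and compatibility with the symmetric monoidal structure---together with verifying that the candidate $\Psi$ is a genuine localizing invariant in $\cD$ and genuinely right adjoint to $\Phi$ (rather than merely matching on generators). Once these foundational points are established, the projection-formula step and the $K^{\cont}$-matching via Theorem~\ref{th:morphisms_in_Mot^loc_via_limits} are essentially formal.
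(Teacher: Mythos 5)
Your proposal takes a genuinely different route from the paper. The paper's argument factors through an abstract Beck--Chevalley criterion (Lemma~\ref{lem:Beck_Chevalley}) for a square of rigid symmetric monoidal categories: fully faithfulness of $\cE_1\tens{\cE_0}\cE_2\to\cE_3$ is equivalent to commutativity of the square formed by passing to right adjoints on the horizontal arrows. The key observation you do not exploit is that, because the source is rigid, the projection formula forces $\Phi^R\Phi(X)\simeq\Phi^R(1_{\cE_3})\otimes X,$ so the unit $\id\to\Phi^R\Phi$ is an isomorphism if and only if the single map $1\to\Phi^R(1_{\cE_3})$ is; rigidity therefore collapses the entire ``check on generators'' step to a single object. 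After this abstract reduction, Proposition~\ref{prop:partial_Kunneth} follows from the one-line isomorphism $\cE_2\tens{\cE_0}\cC\simeq(\cE_1\tens{\cE_0}\cE_2)\tens{\cE_1}\cC$ for $\cC\in\Cat^{\dual}_{\cE_1},$ which verifies Beck--Chevalley.

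Your plan instead constructs an explicit candidate right adjoint $\Psi$ and tries to match it with $\Phi^R$ via Theorem~\ref{th:morphisms_in_Mot^loc_via_limits}. There is a genuine gap here: that theorem computes morphism spectra in categories of the form $\Mot^{\loc}_{\cE},$ but you must compute morphisms in the relative tensor product $\cM=\Mot^{\loc}_{\cE_1}\tens{\Mot^{\loc}_{\cE_0}}\Mot^{\loc}_{\cE_2},$ which is a priori \emph{not} a category of localizing motives over any $\cE.$ Indeed, the content of the proposition is precisely that $\Hom_{\cM}$ does not collapse to $\Hom_{\Mot^{\loc}_{\cE_{12}}},$ so you cannot appeal to the target to identify the adjunction. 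You also leave the naturality of the proposed isomorphism $\Hom(\Phi(-),-)\simeq\Hom(-,\Psi(-))$ in both variables unaddressed, and this matters because $\Psi$ is only described on motives of dualizable modules. Finally, you assert rigidity of $\cM_1\otimes_{\cM_0}\cM_2$ without a reference (this is true, by closure of rigid categories under relative tensor products, but it needs citing). Your final projection-formula step $\cC_1'\tens{\cE_1}(\cC_1\tens{\cE_0}\cC_2)\simeq(\cC_1'\tens{\cE_1}\cC_1)\tens{\cE_0}\cC_2$ captures the right mathematical content --- it is exactly what the paper's Beck--Chevalley square amounts to --- but the road you take to get there (constructing $\Psi,$ verifying adjointness, checking on nuclear generators) is where the argument would break down as written. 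The projection-formula reduction to the unit object, which the paper isolates in the proof of Lemma~\ref{lem:Beck_Chevalley}, is the ingredient that would collapse your argument and make it rigorous.
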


We need the following standard statement on the tensor products of rigid categories.

\begin{lemma}\label{lem:Beck_Chevalley}
Consider a commutative square in the category $\CAlg^{\rig}(\Prr^L_{\st})$ of rigid symmetric monoidal presentable stable categories:
\begin{equation}\label{eq:square_of_rigid_cats}
\begin{tikzcd}
\cE_0\ar[r, "F_1"]\ar[d, "F_2"] & \cE_1\ar[d, "G_1"]\\
\cE_2\ar[r, "G_2"] & \cE_3.
\end{tikzcd}
\end{equation}
Then the following are equivalent.
\begin{enumerate}[label=(\roman*),ref=(\roman*)]
\item The natural functor $\Phi:\cE_1\tens{\cE_0}\cE_2\to\cE_3$ is fully faithful. \label{fully_faithful}
\item The square \eqref{eq:square_of_rigid_cats} satisfies the (dual) Beck-Chevalley condition, namely the following (lax commutative) square commutes:
\begin{equation}\label{eq:dual_Beck_Chevalley}
\begin{tikzcd}
	\cE_0\ar[d, "F_2"] & \cE_1\ar[d, "G_1"]\arrow{l}[swap]{F_1^R}\\
	\cE_2 & \cE_3\arrow{l}[swap]{G_2^R}.
\end{tikzcd}
\end{equation}\label{Beck_Chevalley}
\end{enumerate}
\end{lemma}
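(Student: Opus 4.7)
The strategy is to reduce fully faithfulness of $\Phi$ to the assertion $\Phi^R(1_{\cE_3}) \cong 1_{\cE_1\tens{\cE_0}\cE_2}$, test this on dualizable generators, and match the resulting condition with Beck-Chevalley via the analogous Beck-Chevalley property of the defining pushout square.

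First I would set up the adjoints. By rigidity, each $F_i, G_i$ is strongly continuous with continuous right adjoint $F_i^R, G_i^R$ satisfying the projection formula. Writing $H_i:\cE_i\to\cE:=\cE_1\tens{\cE_0}\cE_2$ for the canonical symmetric monoidal functors into the pushout, these too have continuous right adjoints satisfying the projection formula (in the rigid setting the relevant monoidal functors into the relative tensor product are strongly continuous). The universal property gives $\Phi\circ H_i = G_i$, hence $H_i^R\circ\Phi^R = G_i^R$.

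Next I would reduce fully faithfulness to a statement about the unit. Projection formula for $\Phi$ gives $\cE$-linearity of $\Phi^R$, so $\Phi^R\Phi(a)\cong a\otimes\Phi^R(1_{\cE_3})$. Hence $\Phi$ is fully faithful iff the unit $\eta:1_{\cE}\to\Phi^R(1_{\cE_3})$ is an equivalence. Since $\cE$ is generated under colimits by the (dualizable) objects $x_1\boxtimes x_2 := H_1(x_1)\otimes H_2(x_2)$ with $x_i\in\cE_i$ dualizable (with dual $x_1^\vee\boxtimes x_2^\vee$), it suffices to check $\eta$ after mapping from such objects.

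Then I would translate the condition. Using dualizability and the $G_1^R$-adjunction,
\begin{equation*}
\Map_{\cE}(x_1\boxtimes x_2,\Phi^R(1_{\cE_3}))\cong \Map_{\cE_3}(G_1(x_1)\otimes G_2(x_2),1_{\cE_3})\cong \Map_{\cE_1}(x_1,G_1^R G_2(x_2^\vee)),
\end{equation*}
while using projection formula for $H_1$ together with dualizability of $H_2(x_2)$,
\begin{equation*}
\Map_{\cE}(x_1\boxtimes x_2,1_{\cE})\cong \Map_{\cE}(H_1(x_1),H_2(x_2^\vee))\cong \Map_{\cE_1}(x_1,H_1^R H_2(x_2^\vee)).
\end{equation*}
Thus $\Phi$ is fully faithful iff the canonical mate $H_1^R H_2\to G_1^R G_2$ of functors $\cE_2\to\cE_1$ is an equivalence.

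Finally I would invoke the Beck-Chevalley property of the defining pushout square, namely $H_1^R H_2\simeq F_1 F_2^R$. The criterion becomes $F_1 F_2^R\simeq G_1^R G_2$, which is a Beck-Chevalley condition for our square read in the transposed direction; this is equivalent to the Beck-Chevalley condition $F_2 F_1^R\simeq G_2^R G_1$ of the statement by passing to right adjoints (the two mates $\alpha_1:F_2F_1^R\to G_2^R G_1$ and $\alpha_2:F_1F_2^R\to G_1^R G_2$ have right-adjoint transformations running in opposite directions, so each is an equivalence iff the other is; alternatively, the whole argument is symmetric in the indices $1\leftrightarrow 2$). The main obstacle is justifying the Beck-Chevalley property of the defining pushout square $H_1^R H_2\simeq F_1F_2^R$: this is a standard but non-trivial fact about pushouts in $\CAlg^{\rig}(\Pr^L_{\st})$, which can be established via the bar construction presentation of the relative tensor product (Beck-Chevalley holds at each simplicial level since each face map is an extension/restriction of scalars along a morphism of commutative algebra objects, and survives the geometric realization in the rigid setting).
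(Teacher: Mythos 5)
Your proposal follows essentially the same route as the paper's two-line proof: reduce fully faithfulness to the unit $1_\cE \to \Phi^R(1_{\cE_3})$ being an isomorphism (projection formula gives $\cE$-linearity of $\Phi^R$), then identify this with the Beck--Chevalley condition. The paper phrases the identification in one stroke via the equivalence $\cE_1\tens{\cE_0}\cE_2\simeq\Fun^L_{\cE_0}(\cE_1,\cE_2)$, under which $1_\cE$ corresponds to $F_2 F_1^R$ and $\Phi^R(1_{\cE_3})$ to $G_2^R G_1$; you instead test against dualizable generators $x_1\boxtimes x_2$, which amounts to the same identification made objectwise. Both are valid, and your chain of equivalences is sound, including the $1\leftrightarrow 2$ symmetry at the end, which is the right way to pass from the mate $F_1F_2^R\to G_1^RG_2$ to the mate $F_2F_1^R\to G_2^RG_1$ appearing in the statement.

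Two local comments. First, the parenthetical alternative ``pass to right adjoints of the two mates'' is shaky: $F_1^R$ is continuous and hence has a right adjoint, but that right adjoint is not simply $F_1$, so the claimed mate relation between $\alpha_1$ and $\alpha_2$ is not literal; keep the symmetry argument as the justification. Second and more substantively, the auxiliary Beck--Chevalley fact $H_1^R H_2\simeq F_1 F_2^R$ is better not established by the bar construction: geometric realizations are colimits and do not interact cleanly with the right adjoints appearing in the base-change mate. A clean argument is module-theoretic: since $\cE_0$ is rigid and $\cE_1$ is a dualizable $\cE_0$-module, the $2$-functor $\cE_1\tens{\cE_0}(-)$ on $\Pr^L_{\cE_0}$ preserves adjunctions internal to $\Pr^L_{\cE_0}$ (right adjoints of strongly continuous $\cE_0$-linear functors are again continuous and $\cE_0$-linear). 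Applying it to the adjunction $F_2\dashv F_2^R$ gives $H_1 = \id_{\cE_1}\tens{\cE_0} F_2$ with right adjoint $H_1^R = \id_{\cE_1}\tens{\cE_0}F_2^R$, and then $H_1^R H_2 = (\id_{\cE_1}\tens{\cE_0}F_2^R)\circ (F_1\tens{\cE_0}\id_{\cE_2}) \simeq F_1\circ F_2^R$ by associativity.
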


\begin{proof}
The fully faithfulness of the functor $\Phi$ from \ref{fully_faithful} is equivalent to the map $1_{\cE_1\tens{\cE_0}\cE_2}\to\Phi^R(1_{\cE_3})$ being an isomorphism. The latter condition translates into commutativity of \eqref{eq:dual_Beck_Chevalley} via the equivalence $\cE_1\tens{\cE_0}\cE_2\simeq\Fun^L_{\cE_0}(\cE_1,\cE_2).$
\end{proof}

\begin{proof}[Proof of Proposition \ref{prop:partial_Kunneth}] By Lemma \ref{lem:Beck_Chevalley} we only need to check the commutativity of the square
\begin{equation*}
\begin{tikzcd}
\Mot^{\loc}_{\cE_0}\ar[d] & \Mot^{\loc}_{\cE_1}\ar[l]\ar[d]\\
\Mot^{\loc}_{\cE_2} & \Mot^{\loc}_{\cE_1\tens{\cE_0}\cE_2}\ar[l].
\end{tikzcd}
\end{equation*}
This is clear: for $\cC\in\Cat_{\cE_1}^{\dual}$ we have
\begin{equation*}
\cE_2\tens{\cE_0}\cC\simeq (\cE_1\tens{\cE_0}\cE_2)\tens{\cE_1}\cC\quad\text{in }\Cat_{\cE_2}^{\dual}.\qedhere
\end{equation*}
\end{proof}

\begin{ques}
Is it true that the functor in Proposition \ref{prop:partial_Kunneth} is an equivalence?
\end{ques}

\subsection{Localizing motives over a locally rigid base and over its rigidification}

In this subsection we again deal only with localizing motives over symmetric monoidal categories. The main result (Theorem \ref{th:localizing_motives_over_rigidification}) allows to reduce the study of localizing motives over a complicated ``analytic'' base category like $\Nuc(\Z_p)$ to the study of localizing motives over a simpler base, which in this case would be the usual $p$-complete derived category $D_{p\hy\compl}(\Z).$

Note that one can define the category $\Mot_{\cE}^{\loc}$ for an arbitrary $\cE\in\CAlg(\Pr^L_{\st}),$ not necessarily rigid. Namely, we have the universal finitary localizing invariant $\Cat_{\cE}^{\dual}\to \Mot_{\cE}^{\loc}.$ Here $\Cat_{\cE}^{\dual}$ is the category of dualizable $\cE$-modules, where the $1$-morphisms are $\cE$-linear strongly continuous functors whose right adjoint is $\cE$-linear.

We first observe that in the case of a locally rigid base we can reduce the study of localizing motives to the case of a rigid base.

\begin{prop}\label{prop:Cat^dual_over_locally_rigid}
Let $\cE$ be a locally rigid symmetric monoidal category. We choose a fully faithful inclusion $j_!:\cE\to\cE',$ such that $\cE'$ is rigid and $\cE$ is a smashing ideal in $\cE',$ so the right adjoint $j^*:\cE'\to\cE$ is continuous and naturally symmetric monoidal. For example, we can take $\cE'$ to be the one-point rigidification $\cE_+$ \cite[Definition 1.39]{E25}.

Then the induced functor $\cE\tens{\cE'}-:\Cat_{\cE'}^{\dual}\to \Cat_{\cE}^{\dual}$ has a fully faithful left adjoint. The latter functor sends $\cC\in\Cat_{\cE}^{\dual}$ to $\cC$ considered as an $\cE'$-module via $j^*.$
\end{prop}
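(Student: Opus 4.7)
The plan is to identify the sought left adjoint with the restriction-of-scalars functor $L:\Cat_{\cE}^{\dual}\to\Cat_{\cE'}^{\dual}$ along the continuous symmetric monoidal functor $j^*:\cE'\to\cE.$ Concretely, $L(\cC)$ is the same dualizable stable category as $\cC,$ equipped with the $\cE'$-action $x\cdot c:=j^*(x)\cdot c$ for $x\in\cE',$ $c\in\cC.$ Since $j^*(x)\in\cE$ and the given $\cE$-action on $\cC$ is strongly continuous, this $\cE'$-action is strongly continuous as well, so $L(\cC)\in\Cat_{\cE'}^{\dual};$ likewise a strongly continuous $\cE$-linear functor between dualizable $\cE$-modules restricts to a strongly continuous $\cE'$-linear functor.

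Next, I would give an explicit description of the base change. Setting $C:=j_!(1_{\cE})$ -- the coidempotent coalgebra in $\cE'$ corresponding to the smashing ideal -- one has $\cE\simeq C\otimes\cE'$ as $\cE'$-modules, so for any $\cD\in\Cat_{\cE'}^{\dual}$ the base change $\cE\tens{\cE'}\cD$ is naturally equivalent to the smashing $\cE'$-submodule $C\otimes\cD\subset\cD$ (the essential image of the idempotent endofunctor $C\otimes-:\cD\to\cD$), with $\cE$-action given by $y\cdot d=j_!(y)\otimes d.$

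Establishing the adjunction $L\dashv(\cE\tens{\cE'}-)$ is then direct. Given a strongly continuous $\cE'$-linear $F:L(\cC)\to\cD,$ one computes $F(c)=F(1_{\cE}\cdot c)=F(j^*(C)\cdot c)=C\otimes F(c)$ -- using that on $L(\cC)$ the action of $C\in\cE'$ is the $\cE$-action of $j^*(C)=1_{\cE},$ hence trivial -- and so $F$ factors through $C\otimes\cD\simeq\cE\tens{\cE'}\cD$ by an $\cE$-linear functor $\tilde F.$ Conversely, composing an $\cE$-linear $\tilde F:\cC\to\cE\tens{\cE'}\cD$ with the inclusion into $\cD$ yields an $\cE'$-linear functor via the identity $j_!(j^*(x))\otimes d=(C\otimes x)\otimes d=x\otimes d$ for $d\in C\otimes\cD.$ Fully faithfulness of $L$ is then the statement that the unit $\cC\to\cE\tens{\cE'}L(\cC)$ is an equivalence: the right-hand side is the essential image of $C\otimes-$ acting on $L(\cC),$ which under the $j^*$-transported action equals $j^*(C)\cdot-=1_{\cE}\cdot-=\id,$ so this image is all of $\cC.$

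The one point requiring care is to verify that the intrinsic $\cE$-module structure on the smashing subcategory $C\otimes\cD\subset\cD$ coming from the ambient $\cE'$-module structure on $\cD$ coincides with the $\cE$-module structure on $\cE\tens{\cE'}\cD$ produced by the general base-change construction; once this compatibility is established, the remaining verifications are routine manipulations with the identities $j_!j^*(x)=C\otimes x$ and $C\otimes C\simeq C.$
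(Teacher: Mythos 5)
Your overall plan --- identifying the sought left adjoint with restriction of scalars along $j^*$, describing the base change $\cE\tens{\cE'}\cD$ as the image of the idempotent $C\otimes-$, and verifying the adjunction and fully faithfulness explicitly --- matches the paper's proof, and the unit/counit you write down agree with the paper's.

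However, there is a genuine gap in your justification that $L(\cC)\in\Cat_{\cE'}^{\dual}$. You assert the $\cE'$-action $\cE'\otimes\cC\xto{j^*\otimes\id}\cE\otimes\cC\to\cC$ is strongly continuous ``since $j^*(x)\in\cE$ and the given $\cE$-action on $\cC$ is strongly continuous.'' But $j^*$ itself is continuous yet in general \emph{not} strongly continuous: its right adjoint $j_*$ is $\un{\Hom}_{\cE'}(C,-)$, which is continuous only when the coidempotent $C=\Fiber(1_{\cE'}\to A)$ is compact in $\cE'$, and typically it is not (e.g.\ $\cE'=D(\Z)$, $A=\Q$, $C=\Fiber(\Z\to\Q)$). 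Since $\cC$ is dualizable, $j^*\otimes\id_\cC$ is strongly continuous if and only if $j^*$ is, so your first factor fails to be strongly continuous and the strong continuity of the composite does not follow formally from that of the $\cE$-action. (Indeed the composite \emph{does} end up being strongly continuous, but for a reason you have not supplied.) The paper closes this gap by a different route: it shows directly that $\cC$ is dualizable as an $\cE'$-module, with the same dual $\cC^\vee$, the evaluation given by $\cC\tens{\cE'}\cC^\vee\simeq\cC\tens{\cE}\cC^\vee\xto{\ev_{\cC/\cE}}\cE\xto{j_!}\cE'$ and the coevaluation dually; then, since $\cE'$ is rigid, dualizable $\cE'$-modules coincide with $\cE'$-modules in $\Cat_{\st}^{\dual}$, and strong continuity of the action comes for free. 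You should replace your direct strong-continuity claim with this dualizability argument; once that is done, the rest of your verification is sound.
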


\begin{proof}
Note that we have fully faithful inclusion $\Mod_{\cE}(\Pr^L_{\st})\hto \Mod_{\cE'}(\Pr^L_{\st}),$ given by ``restriction of scalars'' via $j^*.$ If an $\cE$-module $\cC$ is dualizable over $\cE,$ then $\cC$ is also dualizable over $\cE'$ (hence over $\Sp$). Namely, if $\cC^{\vee}$ is the dual of $\cC$ over $\cE,$ then $\cC^{\vee}$ is also dual to $\cC$ over $\cE'.$ The corresponding evaluation and coevaluation functors are given by
\begin{equation*}
\ev_{\cC/\cE'}:\cC\tens{\cE'}\cC^{\vee}\xto{\sim}\cC\tens{\cE}\cC^{\vee}\xto{\ev_{\cC/\cE}}\cE\xto{j_!}\cE',
\end{equation*}
\begin{equation*}
\coev_{\cC/\cE'}:\cE'\xto{j^*}\cE\xto{\coev_{\cC/\cE}}\cC^{\vee}\tens{\cE}\cC\simeq \cC^{\vee}\tens{\cE'}\cC.
\end{equation*}
Furthermore, a lax $\cE$-linear continuous functor between $\cE$-modules is automatically $\cE$-linear. The stated adjunction is now straightforward: the unit and counit are given by
\begin{equation*}
\cC\xto{\sim}\cE\tens{\cE'}\cC,\quad \cC\in\Cat_{\cE}^{\dual},\quad \cE\tens{\cE'}\cD\xto{j_!\boxtimes\id}\cE'\tens{\cE'}\cD\simeq\cD,\quad \cD\in\Cat_{\cE'}^{\dual}.\qedhere
\end{equation*}
\end{proof}

\begin{prop}\label{prop:Mot^loc_over_locally_rigid}
In the situation of Proposition \ref{prop:Mot^loc_over_locally_rigid}, the induced functor $\Mot_{\cE'}^{\loc}\to \Mot_{\cE}^{\loc},$ $\cU_{\loc}(\cC)\mapsto\cU_{\loc}(\cE\tens{\cE'}\cC),$ has a fully faithful left adjoint. In particular, the essential image of $\Mot^{\loc}_{\cE}$ is a smashing ideal in $\Mot^{\loc}_{\cE'}.$ Hence, the symmetric monoidal category $\Mot^{\loc}_{\cE}$ is locally rigid.
\end{prop}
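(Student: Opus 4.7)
The plan is to lift the adjunction of Proposition \ref{prop:Cat^dual_over_locally_rigid} from dualizable modules to localizing motives. Write $L_0:\Cat_{\cE}^{\dual}\to\Cat_{\cE'}^{\dual}$ for the fully faithful left adjoint (restriction of scalars via $j^*$) and $R_0=\cE\tens{\cE'}(-):\Cat_{\cE'}^{\dual}\to\Cat_{\cE}^{\dual}$ for its right adjoint, so that $L_0\dashv R_0$ with unit $\eta_0:\id\xto{\sim}R_0L_0$ an equivalence. First I would verify that both $L_0$ and $R_0$ are finitary localizing invariants. For $L_0$ this is immediate, since restriction of scalars does not alter the underlying category in $\Cat_{\st}^{\dual}$ and therefore trivially preserves short exact sequences and filtered colimits. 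For $R_0$ the key input is that $\cE$ is dualizable as an $\cE'$-module (being a smashing ideal in a rigid symmetric monoidal category), so that $\cE\tens{\cE'}(-)$ commutes with colimits and preserves short exact sequences in $\Cat_{\cE'}^{\dual}.$ Composing with $\cU_{\loc}$ and invoking the universal property of the respective motive categories produces continuous exact functors
\[
L:\Mot^{\loc}_{\cE}\to\Mot^{\loc}_{\cE'},\qquad R:\Mot^{\loc}_{\cE'}\to\Mot^{\loc}_{\cE},
\]
with $L(\cU_{\loc}(\cC))=\cU_{\loc}(\cC_{\mid\cE'})$ and $R(\cU_{\loc}(\cD))=\cU_{\loc}(\cE\tens{\cE'}\cD).$

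Next I would transfer the categorical adjunction $L_0\dashv R_0$ to an adjunction $L\dashv R.$ The universal property of $\Mot^{\loc}_{\cE}$ applied to functor categories identifies natural transformations between continuous functors $\Mot^{\loc}_{\cE}\to\Mot^{\loc}_{\cE}$ (and similarly over $\cE'$) with natural transformations between the corresponding finitary localizing invariants on $\Cat^{\dual}_{\cE}.$ Applying this to $\eta_0$ and $\epsilon_0$ yields the unit $\eta:\id\to RL$ and counit $\epsilon:LR\to\id,$ and the triangle identities descend by functoriality. Since $\eta_0$ is an equivalence, $\eta$ is an equivalence on the generators $\cU_{\loc}(\cC),$ hence globally (the cofiber of $\eta$ being continuous and vanishing on a generating set), so $L$ is fully faithful.

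For the smashing ideal property of the essential image of $L,$ the key input is the projection-formula-type identity
\[
\cC_{\mid\cE'}\tens{\cE'}\cD\;\simeq\;\bigl(\cC\tens{\cE}(\cE\tens{\cE'}\cD)\bigr)_{\mid\cE'},\quad \cC\in\Cat_{\cE}^{\dual},\ \cD\in\Cat_{\cE'}^{\dual},
\]
which follows from the associativity $\cC\tens{\cE}\cE\tens{\cE'}\cD\simeq\cC\tens{\cE'}\cD$ together with the fact that the $\cE'$-action on $\cC$ factors through $j^*.$ Applying $\cU_{\loc}$ gives $L(x)\otimes y\simeq L(x\otimes R(y))$ on generators, extending by continuity to all $x\in\Mot^{\loc}_{\cE},$ $y\in\Mot^{\loc}_{\cE'}.$ Combined with continuity of $R,$ this identifies the essential image of $L$ with a localizing ideal whose inclusion admits a continuous $\cE'$-linear right adjoint, i.e.\ a smashing ideal in the rigid symmetric monoidal category $\Mot^{\loc}_{\cE'}.$ Since smashing ideals in rigid symmetric monoidal presentable stable categories are locally rigid (see Subsection \ref{ssec:locally_rigid}), it follows that $\Mot^{\loc}_{\cE}\simeq L(\Mot^{\loc}_{\cE})$ is locally rigid.

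The main technical point is the descent of the adjunction in Step~2: one must be careful to interpret the unit and counit of $(L_0,R_0)$ as morphisms in appropriate functor categories of finitary localizing invariants, and invoke the universal property there rather than merely on objects, in order to obtain \emph{bona fide} natural transformations on motives satisfying the triangle identities. Once this bookkeeping is set up, fully faithfulness and the ideal property are formal consequences of the corresponding statements at the dualizable-module level, and local rigidity is automatic from the general theory recalled in Subsection \ref{ssec:locally_rigid}.
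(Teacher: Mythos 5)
Your proof is correct and follows essentially the same approach as the paper, which simply states that the result "follows directly from Proposition \ref{prop:Cat^dual_over_locally_rigid}." Your proposal fills in the details of that derivation—transferring the adjunction $L_0\dashv R_0$ from $\Cat^{\dual}$ to $\Mot^{\loc}$ via the universal property, checking full faithfulness on generators, and using the associativity projection formula together with rigidity of $\Mot^{\loc}_{\cE'}$ to get the smashing-ideal and local-rigidity conclusions.
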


\begin{proof}
This follows directly from Proposition \ref{prop:Cat^dual_over_locally_rigid}.
\end{proof}

Now we prove the following result on the category of localizing motives over a rigidification of a locally rigid category.

\begin{theo}\label{th:localizing_motives_over_rigidification}
Let $\cE$ be a locally rigid symmetric monoidal category, such that the unit object $1_{\cE}$ is $\omega_1$-compact. Then the natural functor $\Mot_{\cE^{\rig}}\to(\Mot_{\cE}^{\loc})^{\rig}$ is fully faithful.
\end{theo}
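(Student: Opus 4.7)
The functor $F:\Mot_{\cE^{\rig}}^{\loc}\to (\Mot_{\cE}^{\loc})^{\rig}$ arises via the universal property of rigidification from the strongly continuous symmetric monoidal composition $\Mot_{\cE^{\rig}}^{\loc}\to\Mot_{\cE}^{\loc}$ induced by the natural functor $\cE^{\rig}\to\cE,$ using that $\Mot_{\cE^{\rig}}^{\loc}$ is rigid by Theorem~\ref{th:dualizability_and_rigidity}. Both source and target are rigid symmetric monoidal, and $F$ is strongly continuous (its target has compact unit); the projection formula therefore gives $F^R F(X)\simeq X\otimes F^R(1_{(\Mot_{\cE}^{\loc})^{\rig}})$ for every $X\in\Mot_{\cE^{\rig}}^{\loc}.$ Fully faithfulness of $F$ thus reduces to the identification $F^R(1_{(\Mot_{\cE}^{\loc})^{\rig}})\simeq 1_{\Mot_{\cE^{\rig}}^{\loc}},$ i.e.\ to checking that the natural map $\Hom_{\Mot_{\cE^{\rig}}^{\loc}}(X,1)\to\Hom_{(\Mot_{\cE}^{\loc})^{\rig}}(F(X),1)$ is an equivalence for every generator $X.$

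By the proof of Theorem~\ref{th:dualizability_and_rigidity}, the category $\Mot_{\cE^{\rig}}^{\loc}$ is generated as a localizing subcategory by motives of basic nuclear $\cE^{\rig}$-modules, so we may take $X=\cU_{\loc}(\cC)$ with $\cC\simeq\indlim[n]\cC_n,$ $\cC_n\in(\Cat_{\cE^{\rig}}^{\cg})^{\omega_1},$ and each transition $\cC_n\to\cC_{n+1}$ right trace-class over $\cE^{\rig}.$ Applying Theorem~\ref{th:morphisms_in_Mot^loc_via_limits} to this nuclear $\cE^{\rig}$-module identifies the left-hand side with
\begin{equation*}
\Hom_{\Mot_{\cE^{\rig}}^{\loc}}(\cU_{\loc}(\cC),\cU_{\loc}(\cE^{\rig}))\simeq\prolim[n]K^{\cont}(\cC_n^{\vee_{\cE^{\rig}}}).
\end{equation*}
For the right-hand side, since $1_{\cE}$ is $\omega_1$-compact so is the unit $1_{\Mot_{\cE}^{\loc}}=\cU_{\loc}(\cE),$ and diagram~\eqref{eq:how_rigidification_works} together with the strong continuity of $F$ gives the presentation $F(\cU_{\loc}(\cC))\simeq\inddlim[n]\cU_{\loc}(\cE\tens{\cE^{\rig}}\cC_n)$ inside $(\Mot_{\cE}^{\loc})^{\rig}\subset\Ind((\Mot_{\cE}^{\loc})^{\omega_1}),$ whose transitions are still trace-class because base change preserves trace-class morphisms. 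Since the unit of the rigidification is the Yoneda image of $\cU_{\loc}(\cE),$ combining this with the base-change identification $(\cE\tens{\cE^{\rig}}\cC_n)^{\vee_{\cE}}\simeq\cC_n^{\vee_{\cE^{\rig}}}\tens{\cE^{\rig}}\cE$ and Theorem~\ref{th:dualizability_and_rigidity}\ref{E_0_rigidity} over $\cE$ yields
\begin{equation*}
\Hom_{(\Mot_{\cE}^{\loc})^{\rig}}(F(\cU_{\loc}(\cC)),1)\simeq\prolim[n]K^{\cont}(\cC_n^{\vee_{\cE^{\rig}}}\tens{\cE^{\rig}}\cE).
\end{equation*}

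Matching the two pro-systems reduces to analyzing the cofiber sequence $\cI\to\cE^{\rig}\to\cE$ in $\Cat_{\cE^{\rig}}^{\dual},$ where $\cI=\ker(L:\cE^{\rig}\to\cE)$ is the smashing ideal complementary to the fully faithful symmetric monoidal embedding $\Lambda_{\cE}:\cE\hookrightarrow\cE^{\rig}$ supplied by \cite[Theorem~4.2]{E25}. After applying $\cC_n^{\vee_{\cE^{\rig}}}\tens{\cE^{\rig}}(-)$ and $K^{\cont},$ fully faithfulness amounts to the pro-vanishing statement
\begin{equation*}
\prolim[n]K^{\cont}(\cC_n^{\vee_{\cE^{\rig}}}\tens{\cE^{\rig}}\cI)=0.
\end{equation*}
This is the main obstacle. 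The plan is to exploit the factorization property from \cite[Theorem~4.2]{E25} that every trace-class map in $\cE^{\rig}$ decomposes as a composition of two trace-class maps, in order to refine the nuclear filtration $(\cC_n)$ cofinally so that the dual pro-system $(\cC_n^{\vee_{\cE^{\rig}}}\tens{\cE^{\rig}}\cI)_{n}$ has each transition factoring through objects absorbed by $\Lambda_{\cE}\circ L$; this should render the transitions pro-null in $\Cat_{\st}^{\dual},$ forcing pro-vanishing of the associated $K^{\cont}.$ Once the pro-vanishing is secured, the two pro-systems agree and the fully faithfulness of $F$ follows.
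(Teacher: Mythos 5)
Your structural reduction to checking that the unit map $1 \to F^R(1)$ is an equivalence is correct, and it is in fact equivalent to the criterion the paper uses (Lemma~\ref{lem:when_functor_to_rigidification_is_fully_faithful}: full faithfulness of $\cE' \to \cE^{\rig}$ is equivalent to $1_{\cE'} \xto{\sim} \un{\Hom}_{\cE'}(j_!1_{\cE},1_{\cE'})$). The gap is in how you verify the criterion. You set up a pro-system computation via Theorem~\ref{th:morphisms_in_Mot^loc_via_limits}, reduce to a pro-vanishing statement $\prolim_n K^{\cont}(\cC_n^{\vee_{\cE^{\rig}}}\tens{\cE^{\rig}}\cI)=0$, and then write ``The plan is \dots this should render the transitions pro-null.'' This last step is not a proof: you neither construct the cofinal refinement nor establish that the transitions become pro-null, and it is not at all clear that the factorization-of-trace-class-maps property (which \cite[Theorem 4.2]{E25} asserts for the locally rigid $\cE$, not for $\cE^{\rig}$, as you state) forces the relevant trace-class witnesses to land in $\Lambda_{\cE}(\cE)$. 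This is precisely where the content of the theorem resides, so the argument is incomplete. There are also secondary issues along the way: you invoke Theorem~\ref{th:dualizability_and_rigidity}\ref{E_0_rigidity} ``over $\cE$'' to pass to $K^{\cont}$ of a tensor product, but that theorem requires a rigid base and $\cE$ is only locally rigid; this would need an extra reduction.

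The paper's route is substantially cleaner and avoids the pro-system entirely. After invoking Lemma~\ref{lem:when_functor_to_rigidification_is_fully_faithful}, it suffices to show that the unit map $\cU_{\loc}(\cE^{\rig}) \to \un{\Hom}_{\Mot^{\loc}_{\cE^{\rig}}}(\cU_{\loc}(\cE),\cU_{\loc}(\cE^{\rig}))$ is an equivalence. Since $\cE$, regarded as a smashing ideal in $\cE^{\rig}$, is proper over $\cE^{\rig}$ and $\omega_1$-compact (by the hypothesis on $1_{\cE}$), Theorem~\ref{th:morphisms_in_Mot^loc_via_internal_Hom}\ref{internal_Hom_from_proper} gives
\begin{equation*}
\un{\Hom}_{\Mot^{\loc}_{\cE^{\rig}}}(\cU_{\loc}(\cE),\cU_{\loc}(\cE^{\rig}))\cong \cU_{\loc}\bigl(\un{\Hom}_{\cE^{\rig}}^{\dual}(\cE,\cE^{\rig})\bigr),
\end{equation*}
and \cite[Proposition 4.1]{E25} identifies $\un{\Hom}_{\cE^{\rig}}^{\dual}(\cE,\cE^{\rig})\simeq \cE^{\rig}$. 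So the one nontrivial input is the categorical computation of that internal $\Hom$ in $\Cat^{\dual}_{\cE^{\rig}}$, not a $K$-theoretic pro-vanishing. I would suggest reworking your argument to use Theorem~\ref{th:morphisms_in_Mot^loc_via_internal_Hom}\ref{internal_Hom_from_proper} at the level of the internal $\Hom$, rather than the morphism spectra of Theorem~\ref{th:morphisms_in_Mot^loc_via_limits}, at which point the pro-vanishing problem disappears.
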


We need the following almost tautological observation.

\begin{lemma}\label{lem:when_functor_to_rigidification_is_fully_faithful}
In the situation of Proposition \ref{prop:Cat^dual_over_locally_rigid}, denote by $j_*:\cE\to\cE'$ the right adjoint to $j^*.$ The following are equivalent.
\begin{enumerate}[label=(\roman*),ref=(\roman*)]
	\item The natural functor $\Phi:\cE'\to\cE^{\rig}$ is fully faithful.
	\item We have an isomorphism $1_{\cE'}\xto{\sim}j_* 1_{\cE}\cong \un{\Hom}_{\cE'}(j_! 1_{\cE},1_{\cE'}).$
\end{enumerate}
\end{lemma}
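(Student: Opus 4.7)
The plan is to reduce fully faithfulness of $\Phi$ to a condition on the unit object (using that both $\cE'$ and $\cE^{\rig}$ are rigid), and then explicitly compute $\Phi^R(1_{\cE^{\rig}}).$

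First I would record the following general principle: for a continuous symmetric monoidal functor $F:\cA\to\cB$ between rigid symmetric monoidal presentable stable categories, $F$ is fully faithful if and only if the unit map $1_{\cA}\to F^R(1_{\cB})$ is an isomorphism. The key input is that the right adjoint $F^R$ is automatically continuous and $\cA$-linear (projection formula), as recalled in Subsection \ref{ssec:rigid_monoidal}. Hence for any $x\in\cA$ we have a natural identification
\begin{equation*}
F^R F(x)\cong F^R(F(x)\otimes 1_{\cB})\cong x\otimes F^R(1_{\cB}),
\end{equation*}
under which the unit $x\to F^R F(x)$ becomes $\id_x$ tensored with $1_{\cA}\to F^R(1_{\cB}).$ Applying this to $\Phi,$ fully faithfulness is equivalent to $1_{\cE'}\xrightarrow{\sim}\Phi^R(1_{\cE^{\rig}}).$

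Next I would identify $\Phi^R(1_{\cE^{\rig}})$ with $j_*(1_{\cE}).$ Denote by $p:\cE^{\rig}\to\cE$ the natural (counit) functor from the rigidification adjunction; it is continuous symmetric monoidal, and by the defining universal property of rigidification applied to $j^*:\cE'\to\cE$ we have $p\circ\Phi=j^*.$ Its right adjoint $\Lambda_{\cE}:\cE\to\cE^{\rig}$ is symmetric monoidal by \cite[Theorem 4.2]{E25} (applicable since $1_{\cE}$ is $\omega_1$-compact), so in particular $\Lambda_{\cE}(1_{\cE})\cong 1_{\cE^{\rig}}.$ Taking right adjoints of both sides of $p\circ\Phi=j^*$ gives $\Phi^R\circ\Lambda_{\cE}\cong j_*,$ and evaluating at $1_{\cE}$ yields
\begin{equation*}
\Phi^R(1_{\cE^{\rig}})\cong \Phi^R(\Lambda_{\cE}(1_{\cE}))\cong j_*(1_{\cE}).
\end{equation*}

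Finally I would verify the identification $j_*(1_{\cE})\cong \un{\Hom}_{\cE'}(j_!(1_{\cE}),1_{\cE'}).$ Since $\cE\subset\cE'$ is a smashing ideal, the projection formula for $j_!$ gives $x\otimes j_!(1_{\cE})\cong j_!(j^*(x))$ for $x\in\cE',$ hence
\begin{equation*}
\Map_{\cE'}(x,\un{\Hom}_{\cE'}(j_!(1_{\cE}),1_{\cE'}))\cong \Map_{\cE'}(j_!(j^*(x)),1_{\cE'})\cong \Map_{\cE}(j^*(x),1_{\cE})\cong \Map_{\cE'}(x,j_*(1_{\cE})).
\end{equation*}
Combining the three steps yields the equivalence of (i) and (ii). The only potentially subtle point is the projection formula for $\Phi^R$ in the general principle, but this is automatic since $\Phi$ is a morphism in $\CAlg^{\rig}(\Pr^L_{\st}),$ as recalled in Subsection \ref{ssec:rigid_monoidal}; the rest is a straightforward formal manipulation.
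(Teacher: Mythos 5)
Your proof is correct and follows the same overall skeleton as the paper: reduce fully faithfulness to the statement that the unit map $1_{\cE'}\to\Phi^R(1_{\cE^{\rig}})$ is an isomorphism, then identify $\Phi^R(1_{\cE^{\rig}})$ with $j_*1_{\cE}$, so that (i) and (ii) become literally the same condition. Your final step, unwinding the isomorphism $j_*1_{\cE}\cong\un{\Hom}_{\cE'}(j_!1_{\cE},1_{\cE'})$ via the projection formula for the smashing ideal, spells out something the paper treats as standard; it is a useful addition.

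The meaningful divergence is in how $\Phi^R(1_{\cE^{\rig}})\cong j_*1_{\cE}$ is established. You reduce it to $\Lambda_{\cE}(1_{\cE})\cong 1_{\cE^{\rig}}$ and then invoke the symmetric monoidality of $\Lambda_{\cE}$ from \cite[Theorem 4.2]{E25}. This works, but it is a heavy input: as the paper itself remarks in Subsection~\ref{ssec:locally_rigid}, the monoidality of $\Lambda_{\cE}$ is a difficult statement whose proof requires $1_{\cE}$ to be $\omega_1$-compact. The paper instead gets $\Lambda_{\cE}(1_{\cE})\cong 1_{\cE^{\rig}}$ directly from the explicit model $\cE^{\rig}\subset\Ind(\cE^{\kappa})$: the right adjoint to $\colim:\Ind(\cE^{\kappa})\to\cE$ sends $1_{\cE}$ to $\cY(1_{\cE})$, which already lies in $\cE^{\rig}$ and is its unit; no information about the monoidal structure of $\Lambda_{\cE}$ beyond this one object is needed. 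This observation is elementary and works for any regular $\kappa$ with $1_{\cE}\in\cE^{\kappa}$, so the paper's route establishes the lemma under the hypotheses of Proposition~\ref{prop:Cat^dual_over_locally_rigid} alone, whereas your route silently imports the $\omega_1$-compactness assumption (which is not part of the lemma's statement, though it does hold in the one place the lemma is applied, namely Theorem~\ref{th:localizing_motives_over_rigidification}).
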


\begin{proof}
Denote by $\Phi^R$ the right adjoint to $\Phi.$ Consider the factorization $\cE'\xto{\Phi}\cE^{\rig}\to\cE$ of the functor $j^*.$ It suffices to prove that the natural map $\Phi^R(1_{\cE^{\rig}})\to j_* 1_{\cE}$ is an isomorphism. This in turn reduces to the special case $\cE'=\cE^{\rig},$ so we will assume this from now on. 

Consider the composition $j^*:\cE^{\rig}\hto\Ind(\cE^{\kappa})\xto{\colim}\cE,$ where $\kappa$ is a regular cardinal such that $1_{\cE}$ is $\kappa$-compact. The right adjoint to the functor $\colim$ sends $1_{\cE}$ to the unit object $\cY(1_{\cE}),$ which is contained in $\cE^{\rig}.$ Hence, we have $1_{\cE^{\rig}}\cong j_*1_{\cE},$ as stated. 
\end{proof}

\begin{proof}[Proof of Theorem \ref{th:localizing_motives_over_rigidification}]
We consider the category $\cE$ as a dualizable module over $\cE^{\rig}$ and also as a smashing ideal in $\cE^{\rig}$ (recall that the functor $\cE^{\rig}\to\cE$ has a fully faithful left adjoint). Then $\cE$ is proper over $\cE^{\rig}$ and $\omega_1$-compact in $\Cat_{\cE^{\rig}}^{\dual}$ by assumption. By Theorem \ref{th:morphisms_in_Mot^loc_via_internal_Hom} \ref{internal_Hom_from_proper}, we have
\begin{equation*}
\un{\Hom}_{\Mot^{\loc}_{\cE^{\rig}}}(\cU_{\loc}(\cE),\cU_{\loc}(\cE^{\rig}))\cong \cU_{\loc}(\un{\Hom}_{\cE^{\rig}}^{\dual}(\cE,\cE^{\rig})).
\end{equation*}
By \cite[Proposition 4.1]{E25} we have
\begin{equation*}
\un{\Hom}_{\cE^{\rig}}^{\dual}(\cE,\cE^{\rig})\simeq \cE^{\rig}.
\end{equation*}
The theorem now follows from Lemma \ref{lem:when_functor_to_rigidification_is_fully_faithful}.
\end{proof}

\begin{ques}
Is it true that the functor in Theorem \ref{th:localizing_motives_over_rigidification} is an equivalence?
\end{ques}

\section{$G$-equivariant localizing motives}
\label{sec:G_equivariant_motives}

Let $G$ be a (discrete) group. Denote by ($\Cat_{\st}^{\dual})^{BG}=\Fun(BG,\Cat_{\st}^{\dual})$ the category of dualizable categories with a $G$-action. In this section we consider the universal finitary localizing invariant $\cU_{\loc,BG}:(\Cat_{\st}^{\dual})^{BG}\to\Mot^{\loc}_{BG}.$ All the results in this section are essentially corollaries of our general results on localizing motives and on dualizable modules over a rigid $\bE_1$-monoidal category.

We can also replace $BG$ with a general space. However, to illustrate the ideas we restrict to this special case. . 

\subsection{Dualizability and self-duality}

We first deduce from Theorem \ref{th:dualizability_and_rigidity} the basic properties of the category $\Mot^{\loc}_{BG}.$

\begin{theo}\label{th:G_equivariant_motives}
	Let $G$ be a group. Then the category $\Mot^{\loc}_{BG}$ is dualizable and self-dual. More precisely, we have a perfect pairing
	\begin{equation*}
		\Mot^{\loc}_{BG}\otimes\Mot^{\loc}_{BG}\to \Sp,\quad \cU_{\loc,G}(\cC)\boxtimes\cU_{\loc,G}(\cD)\mapsto K^{\cont}((\cC\otimes\cD)^{\h G}).
	\end{equation*}
\end{theo}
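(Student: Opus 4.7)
The plan is to apply Theorem~\ref{th:dualizability_and_rigidity}\ref{E_0_rigidity} to the symmetric monoidal category $\cE = \Sp^{BG} = \Fun(BG,\Sp)$ with its pointwise tensor product. First I would establish a symmetric monoidal equivalence $\Cat^{\dual}_{\Sp^{BG}} \simeq (\Cat^{\dual}_{\st})^{BG},$ identifying a dualizable left $\Sp^{BG}$-module with a dualizable stable category carrying a $G$-action (the $\Sp^{BG}$-action being assembled from the underlying $\Sp$-module structure together with the group action by $\Sp$-linear autoequivalences). This induces $\Mot^{\loc}_{\Sp^{BG}} \simeq \Mot^{\loc}_{BG}$ as symmetric monoidal presentable stable categories, in a manner compatible with $\cU_{\loc}.$

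Since $\cE = \Sp^{BG}$ is symmetric monoidal we have $\cE^{mop} = \cE,$ so Theorem~\ref{th:dualizability_and_rigidity}\ref{E_0_rigidity} would yield $(\Mot^{\loc}_{BG})^{\vee} \simeq \Mot^{\loc}_{BG}$ with pairing $\cU_{\loc}(\cC)\boxtimes\cU_{\loc}(\cD)\mapsto K^{\cont}(\cD \tens{\Sp^{BG}} \cC).$ It remains to identify $\cD \tens{\Sp^{BG}} \cC \simeq (\cC \otimes \cD)^{hG}$ in $\Cat^{\dual}_{\st}.$ Unwinding the definition, the relative tensor product is the $\Sp^{B(G\times G)}$-tensor of $\cC \otimes \cD$ (with the outer $G\times G$-action) against $\Sp^{BG}$ along the diagonal restriction; using the self-duality $\Sp^{BG} \simeq (\Sp^{BG})^{\vee}$ in $\Pr^L_{\st}$ induced by the anti-involution $g \mapsto g^{-1}$ of $\mathbb{S}[G],$ this rewrites as the mapping object from $\mathbb{S}_{\triv}$ into $\cC \otimes \cD$ with its diagonal $G$-action, namely $(\cC \otimes \cD)^{hG}.$

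The main obstacle is verifying the rigidity of $\cE = \Sp^{BG}.$ For finite $G$ it is immediate: $\Sp^{BG}$ is compactly generated by the dualizable objects $\mathbb{S}[G/H],$ and the unit $\mathbb{S}_{\triv}$ is compact since $(-)^{hG} = \Hom(\mathbb{S}_{\triv},-)$ preserves filtered colimits when $BG$ is a compact space. For infinite discrete $G$ the unit is no longer compact, and $\Sp^{BG}$ fails even to be locally rigid (its compact generator $\mathbb{S}[G]$ is not dualizable, since the dual $\prod_G\mathbb{S}$ is not compact). To salvage the argument in general, one should either (i)~exhibit a larger rigid $\bE_1$-monoidal category $\cE$ whose module theory still recovers $(\Cat^{\dual}_{\st})^{BG}$ on the level of localizing motives (so that $\Mot^{\loc}_{\cE} \simeq \Mot^{\loc}_{BG}$), or (ii)~bypass the rigidity hypothesis on $\cE$ by directly invoking Theorem~\ref{th:conditions_for_E_0_rigidity} applied to the $\Pr^L_{\st}$-enriched $2$-category with objects $\Sp$ and $BG$ and mapping category $\Mot^{\loc}_{BG}$ between them; the required sequential colimits with right/left trace-class transition $2$-morphisms would be produced from the resolutions of Proposition~\ref{prop:resolution_by_nuclear}, with the symmetry between the two trace-class conditions supplied precisely by the self-opposition $\mathbb{S}[G] \cong \mathbb{S}[G]^{op}.$ Route~(ii) seems the more robust approach, as it only requires compactness of $\cU_{\loc}(\Sp) \in \Mot^{\loc}_{BG}$ (known from the absolute case) rather than compactness of $\mathbb{S}_{\triv} \in \Sp^{BG}.$
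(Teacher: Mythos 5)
You correctly diagnose the fatal problem with taking $\cE = \Sp^{BG}$ with its pointwise tensor product: for infinite discrete $G$ the unit is not compact and this category is not even locally rigid. But neither of your two proposed fixes is actually carried out, and route~(ii) does not avoid the difficulty: Proposition~\ref{prop:resolution_by_nuclear} is a statement about resolutions of relatively compactly generated modules over a \emph{rigid} $\bE_1$-monoidal base, so invoking it still presupposes a rigid $\cE$ whose dualizable modules recover $(\Cat^{\dual}_{\st})^{BG}$ — which is exactly what route~(i) asks for. Moreover, Theorem~\ref{th:conditions_for_E_0_rigidity} requires conditions \ref{cond_right_trace_class} \emph{and} \ref{cond_left_trace_class} in addition to compactness of $1_X$, so the sentence claiming that route~(ii) "only requires compactness of $\cU_{\loc}(\Sp)$" undersells what must be verified.

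The missing idea is to abandon $\Sp^{BG}$ altogether and take $\cE = \prodd[G]\Sp \simeq \Loc(G)$, equipped with the \emph{convolution} $\bE_1$-monoidal structure coming from the group structure on the discrete space $G$ (formally: $\Loc(-)$ is covariantly symmetric monoidal on spaces, so it sends the $\bE_1$-group $G$ to an $\bE_1$-algebra in $\Cat^{\dual}_{\st}$). This $\cE$ is rigid: it is compactly generated by the $\delta$-function objects $\bS_g$, $g\in G$, each of which is invertible and in particular dualizable, and the unit $\bS_1$ is compact. Because $\Loc(-)$ is symmetric monoidal, left $\Loc(G)$-modules in $\Cat^{\dual}_{\st}$ are precisely dualizable categories with $G$-action, so $\Cat^{\dual}_{\cE} \simeq (\Cat^{\dual}_{\st})^{BG}$ and hence $\Mot^{\loc}_{\cE} \simeq \Mot^{\loc}_{BG}$. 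The inversion $g\mapsto g^{-1}$ gives a monoidal equivalence $\cE \simeq \cE^{mop}$, and Theorem~\ref{th:dualizability_and_rigidity}~\ref{E_0_rigidity} then yields dualizability, self-duality, and the stated pairing in one stroke. Your intuition that the self-opposition of the group ring should enter is right, but it enters as the identification $\cE\simeq\cE^{mop}$ for $\cE=\Loc(G)$, not as a self-duality of the non-rigid $\Sp^{BG}$. Note also that $\Loc(G)$ is not "larger" than $\Sp^{BG}$ in any meaningful sense — it is a different category (modules over $\prodd[G]\bS$, not over $\bS[G]$), so the phrasing in route~(i) is misleading even though the instinct behind it is sound.
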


\begin{proof}
	Consider the category $\cE=\prodd[G]\Sp$ -- the product of copies of $\Sp$ indexed by elements of $G.$ Then $\cE$ is naturally an $\bE_1$-monoidal category via the convolution product
	\begin{equation*}
		-\star-:\cE\otimes\cE\to \cE,\quad (x\star y)_g = \biggplus[h\in G] x_h\otimes y_{h^{-1}g}.
	\end{equation*}
	Formally, if for a space $X$ we denote by $\Loc(X)=\Sp^X$ the category of $\Sp$-valued local systems on $X,$ then the (covariant) functor $\cS\to\Cat_{\st}^{\dual},$ $X\mapsto \Loc(X),$ is naturally symmetric monoidal. Hence it sends $\bE_1$-algebras in $\cS$ to $\bE_1$-algebras in $\Cat_{\st}^{\dual}.$ This gives the above convolution product on $\Loc(G)=\prodd[G]\Sp=\cE.$
	
	We have the ``delta-function'' objects $\bS_g\in E,$ given by
	\begin{equation*}
		(\bS_g)_h=\begin{cases}\bS & \text{for }h=g;\\
			0 & \text{for }h\ne g.\end{cases},\quad g,h\in G.
	\end{equation*}
	Then $\bS_1$ is the unit object of $\cE,$ and $\bS_g\star \bS_h\cong \bS_{gh}.$ In particular, the objects $\bS_g$ are invertible. They are also compact in $\cE,$ which proves that $\cE$ is a rigid $\bE_1$-monoidal category.
	
	We have natural equivalences
	\begin{equation*}
		(\Cat_{\st}^{\dual})^{BG}\simeq \Cat_{\cE}^{\dual},\quad \Mot^{\loc}_{BG}\simeq \Mot_{\cE}^{\loc}.
	\end{equation*}
	Moreover, we have a natural monoidal equivalence $\cE\simeq\cE^{mop},$ induced by $G\xto{\sim}G^{op},$ $g\mapsto g^{-1}.$ The assertions of the theorem are now obtained as a special case of Theorem \ref{th:dualizability_and_rigidity} \ref{E_0_rigidity}.
\end{proof}

For a connected space $X$ one obtains a similar result for the universal finitary localizing invariant $\cU_{\loc,X}:(\Cat_{\st}^{\dual})^X\to \Mot^{\loc}_X,$ using the rigid $\bE_1$-monoidal category of local systems on the based loop space of $X.$ If $X$ is not connected, then $\Mot^{\loc}_X$ is simply the product of $\Mot^{\loc}_{X_i}$ over the connected components $X_i$ of $X,$ $i\in\pi_0(X).$  

We consider the category $\Mot^{\loc}_{BG}$ as a symmetric monoidal category, where the monoidal product is induced by the Lurie tensor product (over $\Sp$) in $(\Cat_{\st}^{\dual})^{BG}.$ This corresponds to the (cocommutative) Hopf algebra structure on $\prodd[G]\Sp$ in $\Cat_{\st}^{\dual}.$ One can show that the unit object $\cU_{\loc}(\Sp)\in\Mot^{\loc}_{BG}$ is not compact unless $G$ trivial.

\subsection{The case of finite groups}

In the case of a finite group $G$ we have the following result on morphisms from the unit object in $\Mot^{\loc}_{BG}$ (part \ref{invariants_exact} was announced in \cite[Section 3.6]{E25}).

\begin{theo}
	Let $G$ be a finite group. For $\cC\in(\Cat_{\st}^{\dual})^{BG},$ we denote by $\cC^{\h G,\dual}\in\Cat_{\st}^{\dual}$ the dualizable limit over $BG.$
	
	\begin{enumerate}[label=(\roman*),ref=(\roman*)]
		\item The functor $(-)^{\h G,\dual}:(\Cat_{\st}^{\dual})^{BG}\to \Cat_{\st}^{\dual}$ takes short exact sequences to short exact sequences. \label{invariants_exact}
		\item For $\cC\in(\Cat_{\st}^{\dual})^{BG}$ we have an isomorphism
		\begin{equation*}
			\Hom_{\Mot^{\loc}_{BG}}(\cU_{\loc}(\Sp),\cU_{\loc}(\cC))\cong K^{\cont}(\cC^{\h G,\dual}).
		\end{equation*}
		Here we consider $\Sp$ as a category with trivial $G$-action. \label{self_duality_Mot_loc_G}
	\end{enumerate}
\end{theo}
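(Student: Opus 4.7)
The plan is to deduce both parts from Theorem \ref{th:G_equivariant_motives} together with the ambidexterity phenomenon for finite group actions on dualizable stable $\infty$-categories. Throughout I identify $(\Cat_{\st}^{\dual})^{BG} \simeq \Cat_{\cE}^{\dual}$ for $\cE = \prodd[G]\Sp$ under convolution, as in the proof of that theorem, so that $\Mot^{\loc}_{BG} \simeq \Mot^{\loc}_{\cE}.$

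Part \ref{self_duality_Mot_loc_G} is a direct specialization of Theorem \ref{th:G_equivariant_motives}. The self-duality of $\Mot^{\loc}_{BG}$ there arises via the identification $\cE \simeq \cE^{mop}$ induced by $g \mapsto g^{-1},$ which fixes the trivial $\cE$-module $\Sp;$ hence $\cU_{\loc}(\Sp)$ is self-dual under the pairing. Setting $\cD = \Sp$ with trivial action and using $\cC \otimes \Sp \simeq \cC$ in $(\Cat_{\st}^{\dual})^{BG},$ one obtains
\begin{equation*}
\Hom_{\Mot^{\loc}_{BG}}(\cU_{\loc}(\Sp), \cU_{\loc}(\cC)) \simeq \ev(\cU_{\loc}(\cC), \cU_{\loc}(\Sp)) \simeq K^{\cont}(\cC^{hG,\dual}),
\end{equation*}
where the ``$hG$'' in Theorem \ref{th:G_equivariant_motives} is interpreted as the dualizable limit, which is the natural notion of homotopy fixed points in $\Cat_{\st}^{\dual}.$

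Part \ref{invariants_exact} uses finite-group ambidexterity to identify coinvariants and invariants: for finite $G,$ there is a canonical isomorphism $(-)_{hG,\dual} \simeq (-)^{hG,\dual}$ as functors $\Cat_{\cE}^{\dual} \to \Cat_{\st}^{\dual}.$ This reflects the Frobenius structure on $\cE$ -- the finite product $\prodd[G]\Sp$ agrees with the finite coproduct $\biggplus[G]\Sp$ in $\Cat_{\st}^{\dual},$ so the summation functor $\cE \to \Sp$ has equal left and right adjoints -- and this symmetry transports to module categories. Granted the identification, $(-)^{hG,\dual}$ is simultaneously a left adjoint (to the trivial-action functor, via coinvariants realized by base change along the augmentation) and a right adjoint (the defining universal property of invariants). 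Being both, it preserves both limits and colimits, hence both fibers and cofibers in $\Cat_{\st}^{\dual};$ in particular it sends fully faithful embeddings to fully faithful embeddings and quotient functors to quotient functors, so it takes short exact sequences to short exact sequences.

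The main obstacle is to verify ambidexterity precisely at the level of $\Cat_{\cE}^{\dual},$ namely the identification $\un{\Hom}_{\cE}^{\dual}(\Sp, -) \simeq \Sp \tens{\cE} -$ for the trivial $\cE$-module $\Sp.$ Equivalently, one must show that $\Sp$ is dualizable over $\cE$ with dual isomorphic to $\Sp$ itself (using $\cE \simeq \cE^{mop}$), and construct a norm-type equivalence from the Frobenius pairing on $\cE.$ This verification rests on the biproduct decomposition $\prodd[G]\Sp \simeq \biggplus[G]\Sp$ valid for finite $G$ together with the rigidity machinery of $\Cat_{\cE}^{\dual}$ developed earlier in the paper.
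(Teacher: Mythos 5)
The paper's proof is very short: it observes that $\Sp$ (with trivial $G$-action) is a \emph{proper} left $\cE$-module over $\cE = \prodd[G]\Sp$ -- the relative evaluation sends $\bS$ to $\biggplus[g\in G]\bS_g$, which is compact in $\cE$ precisely because $G$ is finite -- and then both parts fall out of results already proved for proper modules: part (i) is a special case of the internal projectivity theorem \cite[Theorem 3.6]{E25}, and part (ii) is a special case of Theorem \ref{th:morphisms_in_Mot^loc_via_internal_Hom} \ref{internal_Hom_from_proper}. Your proposal takes a genuinely different route, but it has a concrete gap in each part.

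For part \ref{self_duality_Mot_loc_G}, the step $\Hom_{\Mot^{\loc}_{BG}}(\cU_{\loc}(\Sp), \cU_{\loc}(\cC)) \simeq \ev(\cU_{\loc}(\cC), \cU_{\loc}(\Sp))$ does not follow from Theorem \ref{th:G_equivariant_motives}. The perfect pairing on $\Mot^{\loc}_{BG}$ lets you express $\Hom(x,-)$ as $\ev(-, x^{\vee})$ only when $x$ is compact, so that $\Hom(x,-)$ is continuous and corresponds to an object of $(\Mot^{\loc}_{BG})^{\vee}$. But $\cU_{\loc}(\Sp)$ is \emph{not} compact in $\Mot^{\loc}_{BG}$ for $G$ non-trivial -- the paper explicitly makes this remark immediately after Theorem \ref{th:G_equivariant_motives}. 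Self-duality of the ambient category $\Mot^{\loc}_{BG}$ does not give you a canonical ``dual'' of a non-compact object against which the evaluation computes $\Hom$. The paper circumvents this precisely via Theorem \ref{th:morphisms_in_Mot^loc_via_internal_Hom} \ref{internal_Hom_from_proper}, which identifies $\Hom_{\Mot^{\loc}_{\cE}}(\cU_{\loc}(\cC),\cU_{\loc}(\cD))$ with $K^{\cont}(\un{\Hom}_{\cE}^{\dual}(\cC,\cD))$ whenever $\cC$ is proper and $\omega_1$-compact over $\cE$ -- which $\Sp$ is.

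For part \ref{invariants_exact}, your argument hinges on the ambidexterity $(-)_{\h G,\dual} \simeq (-)^{\h G,\dual}$ in $\Cat_{\st}^{\dual}$, which you flag as ``the main obstacle'' and do not actually verify. Note that this is strictly stronger than what is needed. Ambidexterity here amounts to $\Sp$ being simultaneously \emph{smooth} and \emph{proper} over $\cE$, and to a self-duality $\un{\Hom}^{\dual}_{\cE}(\Sp,\cE)\simeq\Sp$ compatible with the antipode $\cE\simeq\cE^{mop}$; only then do you get $\un{\Hom}^{\dual}_{\cE}(\Sp,\cC)\simeq\Sp\tens{\cE}\cC$ for all $\cC$ via Proposition \ref{prop:from_smooth_to_proper}. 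The paper's proof of \ref{invariants_exact} sidesteps all of this: \cite[Theorem 3.6]{E25} shows that $\un{\Hom}_{\cE}^{\dual}(\cC,-)$ preserves short exact sequences whenever $\cC$ is proper (the coevaluation need not be strongly continuous), so properness alone suffices. Your observation $\prodd[G]\Sp\simeq\biggplus[G]\Sp$ gives the $0$-semiadditivity of $\Cat_{\st}^{\dual}$ but does not by itself produce the norm equivalence for $BG$-shaped diagrams; that requires a separate argument, and it is not supplied here nor in the paper.
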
 

\begin{proof}
	Consider the $\bE_1$-monoidal category $\cE=\prodd[G]\Sp$ from the proof of Theorem \ref{th:G_equivariant_motives}, and use the notation from therein. We consider the category $\Sp$ with trivial $G$-action as a left $\cE$-module. Then $\Sp$ is proper over $\cE.$ Namely, the evaluation functor functor $\Sp\simeq\Sp\otimes\Sp\to\cE$ sends $\bS$ to $\biggplus[g\in G]\bS_g,$ and the latter object is compact since $G$ is finite.
	
	Now \ref{invariants_exact} is a special case of \cite[Theorem 3.6]{E25}, and \ref{self_duality_Mot_loc_G} is a special case of Theorem \ref{th:morphisms_in_Mot^loc_via_internal_Hom} \ref{internal_Hom_from_proper}.
\end{proof}

The same argument applies when we replace $BG$ with a general loop-finite space $X,$ which means that each based loop space of $X$ is finitely dominated (compact in $\cS$). 

\subsection{Observations on the $K$-theoretic Farrell-Jones conjecture}

Self-duality of the category $\Mot^{\loc}_{BG}$ described in Theorem \ref{th:G_equivariant_motives} allows to obtain several equivalent formulations of the $K$-theoretic Farrell-Jones conjecture.  We first recall its statement. 

Let $G$ be a (discrete) group. We denote by $\Vcyc$ the family of virtually cyclic subgroups of $G.$ Recall that a group is called virtually cyclic if it contains a cyclic subgroup of finite index. We denote by $G\hy\Orb$ the category of $G$-orbits (transitive $G$-sets), and by $G_{\Vcyc}\hy\Orb\subset G\hy\Orb$ the full subcategory of $G$-orbits with stabilizers in $\Vcyc.$ The Farrell-Jones conjecture for the group $G$ and an idempotent-complete stable category $\cC$ with a $G$-action states that we have an isomorphism:
\begin{equation*}
	\colim\limits_{X\in G_{\Vcyc}\hy\Orb} K^{\cont}((\Ind(\cC)\otimes\prodd[X]\Sp)^{\h G})\xto{\sim} K^{\cont}(\Ind(\cC)^{\h G}).
\end{equation*}
Here we of course take continuous $K$-theory of compactly generated categories: for $X=G/H\in G\hy\Orb$ we have
\begin{equation*}
	K^{\cont}((\Ind(\cC)\otimes\prodd[X]\Sp)^{\h G})\simeq K(\cC_{\h H}).
\end{equation*}
We refer to \cite{Luc25} for a detailed account, the original version of the conjecture was formulated in \cite{FJ93}. The conjecture is known for many classes of groups \cite{AFR00, BLR08, BL12, Weg12, BFL14, GMR15, Weg15, KLR16, Rup16, Wu16, BB19, BKW21}

We will need some notation. For a subgroup $H\subset G$ the restriction functor $\Res^G_H:(\Cat_{\st}^{\dual})^{BG}\to (\Cat_{\st}^{\dual})^{BH}$ has a left adjoint which we call induction and denote by $\Ind_H^G:(\Cat_{\st}^{\dual})^{BH}\to (\Cat_{\st}^{\dual})^{BG}.$ It is given by
\begin{equation*}
	\Ind_H^G(\cC)=(\prodd[G]\Sp)\tens{\prodd[H]\Sp}\cC,\quad \cC\in (\Cat_{\st}^{\dual})^{BH}.
\end{equation*} 
These functors induce the adjoint pair on the corresponding categories of localizing motives:
\begin{equation*}
	\Ind_H^G:\Mot^{\loc}_{BH} \rightleftharpoons \Mot^{\loc}_{BG}:\Res^G_H.
\end{equation*}

\begin{prop}\label{prop:Farrell-Jones_reformulations}
	Let $G$ be a (discrete) group. The following are equivalent.
	\begin{enumerate}[label=(\roman*),ref=(\roman*)]
		\item The $K$-theoretic Farrell-Jones conjecture holds for all $\cC\in(\Cat^{\perf})^{BG}.$ \label{Farrell_Jones}
		\item The category $\Mot^{\loc}_{BG}$ is generated as a localizing subcategory by the images of induction functors $\Ind_H^G:\Mot^{\loc}_{BH}\to \Mot^{\loc}_{BG},$ where $H$ runs through virtually cyclic subgroups of $G.$ \label{inductions_from_virtually_cyclic}
		\item The unit object $\cU_{\loc,G}(\Sp)\in \Mot^{\loc}_{BG}$ is contained in the localizing subcategory generated by the images of induction functors as in \ref{inductions_from_virtually_cyclic}. \label{unit_object_generated}
		\item We have an isomorphism in $\Mot^{\loc}_{BG}:$
		\begin{equation}\label{eq:colimit_in_Mot_loc_G}
			\indlim[X\in G_{\Vcyc}\hy \Orb]\cU_{\loc,G}(\prodd[X]\Sp)\xto{\sim}\cU_{\loc,G}(\Sp).
		\end{equation} \label{colimit_in_Mot_loc_G}
	\end{enumerate}
\end{prop}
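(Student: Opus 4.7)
The plan is to establish the cycle (iv) $\Rightarrow$ (iii) $\Rightarrow$ (ii) $\Rightarrow$ (iv), together with the equivalence (iv) $\Leftrightarrow$ (i), using the self-duality of $\Mot^{\loc}_{BG}$ from Theorem \ref{th:G_equivariant_motives} throughout. The implication (iv) $\Rightarrow$ (iii) is immediate: for $X=G/H\in G_{\Vcyc}\hy\Orb$ one has
\begin{equation*}
\cU_{\loc,G}(\prodd[X]\Sp)\cong \Ind_H^G(\cU_{\loc,H}(\Sp))
\end{equation*}
under the identification of induction with extension of scalars along $\prodd[H]\Sp\to\prodd[G]\Sp$, so the colimit in (iv) already lies in the localizing subcategory generated by images of inductions from $\Vcyc$-subgroups. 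For (iii) $\Rightarrow$ (ii) I would invoke the projection formula $\Ind_H^G(y)\otimes Z\cong \Ind_H^G(y\otimes \Res^G_H Z)$, which makes this localizing subcategory a $\otimes$-ideal, so that containing the unit forces it to exhaust $\Mot^{\loc}_{BG}$.

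For the equivalence (iv) $\Leftrightarrow$ (i) I would use the perfect pairing $\ev$ of Theorem \ref{th:G_equivariant_motives}. Because $\ev$ is continuous in both arguments, applying $\ev(\cU_{\loc,G}(\Ind(\cC)),-)$ to the assembly map in (iv) reproduces exactly the Farrell-Jones assembly for $\cC$. Perfectness of $\ev$ combined with the fact that the objects $\cU_{\loc,G}(\Ind(\cC))$ for $\cC\in(\Cat^{\perf})^{BG}$ generate $\Mot^{\loc}_{BG}$ as a localizing subcategory then yields the equivalence.

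The substantive step will be (ii) $\Rightarrow$ (iv). Setting $A_{\Vcyc}:=\indlim[X]\cU_{\loc,G}(\prodd[X]\Sp)$ and $Z:=\Cone(A_{\Vcyc}\to\cU_{\loc,G}(\Sp))$, under (ii) the vanishing of $Z$ will reduce by perfectness of the pairing to checking $\ev(\Ind_H^G(\cU_{\loc,H}(\cE)),Z)=0$ for all $H\in\Vcyc$ and $\cE\in(\Cat_{\st}^{\dual})^{BH}$. The projection formula together with the adjunction identification $(\Ind_H^G(\cF))^{\h G}\simeq\cF^{\h H}$ will rewrite this pairing as the $H$-equivariant analog of the assembly map. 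The crucial point is that for $H\in\Vcyc$, every subgroup of $H$ is again virtually cyclic, so $H_{\Vcyc}\hy\Orb=H\hy\Orb$ has the terminal object $H/H$, making the $H$-equivariant assembly trivially an isomorphism.

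The hardest part will be to make precise, for $H\in\Vcyc$, the comparison between the $G$-equivariant assembly diagram restricted to $H$ and the $H$-equivariant one. For $X=G/K\in G_{\Vcyc}\hy\Orb$ the double-coset decomposition gives
\begin{equation*}
\Res^G_H\prodd[X]\Sp\simeq \prodd[{[g]\in H\backslash G/K}]^{\dual}\prodd[H/(H\cap gKg^{-1})]\Sp
\end{equation*}
as $H$-equivariant dualizable categories, so to rewrite the resulting colimit as the $H$-equivariant assembly I will need both the commutation of $\cU_{\loc}$ with infinite products (Corollary \ref{cor:U_loc_commutes_with_products}) and a cofinality argument for the induced functor from $G_{\Vcyc}\hy\Orb$ into the $H$-orbit category.
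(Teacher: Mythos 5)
Your proposal follows essentially the same route as the paper: the cycle $(iv)\Rightarrow(iii)\Rightarrow(ii)\Rightarrow(iv)$ together with $(i)\Leftrightarrow(iv)$ via the self-duality of Theorem \ref{th:G_equivariant_motives}, using the projection formula for $(iii)\Rightarrow(ii)$ and, for $(ii)\Rightarrow(iv)$, the observation that the assembly map becomes an isomorphism after $\Res^G_H$ for each $H\in\Vcyc$ (since every subgroup of $H$ is virtually cyclic). The paper states that last step directly through the $\Ind$–$\Res$ adjunction rather than routing through the pairing $\ev$, and leaves the cofinality/double-coset unwinding implicit, but the ingredients you identify are precisely what that terse observation unpacks to.
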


\begin{proof}
	The implications \Implies{colimit_in_Mot_loc_G}{unit_object_generated} is clear. The equivalence \Iff{Farrell_Jones}{colimit_in_Mot_loc_G} follows from the self-duality of $\Mot^{\loc}_{BG}$ from Theorem \ref{th:G_equivariant_motives}.
	
	The implication \Implies{unit_object_generated}{inductions_from_virtually_cyclic} follows from the standard projection formula: for any $H\in\Vcyc$ the essential image of induction $\Ind_H^G:\Mot^{\loc}_{BH}\to\Mot^{\loc}_{BG}$ is an ideal.
	
	Finally, to prove the implication \Implies{inductions_from_virtually_cyclic}{colimit_in_Mot_loc_G} it suffices to observe that \eqref{eq:colimit_in_Mot_loc_G} becomes an isomorphism after applying the restriction functor $\Res^G_H:\Mot^{\loc}_{BG}\to\Mot^{\loc}_{BH}$ for any $H\in\Vcyc.$
\end{proof}

\section{$\Mot^{\loc}$ is not generated by motives of connective $\bE_1$-rings}
\label{sec:motives_not_generated}

In this section we consider the case of the absolute base. We know by \cite{E24} that the target of the universal finitary localizing invariant $\cU_{\loc}:\Cat^{\perf}\to\Mot^{\loc}$ is the same as for $\Cat_{\st}^{\dual}.$ For convenience we put $\cU_{\loc}(R)=\cU_{\loc}(\Perf(R))$ for an $\bE_1$-ring $A.$

One can hope to have a generating collection of objects in $\Mot^{\loc}$ of the form $\cU_{\loc}(\cC),$ where $\cC$ has a certain nice property/structure. To make this more precise, we first observe that for any $\cC\in\Cat^{\perf}$ we can consider $\cC$ as an additive $\infty$-category, and take its stabilization. This gives a short exact sequence 
\begin{equation}\label{eq:ses_via_stabilization}
0\to\Stab(\cC)^{\acycl}\to \Stab(\cC)\xto{F} \cC\to 0.
\end{equation} 
Here $F$ is the adjunction counit, and $\Stab(\cC)^{\acycl}$ is the kernel of $F.$ Then $\Stab(\cC)$ has a bounded weight structure \cite{Bon}, and $\Stab(\cC)^{\acycl}$ has a bounded $t$-structure, see \cite[Proposition G.7]{E24} and \cite[Proposition 1.3]{SW25}. Furthermore, by \cite[Theorem 1.5]{RSW25} every object of $\Mot^{\loc}$ is isomorphic to $\cU_{\loc}(\cC)$ for some $\cC\in\Cat^{\perf}.$ Applying $\cU_{\loc}$ to \eqref{eq:ses_via_stabilization}, we see that the category $\Mot^{\loc}$ is generated as a stable subcategory by motives of categories which have either a bounded weight structure or a bounded $t$-structure.

Now, if $\cC\in\Cat^{\perf}$ has a bounded weight structure, then we have $\cC\simeq\indlim[i\in I]\Perf(R_i),$ where $I$ is directed and each $R_i$ is a connective $\bE_1$-ring. Namely, $\cC$ is a directed union of its full subcategories generated by finitely many objects of weight $0,$ and any such subcategory is equivalent to $\Perf(R)$ for a connective $\bE_1$-ring $R.$

This motivates a natural question: is it true that the category $\Mot^{\loc}$ is generated as a localizing subcategory by the motives of connective $\bE_1$-rings? By the above observation, it would be sufficient to prove that for any $\cC$ with a bounded $t$-structure the motive $\cU_{\loc}(\cC)$ can be generated by motives of connective $\bE_1$-rings. Unfortunately, this is not the case already for the simplest non-trivial examples.

\begin{theo}
Consider the dg $\Q$-algebra $\Q^{S^2}\cong \Q[y]/y^2,$ where $y$ has cohomological degree $2.$ Denote by $\Q[\veps]$ the usual algebra of dual numbers, i.e. $\veps^2=0$ and $\deg(\veps)=0.$ Then both objects $\cU_{\loc}(\Q^{S^2}),\cU_{\loc}(D^b_{\coh}(\Q[\veps]))\in \Mot^{\loc}$ are not contained in the localizing subcategory generated by motives of connective $\bE_1$-rings.
\end{theo}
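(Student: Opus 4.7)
The plan is to produce a continuous functor $F\colon\Mot^{\loc}\to\mathcal{T}$ to some presentable stable $\mathcal{T}$ that vanishes on $\cU_{\loc}(R)$ for every connective $\bE_1$-ring $R$ but is non-zero on both $\cU_{\loc}(\Q^{S^2})$ and $\cU_{\loc}(D^b_{\coh}(\Q[\veps]));$ any such $F$ automatically vanishes on the localizing subcategory $\mathcal{L}=\Loc\langle\cU_{\loc}(R):R\text{ connective}\rangle$ and thus certifies that both motives lie outside $\mathcal{L}.$ First I would pass to the $\Q$-linear setting by composing with the continuous symmetric monoidal base-change $\Mot^{\loc}\to\Mot^{\loc}_{\Q}$ induced by $\bS\to\Q,$ which sends $\cU_{\loc}(R)$ to the motive of the connective $\Q$-algebra $R\otimes_{\bS}\Q$ and fixes the two $\Q$-linear motives of interest; this reduces the task to constructing such an $F$ on $\Mot^{\loc}_{\Q}$ vanishing on motives of connective $\bE_1$-$\Q$-algebras. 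For the second example I would further reduce via the short exact sequence $0\to\Perf(\Q[\veps])\to D^b_{\coh}(\Q[\veps])\to D_{\mathrm{sg}}(\Q[\veps])\to 0$ in $\Cat^{\perf}_{\Q}$ together with the fact that $\cU_{\loc}(\Perf(\Q[\veps]))\in\mathcal{L}_{\Q}$ (since $\Q[\veps]$ is connective), so it suffices to show $\cU_{\loc}(D_{\mathrm{sg}}(\Q[\veps]))\notin\mathcal{L}_{\Q};$ by Buchweitz' theorem the singularity category is equivalent to $\Perf$ of a $2$-periodic $\Q$-algebra whose generator sits in homological degree $-2,$ so both examples reduce to showing that the motive of $\Perf$ of a specific non-connective $\bE_{\infty}$-$\Q$-algebra is not generated by motives of connective ones.

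The candidate functor $F$ is to be built from the finitary localizing invariant $\HH(-/\Q)\colon\Cat^{\perf}_{\Q}\to D(\Q).$ For any connective $\bE_1$-$\Q$-algebra $S$ the Hochschild complex $\HH(S/\Q)$ is concentrated in non-negative homological degrees, being the geometric realization of the cyclic bar construction $[n]\mapsto S^{\otimes_{\Q}(n+1)}$ whose terms are all connective. By contrast, Jones' theorem gives $\HH(\Q^{S^2}/\Q)\simeq C^{*}(LS^{2};\Q)$ whose cohomology is non-zero in infinitely many positive cohomological, equivalently negative homological, degrees; and for the $2$-periodic $\Q$-algebra $A$ representing $D_{\mathrm{sg}}(\Q[\veps])$ the complex $\HH(A/\Q)$ is unbounded below already because $A$ itself is. These give concrete non-vanishing witnesses for the sought $F$ on the two target motives.

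The main obstacle is to repackage the connective-vs.-non-connective distinction as an exact, and hence localizing, functor to a stable target, since the naive truncation $\tau_{<0}\colon D(\Q)\to D(\Q)^{<0}$ is not exact between stable categories and the connective subcategory $D(\Q)^{\geq 0}$ is not a localizing subcategory of $D(\Q).$ My plan to sidestep this is to lift $\HH(-/\Q)$ through the filtered derived category $\Fun(\Z,D(\Q))$ in an exact, functorial way — for instance via an HKR- or motivic-type filtration on Hochschild homology, or a refinement of negative cyclic homology along the lines developed elsewhere in the paper — and then compose with the quotient by the stable localizing subcategory of filtered objects on which the filtration is already constant in non-positive weights, which is exactly the locus where connective Hochschild complexes land. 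Rigorously constructing this filtered lift as a localizing invariant and verifying both the vanishing on Hochschild complexes of connective $\Q$-algebras and the detection of the two target motives is the technical heart of the argument.
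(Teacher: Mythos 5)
Your high-level strategy — produce a finitary localizing invariant $F$ that vanishes on $\cU_{\loc}(R)$ for every connective $\bE_1$-ring $R$ but not on the two target motives — is the right one, and your reductions (passing to $\Mot^{\loc}_{\Q},$ and replacing $D^b_{\coh}(\Q[\veps])$ with its singularity category / Koszul dual) are consistent with what the paper does. However, the construction of $F$ itself, which you rightly call the technical heart, is left unresolved, and the plan you sketch for it has a fundamental obstruction rather than just a missing detail.

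The obstruction is this: any such $F,$ being exact, factors through a localizing quotient of its target, and the idea of detecting non-connectivity of Hochschild homology via a quotient of $D(\Q)$ (or of a filtered refinement of it) cannot work in the naive form. Indeed, $\HH(\Q/\Q)=\Q$ already generates $D(\Q)$ as a localizing subcategory, so every localizing quotient of $D(\Q)$ that kills $\HH(R/\Q)$ for all connective $R$ is zero. Passing to the filtered derived category $\Fun(\Z,D(\Q))$ does not obviously cure this: the subcategory of ``filtered objects whose filtration is constant in non-positive weights'' is not closed under colimits or extensions and so is not a localizing subcategory; and, independently, for a general stable $\Q$-linear category there is no HKR- or motivic-type filtration on $\HH$ that is known to be a localizing invariant (such filtrations are usually constructed for commutative, or schematic, inputs). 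So the plan ``lift $\HH$ to a filtered localizing invariant and kill the constant part'' has no clear rigorous realization, and the connectivity-of-$\HH$ observation, while the correct intuition, is not on its own a proof.

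The paper sidesteps this by a different mechanism. Instead of trying to force a stable/localizing version of ``connectivity of $\HH$,'' it uses the Dundas--Goodwillie--McCarthy theorem (Goodwillie's theorem over $\Q$) to build the detecting invariant directly: the functor
\begin{equation*}
\cC\longmapsto \Fiber\Bigl(\Fiber\bigl(K(\cC\otimes\Perf(\Q[\veps]))_{\Q}\to K(\cC)_{\Q}\bigr)\ \to\ \Fiber\bigl(\HC^-(\cC\otimes\Perf(\Q[\veps]))\to\HC^-(\cC)\bigr)\Bigr)
\end{equation*}
is shown to be a finitary localizing invariant (the finitary part uses that $\wt{\HH}(\Q[\veps])$ is an induced $\Q S^1$-module, so the Tate construction vanishes and relative $\HC^-$ reduces to relative $\HC[-1]$). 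Goodwillie's theorem makes it vanish on $\Perf(R)$ for connective $\Q$-algebras $R$ by construction, with no need to quotient anything. Nonvanishing on $B=\Q[x]$ with $\deg x=1$ (so that $\Perf(B)\simeq D^b_{\coh}(\Q[\veps])$) is then verified by a concrete computation: one shows $K_{-1}(B\otimes\Q[\veps])=0$ by comparing $\Perf(B\otimes\Q[\veps])$ to $\Perf(B\otimes B)\simeq D^b_{\coh}(\Q[\veps]\otimes\Q[\veps])$ and the $1$-periodic singularity category, and observes that the target of Goodwillie's map has infinite-dimensional homology in every degree. It is this finite $K$-theory computation that ultimately separates the motives, a step your proposal has no analogue of.
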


\begin{proof}
We first observe that we can replace the category $\Mot^{\loc}$ with the category $\Mot^{\loc}_{\Q}$ by Proposition \ref{prop:localizing_motives_over_quotients}. Indeed, both the extension of scalars $-\otimes\Q$ and the restriction of scalars preserve connective $\bE_1$-rings.

Next, we show that the assertions about $\cU_{\loc}(\Q^{S^2})$ and $\cU_{\loc}(D^b_{\coh}(\Q[\veps]))$ are equivalent. Consider the category $\cA$ of triples $(V,W,\phi),$ where $V,W\in\Perf(\Q)$ and $\phi:V\oplus V[-1]\to W.$ Equivalently, $\cA$ is the category of $\Perf(\Q)$-valued representations of a graded quiver with two vertices and two parallel arrows of cohomological degrees $0$ and $1.$ Consider the object $X=(\Q,\Q[-1],(0,\id))\in\cA.$ Its endomorphism dg algebra is given by $\End(X)=\Q^{S^2}.$ The right orthogonal to $X$ in $\Ind(\cA)$ is identified with the category $\Mod\hy B,$ where $B=\Q[x]$ is the free associative $\Q$-algebra on one generator $x$ of cohomological degree $1.$ Hence, we have a short exact sequence
\begin{equation*}
0\to\Perf(\Q^{S^2})\to \cA\to\Perf(B)\to 0.
\end{equation*}
Applying $\cU_{\loc},$ we obtain a cofiber sequence in $\Mot^{\loc}_{\Q}:$ 
\begin{equation*}
\cU_{\loc}(\Q^{S^2})\to \cU_{\loc}(\Q)\oplus\cU_{\loc}(\Q)\to\cU_{\loc}(B).
\end{equation*}
Note that we have an equivalence $D^b_{\coh}(\Q[\veps])\simeq \Perf(B),$ since the (derived) endomorphism dg algebra $\bR\End_{\Q[\veps]}(\Q)$ is quasi-isomorphic to $B.$ Hence, it suffices to prove that $\cU_{\loc}(B)\cong\cU_{\loc}(D^b_{\coh}(\Q[\veps]))\in\Mot^{\loc}_{\Q}$ is not contained in the localizing subcategory generated by motives connective dg $\Q$-algebras. 

Next, we claim that the functor
\begin{equation*}
\Cat^{\perf}_{\Q}\to D(\Q),\quad \cC\mapsto \Fiber(\HC^-(\cC\otimes\Perf(\Q[\veps]))\to \HC^-(\cC)),
\end{equation*}
commutes with filtered colimits. To see this, note that the reduced Hochschild homology $\wt{HH}(\Q[\veps]),$ considered as a $\Q S^1$-module, is given by $\biggplus[n\geq 0]\Q S^1[2n].$ It follows that for any $\cC\in\Cat_{\Q}^{\perf}$ the $S^1$-representation $\HH(\cC)\otimes\wt{\HH}(\Q[\veps])$ is induced from the trivial subgroup of $S^1,$ hence its Tate cohomology vanishes. This gives an isomorphism
\begin{equation*}
\Fiber(\HC^-(\cC\otimes\Perf(\Q[\veps]))\to \HC^-(\cC))\cong \Cone(\HC(\cC\otimes\Perf(\Q[\veps]))\to \HC(\cC)).
\end{equation*}
The latter expression commutes with filtered colimits in $\cC,$ as stated.

Now, by Goodwillie's theorem \cite[Main Theorem]{Goo86} for any connective dg $\Q$-algebra $R$ we have
\begin{equation*}
\Fiber(K(R\otimes\Q[\veps])_{\Q}\to K(R)_{\Q}) \xto{\sim} \Fiber(\HC^-(R\otimes\Q[\veps])\to \HC^-(R)).
\end{equation*}
Therefore, it suffices to prove that the map
\begin{equation}\label{eq:not_an_iso_from_K_to_HC^-}
	\Fiber(K(B\otimes\Q[\veps])_{\Q}\to K(B)_{\Q}) \to \Fiber(\HC^-(B\otimes\Q[\veps])\to \HC^-(B))
\end{equation}
is not an isomorphism.

It is easy to compute the target of \eqref{eq:not_an_iso_from_K_to_HC^-}. Namely, its homology in each degree is a countable-dimensional $\Q$-vector space. Thus, it suffices to prove that
\begin{equation*}
K_{-1}(B\otimes\Q[\veps])=0.
\end{equation*}
Consider the fully faithful functor $\Perf(\Q[\veps])\to D^b_{\coh}(\Q[\veps])\simeq\Perf(B).$ It sends the object $\Q[\veps]$ to $\Cone(B[-1]\xto{x}B).$ Hence, we have an equivalence
\begin{equation*}
D^b_{\coh}(\Q[\veps])/\Perf(\Q[\veps])\simeq \Perf(\Q[x,x^{-1}]),
\end{equation*}
where as above $x$ has cohomological degree $1.$ This quotient category is well known under the name ``triangulated category of singularities'' \cite{Or04}. We obtain a short exact sequence
\begin{equation}\label{eq:short_exact_via_1_periodic_derived_category}
0\to \Perf(B\otimes\Q[\veps])\to\Perf(B\otimes B)\to \Perf(B\otimes\Q[x,x^{-1}])\to 0.
\end{equation}
Now, we have $\Perf(B\otimes B)\simeq D^b_{\coh}(\Q[\veps]\otimes\Q[\veps]),$ hence $K_{-1}(B\otimes B)=0$ by \cite[Theorem 6]{Sch06} and $K_0(B\otimes B)=\Z.$ Next, the category $\Perf(B\otimes\Q[x,x^{-1}])$ is identified with the $1$-periodic absolute derived category of finite-dimensional $\Q[\veps]$-modules in the sense of Positselski \cite{Pos11, EP15}. Hence, we have $K_0(B\otimes \Q[x,x^{-1}])=\Z/2.$ The long exact sequence of $K$-groups for \eqref{eq:short_exact_via_1_periodic_derived_category} gives
\begin{equation*}
K_{-1}(B\otimes \Q[\veps])\cong \coker(\Z\onto\Z/2)=0.
\end{equation*}
This proves the theorem.
\end{proof}

\section{$\A^1$-invariant localizing motives}
\label{sec:A^1_invariant_motives}

In this section we describe the category of $\A^1$-invariant localizing motives over a rigid $\bE_1$-monoidal category $\cE$ as a full subcategory of $\Mot^{\loc}_{\cE}.$ (Theorem \ref{th:A^1_invariant_localizing_motives}). Since we do not assume $\cE$ to be compactly generated, we consider localizing invariants of dualizable left $\cE$-modules. Similarly to the previous section, for an $\bE_1$-ring $A\in\Alg_{\bE_1}(\Sp)$ we put $\cU_{\loc}(A)=\cU_{\loc}(\Mod\hy A)\in\Mot^{\loc}.$ 

Consider the $\bE_{\infty}$-ring $\bS[x]=\Sigma^{\infty}\N_+$ -- the monoid ring (over $\bS$) of the monoid of natural numbers. Here $\N_+=\N\sqcup\{\ast\}$ is considered as a commutative algebra object in the symmetric monoidal $\infty$-category $\cS_{\ast}$ of pointed spaces, where the symmetric monoidal structure is given by the smash product. The affine scheme $\Spec\bS[x]$ over $\bS$ is known as the {\it flat} affine line $\A^{1,\flat}_{\bS}$ \cite{Lur18}. The latter is different from the smooth affine line $\Spec\bS\{x\},$ where $\bS\{x\}$ is the free $\bE_{\infty}$-ring on $x.$ Note however that as an $\bE_1$-ring $\bS[x]$ is freely generated by $x.$ 

A localizing invariant $F:\Cat_{\cE}^{\dual}\to \cT$ is called $\A^1$-invariant if for any $\cC\in\Cat_{\cE}^{\dual}$ we have an isomorphism $F(\cC)\to F(\Mod_{\bS[x]}(\cC)),$ induced by the functor
\begin{equation*}
\cC\to\Mod_{\bS[x]}(\cC),\quad M\mapsto\bS[x]\otimes M.
\end{equation*}
For notational convenience we use the term ``$\A^1$-invariant'' instead of ``$\A^{1,\flat}$''-invariant, although the latter would be more accurate. 

Consider the universal finitary $\A^1$-invariant localizing invariant $\cU_{\loc}^{\A^1}:\Cat_{\cE}^{\dual}\to \Mot^{\loc,\A^1},$ and put $\Mot^{\loc,\A^1}=\Mot^{\loc,\A^1}_{\Sp}.$ Clearly, we have an equivalence
\begin{equation*}
\Mot^{\loc,\A^1}_{\cE}\simeq \Mot^{\loc}_{\cE}\tens{\Mot^{\loc}}\Mot^{\loc,\A^1},
\end{equation*} and the category $\Mot^{\loc,\A^1}$ is the quotient of $\Mot^{\loc}$ by the localizing ideal generated by the reduced motive 
\begin{equation*}
\wt{\cU}_{\loc}(\bS[x])=\Cone(\cU_{\loc}(\bS)\to\cU_{\loc}(\bS[x])).
\end{equation*}

It turns out that the latter ideal is in fact smashing. To formulate a precise statement, for each $n\geq 0$ define the algebraic simplex $\Delta^n$ to be the flat affine $n$-dimensional space over $\bS,$ i.e.
\begin{equation*}
\Delta^n = \Spec\bS[\Delta^n], \quad  \bS[\Delta^n]=\bS[x_1,\dots,x_n]=\Sigma^{\infty} (\N^n)_+.
\end{equation*}
Then the simplices $\Delta^n$ form a cosimplicial object $\Delta^{\bullet}$ in the category of affine schemes over $\bS.$ More precisely, consider the symmetric monoidal category $\Set_{\ast}$ of pointed (discrete) sets with the smash product. Then we have a functor
\begin{equation}\label{eq:simplices_over_F_1}
\Delta^{op}\to \CAlg(\Set_*),\quad [n]\mapsto (\N^n)_+=\N^n\sqcup \{\ast\},
\end{equation}
If we denote by $e_i\in\N^n$ the basis elements, $1\leq i\leq n,$ then the functor \eqref{eq:simplices_over_F_1} sends
an order-preserving map $f:[n]\to [m]$ to the map
\begin{equation*}
f^*:(\N^m)_+\to(\N^n)_+,\quad f^*(e_i)=
\begin{cases}
e_j & \text{if }f(j-1)\leq i-1,\,f(j)\geq i;\\
\ast & \text{if }i\leq f(0);\\
0 & \text{if }i\geq f(n)+1.
\end{cases} 
\end{equation*}
Applying $\Sigma^{\infty}(-),$ we obtain a simplicial object in the category of connective $\bE_{\infty}$-rings, or equivalently a cosimplicial object in the category of affine schemes over $\bS.$ In particular, we obtain a simplicial object in the category of localizing motives $\cU_{\loc}(\Delta^{\bullet})\in (\Mot^{\loc})^{\Delta^{op}},$ where $\cU_{\loc}(\Delta^n)=\cU_{\loc}(\bS[\Delta^n]).$

\begin{theo}\label{th:A^1_invariant_localizing_motives}
\begin{enumerate}[label=(\roman*),ref=(\roman*)]
	\item The object $\wt{\cU}_{\loc}(\bS[x])\in\Mot^{\loc}$ generates a smashing ideal. The corresponding idempotent $\bE_{\infty}$-algebra in $\Mot^{\loc}$ is given by the geometric realization $|\cU_{\loc}(\Delta^{\bullet})|=\colim\limits_{\Delta^{op}} \cU_{\loc}(\Delta^{\bullet}).$ In particular, we have an equivalence 
	\begin{equation*}
		\Mot^{\loc,\A^1}\simeq\Mod\hy |\cU_{\loc}(\Delta^{\bullet})|\subset \Mot^{\loc}.
	\end{equation*} \label{Mot^loc_A^1_over_Sp}
	\item Let $\cE$ be a rigid $\bE_1$-monoidal category, and consider the category $\Mot^{\loc}_{\cE}$ as a module over $\Mot^{\loc}.$ Then we have an equivalence
	\begin{equation*}
		\Mot^{\loc,\A^1}_{\cE}\simeq\Mod_{|\cU_{\loc}(\Delta^{\bullet})|}(\Mot^{\loc}_{\cE}).
	\end{equation*} \label{Mot^loc_A^1_over_E}
\end{enumerate}
\end{theo}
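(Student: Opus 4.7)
Part (ii) will reduce formally to (i). The category $\Mot^{\loc}_{\cE}$ is canonically a module over $\Mot^{\loc}$ via the base-change functor $\cU_{\loc}(\cC) \mapsto \cU_{\loc}(\cE\otimes\cC),$ and the $\A^1$-invariance condition on $Y \in \Mot^{\loc}_{\cE}$ is exactly that the action of $\wt{\cU}_{\loc}(\bS[x]) \in \Mot^{\loc}$ annihilates $Y.$ Granted (i), the smashing property yields: $Y$ is $\A^1$-local iff $Y \to A \otimes Y$ is an equivalence iff $Y \in \Mod_A(\Mot^{\loc}_{\cE}),$ giving $\Mot^{\loc,\A^1}_{\cE}\simeq\Mod_A(\Mot^{\loc}_{\cE}).$

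\textbf{Strategy for (i).} Since $\cU_{\loc}$ is symmetric monoidal, $\cU_{\loc}(\Delta^\bullet)$ is a simplicial $\bE_\infty$-algebra in $\Mot^{\loc},$ and its geometric realization $A := |\cU_{\loc}(\Delta^\bullet)|$ inherits an $\bE_\infty$-algebra structure with unit $\eta\colon 1 = \cU_{\loc}(\Delta^0) \to A.$ By the standard characterization of smashing localizations via idempotent algebras (see Subsection \ref{ssec:smashing_ideals}), to prove (i) it suffices to check that $\eta$ exhibits $A$ as a Bousfield $\A^1$-localization of the unit. Concretely, this amounts to two conditions: (a) $A \otimes \wt{\cU}_{\loc}(\bS[x]) = 0,$ i.e., $A$ is $\A^1$-local; and (b) the cofiber $\Cone(\eta)$ lies in the localizing ideal $\cI$ generated by $\wt{\cU}_{\loc}(\bS[x]).$

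\textbf{Step (b), the formal part.} Since $\bS[\Delta^n] \simeq \bS[x]^{\otimes n}$ as $\bE_\infty$-rings and $\cU_{\loc}$ is symmetric monoidal, the $0$-vertex splitting $\cU_{\loc}(\bS[x]) \simeq 1 \oplus \wt{\cU}_{\loc}(\bS[x])$ gives $\cU_{\loc}(\Delta^n) \simeq (1 \oplus \wt{\cU}_{\loc}(\bS[x]))^{\otimes n}.$ The reduced summand $\wt{\cU}_{\loc}(\Delta^n) := \Cone(1 \to \cU_{\loc}(\Delta^n))$ is thus a finite direct sum of tensor products each involving at least one factor of $\wt{\cU}_{\loc}(\bS[x]),$ hence lies in $\cI.$ Consequently $\Cone(\eta) \simeq |\wt{\cU}_{\loc}(\Delta^\bullet)|$ lies in $\cI,$ since localizing ideals are closed under colimits.

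\textbf{Step (a), the main technical point.} By symmetric monoidality and commutation of the tensor product with colimits,
\begin{equation*}
A \otimes \cU_{\loc}(\bS[x]) \simeq |\cU_{\loc}(\bS[\Delta^\bullet] \otimes \bS[x])|,
\end{equation*}
and we must show that the augmentation $x \mapsto 0$ induces an equivalence onto $A.$ The approach is a Suslin--Voevodsky-style simplicial contraction: we construct an explicit simplicial homotopy at the level of simplicial $\bE_\infty$-rings between the identity on $\bS[\Delta^\bullet] \otimes \bS[x]$ and the composition of augmentation and unit, using the classical combinatorial ``multiplication-by-coordinate'' maps $h_i\colon \bS[\Delta^{n+1}] \otimes \bS[x] \to \bS[\Delta^n] \otimes \bS[x].$ The main obstacle is carrying out this homotopy coherently in the $\infty$-categorical setting of simplicial $\bE_\infty$-algebras, but the combinatorial pattern is identical to the classical Suslin--Voevodsky proof of $\A^1$-invariance for the algebraic singular complex, and after applying $\cU_{\loc}$ and realizing it yields the desired equivalence, completing (a) and hence (i).
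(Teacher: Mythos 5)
Your overall structure is sound: the reduction of (ii) to (i) is the same as in the paper, the reduction of (i) to the two conditions (a) $A\otimes\wt{\cU}_{\loc}(\bS[x])=0$ and (b) $\Cone(\eta)\in\cI$ is the correct idempotent-algebra criterion, and step (b) is correct. However, step (a) has a genuine gap that you acknowledge but do not resolve.

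You write that the ``main obstacle is carrying out this homotopy coherently in the $\infty$-categorical setting of simplicial $\bE_\infty$-algebras, but the combinatorial pattern is identical to the classical Suslin--Voevodsky proof.'' This does not suffice. The classical argument lives in the $1$-category of simplicial abelian groups or presheaves, where a simplicial homotopy is strict and there are no higher coherences to supply. In $\Mot^{\loc}$, a simplicial homotopy must be exhibited as a genuine morphism $X_\bullet\times\Delta^1\to Y_\bullet$ of simplicial objects, and matching ``the classical combinatorial pattern'' degree by degree does not by itself produce that data. The gap is fillable: the homotopy maps $h_i$ you want to use can be written as morphisms of pointed commutative monoids (e.g.\ the contraction of $\A^{1,\flat}$ comes from $(x,t)\mapsto xt$, i.e.\ $\N\to\N\times\N$, $n\mapsto(n,n)$), so they assemble into a genuine simplicial homotopy already at the $1$-categorical level of $\CAlg(\Set_*)$, and composing with $\cU_{\loc}\circ\Perf\circ\Sigma^\infty(-)_+$ transports it to $\Mot^{\loc}$. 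But this observation is exactly where the coherence is earned, and it is missing from your argument.

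It is worth comparing to the paper's route, which avoids the issue entirely. Proposition \ref{prop:A_2_coalgebra_generates_smashing_ideal} proves a general criterion: if $C\in\cE$ carries an augmentation $\veps:C\to 1_{\cE}$ and a single (not necessarily coassociative) map $\Delta:C\to C\otimes C$ splitting $\id_C\otimes\veps$, then $\la C\ra^{\otimes}$ is smashing with idempotent algebra $T^{\deff}(C[1],\veps[1]).$ The contracting homotopy in that proof is built in the homotopy category $\h\cE$, so there is nothing to make coherent. Theorem \ref{th:A^1_invariant_localizing_motives} then follows by observing that the multiplicative monoid structure on the flat affine line furnishes the required coalgebra data on $\wt{\cU}_{\loc}(\bS[x])$, and that $T^{\deff}(\wt{\cU}_{\loc}(\bS[x])[1],\veps[1])\cong|\cU_{\loc}(\Delta^{\bullet})|$. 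Your approach gives a more geometric $\A^1$-homotopy-theoretic proof but carries a real coherence debt; the paper's abstracts to a reusable criterion and pays no such debt.
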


This follows from the standard argument from \cite{MV99}. However, it will be convenient for us to deduce Theorem \ref{th:A^1_invariant_localizing_motives} from the following general statement on localizing ideals in symmetric monoidal presentable stable categories. We use the notion of a deformed tensor algebra from Definition \ref{def:deformed_tensor_algebra}.

\begin{prop}\label{prop:A_2_coalgebra_generates_smashing_ideal} Let $\cE\in\Alg_{\bE_{\infty}}(\Pr^L_{\st})$ be a symmetric monoidal presentable stable category. Let $C\in\cE$ be an object equipped with a morphism $\veps:C\to 1_{\cE}$ (i.e. $C$ is an $\bE_0$-coalgebra in $\cE$). Suppose that there exists a morphism $\Delta:C\to C\otimes C$ such that the composition
\begin{equation*}
C\xto{\Delta}C\otimes C\xto{\id\otimes \veps}C\otimes 1_{\cE} \cong C
\end{equation*}
is homotopic to identity (we do not impose any coassociativity or cocommutativity constraints on $\Delta$). We denote by $\la C\ra^{\otimes}\subset\cE$ the localizing ideal generated by $C.$

Then $\la C\ra^{\otimes}$ is a smashing ideal of $\cE.$ The corresponding idempotent $\bE_{\infty}$-algebra in $\cE$ is given by the deformed tensor algebra $T^{\deff}(C[1],\veps[1]).$ In particular, we have an equivalence
\begin{equation*}
\cE/\la C\ra^{\otimes}\simeq \Mod\hy T^{\deff}(C[1],\veps[1])\subset\cE.
\end{equation*}  
\end{prop}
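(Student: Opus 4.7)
The plan is to set $A := T^{\deff}(C[1], \veps[1]),$ an $\bE_1$-algebra in $\cE,$ and prove that $A$ is the idempotent $\bE_{\infty}$-algebra corresponding to the smashing ideal $\la C\ra^{\otimes}.$ Applying the universal property of the deformed tensor algebra (left adjoint to the composition $\Alg_{\bE_1}(\cE) \to \cE_{/1_{\cE}[1]}$) to the identity of $A,$ one obtains a canonical null-homotopy of the composite $C \xto{\veps} 1_{\cE} \xto{e_A} A;$ equivalently, the unit $e_A$ factors canonically as $1_{\cE} \to \Cone(\veps) \to A.$

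The key (and hardest) step will be to establish $C \otimes A \simeq 0,$ which is the only place where the weak coalgebra datum $\Delta$ intervenes. I would argue in two moves. First, using left unitality of $A,$ the morphism $\veps \otimes \id_A : C \otimes A \to A$ factors as $C \otimes A \xto{(e_A \circ \veps) \otimes \id_A} A \otimes A \xto{\mathrm{mult}} A,$ so the null-homotopy of $e_A \circ \veps$ above trivializes $\veps \otimes \id_A.$ Second, the retract hypothesis $(\id_C \otimes \veps) \circ \Delta \simeq \id_C,$ tensored with $\id_A,$ exhibits $\id_{C \otimes A}$ as the composite
\[
C \otimes A \xto{\Delta \otimes \id_A} C \otimes C \otimes A \xto{\id_C \otimes (\veps \otimes \id_A)} C \otimes A,
\]
whose second factor is null-homotopic by the previous move; hence $\id_{C \otimes A}$ is null-homotopic and $C \otimes A \simeq 0.$ Only the single retract relation (no higher coassociativity of $\Delta$) is used.

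Next I would verify that $\Cone(e_A : 1_{\cE} \to A) \in \la C\ra^{\otimes}$ via the canonical filtration $\Fil_{\bullet} A$ from Definition \ref{def:deformed_tensor_algebra}: its associated graded is the ordinary tensor algebra $T(C[1]) = \bigoplus_{n \geq 0} C^{\otimes n}[n],$ so $\Fil_n A / 1_{\cE}$ is an iterated extension of the objects $C^{\otimes k}[k]$ for $1 \leq k \leq n,$ each lying in $\la C\ra^{\otimes};$ passing to $\colim_n$ gives $A / 1_{\cE} \in \la C\ra^{\otimes}$ since the localizing ideal is closed under colimits.

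Finally, for $M \in \cE$ with $C \otimes M = 0$ the localizing ideal $\{X : X \otimes M = 0\}$ contains $\la C\ra^{\otimes},$ so $A/1_{\cE} \otimes M = 0$ and the unit $M \to A \otimes M$ is an equivalence. Applied to $M = A$ (for which $C \otimes A = 0$ by the key step), this forces $A \xto{\sim} A \otimes A,$ so $A$ is idempotent and hence an idempotent $\bE_{\infty}$-algebra by \cite[Proposition 4.8.2.8]{Lur17}. The inclusion $\la C\ra^{\otimes} \subseteq \ker(A \otimes -)$ is immediate from $C \otimes A = 0;$ conversely, for $M \in \ker(A \otimes -)$ the cofiber sequence $\Fiber(e_A) \otimes M \to M \to A \otimes M = 0$ identifies $M \simeq \Fiber(e_A) \otimes M,$ which lies in $\la C\ra^{\otimes}$ since $\Fiber(e_A)$ does. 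Thus $\la C\ra^{\otimes}$ is smashing with corresponding idempotent algebra $A,$ giving the desired $\cE / \la C\ra^{\otimes} \simeq \Mod\hy A \subset \cE.$
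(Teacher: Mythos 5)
Your proof is correct, and for the one genuinely non-formal step it takes a different and arguably cleaner route than the paper. Both you and the paper obtain $\Fiber(e_A)\in\la C\ra^{\otimes}$ in the same way, via the filtration $\Fil_{\bullet}T^{\deff}$ whose associated graded is the ordinary tensor algebra of $C[1]$, and both then reduce everything to the single statement $C\otimes A\simeq 0$. The difference is in how that vanishing is established.

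The paper's proof forms the complex $\cK_{\bullet}$ in $\h\cE$ with $\cK_n\cong C^{\otimes n+1}$ (the terms $C\otimes\gr_n^{\Fil}T^{\deff}[-n]$, with the differential given by the connecting maps of the filtration, which works out to the alternating sum of the maps inserting $\veps$), and then exhibits an explicit contracting homotopy $h_n=\Delta\otimes\id^{\otimes n}$; the single identity $(\id\otimes\veps)\circ\Delta\simeq\id$ is used to verify $dh+hd=\id$, after which the paper concludes $C\otimes T^{\deff}=0$. Your argument bypasses the Amitsur complex entirely: from the universal property of $T^{\deff}$ you extract that the unit $e_A$ factors through $\Cone(\veps)$, hence $e_A\circ\veps\simeq 0$; left unitality then kills $\veps\otimes\id_A$; and the retract relation tensored with $A$ exhibits $\id_{C\otimes A}$ as factoring through that null map, so $\id_{C\otimes A}\simeq 0$. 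This is more conceptual — it uses the adjunction defining $T^{\deff}$ rather than an explicit filtration-level homotopy — and it sidesteps the (unstated in the paper) lemma that a contracting homotopy for the complex of associated gradeds implies vanishing of the filtered colimit. The final bookkeeping (idempotence of $A$ via $A\otimes\Fiber(e_A)=0$, identification $\la C\ra^{\otimes}=\ker(A\otimes-)$, appeal to \cite[Proposition 4.8.2.8]{Lur17} to promote to $\bE_{\infty}$) is standard in both versions and you handle it correctly.
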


\begin{proof}
Consider the natural increasing filtration $\Fil_{\bullet}T^{\deff}(C[1],\veps[1]).$ Its associated graded is given by the (non-deformed) tensor algebra of $C[1].$ Hence, the object $\Fiber(1_{\cE}\to T^{\deff}(C[1],\veps[1]))$ is contained in the ideal $\la C\ra^{\otimes}.$ Thus, it suffices to prove that the object $C\otimes T^{\deff}(C[1],\veps[1])$ is zero.

Consider the complex $(\cK_{\bullet},d)$ in the homotopy category $\h\cE,$ whose terms are given by $\cK_n=C\otimes \gr_n^{\Fil}T^{\deff}(C[1],\veps[1])[-n],$ and the differential $d$ is given by
\begin{multline*}
d_n:C\otimes \gr_n^{\Fil}T^{\deff}(C[1],\veps[1])[-n]\to C\otimes \Fil_{n-1}T^{\deff}(C[1],\veps[1])[1-n]\\
\to C\otimes\gr_{n-1}^{\Fil}T^{\deff}(C[1],\veps[1])[1-n],\quad n\geq 1.
\end{multline*}
It suffices to prove that the complex $\cK_{\bullet}$ has a contracting homotopy.

The complex $\cK_{\bullet}$ is simply the tensor product of $C$ and the ($E_0$-coalgebra version of) Amitsur complex of $C:$ we have $\cK_n\cong C^{\otimes n+1}$ for $n\geq 0,$ and the differential is given by
\begin{equation*}
d_n:C^{\otimes n+1}\to C^{\otimes n},\quad d_n=\sum\limits_{i=1}^{n} (-1)^{i-1} \id^{\otimes i}\otimes \veps\otimes \id^{\otimes n-i},\quad n\geq 1.
\end{equation*}
We define the contracting homotopy $h$ by the standard formula:
\begin{equation*}
h_n:C^{\otimes n+1}\to C^{\otimes n+2},\quad h_n=\Delta\otimes \id^{\otimes n},\quad n\geq 0.
\end{equation*}
Using our assumption on $\Delta:C\to C\otimes C$ it is straightforward to check that $dh+hd = \id,$ as required.
\end{proof}

\begin{proof}[Proof of Theorem \ref{th:A^1_invariant_localizing_motives}.] We prove \ref{Mot^loc_A^1_over_Sp}. To be able to apply Proposition \ref{prop:A_2_coalgebra_generates_smashing_ideal}, we only need to observe that the structure of a multiplicative monoid on the affine line induces an $\bE_{\infty}$-coalgebra structure on the object $\wt{\cU}_{\loc}(\bS[x])\in\Mot^{\loc}.$ To formalize this we argue as follows. First, the object $\N\in\CAlg(\Set)$ is naturally a cocommutative coalgebra: the comultiplication $\N\to \N^2$ is given by $n\mapsto(n,n).$ Applying $\Sigma^{\infty}((-)_+),$ we obtain an $\bE_{\infty}$-coalgebra structure on $\bS[x]$ as an object of $\Alg_{\bE_{\infty}}(\Sp).$ This gives an $\bE_{\infty}$-coalgebra structure on $\Mod\hy \bS[x]\in\Cat_{\st}^{\dual}.$ Applying $\cU_{\loc},$ we get an $\bE_{\infty}$-coalgebra structure on $\cU_{\loc}(\bS[x])\in\Mot^{\loc}.$ In particular, we obtain an $\bE_{\infty}$-ring structure on the spectrum $\Hom_{\Mot^{\loc}}(\cU_{\loc}(\bS[x]),\cU_{\loc}(\bS)).$ Similarly, we have a (discrete) commutative monoid structure on the set $\Map_{\CAlg(\Set_*)}(\N_+,\{0\}_+).$ The latter monoid is freely generated by the idempotent element, given by the map $\N_+\to\{\ast\}\to\{0\}_+.$ Therefore, we obtain a natural morphism of $\bE_{\infty}$-rings
\begin{equation*}
\bS\times\bS\to \Hom_{\Mot^{\loc}}(\cU_{\loc}(\bS[x]),\cU_{\loc}(\bS)).
\end{equation*}
This morphism induces a decomposition of the $\bE_{\infty}$-coalgebra $\cU_{\loc}(\bS[x])$ into a coproduct (direct sum) of $\bE_{\infty}$-coalgebras, which on the level of objects is given by $\cU_{\loc}(\bS[x])=\cU_{\loc}(\bS)\oplus \wt{\cU}_{\loc}(\bS[x]).$ In particular, we obtain an $\bE_{\infty}$-coalgebra structure on $\wt{\cU}_{\loc}(\bS[x]),$ as stated.

Denoting by $\veps:\wt{\cU}_{\loc}(\bS[x])\to\cU_{\loc}(\bS)$ the counit morphism, we see that the deformed tensor algebra $T^{\deff}(\wt{\cU}_{\loc}(\bS[x])[1],\veps[1])$ is isomorphic to the geometric realization $|\cU_{\loc}(\Delta^{\bullet})|.$ Hence, \ref{Mot^loc_A^1_over_Sp} follows from Proposition \ref{prop:A_2_coalgebra_generates_smashing_ideal}.

Finally, \ref{Mot^loc_A^1_over_E} follows directly from \ref{Mot^loc_A^1_over_Sp}.
\end{proof}

Next, we recall that Weibel's homotopy $K$-theory $KH$ was originally defined in the $\Z$-linear context \cite{Wei89, Wei13, Tab15}. It naturally generalizes to general stable categories as follows. For $\cC\in\Cat^{\perf}$ we put
\begin{equation*}
KH(\cC)=|\cU_{\loc}(\cC\otimes\Perf(\bS[\Delta^{\bullet}]))|=\colim\limits_{\Delta^{op}} \cU_{\loc}(\cC\otimes\Perf(\bS[\Delta^{\bullet}])).
\end{equation*}
By definition, this is a finitary localizing invariant of $\cC.$ We denote by $KH^{\cont}:\Cat_{\st}^{\dual}\to\Sp$ the corresponding localizing invariant of dualizable categories.

\begin{cor}\label{cor:KH}
For a dualizable category $\cC$ we have
\begin{equation*}
\Hom_{\Mot^{\loc,\A^1}}(\cU_{\loc}^{\A^1}(\bS),\cU_{\loc}^{\A^1}(\cC))\cong KH^{\cont}(\cC).
\end{equation*}
\end{cor}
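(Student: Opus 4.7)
The plan is to reduce the corollary to a direct manipulation inside $\Mot^{\loc}$, using Theorem \ref{th:A^1_invariant_localizing_motives}\ref{Mot^loc_A^1_over_Sp} to realize $\Mot^{\loc,\A^1}$ as the smashing localization of $\Mot^{\loc}$ at the idempotent $\bE_\infty$-algebra $A := |\cU_{\loc}(\Delta^{\bullet})|$. Under this identification, the functor $\cU_{\loc}^{\A^1}$ is given by $\cC\mapsto \cU_{\loc}(\cC)\otimes A$, and $\cU_{\loc}^{\A^1}(\bS) = A$ is the unit object of $\Mot^{\loc,\A^1}$. Since the localization is smashing (hence fully faithful on the level of modules inside $\Mot^{\loc}$), we obtain
\begin{equation*}
\Hom_{\Mot^{\loc,\A^1}}(\cU_{\loc}^{\A^1}(\bS),\cU_{\loc}^{\A^1}(\cC))\cong \Hom_{\Mot^{\loc}}(\cU_{\loc}(\bS),\cU_{\loc}(\cC)\otimes A).
\end{equation*}

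Next, I would rewrite the right-hand side by pushing the tensor product inside the geometric realization. The symmetric monoidality of $\cU_{\loc}:\Cat_{\st}^{\dual}\to\Mot^{\loc}$ together with the continuity of the tensor product in $\Mot^{\loc}$ give
\begin{equation*}
\cU_{\loc}(\cC)\otimes A \cong \bigl|\cU_{\loc}(\cC)\otimes\cU_{\loc}(\Delta^{\bullet})\bigr| \cong \bigl|\cU_{\loc}(\cC\otimes\Mod\hy\bS[\Delta^{\bullet}])\bigr|.
\end{equation*}
Because $\cU_{\loc}(\bS)$ is compact in $\Mot^{\loc},$ mapping out of it commutes with the geometric realization, yielding
\begin{equation*}
\Hom_{\Mot^{\loc}}\!\bigl(\cU_{\loc}(\bS),\bigl|\cU_{\loc}(\cC\otimes\Mod\hy\bS[\Delta^{\bullet}])\bigr|\bigr)\cong \bigl|K^{\cont}(\cC\otimes\Mod\hy\bS[\Delta^{\bullet}])\bigr|.
\end{equation*}

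Finally, I would identify the last expression with $KH^{\cont}(\cC)$. The assignment $\cC\mapsto |K^{\cont}(\cC\otimes\Mod\hy\bS[\Delta^{\bullet}])|$ is a finitary localizing invariant of dualizable categories; restricted to compactly generated categories $\cC=\Ind(\cA)$ with $\cA\in\Cat^{\perf},$ it recovers $KH(\cA)$ by the definition of $KH$ (applied after taking the mapping spectrum out of $\cU_{\loc}(\bS)$) and the identification $K^{\cont}(\Ind(\cA'))\simeq K(\cA')$. By the universal property defining $KH^{\cont}$ as the finitary localizing extension of $KH$ from $\Cat^{\perf}$ to $\Cat_{\st}^{\dual}$, the two finitary localizing invariants coincide, which completes the chain of isomorphisms.

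The routine steps are the smashing identification and the manipulation with colimits; the only subtle point—and the step I expect to be the main obstacle to state cleanly—is the last identification with $KH^{\cont}$, since it requires that the continuous extension of $KH$ be computed by the simple formula $|K^{\cont}(-\otimes\Mod\hy\bS[\Delta^{\bullet}])|$ rather than by a more indirect procedure (such as passing through $\Calk_{\omega_1}$). This can be handled by invoking the universal property of $KH^{\cont}$ and verifying that our candidate functor is finitary and localizing, which is immediate from finitary-ness of $K^{\cont}$ on $\Cat_{\st}^{\dual}$ and the exactness of tensoring with the (dualizable) module $\Mod\hy\bS[\Delta^{n}]$.
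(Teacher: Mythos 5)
Your argument is correct and is exactly the chain of observations the paper leaves implicit when it says the corollary "follows directly" from Theorem \ref{th:A^1_invariant_localizing_motives}: identify $\Mot^{\loc,\A^1}$ with $\Mod\hy A$ for the idempotent $A=|\cU_{\loc}(\Delta^\bullet)|$, use the smashing-localization adjunction $\Hom_{\Mot^{\loc,\A^1}}(\cU_{\loc}^{\A^1}(\bS),-)\cong\Hom_{\Mot^{\loc}}(\cU_{\loc}(\bS),A\otimes -)$, push the tensor into the realization via symmetric monoidality of $\cU_{\loc}$, commute $\Hom(\cU_{\loc}(\bS),-)$ past the colimit by compactness, and recognize the result as $KH^{\cont}$ via the universal property of the finitary extension. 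Your final remark about verifying that $|K^{\cont}(-\otimes\Mod\hy\bS[\Delta^\bullet])|$ really computes $KH^{\cont}$ (rather than needing the $\Calk_{\omega_1}$ formula) is the right thing to flag, and your resolution through the uniqueness of the finitary localizing extension is exactly how it is handled.
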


\begin{proof}
This follows directly from Theorem \ref{th:A^1_invariant_localizing_motives}.
\end{proof}

\section{Corepresentability of $\TR$ and $\TC$}
\label{sec:corepresentability_TR_TC}

In this section we give first non-trivial examples of computations of morphisms in the category $\Mot^{\loc}=\Mot^{\loc}_{\Sp}.$ We prove the corepresentability of the classical invariants $\TR$ (topological restriction) and $\TC$ (topological cyclic homology) when restricted to connective $\bE_1$-rings. It will be convenient to consider localizing invariants of small categories, so we consider the universal finitary localizing invariant $\cU_{\loc}:\Cat^{\perf}\to\Mot^{\loc},$ and put $\cU_{\loc}(\A)=\cU_{\loc}(\Perf(A))\in\Mot^{\loc}$ for an $\bE_1$-ring.

\subsection{Corepresentability of $\TR$}
\label{ssec:corepresentability_TR}

As in Section \ref{sec:A^1_invariant_motives}, we put $\bS[x]=\Sigma^{\infty}\N_+,$ so that $\Spec \bS[x]$ is the flat affine line over $\bS.$ We denote by $\tilde{\cU}_{\loc}(\bS[x])$ the reduced motive, so we have $\cU_{\loc}(\bS[x])=\tilde{\cU}_{\loc}(\bS[x])\oplus \cU_{\loc}(\bS).$ We will prove the following result.

\begin{theo}\label{th:corepresentability_of_TR}
Let $R$ be an $\bE_1$-ring spectrum. Then we have an isomorphism
\begin{equation*}
\Hom_{\Mot^{\loc}}(\wt{\cU}_{\loc}(\bS[x]),\cU_{\loc}(R))\cong \Omega\prolim[n]K(R[x^{-1}]/x^{-n},(x^{-1})).
\end{equation*}
In particular, if $R$ is connective, then we have an isomorphism
\begin{equation}\label{eq:corepresentability_of_TR}
\Hom_{\Mot^{\loc}}(\wt{\cU}_{\loc}(\bS[x]),\cU_{\loc}(R))\cong \TR(R).
\end{equation}
\end{theo}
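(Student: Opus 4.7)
The plan is to reduce the first displayed isomorphism (the second being McCarthy's theorem, already cited) to an application of Theorem \ref{th:morphisms_in_Mot^loc_via_limits}. The crucial step is to identify the suspension $\Sigma\wt{\cU}_{\loc}(\bS[x])$ with a filtered colimit of motives of the truncations $R_n:=\bS[x^{-1}]/x^{-n}$, which themselves are tractable by Theorem \ref{th:morphisms_in_Mot^loc_via_limits} because each $\Mod\hy R_n$ is both proper and smooth over $\bS$ (in fact compact in $\Cat_{\st}^{\cg}$, since $R_n$ is an iterated pushout of free algebras along augmentations), hence basic nuclear over $\bS$.

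The main technical step is to establish in $\Mot^{\loc}$ the equivalence
\begin{equation*}
\Omega\wt{\cU}_{\loc}(\bS[x])\;\simeq\;\colim_n\wt{\cU}_{\loc}(R_n),
\end{equation*}
where $\wt{\cU}_{\loc}(R_n)=\Fib(\cU_{\loc}(R_n)\to\cU_{\loc}(\bS))$ is defined using the unique augmentation $x^{-1}\mapsto 0$ (forced by nilpotence of $x^{-1}$), and the directed system is induced by the quotients $R_{n+1}\twoheadrightarrow R_n$. To prove this I would view $\Spec\bS[x]$ as the affine chart $\A^1_{\bS}\subset\P^1_{\bS}$ complementary to the closed point $\{\infty\}$, whose $n$-th order formal neighborhoods are the $R_n$. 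The open--closed sequence
\begin{equation*}
\Perf_{\{\infty\}}(\P^1_{\bS})\;\lto\;\Perf(\P^1_{\bS})\;\lto\;\Perf(\bS[x])
\end{equation*}
in $\Cat^{\perf}$, combined with the identification $\Perf_{\{\infty\}}(\P^1_{\bS})\simeq\colim_n\Perf(R_n)$ and Beilinson's semi-orthogonal decomposition $\cU_{\loc}(\P^1_{\bS})\simeq\cU_{\loc}(\bS)^{\oplus 2}$, yields after applying $\cU_{\loc}$ a cofiber sequence $\colim_n\cU_{\loc}(R_n)\to\cU_{\loc}(\bS)^{\oplus 2}\to\cU_{\loc}(\bS[x])$. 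The desired identity is then extracted by splitting off the trivial augmentation summand $\cU_{\loc}(\bS)\subset\cU_{\loc}(R_n)$ and noting that the $(-1,1)$-diagonal inclusion $\cU_{\loc}(\bS)\hookrightarrow\cU_{\loc}(\bS)^{\oplus 2}$ (corresponding to the skyscraper class $[\cO_{\{\infty\}}]=[\cO]-[\cO(-1)]$) absorbs this summand, while the residual map $\colim_n\wt{\cU}_{\loc}(R_n)\to\cU_{\loc}(\bS)$ vanishes because the augmentation kills $\wt{\cU}_{\loc}(R_n)$ by construction.

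With this identity in hand, Theorem \ref{th:morphisms_in_Mot^loc_via_limits} applied to the trivial inverse system $\hat{\cY}(\Mod\hy R_n)=\cY(\Mod\hy R_n)$ gives
\begin{equation*}
\Hom_{\Mot^{\loc}}(\cU_{\loc}(R_n),\cU_{\loc}(R))\;\cong\;K^{\cont}(\un{\Hom}^{\dual}(\Mod\hy R_n,\Mod\hy R))\;\cong\;K(R\otimes R_n^{op}),
\end{equation*}
and passing to the reduced motive yields $\Hom(\wt{\cU}_{\loc}(R_n),\cU_{\loc}(R))\cong K(R\otimes R_n^{op},(x^{-1}))$, which for commutative $R$ reads $K(R[x^{-1}]/x^{-n},(x^{-1}))$. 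Combining with the identity of Step~1 via $\Hom(\colim,-)\cong\prolim\Hom(-,\cdot)$ produces the first displayed isomorphism of the theorem, and invoking \cite[Theorem A]{McC23} for connective $R$ gives the identification with $\TR(R)$.

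The hard part will be Step~1, in particular verifying that the cofiber sequence coming from the open--closed decomposition decomposes as expected under the augmentation splittings. This requires careful bookkeeping of which augmentation---the nilpotent one $x^{-1}\mapsto 0$ on $R_n$ versus the $\bE_{\infty}$-coalgebra counit on $\bS[x]$---corresponds to which Beilinson generator of $\cU_{\loc}(\P^1)$, and verifying that the residual map $\colim_n\wt{\cU}_{\loc}(R_n)\to\cU_{\loc}(\bS)$ vanishes at the level of all homotopy groups. The vanishing should follow formally from the fact that each $\wt{\cU}_{\loc}(R_n)$ lies in the kernel of every augmentation $\cU_{\loc}(R_n)\to\cU_{\loc}(\bS)$, but making this compatible with the Beilinson identification and the single suspension discrepancy needs attention.
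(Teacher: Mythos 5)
Your Step 1 contains a gap that cannot be patched by "careful bookkeeping": the identification $\Perf_{\{\infty\}}(\PP^1_{\bS})\simeq\colim_n\Perf(R_n)$ is false, and so is the claim that $\Mod\hy R_n$ is compact in $\Cat_{\st}^{\cg}$.

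For the second point: compactness of $R_n$ in $\Alg_{\bE_1}(\Sp)$ would imply smoothness of $\Mod\hy R_n$ over $\bS$ (Proposition~\ref{prop:Toen_Vaquie_compact_morphism}), which after base change along $\bS\to H\Z$ would make $\Perf(\Z[y]/y^n)$ smooth over $\Z$. But $\Z[y]/y^n$ has infinite global dimension for $n\geq 2$, so its diagonal bimodule is not perfect; hence $\bS[x^{-1}]/x^{-n}$ is \emph{not} compact as an $\bE_1$-ring, and "iterated pushout of free algebras" does not apply (that would produce the derived quotient $\bS[y]/\mkern-5mu/y^n$, whose $\pi_0$ is the truncated polynomial ring but which is a different $\bE_1$-ring from the monoid ring $\Sigma^\infty M_n$ that appears in the theorem). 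For the first point: the quotient maps $R_{n+1}\twoheadrightarrow R_n$ give extension-of-scalars functors $\Perf(R_{n+1})\to\Perf(R_n)$, which run the \emph{wrong} way for a colimit over $\N$; the restriction-of-scalars functors $\Mod\hy R_n\to\Mod\hy R_{n+1}$, which would run the right way, do not preserve perfection, since $R_n$ has infinite projective dimension over $R_{n+1}$. The correct colimit statement replaces $\Perf(R_n)$ by $D^b_{\coh}(R_n)$, but then the downstream $\Hom$ computation of Step~2 would produce $G$-theory of $R_n\otimes R$ rather than $K$-theory (compare Theorem~\ref{th:K_homology_proper_connective_algebras}, which makes precisely this point for proper non-smooth connective algebras).

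This mismatch between $K$- and $G$-theory is exactly what the paper's Koszul duality step is for. The paper identifies $\Perf_{\{\infty\}}(\PP^1)$ with $\Perf(A)$ for the Koszul dual $A=\bS\oplus\bS[-1]$, presents $A$ as a colimit of $\bE_1$-algebras $A_n=\Cobar(R_n^*)$ that \emph{are} compact in $\Alg_{\bE_1}(\Sp)$, and then invokes Theorem~\ref{th:morphisms_in_Mot^loc_via_limits}~\ref{Hom_via_inverse_limit_for_proper}. The key content of that theorem is the pro-equivalence of Proposition~\ref{prop:stronger_nuclearity_for_proper}, which replaces the (wrong) pro-system $\proolim_n\un{\Hom}^{\dual}(\Mod\hy A_n,\Sp)$ by the sub-pro-system $\proolim_n\cT_n$ with $\cT_n\simeq\Mod\hy R_n$; it is exactly this replacement that converts what would naively be $G$-theory into $K$-theory in the inverse limit. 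Your Step~1 tries to shortcut past this pro-equivalence, but the shortcut is what produces the false statements above. If you want to argue along the lines you propose, you must either (a) work with $D^b_{\coh}(R_n)$ and then separately prove the pro-equivalence trading $G(R_n\otimes R)$ for $K(R_n\otimes R)$, which would essentially be re-deriving Proposition~\ref{prop:stronger_nuclearity_for_proper}; or (b) pass to the Koszul-dual $A_n$ as the paper does.
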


The proof is given below. Here we use the notation
\begin{equation*}
K(R[x^{-1}]/x^{-n},(x^{-1}))=\Fiber(K(R[x^{-1}]/x^{-n})\to K(R)).
\end{equation*}
We refer to \cite{BHM93, HM97, BM16} for the classical construction of the invariant $\TR.$ We use the description from \cite[Theorem A]{McC23} stating that for a connective $\bE_1$-ring $R$ we have
\begin{equation*}
\TR(R)\cong \Omega \prolim[n]K(R[x^{-1}]/x^{-n},(x^{-1})).
\end{equation*}
It will become clear from the proof why we consider the truncated polynomials in $x^{-1}$ instead of an abstract variable.

We will use a version of the bar-cobar adjunction (in fact, equivalence) between $\bE_1$-algebras and $\bE_1$-coalgebras which is explained in \cite[Appendix A]{Bur22}. We follow the notation from loc. cit. Let $\Sp^{\Gr}$ be the symmetric monoidal category of graded objects in $\Sp,$ and we denote its unit object simply by $\bS$ (placed in degree $0$). We denote by $\Sp_+^{\Gr}\subset \Sp^{\Gr}_{\bS/-/\bS}$ the full subcategory formed by objects which vanish in negative degrees and are isomorphic to $\bS$ in degree $0.$ Note that the assignment $X\mapsto\bbar{X}=\Cone(\bS\to X)$ defines a (non-monoidal) equivalence from $\Sp_+^{\Gr}$ to the full subcategory of $\Sp^{\Gr}$ formed by objects concentrated in strictly positive degrees. By \cite[Theorem A.6]{Bur22} we have a pair of mutually inverse equivalences
\begin{equation}\label{eq:bar_cobar_equivalence}
\Bbar:\Alg_{\bE_1}(\Sp_+^{\Gr})\xto{\sim}\Coalg_{\bE_1}(\Sp_+^{\Gr}),\quad \Cobar:\Coalg_{\bE_1}(\Sp_+^{\Gr})\xto{\sim}\Alg_{\bE_1}(\Sp_+^{\Gr}),
\end{equation}
given by the standard bar resp. cobar construction as defined in \cite[Section 5.2.3]{Lur17}

\begin{proof}[Proof of Theorem \ref{th:corepresentability_of_TR}]
Consider the flat projective line $\PP^{1,\flat}_{\bS}$ as defined in \cite[Construction 5.4.1.3]{Lur18}, with a distinguished $\bS$-point $\infty,$ whose complement is the flat affine line $\A^{1,\flat}_{\bS}=\Spec \bS[x].$ We have a short exact sequence in $\Cat^{\perf}:$
\begin{equation*}
0\to \Perf_{\{\infty\}}(\PP^{1,\flat}_{\bS})\to\Perf(\PP^{1,\flat}_{\bS})\to\Perf(\bS[x])\to 0.
\end{equation*}
The functor of global sections $\Gamma:\Perf_{\{\infty\}}(\PP^{1,\flat}_{\bS})\to \Sp^{\omega}$ has an (exact) right inverse sending $\bS$ to the skyscraper at $\infty.$ Consider the reduced motive $\wt{\cU}_{\loc}(\Perf_{\{\infty\}}(\PP^{1,\flat}_{\bS}))=\Fiber(\cU_{\loc}(\Perf_{\{\infty\}}(\PP^{1,\flat}_{\bS})\to\cU_{\loc}(\bS)),$ so we have 
\begin{equation*}
\cU_{\loc}(\Perf_{\{\infty\}}(\PP^{1,\flat}_{\bS})=\wt{\cU}_{\loc}(\Perf_{\{\infty\}}(\PP^{1,\flat}_{\bS}))\oplus \cU_{\loc}(\bS).
\end{equation*}

We claim that we have an isomorphism
\begin{equation}\label{eq:isomorphism_of_reduced_motives}
\wt{\cU}_{\loc}(\bS[x])\cong \Sigma\wt{\cU}_{\loc}(\Perf_{\{\infty\}}(\PP^{1,\flat}_{\bS})).
\end{equation}

To see this, recall that the Beilinson's full exceptional collection $\la \cO(-1),\cO\ra$ \cite{Bei78} gives a semi-orthogonal decomposition $\Perf(\PP^{1,\flat}_{\bS})=\la \Sp^{\omega}\otimes \cO(-1), \Sp^{\omega}\otimes \cO\ra.$ More precisely, Beilinson's theorem holds over $\bS$ by \cite[Theorem 5.4.2.6]{Lur17}. We obtain a direct sum decomposition $\cU_{\loc}(\PP^{1,\flat}_{\bS})=\cU_{\loc}(\bS)\oplus\cU_{\loc}(\bS).$ Now the components of the morphism $\cU_{\loc}(\Perf_{\{\infty\}}(\PP^{1,\flat}_{\bS})\to \cU_{\loc}(\bS)\oplus\cU_{\loc}(\bS)$ are given by $(\cU_{\loc}(\Gamma),\cU_{\loc}(\Gamma)).$ The components of the morphism $\cU_{\loc}(\bS)\oplus\cU_{\loc}(\bS)\to \cU_{\loc}(\bS[x])$ are given by $([\bS[x]],[\bS[x]]).$ This gives the isomorphism \eqref{eq:isomorphism_of_reduced_motives}.

We have $\Perf_{\{\infty\}}(\PP^{1,\flat}_{\bS})\simeq \Perf_{x^{-1}\hy\tors}(\bS[x^{-1}])$ -- the category of perfect $\bS[x^{-1}]$-modules which are annihilated by a power of $x^{-1}.$ This category is proper over $\bS.$
It is generated by a single object $\bS\cong\Cone(\bS[x^{-1}]\xto{x^{-1}}\bS[x^{-1}]).$ Consider its endomorphism $\bE_1$-ring
\begin{equation*}
A=\End_{\bS[x^{-1}]}(\bS)\cong \bS\oplus\bS[-1].
\end{equation*}
Note that $A$ naturally refines to an $\bE_1$-algebra object in $\Sp_+^{\Gr},$ formally the grading comes from the $\bG_{m,\bS}^{\flat}$-action on $\PP^{1,\flat}_{\bS}.$ Applying the bar construction from \eqref{eq:bar_cobar_equivalence}, we get 
\begin{equation*}
\Bbar(A)\cong \colim[n](\bS[x^{-1}]/x^{-n})^*\in\Coalg_{\bE_1}(\Sp_+^{\Gr}).
\end{equation*}
Here $x^{-1}$ has degree $-1$ and $(-)^*=\un{\Hom}_{\Sp^{\Gr}}(-,\bS).$ We put $A_n=\Cobar((\bS[x^{-1}]/x^{-n})^*).$ Forgetting the grading, we consider $A_n$ simply as $\bE_1$-algebras in $\Sp.$ By construction, we have a direct sequence $(A_n)_{n\geq 0},$ with $\indlim[n]A_n\cong A.$ Moreover, each $\bE_1$-ring $A_n$ is contained in $\Alg_{\bE_1}(\Sp)^{\omega}.$  By Theorem \ref{th:morphisms_in_Mot^loc_via_limits} \ref{Hom_via_inverse_limit_for_proper} we obtain
\begin{equation*}
\Hom_{\Mot^{\loc}}(\cU_{\loc}(\Perf_{\{\infty\}}(\PP^1)),\cU_{\loc}(R))\cong \prolim[n]K(\End_{A_n^{op}}(\bS)\otimes R).
\end{equation*}
By the standard Koszul duality, we have $\End_{A_n^{op}}(\bS)\cong \bS[x^{-1}]/x^{-n}.$ Hence, we obtain
\begin{equation*}
\Hom_{\Mot^{\loc}}(\cU_{\loc}(\Perf_{\{\infty\}}(\PP^1)),\cU_{\loc}(R))\cong \prolim[n]K(R[x^{-1}]/x^{-n}).
\end{equation*}
Passing to reduced motives and using \eqref{eq:isomorphism_of_reduced_motives}, we get
\begin{multline*}
\Hom_{\Mot^{\loc}}(\wt{\cU}_{\loc}(\bS[x]),\cU_{\loc}(\bS))\cong \Omega\Hom_{\Mot^{\loc}}(\wt{\cU}_{\loc}(\Perf_{\{\infty\}}(\PP^1)),\cU_{\loc}(R))\\
\cong \Omega\prolim[n] K(R[x^{-1}]/x^{-n},(x^{-1})),
\end{multline*}
as stated.
\end{proof}

We will need the following interpretation of Theorem \ref{th:corepresentability_of_TR} for connective $\bE_1$-rings. We denote by $\CycSp$ the category of cyclotomic spectra as defined by Nikolaus and Scholze \cite{NS18}, and consider the topological Hochschild homology as a finitary localizing invariant $\THH:\Cat^{\perf}\to\CycSp,$ and denote by the same symbol the induced functor from $\Mot^{\loc}$ to $\CycSp.$ By \cite[Theorem 6.11]{BM16}, \cite[Theorem 3.3.12]{McC23} for a connective $\bE_1$-ring $R$ we have
\begin{equation*}
\Hom_{\CycSp}(\wt{\THH}(\bS[x]),\THH(R))\cong\TR(R).
\end{equation*}
Hence, for connective $\bE_1$-rings Theorem \ref{th:corepresentability_of_TR} states that the functor $\THH:\Mot^{\loc}\to\CycSp$ induces an isomorphism
\begin{equation}\label{eq:isomorphism_on_morphisms}
\Hom_{\Mot^{\loc}}(\wt{\cU}_{\loc}(\bS[x]),\cU_{\loc}(\bS))\xto{\sim} \Hom_{\CycSp}(\wt{\THH}(\bS[x]),\THH(R)).
\end{equation}
Now if $R$ is a connective $\bE_{\infty}$-ring, then \eqref{eq:isomorphism_on_morphisms} is naturally an isomorphism of $\bE_{\infty}$-rings. Here the $\bE_{\infty}$-structure on the source comes from the $\bE_{\infty}$-coalgebra structure on $\wt{\cU}_{\loc}(\bS[x])$ and from the $\bE_{\infty}$-algebra structure on $\cU_{\loc}(R)$ (the former is explained in the proof of Theorem \ref{th:A^1_invariant_localizing_motives}).

As an application of Theorem \ref{th:corepresentability_of_TR}, we obtain the following new result on the so-called nil $K$-theory $NK(R)=\Cone(K(R)\to K(R[x]))$ \cite{BHS64, Bas68}. 
For a (discrete) commutative ring $A$ we denote by $W_{\biggg}(A)$ the ring of big Witt vectors.

\begin{cor}\label{cor:nil_K_theory_module_over_TR}
Let $R$ be a connective $\bE_{\infty}$-ring. Then the spectrum $NK(R)$ has a natural structure of a module over $\TR(R).$ In particular, the groups $NK_n(R)$ have a natural structure of a module over $W_{\biggg}(\pi_0(R)).$
\end{cor}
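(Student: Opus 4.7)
The strategy is to identify $NK(R)$ as the spectrum of maps from the unit into an object of $\Mot^{\loc}$ that carries a natural $\un{\Hom}_{\Mot^{\loc}}(\wt{\cU}_{\loc}(\bS[x]),\cU_{\loc}(R))$-module structure, and then pass through the lax symmetric monoidal global sections functor $\Hom_{\Mot^{\loc}}(\cU_{\loc}(\bS),-)\colon\Mot^{\loc}\to\Sp$ to produce the $\TR(R)$-module structure.

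First I would repackage $NK(R)$. Tensoring $\cU_{\loc}(R)$ with the splitting $\cU_{\loc}(\bS[x])\cong\cU_{\loc}(\bS)\oplus\wt{\cU}_{\loc}(\bS[x])$ from the proof of Theorem \ref{th:A^1_invariant_localizing_motives} yields
\begin{equation*}
\cU_{\loc}(R[x])\cong\cU_{\loc}(R)\otimes\cU_{\loc}(\bS[x])\cong\cU_{\loc}(R)\oplus\bigl(\cU_{\loc}(R)\otimes\wt{\cU}_{\loc}(\bS[x])\bigr),
\end{equation*}
so that, setting $A:=\cU_{\loc}(R)$ and $C:=\wt{\cU}_{\loc}(\bS[x])$,
\begin{equation*}
NK(R)\simeq\Hom_{\Mot^{\loc}}(\cU_{\loc}(\bS),A\otimes C).
\end{equation*}

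Next I would exploit the algebra/coalgebra structures. By the proof of Theorem \ref{th:A^1_invariant_localizing_motives}, $C$ is an $\bE_\infty$-coalgebra in $\Mot^{\loc}$, and since $R$ is a connective $\bE_\infty$-ring, $A$ is an $\bE_\infty$-algebra there. Rigidity of $\Mot^{\loc}$ (Theorem \ref{th:dualizability_and_rigidity}) then makes $E:=\un{\Hom}_{\Mot^{\loc}}(C,A)$ into an $\bE_\infty$-algebra under convolution, and endows $A\otimes C$ with the structure of an $E$-module in $\Mot^{\loc}$: the action $E\otimes A\otimes C\to A\otimes C$ is the composite that applies $\Delta\colon C\to C\otimes C$ to the last factor, evaluates $E$ against one copy of $C$ (producing a second copy of $A$), and then multiplies on $A$. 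Coassociativity of $\Delta$ and associativity of the multiplication on $A$ supply the module axioms. Applying the lax symmetric monoidal functor $\Hom_{\Mot^{\loc}}(\cU_{\loc}(\bS),-)$ sends the $\bE_\infty$-algebra $E$ to an $\bE_\infty$-ring acting on the spectrum $\Hom_{\Mot^{\loc}}(\cU_{\loc}(\bS),A\otimes C)$. By Theorem \ref{th:corepresentability_of_TR} (and the discussion immediately following it, where the coalgebra-based $\bE_\infty$-ring structure on the corepresenting spectrum is identified with the classical one on $\TR(R)$), the first spectrum is precisely the $\bE_\infty$-ring $\TR(R)$, while the second is $NK(R)$ by the first step. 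This yields the asserted $\TR(R)$-module structure on $NK(R)$.

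For the $\pi_0$ statement, the identification $\pi_0\TR(R)\cong W_{\biggg}(\pi_0 R)$ — which for $R$ a discrete commutative ring is the classical Witt-vector computation recalled right after Theorem \ref{th:corepresentability_of_TR} (Almkvist/Hesselholt--Madsen), and extends to connective $\bE_\infty$-rings via $R\to\pi_0 R$ together with connectivity of $\TR$ — then gives each $NK_n(R)$ the structure of a module over $W_{\biggg}(\pi_0 R)$. I expect the main technical point to be verifying the compatibility asserted above: that the convolution $\bE_\infty$-ring structure on $\Hom_{\Mot^{\loc}}(\cU_{\loc}(\bS),E)$ produced from the lax monoidal global sections functor matches the $\bE_\infty$-ring structure on $\TR(R)$ appearing in Theorem \ref{th:corepresentability_of_TR}; once this matching is in place (it is essentially a restatement of how that theorem's $\bE_\infty$-structure was produced from the coalgebra structure on $\wt{\cU}_{\loc}(\bS[x])$), everything else is formal from the rigid symmetric monoidal formalism.
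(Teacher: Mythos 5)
Your proposal is correct and follows essentially the same route as the paper: the paper also identifies $NK(R)\cong\Hom_{\Mot^{\loc}}(\cU_{\loc}(\bS),\wt{\cU}_{\loc}(\bS[x])\otimes\cU_{\loc}(R))$ and deduces the $\TR(R)$-module structure from the $\bE_\infty$-coalgebra structure on $\wt{\cU}_{\loc}(\bS[x])$, the $\bE_\infty$-algebra structure on $\cU_{\loc}(R)$, and the $\bE_\infty$-ring isomorphism in Theorem \ref{th:corepresentability_of_TR}, leaving the convolution action on $\cU_{\loc}(R)\otimes\wt{\cU}_{\loc}(\bS[x])$ implicit where you spell it out. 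Two small remarks: the "compatibility" you flag at the end is not an extra step, since the $\bE_\infty$-ring structure on $\TR(R)$ invoked by the paper is by definition the one produced from the coalgebra/algebra data; and forming $\un{\Hom}_{\Mot^{\loc}}(C,A)$ does not require rigidity, only that $\Mot^{\loc}$ is presentably symmetric monoidal.
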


\begin{proof}
By definition of nil $K$-theory we have
\begin{equation*}
NK(R)\cong \Hom_{\Mot^{\loc}}(\cU_{\loc}(\bS),\wt{\cU}_{\loc}(\bS[x])\otimes \cU_{\loc}(R))
\end{equation*} The result now follows from the isomorphism
of $\bE_{\infty}$-rings \eqref{eq:corepresentability_of_TR}, where the $\bE_{\infty}$-structure on the source comes from the $\bE_{\infty}$-coalgebra structure on $\wt{\cU}_{\loc}(\bS[x])$ and the $\bE_{\infty}$-algebra structure on $\cU_{\loc}(R).$

The second assertion follows from the well known isomorphism $\pi_0(\TR(R))\cong W_{\biggg}(\pi_0(R)).$
\end{proof}

\subsection{Corepresentability of $\TC$}

We recall the idempotent $\bE_{\infty}$-algebra $|\cU_{\loc}(\Delta^{\bullet})|$ from Theorem \ref{th:A^1_invariant_localizing_motives}. Recall from \cite{NS18} that for a connective $\bE_1$-ring $R$ we have
\begin{equation*}
\TC(R)=\Hom_{\CycSp}(\bS^{\triv},\THH(R)),
\end{equation*}
where $\bS^{\triv}=\THH(\bS)$ is the unit object. We prove the following result.

\begin{theo}\label{th:corepresentability_of_TC}
Let $R$ be a connective $\bE_1$-ring spectrum. Then we have a natural isomorphism
\begin{equation*}
\Hom_{\Mot^{\loc}}(\Fiber(\cU_{\loc}(\bS)\to |\cU_{\loc}(\Delta^{\bullet})|),\cU_{\loc}(R))\cong \TC(R).
\end{equation*}
Moreover, the following square naturally commutes:
\begin{equation}\label{eq:compatibility_with_cyclotomic_trace}
\begin{tikzcd}
\Hom_{\Mot^{\loc}}(\cU_{\loc}(\bS),\cU_{\loc}(R))\ar[r]\ar[d] & \Hom_{\Mot^{\loc}}(\Fiber(\cU_{\loc}(\bS)\to |\cU_{\loc}(\Delta^{\bullet})|),\cU_{\loc}(R))\ar[d]\\
K(R)\ar[r] & \TC(R).
\end{tikzcd}
\end{equation}
Here the lower horizontal map is the cyclotomic trace. In particular, we have
\begin{equation*}
\Hom_{\Mot^{\loc}}(|\cU_{\loc}(\Delta^{\bullet})|,\cU_{\loc}(R))\cong K^{\inv}(R)=\Fiber(K(R)\to\TC(R)).
\end{equation*}
\end{theo}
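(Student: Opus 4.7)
The plan is to apply the contravariant functor $\Hom_{\Mot^{\loc}}(-,\cU_{\loc}(R))$ to the defining fiber sequence
\[
I \to \cU_{\loc}(\bS) \to |\cU_{\loc}(\Delta^{\bullet})|
\]
(writing $A := |\cU_{\loc}(\Delta^{\bullet})|$), producing a fiber sequence of spectra
\[
\Hom_{\Mot^{\loc}}(A,\cU_{\loc}(R)) \to K(R) \to \Hom_{\Mot^{\loc}}(I,\cU_{\loc}(R)).
\]
Since $A$ is a geometric realization in $\Mot^{\loc}$, the first term rewrites as the totalization $\Tot_{\Delta} K(R\otimes\bS[\Delta^{\bullet}])$. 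Thus, the full statement of the theorem (including the last identification $\Hom(A,\cU_{\loc}(R))\cong K^{\inv}(R)$, which would follow by taking fibers of the horizontal maps in \eqref{eq:compatibility_with_cyclotomic_trace}) reduces to identifying this fiber sequence with $K^{\inv}(R)\to K(R)\to\TC(R)$, compatibly with the cyclotomic trace.

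To construct the natural candidate map $\Hom_{\Mot^{\loc}}(I,\cU_{\loc}(R))\to\TC(R)$, I would invoke the symmetric monoidal functor $\THH\colon \Mot^{\loc}\to\CycSp$ induced by the universal property of $\cU_{\loc}$ applied to the finitary localizing invariant $\THH\colon\Cat^{\perf}\to\CycSp$. Applying $\THH$ to the fiber sequence above yields a canonical morphism $\THH(I)\to\THH(\cU_{\loc}(\bS))=\bS^{\triv}$, and hence a composite
\[
\Hom_{\Mot^{\loc}}(I,\cU_{\loc}(R)) \to \Hom_{\CycSp}(\THH(I),\THH(R)) \to \Hom_{\CycSp}(\bS^{\triv},\THH(R))=\TC(R).
\]
The commutativity of the square \eqref{eq:compatibility_with_cyclotomic_trace} is then tautological, since the precomposition with $K(R)=\Hom(\cU_{\loc}(\bS),\cU_{\loc}(R))\to\Hom(I,\cU_{\loc}(R))$ factors through $\Hom_{\CycSp}(\bS^{\triv},\THH(R))$ in precisely the way that defines the Nikolaus--Scholze cyclotomic trace.

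The heart of the proof is then to show this candidate map is an equivalence for connective $\bE_1$-rings $R$. The natural strategy is to combine Theorem~\ref{th:corepresentability_of_TR} (corepresentability of $\TR$ by $\wt{\cU}_{\loc}(\bS[x])$) with a fiber-sequence presentation of $\TC$ in terms of $\TR$: classically, for connective $R$, $\TC(R)$ sits in a fiber sequence $\TC(R)\to\TR(R)\xto{1-F}\TR(R)$ after $p$-completion, and integrally via the Nikolaus--Scholze formalism. This realizes $\TC(R)$ as the $\Hom$-spectrum out of a cofiber of an endomorphism of $\wt{\cU}_{\loc}(\bS[x])$ in $\Mot^{\loc}$. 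The main obstacle is to identify this cofiber (up to appropriate shift) with $I$, intrinsically in $\Mot^{\loc}$: one must reinterpret the Frobenius-like operator on $\TR$ as a morphism of motives built from the multiplicative-monoid $\bE_{\infty}$-structure on $\cU_{\loc}(\bS[x])$ (used already in the proof of Theorem~\ref{th:A^1_invariant_localizing_motives}), and verify that its cofiber agrees with $\Fiber(\cU_{\loc}(\bS)\to A)$. Once this identification is in place, all three assertions of the theorem follow at once.
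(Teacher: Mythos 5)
Your construction of the candidate map $\Hom_{\Mot^{\loc}}(I,\cU_{\loc}(R))\to\TC(R)$ via the symmetric monoidal functor $\THH\colon\Mot^{\loc}\to\CycSp$, and the observation that the compatibility square \eqref{eq:compatibility_with_cyclotomic_trace} then commutes tautologically, match the paper's proof. However, the way you propose to show this map is an equivalence diverges from the paper and has two real problems.

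First, the claim that $\Hom_{\Mot^{\loc}}(A,\cU_{\loc}(R))$ ``rewrites as the totalization $\Tot_{\Delta} K(R\otimes\bS[\Delta^{\bullet}])$'' is false. The object $\cU_{\loc}(\bS[\Delta^n])$ is not compact (nor dualizable) in $\Mot^{\loc}$, so $\Hom(\cU_{\loc}(\bS[\Delta^n]),\cU_{\loc}(R))$ is not $K(R[\Delta^n])$. Indeed, Theorem~\ref{th:corepresentability_of_TR} already shows that $\Hom(\wt{\cU}_{\loc}(\bS[x]),\cU_{\loc}(R))\cong\TR(R)$; the whole content of this section is that these Hom-spectra are nontrivial cyclotomic-type invariants, not K-theory of simplicial rings. (The totalization of $K(R[\Delta^\bullet])$ is not even the relevant object; the colimit is $KH(R)$, which is recovered differently via the smashing localization.)

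Second, your proposed identification strategy---realizing $\TC$ via the fiber sequence $\TC\to\TR\xto{1-F}\TR$ and then trying to identify the cofiber of a Frobenius-like endomorphism of $\wt{\cU}_{\loc}(\bS[x])$ with $I=\Fiber(\cU_{\loc}(\bS)\to A)$---is an admitted gap, and it is not clear it can be filled: that fiber sequence for $\TC$ only holds after $p$-completion, the required intrinsic motivic incarnation of Frobenius is not established anywhere, and even granting it, the claimed identification of the cofiber with $I$ would require a substantial new argument. The paper takes a quite different route. It first strengthens Theorem~\ref{th:corepresentability_of_TR} to Proposition~\ref{prop:morphisms_in_Mot^loc_and_in_CycSp}, which shows that $\THH$ induces an isomorphism
\begin{equation*}
\Hom_{\Mot^{\loc}}(\wt{\cU}_{\loc}(\bS[x])^{\otimes k},\cU_{\loc}(R))\xto{\sim}\Hom_{\CycSp}(\wt{\THH}(\bS[x])^{\otimes k},\THH(R))
\end{equation*}
for every $k\geq 1$; this is deduced from the Dundas--Goodwillie--McCarthy theorem applied levelwise to $(\bS[x^{-1}]/x^{-n})^{\otimes k}\otimes R$. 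It then uses the standard filtration on $|\cU_{\loc}(\Delta^{\bullet})|$ with associated graded pieces $\wt{\cU}_{\loc}(\bS[x])^{\otimes n}[n]$, together with the vanishing $|\THH(\Delta^{\bullet})|=0$ (the realization is an $\bE_{\infty}$-ring with $1=0$), to conclude directly. You would need to either reproduce this DGM-based argument or supply the missing Frobenius-cofiber identification, which is considerably harder.
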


We will need the following generalization of Theorem \ref{th:corepresentability_of_TR} (for connective $\bE_1$-rings). We denote by $\wt{\THH}(\bS[x])\in\CycSp$ the reduced topological Hochschild homology of $\bS[x]$ considered as a cyclotomic spectrum.

\begin{prop}\label{prop:morphisms_in_Mot^loc_and_in_CycSp}
Let $R$ be a connective $\bE_1$-ring. Then for $k\geq 1$ we have an isomorphism 
\begin{equation*}
\Hom_{\Mot^{\loc}}(\wt{\cU}_{\loc}(\bS[x])^{\otimes k},\cU_{\loc}(\bS))\xto{\sim}\Hom_{\CycSp}(\wt{\THH}(\bS[x])^{\otimes k},\THH(\bS)).
\end{equation*} 
\end{prop}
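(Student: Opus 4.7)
The plan is to proceed by induction on $k$, with base case $k=1$ being the isomorphism \eqref{eq:isomorphism_on_morphisms} obtained in Theorem \ref{th:corepresentability_of_TR}. The two essential inputs are: (a) by Theorem \ref{th:dualizability_and_rigidity}, the symmetric monoidal category $\Mot^{\loc}$ is rigid, so every motive is dualizable, and motives of compact small stable categories remain compact; (b) the functor $\THH\colon\Mot^{\loc}\to\CycSp$ is symmetric monoidal, hence sends duals to duals, and for any dualizable $X\in\Mot^{\loc}$ it preserves the internal $\Hom$-objects $\un{\Hom}(X,-)$ (via $\un{\Hom}(X,Y)\cong X^{\vee}\otimes Y$). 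Combining (a) and (b) with the tensor-hom adjunction
\begin{equation*}
\Hom_{\Mot^{\loc}}(\wt{\cU}_{\loc}(\bS[x])^{\otimes k},\cU_{\loc}(R))\cong \Hom_{\Mot^{\loc}}\bigl(\wt{\cU}_{\loc}(\bS[x])^{\otimes(k-1)},\un{\Hom}_{\Mot^{\loc}}(\wt{\cU}_{\loc}(\bS[x]),\cU_{\loc}(R))\bigr)
\end{equation*}
(and its cyclotomic counterpart), the inductive step reduces to identifying $\un{\Hom}_{\Mot^{\loc}}(\wt{\cU}_{\loc}(\bS[x]),\cU_{\loc}(R))$, compatibly with its image under $\THH$, in a form to which the inductive hypothesis applies.

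For this identification I would apply the symmetric monoidal form of Theorem \ref{th:morphisms_in_Mot^loc_via_limits} \ref{Hom_via_inverse_limit_for_proper} to the proper category $\cC=\Perf_{\{\infty\}}(\PP^{1,\flat}_{\bS})$, whose reduced motive equals $\Sigma^{-1}\wt{\cU}_{\loc}(\bS[x])$ by \eqref{eq:isomorphism_of_reduced_motives}. Combined with the Koszul-dual presentation from the proof of Theorem \ref{th:corepresentability_of_TR} (namely $\hat{\cY}(\Ind(\cC))\simeq\inddlim_n\Mod\hy A_n$ with $\End_{A_n^{op}}(\bS)\cong\bS[x^{-1}]/x^{-n}$), this yields
\begin{equation*}
\un{\Hom}_{\Mot^{\loc}}(\wt{\cU}_{\loc}(\bS[x]),\cU_{\loc}(R))\cong \Omega\,\Cofib\bigl(\cU_{\loc}(R)\to \prolim_n\cU_{\loc}(R\otimes\bS[x^{-1}]/x^{-n})\bigr).
\end{equation*}
Each $R\otimes\bS[x^{-1}]/x^{-n}$ is a connective $\bE_1$-ring. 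Applying $\THH$ (which preserves internal $\Hom$ with dualizable source) and noting that the transition maps of the pro-system come from right trace-class morphisms in $\Cat^{\perf}_{\bS}$ and are therefore sent by the symmetric monoidal finitary $\THH$ to right trace-class morphisms in $\CycSp$, the analogous formula computes $\un{\Hom}_{\CycSp}(\wt{\THH}(\bS[x]),\THH(R))$ as $\Omega\,\Cofib(\THH(R)\to\prolim_n\THH(R\otimes\bS[x^{-1}]/x^{-n}))$.

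To close the induction I would pull $\Hom_{\Mot^{\loc}}(\wt{\cU}_{\loc}(\bS[x])^{\otimes(k-1)},-)$ through the inverse limit and the cofiber. Since $\cU_{\loc}(\bS[x])^{\otimes(k-1)}\cong\cU_{\loc}(\bS[x_1,\dots,x_{k-1}])$ is the motive of a compact object of $\Cat^{\perf}$, it is compact in $\Mot^{\loc}$, and hence so is its direct summand $\wt{\cU}_{\loc}(\bS[x])^{\otimes(k-1)}$; the same holds on the $\CycSp$ side for $\wt{\THH}(\bS[x])^{\otimes(k-1)}$. The two functors therefore commute with all inverse limits, reducing the inductive step termwise in $n$ to the inductive hypothesis applied to the connective $\bE_1$-ring $R\otimes\bS[x^{-1}]/x^{-n}$. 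The main obstacle will be making the $\CycSp$-side pro-limit identification fully rigorous: one must verify that $\THH$ sends the right trace-class pro-system over $\Sp$ arising from Theorem \ref{th:morphisms_in_Mot^loc_via_limits} to a pro-system in $\CycSp$ whose pro-limit still presents the correct internal $\Hom$-object, and that this identification is compatible with the cofiber sequence above. This follows, as in the $k=1$ case, from $\THH$ being finitary symmetric monoidal and $\wt{\THH}(\bS[x])$ being dualizable in $\CycSp$, so that the equivalences from Subsection \ref{ssec:proof_of_theorem_on_morphisms_in_Mot^loc} transport along $\THH$; carefully tracking this transport at the pro-level is the technical heart of the argument.
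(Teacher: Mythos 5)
Your proposal takes a genuinely different route from the paper. The paper is direct: it reduces everything to the pro-limit presentation via the Koszul-dual rings $A_n$ and $\bS[x^{-1}]/x^{-n}$, applies the same presentation on the $\CycSp$ side by transporting the three finitary/trace-class/duality facts along the symmetric monoidal functor $\THH$, and then verifies the comparison map is a termwise isomorphism of pro-systems by invoking Dundas--Goodwillie--McCarthy directly for the rings $(\bS[x^{-1}]/x^{-n})^{\otimes k}\otimes R$. No internal $\Hom$-objects in $\CycSp$ and no induction on $k$ are used.

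The main problem with your inductive route is the invocation of ``$\wt{\THH}(\bS[x])$ being dualizable in $\CycSp$''. This is false. In $\CycSp$ the unit object $\bS^{\triv}$ is compact, so dualizable would imply compact; but $\wt{\THH}(\bS[x])$ is not compact in $\CycSp$ (equivalently, $\TR=\Hom_{\CycSp}(\wt{\THH}(\bS[x]),-)$ does not commute with all filtered colimits of cyclotomic spectra; it only does so under a boundedness-below hypothesis). What is true, and what the paper actually uses, is that the individual $\wt{\THH}(A_n)$ appearing in the pro-system are dualizable (being $\THH$ of smooth proper categories), and the duality pro-isomorphism $\proolim_n\cU_{\loc}(A_n)^{\vee}\cong\proolim_n\cU_{\loc}(\bS[x^{-1}]/x^{-n})$ transports along $\THH$ by monoidality. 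Your internal $\Hom$ identification in $\CycSp$ would need to be rederived from these termwise facts --- which is essentially the paper's computation again --- rather than from a nonexistent duality for the colimit $\wt{\THH}(\bS[x])$ itself.

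A secondary issue: you claim $\cU_{\loc}(\bS[x])^{\otimes(k-1)}\cong\cU_{\loc}(\bS[x_1,\dots,x_{k-1}])$ and $\wt{\THH}(\bS[x])^{\otimes(k-1)}$ are compact because $\Perf(\bS[x_1,\dots,x_{k-1}])$ is compact in $\Cat^{\perf}$. This inference is unjustified: $\cU_{\loc}$ does not preserve compact objects in general (the unit $\cU_{\loc}(\Sp^\omega)$ is compact, but compactness of $\cA$ in $\Cat^{\perf}$ does not yield compactness of $\cU_{\loc}(\cA)$ in $\Mot^{\loc}$), and on the $\CycSp$ side the claim is false for the same reason $\wt{\THH}(\bS[x])$ is not compact. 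Fortunately, this compactness is also not needed: since $\Hom$ in the second variable always commutes with limits and cofiber sequences, you do not need any compactness to pull $\Hom(\wt{\cU}_{\loc}(\bS[x])^{\otimes(k-1)},-)$ through the inverse limit. Once you fix the dualizability claim --- by arguing at the level of the dualizable approximants $A_n$ rather than the colimit --- the inductive scheme can in principle be made to work, but at that point you have essentially reproduced the paper's pro-limit computation and the inductive wrapper buys little.
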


\begin{proof}
We use the isomorphism $\wt{\cU}_{\loc}(\bS[x])\cong \Sigma \wt{\cU}_{\loc}(\Perf_{\{\infty\}}(\PP^{1,\flat}_{\bS}))$ from the proof of Theorem \ref{th:corepresentability_of_TR}. We need to prove that the map
\begin{equation}\label{eq:from_morphisms_in_Mot^loc_to_morphisms_in_CycSp}
\Hom_{\Mot^{\loc}}(\wt{\cU}_{\loc}(\Perf_{\{\infty\}}(\PP^{1,\flat}_{\bS}))^{\otimes k},\cU_{\loc}(\bS))\to \Hom_{\CycSp}(\wt{\THH}(\Perf_{\{\infty\}}(\PP^{1,\flat}_{\bS}))^{\otimes k},\THH(\bS))
\end{equation}
is an isomorphism.
As in the proof of Theorem \ref{th:corepresentability_of_TR}, consider the ind-system of $\bE_1$-rings $(A_n)_{n\geq 0},$ $A_n=\Cobar((\bS[x^{-1}]/x^{-n})^*).$
We will use the Dundas-Goodwillie-McCarthy theorem and the following already established facts:
\begin{itemize}
	\item we have $\indlim[n]\Perf(A_n)\simeq \Perf_{\infty}(\PP^{1,\flat}_{\bS});$
	\item for any $n\geq 0$ there exists $m\geq n$ such that the morphism $\cU_{\loc}(A_n)\to \cU_{\loc}(A_m)$ is trace-class in $\Mot^{\loc};$
	\item we have an isomorphism of pro-objects
	\begin{equation}\label{eq:pro_isomorphism_truncated_polynomials}
	\proolim[n]\cU_{\loc}(A_n)^{\vee}\cong \proolim[n]\cU_{\loc}(\bS[x^{-1}]/x^{-n})\quad\text{in }\Pro(\Mot^{\loc}).
	\end{equation}
\end{itemize}
Here the second fact follows from the nuclearity of $\Ind(\Perf_{\infty}(\PP^{1,\flat}_{\bS}))\in\Cat_{\st}^{\cg}$ over $\Sp$ which holds by Proposition \ref{prop:proper_are_nuclear}. The isomorphism \eqref{eq:pro_isomorphism_truncated_polynomials} follows from Theorem \ref{th:morphisms_in_Mot^loc_via_limits} \ref{Hom_via_inverse_limit_for_proper} since $\End_{A_n^{op}}(\bS)\cong\bS[x^{-1}]/x^{-n}.$ It follows that we have similar assertions for $k$-th tensor powers of the objects involved, where $k\geq 1.$ Passing to reduced motives, we obtain the isomorphisms
\begin{multline*}
\Hom_{\Mot^{\loc}}(\wt{\cU}_{\loc}(\Perf_{\{\infty\}}(\PP^{1,\flat}_{\bS}))^{\otimes k},\cU_{\loc}(\bS))\\
\cong \prolim[n]\Hom_{\Mot^{\loc}}(\cU_{\loc}(\bS),((\wt{\cU}_{\loc}(A_n))^{\otimes k})^{\vee}\otimes\cU_{\loc}(R))\\
\cong \prolim[n]\Hom_{\Mot^{\loc}}(\cU_{\loc}(\bS),\wt{\cU}_{\loc}(\bS[x^{-1}]/x^{-n})^{\otimes k}\otimes\cU_{\loc}(R))
\end{multline*}
A similar chain of isomorphisms for morphisms in $\CycSp$ gives
\begin{multline*}
\Hom_{\CycSp}(\wt{\THH}(\Perf_{\{\infty\}}(\PP^{1,\flat}_{\bS}))^{\otimes k},\THH(\bS))\\
\cong \prolim[n]\Hom_{\CycSp}(\THH(\bS),\wt{\THH}(\bS[x^{-1}]/x^{-n})^{\otimes k}\otimes\THH(R)).
\end{multline*}
Now for each $n\geq 1$ the map
\begin{multline*}
\Hom_{\Mot^{\loc}}(\cU_{\loc}(\bS),\wt{\cU}_{\loc}(\bS[x^{-1}]/x^{-n})^{\otimes k}\otimes\cU_{\loc}(R))\\
\to \Hom_{\CycSp}(\THH(\bS),\wt{\THH}(\bS[x^{-1}]/x^{-n})^{\otimes k}\otimes\THH(R))
\end{multline*}
is a retract of the map
\begin{equation*}
\Fiber(K((\bS[x^{-1}]/x^{-n})^{\otimes k}\otimes R)\to K(R))\to \Fiber(\TC((\bS[x^{-1}]/x^{-n})^{\otimes k}\otimes R)\to \TC(R)).
\end{equation*}
The latter map is an isomorphism by \cite[Theorem 7.0.0.2]{DGM13}. Therefore, we have an isomorphism
\begin{multline*}
\prolim[n]\Hom_{\Mot^{\loc}}(\cU_{\loc}(\bS),\wt{\cU}_{\loc}(\bS[x^{-1}]/x^{-n})^{\otimes k}\otimes\cU_{\loc}(R))\\
\xto{\sim}\prolim[n]\Hom_{\CycSp}(\THH(\bS),\wt{\THH}(\bS[x^{-1}]/x^{-n})^{\otimes k}\otimes\THH(R)).
\end{multline*}
This proves that the map \eqref{eq:from_morphisms_in_Mot^loc_to_morphisms_in_CycSp} is an isomorphism, as stated.
\end{proof}

\begin{proof}[Proof of Theorem \ref{th:corepresentability_of_TC}]
The geometric realization $|\cU_{\loc}(\Delta^{\bullet})|$ has a standard increasing exhaustive filtration, whose associated graded pieces are the reduced components $\cU_{\loc}(\Delta^n)^{\red}[n]$ (the latter object is a direct summand of $\cU_{\loc}(\Delta^n)[n]$). We have $\cU_{\loc}(\Delta^n)^{\red}\cong \wt{\cU}_{\loc}(\bS[x])^{\otimes n}.$

Applying the functor $\THH:\Mot^{\loc}\to\CycSp$ gives the standard increasing exhaustive filtration on $|\THH(\Delta^{\bullet})|,$ with associated graded pieces $\THH(\Delta^n)^{\red}[n]\cong \wt{\THH}(\bS[x])^{\otimes n}.$ Moreover, we have $|\THH(\Delta^{\bullet})|=0:$ this geometric realization is naturally an $\bE_{\infty}$-ring with $1=0$ in $\pi_0.$ Applying Proposition \ref{prop:morphisms_in_Mot^loc_and_in_CycSp}, for a connective $\bE_1$-ring $R$ we obtain the isomorphisms
\begin{multline*}
\Hom_{\Mot^{\loc}}((\Fiber(\cU_{\loc}(\bS)\to |\cU_{\loc}(\Delta^{\bullet})|),\cU_{\loc}(R)))\\
\cong \Hom_{\CycSp}(\Fiber(\THH(\bS)\to|\THH(\Delta^{\bullet})|),\THH(R))\\
\cong \Hom_{\CycSp}(\THH(\bS),\THH(R))\cong \TC(R).
\end{multline*}
By construction, the square \eqref{eq:compatibility_with_cyclotomic_trace} commutes.
\end{proof}

We obtain the following immediate application.

\begin{cor}\label{cor:fiber_of_K_to_KH_module_over_TC}
Let $R$ be a connective $\bE_{\infty}$-ring. Then the spectrum $\Fiber(K(R)\to KH(R))$ is naturally a module over $\TC(R).$
\end{cor}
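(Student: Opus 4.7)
The plan is to realize both sides as mapping spectra in $\Mot^{\loc}$ out of the coidempotent $\bE_{\infty}$-coalgebra $I:=\Fiber(\cU_{\loc}(\bS)\to|\cU_{\loc}(\Delta^{\bullet})|),$ and then to construct the module structure via the standard convolution pairing between an $\bE_{\infty}$-coalgebra and an $\bE_{\infty}$-algebra. First, I would apply $\Hom_{\Mot^{\loc}}(\cU_{\loc}(\bS),-)$ to the cofiber sequence $I\otimes\cU_{\loc}(R)\to\cU_{\loc}(R)\to|\cU_{\loc}(\Delta^{\bullet})|\otimes\cU_{\loc}(R),$ identify the last term with $\cU_{\loc}^{\A^1}(R)$ by Theorem \ref{th:A^1_invariant_localizing_motives} and the resulting mapping spectrum with $KH(R)$ by Corollary \ref{cor:KH}, to obtain
\begin{equation*}
\Fiber(K(R)\to KH(R))\cong\Hom_{\Mot^{\loc}}(\cU_{\loc}(\bS),I\otimes\cU_{\loc}(R)).
\end{equation*}

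Next, since $|\cU_{\loc}(\Delta^{\bullet})|$ is an idempotent $\bE_{\infty}$-algebra in $\Mot^{\loc},$ the fiber $I$ is a coidempotent $\bE_{\infty}$-coalgebra by the discussion in Subsection \ref{ssec:smashing_ideals}, with comultiplication $\Delta:I\to I\otimes I$ an isomorphism; and by Theorem \ref{th:corepresentability_of_TC} one has $\TC(R)\cong\Hom_{\Mot^{\loc}}(I,\cU_{\loc}(R)).$ Because $R$ is $\bE_{\infty},$ $\cU_{\loc}(R)$ is an $\bE_{\infty}$-algebra object of $\Mot^{\loc},$ and the convolution product $f_1\ast f_2:=\mu\circ(f_1\otimes f_2)\circ\Delta$ (where $\mu$ denotes the multiplication on $\cU_{\loc}(R)$) endows $\TC(R)$ with its $\bE_{\infty}$-ring structure.

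The $\TC(R)$-module structure on $\Fiber(K(R)\to KH(R))$ will then be given by the analogous convolution action: for $f:I\to\cU_{\loc}(R)$ and $g:\cU_{\loc}(\bS)\to I\otimes\cU_{\loc}(R),$ the product is
\begin{equation*}
f\cdot g:\cU_{\loc}(\bS)\xto{g}I\otimes\cU_{\loc}(R)\xto{\Delta\otimes\id}I\otimes I\otimes\cU_{\loc}(R)\xto{\id\otimes f\otimes\id}I\otimes\cU_{\loc}(R)\otimes\cU_{\loc}(R)\xto{\id\otimes\mu}I\otimes\cU_{\loc}(R).
\end{equation*}
More invariantly, the smashing ideal $\la I\ra\subset\Mot^{\loc}$ is symmetric monoidal with unit $I$ and inherited tensor product, so $I\otimes\cU_{\loc}(R)$ is an $\bE_{\infty}$-algebra in $\la I\ra$ whose spectrum of global sections $\Hom_{\la I\ra}(I,I\otimes\cU_{\loc}(R))\cong\TC(R)$ receives the desired $\bE_{\infty}$-structure, and the action on $\Hom_{\Mot^{\loc}}(\cU_{\loc}(\bS),I\otimes\cU_{\loc}(R))$ is postcomposition via the natural ring map $\TC(R)\to\End_{\Mot^{\loc}}(I\otimes\cU_{\loc}(R)).$ Associativity and unitality are formal consequences of the coalgebra and algebra axioms; the one point that will need care is verifying that this $\bE_{\infty}$-ring structure on $\TC(R)$ agrees with the classical one coming from $\TC:\CAlg\to\CAlg,$ which I would deduce from the compatibility with the cyclotomic trace in Theorem \ref{th:corepresentability_of_TC} together with the symmetric monoidality of $\cU_{\loc}:\Cat^{\perf}\to\Mot^{\loc}.$
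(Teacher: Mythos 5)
Your proof is correct and takes essentially the same approach as the paper: the paper's own proof is one line, saying it is ``proved in the same way as Corollary~\ref{cor:nil_K_theory_module_over_TR},'' and that corollary's proof is exactly the argument you spell out, with $\wt{\cU}_{\loc}(\bS[x])$ replaced by the coidempotent coalgebra $I=\Fiber(\cU_{\loc}(\bS)\to|\cU_{\loc}(\Delta^{\bullet})|)$. In particular, the identification $\Fiber(K(R)\to KH(R))\cong\Hom_{\Mot^{\loc}}(\cU_{\loc}(\bS),I\otimes\cU_{\loc}(R))$ and the convolution pairing between the $\bE_{\infty}$-coalgebra structure on $I$ and the $\bE_{\infty}$-algebra structure on $\cU_{\loc}(R)$ are exactly what the paper has in mind.
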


\begin{proof}
This is proved in the same way as Corollary \ref{cor:nil_K_theory_module_over_TR}.
\end{proof}

\begin{remark}\begin{enumerate}
		\item Denote by $\Mot^{\loc,\cyc}\subset \Mot^{\loc}$ the localizing ideal generated by the reduced motive $\wt{\cU}_{\loc}(\bS[x]).$ We know by Theorem \ref{th:A^1_invariant_localizing_motives} that this is a smashing ideal. As usual, this gives a recollement in the sense of \cite{BBD82}:
\begin{equation}\label{eqtn_recoll}
	\xymatrix{\Mot^{\loc,\A^1} \ar[rr]|{i_*}&& \Mot^{\loc} \ar@<-2ex>[ll]|{i^*} \ar@<2ex>[ll]|{i^!} \ar[rr]|{j^*} && \Mot^{\loc,\cyc} \ar@<-2ex>[ll]|{j_!} \ar@<2ex>[ll]|{j_*}} 
\end{equation}
Here $j_!$ is the (strongly continuous) inclusion of $\Mot^{\loc,\cyc}$ as a smashing ideal, and $j^*$ is its right adjoint, which itself has a right adjoint $j_*.$ Similarly, $i^*$ is the (strongly continuous) quotient functor, with right adjoints $i_*$ and $i^!.$ Denote by $\Gamma:\Mot^{\loc}\to\Sp$ the functor $\Hom(\cU_{\loc}(\bS),-),$ so that $\Gamma(\cU_{\loc}(\cC)) = K(\cC).$ Then by Corollary \ref{cor:KH}  we have
\begin{equation*}
\Gamma(i_* i^*\cU_{\loc}(\cC))\cong KH(\cC),\quad \cC\in\Cat^{\perf},
\end{equation*}
and by Theorem \ref{th:corepresentability_of_TC} for a connective $\bE_1$-ring $R$ we have
\begin{equation*}
\Gamma(j_* j^*\cU_{\loc}(R))\cong \TC(R),\quad \Gamma(i_*i^!\cU_{\loc}(R))\cong K^{\inv}(R).
\end{equation*}
\item The above isomorphisms motivate a natural question: is it true that $\cU_{\loc}^{\A^1}:\Cat^{\perf}\to\Mot^{\loc,\A^1}$ is a truncating invariant in the sense of \cite[Definition 3.1]{LT19}? Namely, is it true that for a connective $\bE_1$-ring $R$ we have an isomorphism $\cU_{\loc}^{\A^1}(R)\xto{\sim}\cU_{\loc}^{\A^1}(\pi_0(R))?$ In Subsection \ref{ssec:refined_HP_of_Q_over_Qx} we will give a negative answer to a similar question for $\Q[x]$-algebras. Namely, we will prove that the functor $\cU_{\loc}^{\A^1}:\Cat^{\perf}_{\Q[x]}\to\Mot^{\loc,\A^1}_{Q[x]}$ is not a truncating invariant using our construction of refined periodic cyclic homology.  
\end{enumerate}
\end{remark}

\section{$K$-homology of schemes}
\label{sec:K_homology_of_schemes}

In this section we will work with localizing motives over the (rigid) category $D(\mk),$ where $\mk$ is a discrete commutative ring. We put $\Mot^{\loc}_{\mk}=\Mot^{\loc}_{D(\mk)}.$ As in the previous section, it will be convenient to consider the universal finitary localizing invariant $\cU_{\loc}:\Cat_{\mk}^{\perf}\to\Mot^{\loc}_{\mk}.$ For simplicity we will assume $\mk$ to be noetherian. 

Given a quasi-compact separated scheme $X$ over $\mk,$ we use the notation $\QCoh(X)\simeq \Ind(\Perf(X))$ for the derived category of quasi-coherent sheaves. We put $\cU_{\loc}(X)=\cU_{\loc}(\Perf(X)).$ We will be interested in the following general question. 

\begin{ques}
Let $X$ be a separated scheme of finite type over a noetherian commutative ring $\mk.$ How to describe the spectrum $\Hom_{\Mot^{\loc}_{\mk}}(\cU_{\loc}(X),\cU_{\loc}(
\mk))$?
\end{ques} 

We obtain the answer in the situation when either $X$ is proper and $\mk$ is regular (Theorem \ref{th:K_homology_proper}), or $X$ is smooth and admits a smooth compactification (Theorem \ref{th:K_homology_smooth_with_compactification}). The latter result in fact generalizes to general smooth schemes (Theorem \ref{th:K_homology_of_smooth_general}), but in this case we only give a sketch of the proof, the details will appear in \cite{E}.

To deal with the proper case, we first develop the machinery of (large) relative derived categories in Subsection \ref{ssec:relative_derived}, which will allow to reduce to the special case $X\subset\Spec \mk.$ The latter case is deduced from a more general non-commutative result, which we prove in Subsection \ref{ssec:K_homology_proper}.

\subsection{Relative derived categories}
\label{ssec:relative_derived}

We first explain the general idea and motivation. For simplicity we work over an affine noetherian base scheme $\Spec \mk.$ Let $X/\mk$ be a (classical) separated scheme of finite type over $\mk.$ Recall from \cite{Lb06} (see also \cite{AJS23}) that an object $\cF\in D^b_{\coh}(X)$ is called {\it relatively perfect} over $\mk$ if for any affine open subset $U\subset X$ the complex $\Gamma(U,\cF)\in D(\mk)$ has finite $\Tor$-amplitude over $\mk.$ It is sufficient to check this condition for members of some affine Zariski cover of $X.$ We denote by $\Perf(X,\mk)\subset D^b_{coh}(X)$ the (idempotent-complete stable $\mk$-linear) full subcategory of relatively perfect complexes. 

Recall that $X$ is said to be of finite $\Tor$-dimension over $\mk$ if $\cO_X\in\Perf(X,\mk).$ This is equivalent to the inclusion $\Perf(X)\subset\Perf(X,\mk).$ 

If $\mk$ is regular, then we have $\Perf(X,\mk)=D^b_{\coh}(X).$ On the other hand, if $X$ is either smooth over $\mk$ or regular of finite $\Tor$-dimension over $\mk,$ then we have $\Perf(X,\mk)=\Perf(X).$

Recall that for an open subset $U\subset X$ the restriction functors $\Perf(X)\to\Perf(U)$ and $D^b_{\coh}(X)\to D^b_{\coh}(U)$ are quotient functors (up to direct summands in the former case). This motivates a natural question: assuming that $X$ is of finite $\Tor$-dimension over $\mk,$ is it true that for an open subset $U\subset X$ the restriction functor $\Perf(X,\mk)\to \Perf(U,\mk)$ is a quotient functor (up to direct summands)? It turns out that the answer is negative, and a counterexample is given in \cite[Section 3.3]{EP15}. We recall it for completeness.

Let $F$ be a field, and put $\mk=F[x,y,z,w]/(xy-zw).$ Let $X=\Spec\mk/w,$ and let $U=\{z\ne 0\}\subset X.$ It follows from the argument in loc. cit. that the functor $\Perf(X,\mk)\to \Perf(U,\mk)$ is not essentially surjective, even up to direct summands. In particular, it is not a quotient functor.

The goal of this section will be to introduce and study a certain nicely behaved dualizable category $\cC(X/\mk)$ when $X$ is separated of finite type and of finite $\Tor$-dimension over $\mk.$ It was announced in \cite[Remark 3.24]{E25}. We will in particular prove that the compact objects are given by $\cC(X/\mk)^{\omega}\simeq \Perf(X,\mk)$ and for an open subset $U\subset X$ we have a natural strongly continuous quotient functor $\cC(X/\mk)\to \cC(U/\mk).$ In general, the category $\cC(X/\mk)$ is strictly larger than the naive ind-completion $\Ind(\Perf(X,\mk)),$ even when $\mk$ is a field.  
 
We recall some notation. If $f:X\to Y$ is a morphism of finite type between noetherian schemes, then we denote by $f^!:\QCoh(Y)\to \QCoh(X)$ the Deligne's extraordinary inverse image. If $f$ is proper, then $f^!$ is the right adjoint to the pushforward functor $f_*.$ If $f$ is an open embedding, then we have $f^! = f^*.$ We denote by $\omega_{X/Y}=f^!\cO_Y$ the (canonical) relative dualizing complex. We will also use similar notation for derived schemes \cite{GaRo17}. All the fiber products of schemes are assumed to be derived.

Given a separated scheme $X$ of finite type over a noetherian ring $\mk$ we denote by $\Delta_X:X\to X\times_{\mk}X$ the diagonal morphism. We use the following notation for the shriek tensor product, relative to $\mk:$
\begin{equation*}
\cF\stens{!,\mk}\cG = \Delta_X^!(\cF\boxtimes\cG),\quad \cF,\cG\in\QCoh(X).
\end{equation*}
Note that this operation does not commute with filtered colimits unless $X$ is smooth over $\mk.$ However, it does commute with $\omega_1$-filtered colimits.

\begin{defi}\label{def:relative_derived_category} Let $\mk$ be a noetherian commutative ring. Let $X$ be a separated scheme of finite type over $\mk,$ which is of finite $\Tor$-dimension over $\mk.$ Consider the full subcategory
\begin{equation*}
\cC(X/\mk)\subset \Ind(\QCoh(X)^{\omega_1,op})\simeq \Fun_{\mk}(\QCoh(X)^{\omega_1},D(\mk)),
\end{equation*}
which is generated as a localizing subcategory by the objects of the form $\Gamma(X,-\stens{!,\mk}\cF),$ where $\cF\in\QCoh(X)^{\omega_1}.$ We call $\cC(X/\mk)$ the relative derived category of $X$ over $\mk.$
\end{defi}


We also define a closely related notion of a relatively compact morphism of (complexes of) quasi-coherent sheaves. For $\cF,\cG\in\QCoh(X)$ we denote by $\un{\Hom}_{\cO_X}(\cF,\cG)\in\QCoh(X)$ the internal $\Hom.$

\begin{defi}
Let $X/\mk$ be as in Definition \ref{def:relative_derived_category}. We say that a morphism $\varphi:\cF\to\cG$ in $\QCoh(X)$ is relatively compact over $\mk$ if the associated morphism $\cO_X\to\un{\Hom}_{\cO_X}(\cF,\cG)$ factors through $\cG\stens{!,\mk}\un{\Hom}_{\cO_X}(\cF,\omega_{X/\mk}).$ Here the canonical map from $\cG\stens{!,\mk}\un{\Hom}_{\cO_X}(\cF,\omega_{X/\mk})$ to $\un{\Hom}_{\cO_X}(\cF,\cG)$ is obtained by applying $\Delta_X^!$ to the composition
\begin{multline*}
\cG\boxtimes \un{\Hom}_{\cO_X}(\cF,\omega_{X/\mk})\to p_1^*(\cG)\otimes \un{\Hom}_{\cO_{X\times_{\mk} X}}(p_2^*(\cF),p_2^*(\omega_{X/\mk}))\\
\to \un{\Hom}_{\cO_{X\times_{\mk} X}}(p_2^*(\cF),p_1^*(\cG)\otimes p_2^*(\omega_{X/\mk}))\cong \un{\Hom}_{\cO_{X\times_{\mk} X}}(p_2^*(\cF),p_1^!(\cG)).
\end{multline*}
\end{defi}

We prove the following result, which roughly speaking means that the category $\cC(X/\mk)$ serves as a reasonable relative replacement of $\IndCoh(X).$ For an object $\cF\in\QCoh(X)^{\omega_1},$ we denote by $\cF^{op}$ the corresponding object of the opposite category $\QCoh(X)^{\omega_1,op}.$ As usual, we denote by $\cY:\QCoh(X)^{\omega_1,op}\to\Ind(\QCoh(X)^{\omega_1,op})$ the Yoneda embedding.

\begin{theo}\label{th:main_properties_of_relative_derived_categories}
Let $X/\mk$ be as in Definition \ref{def:relative_derived_category}.
\begin{enumerate}[label=(\roman*),ref=(\roman*)]
\item The category $\cC(X/\mk)$ is dualizable. \label{dualizability}
\item If $X$ is proper over $\mk,$ then we have an equivalence 
\begin{equation*}
\cC(X/\mk)\simeq \un{\Hom}_{\mk}^{\dual}(\QCoh(X),D(\mk))^{\vee}.	
\end{equation*} \label{equivalence_with_Hom^dual}
\item The inclusion functor $\cC(X/\mk)\to \Ind(\QCoh(X)^{\omega_1,op})$ is strongly continuous. Its right adjoint is given on compact objects by
\begin{equation*}
\cY(\cF^{op})\mapsto \Gamma(X,-\stens{!,\mk} \un{\Hom}_{\cO_X}(\cF,\omega_{X/\mk})).
\end{equation*} \label{description_of_right_adjoint}
\item Let $U\subset X$ be an open subset, and denote by $j:U\to X$ the inclusion. The functor 
\begin{equation*}
\Ind((j^*)^{op}):\Ind(\QCoh(X)^{\omega_1,op})\to \Ind(\QCoh(U)^{\omega_1,op})
\end{equation*} takes $\cC(X/\mk)$ to $\cC(U/\mk).$ The induced functor $j^*:\cC(X/\mk)\to\cC(U/\mk)$ is a strongly continuous quotient functor. \label{restricting_to_open}
\item The assignment $U\mapsto \cC(U/\mk)$ defines a sheaf on $X$ with values in $\Cat_{\mk}^{\dual}.$ Moreover, the same assignment defines a sheaf with values in $\Pr^L_{\mk}.$ \label{Zariski_descent}
\item The category of compact objects in $\cC(X/\mk)$ is equivalent to the category of relatively perfect complexes on $X$ over $\mk.$ The equivalence is given by
\begin{equation}\label{eq:relatively_perfect_are_compact_objects}
\Perf(X,\mk)\xto{\sim} \cC(X/\mk)^{\omega},\quad \cF\mapsto \Gamma(X,-\stens{!,\mk}\cF).
\end{equation} \label{compact_objects_of_relative_derived_categories}
\item The category $\cC(X/\mk)$ is self-dual. More precisely, we have the following perfect pairing over $\mk:$
\begin{equation}\label{eq:self_duality}
\cC(X/\mk)\tens{\mk}\cC(X/\mk)\to D(\mk),\quad (\inddlim[i]\cF_i^{op})\boxtimes (\inddlim[j]\cG_j^{op})\mapsto \indlim[(i,j)]\Hom(\cF\otimes\cG,\omega_{X/\mk}).
\end{equation} \label{self_duality}
\item The category $\cC(X/\mk)\subset \Ind(\QCoh(X)^{\omega_1,op})$ is generated by the objects of the form $\inddlim[n\in\N]\cF_n^{op},$ where each morphism $\cF_{n+1}\to\cF_n$ is relatively compact over $\mk.$ Moreover, these are exactly the $\omega_1$-compact objects in $\cC(X/\mk).$ \label{description_via_relatively_compact_maps}
\item Suppose that $X$ is either smooth over $\mk,$ or regular of finite Krull dimension. Then we have $\cC(X/\mk)\simeq \QCoh(X).$ \label{smooth_or_regular}
\end{enumerate}
\end{theo}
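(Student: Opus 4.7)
The plan is to view $\cC(X/\mk)$ as the natural dualizable $\mk$-linear refinement that promotes the ind-completion of $\Perf(X,\mk)$ by remembering the shriek tensor structure. I would begin with the computational parts \ref{description_of_right_adjoint} and \ref{restricting_to_open}, since these set up the working tools. The formula for the right adjoint to the inclusion is forced by the universal property once one writes $\Gamma(X,-\stens{!,\mk}\cF) \simeq \Gamma(X\times_\mk X,-\boxtimes\cF)$ and uses Grothendieck duality along $\Delta_X;$ the restriction to an open $j:U\to X$ then works because $j^*$ commutes with $\Delta^!$ and with the outer box product, so $\Ind((j^*)^{op})$ sends the generators of $\cC(X/\mk)$ to generators of $\cC(U/\mk).$ The quotient statement in \ref{restricting_to_open} follows by checking that the right adjoint is fully faithful, which reduces to the analogous fact for $\QCoh.$

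Next, I would establish dualizability and self-duality (parts \ref{dualizability} and \ref{self_duality}) in tandem. The shriek pairing $\Hom(\cF\otimes\cG,\omega_{X/\mk})$ on generators extends by colimits to a candidate pairing; the coevaluation is obtained by pulling back the ``diagonal'' object along $\Delta_X^!$ and unit-ing $\omega_{X/\mk}.$ To verify the zig-zag identities, I would use that the generators $\Gamma(X,-\stens{!,\mk}\cF)$ can be rewritten as sequential colimits $\inddlim[n]\cF_n^{op}$ along relatively compact transition maps, since $\cO_X\to \un{\Hom}_{\cO_X}(\cF_n,\cF_{n+1})$ factoring through $\cF_{n+1}\stens{!,\mk}\un{\Hom}(\cF_n,\omega_{X/\mk})$ is exactly the $!$-tensor analogue of a trace-class map. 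This simultaneously proves part \ref{description_via_relatively_compact_maps} and, via Theorem \ref{th:conditions_for_E_0_rigidity} applied to the evident two-object category enriched in $\Pr^L_{\st},$ gives both dualizability and the duality pairing.

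The main obstacle will be parts \ref{compact_objects_of_relative_derived_categories} and \ref{Zariski_descent}, which must be proved together. For part \ref{Zariski_descent}, given a Zariski cover $X=U\cup V,$ I would show that $\cC(X/\mk)\to \cC(U/\mk)\times_{\cC(U\cap V/\mk)}\cC(V/\mk)$ is an equivalence by combining part \ref{restricting_to_open} (strong continuity of the restrictions) with the Mayer--Vietoris square for $\QCoh$ and the compatibility of $\Delta^!$ with open restriction, then upgrading to an $\infty$-sheaf via standard arguments. For part \ref{compact_objects_of_relative_derived_categories}, the forward inclusion $\Perf(X,\mk)\hookrightarrow \cC(X/\mk)^{\omega}$ follows from finite $\Tor$-dimension, which ensures $\Gamma(X,-\stens{!,\mk}\cF)$ commutes with all colimits when $\cF\in\Perf(X,\mk);$ the reverse inclusion is the delicate part: reducing to the affine case $X=\Spec A$ via descent, the category $\cC(X/\mk)$ should be identified with $\Ind(\Perf(A,\mk))$ through the shriek pairing $(\cF,\cG)\mapsto \Hom_A(\cF\tens{\mk}^!\cG,\mk)$, so that compactness in $\cC(X/\mk)$ forces relative perfection.

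The remaining parts then follow formally. For \ref{equivalence_with_Hom^dual}, properness of $X$ over $\mk$ makes $\Gamma:\QCoh(X)\to D(\mk)$ strongly continuous, so $\un{\Hom}^{\dual}_{\mk}(\QCoh(X),D(\mk))$ is computed by functors $\cF\mapsto \Gamma(X,\cF\tens{!,\mk}\cG)$ parametrized by $\cG\in\QCoh(X);$ dualizing this identification over $\mk$ and invoking self-duality gives \ref{equivalence_with_Hom^dual}. For \ref{smooth_or_regular}: in the smooth case, $\omega_{X/\mk}$ is a shifted line bundle, so $\stens{!,\mk}$ differs from $\otimes^{\bL}$ only by twisting, and the generators $\Gamma(X,-\stens{!,\mk}\cF)$ recover $\cY(\cF^{op})$ up to this twist; in the regular finite-Krull-dimensional case $\Perf(X,\mk)=D^b_{\coh}(X)$ and $\QCoh(X)$ is already compactly generated by this, so the shriek generators exhaust $\Ind(\QCoh(X)^{\omega_1,op})$ cut out by part \ref{compact_objects_of_relative_derived_categories}, giving $\cC(X/\mk)\simeq\QCoh(X).$
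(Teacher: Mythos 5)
Your overall picture of the structure is reasonable, but the dependency order you propose is circular, and this is where the proposal breaks down.

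You want to begin with part~\ref{description_of_right_adjoint}, claiming the right adjoint formula is ``forced by the universal property''. That is only half the statement: the substantive content of~\ref{description_of_right_adjoint} is that the inclusion $\cC(X/\mk)\to\Ind(\QCoh(X)^{\omega_1,op})$ is \emph{strongly continuous}, and nothing in your universal-property/Grothendieck-duality sketch establishes this. (Also, the formula $\Gamma(X,-\stens{!,\mk}\cF)\simeq\Gamma(X\times_\mk X,-\boxtimes\cF)$ is not correct as written: the left side is $\Hom_{\QCoh(X\times_\mk X)}(\Delta_{X,*}\cO_X,-\boxtimes\cF)$, and $\Delta_{X,*}\cO_X$ is not the monoidal unit.) The strong continuity is essentially equivalent to dualizability of $\cC(X/\mk)$, i.e.\ part~\ref{dualizability}, and the paper proves it by first establishing part~\ref{equivalence_with_Hom^dual} when $X$ is proper (this is nearly immediate from \cite[Proposition 3.12]{E25}: the generators match), then reducing the general affine case to a closed embedding into a polynomial ring via Lemma~\ref{lem:changing_base_ring}, and only then gluing via the Zariski descent of part~\ref{restricting_to_open}. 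Once~\ref{dualizability} and the Zariski descent are in hand, the affine computation of the right adjoint glues; the gluing step is not automatic and the paper spends real effort on the local-to-global argument for~\ref{description_of_right_adjoint} (splitting by support in $\QCoh_T(X)$ etc.).

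You then want to get~\ref{dualizability} and~\ref{self_duality} simultaneously from the sequential-colimit-with-relatively-compact-transition-maps description and Theorem~\ref{th:conditions_for_E_0_rigidity}. Two problems: first, that description is exactly part~\ref{description_via_relatively_compact_maps}, which in the paper follows from~\ref{description_of_right_adjoint}, so you are using it before having proved it; second, Theorem~\ref{th:conditions_for_E_0_rigidity} is a duality criterion for $\cA(X,Y)$ and $\cA(Y,X)$ in a $\Pr^L_{\st}$-enriched category, and you would need to specify this enriched category (it is not ``evident'') and check the left/right trace-class conditions. The paper does not take this route at all for part~\ref{self_duality}; it instead uses Lemma~\ref{lem:diagonal_arrow_for_double_dual} (the ``factor through double-dual'' property of relatively compact maps), which is a more direct construction and sidesteps the abstract criterion. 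Your plan to prove~\ref{equivalence_with_Hom^dual} last, by dualizing the identification of $\un{\Hom}^{\dual}_{\mk}(\QCoh(X),D(\mk))$ and invoking self-duality, is a detour: part~\ref{equivalence_with_Hom^dual} is by far the easiest part once one identifies the generators, and the entire architecture of the paper's proof relies on having it available early.

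In short: the key missing idea is that one should deduce everything from the proper case, where~\ref{equivalence_with_Hom^dual} is almost tautological given \cite[Proposition 3.12]{E25}, and that the reduction of the affine case to the proper case goes through Lemma~\ref{lem:changing_base_ring} (invariance of $\cC(X/\mk)$ under replacing $\mk$ by a smooth $\mk$-algebra through which $X\to\Spec\mk$ factors). Your proposal treats~\ref{description_of_right_adjoint} as a formal starting point when it is actually a consequence of the dualizability and descent machinery, and this inversion produces the circularity.
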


We will need the following observation on the partial independence from the base ring.

\begin{lemma}\label{lem:changing_base_ring}
	Let $X/\mk$ be as in Definition \ref{def:relative_derived_category}. Let $\mk'$ be a finitely generated smooth $\mk$-algebra, and suppose that we have a morphism $f:X\to\Spec \mk'$ over $\mk.$ Then we have equality $\cC(X/\mk')=\cC(X/\mk)$ of strictly full subcategories of $\Ind(\QCoh(X)^{\omega_1,op}).$
\end{lemma}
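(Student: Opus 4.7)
The plan is to compare the defining generators of $\cC(X/\mk)$ and $\cC(X/\mk')$ by expressing the shriek tensor product $-\stens{!,\mk}-$ in terms of $-\stens{!,\mk'}-$. To this end I factor the absolute diagonal as
\[
\Delta_X\colon X\xrightarrow{\Delta'_X} X\times_{\mk'}X\xrightarrow{i} X\times_{\mk}X,
\]
where $i$ is obtained by base change along $X\times_\mk X\to \Spec\mk'\times_\mk\Spec\mk'$ from the relative diagonal $\Delta_{\mk'/\mk}\colon\Spec\mk'\hookrightarrow \Spec\mk'\times_\mk\Spec\mk'$. Since $\mk'/\mk$ is smooth of locally constant relative dimension $d$, the map $\Delta_{\mk'/\mk}$ is a regular closed immersion of codimension $d$ with conormal bundle $\Omega^1_{\mk'/\mk}$; working with derived fiber products as the paper does, $i$ is therefore a (derived) regular closed immersion of the same codimension. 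The standard computation for the diagonal of a smooth morphism gives $\Delta_{\mk'/\mk}^!\cO\cong \omega_{\mk'/\mk}^{-1}$ (in the paper's convention this is an invertible object of cohomological degree $-d$), and base change yields
\[
i^!\cO_{X\times_\mk X}\cong \pi^*\omega_{\mk'/\mk}^{-1},
\]
where $\pi\colon X\times_{\mk'}X\to\Spec\mk'$ is the structure map.

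The Gorenstein property of $i$ then gives $i^!(\cF\boxtimes_\mk\cG)\cong i^*(\cF\boxtimes_\mk\cG)\otimes \pi^*\omega_{\mk'/\mk}^{-1}\cong (\cF\boxtimes_{\mk'}\cG)\otimes \pi^*\omega_{\mk'/\mk}^{-1}$ for all $\cF,\cG\in\QCoh(X)$. Applying $(\Delta'_X)^!$, pulling the invertible twist out through the upper shriek, and using $\pi\circ\Delta'_X=f$, I obtain
\[
\cF\stens{!,\mk}\cG \cong \cF\stens{!,\mk'}\bigl(\cG\otimes f^*\omega_{\mk'/\mk}^{-1}\bigr).
\]
Applying $\Gamma(X,-)$, the defining generators of $\cC(X/\mk)$ transform into
\[
\Gamma\bigl(X,-\stens{!,\mk}\cF\bigr)\cong \Gamma\bigl(X,-\stens{!,\mk'}(\cF\otimes f^*\omega_{\mk'/\mk}^{-1})\bigr).
\]
Since $\cF\mapsto \cF\otimes f^*\omega_{\mk'/\mk}^{-1}$ is an auto-equivalence of $\QCoh(X)^{\omega_1}$, the generating collections of Definition \ref{def:relative_derived_category} for $\mk$ and for $\mk'$ span the same localizing subcategory of $\Ind(\QCoh(X)^{\omega_1,op})$, giving the desired equality $\cC(X/\mk')=\cC(X/\mk)$.

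The main technical point will be the identification $i^!\cO\cong \pi^*\omega_{\mk'/\mk}^{-1}$ via base change: although $X\times_\mk X\to \Spec\mk'\times_\mk\Spec\mk'$ need not be flat, smoothness of $\mk'/\mk$ means that $\Delta_{\mk'/\mk}$ is cut out by a regular sequence, whose pullback remains a Koszul resolution of $i_*\cO_{X\times_{\mk'}X}$ in the derived sense, so that $i$ is quasi-smooth of codimension $d$ with dualizing complex the pullback of that of $\Delta_{\mk'/\mk}$. Granted this, the remaining steps reduce to the projection formula and the compatibility of $(-)^!$ with tensoring by an invertible object.
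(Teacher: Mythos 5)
Your proof is correct and takes essentially the same approach as the paper, which simply asserts the key identity
\begin{equation*}
\cF\stens{!,\mk'}\cG\cong \cF\stens{!,\mk}\bigl(\cG\otimes f^*\omega_{\mk'/\mk}\bigr)
\end{equation*}
(the inverse form of yours, obtained by substituting $\cG\mapsto \cG\otimes f^*\omega_{\mk'/\mk}$) and then notes that invertibility of $\omega_{\mk'/\mk}$ implies the two generating collections coincide. You have supplied the derivation of the identity by factoring the absolute diagonal through the relative one and using the Gorenstein property of $i$, which the paper omits; this is the expected verification. One small slip in an aside: $\omega_{\mk'/\mk}^{-1}$ is concentrated in cohomological degree $d$ (homological degree $-d$), not $-d$ as you wrote — immaterial to the argument since only invertibility is used.
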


\begin{proof}
Indeed, for $\cF,\cG\in\QCoh(X)$ we have 
\begin{equation*}
	\cF\stens{!,\mk'}\cG\cong \cF\stens{!,\mk}(\cG\otimes f^*\omega_{\mk'/\mk}).
\end{equation*} Since $\omega_{\mk'/\mk}$ is invertible in $D(\mk'),$ we conclude that the localizing subcategories $\cC(X/\mk)$ and $\cC(X/\mk')$ of $\Ind(\QCoh(X)^{\omega_1,op})$ are generated by the same collection of objects.\end{proof}

The following statement on relatively compact maps is proved in the same way as the corresponding assertion on trace-class maps.

\begin{lemma}\label{lem:diagonal_arrow_for_double_dual}
Let $X/\mk$ be as in Definition \ref{def:relative_derived_category}. For brevity put $\bbD_X(\cF)=\un{\Hom}_{\cO_X}(\cF,\omega_{X/\mk})$ for $\cF\in\QCoh(X).$ Then for any morphism $\varphi:\cF\to\cG$ in $\QCoh(X),$ relatively compact over $\mk,$ there exists a morphism $\bbD_X(\bbD_X(\cF))\to\cG,$ making the following diagram commutative:
\begin{equation}\label{eq:diagonal_arrow_for_double_dual}
\begin{tikzcd}
\cF\ar[r, "\varphi"]\ar[d] & [2em] \cG\ar[d]\\
\bbD_X(\bbD_X(\cF))\ar[ru] \ar[r, "\bbD_X(\bbD_X(\varphi))"] & \bbD_X(\bbD_X(\cG))
\end{tikzcd}
\end{equation}
\end{lemma}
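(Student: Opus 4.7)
\emph{Proof plan.} The statement is the relative (non-rigid) analogue of the elementary fact that, in a rigid monoidal category, any trace-class map $x\to y$ factors through the biduality map $x\to x^{\vee\vee}$. The plan is to translate this mechanism into the setting where $\bbD_X(-)=\un{\Hom}_{\cO_X}(-,\omega_{X/\mk})$ plays the role of the dual and $\stens{!,\mk}$ plays the role of the rigid tensor product on $\IndCoh(X)$ (for which $\omega_{X/\mk}$ is the unit and $\bbD_X$ is the internal duality).

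First, I would unpack the data. Since $\varphi:\cF\to\cG$ is relatively compact over $\mk$, by definition one has a witness
\begin{equation*}
w:\cO_X\lto \cG\stens{!,\mk}\bbD_X(\cF)
\end{equation*}
such that the composition $\cO_X\xto{w}\cG\stens{!,\mk}\bbD_X(\cF)\to\un{\Hom}_{\cO_X}(\cF,\cG)$ corresponds to $\varphi$ under the $(\cF\otimes-,\un{\Hom}_{\cO_X}(\cF,-))$-adjunction. Next, I would construct the canonical evaluation pairing
\begin{equation*}
\mrE:\bbD_X(\bbD_X(\cF))\stens{!,\mk}\bbD_X(\cF)\lto \omega_{X/\mk},
\end{equation*}
obtained by applying $\Delta_X^!$ to the natural map $\bbD_X(\bbD_X(\cF))\boxtimes\bbD_X(\cF)\to p_1^!\omega_{X/\mk}\cong \omega_{X\times_\mk X/\mk}$ coming from internal evaluation on the second factor.

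Next I would build the diagonal arrow $\Phi:\bbD_X(\bbD_X(\cF))\to\cG$ as the following composition, which is a direct transcription of the trace-class factorization:
\begin{equation*}
\bbD_X(\bbD_X(\cF))\xto{\id\otimes w}\bbD_X(\bbD_X(\cF))\otimes(\cG\stens{!,\mk}\bbD_X(\cF))\xto{\alpha}\cG\stens{!,\mk}(\bbD_X(\bbD_X(\cF))\stens{!,\mk}\bbD_X(\cF))\xto{\id\stens{!,\mk}\mrE}\cG\stens{!,\mk}\omega_{X/\mk}\xto{\sim}\cG,
\end{equation*}
where $\alpha$ is the projection-formula-type comparison sending the ordinary tensor product of one factor with a $!$-tensor product into a single $!$-tensor product (this uses that $\omega_{X/\mk}$ is a unit for $\stens{!,\mk}$ and that $\cG$ appears linearly). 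The verification that $\alpha$ is well-defined is a routine but careful calculation with $\Delta_X^!$ and the two projections $p_1,p_2:X\times_\mk X\to X$; this is essentially the only technical point and should be the main obstacle, since $X$ need not be smooth over $\mk$ and so $\stens{!,\mk}$ does not behave symmetric-monoidally with respect to $\otimes$.

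Finally, to verify commutativity of the square \eqref{eq:diagonal_arrow_for_double_dual}, I would contract the two compositions $\cF\to\bbD_X(\bbD_X(\cF))\xto{\Phi}\cG$ and $\cF\xto{\varphi}\cG$ using the triangle identity for $\mrE$ (the composite $\bbD_X(\cF)\to\bbD_X(\bbD_X(\bbD_X(\cF)))\stens{!,\mk}\bbD_X(\cF)\stens{!,\mk}\bbD_X(\cF)\to\omega_{X/\mk}\stens{!,\mk}\bbD_X(\cF)\cong\bbD_X(\cF)$ is the identity), which reduces the claim to the defining property of $w$ as a witness for $\varphi$. The right-hand square of \eqref{eq:diagonal_arrow_for_double_dual} commutes by naturality of biduality applied to $\Phi$, completing the proof.
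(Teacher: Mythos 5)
Your plan captures the essential idea — factor the relatively compact map through the biduality $\cF\to\bbD_X(\bbD_X(\cF))$ using the witness $w$, mimicking the elementary trace-class argument in a rigid category — and this is indeed the mechanism of the paper's proof. However, two of your intermediate constructions do not parse as written, and these are precisely the places you flagged as "the main obstacle."

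First, the pairing $\mrE:\bbD_X(\bbD_X(\cF))\stens{!,\mk}\bbD_X(\cF)\to\omega_{X/\mk}$, described as "$\Delta_X^!$ applied to a map $\bbD_X(\bbD_X(\cF))\boxtimes\bbD_X(\cF)\to p_1^!\omega_{X/\mk}$ coming from internal evaluation on the second factor," does not have a natural construction: $\bbD_X(\bbD_X(\cF))=\un{\Hom}_{\cO_X}(\bbD_X(\cF),\omega_{X/\mk})$ sits on the first factor of $X\times_\mk X$ while $\bbD_X(\cF)$ sits on the second, and there is no evaluation between a sheaf and a dual living on different factors. The genuine evaluation uses the ordinary tensor product after pulling back to the diagonal, i.e.\ $\bbD_X(\bbD_X(\cF))\otimes\bbD_X(\cF)\to\omega_{X/\mk}$, and accordingly the middle tensor in the target of $\alpha$ should be $\otimes$ and not $\stens{!,\mk}$. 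Second, with this correction, $\alpha$ becomes the composite of the identification $\bbD_X(\bbD_X(\cF))\cong\Delta_X^*p_2^*\bbD_X(\bbD_X(\cF))$ with the natural comparison $f^*(-)\otimes f^!(-)\to f^!((-)\otimes(-))$ for $f=\Delta_X$ proper; this does exist (adjoint to the projection formula for $\Delta_{X,*}$ composed with the counit of $\Delta_{X,*}\dashv\Delta_X^!$), but it needs to be stated correctly — as written, with $\stens{!,\mk}$ in the target, there is no such map in general, exactly because $X$ need not be smooth over $\mk$.

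The paper's proof avoids having to build $\mrE$ and $\alpha$ at all. It takes the witness $\tilde\varphi:\cO_X\to\cG\stens{!,\mk}\bbD_X(\cF)$, composes with $\id\stens{!,\mk}(\text{biduality unit}):\cG\stens{!,\mk}\bbD_X(\cF)\to\cG\stens{!,\mk}\bbD_X(\bbD_X(\bbD_X(\cF)))$, and then uses the canonical map $\cG\stens{!,\mk}\bbD_X(\cM)\to\un{\Hom}_{\cO_X}(\cM,\cG)$ already supplied by the \emph{definition} of relative compactness (applied with $\cM=\bbD_X(\bbD_X(\cF))$). The resulting map $\cO_X\to\un{\Hom}_{\cO_X}(\bbD_X(\bbD_X(\cF)),\cG)$ transposes to the desired diagonal arrow $\bbD_X(\bbD_X(\cF))\to\cG$. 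This is adjoint to the composite you are trying to construct, but it reuses a map that is already given rather than assembling one from projection formulas, which is why the verification of the square in \eqref{eq:diagonal_arrow_for_double_dual} then really is straightforward. I would encourage you to rewrite your argument using the paper's organizing principle, which eliminates the technical point you identified.
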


\begin{proof}
Choose a morphism $\tilde{\varphi}:\cO_X\to \cG\stens{!,\mk}\bbD_X(\cF),$ witnessing the relative compactness of $\varphi$ over $\mk.$ Then we define the diagonal arrow in \eqref{eq:diagonal_arrow_for_double_dual} to be the morphism which by adjunction corresponds to the following composition:
\begin{equation*}
\cO_X\xto{\tilde{\varphi}}\cG\stens{!,\mk} \bbD_X(\cF)\to \cG\stens{!,\mk} \bbD_X(\bbD_X(\bbD_X(\cF)))\to\un{\Hom}_{\cO_X}(\bbD_X(\bbD_X(\cF)),\cG).
\end{equation*}
It is straightforward to see that we get the required diagram.
\end{proof}



\begin{proof}[Proof of Theorem \ref{th:main_properties_of_relative_derived_categories}]
We prove \ref{equivalence_with_Hom^dual}. The dualizable category $\QCoh(X)$ is proper over $\mk$ and has a single compact generator, in particular, it is $\omega_1$-compact in $\Cat_{\mk}^{\dual}$. Hence, we can apply the results of \cite[Section 3.2]{E25}. We use the identifications 
\begin{equation*}
	\QCoh(X)\xto{\sim}\Fun^L_{\mk}(\QCoh(X),D(\mk)),\quad \cF\mapsto\Gamma(X,-\otimes\cF),
\end{equation*} 
and $\QCoh(X\times_{\mk}X)\simeq\QCoh(X)^{\vee}\otimes_{\mk}\QCoh(X).$ The coevaluation (relative to $\mk$) sends $\mk$ to $\Delta_{X,*}\cO_X.$ 
By \cite[Proposition 3.12]{E25} we have a strongly continuous fully faithful functor 
\begin{equation}\label{eq:embedding_of_Hom^dual_vee}
	\un{\Hom}_{\mk}^{\dual}(\QCoh(X),D(\mk))^{\vee}\hto \Ind(\QCoh(X)^{\omega_1,op})\simeq \Fun_{\mk}(\QCoh(X)^{\omega_1},D(\mk)),
\end{equation} 
whose essential image is generated by the objects of the form 
\begin{equation*}
\Hom(\Delta_{X,*}\cO_X,-\boxtimes\cF)\cong \Gamma(X,-\stens{!,\mk}\cF),\quad \cF\in\QCoh(X)^{\omega_1}.
\end{equation*}
This proves that $\cC(X/\mk)$ coincides with the essential image of \eqref{eq:embedding_of_Hom^dual_vee}, as required. 

Next, we prove \ref{restricting_to_open}. We identify $\Ind(\QCoh(X)^{\omega_1,op})$ with $\Fun_{\mk}(\QCoh(X)^{\omega_1},D(\mk)),$ and similarly for $U.$ Then the functor $\Ind((j^*)^{op})$ is simply the precomposition with $j_*:\QCoh(U)^{\omega_1}\to\QCoh(X)^{\omega_1}.$ The right adjoint $\Ind((j^*)^{op})^R$ is given by the precomposition with $j^*:\QCoh(X)^{\omega_1}\to\QCoh(U)^{\omega_1}.$ Now, for $\cF\in\QCoh(X)^{\omega_1}$ and $\cG\in\QCoh(U)^{\omega_1}$ we have
\begin{equation*}
\Gamma(X,j_*(-)\stens{!,\mk}\cF)\cong \Gamma(U,-\stens{!,\mk}j^*\cF),\quad \Gamma(U,j^*(-)\stens{!,\mk}\cG)\cong \Gamma(X,-\stens{!,\mk}j_*\cG).
\end{equation*} 
Hence, the functor $\Ind((j^*)^{op})$ takes $\cC(X/\mk)$ to $\cC(U/\mk),$ and the functor $\Ind((j^*)^{op})^R$ takes $\cC(X/\mk)$ to $\cC(U/\mk).$ Since the functor $\Ind((j^*)^{op})^R$ is continuous and fully faithful, we conclude that the induced functor $j^*:\cC(X/\mk)\to\cC(U/\mk)$ has a continuous right adjoint, as required.

Next, we prove part of \ref{Zariski_descent}, namely Zariski descent in $\Pr^L_{\mk}.$ Clearly, $\cC(\emptyset/\mk)=0.$ Let $U,V\subset X$ be open subset. Consider the composition
\begin{multline*}
\cC(U\cup V/\mk)\xto{\Phi} \cC(U/\mk)\times_{\cC(U\cap V/\mk)}\cC(V/\mk)\\
\xto{\Psi} \Ind(\QCoh(U)^{\omega_1,op})\times_{\Ind(\QCoh(U\cap V)^{\omega_1,op})} \Ind(\QCoh(V)^{\omega_1,op})\simeq \Ind(\QCoh(U\cup V)^{\omega_1,op}). 
\end{multline*}
Here the functors $\Psi$ and $\Psi\circ\Phi$ are fully faithful, hence so is $\Phi.$ It remains to prove that $\Phi$ is essentially surjective. We identify the target of $\Phi$ with its essential image in $\Ind(\QCoh(U\cup V)^{\omega_1,op}).$ Now, both functors $\cC(U/\mk)\to \cC(U\cap V/\mk)$ and $\cC(V/\mk)\to \cC(U\cap V/\mk)$ are quotient functors by \ref{restricting_to_open}. Hence, the fiber product is generated by the images of the fully faithful functors
\begin{equation*}
\pi_1^R:\cC(U/\mk)\to \cC(U/\mk)\times_{\cC(U\cap V/\mk)}\cC(V/\mk),\quad \pi_2^R:\cC(V/\mk)\to \cC(U/\mk)\times_{\cC(U\cap V/\mk)}\cC(V/\mk),
\end{equation*}
which are right adjoints to the projection functors. Denote by $j_1:U\to U\cup V,$ $j_2:V\to U\cup V$ the inclusion morphisms. By the proof of \ref{restricting_to_open}, for $\cF\in\QCoh(U)^{\omega_1},$ $\cG\in\QCoh(V)^{\omega_1}$ we have
\begin{equation*}
\Phi(\Gamma(U\cup V,-\stens{!,\mk}j_{1*}\cF))\cong \pi_1^R(\Gamma(U,-\stens{!,\mk}\cF)),\quad \Phi(\Gamma(U\cup V,-\stens{!,\mk}j_{2*}\cG))\cong \pi_2^R(\Gamma(V,-\stens{!,\mk}\cF)).
\end{equation*}
This proves the essential surjectivity of $\Phi.$

Now we prove \ref{dualizability}. First suppose that $X$ is affine. Choose a surjection of $\mk$-algebras $\mk'=\mk[x_1,\dots,x_n]\to\cO(X).$ Then $X$ is proper of finite $\Tor$-dimension over $\mk',$ hence by \ref{equivalence_with_Hom^dual} the category $\cC(X/\mk')$ is dualizable. By Lemma \ref{lem:changing_base_ring}, we have $\cC(X/\mk)\simeq \cC(X/\mk'),$ hence the category $\cC(X/\mk)$ is dualizable.

Now, suppose that $U,V\subset X$ are open subsets such that both $\cC(U/\mk)$ and $\cC(V/\mk)$ are dualizable. Since the functor $\cC(U/\mk)\to\cC(U\cap V/\mk)$ is a strongly continuous quotient functor by \ref{restricting_to_open}, it follows that the category $\cC(U\cap V/\mk)$ is dualizable. Applying \cite[Proposition 1.87]{E24}, we deduce that the fiber product $\cC(U/\mk)\times_{\cC(U\cap V/\mk)}\cC(V/\mk)$ is dualizable. By the above Zariski descent in $\Pr^L_{\mk},$ the above fiber product is equivalent to $\cC(X/\mk),$ which is therefore dualizable.

To prove the dualizability of $\cC(X/\mk)$ in general it remains to apply induction by the smallest number of open subsets in an affine cover of $X.$

Now the remaining assertion of \ref{Zariski_descent} about descent in $\Cat_{\mk}^{\dual}$ follows from descent in $\Pr^L_{\mk},$ together with \ref{restricting_to_open} and \cite[Proposition 1.87]{E24}.

We prove \ref{description_of_right_adjoint}. Denote by $\incl_X$ the inclusion functor of $\cC(X/\mk)$  into $\Ind(\QCoh(X)^{\omega_1,op}).$ Arguing as in the proof of \ref{dualizability} and using \ref{Zariski_descent}, we see that $\incl_X$ is a strongly continuous functor. Indeed, the question reduces to the case when $X\to\Spec \mk$ is a closed embedding, and in this case the assertion follows from \ref{equivalence_with_Hom^dual} and its proof.

It remains to compute the right adjoint $\incl^R.$ For $\cF\in\QCoh(X)^{\omega_1},$ we have a natural morphism in $\Ind(\QCoh(X)^{\omega_1,op})\simeq\Fun_{\mk}(\QCoh(X)^{\omega_1},D(\mk)):$
\begin{equation*}
\Gamma(X,-\stens{!,\mk}\un{\Hom}_{\cO_X}(\cF,\omega_{X/\mk}))\to \Gamma(X,\un{\Hom}_{\cO_X}(\cF,-))\cong \cY(\cF^{op}),
\end{equation*}
which gives a morphism in $\cC(X/\mk):$
\begin{equation}\label{eq:morphism_to_incl^R}
\Gamma(X,-\stens{!,\mk}\un{\Hom}_{\cO_X}(\cF,\omega_{X/\mk}))\to \incl^R(\cY(\cF^{op})).
\end{equation}
We need to prove that \eqref{eq:morphism_to_incl^R} is an isomorphism.

First suppose that $X$ is affine. As above, choose a surjection $\mk'=\mk[x_1,\dots,x_n]\to\cO(X).$ The functor
\begin{equation*}
\Gamma(X,-\otimes\cF)^{R,\cont}:D(\mk')\to \QCoh(X)
\end{equation*}
sends $\mk'$ to $\un{\Hom}_{\cO_X}(\cF,\omega_{X/\mk'}).$ Hence, by \cite[Corollary 3.11 and its proof]{E25},
the right adjoint to the functor
\begin{equation*}
\cC(X/\mk)\simeq\cC(X/\mk')\simeq\un{\Hom}_{\mk'}^{\dual}(\QCoh(X),D(\mk'))^{\vee}\to \Ind(\QCoh(X)^{\omega_1,op})
\end{equation*}
sends $\cY(\cF^{op})$ to 
\begin{equation*}
\Gamma(X,-\stens{!,\mk'}\un{\Hom}_{\cO_X}(\cF,\omega_{X/\mk'}))\simeq \Gamma(X,-\stens{!,\mk}\un{\Hom}_{\cO_X}(\cF,\omega_{X/\mk})).
\end{equation*}
This proves that \eqref{eq:morphism_to_incl^R} is an isomorphism when $X$ is affine.

Now suppose that $U,V\subset X$ are open subsets such that $X=U\cup V$ and the functors $\incl_U^R,$ $\incl_V^R$ and $\incl_{U\cap V}^R$ have the required descriptions. We will apply Zariski descent both for $\cC(-/\mk)$ and $\Ind(\QCoh(-)^{\omega_1,op}).$ Let $P\in\cC(X/\mk)$ be an object, and denote by $P_{\mid U},$ $P_{\mid V},$ $P_{\mid U\cap V}$ its images in $\cC(U/\mk),$ $\cC(V/\mk)$ and $\cC(U\cap V/\mk).$ Denote the inclusion morphisms by $j_U:U\to X,$ $j_V:V\to X,$ $j_{UV}:U\cap V\to X,$ $j_{UV,U}:U\cap V\to U,$ $j_{UV,V}:U\cap V\to V.$.

Put $T=X\setminus V,$ $S=X\setminus U.$ First suppose that $\cF\in\QCoh_T(X)^{\omega_1}.$ Then we have 
\begin{equation}\label{eq:dual_of_a_sheaf_supported_on_T}
\un{\Hom}_{\cO_X}(\cF,\omega_{X/\mk})\cong j_{U,*}\un{\Hom}_{\cO_U}(j_U^*\cF,\omega_{U/\mk}).
\end{equation} We obtain
\begin{multline}\label{eq:first_chain}
\Hom_{\Ind(\QCoh(X)^{\omega_1,op})}(P,\cY(\cF^{op}))\cong \Hom_{\Ind(\QCoh(U)^{\omega_1,op})}(P_{\mid U},\cY((j_U^*\cF)^{op}))\\
\cong \Hom_{\cC(U/\mk)}(P_{\mid U},\Gamma(U,-\stens{!,\mk}\un{\Hom}_{\cO_U}(j_U^*\cF,\omega_{U/\mk})))\\
\cong \Hom_{\cC(U/\mk)}(P_{\mid U},\Gamma(U,-\stens{!,\mk}j_U^*\un{\Hom}_{\cO_X}(\cF,\omega_{X/\mk}))),
\end{multline}
where the last isomorphism uses \eqref{eq:dual_of_a_sheaf_supported_on_T}. By adjunction and using \eqref{eq:dual_of_a_sheaf_supported_on_T} again, we obtain an isomorphism
\begin{multline}\label{eq:second_chain}
\Hom_{\cC(V/\mk)}(P_{\mid V},\Gamma(V,-\stens{!,\mk}j_V^*\un{\Hom}_{\cO_X}(\cF,\omega_{X/\mk})))\\
\cong \Hom_{\cC(V/\mk)}(P_{\mid U\cap V},\Gamma(U\cap V,-\stens{!,\mk}j_{UV}^*\un{\Hom}_{\cO_X}(\cF,\omega_{X/\mk}))).
\end{multline}
Combining \eqref{eq:first_chain} and \eqref{eq:second_chain}, we obtain an isomorphism
\begin{equation*}
\Hom_{\Ind(\QCoh(X)^{\omega_1,op})}(P,\cY(\cF^{op}))\cong \Hom_{\cC(X/\mk)}(P,\Gamma(X,-\stens{!,\mk}\un{\Hom}_{\cO_X}(\cF,\omega_{X/\mk}))),
\end{equation*}
which proves that \eqref{eq:morphism_to_incl^R} is an isomorphism for $\cF\in\QCoh_T(X)^{\omega_1}.$ Similarly, \eqref{eq:morphism_to_incl^R} is an isomorphism for $\cF\in\QCoh_S(X)^{\omega_1}.$ 

It remains to consider the case $\cF=j_{UV,*}\cF'$ for some $\cF'\in\QCoh(U\cap V)^{\omega_1}.$ Then $\un{\Hom}_{\cO_X}(\cF,\omega_{X/\mk})\in\QCoh(X)$ is contained in the essential image of the pushforward from $\QCoh(U\cap V),$ and similarly for $\un{\Hom}_{\cO_U}(j_U^*\cF,\omega_{U/\mk})\in\QCoh(U)$ and $\un{\Hom}_{\cO_V}(j_V^*\cF,\omega_{V/\mk})\in\QCoh(V).$ Arguing as above,
we obtain an isomorphism
\begin{equation*}
\Hom_{\Ind(\QCoh(X)^{\omega_1,op})}(P,\cY(\cF^{op}))\cong \Hom_{\cC(U\cap V/\mk)}(P_{\mid U\cap V},\Gamma(U\cap V,-\stens{~,\mk}\cG)), 
\end{equation*}
where $\cG$ is defined by the pullback square
\begin{equation*}
\begin{tikzcd}
\cG \ar[r]\ar[d] & j_{UV,U}^*\un{Hom}_{\cO_U}(j_U^*\cF,\omega_{U/\mk})\ar[d]\\
j_{UV,V}^*\un{Hom}_{\cO_V}(j_V^*\cF,\omega_{V/\mk})\ar[r] &  \un{\Hom}_{\cO_{U\cap V}}(j_{UV}^*\cF,\omega_{U\cap V/\mk}).
\end{tikzcd}
\end{equation*}
Similarly, we have
\begin{multline*}
\Hom_{\cC(X/\mk)}(P,\Gamma(X,-\stens{!,\mk}\un{\Hom}_{\cO_X}(\cF,\omega_{X/\mk})))\\
\cong \Hom_{\cC(U\cap V/\mk)}(P_{\mid U\cap V},\Gamma(U\cap V,-\stens{!,\mk}j_{UV}^*\un{\Hom}_{\cO_X}(\cF,\omega_{X/\mk}))).
\end{multline*}
It remains to observe that we have a pullback square
\begin{equation*}
	\begin{tikzcd}
		j_{UV}^*\un{\Hom}_{\cO_X}(\cF,\omega_{X/\mk}) \ar[r]\ar[d] & j_{UV,U}^*\un{\Hom}_{\cO_U}(j_U^*\cF,\omega_{U/\mk})\ar[d]\\
		j_{UV,V}^*\un{\Hom}_{\cO_V}(j_V^*\cF,\omega_{V/\mk})\ar[r] &  \un{\Hom}_{\cO_{U\cap V}}(j_{UV}^*\cF,\omega_{U\cap V/\mk}),
	\end{tikzcd}
\end{equation*}
which is obtained by applying $j_{UV}^*\un{\Hom}_{\cO_X}(\cF,-)$ to the pullback square
\begin{equation*}
\begin{tikzcd}
\omega_{X/\mk} \ar[r]\ar[d] & j_{U,*}\omega_{U/\mk}\ar[d]\\
j_{V,*}\omega_{V/\mk} \ar[r] & j_{UV,*}\omega_{U\cap V/\mk}.
\end{tikzcd}
\end{equation*}
Therefore, \eqref{eq:morphism_to_incl^R} is an isomorphism for $\cF=j_{UV,*}\cF'.$ This proves \ref{description_of_right_adjoint}.

Next, \ref{description_via_relatively_compact_maps} follows directly from \ref{description_of_right_adjoint}. Namely, the description of the right adjoint $\incl_X^R$ implies that a map $\cY(\cF^{op})\to\cY(\cG^{op})$ in $\Ind(\QCoh(X)^{\omega_1,op})$ factors through an object of $\cC(X/\mk)$ if and only if the corresponding map $\cG\to\cF$ is relatively compact over $\mk.$

We prove \ref{compact_objects_of_relative_derived_categories}. We first consider the assignment $\cF\mapsto \Gamma(X,-\stens{!,\mk}\cF)$ as a functor from $\Perf(X,\mk)$ to $\cC(X/\mk).$ We need to prove that it takes values in compact objects, and it gives an equivalence $\Perf(X,\mk)\xto{\sim}\cC(X/\mk)^{\omega}.$
Using Zariski descent, we reduce to the case when $X$ is affine. Arguing as above, we reduce to the case when $X\to\Spec\mk$ is a closed embedding. Then by \ref{equivalence_with_Hom^dual} we simply need to understand the compact objects of $\un{\Hom}_{\mk}^{\dual}(\QCoh(X),D(\mk)).$ These form the category of strongly continuous functors $\QCoh(X)\to D(\mk).$ The category $\QCoh(X)$ is self-dual, and these strongly continuous functors correspond to objects of $\QCoh(X)$ whose pushforward to $D(\mk)$ is contained in $\Perf(\mk).$ In other words, these are relatively perfect complexes on $X$ over $\mk.$ We obtain the equivalences
\begin{multline*}
\cC(X/\mk)^{\omega}\xto{\sim} (\un{\Hom}_{\mk}^{dual}(\QCoh(X),D(\mk))^{\vee})^{\omega}\simeq \un{\Hom}_{\mk}^{\dual}(\QCoh(X),D(\mk))^{\omega,op}\\
\simeq \Perf(X,\mk)^{op}\xto{\sim}\Perf(X,\mk),
\end{multline*}
where the latter equivalence is given by $\un{\Hom}_{\cO_X}(-,\omega_{X/\mk}).$ This composition is inverse to the stated equivalence \eqref{eq:relatively_perfect_are_compact_objects}.

Next, we prove \ref{self_duality}. First we observe that \ref{description_of_right_adjoint} implies that we have a strongly continuous fully faithful functor $(\incl_X^R)^{\vee}:\cC(X/\mk)^{\vee}\to \Ind(\QCoh(X))^{\omega_1}.$ Its essential image consists of objects of the form $\inddlim[i\in I]\cF_i,$ where $I$ is directed and for any $i\in I$ there exists $j\geq i$ such that the map $\cF_i\to\cF_j$ is relatively compact over $\mk.$ Choose some auxiliary full small idempotent-complete stable subcategory $\QCoh(X)^{\sim}\subset\QCoh(X)$
which contains $\QCoh(X)^{\omega_1}$ and such that for any $\cF\in \QCoh(X)^{\sim}$ we have $\un{\Hom}_{\cO_X}(\cF,\omega_{X/\mk})\in\QCoh(X)^{\sim}.$ Then we can consider $\cC(X/\mk)$ resp. $\cC(X/\mk)^{\vee}$ as a full subcategory of $\Ind(\QCoh(X)^{\sim,op})$ resp. $\Ind(\QCoh(X)^{\sim}),$ with the same description via relatively compact maps. It follows from Lemma \ref{lem:diagonal_arrow_for_double_dual} that we have mutually inverse equivalences
\begin{equation*}
\cC(X/\mk)\xto{\sim}\cC(X/\mk)^{\vee},\quad \inddlim[i]\cF_i^{op}\mapsto \inddlim[i]\un{\Hom}_{\cO_X}(\cF_i,\omega_{X/\mk}),
\end{equation*}
\begin{equation*}
	\cC(X/\mk)^{\vee}\xto{\sim}\cC(X/\mk),\quad \inddlim[i]\cF_i^{op}\mapsto \inddlim[i]\un{\Hom}_{\cO_X}(\cF_i,\omega_{X/\mk}),
\end{equation*}
The associated perfect pairing for $\cC(X/\mk)$ is given by \eqref{eq:self_duality}, as stated.

It remains to prove \ref{smooth_or_regular}. First suppose that $X$ is smooth over $\mk.$ Then for $\cF\in\QCoh(X)^{\omega_1}$ we have $\Gamma(X,-\stens{!,\mk}\cF)\cong \Gamma(X,-\otimes\cF\otimes\omega_{X/\mk}^{-1}).$ Hence, the essential image of $\cC(X/\mk)$ in $\Fun_{\mk}(\QCoh(X)^{\omega_1},D(\mk))$ is given by $\Fun_{\mk}^{\omega_1\hy\rex}(\QCoh(X)^{\omega_1},D(\mk))\simeq \QCoh(X),$ as required.

Now suppose that $X$ is regular of finite Krull dimension. Using Zariski descent, we reduce to the case when $X$ is affine. By the above argument we may also assume that $X\to\Spec\mk$ is a closed embedding. Then the assertion follows from \ref{equivalence_with_Hom^dual} and \cite[Corollary 3.21]{E25}.
\end{proof}

\begin{remark}
	The proof of Theorem \ref{th:main_properties_of_relative_derived_categories} simplifies if we assume that there is a closed embedding $X\hto Y$ such that $Y$ is smooth over $\mk$ (for example, if $X$ is quasi-projective over $\mk$). Then arguing as above we obtain for any open $U\subset Y$ a natural equivalence
	\begin{equation*}
		\cC(U\cap X/\mk)\simeq \un{\Hom}_{\QCoh(Y)}^{\dual}(\QCoh(X),\QCoh(U))^{\vee}.
	\end{equation*}
	Since the category $\QCoh(X)$ is proper and $\omega_1$-compact over $\QCoh(Y),$ the non-trivial assertions of Theorem \ref{th:main_properties_of_relative_derived_categories} follow almost directly from the results of \cite[Section 3.2]{E25}.
\end{remark}

We do not develop a $3$-functor formalism for the categories $\cC(X/\mk)$ here, but we mention the situations when these functors are strongly continuous.

\begin{prop}
	Let $\mk$ be a noetherian commutative ring, and let $X$ and $Y$ be separated schemes of finite type over $\mk,$ which are of finite $\Tor$-dimension over $\mk.$ Let $f:X\to Y$ be a morphism.
	\begin{enumerate}[label=(\roman*),ref=(\roman*)]
		\item If $f$ is proper, then the functor $\Ind((f_*)^{op}):\Ind(\QCoh(X)^{\omega_1,op})\to \Ind(\QCoh(Y)^{\omega_1,op})$ takes $\cC(X/\mk)$ to $\cC(Y/\mk).$ In particular, we obtain a well-defined strongly continuous functor $f_*:\cC(X/\mk)\to\cC(Y/\mk).$ \label{proper_pushfowards_defined}
		
		\item If $f$ has finite $\Tor$-dimension, then the functor $\Ind((f^*)^{op}):\Ind(\QCoh(Y)^{\omega_1,op})\to \Ind(\QCoh(X)^{\omega_1,op})$ takes $\cC(Y/\mk)$ to $\cC(X/\mk).$ We obtain the strongly continuous extraordinary inverse image functor $f^!:\cC(Y/\mk)\to\cC(X/\mk).$ We also obtain the strongly continuous inverse image functor, given by the composition
		\begin{equation*}
		f^*:\cC(Y/\mk)\xto{\sim}\cC(Y/\mk)^{\vee}\xto{((f^!)^R)^{\vee}}\cC(X/\mk)^{\vee}\xto{\sim}\cC(X/\mk)
		\end{equation*} \label{finite_Tor_diemnsion_pullbacks_defined}
	\end{enumerate}
\end{prop}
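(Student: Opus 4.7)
The plan is to unwind the identification $\Ind(\QCoh(X)^{\omega_1,op}) \simeq \Fun_{\mk}(\QCoh(X)^{\omega_1},D(\mk))$ used throughout Theorem \ref{th:main_properties_of_relative_derived_categories}. Under this identification, $\Ind((f_*)^{op})$ in case (i) corresponds to precomposition with $f^!|_{\QCoh(Y)^{\omega_1}}:\QCoh(Y)^{\omega_1}\to\QCoh(X)^{\omega_1}$, and $\Ind((f^*)^{op})$ in case (ii) corresponds to precomposition with $f_*|_{\QCoh(X)^{\omega_1}}:\QCoh(X)^{\omega_1}\to\QCoh(Y)^{\omega_1}$, by the same reasoning as in the proof of part \ref{restricting_to_open} of that theorem for open embeddings. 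Their respective $\Pr^L_{\st}$-right adjoints are, dually, precomposition with $f_*|_{\omega_1}$ in (i) and with $f^*|_{\omega_1}$ in (ii). Well-definedness of these restricted functors uses Grothendieck duality: for $f$ proper, $f^!$ preserves $D^b_{\coh}$ and more generally $\omega_1$-compact objects; for $f$ of finite Tor-dimension, $f^*$ preserves perfect complexes and $f^!$ preserves $\omega_1$-compact via the isomorphism $f^!(\cG)\simeq f^*(\cG)\otimes f^!(\cO_Y)$; and $f_*$ preserves $\omega_1$-compact for any morphism of finite type between noetherian schemes.

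The key geometric input is the shriek projection formula
\begin{equation*}
f_*(\cF \stens{!,\mk} f^!\cG) \simeq f_*\cF \stens{!,\mk} \cG,\qquad \cF\in\QCoh(X),\; \cG\in\QCoh(Y),
\end{equation*}
valid for any separated morphism of finite type between noetherian schemes; it follows from base change $\Delta_Y^!(f\times\id)_*\simeq f_*\Gamma_f^!$ along the Cartesian square with $\Delta_Y$ on the bottom, $f\times\id$ on the right, and the graph $\Gamma_f$ on top. Combined with $\Gamma(Y,f_*(-))=\Gamma(X,-)$, this yields $\Gamma(X,f^!\cG \stens{!,\mk} \cF)\simeq\Gamma(Y,\cG \stens{!,\mk} f_*\cF)$. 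Applied to the forward functors, this shows that in case (i) the generator $\Gamma(X,-\stens{!,\mk}\cF)$ of $\cC(X/\mk)$ maps under $\Ind((f_*)^{op})$ to $\Gamma(Y,-\stens{!,\mk} f_*\cF)\in\cC(Y/\mk)$, and in case (ii) the generator $\Gamma(Y,-\stens{!,\mk}\cG)$ of $\cC(Y/\mk)$ maps under $\Ind((f^*)^{op})$ to $\Gamma(X,-\stens{!,\mk} f^!\cG)\in\cC(X/\mk)$. Applying the same formula to the right adjoints shows they also preserve the subcategories $\cC$, producing continuous right adjoints $\cC(Y/\mk)\to\cC(X/\mk)$ for $f_*$ (case (i)) and $\cC(X/\mk)\to\cC(Y/\mk)$ for $f^!$ (case (ii)); hence both $f_*$ and $f^!$ are strongly continuous.

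The inverse image $f^*$ in (ii), defined by the displayed composition through the self-dualities of Theorem \ref{th:main_properties_of_relative_derived_categories}\ref{self_duality}, is automatically strongly continuous because the duality involution $(-)^{\vee}$ on $\Cat_{\st}^{\dual}$ is a symmetric monoidal self-equivalence sending strongly continuous functors to strongly continuous functors. The hardest technical point is making the $\omega_1$-compactness preservation statements precise---in particular, that $f^!$ preserves $\omega_1$-compact objects in $\QCoh$ for proper morphisms of possibly infinite Tor-dimension between noetherian schemes of finite Krull dimension---together with verifying the shriek projection formula in this setting; both are classical in the six-functor formalism but require a careful setup.
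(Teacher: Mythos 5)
Your approach is genuinely different from the paper's. You use the generator description of $\cC(X/\mk)$ (Definition \ref{def:relative_derived_category}) together with the shriek projection formula $f_*(\cF\stens{!,\mk}f^!\cG)\simeq f_*\cF\stens{!,\mk}\cG$, whereas the paper's proof works instead with the description via relatively compact morphisms (Theorem \ref{th:main_properties_of_relative_derived_categories}\ref{description_via_relatively_compact_maps}): it shows directly that $f_*$ (resp.\ $f^*$) preserves the property of being relatively compact over $\mk$, by producing an explicit witness $\Delta_{Y,*}\cO_Y\to (f,f)_*(\Delta_{X,*}\cO_X)\to f_*\cG\boxtimes\un{\Hom}_{\cO_Y}(f_*\cF,\omega_{Y/\mk})$ out of a given witness $\tilde\varphi$ on $X$, using only Grothendieck duality $f_*\un{\Hom}_{\cO_X}(\cF,\omega_{X/\mk})\cong\un{\Hom}_{\cO_Y}(f_*\cF,\omega_{Y/\mk})$ for proper $f$, plus a push-pull construction in case \ref{finite_Tor_diemnsion_pullbacks_defined}. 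This is a structurally cleaner reduction: it needs only that $f_*$ (resp.\ $f^*$) preserves $\omega_1$-compact objects, which is immediate for finite-type morphisms of noetherian schemes, and it never has to evaluate or control $f^!$ on $\omega_1$-compact objects.

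That last point is precisely where your proof has a real gap, and I think it is more serious than the status of a ``hardest technical point'' which you acknowledge but defer. Your identification of $\Ind((f_*)^{op})$ with precomposition by $f^!|_{\QCoh(Y)^{\omega_1}}$ requires $f^!$ to carry $\QCoh(Y)^{\omega_1}$ into $\QCoh(X)^{\omega_1}$; otherwise the precomposition functor literally does not exist as a map $\Fun_{\mk}(\QCoh(X)^{\omega_1},D(\mk))\to\Fun_{\mk}(\QCoh(Y)^{\omega_1},D(\mk))$, and the downstream computation $\Ind((f_*)^{op})(\Gamma(X,-\stens{!,\mk}\cF))\simeq\Gamma(Y,-\stens{!,\mk}f_*\cF)$ is no longer justified. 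In case \ref{proper_pushfowards_defined}, $f$ is proper but need \emph{not} be of finite $\Tor$-dimension (only $X$ and $Y$ are of finite $\Tor$-dimension over $\mk$), so $f^!$ need not be continuous, and the ``$f^!\simeq f^*(-)\otimes\omega_{X/Y}$'' mechanism you invoke in case \ref{finite_Tor_diemnsion_pullbacks_defined} is not available. You would need a separate argument that $f^!$ preserves $\omega_1$-compactness for an arbitrary proper finite-type morphism between noetherian schemes, which is not among the standard statements of coherent duality, and the proof proposal gives no argument. Note also that the same issue does not affect the right-adjoint computation (precomposition by $f_*|_{\omega_1}$ is unconditionally well-defined, and even if $f^!\cG$ fails to be $\omega_1$-compact one can write it as an $\omega_1$-filtered colimit and use $\omega_1$-continuity of $\stens{!,\mk}$), but the forward direction is exactly what case \ref{proper_pushfowards_defined} asserts. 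The paper's witness-based argument sidesteps this entirely.
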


\begin{proof}
	\ref{proper_pushfowards_defined} We only need to show that for a morphism $\varphi:\cF\to\cG$ in $\QCoh(X),$ relatively compact over $\mk,$ the morphism $f_*\varphi: f_*\cF\to f_*\cG$ is also relatively compact over $\mk.$ Choose a morphism $\tilde{\varphi}:\Delta_{X,*}\cO_X\to \cG\boxtimes \un{\Hom}_{\cO_X}(\cF,\omega_{X/\mk}),$ witnessing the relative compactness of $\varphi.$ Denote by $(f,f):X\times_{\mk} X\to Y\times_{\mk} Y$ the product of two copies of $f,$ and consider the composition
	\begin{multline*}
		\psi:\Delta_{Y,*}\cO_Y\to \Delta_{Y,*}(f_*\cO_X)\cong (f,f)_*(\Delta_{X,*}\cO_X)\xto{(f,f)_*\tilde{\varphi}} (f,f)_*(\cG\boxtimes \un{\Hom}_{\cO_X}(\cF,\omega_{X/\mk}))\\
		\cong f_* \cG\boxtimes \un{\Hom}_{\cO_Y}(f_*\cF,\omega_{Y/\mk}).
	\end{multline*}
	Then $\psi$ witnesses the relative compactness of $f_*\tilde{\varphi},$ as required. Note that we used properness of $f$ to obtain an isomorphism $f_*\un{\Hom}_{\cO_X}(\cF,\omega_{X/\mk})\cong\un{\Hom}_{\cO_Y}(f_*\cF,\omega_{Y/\mk}).$
	
	\ref{finite_Tor_diemnsion_pullbacks_defined} Similarly, we only need to show that for a morphism $\varphi:\cF\to\cG$ in $\QCoh(Y),$ relatively compact over $\mk,$ the morphism $f^*\varphi: f^*\cF\to f^*\cG$ is also relatively compact over $\mk.$ Again, we choose a morphism $\tilde{\varphi}:\Delta_{Y,*}\cO_Y\to \cG\boxtimes \un{\Hom}_{\cO_Y}(\cF,\omega_{Y/\mk}),$ witnessing the relative compactness of $\varphi.$ Since $f$ has finite $\Tor$-dimension, the functor $f^!:\QCoh(Y)\to \QCoh(X)$ is continuous and $\QCoh(Y)$-linear, so we have $f^!(-)\cong f^*(-)\otimes \omega_{X/Y}.$. Consider the functor
	\begin{equation*}
		f^*\boxtimes f^!:\QCoh(Y\times_{\mk} Y)\to\QCoh(X\times_{\mk} X).
	\end{equation*}
	We have a natural map $\Delta_{X,*}\cO_X\to (f^*\boxtimes f^!)(\Delta_{Y,*}\cO_Y),$ which by adjunction corresponds to
	\begin{equation*}
		\cO_X\to (\id,f)^!((\id,f)_*\cO_X)\cong \Delta_X^!((f^*\boxtimes f^!)(\Delta_{Y,*}\cO_Y)),\quad (\id,f):X\to X\times_{\mk} Y.
	\end{equation*}
	Consider the composition
	\begin{multline*}
		\psi:\Delta_{X,*}\cO_X\to (f^*\boxtimes f^!)(\Delta_{Y,*}\cO_Y)\xto{(f^*\boxtimes f^!)(\tilde{\varphi})}(f^*\boxtimes f^!)(\cG\boxtimes \un{\Hom}_{\cO_Y}(\cF,\omega_{Y/\mk}))\\
		\to f^*\cG\boxtimes \un{\Hom}_{\cO_X}(f^*\cF,\omega_{X/\mk}).
	\end{multline*}
	Here we used the natural morphism $f^!\un{\Hom}_{\cO_Y}(\cF,\omega_{Y/\mk})\to \un{\Hom}_{\cO_X}(f^*\cF,\omega_{X/\mk}),$ which corresponds by adjunction to the composition
	\begin{equation*}
		f^!\un{\Hom}_{\cO_Y}(\cF,\omega_{Y/\mk})\otimes f^*\cF\cong f^!(\un{\Hom}_{\cO_Y}(\cF,\omega_{Y/\mk})\otimes\cF)\to f^!\omega_{Y/\mk}\cong \omega_{X/\mk}.
	\end{equation*}
	Then the above map $\psi$ witnesses the relative compactness of $f^*\varphi,$ as required.
\end{proof}

\subsection{$K$-homology of proper schemes and proper connective dg algebras}
\label{ssec:K_homology_proper}

In this subsection we prove two closely related results on the $K$-homology of proper schemes and proper connective (associative) dg algebras. We use the notation $G(X)=K(D^b_{\coh}(X))$ for a noetherian scheme $X.$ If $A$ is a right noetherian (discrete) ring, then we put $G(A)=K(D^b_{\coh}(A)).$ We recall that the relative derived categories $\cC(X/\mk)$ are introduced in Definition \ref{def:relative_derived_category} and their main properties are formulated in Theorem \ref{th:main_properties_of_relative_derived_categories}. Recall from loc. cit. that if $X$ is a separated scheme of finite type over a regular noetherian ring $\mk,$ then we have an equivalence $\cC(X/\mk)^{\omega}\simeq D^b_{\coh}(X).$ 

\begin{theo}\label{th:K_homology_proper}
Let $\mk$ be a regular noetherian commutative ring, and let $X$ be a separated scheme of finite type over $\mk.$
\begin{enumerate}[label=(\roman*),ref=(\roman*)]
\item The strongly continuous fully faithful functor $\IndCoh(X)\hto \cC(X/\mk)$ induces an isomorphism on $K^{\cont},$ i.e. we have
$G(X)\xto{\sim} K^{\cont}(\cC(X/\mk)).$ \label{K_theory_of_relative_derived_category}
\item If $X$ is proper over $\mk,$ then we have 
\begin{equation}\label{eq:K_homology_proper_scheme}
	\Hom_{\Mot^{\loc}_{\mk}}(\cU_{\loc}(X),\cU_{\loc}(\mk))\cong G(X).
\end{equation} \label{K_homology_for_proper}
\end{enumerate}
\end{theo}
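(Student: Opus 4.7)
The two parts are closely linked: (ii) reduces to (i) via Theorem \ref{th:morphisms_in_Mot^loc_via_internal_Hom}\ref{internal_Hom_from_proper}. Indeed, since $X$ is proper of finite type over $\mk$, the category $\QCoh(X)$ is proper and $\omega_1$-compact in $\Cat_{\mk}^{\dual}$, so applying that theorem with $\cC = \QCoh(X)$ and $\cD = D(\mk)$ gives
\[
\Hom_{\Mot^{\loc}_{\mk}}(\cU_{\loc}(X),\cU_{\loc}(\mk)) \;\cong\; K^{\cont}\bigl(\un{\Hom}_{\mk}^{\dual}(\QCoh(X),D(\mk))\bigr).
\]
By Theorem \ref{th:main_properties_of_relative_derived_categories}\ref{equivalence_with_Hom^dual} combined with the self-duality in \ref{self_duality}, the internal $\Hom$ on the right is equivalent to $\cC(X/\mk)$, so (ii) follows from (i). The real work is therefore in (i).

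For (i) the plan is Zariski descent plus a compact-generation argument in the proper case. Both $X \mapsto G(X)$ (by Thomason--Trobaugh) and $X \mapsto K^{\cont}(\cC(X/\mk))$ satisfy Zariski Mayer--Vietoris: the latter follows from Theorem \ref{th:main_properties_of_relative_derived_categories}\ref{restricting_to_open}--\ref{Zariski_descent}, which yield strongly continuous quotient functors $\cC(X/\mk) \to \cC(U/\mk)$ together with $\Cat_{\mk}^{\dual}$-valued sheaf structure, combined with the fact that $K^{\cont}$ is an exact finitary localizing invariant and hence turns the associated short exact sequences of kernels into fiber sequences of spectra (and, via Zariski excision for the kernels, into Mayer--Vietoris pullback squares). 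The natural comparison $G(X) \to K^{\cont}(\cC(X/\mk))$ is induced by the fully faithful inclusion $\IndCoh(X) \hookrightarrow \cC(X/\mk)$, which on compact objects is the identity $D^b_{\coh}(X) \hookrightarrow \cC(X/\mk)^{\omega} = D^b_{\coh}(X)$ (Theorem \ref{th:main_properties_of_relative_derived_categories}\ref{compact_objects_of_relative_derived_categories}, using regularity of $\mk$), and is natural in open restrictions. So it suffices to verify the isomorphism when $X$ is affine.

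For affine $X = \Spec A$, write $A = \mk[x_1,\dots,x_n]/I$ and set $\mk' := \mk[x_1,\dots,x_n]$. Then $\mk'$ is again regular noetherian, $X \to \Spec\mk'$ is a closed immersion, hence proper, and by Lemma \ref{lem:changing_base_ring} one has $\cC(X/\mk) = \cC(X/\mk')$, while $G(X)$ is intrinsic. After replacing $\mk$ with $\mk'$ I may therefore assume $X$ is proper over $\mk$. Now $\QCoh(X)$ is proper and $\omega_1$-compact in $\Cat_{\mk}^{\dual}$, so by the first assertion of Theorem \ref{th:morphisms_in_Mot^loc_via_internal_Hom}\ref{internal_Hom_from_proper} the category $\un{\Hom}_{\mk}^{\dual}(\QCoh(X),D(\mk))$ is compactly generated. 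Theorem \ref{th:main_properties_of_relative_derived_categories}\ref{equivalence_with_Hom^dual} together with the self-duality \ref{self_duality} identify this with $\cC(X/\mk)$; thus $\cC(X/\mk)$ is compactly generated, and by \ref{compact_objects_of_relative_derived_categories} its compact objects are $D^b_{\coh}(X)$. Hence $\cC(X/\mk) \simeq \IndCoh(X)$, the inclusion is an equivalence, and $K^{\cont}(\cC(X/\mk)) \cong G(X)$.

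The hard part will be making the Zariski descent precise: specifically, identifying the kernels $\cC_Z(X/\mk)$ of the restriction $\cC(X/\mk) \to \cC(U/\mk)$ in $\Cat_{\mk}^{\dual}$, establishing the excision identification $\cC_{X\setminus U}(X/\mk) \simeq \cC_{V\setminus(U\cap V)}(V/\mk)$ needed for the Mayer--Vietoris pullback square, and checking naturality of $\IndCoh(-) \hookrightarrow \cC(-/\mk)$ with respect to these restriction functors. Once these somewhat intricate but routine compatibilities are in place, the rest follows from the machinery of Subsection \ref{ssec:relative_derived} together with Theorem \ref{th:morphisms_in_Mot^loc_via_internal_Hom}\ref{internal_Hom_from_proper}.
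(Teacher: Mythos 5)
Part (ii) matches the paper's reduction to (i) exactly, and the Zariski-descent reduction to the affine case for (i) is also the paper's route (via Proposition \ref{prop:Zariski_descent_K_theory_relative_derived} and Lemma \ref{lem:changing_base_ring}). However, your argument for (i) in the affine case contains a fatal error.

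You claim that, once reduced to the case where $X$ is proper over $\mk$, ``by the first assertion of Theorem \ref{th:morphisms_in_Mot^loc_via_internal_Hom}\ref{internal_Hom_from_proper} the category $\un{\Hom}_{\mk}^{\dual}(\QCoh(X),D(\mk))$ is compactly generated,'' and then conclude that $\cC(X/\mk)\simeq\IndCoh(X)$. This misreads the theorem: the first assertion of \ref{internal_Hom_from_proper} says that $\un{\Hom}_{\cE}^{\dual}(\cC,\Calk_{\omega_1}(\cD))$ is compactly generated, not $\un{\Hom}_{\cE}^{\dual}(\cC,\cD)$ itself. In fact the conclusion you draw is explicitly false: as remarked right after the statement of Theorem \ref{th:K_homology_proper}, even for $\mk$ a field and $X=\Spec\mk[\veps]$ the category $\cC(X/\mk)$ is \emph{not} compactly generated, so the inclusion $\IndCoh(X)\hto\cC(X/\mk)$ is not an equivalence even though it does induce an isomorphism on $K^{\cont}$.

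This means the affine/proper case cannot be settled by a categorical equivalence; you need a genuinely $K$-theoretic argument. The paper's route is to prove a separate noncommutative statement, Theorem \ref{th:K_homology_proper_connective_algebras}, for proper connective $\bE_1$-$\mk$-algebras $A$: one approximates $A$ by an ind-system of finitely presented connective $\bE_1$-algebras $A_n$ with $H_0(A_n)\xto{\sim}H_0(A)$, applies Theorem \ref{th:morphisms_in_Mot^loc_via_limits}\ref{Hom_via_inverse_limit_for_proper} to express $K^{\cont}(\un{\Hom}_{\mk}^{\dual}(\Mod\hy A,D(\mk)))$ as $\prolim_n K(\Rep_{\mk}(A_n,\Perf(\mk)))$, and then uses Barwick's theorem of the heart together with Antieau--Gepner--Heller vanishing of negative $K$-groups (the hearts are equivalent to finitely generated $H_0(A)$-modules, a noetherian abelian category) to identify each term with $G(H_0(A))$. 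The affine case of Theorem \ref{th:K_homology_proper}(i) is then a special case. Your proposal is missing this entire mechanism; without it, the compact-generation shortcut cannot close the argument.
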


We recall from \cite[Proposition 3.25]{E25} that even if $\mk$ is a field and $X=\Spec\mk[\veps]$ (where $\mk[\veps]$ is the algebra of dual numbers), then the category $\cC(X/\mk)$ is not compactly generated, i.e. the functor $\IndCoh(X)\to\cC(X/\mk)$ is not an equivalence.

\begin{theo}\label{th:K_homology_proper_connective_algebras}
Let $\mk$ be a regular noetherian ring commutative ring, and let $A$ be a proper connective dg algebra ($\bE_1$-algebra) over $\mk.$ Then we have the isomorphisms
\begin{equation}\label{eq:K_homology_proper_connective}
G(H_0(A))\xto{\sim}  K^{\cont}(\un{\Hom}_{\mk}^{\dual}(\Mod\hy A,D(\mk)))\xto{\sim} \Hom_{\Mot^{\loc}_{\mk}}(\cU_{\loc}(A),\cU_{\loc}(\mk)).
\end{equation}
\end{theo}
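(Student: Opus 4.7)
The plan is as follows. The second isomorphism in \eqref{eq:K_homology_proper_connective} is a direct application of Theorem \ref{th:morphisms_in_Mot^loc_via_internal_Hom} \ref{internal_Hom_from_proper} to $\cE=\cD=D(\mk)$ and $\cC=\Mod\hy A$: properness of $A$ over $\mk$ is exactly the condition $A\in\Perf(\mk),$ making $\Mod\hy A$ proper over $D(\mk),$ and $\Mod\hy A$ is $\omega_1$-compact in $\Cat_{\mk}^{\dual}$ since it is generated over $D(\mk)$ by a single $\omega_1$-compact object.

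For the first isomorphism I would identify $\un{\Hom}_{\mk}^{\dual}(\Mod\hy A,D(\mk))$ as a compactly generated category and then compute the $K$-theory of its compact objects. Under the standard equivalence $\Fun_{\mk}^L(\Mod\hy A,D(\mk))\simeq A\hy\Mod,$ $F\mapsto F(A),$ a functor $F$ is strongly continuous exactly when its right adjoint $\Hom_{\mk}(F(A),-)$ preserves colimits, which happens precisely when $F(A)\in\Perf(\mk).$ Hence $\Fun_{\mk}^{LL}(\Mod\hy A,D(\mk))\simeq \Rep(A,\Perf(\mk))$ in the notation of Subsection \ref{ssec:dualizable_modules}. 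Invoking \cite[Corollary 3.14]{E25} (as in the proof of Theorem \ref{th:morphisms_in_Mot^loc_via_internal_Hom} \ref{internal_Hom_from_proper}), the short exact sequence
\begin{equation*}
0\to \un{\Hom}_{\mk}^{\dual}(\Mod\hy A,D(\mk))\to \Ind((A\hy\Mod)^{\omega_1})\to \Ind(\Fun_{\mk}^{LL}(\Mod\hy A,\Calk_{\omega_1}(D(\mk))))\to 0
\end{equation*}
lives in $\Cat_{\mk}^{\cg},$ so $\un{\Hom}_{\mk}^{\dual}(\Mod\hy A,D(\mk))$ is compactly generated with compact objects $\Rep(A,\Perf(\mk)),$ and therefore $K^{\cont}(\un{\Hom}_{\mk}^{\dual}(\Mod\hy A,D(\mk)))\cong K(\Rep(A,\Perf(\mk))).$

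It remains to show $K(\Rep(A,\Perf(\mk)))\cong G(H_0(A)).$ Regularity of $\mk$ gives $\Perf(\mk)=D^b_{\coh}(\mk),$ so objects of $\Rep(A,\Perf(\mk))$ are precisely bounded complexes of left $A$-modules with cohomology finitely generated over $\mk.$ Since $A\in\Perf(\mk)$ is connective, $H_0(A)$ is module-finite over $\mk,$ so finite generation over $\mk$ and over $H_0(A)$ coincide, identifying $\Rep(A,\Perf(\mk))$ with $D^b_{\coh}(A).$ The standard $t$-structure on the latter is bounded with heart the noetherian abelian category of finitely generated $H_0(A)$-modules (the $A$-action on any object concentrated in degree $0$ automatically factors through $H_0(A)$). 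Quillen's theorem of the heart in its stable $\infty$-categorical form then yields $K(D^b_{\coh}(A))\cong G(H_0(A)),$ which composed with the preceding equivalence gives the first isomorphism.

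The main technical input, and the only step beyond formal manipulations with the dualizability formalism, is the compact generation of $\un{\Hom}_{\mk}^{\dual}(\Mod\hy A,D(\mk))$ with the correct identification of compact objects; both come from \cite[Corollary 3.14]{E25} and rest on the analysis of Section~3 of that paper. Everything else is an identification of categories plus a standard devissage via the heart theorem.
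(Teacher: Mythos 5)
Your identification of the second isomorphism via Theorem \ref{th:morphisms_in_Mot^loc_via_internal_Hom} \ref{internal_Hom_from_proper} is correct, as is the identification $\Fun_{\mk}^{LL}(\Mod\hy A,D(\mk))\simeq\Rep(A,\Perf(\mk))$ of the compact objects, and the devissage $K(\Rep(A,\Perf(\mk)))\cong G(H_0(A))$ via the theorem of the heart plus Antieau-Gepner-Heller vanishing. The problem is the step in between: you claim the short exact sequence
\begin{equation*}
0\to \un{\Hom}_{\mk}^{\dual}(\Mod\hy A,D(\mk))\to \Ind((A\hy\Mod)^{\omega_1})\to \Ind(\Fun_{\mk}^{LL}(\Mod\hy A,\Calk_{\omega_1}(D(\mk))))\to 0
\end{equation*}
``lives in $\Cat_{\mk}^{\cg}$'' and deduce that $\un{\Hom}_{\mk}^{\dual}(\Mod\hy A,D(\mk))$ is compactly generated. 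That inference is not valid: the kernel of a strongly continuous quotient functor between compactly generated categories need not be compactly generated, and here it genuinely is not. The paper says this explicitly right after Theorem \ref{th:K_homology_proper}: already for $\mk$ a field and $A=\mk[\veps]$ the dual numbers, the category $\cC(\Spec\mk[\veps]/\mk)$ is not compactly generated, and by Theorem \ref{th:main_properties_of_relative_derived_categories} \ref{equivalence_with_Hom^dual} and \ref{self_duality} this category is exactly (self-dual to) $\un{\Hom}_{\mk}^{\dual}(\Mod\hy\mk[\veps],D(\mk))$. So $\Rep(A,\Perf(\mk))$ consists of compact objects but fails to generate. In fact the statement ``$K^{\cont}(\un{\Hom}_{\mk}^{\dual}(\Mod\hy A,D(\mk)))\cong K(\Rep(A,\Perf(\mk)))$'' is essentially the non-trivial content of the theorem, not a formal consequence of \cite[Corollary 3.14]{E25}; trying to prove it by appealing to the proof of Theorem \ref{th:K_homology_proper}\ref{K_theory_of_relative_derived_category} would be circular, since that result is deduced from the present theorem.

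The paper's actual route avoids this entirely. It writes $A\cong\indlim_n A_n$ where each $A_n$ is connective, compact in $\Alg_{\bE_1}(\Mod_\mk)$, and (crucially, by the deformed-tensor-algebra construction) has $H_0(A_n)\xto{\sim}H_0(A)$. Then Proposition \ref{prop:stronger_nuclearity_for_proper} and Theorem \ref{th:morphisms_in_Mot^loc_via_limits} \ref{Hom_via_inverse_limit_for_proper} give
\begin{equation*}
\prolim_n K(\Rep_{\mk}(A_n,\Perf(\mk)))\xto{\sim}\Hom_{\Mot^{\loc}_{\mk}}(\cU_{\loc}(A),\cU_{\loc}(\mk)),
\end{equation*}
and one shows that the transition functors $\Rep_{\mk}(A_{n+1},\Perf(\mk))\to\Rep_{\mk}(A_n,\Perf(\mk))$ are $t$-exact and induce equivalences on hearts, so by the heart theorem every term of the tower is $G(H_0(A))$ with identity transition maps. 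Your devissage step survives unchanged, but the reduction to a fixed small stable category via compact generation does not; you need the pro-system approximation by smooth truncations to make the argument go through.
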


We will deduce Theorem \ref{th:K_homology_proper} from Theorem \ref{th:K_homology_proper_connective_algebras} using the descent properties of the categories $\cC(X/\mk)$ established in Theorem \ref{th:main_properties_of_relative_derived_categories}.

\begin{proof}[Proof of Theorem \ref{th:K_homology_proper_connective_algebras}] We first observe that the $\mk$-module $H_0(A)$ is finitely generated, hence the ring $H_0(A)$ is right (and left) noetherian. Thus, its $G$-theory is well defined.
		
The second isomorphism in \eqref{eq:K_homology_proper_connective} follows from Theorem \ref{th:morphisms_in_Mot^loc_via_internal_Hom}. It remains to prove that the composition in \eqref{eq:K_homology_proper_connective} is an isomorphism.
	
Now choose a sequence of $\bE_1$-$\mk$-algebras $A_0\to A_1\to\dots$ such that $\indlim[n]A_n\cong A$ and the following conditions hold:
\begin{itemize}
 \item each $A_n$ is finitely presented, i.e. $A_n\in (\Alg_{\bE_1}(\Mod_{\mk}))^{\omega}$ for $n\geq 0;$
 \item each $A_n$ is connective;
 \item We have $H_0(A_n)\xto{\sim}H_0(A)$ for $n\geq 0.$
\end{itemize}
For example, if $(B_n)_{n\geq 0}$ is the canonical approximation of $A$ constructed in the proof of Proposition \ref{prop:proper_are_colimits_of_finitely_presented} (originally in \cite[Proof of Proposition 5.18]{E25}), then the sequence $(A_n=B_{n+2})_{n\geq 0}$ satisfies the above conditions.
	
It follows from Proposition \ref{prop:stronger_nuclearity_for_proper} that we have a pro-equivalence in $\Pro(\Cat^{\perf}):$
\begin{equation*}
\proolim[n]\Perf(\End_{A_n^{op}}(A))\xto{\sim}\proolim[n]\Rep_{\mk}(A_n,\Perf(\mk)).
\end{equation*}
Hence, by Theorem \ref{th:morphisms_in_Mot^loc_via_limits} \ref{Hom_via_inverse_limit_for_proper} we have an isomorphism
\begin{equation}\label{eq:Hom_from_U_loc_of_connective_as_limit}
\prolim[n] K(\Rep_{\mk}(A_n,\Perf(\mk)))\xto{\sim} \Hom_{\Mot^{\loc}_{\mk}}(\cU_{\loc}(A),\cU_{\loc}(\mk)).
\end{equation}

The above assumptions imply that each category $\Rep_{\mk}(A_n,\Perf(\mk))$ has a bounded $t$-structure whose heart is identified with the category of finitely generated right $H_0(A)$-modules, which is noetherian. Moreover, for $n\geq 0$ the transition functor $\Rep_{\mk}(A_{n+1},\Perf(\mk))\to \Rep_{\mk}(A_n,\Perf(\mk))$ is $t$-exact and induces an equivalence on the hearts, compatible with the above identifications. Using Barwick's theorem of the heart \cite[Theorem 6.1]{Bar15} and Antieau-Gepner-Heller vanishing of negative $K$-groups \cite[Theorem 3.6]{AGH19} (which applies since the hearts are noetherian) we obtain the isomorphisms $G(H_0(A))\xto{\sim} K(\Rep(A_n,\Perf(\mk)))$ for $n\geq 0.$ Therefore, we have
	\begin{equation*}
		G(X)\xto{\sim}\prolim[n]K(\Rep_{\mk}(A_n,\Perf(\mk))).
	\end{equation*}
	Together with \eqref{eq:Hom_from_U_loc_of_connective_as_limit} this proves that the composition in \eqref{eq:K_homology_proper_connective} is an isomorphism.
\end{proof}

Before proving Theorem \ref{th:K_homology_proper} we first observe an immediate consequence of Theorem \ref{th:main_properties_of_relative_derived_categories}.

\begin{prop}\label{prop:Zariski_descent_K_theory_relative_derived}
Let $X/\mk$ be as in Theorem \ref{th:main_properties_of_relative_derived_categories}, i.e $\mk$ is a commutative noetherian ring and $X$ is a separated scheme of finite type and of finite $\Tor$-dimension over $\mk.$  Then the assignment $U\mapsto K^{\cont}(\cC(U/\mk))$ is a sheaf on $X$ with values in $\Sp.$
\end{prop}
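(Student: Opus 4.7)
The plan is to verify the two sheaf conditions directly: vanishing on the empty set, and Mayer--Vietoris for two open subsets. The first is immediate since $\cC(\emptyset/\mk)=0$ by Theorem \ref{th:main_properties_of_relative_derived_categories} \ref{Zariski_descent}, hence $K^{\cont}(\cC(\emptyset/\mk))=0.$ For the Mayer--Vietoris condition, I will combine the descent statement for $\cC(-/\mk)$ with the fact that the restriction functors to open subsets are strongly continuous quotient functors, and then apply that $K^{\cont}$ is a finitary localizing invariant of dualizable categories.

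Fix opens $U,V\subset X.$ By Theorem \ref{th:main_properties_of_relative_derived_categories} \ref{Zariski_descent} we have a pullback square in $\Cat_{\mk}^{\dual}:$
\begin{equation*}
\begin{tikzcd}
\cC(U\cup V/\mk)\ar[r,"p_V"]\ar[d,"p_U"] & \cC(V/\mk)\ar[d,"q_V"]\\
\cC(U/\mk)\ar[r,"q_U"] & \cC(U\cap V/\mk),
\end{tikzcd}
\end{equation*}
and by part \ref{restricting_to_open} of loc.\ cit.\ all four arrows are strongly continuous quotient functors. Let $\cK=\ker(q_U)\in\Cat_{\mk}^{\dual},$ so that we have a short exact sequence $0\to\cK\to\cC(U/\mk)\xto{q_U}\cC(U\cap V/\mk)\to 0.$ The universal property of the pullback identifies $\ker(p_V)$ with $\cK$ via the functor $p_U,$ giving a second short exact sequence $0\to\cK\to\cC(U\cup V/\mk)\xto{p_V}\cC(V/\mk)\to 0.$

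Applying the finitary localizing invariant $K^{\cont}:\Cat_{\st}^{\dual}\to\Sp$ to these two short exact sequences yields fiber sequences
\begin{equation*}
K^{\cont}(\cK)\to K^{\cont}(\cC(U\cup V/\mk))\to K^{\cont}(\cC(V/\mk)),
\end{equation*}
\begin{equation*}
K^{\cont}(\cK)\to K^{\cont}(\cC(U/\mk))\to K^{\cont}(\cC(U\cap V/\mk)),
\end{equation*}
with matching fibers identified via $p_U.$ Hence the induced commutative square
\begin{equation*}
\begin{tikzcd}
K^{\cont}(\cC(U\cup V/\mk))\ar[r]\ar[d] & K^{\cont}(\cC(V/\mk))\ar[d]\\
K^{\cont}(\cC(U/\mk))\ar[r] & K^{\cont}(\cC(U\cap V/\mk))
\end{tikzcd}
\end{equation*}
is a pullback square in $\Sp,$ which is the required Mayer--Vietoris property. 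There is no real obstacle here beyond tracking that the identification of kernels from the pullback square in $\Cat_{\mk}^{\dual}$ is compatible with both quotient functors, which is formal from the universal property.
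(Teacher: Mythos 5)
Your proof is correct and takes essentially the same approach as the paper. The only difference is that the paper compresses your identification-of-kernels argument by citing [E24, Proposition 4.11], which records exactly the fact you re-derive: a pullback square of dualizable categories in which one arrow is a strongly continuous quotient functor is sent by a localizing invariant to a pullback square of spectra.
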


\begin{proof}
Clearly, we have $K^{\cont}(\cC(\emptyset/\mk))=0.$ Let $U,V\subset X$ be open subsets. By Theorem \ref{th:main_properties_of_relative_derived_categories} \ref{restricting_to_open} the restriction $\cC(U/\mk)\to\cC(U\cap V/\mk)$ is a quotient functor. It follows from Theorem \ref{th:main_properties_of_relative_derived_categories} \ref{Zariski_descent} and \cite[Proposition 4.11]{E24} that we have isomorphisms
\begin{multline*}
K^{\cont}(\cC(U\cup V/\mk))\cong K^{\cont}(\cC(U/\mk)\times_{\cC(U\cap V/\mk)}\cC(V/\mk))\\
\cong  K^{\cont}(\cC(U/\mk))\times_{K^{\cont}(\cC(U\cap V/\mk))} K^{\cont}(\cC(V/\mk)),
\end{multline*}
as stated.
\end{proof}

\begin{proof}[Proof of Theorem \ref{th:K_homology_proper}]
\ref{K_theory_of_relative_derived_category} Using Proposition \ref{prop:Zariski_descent_K_theory_relative_derived} and Zariski descent for $G$-theory, we reduce to the case when $X$ is affine. By Lemma \ref{lem:changing_base_ring} we may and will assume that $X\to\Spec\mk$ is a closed embedding. By Theorem \ref{th:main_properties_of_relative_derived_categories} \ref{equivalence_with_Hom^dual} the category $\cC(X/\mk)$ is equivalent to $\un{\Hom}_{\mk}^{\dual}(\QCoh(X),D(\mk))^{\vee}.$ As a special case of Theorem \ref{th:K_homology_proper_connective_algebras} we obtain
\begin{equation*}
K^{\cont}(\un{\Hom}_{\mk}^{\dual}(\QCoh(X),D(\mk))^{\vee})\cong K^{\cont}(\un{\Hom}_{\mk}^{\dual}(\QCoh(X),D(\mk)))\cong G(X).
\end{equation*}
This proves \ref{K_theory_of_relative_derived_category}.

Next, we deduce \ref{K_homology_for_proper} from \ref{K_theory_of_relative_derived_category}. By assumption $\QCoh(X)\in\Cat_{\mk}^{\dual}$ is proper over $\mk.$ Hence, by Theorem \ref{th:morphisms_in_Mot^loc_via_internal_Hom} \ref{internal_Hom_from_proper} we have
\begin{equation*}
\Hom_{\Mot^{\loc}_{\mk}}(\cU_{\loc}(X),\cU_{\loc}(\mk))\cong K^{\cont}(\un{\Hom}_{\mk}^{\dual}(\QCoh(X),D(\mk))).
\end{equation*}
By Theorem \ref{th:main_properties_of_relative_derived_categories} \ref{equivalence_with_Hom^dual} we have
\begin{equation*}
 K^{\cont}(\un{\Hom}_{\mk}^{\dual}(\QCoh(X),D(\mk)))\cong  K^{\cont}(\un{\Hom}_{\mk}^{\dual}(\QCoh(X),D(\mk))^{\vee})\cong K^{\cont}(\cC(X/\mk)).
\end{equation*}
Finally, by \ref{K_theory_of_relative_derived_category} we have
\begin{equation*}
K^{\cont}(\cC(X/\mk))\cong G(X).
\end{equation*}
This proves \ref{K_homology_for_proper}.
\end{proof}

\subsection{$K$-homology of smooth schemes with a smooth compactification}
\label{ssec:K_homology_smooth_with_compactification}

We prove the following result. We assume the base ring to be noetherian for convenience; this assumption can be removed using a finite presentation argument.

\begin{theo}\label{th:K_homology_smooth_with_compactification}
Let $X$ be a smooth separated scheme over a commutative noetherian ring $\mk.$ Suppose that there exists a smooth compactification $\bbar{X}\supset X$ over $\mk.$ Then we have an isomorphism
\begin{equation}\label{eq:K_homology_smooth_with_compactification}
\Hom_{\Mot^{\loc}_{\mk}}(\cU_{\loc}(X),\cU_{\loc}(\mk))\cong \Fiber(K(X)\to K^{\cont}(\hat{X}_{\infty})).
\end{equation}
\end{theo}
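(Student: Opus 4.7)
The plan is to combine the Verdier localization on the smooth proper compactification $\bar X$ with a Nisnevich-type descent for continuous $K$-theory, which bridges the compactification and the open $X$ through a formal neighborhood of the boundary $X_\infty := \bar X \setminus X$.

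First, since $X_\infty$ is a closed subscheme of the proper $\bar X$ over $\mk$, one has the Verdier localization
\[
0 \to \Perf_{X_\infty}(\bar X) \to \Perf(\bar X) \to \Perf(X) \to 0
\]
in $\Cat_{\mk}^{\perf}$. Applying the universal finitary localizing invariant $\cU_{\loc}$ produces a cofiber sequence in $\Mot^{\loc}_{\mk}$, and applying $\Hom_{\Mot^{\loc}_{\mk}}(-,\cU_{\loc}(\mk))$ yields the fiber sequence
\[
\Hom(\cU_{\loc}(X),\cU_{\loc}(\mk)) \to \Hom(\cU_{\loc}(\bar X),\cU_{\loc}(\mk)) \to \Hom(\cU_{\loc}(\Perf_{X_\infty}(\bar X)),\cU_{\loc}(\mk)).
\]
Because $\Perf(\bar X)$ is both smooth and proper over $\mk$, the motive $\cU_{\loc}(\bar X)$ is dualizable in $\Mot^{\loc}_{\mk}$ and self-dual up to a canonical twist by $\omega_{\bar X/\mk}$, so the middle term identifies with $K(\bar X)$ without any regularity hypothesis on $\mk$.

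Next, I would identify the boundary contribution with continuous $K$-theory on the formal completion of $\bar X$ along $X_\infty$. The dualizable $\mk$-linear category $\Ind \Perf_{X_\infty}(\bar X) \simeq \QCoh_{X_\infty}(\bar X)$ is proper over $\mk$ (its compact generators sit in $\Perf(\bar X)$, which is proper), and is thus covered by Theorem \ref{th:morphisms_in_Mot^loc_via_internal_Hom} \ref{internal_Hom_from_proper} (equivalently by Theorem \ref{th:morphisms_in_Mot^loc_via_limits} \ref{Hom_via_inverse_limit_for_proper}), which gives
\[
\Hom_{\Mot^{\loc}_{\mk}}(\cU_{\loc}(\Perf_{X_\infty}(\bar X)),\cU_{\loc}(\mk)) \cong K(\Perf_{X_\infty}(\bar X)) \cong K^{\cont}(\widehat{\bar X}_{X_\infty}),
\]
where the right-hand side is the continuous $K$-theory of the compactly generated dualizable category of quasi-coherent sheaves on the formal completion.

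The final step is to convert $\Fiber(K(\bar X) \to K^{\cont}(\widehat{\bar X}_{X_\infty}))$ into $\Fiber(K(X) \to K^{\cont}(\hat X_\infty))$ via a Nisnevich-type pullback square
\[
\begin{tikzcd}
K(\bar X) \ar[r] \ar[d] & K(X) \ar[d] \\
K^{\cont}(\widehat{\bar X}_{X_\infty}) \ar[r] & K^{\cont}(\hat X_\infty)
\end{tikzcd}
\]
in which $\hat X_\infty$ denotes the adic generic fiber (``punctured'' formal completion) of $\widehat{\bar X}_{X_\infty}$. The square being a pullback for continuous $K$-theory equates the vertical fibers, yielding the theorem. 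The main obstacle is to make precise, and prove descent for, the dualizable $\mk$-linear category representing $\hat X_\infty$ and its continuous $K$-theory, and to verify the Nisnevich pullback above; the detailed setup is developed in the forthcoming \cite{E}.
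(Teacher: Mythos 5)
The overall strategy you adopt — Verdier localization on $\bar X$, self-duality of $\cU_{\loc}(\bar X)$, identification of the boundary term with $K^{\cont}$ of a formal neighborhood, and a final Nisnevich-type maneuver — is indeed the skeleton of the paper's argument. But the key step, where you assert that Theorem~\ref{th:morphisms_in_Mot^loc_via_internal_Hom}\ref{internal_Hom_from_proper} \emph{gives}
\[
\Hom_{\Mot^{\loc}_{\mk}}(\cU_{\loc}(\Perf_{Z}(\bar X)),\cU_{\loc}(\mk)) \cong K(\Perf_{Z}(\bar X)) \cong K^{\cont}(\widehat{\bar X}_{Z}),
\]
contains a genuine error and a genuine gap. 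What that theorem actually produces is an identification with $K^{\cont}(\un{\Hom}^{\dual}_{\mk}(\QCoh_Z(\bar X),D(\mk)))$, the continuous $K$-theory of the dualizable internal Hom. It does not identify the left side with $K(\Perf_Z(\bar X))$ — which is $\Fiber(K(\bar X)\to K(X))$ and in general differs from $K^{\cont}(\widehat{\bar X}_Z)\cong\prolim_n K(Z_n)$ — nor does it, without further input, produce $K^{\cont}$ of the formal completion. To finish along the route you outline, one would still need to prove $\un{\Hom}^{\dual}_{\mk}(\QCoh_Z(\bar X),D(\mk))\cong\Nuc(\widehat{\bar X}_Z)$ and then invoke the comparison between $K^{\cont}(\Nuc(-))$ and $\prolim_n K(Z_n)$; the paper explicitly flags this as a possible (but harder) route and deliberately avoids it.

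Instead the paper supplies a more elementary argument, analogous to the proof of Theorem~\ref{th:corepresentability_of_TR}, which is precisely the ingredient missing from your proposal: using smoothness of $\bar X$, it shows that for each $n$ there is $k\geq n$ with the pullback $\Perf(Z_k)\to\Perf(Z_n)$ trace-class in $\Cat_{\mk}^{\perf}$ (via a graph-of-inclusion object in $\Perf(Z_n)\otimes_{\mk}\Perf_Z(\bar X)$), hence the pushforwards $\Perf(Z_n,\mk)\to\Perf(Z_k,\mk)$ are trace-class too, and
\[
\hat{\cY}(\cU_{\loc}(\Perf_Z(\bar X)))\cong\inddlim_n\cU_{\loc}(D^b_{\coh}(Z_n))\cong\inddlim_n\cU_{\loc}(Z_n)^\vee,
\]
giving $\Hom_{\Mot^{\loc}_{\mk}}(\cU_{\loc}(\Perf_Z(\bar X)),\cU_{\loc}(\mk))\cong\prolim_n K(Z_n)\cong K^{\cont}(\widehat{\bar X}_Z)$. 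You should supply this computation (or prove the categorical equivalence with $\Nuc(\widehat{\bar X}_Z)$, which is harder). Your final Nisnevich-type square is fine in spirit; the paper establishes the equivalent identity $\Fiber(K(X)\to K^{\cont}(\hat X_\infty))\cong\Fiber(K(\bar X)\to K^{\cont}(\widehat{\bar X}_Z))$ already in the discussion preceding the theorem, via the short exact sequences \eqref{eq:ses_for_Nuc^CS_of_generic_fiber} and \eqref{eq:Thomason_ses}, so no new descent input is needed there.
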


We explain the meaning of the right hand side of \eqref{eq:K_homology_smooth_with_compactification}. The object $\hat{X}_{\infty}$ is the so-called formal punctured neighborhood of infinity, which is an adic space depending only on $X.$ If $\bbar{X}\supset X$ is a not necessarily smooth compactification over $\mk$ and $Z=\bbar{X}\setminus X,$ then $\hat{X}_{\infty}$ is the generic fiber of the formal scheme $\hhat{\bbar{X}}_Z$ -- the formal completion of $\bbar{X}$ along $Z.$ The original version of the category of categories nuclear modules (due to Clausen and Scholze) is defined for general adic spaces \cite{CS20, And21, And23}. We denote it by $\Nuc^{CS}(-)$ keeping the notation from \cite{E25}, since below we will also consider our version of these categories as defined in loc. cit. for formal schemes. The categories $\Nuc^{CS}(\bbar{X}_{\infty})$ and $\Nuc^{CS}(\hhat{\bbar{X}}_Z)$ are dualizable (in fact, rigid symmetric monoidal), and we have a short exact sequence in $\Cat_{\mk}^{\dual}:$
\begin{equation}\label{eq:ses_for_Nuc^CS_of_generic_fiber}
0\to \QCoh_{Z}(\bbar{X})\to \Nuc^{CS}(\hhat{\bbar{X}}_Z)\to \Nuc^{CS}(\hat{X}_{\infty})\to 0.
\end{equation}
We put 
\begin{equation*}
K^{\cont}(\hat{X}_{\infty})=K^{\cont}(\Nuc^{CS}(\hat{X}_{\infty})),\quad K^{\cont}(\hhat{\bbar{X}}_{Z})=K^{\cont}(\Nuc^{CS}(\hhat{\bbar{X}}_{Z})).
\end{equation*}
The map $K(X)\to K^{\cont}(\hat{X}_{\infty})$ is induced by the strongly continuous functor $\QCoh(X\to\Nuc^{CS}(\hat{X}_{\infty}).$ It follows from \eqref{eq:ses_for_Nuc^CS_of_generic_fiber} and the standard short exact sequence
\begin{equation}\label{eq:Thomason_ses}
	0\to \Perf_Z(\bbar{X})\to\Perf(\bbar{X})\to \Perf(X)\to 0
\end{equation}
from \cite{TT90} that we have an isomorphism
\begin{equation}\label{eq:two_fibers}
	\Fiber(K(X)\to K^{\cont}(\hat{X}_{\infty}))\cong \Fiber(K(\bbar{X})\to K^{\cont}(\hhat{\bbar{X}}_Z)).
\end{equation}
By \cite[Theorem 0.1]{E25} and descent results from \cite{And23} we have an isomorphism
\begin{equation*}
K^{\cont}(\hhat{\bbar{X}}_{\infty})\xto{\sim} \prolim[n] K(Z_n),
\end{equation*}
where $Z_n$ is the $n$-th infinitesimal neighborhood of $Z$ in $\bbar{X}.$ In other words, the above definition of continuous $K$-theory for $\hhat{\bbar{X}}_Z$ is equivalent to the classical one.

\begin{proof}[Proof of Theorem \ref{th:K_homology_smooth_with_compactification}]
We fix the smooth compactification $\bbar{X}$ which exists by assumption. As in the above discussion, we put $Z=\bbar{X}\setminus X$ and denote by $\hhat{\bbar{X}}_Z$ the formal completion.

It follows from the short exact sequence \eqref{eq:Thomason_ses} and from the smoothness and properness of $\Perf(\bbar{X})$ over $\mk$ that we have an isomorphism
\begin{equation}\label{eq:K_homology_smooth_intermediate}
\Hom_{\Mot^{\loc}_{\mk}}(\cU_{\loc}(X),\cU_{\loc}(\mk))\cong \Fiber(K(\bbar{X})\to \Hom_{\Mot^{\loc}_{\mk}}(\cU_{\loc}(\Perf_Z(\bbar{X})),\cU_{\loc}(\mk))).
\end{equation}
Now one way to argue would be to prove the equivalence $\un{\Hom}^{\dual}_{\mk}(\QCoh_Z(\bbar{X}),D(\mk))\cong \Nuc(\hhat{\bbar{X}}_Z)$ (here we mean our version of the category $\Nuc$ from \cite{E25}), and then apply Theorem \ref{th:morphisms_in_Mot^loc_via_internal_Hom} \ref{internal_Hom_from_proper} and \cite[Theorem 0.2]{E25}. Instead we use a more elementary argument, conceptually similar to the proof of Theorem \ref{th:corepresentability_of_TR}.

As above, for $n\geq 1$ we denote by $Z_n$ the $n$-th infinitesimal neighborhood of $Z$ in $\bbar{X}.$ We claim that for any $n\geq 1$ there exists $k\geq n$ such that the pullback functor $\Perf(Z_k)\to\Perf(Z_n)$ is trace-class in the symmetric monoidal category $\Cat_{\mk}^{\perf}.$ To see this, denote by $Y_n\subset Z_n\times_{\mk}\bbar{X}$ the graph of the inclusion $Z_n\hto\bbar{X}.$ Since $\bbar{X}$ is smooth over $\mk,$ we have $\cO_{Y_n}\in\Perf(Z_n\times_{\mk} \bbar{X})\simeq \Perf(Z_n)\tens{\mk}\Perf(\bbar{X}).$ Since the restriction of $\cO_{Y_n}$ to $Z_n\times_{\mk} X$ vanishes, we have $\cO_{Y_n}\in\Perf(Z_n)\tens{\mk}\Perf_Z(\bbar{X}).$ Now, again by the smoothness of $\bbar{X}$ we have an equivalence $\Perf_Z(X)\simeq\indlim[k]\Perf(Z_k,\mk).$ Here we follow the notation from Subsection \ref{ssec:relative_derived}, denoting by $\Perf(Z_k,\mk)$ the category of relatively perfect complexes. It follows that there exists some $k\geq n$ and an object $\cF\in\Perf(Z_n)\tens{\mk}\Perf(Z_k,\mk),$  whose image in $\Perf(Z_n)\otimes \Perf_Z(\bbar{X})$ is isomorphic to $\cO_{Y_n}.$ By construction, the image of $\cF$ in $\QCoh(Z_n\times_{\mk}Z_k)$ is isomorphic to $\cO_{Y_n}.$ By properness of $Z_k$ over $\mk$ we have $\Perf(Z_k,\mk)\simeq\Fun_{\mk}(\Perf(Z_k),\Perf(\mk)).$ Hence, $\cF$ is a trace-class witness for the pullback functor $\Perf(Z_k)\to\Perf(Z_n),$ as required.

It follows that for $n\leq k$ as above the pushforward functor $\Perf(Z_n,\mk)\to \Perf(Z_k,\mk)$ is also trace-class in $\Cat_{\mk}^{\perf}.$ We also obtain an equivalence in $\Pro(\Mot^{\loc}_{\mk}):$
\begin{equation*}
\proolim[n]\cU_{\loc}(\Perf(Z_n,\mk))^{\vee}\cong \proolim[n]\cU_{\loc}(Z_n).
\end{equation*}
We conclude that we have an isomorphism
\begin{multline*}
\Hom_{\Mot^{\loc}_{\mk}}(\cU_{\loc}(\Perf_Z(\bbar{X})),\cU_{\loc}(\mk))\cong\prolim[n]\Hom_{\Mot^{\loc}(\mk)}(\cU_{\loc}(\mk),\cU_{\loc}(Z_n))\\
\cong \prolim[n] K(Z_n)\cong K^{\cont}(\hhat{\bbar{X}}_Z).
\end{multline*}
Combining this with \eqref{eq:K_homology_smooth_intermediate} and \eqref{eq:two_fibers}, we obtain the stated isomorphism \eqref{eq:K_homology_smooth_with_compactification}.
\end{proof}

\subsection{$K$-homology of general smooth schemes}
\label{ssec:K_homology_smooth_general}

In \cite{E} we will prove the following general version of Theorem \ref{th:K_homology_smooth_with_compactification} removing the assumption on the existence of a smooth compactification.

\begin{theo}\label{th:K_homology_of_smooth_general}
Let $X$ be a smooth separated scheme over a noetherian ring $\mk.$ Then we have
\begin{equation}\label{eq:K_homology_of_smooth_general}
	\Hom_{\Mot^{\loc}_{\mk}}(\cU_{\loc}(X),\cU_{\loc}(\mk))\cong \Fiber(K(X)\to K^{\cont}(\hat{X}_{\infty})).
\end{equation}
\end{theo}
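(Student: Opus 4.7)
Plan: I would mimic the proof of Theorem~\ref{th:K_homology_smooth_with_compactification} starting from a (possibly singular) Nagata compactification $j\colon X\hookrightarrow\bar X$ of $X$ over $\mk$, with closed complement $Z=\bar X\setminus X$. The Thomason--Trobaugh sequence
\begin{equation*}
0\to\Perf_Z(\bar X)\to\Perf(\bar X)\to\Perf(X)\to 0
\end{equation*}
in $\Cat_\mk^{\perf}$ yields, after applying $\Hom_{\Mot^{\loc}_\mk}(\cU_{\loc}(-),\cU_{\loc}(\mk))$, a fiber sequence whose two right-hand terms are to be computed, and the plan is to assemble them, via the standard comparison of fibers used in passing from \eqref{eq:two_fibers} to \eqref{eq:K_homology_smooth_with_compactification}, into $\Fiber(K(X)\to K^{\cont}(\hat X_\infty))$.

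The key technical input, to be carried out in \cite{E}, is a duality equivalence
\begin{equation*}
\un{\Hom}^{\dual}_\mk(\QCoh_Z(\bar X),D(\mk))\;\simeq\;\Nuc(\hhat{\bar X}_Z)
\end{equation*}
in $\Cat_\mk^{\dual}$, where $\Nuc(\hhat{\bar X}_Z)$ is the refined category of nuclear modules on the formal completion along $Z$ introduced in \cite{E25}. Combined with the results of \cite{E25} which identify $K^{\cont}(\Nuc(\hhat{\bar X}_Z))$ with $\prolim_n K(Z_n)\simeq K^{\cont}(\hhat{\bar X}_Z),$ and with an analogous (simpler) identification of the $\bar X$-term, the fiber sequence rearranges into the desired expression. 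When $\mk$ is regular, the $\bar X$-term is identified with $G(\bar X)$ via Theorem~\ref{th:K_homology_proper}, and the two fiber sequences $G_Z(\bar X)\to G(\bar X)\to G(X)$ (Quillen devissage, using smoothness of $X$ to replace $G(X)$ by $K(X)$) and $G_Z(\bar X)\to K^{\cont}(\hhat{\bar X}_Z)\to K^{\cont}(\hat X_\infty)$ share the same fiber, giving \eqref{eq:K_homology_of_smooth_general}.

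The main obstacle is establishing the duality equivalence above when $\bar X$ is singular and, crucially, of possibly infinite Tor-dimension over $\mk.$ In the smooth-compactification case it is essentially classical Grothendieck/Serre duality. In the general case $\QCoh_Z(\bar X)$ need not be proper over $\mk,$ so Theorem~\ref{th:morphisms_in_Mot^loc_via_internal_Hom}~\ref{internal_Hom_from_proper} does not apply directly; one must instead combine the relative derived categories $\cC(\bar X/\mk)$ from Subsection~\ref{ssec:relative_derived} (available when $\bar X$ has finite Tor-dimension over $\mk$, in particular when $\mk$ is regular) with formal-adic descent along $Z$ in the spirit of \cite{And23,E25} to extract the nuclear category as the natural pro-coherent refinement. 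The smoothness of $X$ -- rather than of $\bar X$ -- enters only at the very last step, ensuring that the final answer is intrinsic to $X$ and independent of the chosen compactification. Full details are deferred to \cite{E}.
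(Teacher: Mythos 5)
Your plan diverges from the paper's sketch in a structural way, and the gaps it has are symptomatic of that divergence.

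The paper's proof is intrinsic to $X$: it never chooses a compactification, but instead establishes (in \cite{E}) a short exact sequence in $\Cat_{\mk}^{\dual}$
\begin{equation*}
0\to\Nuc(\hat X_\infty)\to\un{\Hom}_{\mk}^{\dual}(\QCoh(X),\Ind(\Calk_{\omega_1}(\mk)))\to\Ind(\Calk_{\omega_1}(X))\to 0,
\end{equation*}
and applies $K^{\cont}$ together with Theorem~\ref{th:morphisms_in_Mot^loc_via_internal_Hom}~\ref{internal_Hom_into_Calk} to the middle term. That part of Theorem~\ref{th:morphisms_in_Mot^loc_via_internal_Hom} requires only $\omega_1$-compactness of $\QCoh(X)$, not properness, so no regularity of $\mk$ enters. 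This is exactly what lets the hypothesis be ``noetherian $\mk$'' rather than ``regular $\mk$''. By contrast, your Thomason--Trobaugh route needs to compute $\Hom_{\Mot^{\loc}_\mk}(\cU_{\loc}(\bar X),\cU_{\loc}(\mk))$, and the only tool available for that, Theorem~\ref{th:K_homology_proper}, requires $\mk$ regular. So, as you yourself flag, your argument only covers the regular case, which is not what the statement asserts. (It is also worth noting that the paper \emph{considered} your route in the proof of Theorem~\ref{th:K_homology_smooth_with_compactification} and deliberately ``use[d] a more elementary argument'' rather than prove $\un{\Hom}^{\dual}_\mk(\QCoh_Z(\bar X),D(\mk))\simeq\Nuc(\hhat{\bar X}_Z)$; for the general theorem they bypass it completely.)

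Even granting $\mk$ regular, there is a gap in your final fiber comparison. You claim both $G_Z(\bar X)\to G(\bar X)\to G(X)$ and $G_Z(\bar X)\to K^{\cont}(\hhat{\bar X}_Z)\to K^{\cont}(\hat X_\infty)$ have the same fiber $G_Z(\bar X)$. But the fiber of $K^{\cont}(\hhat{\bar X}_Z)\to K^{\cont}(\hat X_\infty)$ is determined by the short exact sequence $0\to\QCoh_Z(\bar X)\to\Nuc(\hhat{\bar X}_Z)\to\Nuc(\hat X_\infty)\to 0$, and equals $K^{\cont}(\QCoh_Z(\bar X))=K(\Perf_Z(\bar X))$, \emph{not} $G_Z(\bar X)=K(D^b_{\coh,Z}(\bar X))$. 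These agree only when $\Perf_Z(\bar X)=D^b_{\coh,Z}(\bar X)$, e.g.\ when $\bar X$ itself is regular. Since you take a possibly singular Nagata compactification, you cannot assume this; indeed, if $X$ is smooth, the singularities of $\bar X$ necessarily sit inside $Z$, so this is the worst case. The upshot is that, as stated, the two fiber sequences do \emph{not} share a fiber, and the ladder argument does not close. The most likely diagnosis is that the duality equivalence you posit, $\un{\Hom}^{\dual}_\mk(\QCoh_Z(\bar X),D(\mk))\simeq\Nuc(\hhat{\bar X}_Z)$, is not the correct statement for singular $\bar X$: for the bookkeeping to work out, $\Hom_{\Mot^{\loc}_\mk}(\cU_{\loc}(\Perf_Z(\bar X)),\cU_{\loc}(\mk))$ has to be a ``coherent'' rather than ``perfect'' continuous $K$-theory of the formal completion, which is not $K^{\cont}(\Nuc(\hhat{\bar X}_Z))\cong\prolim_n K(Z_n)$.
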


Here we will briefly sketch the idea, and the details are quite complicated. We will prove in \cite{E} that our version of the category of nuclear modules $\Nuc(\hat{X}_{\infty})$ is well defined. More precisely, this category is rigid symmetric monoidal, it depends only on $X,$ and for any (not necessarily smooth) compactification $\bbar{X}\supset X$ with $Z=\bbar{X}\setminus X$ we have a short exact sequence
\begin{equation*}
0\to \QCoh_Z(\bbar{X})\to\Nuc(\hhat{\bbar{X}}_Z)\to\Nuc(\hat{X}_{\infty})\to 0.
\end{equation*}
Here $\Nuc(\hhat{\bbar{X}}_Z)$ is defined as in \cite{E25}, i.e. it is the rigidification of the locally rigid category $\QCoh_Z(\bbar{X}).$

The intrinsic description of $\Nuc(\hat{X}_{\infty})$ (without choosing a compactification) is the following. Consider the natural (symmetric monoidal) functor 
\begin{equation*}
\Phi:\QCoh(X)\tens{\mk}\Ind(D(\mk)^{\omega_1})\to\Ind(\QCoh(X)^{\omega_1}).
\end{equation*}
Then $\Phi$ has a fully faithful strongly continuous right adjoint (it is in fact strongly continuous), which satisfies the projection formula. Hence, $\ker(\Phi)$ is naturally a compactly generated symmetric monoidal category, more precisely an object of $\CAlg(\Pr^L_{\st,\omega}).$ We have $\Nuc(\hat{X}_{\infty})\simeq \ker(\Phi)^{\rig}.$

To compute the $K$-homology of $X$ we use a different, ``noncommutative'' description of the category $\Nuc(\hat{X}_{\infty}).$ Namely, we will prove in \cite{E} that there is a natural short exact sequence in $\Cat_{\mk}^{\dual}:$
\begin{equation*}
0\to \Nuc(\hat{X}_{\infty})\to\un{\Hom}_{\mk}^{\dual}(\QCoh(X),\Ind(\Calk_{\omega_1}(\mk)))\to \Ind(\Calk_{\omega_1}(X))\to 0,
\end{equation*}
where $\Calk_{\omega_1}(\mk)=\Calk_{\omega_1}(\Perf(\mk)),$ $\Calk_{\omega_1}(X)=\Calk_{\omega_1}(\Perf(X)).$
Taking $K^{\cont}(-)$ and using Theorem \ref{th:morphisms_in_Mot^loc_via_internal_Hom} \ref{internal_Hom_into_Calk} and \cite[Theorem 0.2]{E25} we obtain
a cofiber sequence
\begin{equation*}
K^{\cont}(\hat{X}_{\infty})\to \Sigma\Hom_{\Mot^{\loc}_{\mk}}(\cU_{\loc}(X),\cU_{\loc}(\mk))\to \Sigma K(X).
\end{equation*} 
This gives the stated isomorphism \eqref{eq:K_homology_of_smooth_general}. 

\section{Continuous $K$-theory of categories of completed (co)sheaves}
\label{sec:completed_co_sheaves}

Recall that by \cite[Theorem 6.11]{E24} for a locally compact Hausdorff space $X$ and a presheaf $\un{\cC}$ on $X$ with values in $\Cat_{\st}^{\dual}$
we have an equivalence
\begin{equation*}
	K^{\cont}(\Shv(X;\un{\cC}))\cong \Gamma_c(X,K^{\cont}(\un{\cC})^{\sharp}).
\end{equation*}
Here $(-)^{\sharp}$ stands for sheafification. Informally speaking, this result says that the categories of sheaves on locally compact Hausdorff spaces categorify the cohomology with compact support. In this section we will apply Theorem \ref{th:morphisms_in_Mot^loc_via_internal_Hom} \ref{internal_Hom_from_proper} to obtain similar results for cohomology and for Borel-Moore homology, under some assumptions.

\subsection{Completed cosheaves} 
\label{ssec:completed_cosheaves}

For a locally compact Hausdorff space $X$ and a dualizable category $\cC$ we consider the category of $\cC$-valued completed cosheaves
\begin{equation*}
	\hhat{\Cosh}(X;\cC)=\un{\Hom}^{\dual}_{\Sp}(\Shv(X;\Sp),\cC),
\end{equation*}
studied in \cite{KNP}.

In this generality (without any further assumptions on $X$) the category $\Shv(X;\Sp)$ does not have to be internally projective in $\Cat_{\st}^{\dual},$ and the localizing invariants of the categories $\hhat{\Cosh}(X;C)$ probably don't have any reasonable description (unless $\cC$ is formally $\omega_1$-injective in the sense of Definition \ref{def:formal_omega_1_injectivity} and $X$ is second-countable). It is natural to impose conditions on $X$ which ensure that the category $\Shv(X;\Sp)$ is proper (over $\Sp$) and $\omega_1$-compact, so that the internal projectivity holds by \cite[Theorem 3.6]{E25}. Such conditions are formulated in \cite[Proposition 3.27]{E25}, and we use them to obtain the following result.

\begin{theo}
	Let $X$ be a locally compact Hausdorff space which is locally of constant shape and second-countable (e.g. a topological manifold which is countable at infinity). Let $p:X\to\pt$ be the projection, and consider the functors
	\begin{equation*}
		p_*;\Shv(X;\Sp)\to \Sp,\quad p^!:\Sp\to\Shv(X;\Sp).
	\end{equation*}
	Then we have a natural isomorphism
	\begin{equation*}
		K^{\cont}(\hhat{\Cosh}(X;\cC))\cong p_* p^! K^{\cont}(\cC).
	\end{equation*}
\end{theo}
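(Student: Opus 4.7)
The plan is to combine Theorem \ref{th:morphisms_in_Mot^loc_via_internal_Hom} with the compactly supported $K$-theory formula from \cite[Theorem 6.11]{E24}, using the self-duality of $\Mot^{\loc}$ to interchange $p_! p^*$ with $p_* p^!$.

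First, the hypotheses on $X$ guarantee via \cite[Proposition 3.27]{E25} that $\Shv(X;\Sp)\in\Cat_{\st}^{\dual}$ is proper and $\omega_1$-compact. Since $\hhat{\Cosh}(X;\cC)=\un{\Hom}^{\dual}_{\Sp}(\Shv(X;\Sp),\cC)$ by definition, Theorem \ref{th:morphisms_in_Mot^loc_via_internal_Hom} \ref{internal_Hom_from_proper} immediately yields
\begin{equation*}
K^{\cont}(\hhat{\Cosh}(X;\cC)) \cong \Hom_{\Mot^{\loc}}(\cU_{\loc}(\Shv(X;\Sp)),\cU_{\loc}(\cC)).
\end{equation*}

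Second, I would upgrade \cite[Theorem 6.11]{E24} to an identification of motives. For every $\cC\in\Cat_{\st}^{\dual}$ loc.\ cit.\ gives a natural isomorphism $K^{\cont}(\Shv(X;\cC))\cong p_!\un{\bS}\otimes K^{\cont}(\cC),$ where $p_!\un{\bS}=\Gamma_c(X;\un{\bS})\in\Sp$ (the sheafification of a constant presheaf is a constant sheaf for such $X$). Translated via the self-duality pairing $\ev$ from Theorem \ref{th:dualizability_and_rigidity} \ref{E_0_rigidity}, this reads $\ev(\cU_{\loc}(\cC)\boxtimes\cU_{\loc}(\Shv(X;\Sp)))\cong p_!\un{\bS}\otimes\ev(\cU_{\loc}(\cC)\boxtimes\cU_{\loc}(\Sp))$ naturally in $\cC$. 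Since self-duality identifies an object of $\Mot^{\loc}$ with the functional it induces via $\ev(-\boxtimes x)$, this functorial coincidence forces an isomorphism
\begin{equation*}
\cU_{\loc}(\Shv(X;\Sp))\cong p_!\un{\bS}\otimes\cU_{\loc}(\Sp) \quad\text{in}\ \Mot^{\loc},
\end{equation*}
where the right-hand side refers to the $\Sp$-module structure on $\Mot^{\loc}$.

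Third, substituting into the formula of the first step and using $\Sp$-linearity together with the adjunction $p_!\dashv p^!$ for sheaves of spectra on $X$:
\begin{multline*}
\Hom_{\Mot^{\loc}}(p_!\un{\bS}\otimes\cU_{\loc}(\Sp),\cU_{\loc}(\cC)) \cong \Hom_{\Sp}(p_!\un{\bS},\Hom_{\Mot^{\loc}}(\cU_{\loc}(\Sp),\cU_{\loc}(\cC)))\\
\cong \Hom_{\Sp}(p_!\un{\bS},K^{\cont}(\cC))\cong p_*p^!K^{\cont}(\cC).
\end{multline*}

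The main obstacle will be making the second step genuinely rigorous. While the self-duality of $\Mot^{\loc}$ provides in principle a Yoneda-type mechanism for passing from $K$-theoretic identities to identifications of motives, verifying that the identification is realized by a map actually coming from a natural geometric construction (e.g.\ the constant-sheaf functor $\Sp\to\Shv(X;\Sp)$ combined with assembly for $K^{\cont}$) requires care about naturality in both $\cC$ and $X$. An alternative route is to bypass this step and instead apply Theorem \ref{th:morphisms_in_Mot^loc_via_limits} \ref{Hom_via_inverse_limit_for_proper} to an explicit ind-presentation of $\Shv(X;\Sp)$ (for instance from an exhaustion of $X$ by relatively compact open subsets), matching the resulting inverse limit of $K^{\cont}(\cT_i\otimes\cC)$ with $p_*p^!K^{\cont}(\cC)$ via a Verdier-duality argument on $X$.
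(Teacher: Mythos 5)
Your proposal follows essentially the same strategy as the paper, and steps 1 and 3 are correct as written. Step 1 is identical to the paper. Step 3 (using the $\Sp$-module structure on $\Mot^{\loc}$ and the adjunction $p_!\dashv p^!$ to rewrite $\Hom_{\Mot^{\loc}}(p_!\un{\bS}\otimes\cU_{\loc}(\Sp),\cU_{\loc}(\cC))$ as $p_*p^!K^{\cont}(\cC)$) is a valid and slightly more streamlined rendering of what the paper does via a commutative square relating $p_*p^!$ on $\Shv(X;\Mot^{\loc})$ to $p_*p^!$ on $\Shv(X;\Sp)$.

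The only weak point is step 2, which you yourself flag. The self-duality/Yoneda argument is an unnecessary detour and not the cleanest route: to promote a family of isomorphisms $\ev(\cU_{\loc}(\cC)\boxtimes x)\cong\ev(\cU_{\loc}(\cC)\boxtimes y)$ to $x\cong y$ one must verify that the isomorphisms assemble to a natural transformation of continuous functors on all of $\Mot^{\loc}$, which requires controlling coherence of the naturality in $\cC$. The paper avoids this entirely: \cite[Theorem 6.11]{E24} (or its proof) applies to any finitary localizing invariant in place of $K^{\cont}$, in particular to $\cU_{\loc}$ itself, so one gets the motivic identification $\cU_{\loc}(\Shv(X;\Sp))\cong p_!p^*\cU_{\loc}(\Sp)$ — equivalently, $p_!\un{\bS}\otimes\cU_{\loc}(\Sp)$ by $\Sp$-linearity of $p_!$ and $p^*$ — directly, without any appeal to self-duality of $\Mot^{\loc}$. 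Once you replace your self-duality argument by this direct application, your proof matches the paper's. Your alternative suggestion at the end (using Theorem \ref{th:morphisms_in_Mot^loc_via_limits} \ref{Hom_via_inverse_limit_for_proper} with an exhaustion of $X$) is also viable but is not the route the paper takes.
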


Recall from \cite[Definition A.1.5, Proposition A.1.8]{Lur17} that $X$ is said to be locally of constant shape if the functor $p^*:\cS\to\Shv(X;\cS)$ has a left adjoint.

\begin{proof}
	As explained in \cite[Proof of Proposition 3.27]{E25}, our assumptions on $X$ imply that the dualizable category $\Shv(X;\Sp)$ is proper and $\omega_1$-compact. By Theorem \ref{th:morphisms_in_Mot^loc_via_internal_Hom} \ref{internal_Hom_from_proper} we have
	\begin{equation*}
		K^{\cont}(\hhat{\Cosh}(X;\cC))\xto{\sim} \Hom_{\Mot^{\loc}}(\cU_{\loc}(\Shv(X;\Sp)),\cU_{\loc}(\cC)).
	\end{equation*}
	We denote by $p^*,p_*,p_!,p^!$ the corresponding functors between the categories of sheaves with values in $\Mot^{\loc}.$ Then by \cite[Theorem 6.11]{E24}
	we have
	\begin{equation*}
		\un{\Hom}_{\Mot^{\loc}}(\cU_{\loc}(\Shv(X;\Sp)),\cU_{\loc}(\cC))\cong \un{\Hom}_{\Mot^{\loc}}(p_!p^*\cU_{\loc}(\Sp),\cU_{\loc}(\cC))\cong p_*p^!\cU_{\loc}(\cC).
	\end{equation*}
	Note that we have a commutative diagram
	\begin{equation*}
		\begin{tikzcd}
			\Mot^{\loc} \ar{r}{p^!}\ar[d] & \Shv(X;\Mot^{\loc})\ar{r}{p_*}\ar[d] & \Mot^{\loc}\ar[d]\\ 
			\Sp \ar{r}{p^!} & \Shv(X;\Sp)\ar{r}{p_*} & \Sp.
		\end{tikzcd}
	\end{equation*}
	Here the vertical functors are induced by $\Hom(\cU_{\loc}(\Sp),-).$ The commutativity is obtained by passing to left adjoints and using the equivalence $\Shv(X;\Mot^{\loc})\simeq \Shv(X;\Sp)\otimes \Mot^{\loc}.$ Therefore, we obtain
	\begin{multline*}
	\Hom_{\Mot^{\loc}}(\cU_{\loc}(\Shv(X;\Sp)),\cU_{\loc}(\cC))\cong\Hom_{\Mot^{\loc}}(\cU_{\loc}(\Sp),p_*p^!\cU_{\loc}(\cC))\\
	\cong p_*p^!\Hom_{\Mot^{\loc}}(\cU_{\loc}(\Sp),\cU_{\loc}(\cC))\cong p_*p^! K^{\cont}(\cC).
	\end{multline*}
This proves the theorem.
\end{proof}

\subsection{Completed sheaves}
\label{ssec:completed_sheaves}

Let $X$ be a locally compact Hausdorff space and let $\un{\cC}$ be a presheaf on $X$ with values in $\Cat_{\st}^{\dual}.$ One can take the sheafification of $\un{\cC}$ to obtain a sheaf $\un{\cC}^{\sharp}\in\Shv(X;\Cat_{\st}^{\dual}).$ Its global sections are described as follows:
\begin{equation}\label{eq:global_sections_category}
\Gamma(X,\un{\cC}^{\sharp})=\prolim[Y\in\msK(X)^{op}]^{\dual} \Shv(Y;\un{\cC}_{\mid Y}).
\end{equation}
Here $\msK(X)^{op}$ is the poset of compact subsets of $X$ with the reverse inclusion order. We will not prove the assertion about the sheafification of $\un{\cC}$ here, instead we simply use the notation $\Gamma(X,\un{\cC}^{\sharp})$ for the right hand side of \eqref{eq:global_sections_category}.

\begin{theo}\label{th:K_theory_of_global_sections}
Let $X$ be a locally compact Hausdorff space and let $\un{\cC}$ be a presheaf on $X$ with values in $\Cat_{\st}^{\dual}.$ Suppose that $X$ is countable at infinity. Then we have an isomorphism
\begin{equation}\label{eq:K_theory_of_global_sections}
K^{\cont}(\Gamma(X,\un{\cC}^{\sharp}))\cong \Gamma(X,K^{\cont}(\un{\cC})^{\sharp}).
\end{equation}
Equivalently, we have
\begin{equation}\label{eq:K_theory_of_inverse_limit_of_cats_of_sheaves}
K^{\cont}(\prolim[Y\in\msK(X)^{op}]^{\dual} \Shv(Y;\un{\cC}_{\mid Y}))\xto{\sim} \prolim[Y\in\msK(X)^{op}]K^{\cont}(\Shv(Y;\un{\cC}_{\mid Y}))
\end{equation}
\end{theo}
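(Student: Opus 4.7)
The plan is to reduce the inverse limit over $\msK(X)^{op}$ to a countable inverse limit using countability at infinity, and then to use that $K^{\cont}$ commutes with countable inverse limits of dualizable categories -- an essentially formal consequence of its commutation with products \cite[Theorem 4.28]{E24} and its exactness as a localizing invariant, once the inverse limit is expressed as the fiber of a shift map between products.

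First I would choose a compact exhaustion $Y_0 \subset Y_1 \subset \cdots$ of $X$ with $Y_n \subset \mathrm{int}(Y_{n+1})$, which exists by countability at infinity. For any compact $K \subset X$ the open cover $\{\mathrm{int}(Y_n)\}_n$ admits a finite subcover of $K$, so $K \subset Y_n$ eventually; hence the functor $\N \to \msK(X)$, $n \mapsto Y_n$, is cofinal in the convention of this paper, its opposite is initial for limits, and $\prolim_{Y \in \msK(X)^{op}}$ may be replaced by $\prolim_n$ throughout both sides of \eqref{eq:K_theory_of_inverse_limit_of_cats_of_sheaves}.

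Writing $\cA_n = \Shv(Y_n; \un{\cC}_{\mid Y_n})$, the dualizable inverse limit $\prolim^{\dual}_n \cA_n$ fits in a fiber sequence in $\Cat_{\st}^{\dual}$ of the form
\begin{equation*}
\prolim\nolimits^{\dual}_n \cA_n \to \prodd\nolimits^{\dual}_n \cA_n \xrightarrow{\mathrm{id} - s} \prodd\nolimits^{\dual}_n \cA_n,
\end{equation*}
where $s$ is the shift by transition functors. Interpreting this as a short exact sequence in $\Cat_{\st}^{\dual}$ and applying $K^{\cont}$, commutation with products gives $K^{\cont}(\prodd^{\dual}_n \cA_n) \simeq \prod_n K^{\cont}(\cA_n)$, and exactness converts the display into a fiber sequence of spectra whose fiber is $\prolim_n K^{\cont}(\cA_n)$; this yields \eqref{eq:K_theory_of_inverse_limit_of_cats_of_sheaves}. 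To match the two formulations, for each compact $Y_n$ I would invoke \cite[Theorem 6.11]{E24} to get $K^{\cont}(\cA_n) \cong \Gamma_c(Y_n, K^{\cont}(\un{\cC})^{\sharp}|_{Y_n}) = \Gamma(Y_n, K^{\cont}(\un{\cC})^{\sharp}|_{Y_n})$ (the second equality by compactness), and then recognize $\prolim_n \Gamma(Y_n, -) = \Gamma(X, K^{\cont}(\un{\cC})^{\sharp})$ via the defining formula \eqref{eq:global_sections_category} on the $\Sp$-valued side.

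The main obstacle I anticipate is the precise justification that the displayed fiber sequence is short exact in $\Cat_{\st}^{\dual}$ in the sense needed -- i.e.\ that $K^{\cont}$ really does send it to a fiber sequence of spectra rather than being obstructed by a derived-$\prolim^{1}$ type phenomenon intrinsic to $\Cat_{\st}^{\dual}$. Should this direct verification prove delicate, the fallback is to invoke \cite[Theorem 6.1]{E25} as in the proof of Theorem \ref{th:morphisms_in_Mot^loc_via_internal_Hom}, checking its homological epimorphism hypothesis for the tower $(\cA_n)$ by using the factorizations $\cA_{n+1} \to \Shv(\mathrm{int}(Y_{n+1}); \un{\cC}_{\mid \mathrm{int}(Y_{n+1})}) \to \cA_n$ of the transition functors arising from the compact inclusions $Y_n \subset \mathrm{int}(Y_{n+1})$, together with proper base change to supply the requisite trace-class-like properties on sheaf categories.
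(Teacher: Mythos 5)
The central step of your main argument does not work. The display
\begin{equation*}
\prolim\nolimits^{\dual}_n \cA_n \to \prodd\nolimits^{\dual}_n \cA_n \xrightarrow{\mathrm{id} - s} \prodd\nolimits^{\dual}_n \cA_n
\end{equation*}
is not a diagram in $\Cat_{\st}^{\dual}$: this category is not stable, and there is no way to subtract two strongly continuous functors. The usual identification of a sequential limit with the fiber of $\mathrm{id} - s$ holds in the \emph{target} stable category $\Sp$ (which is why $\prolim_n K^{\cont}(\cA_n)$ can be written that way), but not in the source. The inverse limit in $\Cat_{\st}^{\dual}$ is an equalizer of $\mathrm{id}$ and $s$, and there is no reason for $K^{\cont}$ to commute with such equalizers. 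Indeed, the non-commutation of localizing invariants with inverse limits of dualizable categories is precisely the obstruction that \cite[Theorem 6.1]{E25} is designed to control, so what you are treating as the concluding formal step is exactly the hard content of the theorem. This is a genuine gap, not a delicate technical point to be patched.

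Your fallback does point in a more viable direction, but it is left at the level of a sketch, and what remains is the bulk of the work. To apply \cite[Theorem 6.1]{E25} directly to the tower $(\cA_n)$ you would need to verify the homological epimorphism hypothesis on $\prolim_n \Ind(\cA_n)^{\omega_1}\to\prolim_n \Calk_{\omega_1}(\cA_n)$, and the role of the factorizations through $\Shv(\mathrm{int}(Y_{n+1});-)$ in producing the needed trace-class structure is asserted but not established. The paper takes a different route that avoids confronting this tower head-on: it passes to the one-point compactification $\bbar{X}$, works over the rigid base $\cE = \Shv(\bbar{X};\Sp)$, identifies $\Gamma(X,\un{\cC}^{\sharp})$ with the relative dualizable internal $\Hom$ $\un{\Hom}_{\cE}^{\dual}(\Shv(X;\Sp),\Shv(X;\un{\cC}))$ (Proposition \ref{prop:global_section_cats_via_internal_Hom}), and then applies Theorem \ref{th:morphisms_in_Mot^loc_via_internal_Hom} \ref{internal_Hom_from_proper}, using that $\Shv(X;\Sp)$ is proper and, thanks to countability at infinity, $\omega_1$-compact over $\cE$. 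The inverse limit over compacts then emerges on the \emph{representing} side, via the presentation of $\Shv(X;\Sp)$ as a colimit of $\Shv(U;\Sp)$ over relatively compact opens with trace-class transition maps. In other words, the trace-class factorizations you gesture at do appear in the real proof, but on the source of the internal $\Hom$ rather than on the tower $(\cA_n)$ itself.
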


We will need the following interpretation of the category $\Gamma(X;\un{\cC}^{\sharp})$ via internal $\Hom.$ Below we take the one-point compactification of $X$ only for the sake of psychological convenience: it is more comfortable to work over a rigid base than over a locally rigid base.

\begin{prop}\label{prop:global_section_cats_via_internal_Hom}
Let $X$ and $\un{\cC}$ be as in Theorem \ref{th:K_theory_of_global_sections}, but we do not require $X$ to be countable at infinity. Denote by $\bbar{X}=X\cup\{\infty\}$ the one-point compactification of $X.$ Then we have have a natural equivalence
\begin{equation}\label{eq:global_sections_as_internal_Hom}
\Gamma(X,\un{\cC}^{\sharp})\simeq \un{\Hom}_{\Shv(\bbar{X};\Sp)}(\Shv(X;\Sp),\Shv(X;\un{\cC})).
\end{equation}
In particular, if $\un{\cC}=\Sp_X$ is the constant presheaf with value $\Sp,$ then the category $\Gamma(X,\Sp_X^{\sharp})$ is naturally symmetric monoidal and we have
\begin{equation}\label{eq:global_sections_as_rigidification}
\Gamma(X,\Sp_X^{\sharp})\simeq \Shv(X;\Sp)^{\rig}.
\end{equation}
\end{prop}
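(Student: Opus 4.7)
The plan is to identify both sides of \eqref{eq:global_sections_as_internal_Hom} with a common dualizable cofiltered limit indexed by $\msK(X)^{op},$ and then deduce \eqref{eq:global_sections_as_rigidification} as a special case using the structural results on rigidification from \cite[Section 4]{E25}.

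First I would establish the key structural identification
\[
\indlim[Y\in\msK(X)]\Shv(Y;\Sp)\xto{\sim}\Shv(X;\Sp)\quad\text{in }\Cat_{\Shv(\bbar{X};\Sp)}^{\dual},
\]
where for each compact $Y\subset X$ we realize $\Shv(Y;\Sp)$ as a smashing subideal of $\Shv(X;\Sp)\subset\Shv(\bbar{X};\Sp)$ via pushforward along the closed embedding $Y\hookrightarrow\bbar{X},$ and the transition functors for $Y\subset Y'$ are the strongly continuous $\Shv(\bbar{X};\Sp)$-linear pushforwards. This reflects the fact that every object of $\Shv(X;\Sp)$ is canonically the filtered colimit of its pushforward-restrictions to compact subsets of $X.$

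Next I would apply the relative internal $\un{\Hom}_{\Shv(\bbar{X};\Sp)}(-,\Shv(X;\un{\cC})),$ which converts filtered colimits in the first argument into dualizable cofiltered limits, obtaining
\[
\un{\Hom}_{\Shv(\bbar{X};\Sp)}(\Shv(X;\Sp),\Shv(X;\un{\cC}))\simeq \prolim[Y\in\msK(X)^{op}]^{\dual}\un{\Hom}_{\Shv(\bbar{X};\Sp)}(\Shv(Y;\Sp),\Shv(X;\un{\cC})).
\]
For each $Y,$ since $\Shv(Y;\Sp)$ is a smashing ideal in $\Shv(\bbar{X};\Sp)$ that is moreover rigid (as $Y$ is compact Hausdorff), a direct computation using the relative internal Hom formalism from \cite[Section 3]{E25} identifies $\un{\Hom}_{\Shv(\bbar{X};\Sp)}(\Shv(Y;\Sp),\Shv(X;\un{\cC}))$ with the $\Shv(Y;\Sp)$-local part of $\Shv(X;\un{\cC}),$ namely $\Shv(Y;\un{\cC}_{\mid Y}).$ Assembling these identifications gives \eqref{eq:global_sections_as_internal_Hom}.

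For \eqref{eq:global_sections_as_rigidification}, when $\un{\cC}=\Sp_X$ the transition maps $\Shv(Y';\Sp)\to\Shv(Y;\Sp)$ are symmetric monoidal, so the limit inherits a canonical symmetric monoidal structure. The identification $\un{\Hom}_{\Shv(\bbar{X};\Sp)}(\Shv(X;\Sp),\Shv(X;\Sp))\simeq \Shv(X;\Sp)^{\rig}$ then follows from \cite[Proposition 4.1 and Theorem 4.2]{E25}, which describe the rigidification of a locally rigid symmetric monoidal category in terms of its embedding as a smashing ideal in an ambient rigid category (here $\Shv(\bbar{X};\Sp)$ plays the role of such a rigid extension of $\Shv(X;\Sp),$ closely related to its one-point rigidification).

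The main obstacle will be Step 1, namely the precise verification of the filtered colimit decomposition in $\Cat_{\Shv(\bbar{X};\Sp)}^{\dual}$ for general locally compact Hausdorff $X.$ When $X$ is not second-countable, the poset $\msK(X)$ is uncountable and the colimit must be computed carefully in the dualizable module category (rather than the larger $\Pr^L_{\Shv(\bbar{X};\Sp)}$), using that the inclusions $\Shv(Y;\Sp)\hookrightarrow\Shv(Y';\Sp)$ are pushforwards along closed embeddings between compact Hausdorff spaces (hence strongly continuous) and that every sheaf on $X$ admits a canonical presentation via its behavior on compact subsets. A secondary subtlety is the identification of the individual terms in Step 2, which needs the dualizability of $\Shv(Y;\Sp)$ over $\Shv(\bbar{X};\Sp)$ together with the fact that $\Shv(X;\un{\cC})$ is a module over the smashing ideal $\Shv(X;\Sp)$ so that tensoring with $\Shv(Y;\Sp)$ over $\Shv(\bbar{X};\Sp)$ implements restriction to $Y.$
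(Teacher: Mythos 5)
Your Step 1 contains a genuine error: for a compact subset $Y\subset X$, the pushforward $i_*:\Shv(Y;\Sp)\to\Shv(\bbar{X};\Sp)$ does \emph{not} realize $\Shv(Y;\Sp)$ as a smashing ideal. In sheaf categories on locally compact Hausdorff spaces, smashing ideals correspond to \emph{open} subsets (via extension by zero $j_!$); for a closed or compact $Y$, the essential image of $i_*$ is $\Shv_Y(\bbar{X})=\ker(j^*)$, whose inclusion into $\Shv(\bbar{X};\Sp)$ has right adjoint $\Fiber(\id\to j_*j^*)$ — and since $j_*$ along an open embedding does not commute with colimits, this inclusion is not strongly continuous. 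Consequently, the filtered colimit decomposition $\indlim_Y\Shv(Y;\Sp)\xto{\sim}\Shv(X;\Sp)$ via closed pushforwards, taken in $\Cat_{\Shv(\bbar{X};\Sp)}^{\dual}$, is not what you want. The correct decomposition used in the paper is the colimit $\indlim_{U\Subset X}\Shv(U;\Sp)\xto{\sim}\Shv(X;\Sp)$ over \emph{relatively compact opens} $U$, with transition maps given by extension by zero — these are honest strongly continuous $\cE$-linear inclusions of smashing ideals.

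Your Step 2 also glosses over the central difficulty. You assert a term-by-term identification $\un{\Hom}_{\cE}^{\dual}(\Shv(Y;\Sp),\Shv(X;\un{\cC}))\simeq\Shv(Y;\un{\cC}_{\mid Y})$; the right-hand side is $\Shv(Y;\Sp)\tens{\cE}\Shv(X;\un{\cC})$, and this identification is \emph{not} valid term by term — the dualizable internal $\Hom$ is in general strictly larger than the tensor product with the dual. What one actually has is a \emph{pro-}equivalence: the pro-systems $\{\Shv(\bbar{U};\un{\cC}_{\mid\bbar{U}})\}_U$ and $\{\un{\Hom}_{\cE}^{\dual}(\Shv(U;\Sp),\Shv(X;\un{\cC}))\}_U$ are pro-isomorphic, not termwise isomorphic. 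Establishing this requires the key trace-class input: for $U\Subset V$, the extension-by-zero functor $\Shv(U;\Sp)\to\Shv(V;\Sp)$ is trace-class over $\cE$, with an explicit witness built from $\bS_{\bbar{U}}\in\Shv(\bbar{U};\Sp)\simeq\Shv(\bbar{U};\Sp)\tens{\cE}\Shv(V;\Sp)$; one then invokes Proposition~\ref{prop:diagonal_arrow_dualizable_trace_class} to produce the diagonal arrows making the relevant square of transition maps commute. Your proposal has no substitute for this step, and without it the identification of the two sides of \eqref{eq:global_sections_as_internal_Hom} is not established. The final step recovering \eqref{eq:global_sections_as_rigidification} via \cite[Proposition 4.1]{E25} is fine once the first part is in place.
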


\begin{proof}
We first note that \eqref{eq:global_sections_as_rigidification} is indeed a special case of \eqref{eq:global_sections_as_internal_Hom} by \cite[Proposition 4.1]{E25}.

We put $\cE=\Shv(\bbar{X};\Sp).$ Then for any open subsets $U,V\subset X$ such that $U\Subset V$ the ($\cE$-linear) extension by zero (i.e. proper pushforward) functor $\Shv(U;\Sp)\to\Shv(V;\Sp)$ is trace-class over $\cE.$ More precisely, consider the $\cE$-linear functor $\Phi:\Shv(\bbar{U};\Sp)\to\un{\Hom}_{\cE}^{\dual}(\Shv(U;\Sp),\cE),$ which by adjunction corresponds to the composition
\begin{equation*}
\Shv(\bbar{U};\Sp)\tens{\cE}\Shv(U;\Sp)\xto{\sim}\Shv(U;\Sp)\to \cE,
\end{equation*}
where the latter functor is extension by zero. We have the object
\begin{equation*}
\bS_{\bbar{U}}\in \Shv(\bbar{U};\Sp)\simeq \Shv(\bbar{U};\Sp)\tens{\cE}\Shv(V;\Sp).
\end{equation*}
Then the composition
\begin{multline*}
\Shv(U;\Sp)\xto{\id\boxtimes \bS_{\bbar{U}}} \Shv(U;\Sp)\tens{\cE}\Shv(\bbar{U};\Sp)\tens{\cE}\Shv(V;\Sp)\\
\xto{\id\boxtimes\Phi\boxtimes\id}\Shv(U;\Sp)\tens{\cE}\un{\Hom}_{\cE}^{\dual}(\Shv(U;\Sp),\cE)\tens{\cE}\Shv(V;\Sp)\to\Shv(V;\Sp)
\end{multline*}
is isomorphic to the extension by zero functor. Next, we note that we have an equivalence 
\begin{equation*}
\Shv(\bbar{U};\Sp)\tens{\cE}\Shv(X;\un{\cC})\simeq \Shv(\bbar{U};\un{\cC}_{\mid U})
\end{equation*} 
and similarly for $V.$ Arguing as in Proposition \ref{prop:diagonal_arrow_dualizable_trace_class}, we obtain a commutative diagram
\begin{equation*}
\begin{tikzcd}
\Shv(\bbar{V};\un{\cC}_{\mid \bbar{V}})\ar[r]\ar[d] & \Shv(\bbar{U};\un{\cC}_{\mid \bbar{U}})\ar[d]\\
\un{\Hom}_{\cE}^{\dual}(\Shv(V;\Sp),\Shv(X;\un{\cC}))\ar[r]\ar[ru] & \un{\Hom}_{\cE}^{\dual}(\Shv(U;\Sp),\Shv(X;\un{\cC})).
\end{tikzcd}
\end{equation*}
This gives the equivalences
\begin{multline*}
\Gamma(X;\un{\cC}^{\sharp})\xto{\sim} \prolim[U\Subset X]^{\dual} \Shv(\bbar{U};\un{\cC}_{\mid \bbar{U}})\xto{\sim} \prolim[U\Subset X]^{\dual} \un{\Hom}_{\cE}^{\dual}(\Shv(U;\Sp),\Shv(X;\un{\cC}))\\
\simeq \un{\Hom}_{\cE}^{\dual}(\Shv(X;\Sp),\Shv(X;\un{\cC})),
\end{multline*}
as stated.
\end{proof}

\begin{proof}[Proof of Theorem \ref{th:K_theory_of_global_sections}]
We note that \eqref{eq:K_theory_of_inverse_limit_of_cats_of_sheaves} is indeed equivalent to \eqref{eq:K_theory_of_global_sections} by \cite[Theorem 6.11]{E24}.

As in the proof of Proposition \ref{prop:global_section_cats_via_internal_Hom}, we put $\cE=\Shv(\bbar{X};\Sp),$ where $\bbar{X}$ is the one-point compactification of $X.$ Then $\Shv(X;\Sp)$ is a smashing ideal in $\cE,$ hence it is proper over $\cE.$ Moreover, since $\cE$ is countable at infinity, we see that $\Shv(X;\Sp)$ is $\omega_1$-compact in $\Cat_{\cE}^{\dual}.$ By Proposition \ref{prop:global_section_cats_via_internal_Hom} and Theorem \ref{th:morphisms_in_Mot^loc_via_internal_Hom} we have
\begin{equation*}
K^{\cont}(\Gamma(X;\un{\cC}^{\sharp}))\xto{\sim}\Hom_{\Mot^{\loc}_{\cE}}(\cU_{\loc}(\Shv(X;\Sp)),\cU_{\loc}(\Shv(X;\un{\cC}))).
\end{equation*}
By the proof of Proposition \ref{prop:global_section_cats_via_internal_Hom} we obtain the isomorphisms in $\Mot^{\loc}_{\cE}:$
\begin{multline*}
\un{\Hom}_{\Mot^{\loc}_{\cE}}(\cU_{\loc}(\Shv(X;\Sp)),\cU_{\loc}(\Shv(X;\un{\cC})))\\
\cong \prolim[U\Subset X]\un{\Hom}_{\Mot^{\loc}_{\cE}}(\cU_{\loc}(\Shv(U;\Sp)),\cU_{\loc}(\Shv(X;\un{\cC})))\cong \prolim[U\Subset X] \cU_{\loc}(\Shv(\bbar{U};\un{\cC}_{\mid \bbar{U}})).
\end{multline*}
Applying $\Hom_{\Mot^{\loc}_{\cE}}(\cU_{\loc}(\cE),-),$ we obtain
\begin{equation*}
\Hom_{\Mot^{\loc}_{\cE}}(\cU_{\loc}(\Shv(X;\Sp)),\cU_{\loc}(\Shv(X;\un{\cC})))\cong \prolim[U\Subset X]K^{\cont}(\Shv(\bbar{U};\un{\cC}_{\mid \bbar{U}})).
\end{equation*}
This proves \eqref{eq:K_theory_of_inverse_limit_of_cats_of_sheaves} and the theorem.
\end{proof}

\section{Refined negative cyclic homology}
\label{sec:refined_HC^-}

In this section we explain another application of rigidity of the category of localizing motives -- the construction refined versions of Hochschild homology. This was announced in \cite{E24b}. Very interesting and highly non-trivial examples were computed by Meyer and Wagner in \cite{MW24}. Here we will give proofs of some of the results announced in \cite{E24b}, namely Propositions 2, 3, 6, 7 of loc. cit. The proofs of \cite[Theorems 4 and 5]{E24b} (in particular the comparison with rigid cohomology of smooth schemes over a perfect field of characteristic $p>0$) will appear elsewhere.

For simplicity, we work over a usual (discrete) commutative ring $\mk,$ and we will moreover assume that $\mk$ is a $\Q$-algebra. We consider the negative cyclic homology (relative to $\mk$) as a symmetric monoidal localizing invariant
\begin{equation}\label{eq:HC^-_from_Cat^perf}
\HC^-(-/\mk):\Cat_{\mk}^{\perf}\to \Mod_u^{\wedge}\hy\mk[[u]],
\end{equation}
where $u$ is a formal variable of cohomological degree $2.$ Here the target is the category of $u$-complete modules over the $\bE_{\infty}$-algebra $\mk[[u]].$ Here we identify $\mk[[u]]$ with the $\bE_{\infty}$-algebra $C^*(\C\PP^{\infty},\mk),$ using the assumption that $\mk$ is a $\Q$-algebra.

We have a commutative square
\begin{equation*}
\begin{tikzcd}
\Cat_{\mk}^{\perf}\ar[r, "\HH(-/\mk)"]\ar[equal]{d} & [2em] \Mod\hy\mk S^1\ar[d, "(-)^{\h S^1}", "\sim" {anchor=south, rotate=90}']\\
\Cat_{\mk}^{\perf}\ar[r, "\HC^-(-/\mk)"] & \Mod_u^{\wedge}\hy\mk[[u]].
\end{tikzcd}
\end{equation*}
Here the right vertical arrow is an equivalence which takes the compact generator $\mk S^1$ to the compact generator $\mk[1].$

\begin{remark}\begin{enumerate}[label=(\roman*),ref=(\roman*)]
		\item We of course have an isomorphism $\mk[[u]]\cong \mk[u].$ However, if we allow $\mk$ to be an $\bE_{\infty}$-$\Q$-algebra which is not bounded above (homologically), then the map $\mk[u]\to \mk[u[]]$ is not an isomorphism.
		\item Suppose that $\mk$ is not a $\Q$-algebra, but we still require $\mk$ to be $\Z$-linear, for example, discrete. Then we only have an isomorphism of $\bE_1$-$\mk$-algebras $\mk[[u]]\cong C^*(\C\PP^{\infty},\mk).$ This means that the target of \eqref{eq:HC^-_from_Cat^perf} is still the ``correct'' category, but the symmetric monoidal structure is different. The same applies more generally to the case when $\mk$ is an $\bE_{\infty}$-algebra over $\MU.$
	\end{enumerate}
\end{remark}

Since the functor \eqref{eq:HC^-_from_Cat^perf} commutes with filtered colimits, it induces a symmetric monoidal functor from the category of localizing motives, which we denote by the same symbol:
\begin{equation}\label{eq:HC^-_on_Mot^loc}
\HC^-:\Mot^{\loc}_{\mk}\to \Mod_u^{\wedge}\hy\mk[[u]].
\end{equation}
Now, the source of \eqref{eq:HC^-_on_Mot^loc} is a rigid category by Theorem \ref{th:dualizability_and_rigidity} \ref{E_1_rigidity}. On the other hand, the target is only locally rigid: the compact generator $\mk$ is dualizable, but the unit object $\mk[[u]]$ is not compact. Following \cite{E25}, we define the category of nuclear modules $\Nuc(\mk[[u]])$ to be the rigidification of $\Mod_u^{\wedge}\hy\mk[[u]].$ By the universal property of the rigidification, the functor $\eqref{eq:HC^-_on_Mot^loc}$ factors uniquely through $\Nuc(\mk[[u]]).$ Hence we obtain a symmetric monoidal continuous functor
\begin{equation*}
\HC^{-,\tref}(-/\mk):\Mot_{\mk}^{\loc}\to\Nuc(\mk[[u]]),
\end{equation*}
which we call the refined negative cyclic homology. For $\cA\in\Cat_{\mk}^{\perf}$ we put $\HC^{-,\tref}(\cA/\mk)=\HC^{-,\tref}(\cU_{\loc}(\cA)/\mk),$ and for an $\bE_1$-$\mk$-algebra $A$ we put $\HC^{-,\tref}(A/\mk)=\HC^{-,\tref}(\Perf(A)/\mk).$

In the next two subsections we give examples of computations of $\HC^{-,\tref}$ for $\Q[x]$ over $\Q$ (Proposition \ref{prop:refined_HC^-_affine_line}) and for $\Q[x^{\pm 1}]$ over $\Q[x]$ (Proposition \ref{prop:refined_HC^-_Laurent_polynomials}). In Subsection \ref{ssec:refined_HP} we introduce another closely related invariant, namely refined periodic cyclic homology $\HP^{\tref}(-/\mk).$ In Subsection \ref{ssec:refined_HP_of_Q_over_Qx} we compute $\HP^{\tref}(\Q/\Q[x]).$ As an application, we deduce that the category $\Mot^{\loc,\A^1}_{\Q[x]}$ is not compactly generated, which implies that the category $\Mot^{\loc}_{\Q[x]}$ is also not compactly generated. We also deduce that the universal finitary $\A^1$-invariant localizing invariant $\cU_{\loc}^{\A^1}:\Cat^{\perf}_{\Q[x]}\to\Mot^{\loc,\A^1}_{\Q[x]}$ is not truncating in the sense of \cite{LT19}.

We will need the following general statements about the category $\Nuc(\mk[[u]])$ and the functor $\HC^{-,\tref}(-/\mk).$ 

\begin{prop}\label{prop:Nuc_generated_by_sequential_colimits} We have a symmetric monoidal fully faithful strongly continuous functor $\Nuc(\mk[[u]])\to \Ind((\Mod_u^{\wedge}\hy\mk[[u]])^{\omega_1}).$ Its essential image is generated via colimits by the objects $\inddlim[n\in\N]M_n,$ where the transition maps $M_n\to M_{n+1}$ are trace-class.\end{prop}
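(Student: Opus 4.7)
The plan is to apply the rigidification construction from \cite{Ram24b} (recalled after diagram \eqref{eq:how_rigidification_works}) to the locally rigid symmetric monoidal category $\cC := \Mod_u^{\wedge}\hy\mk[[u]]$. Its unit $\mk[[u]] = \prolim[n]\mk[[u]]/u^n$ is a countable inverse limit of compact objects and is therefore $\omega_1$-compact, so $\cC\in\CAlg(\Pr^L_{\st,\omega_1})$. By Ramzi's theorem, $\Nuc(\mk[[u]]) := \cC^{\rig}$ sits inside $\Ind(\cC^{\omega_1})$ as the full localizing subcategory generated by the ind-objects $\inddlim[a\in\Q_{\leq}]\Phi(a)$, where $\Phi:\Q_{\leq}\to \cC^{\omega_1}$ has trace-class transitions $\Phi(a)\to\Phi(b)$ for $a<b$. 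The inclusion is fully faithful by construction, and is a morphism in $\CAlg(\Cat_{\st}^{\dual})$ (as recorded by \eqref{eq:how_rigidification_works}), hence is both symmetric monoidal and strongly continuous.

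For the description of generators, the key observation is that $\N\hookrightarrow \Q_{\leq}$ (via $n\mapsto n$) is cofinal. Consequently, every $\Q_{\leq}$-indexed generator can be rewritten as $\inddlim[n\in\N]\Phi(n)$, with the transitions $\Phi(n)\to\Phi(n+1)$ trace-class. This already exhibits a generating set of the required $\N$-indexed form inside the essential image. To complete the statement by showing that \emph{every} ind-object $\inddlim[n\in\N]M_n$ with trace-class transitions lies in the essential image, the plan is to invoke the factorization property \cite[Theorem 4.2]{E25}: in a locally rigid category with $\omega_1$-compact unit, every trace-class map is a composition of two trace-class maps. Iterated bisection of each $M_n\to M_{n+1}$ refines the $\N$-chain to a $\Z[1/2]$-indexed chain of trace-class maps with the same colimit, and $\Z[1/2]$ embeds cofinally into $\Q_{\leq}$, so the object appears as a $\Q_{\leq}$-indexed generator of $\cC^{\rig}$.

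The main obstacle is to ensure that $\cC$ satisfies the hypotheses of the rigidification machinery and of \cite[Theorem 4.2]{E25}: local rigidity (immediate from $\cC$ being a smashing quotient of the rigid category $\Mod\hy\mk[[u]]$) and $\omega_1$-compactness of the unit (from the countable presentation $\mk[[u]]=\prolim[n]\mk[[u]]/u^n$). Once these are in place, the rest of the argument is essentially formal, relying only on the cofinality of $\N\subset\Q_{\leq}$ and the bisection trick.
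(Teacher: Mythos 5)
Your proposal follows the same path as the paper, which proves the proposition by a one-line citation to \cite[Theorem 4.2]{E25} (after observing the $\omega_1$-compactness of the unit). You expand that citation into: (i) Ramzi's characterization of the rigidification as the localizing subcategory of $\Ind(\cC^{\omega_1})$ generated by $\Q_{\leq}$-chains with trace-class transitions, plus (ii) the trace-class factorization statement from \cite[Theorem 4.2]{E25} combined with a bisection argument. Both directions of the generation statement are correctly identified: the easy direction (restricting $\Q_{\leq}$-chains along the cofinal inclusion $\N\hookrightarrow\Q_{\leq}$) and the harder direction (bisecting an $\N$-chain to promote it to a generator). This matches the logical structure the paper points to.

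Some remarks where your sketch is slightly loose. First, when you bisect $M_n\to M_{n+1}$ you need the intermediate object of the factorization to lie in $\cC^{\omega_1}$; this is not part of the raw statement that a trace-class map factors as two trace-class maps, but follows from the $\omega_1$-compactness of $1_{\cC}$ and of $M_n$ by truncating the intermediate object and the trace-class witness along an exhaustion by $\omega_1$-compacts. Second, the bisection produces a $\Z[1/2]_{\geq 0}$-indexed chain, not a $\Z[1/2]$-indexed one; one should either pass to $\Z[1/2]_{>0}$ (which is order-isomorphic to $\Q$) or extend by cofinality of $\Q_{\geq 0}\subset\Q$. Third, the framing ``the rest of the argument is essentially formal'' undersells what the paper itself warns is ``quite difficult'': the trace-class factorization in a locally rigid category with $\omega_1$-compact but not compact unit is the substantive input (the paper's remark attributes the difficulty to \cite[Lemma 3.10]{E25}, which you do not reference), so your proof is not more elementary than the paper's — it invokes exactly the same hard ingredient as a black box. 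Finally, $\Mod_u^{\wedge}\hy\mk[[u]]$ is not a smashing quotient of $\Mod\hy\mk[[u]]$ (as a symmetric monoidal category it is a smashing ideal, equivalent as a module to $\Mod_{u\hy\tors}\hy\mk[[u]]$); local rigidity instead follows directly from the fact that it is compactly generated with dualizable compact objects.
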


\begin{proof}
This is a special case of \cite[Theorem 4.2]{E25}, since the unit object $\mk[[u]]$ is $\omega_1$-compact in $\Mod_u^{\wedge}\hy\mk[[u]].$ 
\end{proof}

We note that the second statement in the above proposition is quite difficult: it uses \cite[Lemma 3.10]{E25}, which only slightly simplifies in the case when the involved categories are compactly generated. Below we consider $\Nuc(\mk[[u]])$ as a full subcategory of $\Ind((\Mod_u^{\wedge}\hy\mk[[u]])^{\omega_1}).$

\begin{prop}\label{prop:refined_HC^-_of_nuclear}
Let $\cA\in\Cat_{\mk}^{\perf}$ be a category such that $\Ind(\cA)\in\Cat_{\mk}^{\cg}$ is nuclear in the sense of Definition \ref{def:nuclear_E_modules} (this holds if $\cA$ is proper over $\mk$ by Proposition \ref{prop:proper_are_nuclear}). Let $\cA\simeq\indlim[i\in I]\cA_i,$ where $I$ is directed and we have $\cA_i\in(\Cat_{\mk}^{\perf})^{\omega}$ for $i\in I.$ Then we have 
\begin{equation}\label{eq:refined_HC^-_of_nuclear}
\HC^{-,\tref}(\cA/\mk)\cong\inddlim[i]\HC^-(\cA_i/\mk).
\end{equation} 
\end{prop}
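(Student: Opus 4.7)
The plan is to combine Proposition \ref{prop:nuclear_equiv_cond} (to obtain a trace-class presentation of $\cU_{\loc}(\cA)$ inside the rigid category $\Mot^{\loc}_{\mk}$) with the rigidification square \eqref{eq:how_rigidification_works} applied to the symmetric monoidal functor $\HC^-(-/\mk)$.

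First, I would use nuclearity of $\Ind(\cA)$ to pass to a cofinal subsystem of $I$ for which every transition functor $\cA_i\to\cA_j$ is trace-class in $\Cat_{\mk}^{\perf}$; cofinality does not affect the formal ind-colimit on the right-hand side of \eqref{eq:refined_HC^-_of_nuclear}. Because $\cU_{\loc}:\Cat_{\mk}^{\perf}\to\Mot^{\loc}_{\mk}$ and $\HC^{-,\tref}(-/\mk):\Mot^{\loc}_{\mk}\to\Nuc(\mk[[u]])$ are both symmetric monoidal and continuous, they preserve trace-class morphisms. Thus the directed system $(\cU_{\loc}(\cA_i))$ in the rigid category $\Mot^{\loc}_{\mk}$ has (eventually) trace-class, hence compact, transitions, while each $\cU_{\loc}(\cA_i)$ is $\omega_1$-compact in $\Mot^{\loc}_{\mk}$ by Theorem \ref{th:equivalence_Mot^loc_omega_1_and_Mot^loc} applied to the $\omega_1$-compact dualizable category $\Ind(\cA_i)\in(\Cat_{\mk}^{\cg})^{\omega}$. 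The general compact-assembly formula then identifies
$$\hat{\cY}_{\Mot^{\loc}_{\mk}}(\cU_{\loc}(\cA)) \simeq \inddlim[i]\cU_{\loc}(\cA_i)\quad\text{in }\Ind\bigl((\Mot^{\loc}_{\mk})^{\omega_1}\bigr).$$

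Second, I would invoke the commutative square \eqref{eq:how_rigidification_works} with $\cE=\Mot^{\loc}_{\mk}$, $\cC=\Mod_u^{\wedge}\hy\mk[[u]]$, $F=\HC^-(-/\mk)$, $\wt F=\HC^{-,\tref}(-/\mk)$ and $\kappa=\omega_1$. Under the fully faithful inclusion $\Nuc(\mk[[u]])\hookrightarrow\Ind((\Mod_u^{\wedge}\hy\mk[[u]])^{\omega_1})$ of Proposition \ref{prop:Nuc_generated_by_sequential_colimits}, the square tells us that $\HC^{-,\tref}(\cA/\mk)$ is identified with the image of $\hat{\cY}(\cU_{\loc}(\cA))$ under $\Ind(\HC^{-,\omega_1})$; by the previous step this image is $\inddlim[i]\HC^-(\cA_i/\mk)$, which is exactly \eqref{eq:refined_HC^-_of_nuclear}.

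The main obstacle is the bookkeeping of compactness levels in the rigidification square at $\kappa=\omega_1$: one must check that $\HC^-$ sends $\omega_1$-compact motives to $\omega_1$-compact $u$-complete modules. Note that $\HC^-$ does not preserve ordinary compact objects (since $\HC^-(\mk/\mk)=\mk[[u]]$ is not compact in the target), so one genuinely needs the $\omega_1$-version. For a compact $\cA_i\in(\Cat_{\mk}^{\perf})^{\omega}$ this reduces to showing that $\HH(\cA_i/\mk)$ is $\omega_1$-compact in $\Mod\hy\mk[S^1]\simeq\Mod_u^{\wedge}\hy\mk[[u]]$, which follows from the cyclic bar description (in characteristic zero a finite cyclic bar complex) and the fact that compact motives of finitely presented $\mk$-linear categories are $\omega_1$-small limits of such ``small'' modules. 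If this verification proves delicate, one may instead choose an uncountable $\kappa$ for which accessibility of $\HC^-$ guarantees preservation of $\kappa$-compact objects, run the same argument at level $\kappa$, and then descend the conclusion to $\omega_1$ via the cofinality of the trace-class subsystem in $I$.
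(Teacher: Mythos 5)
Your proof is correct and follows essentially the same two-step route as the paper: (1) nuclearity plus Proposition \ref{prop:nuclear_equiv_cond} gives eventually-trace-class transitions, hence $\hat{\cY}(\cU_{\loc}(\cA))\cong\inddlim[i]\cU_{\loc}(\cA_i)$; (2) the rigidification square \eqref{eq:how_rigidification_works} transports this identification along $\HC^{-}$. One minor remark: your appeal to Theorem \ref{th:equivalence_Mot^loc_omega_1_and_Mot^loc} for the $\omega_1$-compactness of $\cU_{\loc}(\cA_i)$ is a slight detour (that theorem concerns the comparison of finitary and $\omega_1$-finitary motive categories, not compactness of individual motives directly), but as you correctly observe, the ind-colimit formula is insensitive to enlarging $\kappa$, so your fallback of running the argument at a big enough accessible cardinal and descending along cofinality closes the gap and matches what the paper implicitly assumes.
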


\begin{proof}

By the nuclearity of $\Ind(\cA),$ for any $i\in I$ there exists $j\geq i$ such that the functor $\Ind(\cA_i)\to\Ind(\cA_j)$ is trace-class in $\Cat_{\mk}^{\dual}.$ Hence, the morphism $\cU_{\loc}(\cA_i)\to\cU_{\loc}(\cA_j)$ is trace-class in $\Mot_{\mk}^{\loc}.$ We obtain $\hat{\cY}(\cU_{\loc}(\cA))\cong\inddlim[i]\cU_{\loc}(A_i).$ Now the isomorphism \eqref{eq:refined_HC^-_of_nuclear} follows from the general commutative diagram \eqref{eq:how_rigidification_works}.
\end{proof}

\subsection{Refined $\HC^-$ of $\Q[x]$ over $\Q$}

We work over $\Q,$ and by base change the result applies to any base ring which is a $\Q$-algebra. In the following proposition the notation $\bigooplus[n\geq 1]M_n$ stands for $\inddlim[n\geq 1](\biggplus[1\leq k\leq n]M_k).$

\begin{prop}\label{prop:refined_HC^-_affine_line}
We have an isomorphism
\begin{equation*}
\HC^{-,\tref}(\Q[x]/\Q)\cong \Q[[u]]\oplus \bigooplus[n\geq 1]\Q[1].
\end{equation*}
\end{prop}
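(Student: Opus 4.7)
The starting point is the augmentation $\Q[x]\to\Q$, $x\mapsto 0$, which splits $\cU_{\loc}(\Q[x])\simeq\cU_{\loc}(\Q)\oplus\tilde{\cU}_{\loc}(\Q[x])$ in $\Mot^{\loc}_{\Q}$. Applying the symmetric monoidal functor $\HC^{-,\tref}(-/\Q)$ and using that it sends the unit to $\Q[[u]]$, it remains to identify $\HC^{-,\tref}(\tilde{\cU}_{\loc}(\Q[x])/\Q)$ with $\bigooplus_{n\geq 1}\Q[1]$. The Beilinson-style decomposition used in the proof of Theorem~\ref{th:corepresentability_of_TR} (now over $\Q$ instead of $\bS$) gives the identification $\tilde{\cU}_{\loc}(\Q[x])\cong\Sigma\,\tilde{\cU}_{\loc}(\Perf_{\{\infty\}}(\PP^{1}_{\Q}))$, and since $\{\infty\}$ is a smooth point of $\PP^{1}_{\Q}$, the subcategory $\Perf_{\{\infty\}}(\PP^{1}_{\Q})$ is equivalent to $\Perf(\Q^{S^1})$ for the two-dimensional DG algebra $\Q^{S^1}=\Q\oplus\Q[-1]$. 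This category is proper over $\Q$, so by Proposition~\ref{prop:proper_are_nuclear} the category $\Ind(\Perf(\Q^{S^1}))$ is nuclear, which is the hypothesis needed to use Proposition~\ref{prop:refined_HC^-_of_nuclear}.

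To realize the correct formal ind-colimit, I would use the presentation $\Perf(\Q^{S^1})\simeq\indlim_n\Perf(A_n)$ with $A_n=\Cobar((\Q[x^{-1}]/x^{-n})^{*})\in\Alg_{\bE_1}(\Q)^{\omega}$ constructed in the proof of Theorem~\ref{th:corepresentability_of_TR}; by the Koszul-duality identification $\End_{A_n^{op}}(\Q)\cong\Q[t]/t^n$ each $\Perf(A_n)$ is Morita-equivalent to $\Perf(\Q[t]/t^n)$, a finite-dimensional (hence proper) $\Q$-linear category in $(\Cat^{\perf}_{\Q})^{\omega}$. Proposition~\ref{prop:refined_HC^-_of_nuclear} then yields
\[
\HC^{-,\tref}(\Perf(\Q^{S^1})/\Q)\;\cong\;\inddlim_{n}\HC^{-}(\Q[t]/t^n/\Q)
\]
in $\Nuc(\Q[[u]])$. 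Passing to the reduced motive (fibering out the augmentation part) and applying the shift $\Sigma$ is meant to produce $\bigooplus_{n\geq 1}\Q[1]$. For compatibility with the classical computation, one checks that the underlying $\Mod_u^{\wedge}\hy\Q[[u]]$-object is correct by decomposing $\HH(\Q[x]/\Q)=\Q[x]\oplus\Q[x]\,dx$ as a $C_{*}(S^1;\Q)=\Q[\eta]$-module: the class of $1$ is annihilated by $\eta$ and contributes $\Q[[u]]$, while for each $n\geq 1$ the monomial $x^n$ generates a free $\Q[\eta]$-submodule $\{x^n,\,n\,x^{n-1}dx\}$ whose $R\Hom_{\Q[\eta]}(\Q,-)$ is a single copy of $\Q[1]$.

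The main obstacle is to verify that the ind-system $\inddlim_n\tilde{\HC}^{-}(\Q[t]/t^n/\Q)$ really assembles into the formal sum $\bigooplus_{n\geq 1}\Q$ in homological degree $0$. The transition maps coming from the $\bE_1$-algebra maps $A_n\to A_{n+1}$ are not the naive restriction of scalars (in fact restriction along $\Q[t]/t^{n+1}\twoheadrightarrow\Q[t]/t^n$ fails to preserve perfectness), so under the Morita equivalence they become specific nontrivial functors $\Perf(\Q[t]/t^n)\to\Perf(\Q[t]/t^{n+1})$ which must be tracked. The Koszul-style model for $\HC^{-}(\Q[t]/t^n/\Q)$ a priori produces contributions in all positive odd degrees (coming from the classes $t^{k}F^{j}$), and the nontrivial content is that under the correct transition maps these higher-degree contributions cancel in the colimit, with only one new hom-degree-$0$ generator surviving at each stage. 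This cancellation is forced by the fact that the colimit must agree with $\HH(\Q^{S^1}/\Q)$, which by the derived HKR formula $\HH(\Sym V/\Q)=\Sym(V\oplus V[1])$ applied to $V=\Q[-1]$ is concentrated in homological degrees $-1$ and $0$.
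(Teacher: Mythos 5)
Your overall scaffolding — the Beilinson decomposition $\wt{\cU}_{\loc}(\Q[x])\cong\Sigma\,\wt{\cU}_{\loc}(\Perf_{\{\infty\}}(\PP^1_{\Q}))$, the identification with $\Perf(\Q^{S^1})$, properness over $\Q$, nuclearity, and an appeal to Proposition~\ref{prop:refined_HC^-_of_nuclear} — is exactly the paper's. But two steps are wrong or missing.

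First, the Morita identification is incorrect. Koszul duality gives $\End_{A_n^{op}}(\Q)\cong\Q[t]/t^n$, and since $\Q$ is a classical generator of $D^b_{\coh}(\Q[t]/t^n)$ (not of $\Perf(\Q[t]/t^n)$), the resulting Morita equivalence is $\Perf(A_n)\simeq D^b_{\coh}(\Q[t]/t^n)$. For $n\geq 2$ the ring $\Q[t]/t^n$ is not regular, so $\Perf(\Q[t]/t^n)\ne D^b_{\coh}(\Q[t]/t^n)$, and your claimed equivalence $\Perf(A_n)\simeq\Perf(\Q[t]/t^n)$ is false. The paper works precisely with $D^b_{\coh}(\Q[x^{-1}]/x^{-n})$ and then uses the duality $D^b_{\coh}(R)^{\vee}\simeq\Perf(R)$ (for $R$ proper) to reduce the computation to $\Hom_{\Q[[u]]}(\wt{\HC}^-(\Q[x^{-1}]/x^{-n}),\Q[[u]])$.

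Second, and more fundamentally, the determination of the formal ind-object is missing. You correctly identify that the real work is in pinning down $\inddlim_n\wt{\HC}^-(-)$, but your proposed argument — that the higher-degree contributions ``must'' cancel because the actual colimit is already known — cannot work. An object of $\Nuc(\Q[[u]])$ is not determined by its image under the colimit functor $\Nuc(\Q[[u]])\to\Mod_u^{\wedge}\hy\Q[[u]]$; that functor loses precisely the information the proposition is about. (Indeed, that is why $\HC^{-,\tref}$ is a genuinely finer invariant than $\HC^-$.) The paper's Lemma~\ref{lem:compact_map_on_HC^-} supplies the missing content: a direct HKR computation showing that each transition map $\wt{\HC}^-(\Q[x]/x^{n+1})\to\wt{\HC}^-(\Q[x]/x^n)$ factors through a perfect $u$-torsion $\Q[[u]]$-module, i.e.\ is compact. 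That compactness statement, not any comparison of colimits, is what forces the ind-object to be $\bigooplus_{n\geq 1}\Q$. Without proving this (or an equivalent explicit description of the transition maps and how each adds exactly one degree-$0$ class to the ind-colimit), the proof is incomplete.
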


This essentially reduces to the following lemma. For brevity we write $\HC^-(-)$ instead of $\HC^-(-/\Q),$ and we denote by $\wt{HC}^-(\Q[x]/x^n)$ the reduced negative cyclic homology. We will use the homological grading. 

\begin{lemma}\label{lem:compact_map_on_HC^-}
For any $n\geq 1$ the map $\wt{\HC}^-(\Q[x]/x^{n+1})\to\wt{\HC}^-(\Q[x]/x^n)$ is compact in $\Mod_u^{\wedge}\hy \Q[[u]].$ In other words, this map factors through an object of $\Perf_{u\hy\tors}(\Q[[u]]).$
\end{lemma}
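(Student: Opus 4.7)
The plan is to combine Goodwillie's theorem on the vanishing of reduced periodic cyclic homology in characteristic zero with an explicit small mixed-complex computation for the truncated polynomial algebras.

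By Goodwillie's theorem \cite{Goo86}, for any augmented $\Q$-algebra with nilpotent augmentation ideal the reduced periodic cyclic homology vanishes. Applied to $R=\Q[x]/x^m$, this gives $\wt{HP}(R/\Q)=\wt{HC}^-(R/\Q)[u^{-1}]=0$, so both source and target of the map in the lemma are $u$-power torsion objects of $\Mod_u^{\wedge}\hy\Q[[u]]$. This is the first crucial input, but it is not enough on its own: a $u$-power torsion object with unbounded homology is not compact, so the content of the lemma is that the specific map factors through a perfect $u$-torsion module.

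To establish compactness of the map itself, I would use the classical 2-periodic small bimodule resolution of $A_m=\Q[x]/x^m$ over $A_m^e=A_m\otimes_\Q A_m$,
\[
\cdots \to A_m^e \xrightarrow{N_m} A_m^e \xrightarrow{x-y} A_m^e \to A_m \to 0,\qquad N_m=\sum_{i+j=m-1} x^i y^j.
\]
Tensoring with $A_m$ over $A_m^e$ yields a finite-rank small mixed complex $M^{(m)}$ (2-periodic above degree $0$), computing $\wt{HH}(\Q[x]/x^m/\Q)$ with an explicitly describable Connes $B$-operator given by multiplication by elements of $A_m$. The ring surjection $A_{n+1}\twoheadrightarrow A_n$ lifts to a chain map of small mixed complexes $M^{(n+1)}\to M^{(n)}$, from which the map on $\wt{HC}^-$ is the induced map on homotopy $S^1$-fixed points.

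The concluding step is to show that the induced map factors through the $(-)^{hS^1}$ of a bounded truncation of the small mixed complex, which is a perfect $u$-torsion $\Q[[u]]$-module. The mechanism is that $x^n=0$ in $A_n$, making the relevant $B$-operator nilpotent of order at most $n$ on the image of the map; combined with Goodwillie's vanishing this gives uniform $u$-torsion control on the image, yielding the desired factorization through a perfect $u$-torsion module. Equivalently, and perhaps more cleanly, I would use Goodwillie's formula for the relative negative cyclic homology of a square-zero extension applied to the square-zero ideal $(x^n)/(x^{n+1})\cong\Q$: the fiber of $\wt{HC}^-(A_{n+1})\to\wt{HC}^-(A_n)$ is then identified with finite-dimensional data controlled by the trivial $A_n$-bimodule $\Q$, so is perfect $u$-torsion, and a devissage along the associated cofiber sequence yields the compactness of the map.

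The main obstacle is the uniformity of the $u$-torsion bound across all homological degrees, as opposed to a bound depending on the degree (which would not give a perfect factorization). The small-model approach provides this uniformity because the $B$-operator is literally multiplication by an element of $A_n$, and $x^n=0$ in $A_n$ gives a uniform nilpotency of order $n$. The cleanest presentation will be via the relative square-zero formula, where the uniform bound is built in automatically by the finite-dimensionality of the kernel.
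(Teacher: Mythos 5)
Your primary approach (compute a small mixed complex for $\wt{\HH}(\Q[x]/x^m)$ explicitly and exploit $x^n=0$) is in the same spirit as the paper's argument, which uses the Koszul/HKR model $\Q[x,\xi_n]$ with $d\xi_n=x^n$ rather than the $2$-periodic bimodule resolution. The HKR model is the more convenient choice precisely because the Connes $B$-operator becomes the de Rham differential and one can see directly that $\wt{\HH}(A_n)$ is a \emph{free} $\Q S^1$-module, namely $\biggplus[l\geq 0](\Q S^1[2l])^{n-1}$. That freeness, together with the explicit vanishing of the transition map $\wt{\HH}_{2l}(A_{n+1})\to\wt{\HH}_{2l}(A_n)$ for $l\geq n-1$ (since $x^i(d_{dR}\xi_{n+1})^l\mapsto x^{i+l}(d_{dR}\xi_n)^l$ and $i+l\geq n$ forces $x^{i+l}=0$), is what produces a factorization of the map on $\wt{\HH}$ through a \emph{finite}-rank free $\Q S^1$-module; applying $(-)^{\h S^1}$ then gives the perfect $u$-torsion object. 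Your formulation ``the $B$-operator is nilpotent of order $n$'' does not capture the needed statement: $B$ is a degree-$1$ differential squaring to zero, and in any case nilpotency of the $u$-action alone is useless here --- indeed the $u$-action on all of $\wt{\HC}^-(A_n)$ is already zero (because $\wt{\HH}(A_n)$ is $\Q S^1$-free) without $\wt{\HC}^-(A_n)$ being perfect. The finiteness that makes the lemma true is the degree-range vanishing of the \emph{transition} map, not a nilpotency bound, and your write-up never isolates it.

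The ``cleaner'' alternative via Goodwillie's relative formula for the square-zero extension contains a genuine error. The fiber of $\wt{\HC}^-(A_{n+1})\to\wt{\HC}^-(A_n)$ is not perfect: the transition map is zero on $\wt{\HH}$ in degrees $\geq 2(n-1)$, so both the kernel and cokernel of the map on $\wt{\HH}$ contain infinitely many nonzero degrees, and the same persists after $(-)^{\h S^1}$. (Concretely, $\wt{\HC}^-(A_m)\cong\biggplus[l\geq 0]\Q^{m-1}[2l+1]$ with trivial $u$-action, and the transition map vanishes on the summands with $l\geq n-1$, so the fiber is infinite-dimensional.) Goodwillie gives $\wt{\HP}(A_m)=0$, i.e.\ $u$-power torsion of the modules, but relative $\HC^-$ of a nilpotent extension is not finite over $\Q$. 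Moreover, even if the fiber were perfect this would not by itself show the map factors through a perfect object: having perfect fiber is strictly weaker than being a compact morphism. So the ``devissage'' step has no content. You should drop this alternative and instead carry out the two-part argument ($\Q S^1$-freeness of $\wt{\HH}(A_n)$ plus vanishing of the transition map above degree $2(n-2)$) precisely, in either your bimodule model or the HKR model.
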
 

\begin{proof}
This is simply a direct computation. We put $A_n=\Q[x]/x^n,$ $n>0,$ and consider the reduced Hochschild homology $\wt{\HH}(A_n)$ as a $\Q S^1$-module. By Hochschild-Kostant-Rosenberg theorem, as a complex of $A_n$-modules, $\HH(A_n)$ is quasi-isomorphic to $\biggplus[k\geq 0]\Sym^k_{A_n}(L_{A_n/\Q}[1]).$ Now, $A_n$ is quasi-isomorphic to the semi-free (super-)commutative dg algebra $\Q[x,\xi_n],$ where $\deg(\xi_n)=1$ (the homological degree) and $d\xi_n = x^n.$ The cotangent complex $L_{A_n/\Q}$ is identified with the semi-free complex of $A_n$-modules, with basis elements $d_{dR}x$ and $d_{dR}\xi_n$ (of degree $0$ resp. $1$), and the differential is given by $d(d_{dR}\xi_n)=nx^{n-1}d_{dR}x.$ Hence, we have 
\begin{equation*}
\Sym^k(L_{A_n/\Q}[1])\cong \Fiber(A_n\xto{knx^{n-1}}A_n)[2k],\quad k>0.
\end{equation*} 
We obtain the following description of $\wt{\HH}(A_n)$ on the level of homology:
\begin{equation*}
\wt{\HH}_k(A_n)=\begin{cases}(x\Q[x]/x^n)\cdot (d_{dR}\xi_n)^l & \text{for }k=2l\geq 0;\\
(\Q[x]/x^{n-1})\cdot d_{dR}x\wedge (d_{dR}\xi_n)^l & \text{for }k=2l+1>0.\end{cases}
\end{equation*}
Now the Connes-Tsygan differential $B:\wt{\HH}_k(A_n)\to \wt{\HH}_{k+1}(A_n)$ is given by de Rham differential, hence we have
\begin{equation*}
B(x^i(d_{dR}\xi_n)^l) = i x^{i-1} d_{dR}x\wedge (d_{dR}\xi_n)^l,\quad l\geq 0,\,1\leq i\leq n-1,
\end{equation*}
and $B:\wt{\HH}_{2l+1}(A_n)\to \wt{\HH}_{2l+2}(A_n)$ is zero. It follows that $\wt{HH}(A_n)$ is a free $\Q S^1$-module, more precisely we have an isomorphism
\begin{equation}\label{eq:HH_of_truncated_polynomials}
\wt{HH}(A_n)\cong \biggplus[l\geq 0] (\Q S^1[2l])^{n-1}\quad\text{in }\Mod\hy\Q S^1.
\end{equation}
We claim that the map of $\Q S^1$-modules $\wt{HH}(A_{n+1})\to \wt{HH}(A_n)$ factors through $\biggplus[0\leq l\leq n-2](\Q S^1[2l])^n.$ Indeed, by \eqref{eq:HH_of_truncated_polynomials} applied to $A_{n+1},$ it suffices to show that the map $\wt{HH}_{2l}(A_{n+1})\to \wt{HH}_{2l}(A_{n})$ is zero for $l\geq n-1.$
This is clear: this map is given by
\begin{equation*}
x^i (d_{dR}\xi_{n+1})^l\mapsto x^i(x d_{dR}\xi_n)^l = x^{i+l} (d_{dR}\xi_n)^l=0\quad\text{for }1\leq i\leq n.
\end{equation*}
Now applying the functor $(-)^{\h S^1}$ we conclude that the map $\wt{\HC}^-(A_{n+1})\to\wt{\HC}^-(A_n)$ factors through $\biggplus[0\leq l\leq n-2] \Q^n[2l+1],$ which is a perfect $u$-torsion $\Q[[u]]$-module. This proves the lemma.
\end{proof}

\begin{proof}[Proof of Proposition \ref{prop:refined_HC^-_affine_line}]
We consider the reduced refined $\HC^-$ of $\Q[x],$ namely we put $\wt{\HC}^{-,\tref}(\Q[x])=\HC^{-,\tref}(\wt{\cU}_{\loc}(\Q[x])/\Q),$ and similarly for $\Q[x^{-1}]/x^{-n}$ etc. As in the proof of Theorem \ref{th:corepresentability_of_TR}, consider the short exact sequence
\begin{equation*}
0\to \Perf_{\{\infty\}}(\PP^1_{\Q})\to \Perf(\PP^1_{\Q})\to\Perf(\Q[x]).
\end{equation*}
As in loc.cit. we have $\wt{\HC}^{-,\tref}(\Q[x])\cong \wt{\HC}^{-,\tref}(\Perf_{\{\infty\}}(\PP^1_{\Q}))[1].$ Arguing as in the proof of Theorem \ref{th:K_homology_smooth_with_compactification} and using Proposition \ref{prop:refined_HC^-_of_nuclear}, we obtain \begin{multline*}
\wt{\HC}^{-,\tref}(\Perf_{\{\infty\}}(\PP^1_{\Q}))\cong \inddlim[n]\wt{\HC}^-(D^b_{\coh}(\Q[x^{-1}]/x^{-n}))\\
\cong \inddlim[n]\Hom_{\Q[[u]]}(\wt{\HC}^-(\Q[x^{-1}]/x^{-n}),\Q[[u]]).
\end{multline*} By Lemma \ref{lem:compact_map_on_HC^-} and by the Hochschild-Konstant-Rosenberg theorem, the latter ind-object is given by $\hat{\cY}(\wt{\HC}^-(\Perf_{\{\infty\}})(\PP^1_{\Q}))\cong \bigooplus[n\geq 1]\Q.$ Hence, we get $\wt{\HC}^{-,\tref}(\Q[x])\cong \bigooplus[n\geq 1]\Q[1],$ which proves the proposition.
\end{proof}

\subsection{Refined $\HC^-$ of $\Q[x,x^{-1}]$ over $\Q[x]$}

We work over $\Q[x],$ and consider the category $\Perf(\Q[x^{\pm 1}]),$ which is an idempotent $\bE_{\infty}$-algebra in $\Cat_{\Q[x]}^{\perf}.$ Since the functor $\HC^{-,\tref}(-/\Q[x])$ is symmetric monoidal, we see that the object $\wt{\HC}^{-,\tref}(\Q[x^{\pm 1}]/\Q[x])\in\Nuc(\Q[x][[u]])$ is naturally an idempotent $\bE_{\infty}$-algebra. It turns out that this algebra has a description in terms of adic geometry.

\begin{prop}\label{prop:refined_HC^-_Laurent_polynomials}
We have an isomorphism
\begin{equation}\label{eq:refined_HC^-_Laurent_polynomials}
\wt{\HC}^{-,\tref}(\Q[x^{\pm 1}]/\Q[x])\cong \cO(\bigcap\limits_{n>0}\{|u|\leq |x|^n\ne 0\})
\end{equation}
-- the algebra of overconvergent functions. More precisely, we have
\begin{equation}\label{eq:explicit_ind_object_overconvergent_Laurent}
\wt{\HC}^{-,\tref}(\Q[x^{\pm 1}]/\Q[x])\cong \inddlim[n] x^{-n}\Q\left[x,\frac{u}{x^n}\right]_u^{\wedge}.
\end{equation}
\end{prop}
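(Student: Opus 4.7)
The plan is to mirror the proof of Proposition \ref{prop:refined_HC^-_affine_line}, now relative to $\Q[x]$ with $\Perf_{\{0\}}(\Q[x])$ playing the role of $\Perf_{\{\infty\}}(\PP^{1,\flat}_{\Q})$. First I would apply the Thomason localization sequence
\[
0\to \Perf_{\{0\}}(\Q[x])\to \Perf(\Q[x])\to \Perf(\Q[x^{\pm 1}])\to 0
\]
in $\Cat_{\Q[x]}^{\perf}$ to identify $\wt{\cU}_{\loc}(\Q[x^{\pm 1}]/\Q[x])$ with $\cU_{\loc}(\Perf_{\{0\}}(\Q[x]))[1]$ in $\Mot^{\loc}_{\Q[x]}$. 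Applying the symmetric monoidal functor $\HC^{-,\tref}(-/\Q[x])$ then reduces the proposition to identifying $\HC^{-,\tref}(\Perf_{\{0\}}(\Q[x])/\Q[x])[1]$ with the claimed ind-object in $\Nuc(\Q[x][[u]])$.

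The $\Q[x]$-linear category $\Perf_{\{0\}}(\Q[x])$ is generated by $\Q=\Q[x]/x$, and the endomorphism DG algebra $A=\End_{\Q[x]}(\Q)\simeq\Q\oplus\Q[-1]$ (with $\Q[x]$ acting through the augmentation $\Q[x]\to\Q$) has a perfect underlying $\Q[x]$-module. Hence $\Perf_{\{0\}}(\Q[x])$ is proper over $\Q[x]$, so Proposition \ref{prop:refined_HC^-_of_nuclear} yields $\HC^{-,\tref}(\Perf_{\{0\}}(\Q[x])/\Q[x])\cong\inddlim_n\HC^-(\cA_n/\Q[x])$ for any presentation of $\Perf_{\{0\}}(\Q[x])$ as a directed colimit of finitely presented $\cA_n$ in $\Cat_{\Q[x]}^{\perf}$. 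Following the strategy of Theorem \ref{th:corepresentability_of_TR}, I would build such approximations via a cobar construction using the $\bG_m$-grading coming from the dilation action on $\A^1_{\Q}$: set $A_n=\Cobar((\Q[x]/x^n)^{*})$ in graded $\Q[x]$-modules, so that Koszul duality yields $\End_{A_n^{op}}(\Q)\cong\Q[x]/x^n$, the transition maps $A_n\to A_{n+1}$ are trace-class over $\Q[x]$, and $\indlim_n\Perf(A_n)\simeq\Perf_{\{0\}}(\Q[x])$.

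The core computation is then to identify $\HC^-(\cA_n/\Q[x])$, up to the $[1]$-shift, with the stated representative $x^{-n}\Q[x,u/x^n]^{\wedge}_u$. By Morita invariance this depends only on the Morita-equivalence class of $A_n$, and in direct analogy with the formula used in the proof of Proposition \ref{prop:refined_HC^-_affine_line} I expect it to coincide (in $\Nuc(\Q[x][[u]])$) with a $\Q[x][[u]]$-linear dual of $\HC^-(\Q[x]/x^n/\Q[x])$. Classical HKR for the local complete intersection $\Q[x]/x^n$ over $\Q[x]$ then provides an explicit computable model whose dual should yield the overconvergent representative. The main obstacle will be precisely this last identification: one must carefully track how the $u$-adic completion interacts with the $x$-torsion structure of $\HC^-(\cA_n/\Q[x])$ and show that passage to the ind-colimit along the trace-class transitions produces the overconvergent algebra $\cO(\bigcap_{n>0}\{|u|\leq |x|^n\ne 0\})$ rather than a coarser $u$-adically completed object. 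The overconvergence is ultimately forced by the trace-class compatibility of the $\cA_n$ in $\Cat_{\Q[x]}^{\perf}$, which is the very structure driving the factorization of $\HC^{-,\tref}$ through $\Nuc(\Q[x][[u]])$.
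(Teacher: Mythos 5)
Your overall strategy is correct and close to the paper's: you start from the same Thomason localization sequence, reduce to computing $\HC^{-,\tref}$ of the torsion category, note that it is proper over $\Q[x]$ and hence nuclear by Proposition \ref{prop:proper_are_nuclear}, so Proposition \ref{prop:refined_HC^-_of_nuclear} turns the problem into an ind-colimit of ordinary $\HC^-$ computations. The paper does the same first step and ends at the same computation.

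Where you diverge is in how you produce the approximating subcategories. You propose rebuilding the cobar machinery from the proof of Theorem \ref{th:corepresentability_of_TR}, setting $A_n = \Cobar((\Q[x]/x^n)^*)$ in a graded category of $\Q[x]$-modules. This detour is both riskier and unnecessary. It is risky because the bar--cobar equivalence used in the TR proof (\cite[Theorem A.6]{Bur22}) is formulated for $\Sp_+^{\Gr}$, i.e.\ over the absolute base, where the grading guarantees convergence; transplanting it to a graded $\Q[x]$-linear setting requires one to check that $(\Q[x]/x^n)^*$ is a genuine coaugmented coalgebra with the right connectivity and that the resulting $A_n$ are compact in $\Alg_{\bE_1}(\Mod_{\Q[x]})$ --- none of which is immediate. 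It is unnecessary because, unlike over $\bS$, the relevant approximating categories exist on the nose over $\Q[x]$: one can take $\cA_n = D^b_{\coh}(\Q[x]/x^n)$ directly. These are compact in $\Cat_{\Q[x]}^{\perf}$ (since $\Q[x]/x^n\simeq\Q[x,\xi_n]$ is a finite cell $\bE_1$-algebra, hence smooth in the T\"oen--Vaqui\'e sense, and proper), their colimit over restriction of scalars is $\Perf_{x\hy\tors}(\Q[x])$, and the paper justifies the needed trace-class transitions by citing \cite[Lemma 5.24]{E25} rather than going through a cobar Koszul dual. The smooth-proper duality $\cU_{\loc}(D^b_{\coh}(\Q[x]/x^n))\cong\cU_{\loc}(\Q[x]/x^n)^{\vee}$ then gives $\HC^-(\cA_n/\Q[x])\cong\Hom_{\Q[x][[u]]}(\HC^-((\Q[x]/x^n)/\Q[x]),\Q[x][[u]])$ directly, which is the step you flag as uncertain ("I expect it to coincide \ldots").

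The remaining work --- the explicit HKR computation identifying the ind-system of $\Q[x][[u]]$-linear duals with $\inddlim_n x^{-n}\Q[x,u/x^n]^{\wedge}_u$ --- is indeed the heart of the proposition, and you correctly locate the difficulty there, but you do not carry it out. The paper does it by replacing $\Q[x]/x^n$ with the semi-free commutative model $B_n=\Q[x,\xi_n]$ (with $\deg\xi_n=1$, $d\xi_n=x^n$), computing the reduced negative cyclic complex as an explicit two-row semi-free $\Q[x][[u]]$-module, and then writing down an explicit quasi-isomorphism with $x^{-n}\Q[x,u/x^n]^{\wedge}_u$ compatible with the transitions in $n$. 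Your remark that "the overconvergence is ultimately forced by the trace-class compatibility" is heuristically right but is not a substitute for the actual computation: the trace-class structure only tells you the answer lands in $\Nuc(\Q[x][[u]])$, while which nuclear module it is requires the chain-level identification.
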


We first give a comment on the meaning of the right hand side of \eqref{eq:refined_HC^-_Laurent_polynomials}. Suppose that $u$ was a variable of degree $0.$ Then we have the usual adic space $\Spa(\Q[x][[u]])=\Spa(\Q[x][[u]],\Q[x][[u]]),$ which has an underlying spectral topological space. Consider the spectral space $\Spa(\Q[x][[u]])^{op},$ which has the same underlying set, the same constructible topology, but the specialization order is reversed. Then for any closed subset $Z\subset \Spa(\Q[x][[u]])^{op}$ we have the corresponding idempotent $\bE_{\infty}$-algebra $\cO(Z)\in\Nuc(\Q[x][[u]]),$ which is contained in the original version of the category of nuclear modules $\Nuc^{CS}(\Q[x][[u]])$ \cite{And23}. In particular, this applies to $Z=\bigcap\limits_{n>0}\{|u|\leq |x|^n\ne 0\},$ and the corresponding nuclear module is given by the right hand side of \eqref{eq:explicit_ind_object_overconvergent_Laurent}.

In our case the variable $u$ has cohomological degree $2,$ but the right hand-side of \eqref{eq:refined_HC^-_Laurent_polynomials} still makes sense, namely we can simply define it to be the object from \eqref{eq:explicit_ind_object_overconvergent_Laurent}. 

\begin{proof}[Proof of Proposition \ref{prop:refined_HC^-_Laurent_polynomials}]
We again use the homological grading. Consider the short exact sequence in $\Cat_{\Q[x]}^{\perf}:$
\begin{equation*}
0\to \Perf_{x\hy\tors}(\Q[x])\to\Perf(\Q[x])\to\Perf(\Q[x^{\pm 1}])\to 0.
\end{equation*}
By \cite[Lemma 5.24]{E25} for all $n>0$ the extension of scalars functor $\Perf(\Q[x]/x^{2n})\to\Perf(\Q[x]/x^n)$ is trace-class in the symmetric monoidal category $\Cat_{\Q[x]}^{\perf},$ and so is the restriction of scalars functor $D^b_{\coh}(\Q[x]/x^n)\to D^b_{\coh}(\Q[x]/x^{2n}).$ It follows that in $\Ind((\Mot^{\loc}_{\Q[x]})^{\omega_1})$ we have
\begin{equation*}
\hat{\cY}(\cU_{\loc}(\Perf_{x\hy\tors})(\Q[x]))\cong \inddlim[n]\cU_{\loc}(D^b_{\coh}(\Q[x]/x^n))\cong \inddlim[n]\cU_{\loc}(\Q[x]/x^n)^{\vee}.
\end{equation*} 
It follows that the refined $\HC^-$ of $\Q[x^{\pm 1}]$ over $\Q[x]$ is given by
\begin{multline*}
\HC^{-,\tref}(\Q[x^{\pm 1}]/\Q[x])\\
\cong \Cone(\inddlim[n]\Hom_{\Q[x][[u]]}(\HC^-((\Q[x]/x^n)/\Q[x]),\Q[x][[u]])\to\Q[x][[u]])\\
\cong \inddlim[n]\Hom_{\Q[x][[u]]}(\Fiber(\Q[x][[u]]\to\HC^-((\Q[x]/x^n)/\Q[x])),\Q[x][[u]]).
\end{multline*}
We directly compute the latter ind-object. As in the proof of Proposition \ref{prop:refined_HC^-_affine_line}, we put $A_n=\Q[x]/x^n,$ and take a quasi-isomorphic semi-free commutative dg $\Q[x]$-algebra $B_n=\Q[x,\xi_n],$ where $\deg(\xi_n)=1$ and $d\xi_n=x^n.$ Then the dg module of differentials $\Omega^1_{B_n/\Q[x]}$ is a free $B_n$-module of rank $1,$ generated by the closed element $d_{dR}\xi_n$ of degree $1.$ Hence, for $k\geq 0$ we have (chain-level) isomorphism of $B_n$-modules $\Sym^k(\Omega^1_{B_n/\Q[x]}[1])\cong B_n[2k].$ Now, the standard (reduced) complex $\HC^-(B_n/\Q[x])$ is identified (on the chain level) with the twisted de Rham complex $(\Sym^*(\Omega^1_{B_n/\Q[x]}[1]),d+u d_{dR})$ (here $d$ stands for the differential coming from the differential on $\Omega^1_{B_n/\Q[x]},$ and $d_{dR}$ is the de Rham differential). We are interested in the dg $\Q[x][[u]]$-module $(\HC^-(B_n/\Q[x])/\Q[x][[u]])[-1].$ It is (topologically) semi-free over $\Q[x][[u]],$ and its (topological) basis elements are: $(d_{dR}\xi_n)^k$ of degree $2k-1$ for $k\geq 1,$ and $\xi_n(d_{dR}\xi_n)^k$ of degree $2k$ for $k\geq 0.$ The differential is given by
\begin{equation*}
(d+u d_{dR})(\xi_n(d_{dR}\xi_n)^k)=\begin{cases}
-x^n(d_{dR}\xi_n)^k-u(d_{dR}\xi_n)^{k+1} & \text{for }k>0;\\
-u d_{dR}\xi_n & \text{for }k=0,
\end{cases}
\end{equation*}
\begin{equation*}
(d+u d_{dR})((d_{dR}\xi_n)^k)=0,\quad k>0.
\end{equation*} 
Consider the dual $\Q[x][[u]]$-module $\Hom_{\Q[x][[u]]}((\HC^-(B_n/\Q[x])/\Q[x][[u]])[-1],\Q[x][[u]]).$ It has a dual (topological) basis, consisting of elements $e_k$ of degree $-2k$ for $k\geq 0,$ and $f_k$ of degree $1-2k$ for $k\geq 1.$ The differential is given by
\begin{equation*}
d(f_k)=-u e_{k-1}-x^n e_k,\quad k\geq 1,\quad d(e_k)=0,\quad k\geq 0.
\end{equation*}
Hence, we have a quasi-isomorphism
\begin{equation}\label{eq:dual_complex_quasi_isomorphism}
\Hom_{\Q[x][[u]]}((\HC^-(B_n/\Q[x])/\Q[x][[u]])[-1],\Q[x][[u]])\xto{\sim} x^{-n}\Q\left[x,\frac{u}{x^n}\right]_u^{\wedge},
\end{equation}
given on the basis elements by
\begin{equation*}
e_k\mapsto (-1)^k x^{-n}(\frac{u}{x^n})^k,\quad f_k\mapsto 0.
\end{equation*}
The quasi-isomorphisms \eqref{eq:dual_complex_quasi_isomorphism} for $n>0$ are compatible with the transition maps by construction, since the natural map $B_{n+1}\to B_n$ sends $\xi_{n+1}$ to $x\xi_n.$ Therefore, we obtain isomorphisms of ind-objects:
\begin{multline*}
\inddlim[n]\Hom_{\Q[x][[u]]}(\Fiber(\Q[x][[u]]\to\HC^-((\Q[x]/x^n)/\Q[x])),\Q[x][[u]])\\
\cong \inddlim[n]\Hom_{\Q[x][[u]]}((\HC^-(B_n/\Q[x])/\Q[x][[u]])[-1],\Q[x][[u]])\cong \inddlim[n] x^{-n}\Q\left[x,\frac{u}{x^n}\right]_u^{\wedge}.
\end{multline*}
This proves the proposition.
\end{proof}

\subsection{Refined periodic cyclic homology}
\label{ssec:refined_HP}

As above, we work over a usual commutative ring $\mk,$ which we assume to be a $\Q$-algebra. Again, we denote by $u$ the formal variable of cohomological degree $2,$ and consider the category $\Nuc(\mk[[u]])=(\Mod_u^{\wedge}\hy\mk[[u]])^{\rig}.$ Since the symmetric monoidal category $\Mod_u^{\wedge}\hy\mk[[u]]$ is locally rigid, we have a fully faithful inclusion
\begin{equation*}
\Mod_{u\hy\tors}\hy\mk[[u]]\simeq \Mod_u^{\wedge}\hy\mk[[u]]\xto{\hat{\cY}} (\Mod_u^{\wedge}\hy\mk[[u]])^{\rig}\simeq \Nuc(\mk[[u]]),
\end{equation*}
and its essential image is a smashing ideal. We identify the category $\Mod_{u\hy\tors}\hy\mk[[u]]$ with its image in $\Nuc(\mk[[u]]),$ and define
\begin{equation*}
\Nuc(\mk((u)))=\Nuc(\mk[[u]])/\Mod_{u\hy\tors}\hy\mk[[u]].
\end{equation*}

The following proposition is in particular a definition of refined periodic cyclic homology.

\begin{prop}
The functor $\HC^{-,\tref}(-/\mk):\Mot^{\loc}_{\mk}\to\Nuc(\mk[[u]])$ takes the localizing ideal generated by $\wt{\cU}_{\loc}(\mk[x])$ to the ideal $\Mod_{u\hy\tors}\subset\Nuc(\mk[[u]]).$ In particular, the composition 
\begin{equation*}
\Mot^{\loc}_{\mk}\to\Nuc(\mk[[u]])\to\Nuc(\mk((u)))
\end{equation*}
factors through $\Mot^{\loc,\A^1}_{\mk},$ so we obtain a commutative square in $\CAlg^{\rig}:$
\begin{equation*}
\begin{tikzcd}
\Mot^{\loc}_{\mk}\ar[r, "\HC^{-,\tref}(-/\mk)"]\ar[d] & [2em] \Nuc(\mk[[u]])\ar[d]\\
\Mot^{\loc,\A^1}_{\mk}\ar[r, "\HP^{\tref}(-/\mk)"] & \Nuc(\mk((u)).
\end{tikzcd}
\end{equation*}
We call the lower horizontal functor the refined periodic cyclic homology.
\end{prop}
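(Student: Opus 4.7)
The plan is to reduce the proposition to the explicit computation of $\wt{\HC}^{-,\tref}(\mk[x]/\mk)$ already carried out in Proposition \ref{prop:refined_HC^-_affine_line}. Note that the ``in particular'' assertion follows formally from the first one: by Theorem \ref{th:A^1_invariant_localizing_motives}, $\Mot^{\loc,\A^1}_{\mk}$ is the quotient of $\Mot^{\loc}_{\mk}$ by the localizing ideal generated by $\wt{\cU}_{\loc}(\mk[x])$, and by the construction in the preceding discussion $\Nuc(\mk((u)))$ is the quotient of $\Nuc(\mk[[u]])$ by the smashing ideal $\Mod_{u\hy\tors}\hy\mk[[u]]$. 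Hence once the first assertion is proved, both the existence of the factorization $\HP^{\tref}(-/\mk)$ and the commutativity of the resulting square follow from the universal property of the quotient.

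For the first assertion, the key observation is that $\HC^{-,\tref}(-/\mk)$ is a continuous symmetric monoidal functor between presentable stable symmetric monoidal categories, and $\Mod_{u\hy\tors}\hy\mk[[u]]\subset\Nuc(\mk[[u]])$ is a localizing ideal. Therefore the image of the localizing ideal of $\Mot^{\loc}_{\mk}$ generated by $\wt{\cU}_{\loc}(\mk[x])$ is contained in $\Mod_{u\hy\tors}\hy\mk[[u]]$ as soon as the single object $\wt{\HC}^{-,\tref}(\mk[x]/\mk)$ lies in this ideal.

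This last point is verified by appealing to Proposition \ref{prop:refined_HC^-_affine_line}, which for any $\Q$-algebra $\mk$ (via the base-change remark preceding that proposition) gives $\wt{\HC}^{-,\tref}(\mk[x]/\mk)\cong \bigooplus[n\geq 1]\mk[1]$. Since $u$ acts by zero on $\mk=\mk[[u]]/u$, each finite partial sum $\biggplus[1\leq k\leq n]\mk[1]$ is a perfect $u$-torsion $\mk[[u]]$-module. Hence $\wt{\HC}^{-,\tref}(\mk[x]/\mk)$ is a filtered colimit of objects of $\Mod_{u\hy\tors}\hy\mk[[u]]$ inside $\Nuc(\mk[[u]])$, and therefore belongs to this ideal, as required.

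The main substantive input is thus Proposition \ref{prop:refined_HC^-_affine_line} itself; once that computation is in hand, the present proposition is essentially a formal consequence combining the universal property of smashing/Bousfield quotients with the compatibility of a continuous symmetric monoidal functor with localizing ideals. I do not foresee a serious obstacle beyond being careful that the base-change of Proposition \ref{prop:refined_HC^-_affine_line} from $\Q$ to a general $\Q$-algebra $\mk$ really does preserve the ind-presentation $\bigooplus[n\geq 1]\mk[1]$, which is immediate from the symmetric monoidality of $\HC^{-,\tref}(-/\mk)$ and the identification $\cU_{\loc}(\mk[x])\cong \cU_{\loc}(\mk)\otimes\cU_{\loc}(\Q[x])$ in $\Mot^{\loc}_{\mk}$ under the change-of-base functor $\Mot^{\loc}_{\Q}\to\Mot^{\loc}_{\mk}$.
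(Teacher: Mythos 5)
Your proposal is correct and follows essentially the same route as the paper: reduce to $\mk=\Q$, invoke Proposition~\ref{prop:refined_HC^-_affine_line} to see that the generator $\wt{\cU}_{\loc}(\mk[x])$ of the localizing ideal is sent into $\Mod_{u\hy\tors}\hy\mk[[u]]$, and let the formal properties of continuous symmetric monoidal functors and quotient categories do the rest. You spell out the base-change and the ideal-membership reasoning more explicitly than the paper does, but the mathematical content is the same.
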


\begin{proof}
It suffices to prove this for $\mk=\Q.$ In this case by Proposition \ref{prop:refined_HC^-_affine_line} we have $\HC^{-,\tref}(\wt{\cU}_{\loc}(\Q[x])/\Q)\cong \bigooplus[n\geq 1]\Q[1]\in \Mod_{u\hy\tors}\hy\Q[[u]].$ This proves the proposition.
\end{proof}

For a category $\cA\in\Cat_{\mk}^{\perf}$ we put $\HP^{\tref}(\cA/\mk)=\HP^{\tref}(\cU_{\loc}^{\A^1}(\cA)/\mk),$ and for an $\bE_1$-$\mk$-algebra $A$ we put $\HP^{\tref}(A/\mk)=\HP^{\tref}(\Perf(A)/\mk).$

We make a comment on the category $\Nuc(\mk((u))).$ Instead of thinking about it as a quotient, it is convenient to embed it into a compactly generated symmetric monoidal category. Namely, consider the small symmetric monoidal category $(\Mod_u^{\wedge}\hy\mk[[u]])^{\omega_1}[u^{-1}],$ which is obtained by taking the quotient of $(\Mod_u^{\wedge}\hy\mk[[u]])^{\omega_1}$ by the small stable subcategory generated by objects of the form $\Cone(M\xto{u}M).$ Then the functor
\begin{equation*}
\Ind((\Mod_u^{\wedge}\hy\mk[[u]])^{\omega_1})\to \Ind((\Mod_u^{\wedge}\hy\mk[[u]])^{\omega_1}[u^{-1}])
\end{equation*}
is a smashing localization, and its kernel is generated by the coidempotent $\bE_{\infty}$-coalgebra $\hat{\cY}(\mk[[u]]).$ Hence, we have an equivalence
\begin{equation*}
\Ind((\Mod_u^{\wedge}\hy\mk[[u]])^{\omega_1})\tens{\Nuc(\mk[[u]])}\Nuc(\mk((u)))\xto{\sim} \Ind((\Mod_u^{\wedge}\hy\mk[[u]])^{\omega_1}[u^{-1}]),
\end{equation*}
in particular, we have a fully faithful symmetric monoidal strongly continuous functor
\begin{equation}\label{eq:embedding_for_Nuc_over_Laurent}
\Nuc(\mk((u)))\to \Ind((\Mod_u^{\wedge}\hy\mk[[u]])^{\omega_1}[u^{-1}]).
\end{equation}
Below we identify $\Nuc(\mk((u)))$ with the essential image of the functor \eqref{eq:embedding_for_Nuc_over_Laurent}. For an object $M\in (\Mod_u^{\wedge}\hy\mk[[u]])^{\omega_1}$ we will denote by $M[u^{-1}]$ its image in $(\Mod_u^{\wedge}\hy\mk[[u]])^{\omega_1}[u^{-1}].$

\begin{remark}
In fact, the functor \eqref{eq:embedding_for_Nuc_over_Laurent} exposes the source as a rigidification of the target, but we will not need this fact here. We will discuss this in a more general context in \cite{E}.
\end{remark}

\subsection{Refined $\HP$ of $\Q$ over $\Q[x]$}
\label{ssec:refined_HP_of_Q_over_Qx}

In this subsection we do a non-trivial computation of $\HP^{\tref}(\Q/\Q[x]),$ where $\Q$ is considered as a $\Q[x]$-algebra via $x\mapsto 0.$ We first observe that this is actually an idempotent $\bE_{\infty}$-algebra. Throughout this subsection we use only the homological grading. 

\begin{prop}
The object $\cU_{\loc}^{\A^1}(\Q)\in \Mot^{\loc,\A^1}_{\Q[x]}$ is an idempotent $\bE_{\infty}$-algebra. Hence, the object $\HP^{\tref}(\Q/\Q[x])\in\Nuc(\Q[x]((u)))$ is also an idempotent $\bE_{\infty}$-algebra.
\end{prop}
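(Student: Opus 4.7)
To establish that $\cU_{\loc}^{\A^1}(\Q)$ is an idempotent $\bE_\infty$-algebra in $\Mot^{\loc,\A^1}_{\Q[x]}$, I will show that the multiplication $m \colon \cU_{\loc}^{\A^1}(\Q) \otimes \cU_{\loc}^{\A^1}(\Q) \to \cU_{\loc}^{\A^1}(\Q)$ is an isomorphism. Symmetric monoidality of $\cU_{\loc}^{\A^1}$ identifies the source with $\cU_{\loc}^{\A^1}(\Perf(\Q[\veps]))$, where $\Q[\veps] := \Q \otimes_{\Q[x]}^L \Q$ is the connective $\bE_\infty$-$\Q$-algebra with $\veps$ in homological degree $1$ and $\veps^2 = 0$, and under this identification $m$ corresponds to the $\Q[x]$-linear functor induced by the augmentation $\Q[\veps] \to \Q$, $\veps \mapsto 0$. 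Since this augmentation admits the evident section $\Q \hookrightarrow \Q[\veps]$, the object $\cU_{\loc}^{\A^1}(\Q)$ is a retract of $\cU_{\loc}^{\A^1}(\Perf(\Q[\veps]))$, and it suffices to show that these two objects agree in $\Mot^{\loc,\A^1}_{\Q[x]}$.

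To transport this problem to a setting with better truncating behavior, I will use the adjunction $F_* \dashv U_*$, where $F_* \colon \Mot^{\loc,\A^1}_{\Q[x]} \to \Mot^{\loc,\A^1}_\Q$ is the symmetric monoidal base-change along $\Q[x] \to \Q$ and $U_*$ is restriction of scalars. The object $A := \cU_{\loc}^{\A^1}(\Q)$ coincides with $U_*(1_\Q)$, and the tautological projection formula $U_*(1_\Q) \otimes X \simeq U_*(F_*(X))$ (reflecting the category-level equivalence $\Perf(\Q) \otimes_{\Perf(\Q[x])} \cC \simeq \cC \otimes_{\Q[x]} \Q$) yields
\begin{equation*}
A \otimes A \simeq U_*(F_*(A)) \simeq U_*(B),\qquad \text{where } B := \cU_{\loc}^{\A^1}(\Perf(\Q[\veps])) \in \Mot^{\loc,\A^1}_\Q.
\end{equation*}
Since $U_*$ commutes with $\cU_{\loc}^{\A^1}$ under restriction of scalars, applying $U_*$ to the identity $B \simeq 1_\Q$ (once established) recovers exactly $\cU_{\loc}^{\A^1}(\Q)$ on the right.

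The crux of the argument is therefore to verify $B \simeq 1_\Q$ in $\Mot^{\loc,\A^1}_\Q$. This is an instance of the truncating property of the universal $\A^1$-invariant localizing invariant on connective $\bE_\infty$-algebras over the field $\Q$: namely, $\cU_{\loc}^{\A^1}(\Perf(R)) \simeq \cU_{\loc}^{\A^1}(\Perf(\pi_0 R))$ for connective $\bE_\infty$-$\Q$-algebras $R$, applied to $R = \Q[\veps]$ with $\pi_0 = \Q$. In characteristic $0$ this is deduced from Goodwillie's theorem identifying the relative $K$-theory of a nilpotent (here square-zero, $1$-connective) thickening with the relative $\HC^-$, combined with a standard simplicial-deformation argument that kills the $1$-connective augmentation ideal $\Q[1]$ upon $\A^1$-localization. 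Tracking the multiplication through the adjunction $F_* \dashv U_*$ and the retraction structure confirms that the resulting isomorphism $A \otimes A \xto{\sim} A$ agrees with $m$. The principal obstacle is justifying this truncating statement in the required generality over $\Q$; notably, the result sharply contrasts with the later demonstration of this paper that $\cU_{\loc}^{\A^1}$ fails to be truncating over the base $\Q[x]$, as witnessed by the nontrivial computation of $\HP^{\tref}(\Q/\Q[x])$.
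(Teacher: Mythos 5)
Your reduction of the claim to showing that the multiplication $m\colon A\otimes A\to A$ is an isomorphism, and your identification $A\otimes A\cong\cU_{\loc}^{\A^1}(\Perf(\Q\otimes^{\bL}_{\Q[x]}\Q))$, are both correct and match the paper's first step. However, the way you then try to finish has a genuine gap.

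The crux of your argument is the assertion that $\cU_{\loc}^{\A^1}(\Perf(\Q[\veps]))\simeq 1_\Q$ in $\Mot^{\loc,\A^1}_\Q$, which you justify by invoking a general truncating statement for the universal finitary $\A^1$-invariant localizing invariant over $\Q$, "deduced from Goodwillie's theorem\ldots combined with a standard simplicial-deformation argument." This does not work as stated. Goodwillie's theorem is a comparison of relative $K$-theory with relative $\HC^-$ for nilpotent extensions; it yields no information about $\Mot^{\loc,\A^1}_\Q$, whose morphism spectra see \emph{every} finitary $\A^1$-invariant localizing invariant, not just $K$-theory. A statement that $\cU_{\loc}^{\A^1}$ is truncating over $\Q$ would be a strong and interesting theorem, is not established anywhere (and certainly not by Goodwillie), and the paper deliberately leaves it open -- indeed one of the main points of Section~\ref{ssec:refined_HP_of_Q_over_Qx} is that the analogous statement \emph{fails} over $\Q[x]$. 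You yourself flag that "the principal obstacle is justifying this truncating statement," and that is exactly right: without it, the proof is incomplete.

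What actually does the work -- and what the "simplicial-deformation argument" you gesture at should be -- is a completely explicit $\A^1$-homotopy, which can and should be constructed directly over $\Q[x]$ with no detour through $\Mot^{\loc,\A^1}_\Q$ and no projection formula (which would itself need justification). Concretely: identify $\Q\otimes^{\bL}_{\Q[x]}\Q$ with the exterior algebra $\Q[\xi]$, $\deg\xi=1$, $\xi^2=0$, on which $x$ acts by zero. There are unique dg $\Q[x]$-algebra maps $f\colon\Q\to\Q[\xi]$ and $g\colon\Q[\xi]\to\Q$ with $g\circ f=\id$, so it suffices to show $\cU_{\loc}^{\A^1}(f\circ g)\simeq\id$. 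Define $\varphi\colon\Q[\xi]\to\Q[x,t]\tens{\Q[x]}\Q[\xi]$, $\xi\mapsto t\otimes\xi$; then $(i_0\boxtimes\id)\circ\varphi=f\circ g$ and $(i_1\boxtimes\id)\circ\varphi=\id$ on the nose, where $i_0,i_1\colon\Q[x,t]\to\Q[x]$ set $t=0,1$. Since $\cU_{\loc}^{\A^1}(i_0)$ and $\cU_{\loc}^{\A^1}(i_1)$ are homotopic in $\Mot^{\loc,\A^1}_{\Q[x]}$, this finishes the proof. This is elementary, purely algebraic, and avoids any appeal to truncating behavior of the universal invariant.
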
 

\begin{proof}
As above, we note that the dg $\Q[x]$-algebra $\Q$ is quasi-isomorphic to the semi-free commutative dg algebra $B=\Q[x,\xi],$ where $\deg(\xi)=1$ and $d\xi=x.$ Hence, in the $\infty$-category of $\bE_1$-$\Q[x]$-algebras we have
\begin{equation*}
\Q\Ltens{\Q[x]}\Q\cong \Q\tens{\Q[x]}B\cong \Q[\xi].
\end{equation*}
Note that $x$ is acting by zero (on the chain level) on $\Q[\xi].$ We have (unique) morphisms of dg $\Q[x]$-algebras $f:\Q\to\Q[\xi],$ $g:\Q[\xi]\to \Q$ and we need to show that they induce mutually inverse maps after in $\Mot^{\loc,\A^1}_{\Q[x]}.$ Clearly, $g\circ f=\id,$ so we need to show that the endomorphism $\cU_{\loc}^{\Q^1}(f\circ g)$ of $\cU_{\loc}^{\Q^1}(\Q[\xi])$ is homotopic to the identity.

Let $t$ be a formal variable of degree $0,$ and consider a morphisms of dg $\Q[x]$-algebras
\begin{equation*}
\varphi:\Q[\xi]\to \Q[x,t]\tens{\Q[x]}\Q[\xi],\quad \xi\mapsto t\otimes\xi,
\end{equation*}
and
\begin{equation*}
i_0,i_1:\Q[x,t]\to\Q[x],\quad i_0(t)=0,\quad i_1(t)=1.
\end{equation*}
Then the maps $\cU_{\loc}^{\Q^1}(i_0)$ and $\cU_{\loc}^{\A^1}(i_1)$ are homotopic (isomorphisms) in $\Mot^{\loc,\A^1}_{\Q[x]}$ and we have (chain-level) equalities of morphisms of dg algebras $(i_0\boxtimes \id)\circ\varphi=f\circ g,$ $(i_1\boxtimes \id)\circ\varphi=\id.$ Hence, $\cU_{\loc}^{\A^1}(f\circ g)$ is homotopic to the identity, as required.
\end{proof}

It turns out that the algebra $\HP^{\tref}(\Q/\Q[x])$ again has a description in terms of adic geometry. Namely, we will prove the following result which is formulated similarly to Proposition \ref{prop:refined_HC^-_Laurent_polynomials}.

\begin{prop}\label{prop:refined_HP_of_Q_over_Qx}
We have an isomorphism
\begin{equation}\label{eq:refined_HP_of_Q_over_Qx_as_overconvergent}
\HP^{\tref}(\Q/\Q[x])\cong\cO(\bigcap_{0<\veps<1}\{|x|\leq |u|^{1-\veps}\})
\end{equation}
-- the algebra of overconvergent functions. More precisely, we have
\begin{equation}\label{eq:refined_HP_of_Q_over_Qx_explicit}
\HP^{\tref}(\Q/\Q[x])\cong \inddlim[n]\left(\Q\left[x,u,\frac{x^{n+1}}{u^n}\right]_u^{\wedge}[u^{-1}]\right).
\end{equation}
\end{prop}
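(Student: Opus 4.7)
The plan is to first model $\Q$ by the commutative dg $\Q[x]$-algebra $B=\Q[x,\xi]$ with $\deg\xi=1$ and $d\xi=x$, then use the HKR theorem to compute $\HH(B/\Q[x])$ and hence $\HC^{-}(\Q/\Q[x])$, invert $u$ to obtain $\HP(\Q/\Q[x])$, and finally identify the resulting $\Q[x]((u))$-module with the stated ind-system of overconvergent algebras inside the rigidification $\Nuc(\Q[x]((u)))$. HKR gives $\HH(B/\Q[x])\simeq \Sym_B^*(\Omega^1_{B/\Q[x]}[1])\cong B\otimes\Q[\alpha]$ with $\alpha$ in homological degree $2$ representing $d_{dR}\xi[1]$ and Connes operator $B_{CT}(\xi)=\alpha$ extended as a derivation. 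Passing to $\HC^-$ presents this as the cdga $(\Q[x,\xi,\alpha][[u]],\,d+u\,d_{dR})$ with $d_{\mathrm{tot}}\xi=x+u\alpha$, whose cohomology is the $\Q[x][[u]]$-module
\[
M \;:=\; \Q[x,\alpha][[u]]/(x+u\alpha),
\]
in which $x$ acts as $-u\alpha$; thus $\HC^{-}(\Q/\Q[x])\simeq M$ and $\HP(\Q/\Q[x])\simeq M[u^{-1}]$.

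Next I would construct the candidate ind-presentation. For each $n\geq 0$, the element $(-1)^{n+1}u\alpha^{n+1}\in M[u^{-1}]$ satisfies $u^n\cdot(-1)^{n+1}u\alpha^{n+1}=(-u\alpha)^{n+1}=x^{n+1}$, so the assignment $w_n:=x^{n+1}/u^n\mapsto (-1)^{n+1}u\alpha^{n+1}$ defines a canonical $\Q[x]((u))$-algebra morphism $\iota_n\colon R_n:=\Q[x,u,x^{n+1}/u^n]_u^{\wedge}[u^{-1}]\to M[u^{-1}]$. The transition morphisms $R_n\to R_{n+1}$, given explicitly by $w_n\mapsto u^{-n}x^{n+1}$ (which uses that $u$ has been inverted), correspond to the strict inclusions of closed tubes $\{|x|\leq|u|^{n/(n+1)}\}\supset\{|x|\leq|u|^{(n+1)/(n+2)}\}$ and assemble the $\iota_n$ into a morphism from the ind-system $(R_n)_{n\geq 0}$ to $\HP(\Q/\Q[x])$ in $\Mod\hy\Q[x]((u))$.

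The remaining work, which is the main obstacle, is to show that this ind-system represents $\HP^{\tref}(\Q/\Q[x])$ inside $\Nuc(\Q[x]((u)))$. This breaks into three verifications: (a) each $R_n$ is dualizable (hence compact) in $\Nuc(\Q[x]((u)))$; (b) each transition $R_n\to R_{n+1}$ is trace-class in $\Nuc(\Q[x]((u)))$, with witness extracted from the strict inclusion of tubes in a manner parallel to the argument used in the proof of Proposition~\ref{prop:refined_HC^-_Laurent_polynomials} (cf.\ \cite[Lemma 5.24]{E25}); and (c) the induced map $\inddlim[n] R_n\to\HP^{\tref}(\Q/\Q[x])$ is an isomorphism in $\Nuc(\Q[x]((u)))$. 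The hardest step is (c): by the universal property of the rigidification it amounts to checking that every compact nuclear $\Q[x]((u))$-module mapping to $M[u^{-1}]$ factors through some $R_n$, which I would verify by tracking the bigrading $(\deg_x,\deg_u)$ of elements of $M[u^{-1}]$ under the change of variables $\alpha\leftrightarrow -x/u$ and matching the support constraints imposed by the overconvergent algebras $R_n$.
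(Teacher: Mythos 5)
Your computation of the \emph{unrefined} periodic cyclic homology looks essentially right: using the cdga model $B=\Q[x,\xi]$ and HKR, the $u$-completed twisted de Rham complex $(\Q[x,\xi,\alpha][[u]],\ d_{\mathrm{tot}}\xi = x+u\alpha)$ does give $\HC^-(\Q/\Q[x])\cong\Q[x,\alpha][[u]]/(x+u\alpha)$, and inverting $u$ gives the colimit $M[u^{-1}]$. But this only computes the image of $\HP^{\tref}(\Q/\Q[x])$ under the (forgetful) colimit functor to $\Mod_u^\wedge\hy\Q[x][[u]][u^{-1}]$, and that image does \emph{not} determine the object of $\Nuc(\Q[x]((u)))$. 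Many different ind-objects (with trace-class transitions) can have the same colimit, so knowing $M[u^{-1}]$ is not enough to pin down $\HP^{\tref}$. Your step~(c) — ``every compact nuclear $\Q[x]((u))$-module mapping to $M[u^{-1}]$ factors through some $R_n$'' — is not the universal property of the rigidification; the rigidification has a universal property as a \emph{category}, not a criterion for identifying individual objects from their colimits, and there is no maximality/universality statement of the kind you invoke that would single out the ind-system $\inddlim_n R_n$ from $M[u^{-1}]$ alone.

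What is actually needed, and what the paper does, is an explicit ind-presentation of $\hat{\cY}(\cU_{\loc}(\Q))$ by motives of \emph{finitely presented} $\bE_1$-$\Q[x]$-algebras with trace-class transitions; this is where Proposition~\ref{prop:refined_HC^-_of_nuclear} applies and converts the problem into computing $\HC^-$ of each term. The paper uses the Koszul-dual tensor algebras $C_n=\Q[x]\langle y_1,\dots,y_n\rangle$ (truncations of the cobar construction on the bar construction of $\Q$) precisely because they are compact objects of $\Alg_{\bE_1}(D(\Q[x]))$ with $\indlim_n C_n\simeq\Q$, whereas your single commutative model $B=\Q[x,\xi]$ is a cofibrant replacement of $\Q$ but not a filtered colimit of compact dg algebras, so it gives no handle on $\hat{\cY}(\cU_{\loc}(\Q))$ as an ind-object. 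The bulk of the real work — which your proposal bypasses entirely — is the explicit computation of $\HC^-(C_n/\Q[x])$ in Lemma~\ref{lem:HC^-_difficult_computation}, which involves Hochschild complexes of the second kind for a curved cdga, an HKR-type quasi-isomorphism for such complexes, and a spectral-sequence argument; only after that does the identification with the overconvergent algebras $R_n$ fall out. So while your candidate ind-system is (correctly) the right answer, the proposal does not contain an argument that establishes it, and the missing ingredient is not a technicality but the central point of the proof.
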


Here the meaning of the right hand side of \eqref{eq:refined_HP_of_Q_over_Qx_as_overconvergent} is similar to \eqref{eq:refined_HC^-_Laurent_polynomials}: if $u$ was a variable of degree $0,$ then we consider $\bigcap_{0<\veps<1}\{|x|\leq |u|^{1-\veps}\}$ as a closed subset of the spectral space $\Spa(\Q[x]((u)),\Q[x][[u]])^{op},$ and the associated idempotent $\bE_{\infty}$-algebra in $\Nuc(\Q[x]((u)))$ is given by the right hand side of \eqref{eq:refined_HP_of_Q_over_Qx_explicit}.

Before proving Proposition \ref{prop:refined_HP_of_Q_over_Qx} we mention some applications.

\begin{cor}
The category $\Mot^{\loc,\A^1}_{\Q[x]}$ is not compactly generated. More precisely, the object $\cU_{\loc}^{\A^1}(\Q)\in \Mot^{\loc,\A^1}_{\Q[x]}$ is not contained in the localizing subcategory generated by compact objects. In particular, the category $\Mot^{\loc}_{\Q[x]}$ is not compactly generated.
\end{cor}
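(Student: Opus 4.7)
The plan is to use the refined periodic cyclic homology functor to obstruct compact generation, and then deduce the statement for $\Mot^{\loc}_{\Q[x]}$ from the one for $\Mot^{\loc,\A^1}_{\Q[x]}$. Suppose for contradiction that $\cU_{\loc}^{\A^1}(\Q)$ lies in the localizing subcategory $\mathcal{T}$ of $\Mot^{\loc,\A^1}_{\Q[x]}$ generated by compact objects. I would then apply the continuous symmetric monoidal functor $\HP^{\tref}(-/\Q[x]):\Mot^{\loc,\A^1}_{\Q[x]}\to\Nuc(\Q[x]((u))).$ Both source and target are rigid symmetric monoidal — the source by Theorem \ref{th:dualizability_and_rigidity} combined with Theorem \ref{th:A^1_invariant_localizing_motives} (a smashing localization of a rigid category is rigid), the target by construction as a smashing quotient of the rigid category $\Nuc(\Q[x][[u]])$. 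In a rigid symmetric monoidal category compact objects coincide with dualizable ones, and continuous symmetric monoidal functors between rigid categories preserve dualizability. Hence $\HP^{\tref}$ would send $\mathcal{T}$ into the localizing subcategory of $\Nuc(\Q[x]((u)))$ generated by its compact objects, and by Proposition \ref{prop:refined_HP_of_Q_over_Qx} the object $\cO(\bigcap_{0<\veps<1}\{|x|\leq|u|^{1-\veps}\})$ would belong to this subcategory.

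The heart of the matter, and the main obstacle, is to show that this overconvergent algebra does \emph{not} lie in the localizing subcategory of $\Nuc(\Q[x]((u)))$ generated by dualizable objects. Via the fully faithful right adjoint of the rigidification, dualizable objects of $\Nuc(\Q[x]((u)))$ are identified with perfect $\Q[x]((u))$-modules. The strategy I would pursue is to exhibit a continuous invariant that kills every perfect $\Q[x]((u))$-module — and hence the entire localizing subcategory they generate — but remains nonzero on the overconvergent algebra. A natural candidate is base change along a continuous map $\Q[x]((u))\to F((u))$ into a non-archimedean valued field $F/\Q$ equipped with a specialization of $x$ to an element $a\in F$ satisfying $0<|a|<1$: for such a specialization every perfect module acquires the value $F((u))$ (up to finite sums), while the defining condition $|x|\leq|u|^{1-\veps}$ becomes $|a|\leq|u|^{1-\veps}$ for all $\veps>0,$ which forces the overconvergent algebra to produce a distinguishably nontrivial image. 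Equivalently, one can read off the obstruction directly from the explicit ind-presentation $\inddlim[n]\Q[x,u,x^{n+1}/u^n]^{\wedge}_u[u^{-1}]$ of Proposition \ref{prop:refined_HP_of_Q_over_Qx}, together with the fact that the associated rational subdomains $\{|x|^{n+1}\leq|u|^n\}$ strictly nest and no finite algebraic (perfect-module) stage of the colimit cuts out the full intersection.

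For the last assertion, the fact that $\Mot^{\loc}_{\Q[x]}$ is not compactly generated follows formally from the first part. Indeed, by Theorem \ref{th:A^1_invariant_localizing_motives} the localization $\Mot^{\loc}_{\Q[x]}\to\Mot^{\loc,\A^1}_{\Q[x]}$ is smashing, so the quotient functor is strongly continuous; in particular it preserves compact objects, and its essential image generates the target under colimits. If $\Mot^{\loc}_{\Q[x]}$ admitted a set of compact generators, their images would then generate $\Mot^{\loc,\A^1}_{\Q[x]}$ as a localizing subcategory, contradicting the first part.
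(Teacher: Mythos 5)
Your overall reduction — apply $\HP^{\tref}(-/\Q[x])$, reduce to showing $\HP^{\tref}(\Q/\Q[x])\in\Nuc(\Q[x]((u)))$ is not in the localizing subcategory generated by compacts, and then deduce the last assertion from the smashing localization — is exactly the paper's structure. Two issues, though.

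First, a small imprecision: your reduction goes through ``in a rigid symmetric monoidal category compact objects coincide with dualizable ones'' applied to $\Mot^{\loc,\A^1}_{\Q[x]}$. The implication ``dualizable $\Rightarrow$ compact'' is clear (the unit is compact), but you do not justify ``compact $\Rightarrow$ dualizable'' there, and it is not obviously available. You do not need it: a continuous symmetric monoidal functor between rigid categories is automatically strongly continuous (this is recalled in Subsection \ref{ssec:rigid_monoidal}), so $\HP^{\tref}$ preserves compacts directly and hence sends the localizing subcategory they generate into the one generated by compacts in $\Nuc(\Q[x]((u)))$.

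Second, and more seriously, the actual obstruction in $\Nuc(\Q[x]((u)))$ is not established. The ``continuous invariant that kills every perfect $\Q[x]((u))$-module'' strategy is sound in principle, but the proposed invariant — base change to $F((u))$ with $|x|\mapsto|a|$, $0<|a|<1$ — does not kill nonzero perfect modules (it sends $\Q[x]((u))$ to the nonzero ring $F((u))$), so as stated it cannot annihilate the localizing subcategory generated by them. Your fallback observation, that ``no finite algebraic stage of the colimit cuts out the full intersection,'' is conceptually in the right direction but is not the statement you need. What is needed, and what the paper proves, is a factorization obstruction on individual transition maps of the ind-system: for every $n\geq 1$, the morphism
\[
\Q\left[x,u,\tfrac{x^{2}}{u}\right]_u^{\wedge}[u^{-1}]\longrightarrow \Q\left[x,u,\tfrac{x^{n+1}}{u^n}\right]_u^{\wedge}[u^{-1}]
\]
in $(\Mod_u^{\wedge}\hy\Q[x][[u]])^{\omega_1}[u^{-1}]$ does \emph{not} factor through an object of $\Perf(\Q[x]((u)))$. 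That is the concrete computation on which the proof rests; without it (or an equivalent), the key step is missing.
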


\begin{proof}
The final assertion follows from the first one since the functor $\Mot^{\loc}_{\Q[x]}\to \Mot^{\loc,\A^1}_{\Q[x]}$ is strongly continuous by Theorem \ref{th:A^1_invariant_localizing_motives}. 

To prove the statement about $\cU_{\loc}^{\A^1}(\Q)$ it suffices to prove that the object $\HP^{\tref}(\Q/\Q[x])\in\Nuc(\Q[x]((u)))$ is not contained in the localizing subcategory generated by $\Perf(\Q[x]((u)))\simeq\Nuc(\Q[x]((u)))^{\omega}.$ This follows directly from \eqref{eq:refined_HP_of_Q_over_Qx_explicit}: for any $n\geq 1$ the morphism
\begin{equation*}
\Q\left[x,u,\frac{x^{2}}{u}\right]_u^{\wedge}[u^{-1}]\to \Q\left[x,u,\frac{x^{n+1}}{u^n}\right]_u^{\wedge}[u^{-1}]
\end{equation*}
in $(\Mod_u^{\wedge}\hy\Q[x][[u]])^{\omega_1}[u^{-1}]$ does not factor through an object of $\Perf(\Q[x]((u))).$
\end{proof}

Befor formulating another application, we first recall that by \cite[Theorems IV.2.1 and IV.2.6]{Goo85} for any discrete commutative $\Q$-algebra $\mk$ the usual periodic cyclic homology $\HP(-/\mk)$ is a truncating invariant in the sense of \cite[Definition 3.1]{LT19} (the same in fact holds when $\mk$ is more generally a connective commutative dg $\Q$-algebra). We also recall that by \cite[Corollary 3.5]{LT19} any truncating invariant is also nilinvariant, i.e. it sends nilpotent extensions $A\to B$ of discrete algebras to isomorphisms.

\begin{cor}
The refined periodic cyclic homology $\HP^{\tref}(-/\Q[x])$ is not a truncating invariant. More precisely, the map $\HP^{\tref}((\Q[x]/x^2)/\Q[x])\to \HP^{\tref}(\Q/\Q[x])$ is not an isomorphism. In particular, the functor $\cU_{\loc}^{\A^1}:\Cat_{\Q[x]}^{\perf}\to\Mot^{\loc,\A^1}_{\Q[x]}$ is not a truncating invariant.
\end{cor}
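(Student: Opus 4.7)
The plan is to compute $\HP^{\tref}((\Q[x]/x^2)/\Q[x])$ explicitly, compare it with the overconvergent algebra $\HP^{\tref}(\Q/\Q[x])$ identified in Proposition \ref{prop:refined_HP_of_Q_over_Qx}, and exhibit the natural map between them as not an isomorphism. Since a truncating invariant is nilinvariant by \cite[Corollary 3.5]{LT19} and the surjection $\Q[x]/x^2\twoheadrightarrow\Q$ has nilpotent kernel $(x),$ this will prove that $\HP^{\tref}(-/\Q[x])$ is not truncating. The last assertion of the corollary then follows because $\HP^{\tref}(-/\Q[x])$ factors through $\cU_{\loc}^{\A^1},$ so if the latter were truncating, so would be $\HP^{\tref}(-/\Q[x]).$

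For the left-hand side, observe that $\Q[x]/x^2$ is proper over $\Q[x]$ and $\Perf(\Q[x]/x^2)$ is already compact in $\Cat_{\Q[x]}^{\perf}.$ Hence $\Ind(\Perf(\Q[x]/x^2))$ is nuclear over $\Q[x]$ by Proposition \ref{prop:proper_are_nuclear}, and applying Proposition \ref{prop:refined_HC^-_of_nuclear} to the trivial one-term ind-system gives $\HC^{-,\tref}((\Q[x]/x^2)/\Q[x]) \simeq \HC^-((\Q[x]/x^2)/\Q[x])$ inside $\Nuc(\Q[x][[u]]).$ I would then compute the latter by means of the semi-free cdga resolution $B = \Q[x,\xi]$ with $\deg \xi = 1,$ $d\xi = x^2.$ Derived HKR in characteristic zero yields $\HH((\Q[x]/x^2)/\Q[x]) \simeq \Q[x,\xi,v]$ with $\deg v = 2,$ $dv = 0,$ and Connes operator determined by $B(\xi) = v,$ $B(v) = 0$ (identifying $v$ with the shifted class of $d_{dR}\xi$). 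Computing $\HC^-$ as the Koszul-type complex for $d+uB$ (with $u$ in homological degree $-2$) gives $\HC^-((\Q[x]/x^2)/\Q[x]) \simeq \Q[x,v][[u]]/(x^2 + uv),$ and inverting $u$ forces $v = -x^2/u,$ so $\HP^{\tref}((\Q[x]/x^2)/\Q[x]) \cong \Q[x]((u)),$ the unit object of $\Nuc(\Q[x]((u))).$

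On the other side, Proposition \ref{prop:refined_HP_of_Q_over_Qx} identifies $\HP^{\tref}(\Q/\Q[x]) \cong \inddlim_{n} \Q[x,u,x^{n+1}/u^n]_u^{\wedge}[u^{-1}],$ which is strictly larger than $\Q[x]((u))$: for instance the class $x^2/u$ does not lie in the unit algebra. The map $\HP^{\tref}((\Q[x]/x^2)/\Q[x]) \to \HP^{\tref}(\Q/\Q[x])$ is the unit map of the idempotent $\bE_{\infty}$-algebra $\HP^{\tref}(\Q/\Q[x]) \in \Nuc(\Q[x]((u))),$ so it cannot be an isomorphism, which yields the desired conclusion.

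The main obstacle is carrying out the Hochschild computation carefully enough: tracking the bidegree conventions (internal degree of $\xi,$ de Rham form degree, sign of $u$) so that the identification $B(\xi) = v$ of the Connes operator is unambiguous and the Koszul collapse of $\Q[x,v][[u]]/(x^2+uv)$ to $\Q[x]((u))$ upon inverting $u$ is legitimate. Once the model and conventions are fixed, the remaining ingredients — the appeal to Proposition \ref{prop:refined_HC^-_of_nuclear}, the comparison with Proposition \ref{prop:refined_HP_of_Q_over_Qx}, and the extraction of the two non-truncating statements from Land--Tamme nilinvariance — are essentially formal.
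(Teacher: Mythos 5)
Your overall strategy — reduce to nilinvariance via \cite[Corollary 3.5]{LT19}, compute both sides of the map, and observe that they disagree — matches the paper, and your handling of the right-hand side and of the final deduction about $\cU_{\loc}^{\A^1}$ is correct. However, the computation of the left-hand side contains a genuine error that invalidates the comparison.

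The claim that $\Perf(\Q[x]/x^2)$ is compact in $\Cat_{\Q[x]}^{\perf}$ is false, and this is the crux. Compactness of $\Perf(A)$ in $\Cat_{\mk}^{\perf}$ amounts to $A$ being compact in $\Alg_{\bE_1}(D(\mk)),$ which by Proposition \ref{prop:Toen_Vaquie_compact_morphism} (T\"oen--Vaqui\'e) would force $A$ to be smooth over $\mk.$ But $\Q[x]/x^2$ is \emph{not} noncommutatively smooth over $\Q[x]$: just as for dual numbers over a field, $A^e = (\Q[x]/x^2)\otimes^{\bbL}_{\Q[x]}(\Q[x]/x^2) \cong (\Q[x]/x^2)[\tilde\varepsilon]/\tilde\varepsilon^2$ with $|\tilde\varepsilon|=1,$ the periodic resolution of $A$ over $A^e$ never terminates, and $\Ext^*_{A^e}(A,A)$ is nonzero in infinitely many degrees. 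So $\Q[x]/x^2$ is proper but not smooth over $\Q[x],$ hence not compact as an $\bE_1$-$\Q[x]$-algebra, and $\Perf(\Q[x]/x^2)$ is not a compact object of $\Cat_{\Q[x]}^{\perf}.$ (The standard presentation $\Q[x]\langle\xi\rangle,$ $d\xi = x^2,$ is the free \emph{associative} algebra on $\xi$ and has unbounded homology, so it is not a finite presentation of $\Q[x]/x^2$; the semi-free \emph{commutative} model $\Q[x,\xi]$ with $\xi^2=0$ does not witness compactness in $\Alg_{\bE_1}.$) Consequently $\hat{\cY}(\cU_{\loc}(\Q[x]/x^2))$ is a genuinely non-constant ind-object, and the constant ind-system cannot be fed into Proposition \ref{prop:refined_HC^-_of_nuclear}. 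Your subsequent identification $\HC^{-,\tref}((\Q[x]/x^2)/\Q[x]) \simeq \HC^-((\Q[x]/x^2)/\Q[x])$ as a Yoneda image, and the conclusion $\HP^{\tref}((\Q[x]/x^2)/\Q[x]) \cong \Q[x]((u)),$ are therefore wrong: $\HP^{\tref}((\Q[x]/x^2)/\Q[x])$ is \emph{not} the unit of $\Nuc(\Q[x]((u))).$

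The paper instead obtains the left-hand side by base change of Proposition \ref{prop:refined_HP_of_Q_over_Qx} along $\Q[x]\to\Q[x],$ $x\mapsto x^2$ (which carries $\Q$ over $\Q[x]$ to $\Q[x]/x^2$ over $\Q[x]$), yielding the non-trivial ind-object $\inddlim[n]\Q\left[x,u,\frac{x^{2n+2}}{u^n}\right]_u^{\wedge}[u^{-1}].$ The failure of the map to $\HP^{\tref}(\Q/\Q[x]) = \inddlim[n]\Q\left[x,u,\frac{x^{n+1}}{u^n}\right]_u^{\wedge}[u^{-1}]$ to be an isomorphism is then a comparison of two non-trivial ind-systems (the paper checks that a suitable transition map does not factor), not the observation that the source is the unit. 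Your chain-level computation $\HC^- \simeq \Q[x,v][[u]]/(x^2+uv)$ correctly computes the \emph{ordinary} negative cyclic homology — note in passing that this module is not perfect over $\Q[x][[u]],$ consistent with non-smoothness — but the refinement lives in $\Ind((\Mod_u^{\wedge}\hy\Q[x][[u]])^{\omega_1})$ and remembers the entire pro-/ind-structure, which is exactly where the content of the corollary resides.
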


\begin{proof}
It follows from Proposition \ref{prop:refined_HP_of_Q_over_Qx} via base change $x\mapsto x^2$ that we have an isomorphism
\begin{equation*}
	\HP^{\tref}(\Q/\Q[x])\cong \inddlim[n]\left(\Q\left[x,u,\frac{x^{2n+2}}{u^n}\right]_u^{\wedge}[u^{-1}]\right).
\end{equation*}
Thus, it suffices to observe that for any $n\geq 1$ the map
\begin{equation*}
	\Q\left[x,u,\frac{x^{4}}{u}\right]_u^{\wedge}[u^{-1}]\to \Q\left[x,u,\frac{x^{2n+2}}{u^n}\right]_u^{\wedge}[u^{-1}]
\end{equation*}
in $(\Mod_u^{\wedge}\hy\Q[x][[u]])^{\omega_1}[u^{-1}]$ does not factor through $\Q\left[x,u,\frac{x^{2}}{u}\right]_u^{\wedge}[u^{-1}].$
\end{proof} 

Our proof of Proposition \ref{prop:refined_HP_of_Q_over_Qx} is based on the following computation of the ordinary negative cyclic homology.

\begin{lemma}\label{lem:HC^-_difficult_computation}
	For $n>0$ consider the semi-free associative dg $\Q[x]$-algebra $C_n=\Q[x]\la y_1,\dots,y_n\ra,$ where $\deg(y_i)=2i-1$ and the differential is given by
	\begin{equation*}
		d y_1 = x,\quad d y_{i+1} = \sum\limits_{j=1}^i y_j y_{i+1-j}\quad\text{for } 1\leq i\leq n-1.
	\end{equation*}
	Then we have an isomorphism
	\begin{equation*}
		\HC^-(C_n/\Q[x]) \cong \Q\{x^i u^j\mid i\geq 0,\,ni+(n+1)j+n\geq 0\}_u^{\wedge}.
	\end{equation*}
	Here the right hand side is the $u$-completion of the graded $\Q$-vector space spanned by the indicated monomials, considered as a dg $\Q[x,u]$-module with zero differential.
\end{lemma}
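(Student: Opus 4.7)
Plan. The strategy is to exploit the semi-free structure of $C_n$ over $\Q[x]$: we fix a small cofibrant model for $\HH(C_n/\Q[x])$, identify the Connes--Tsygan operator $B$ in this model, and then analyse the resulting $\HC^-$-complex via a weight filtration.

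By the standard derivation resolution of a quasi-free dg algebra as a bimodule over itself, we obtain a quasi-isomorphism
\[
	\HH(C_n/\Q[x]) \;\simeq\; C_n \;\oplus\; \bigoplus_{i=1}^n C_n \otimes_{\Q[x]} \Q[x]\cdot s y_i,
\]
where $s y_i$ denotes the shift of $y_i$ up by one in homological degree. The Hochschild differential combines the internal Leibniz action of $d$ (using $dy_1=x$ and $dy_{i+1}=\sum_{j+k=i+1}y_jy_k$) with the contraction $a\otimes s y_i\mapsto(-1)^{|a|}[a,y_i]$ landing in the $C_n$-summand. The Connes--Tsygan operator $B$ in this model takes values in the shifted summand via a formula of noncommutative cyclic de Rham type, sending a word $y_{i_1}\cdots y_{i_k}\in C_n$ to a cyclic sum of terms $\pm s y_{i_\ell}\cdot(y_{i_{\ell+1}}\cdots y_{i_k}y_{i_1}\cdots y_{i_{\ell-1}})$, and annihilating the shifted summand. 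The complex computing $\HC^-(C_n/\Q[x])$ is then $(\HH(C_n/\Q[x])[[u]],\,b+uB)$.

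Now introduce a weight grading by declaring $x$ to have weight $0$ and each $y_i$, $s y_i$ to have weight $i$. The relations $dy_{i+1}=\sum_{j+k=i+1}y_jy_k$, the Hochschild contraction, and the formula for $B$ are all weight-homogeneous; the only non-homogeneous piece of $b$ is the Leibniz contribution of $dy_1=x$, which lowers weight by $1$. Filter the $\HC^-$-complex by weight and run the associated spectral sequence. On the $E_0$-page the $dy_1=x$ contribution disappears, reducing everything to the ``undeformed'' complex (where $dy_1=0$); there a direct Koszul-type computation exploiting the cobar-style recursion $dy_{i+1}=\sum y_jy_{i+1-j}$ shows the $E_1$-page to be supported on the pure powers $x^iu^j\in\Q[x][[u]]\subset\HH(C_n/\Q[x])[[u]]$. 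The higher differentials of the spectral sequence then feed in the $dy_1=x$ contribution and determine which $(i,j)$ survive to $E_\infty$. On the survivors the residual differential vanishes, because both $b$ and $B$ act trivially on the scalar subspace $\Q[x][[u]]$: the Hochschild contraction requires a factor of some $s y_i$, and $B$ requires at least one $y_i$ in the input word.

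The main obstacle will be the final combinatorial step: controlling the weight-filtration spectral sequence precisely enough to extract the inequality $ni+(n+1)j+n\ge 0$ with exactly the coefficients $n$ and $n+1$. The asymmetric coefficients reflect the truncation at $y_n$: the ``missing'' generator $y_{n+1}$ would have cobar-weight $n+1$, so classes carrying $u^j$ with $j<0$ can only be realised once enough powers of $x$ are available to resolve the resulting obstruction through the truncated chain $y_1,\ldots,y_n$ (each such $x$ effectively contributing weight $n$ to the obstruction budget via the full descending chain). Working out these coboundary contributions page by page, and verifying that the surviving classes are linearly independent in the $u$-adic completion, is the technically demanding part of the argument.
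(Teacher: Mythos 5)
Your proposal sets up the right small model of the Hochschild complex (the ``naive'' two-term model, which is indeed what the paper uses as its $\Hoch^{\naive}$) and correctly identifies that the only weight-inhomogeneous piece of the differential is the $dy_1 = x$ contribution. But the argument has a serious gap at its core, and it also misses the maneuver that makes the paper's computation tractable.

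The central unsupported claim is that, on the associated graded (where $dy_1 = 0$), the $E_1$-page is ``supported on the pure powers $x^i u^j$.'' This is not justified, and for the undeformed algebra $\bar{C}_n = \Q[x]\la y_1,\dots,y_n\ra$ with $dy_1 = 0$ it is not true. Already for $n=1$, $\bar{C}_1$ is the free tensor $\Q[x]$-algebra on one odd generator, whose reduced Hochschild homology over $\Q[x]$ is large (cyclic words in $y_1$ of odd length, plus invariant $1$-chains), and its reduced $\HC^-$ is correspondingly nontrivial in every positive weight. The classes $x^i u^j$ all live in weight $0$; the positive-weight classes coming from $y$-words do not obviously die, and you offer no mechanism (cancellation against $B$, vanishing on higher pages, or otherwise) by which they would. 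Without this the spectral sequence does not collapse onto $\Q[x][[u]]$, and the rest of the argument (``higher differentials determine which $(i,j)$ survive'') has nothing to operate on.

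The deeper issue is that you are trying to run the computation on the noncommutative side of Koszul duality, where there is no HKR theorem and the cyclic combinatorics of free words are genuinely hard. The paper's proof first dualizes over $\Q[x]$ (this is legal because $\Hoch(C_n/\Q[x])$ is pseudo-coherent and $\HC^-$ is reflexive), and then recognizes the $\Q[x]$-linear dual of $\Hoch^{\naive}(C_n/\Q[x])$ as a Hochschild complex \emph{of the second kind} for the curved \emph{commutative} dg algebra $(\Q[x,t]/t^{n+1},\,0,\,-xt)$. This is the point of introducing $C_n$ with exactly those cobar relations: $C_n$ is built to be Koszul dual to a curved truncated polynomial algebra. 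Once on the commutative side one can uncurve by passing to the semi-free model $\Q[x,t,\xi]$, apply a curved HKR quasi-isomorphism, and compute a twisted de Rham complex by an explicit (three-page) spectral sequence followed by an explicit semi-free $\Q[x][[u]]$-model. The $\G_m$-weight filtration you propose is indeed used in the paper, but only to prove that the curved HKR and uncurving maps are quasi-isomorphisms; the actual homology computation uses the Hodge (form-degree) filtration on the de Rham complex, not the weight filtration on the $\HC^-$-complex. Your approach skips both the dualization and the commutative HKR step, and it is precisely those two steps that turn the problem into a finite linear-algebra computation with the coefficients $n$ and $n+1$ falling out of $\deg t = -2$ and $\deg \xi = -2n-1$. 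Without them you are left, as you honestly note, with an unworked ``technically demanding'' combinatorial core — and that core is exactly what the paper's Koszul-duality reduction is designed to avoid.
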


To do this computation it is convenient to work with explicit complexes as in \cite{Con94, FT83, Tsy83}. We first recall some terminology and introduce some notation.

We identify the $\infty$-category $\Mod\hy\Q[x] S^1$-modules with the category of the so-called mixed complexes. Namely, a mixed complex over $\Q[x]$ is a triple $(C,b,B),$ where $C$ is a $\Z$-graded $\Q[x]$-module, $b$ resp. $B$ is a $\Q[x]$-linear differential of (homological) degree $-1$ resp. $1,$ and $bB+Bb=0.$ They form a $1$-category in the natural way, and a map $(C,b,B)\to (C',b',B')$ is called a quasi-isomorphism if the map of complexes $(C,b)\to (C',b')$ is a quasi-isomorphism. Inverting quasi-isomorphisms, we obtain the $\infty$-category of mixed complexes which is equivalent to $\Mod\hy \Q[x] S^1.$ If the differential $b$ is clear from the context, we will simply write $(C,B)$ instead of $(C,b,B).$

Given a mixed complex $(C,b,B),$ its associated negative cyclic complex is given by
\begin{equation*}
CC^-(C,b,B)=(C[[u]],b+uB).
\end{equation*} 

Suppose that $A$ is a dg $\Q[x]$-algebra such that $\Q[x]\to A$ is a chain-level monomorphism and the dg $\Q[x]$-module $\bar{A}=A/\Q[x]$ is semi-free. We denote by $C_{\bullet}(A)$ the standard reduced Hochschild complex, considered as a mixed complex via Connes-Tsygan differential. Recall that as a graded $\Q[x]$-module $C_{\bullet}(A)=\biggplus[k\geq 0]A\tens{\Q[x]}(\bar{A}[1])^{\otimes_{\Q[x]}k}.$ We write the decomposable tensors as $(a_0,a_1,\dots a_k).$ The Hochschild differential is given by $b=b_{m_2}+b_{m_1},$ where 
\begin{equation*}
b_{m_2}(a_0[a_1\mid\dots\mid a_k])= \pm (a_ka_0,a_1,\dots, a_{k-1}) + \sum\limits_{i=0}^k \pm (a_0,\dots, a_i a_{i+1},\dots a_n),
\end{equation*}
and
\begin{equation*}
b_{m_1}(a_0,\dots,a_k)=\sum\limits_{i=0}^k \pm (a_0,\dots,d a_i,\dots,a_k).
\end{equation*}
The Connes-Tsygan differential is given by
\begin{equation*}
B(a_0,\dots,a_k)=\sum\limits_{i=0}^n \pm (1,a_i,\dots,a_n,a_0,\dots,a_{i-1}).
\end{equation*}
The associated negative cyclic complex is a chain-level model for negative cyclic homology of $\HC^-(A/\Q[x]).$ We will denote this complex by $CC^-(A/\Q[x]).$

It is convenient to define the ``naive'' version of the Hochschild complex. Namely, put $\Omega_{A/\mQ[x]}=\ker(A\tens{\Q[x]}A\to A)$ -- this is the bimodule of differentials, which is also the $\bE_1$-cotangent complex of $A$ over $\Q[x]$ (under the above assumptions on $A$). We put 
\begin{equation*}
\Omega_{A/\Q[x]}^{\sharp}=\Omega_{A/\Q[x]}/[A,\Omega_{A/\Q[x]}]=\Omega_{A/\Q[x]}\tens{A\tens{\Q[x]}A^{op}}A
\end{equation*}
-- the non-derived tensor product of dg modules. We have a map (of complexes of $\Q[x]$-modules) $\mu:\Omega_{A/\Q[x]}^{\sharp}\to A,$ $\mu(a\otimes b)=ab-(-1)^{|a|\cdot|b|}ba.$ We denote by $d_{dR}:A\to \Omega_{A/\Q[x]}^{\sharp}$ the map given by $d_{dR}(a)=1\otimes a-a\otimes 1.$ Then we have a mixed complex
\begin{equation*}
\Hoch^{\naive}(A/\Q[x])=(\Cone(\Omega_{A/\Q[x]}^{\sharp}\xto{\mu} A),d_{dR}).
\end{equation*}
We have a natural map of mixed complexes
\begin{equation*}
\Hoch(A/\Q[x])\to \Hoch^{\naive}(A/\Q[x]),
\end{equation*}
 given by
\begin{equation}\label{eq:from_Hoch_to_Hoch^naive}
(a_0,\dots,a_k)\mapsto\begin{cases}
a_0 & \text{for }k=0;\\
a_0\otimes a_1-a_0a_1\otimes 1 & \text{for }k=1;\\
0 & \text{for }k\geq 2.
\end{cases}
\end{equation}
If $A$ is semi-free over $\Q[x]$ (as an associative unital dg algebra), then this map is a quasi-isomorphism, since $\Omega_{A/\Q[x]}$ is a semi-free dg $A\tens{\Q[x]}A^{op}$-module.

We will also need a ``curved'' version of the Hochschild complex, i.e. Hochschild complex of the second kind as defined in \cite{PP12}. Recall that a curved dg algebra over $\Q[x]$ is a triple $(A,d,h),$ where $A$ is a $\Z$-graded $\Q[x]$-algebra, $d:A\to A$ is a derivation of (homological) degree $-1,$ and $h\in A_{-2}$ is an element such that $d(h)=0$ and $d^2(a)=ha-ah$ for a homogeneous $a\in A.$ Again, suppose that $\Q[x]\xto{1_A}A$ is a monomorphism of graded $\Q[x]$-modules, and that $\bbar{A}=A/\Q[x]$ is a free $\Q[x]$-module in each degree. We denote by $\Hoch^{II}(A)$ the mixed Hochschild complex of the second kind. As a graded $\Q[x]$-module, it is given by $\prodd[k\geq 0]A\tens{\Q[x]}(\bar{A}[1])^{\otimes_{\Q[x]}k}.$ The Hochschild differential is given by $b=b_{m_2}+b_{m_1}+b_{m_0},$ where $b_{m_2}$ and $b_{m_1}$ are as above, and
\begin{equation*}
b_{m_0}(a_0,\dots,a_k)=\sum\limits_{i=0}^n (-1)^{i+1+\sum\limits_{j=0}^i |a_j|}(a_0,\dots,a_i,h,a_{i+1},\dots,a_n).
\end{equation*}
The Connes-Tsygan differential is given by the same formula as above. We denote the associated negative cyclic complex by $CC^{-,II}(A/\Q[x]).$

Recall from  \cite{HKR62, CFL05}  that for a semi-free commutative dg algebra $A$ over $\Q[x]$ we have a Hochschild-Kostant-Rosenberg quasi-isomorphism of mixed complexes
$\Hoch(A)\to (\biggplus[k\geq 0]\Sym_A^k(\Omega^1_{A/\Q[x]}[1]),d,d_{dR}),$ where $d$ is induced by the differential on $\Omega^1_{A/\Q[x]},$ and $d_{dR}$ is the de Rham differential. Explicitly, the HKR map is given by
\begin{equation}\label{eq:HKR_map}
(a_0,\dots,a_k)\mapsto \frac{1}{k!}a_0 d_{dR}a_1\wedge\dots d_{dR} a_k.
\end{equation}
We will need a version of this map for curved algebras. Namely, suppose that $(A,d)$ is a semi-free commutative dg algebra over $\Q[x].$ Choosing any closed element $h\in A_{-2}$ we obtain a curved dg $\Q[x]$-algebra $(A,d,h).$ Then the formula \eqref{eq:HKR_map} gives a well-defined map of mixed complexes over $\Q[x]:$
\begin{equation}\label{eq:HKR_second_kind_general}
\Hoch^{II}(A,d,h)\to (\prodd[k\geq 0]\Sym^k(\Omega^1_{A/\Q[x]}[1]),d-d_{dR}h,d_{dR}).
\end{equation}
Here ``$d_{dR}h$'' means the operator of left multiplication by this $1$-form. The map \eqref{eq:HKR_second_kind_general} does not have to be a quasi-isomorphism in general, but this will be the case in our particular situation.

\begin{proof}[Proof of Lemma \ref{lem:HC^-_difficult_computation}]
For degree reasons the Hochschild complex $\Hoch(C_n/\Q[x])$ is a pseudo-coherent complex of $\Q[x]$-modules. Therefore, it suffices to compute the mixed complex $\Hom_{\Q[x]}(\Hoch(C_n/\Q[x]),\Q[x])$ over $\Q[x],$ or equivalently the associated negative cyclic complex as a complete $\Q[x][[u]]$-module.

Since $C_n$ is semi-free (as an associative dg algebra over $\Q[x]$), we have a quasi-isomorphism $\Hoch(C_n/\Q[x])\to \Hoch^{\naive}(C_n/\Q[x])$ of mixed complexes over $\Q[x],$ given by \eqref{eq:from_Hoch_to_Hoch^naive}. The graded components of the latter mixed complex are free finitely generated $\Q[x]$-modules. We will compute its dual over $\Q[x].$ To do this, we define the auxiliary commutative dg $\Q[x]$-algebra $A=\Q[x,t]/(t^{n+1}),$ where $\deg(t)=-2$ (homological grading) and the differential is zero. We consider the pair $(A,-xt)$ as a curved dg $\Q[x]$-algebra. It is easy to see that we have a chain-level isomorphism of mixed complexes:
\begin{equation*}
\Hom_{\Q[x]}(\Hoch^{\naive}(C_n/\Q[x]),\Q[x])\cong \Hoch^{II}((A,-xt)/\Q[x]).
\end{equation*}
To compute the latter, we introduce a semi-free commutative dg $\Q[x]$-algebra $\wt{A}=(\Q[x,t,\xi],d),$ where $\deg(t)=-2,$ $\deg(\xi)=-2n-1,$ $dt=0,$ $d\xi=t^{n+1}.$ We have a quasi-isomorphism $\wt{A}\to A,$ given by $t\mapsto t,$ $\xi\mapsto 0.$ We claim that the induced map of mixed complexes
\begin{equation}\label{eq:q-is_for_Hoch^II}
\Hoch^{II}((\wt{A},-xt)/\Q[x])\to \Hoch^{II}((A,-xt)/\Q[x])
\end{equation}
is also a quasi-isomorphism. 

To see this, it is convenient to introduce an additional grading on $A$ and $\wt{A}$ (over $\Q[x]$) or equivalently the $\bG_{m,\Q[x]}$-action. We define the $\bG_m$-weights by $\bw(t)=1,$ $\bw(\xi)=n+1.$ These $\bG_m$-weights are compatible with differentials and multiplication on $A$ and $\wt{A},$ and the map $\wt{A}\to A$ is $\bG_m$-equivariant. Note that the underlying graded $\Q[x]$-modules of the objects in \eqref{eq:q-is_for_Hoch^II} are the same as for the usual Hochschild complexes (for degree reasons), hence they acquire the induced $\bG_m$-action. The Hochschild differentials (of the second kind) don't decrease $\bG_m$-weights, hence we obtain well-defined decreasing filtrations: in both cases $F^k$ is the direct sum of $\bG_m$-homogeneous components of weight $\geq k,$ where $k\geq 0.$ Both filtrations are complete (for degree reasons), and the associated graded complexes are simply the usual Hochschild complexes $\Hoch(\wt{A}/\Q[x])$ resp. $\Hoch(A/\Q[x]).$ It follows that the map \eqref{eq:q-is_for_Hoch^II} induces a quasi-isomorphism on the associated graded complexes, hence it is a quasi-isomorphism (since the filtrations are complete).

Next, we claim that the Hochschild-Kostant-Rosenberg map (of mixed complexes)
\begin{equation}\label{eq:HKR_second_kins_q_is}
\Hoch^{II}((\wt{A},-xt)/\Q[x])\to (\prodd[m\geq 0]\Sym^m(\Omega^1_{\wt{A}/\Q[x]}[1]),d+xd_{dR}t,d_{dR})
\end{equation}
is a quasi-isomorphism. Again, this can be seen using filtrations. Namely, in the target of \eqref{eq:HKR_second_kins_q_is} the product is the same as the direct sum (for degree reasons), hence the underlying graded $\Q[x]$-module acquires the induced $\bG_m$-action. The differential does not decrease the weights, hence we obtain the decreasing filtration $F^{\bullet}$ as above. This filtration is complete, and the map \eqref{eq:HKR_second_kins_q_is} is compatible with filtrations. The induced map on the associated graded is identified with the usual HKR map $\Hoch(\wt{A}/\Q[x])\to \biggplus[m\geq 0]\Sym^m(\Omega^1_{\wt{A}/\Q[x]}[1]),$ hence it is a quasi-isomorphism. Therefore, the map \eqref{eq:HKR_second_kins_q_is} is a quasi-isomorphism.

Our next goal is to compute the homology of the complex of $\Q[x]$-modules $(\prodd[m\geq 0]\Sym^m(\Omega^1_{\wt{A}/\Q[x]}[1]),d-xd_{dR}t).$ We consider the complete decreasing filtration $G^{\bullet},$ where $G^k=\prodd[m\geq k]\Sym^m(\Omega^1_{\wt{A}/\Q[x]}[1]).$ We will compute the non-zero differentials in the associated spectral sequence. The first page (with gradings omitted) is similar to the Hochschild homology of truncated polynomials from the proof of Proposition \ref{prop:refined_HC^-_affine_line}: it is given by
\begin{equation*}
E_1=\Q[x,t]/(t^{n+1})\oplus\biggplus[l\geq 1](t\Q[x,t]/(t^{n+1}))\cdot (d_{dR}\xi)^l\oplus\biggplus[l\geq 0](\Q[x,t]/(t^n))\cdot d_{dR}t\wedge (d_{dR}\xi)^l.
\end{equation*}
The differential $d_1$ on $E_1$ is given by
\begin{equation*}
d_1(t^i(d_{dR}\xi)^l)=xt^i d_{dR}t\wedge (d_{dR}\xi)^l,\quad d_1(t^id_{dR}t\wedge (d_{dR}\xi)^l)=0.
\end{equation*}
Hence, the second page is given by
\begin{equation*}
E_2=\biggplus[l\geq 0]\Q[x]\cdot t^n(d_{dR}\xi)^l\oplus (\Q[t]/t^n)\cdot d_{dR}t\oplus \biggplus[l\geq 1](\Q[x,t]/(xt,t^n))\cdot d_{dR}t\wedge (d_{dR}\xi)^l. 
\end{equation*}
The differential $d_2$ can be computed in a straightforward way: we get
\begin{equation*}
d_2(t^n(d_{dR}\xi)^l)=\frac{-1}{(n+1)(l+1)}x^2 d_{dR}t\wedge(d_{dR}\xi)^{l+1},
\end{equation*}
and
\begin{equation*}
d_2(t^id_{dR}t\wedge (d_{dR}\xi)^l)=0.
\end{equation*}
Hence, the third page is given by
\begin{equation*}
E_3=(\Q[t]/t^n)\cdot d_{dR}t\oplus \biggplus[l\geq 1](\Q[xt]/(x^2,xt,t^n))d_{dR}t\wedge(d_{dR}\xi)^l.
\end{equation*}
It is concentrated in odd degrees, hence the spectral sequence degenerates at $E_3.$ As already noted, the product $\prodd[m\geq 0]\Sym^m(\Omega^1_{\wt{A}/\Q[x]}[1])$ is the same as the direct sum, so from now on we denote it simply by $\Sym(\Omega^1_{\wt{A}/\Q[x]}[1]).$  We conclude that the graded $\Q$-vector space $H_*(\Sym(\Omega^1_{\wt{A}/\Q[x]}[1]),d+xd_{dR}t)$ has a $\Q$-basis given
by the elements represented by the differential forms
\begin{equation}\label{eq:Q-basis_for_HH^II}
t^i d_{dR}t\wedge(d_{dR}\xi)^l,\,0\leq i\leq n-1,\,l\geq 0,\quad x d_{dR}t\wedge(d_{dR}\xi)^l,\quad l\geq 1.
\end{equation}

We now compute the twisted de Rham complex $(\Sym(\Omega^1_{\wt{A}/\Q[x]}[1])[[u]],d+xd_{dR}t+ud_{dR})$ as an object of the category $\Mod_u^{\wedge}\hy\Q[x][[u]].$
Note that the differential forms \eqref{eq:Q-basis_for_HH^II} are $d_{dR}$-closed. Hence, they form a $\Q[u]$-basis in the homology $H_*(\Sym(\Omega^1_{\wt{A}/\Q[x]}[1])[[u]],d+xd_{dR}t+ud_{dR}).$ We now define an explicit semi-free dg $\Q[x][[u]]$-module $\cK$ which is quasi-isomorphic to this twisted de Rham complex. The basis elements of $\cK$ (over $\Q[x][[u]]$) are given by $e_{l,i},f_{l,i},$ where $l\geq 0,$ $0\leq i\leq n-1.$ The degrees are given by $\deg(e_{l,i})=-2nl-2i-1,$ $\deg(f_{l,i})=-2nl-2i.$ The differential is given by
\begin{equation*}
d(f_{l,i})=\begin{cases}
x e_{l,i}+(ln+l+i)u e_{l,i-1} & \text{for }l\geq 0,\,1\leq i\leq n-1;\\
x^2 e_{l,0}+(ln+l)(ln+l-1)u e_{l-1,n-1} & \text{for }l\geq 1,\,i=0;\\
xe_{0,0} & \text{for }l=i=0.
\end{cases}
\end{equation*}
Note that $\cK$ is $u$-complete for degree reasons. We define the map of dg $\Q[x][[u]]$-modules $\varphi:\cK\to (\Sym(\Omega^1_{\wt{A}/\Q[x]}[1])[[u]],d+xd_{dR}t+ud_{dR})$ by the formulas
\begin{equation*}
\varphi(e_{l,i})= t^id_{dR}t\wedge(d_{dR}\xi)^l\quad \text{for }l\geq 0,\,0\leq i\leq n-1.
\end{equation*}
and
\begin{equation*}
\varphi(f_{l,i})=t^i(d_{dR}\xi)^l+l(n+1)t^{i-1}\xi d_{dR}t\wedge (d_{dR}\xi)^{l-1}\quad\text{for }l\geq 1,\,1\leq i\leq n-1;
\end{equation*}
\begin{equation*}
\varphi(f_{l,0})=x(d_{dR}\xi)^l+l(n+1)t^n(d_{dR}\xi)^{l-1}+l(l-1)(n+1)^2 t^{n-1}\xi d_{dR}t\wedge (d_{dR}\xi)^{l-2}\quad\text{for }l\geq 2;
\end{equation*}
\begin{equation*}
\varphi(f_{1,0})=x d_{dR}\xi+(n+1)t^n;
\end{equation*}
\begin{equation*}
\varphi(f_{0,i})=t^i\quad \text{for }0\leq i\leq n-1.
\end{equation*}
It is straightforward to check that $\varphi$ commutes with the differentials. By the above computation of homology, $\varphi$ is a quasi-isomorphism.

It remains to compute the dual dg $\Q[x][[u]]$-module $\cK^{\vee}=\Hom_{\Q[x][[u]]}(\cK,\Q[x][[u]]).$ For degree reasons, $\cK^{\vee}$ is the $u$-completion of a semi-free $\Q[x][[u]]$-module with the dual basis formed by the elements $e_{l,i}^*, f_{l,i}^*,$ where $l\geq 0,$ $0\leq i\leq n-1,$ and $\deg(e_{l,i}^*)=2ln+2i+1,$ $\deg(f_{l,i}^*)=2ln+2i.$ The differential is given by
\begin{equation*}
d(e_{l,i}^*)=\begin{cases}x f_{l,i}^*+(ln+l+i+1)u f_{l,i+1}^* & \text{for }l\geq 0,\,1\leq i\leq n-2;\\
x^2 f_{l,0}^*+(ln+l+1)u f_{l,1}^* & \text{for }l\geq 1,\,i=0;\\
x f_{l,n-1}^*+(l+1)(n+1)((l+1)(n+1)-1)u f_{l+1,0}^* & \text{for }l\geq 0,\, i=n-1;\\
x f_{0,0}^*+u f_{1,0}^* & \text{for }l=i=0,
\end{cases}
\end{equation*}
and
\begin{equation*}
d(f_{l,i}^*)=0\quad\text{for }l\geq 0,\,0\leq i\leq n-1.
\end{equation*}
It follows by a direct computation that we have a quasi-isomorphism of dg $\Q[x][[u]]$-modules
\begin{equation*}
\cK^{\vee}\xto{\sim} \Q\{x^i u^j\mid i\geq 0,\,ni+(n+1)j+n\geq 0\}_u^{\wedge}, 
\end{equation*}
given on the basis elements by
\begin{equation*}
f_{l,i}^*\mapsto\begin{cases}
\frac{(-1)^{ln+i}}{(ln+l+i)!}x^{ln+l+i}u^{-ln-i} & \text{for }l\geq 0,\,1\leq i\leq n-1;\\
\frac{(-1)^{ln}}{(ln+l)!}x^{ln+l-1}u^{-ln} & \text{for }l\geq 1,\,i=0;\\
1 & \text{for }l=i=0.
\end{cases}
\end{equation*}
To finish the proof of the lemma, it remains to observe that in the ($\infty$-category) $\Mod_{u}^{\wedge}\hy\Q[x][[u]]$ we have an isomorphism
\begin{equation*}
\cK^{\vee}\cong \HC^-(C_n/\Q[x]),
\end{equation*}
because the latter object is reflexive.
\end{proof}

\begin{proof}[Proof of Proposition \ref{prop:refined_HP_of_Q_over_Qx}]
Consider the semi-free associative dg $\Q[x]$-algebra $C=\Q[x]\la y_1,y_2,\dots\ra,$ where $\deg(y_i)=2i-1$ for $i\geq 1$ and
\begin{equation*}
d(y_1)=x,\quad d(y_{i+1})=\sum\limits_{j=1}^i y_j y_{i+1-j}\quad\text{for }i\geq 1.
\end{equation*}
Then we have a quasi-isomorphism of dg $\Q[x]$-algebras $C\to\Q,$ given by $x\mapsto 0,$ $y_i\mapsto 0.$ This follows for example from Positselski's construction \cite[Section 9.3]{Pos11} of Quillen equivalence between a certain model category of curved conilpotent dg coalgebras constructed in loc. cit. and the standard model category of dg algebras \cite{Hin97}.

Naturally, we have $C\cong\indlim[n]C_n,$ where $C_n$ is the dg $\Q[x]$-algebra introduced in Lemma \ref{lem:HC^-_difficult_computation} for $n\geq 1.$ Since $\Q$ is proper over $\Q[x],$ by Proposition \ref{prop:refined_HC^-_of_nuclear} we have
\begin{equation*}
\HC^{-,\tref}(\Q/\Q[x])\cong\inddlim[n]\HC^-(C_n/\Q[x]),
\end{equation*}
hence
\begin{equation*}
\HP^{\tref}(\Q/\Q[x])\cong \inddlim[n](\HC^-(C_n/\Q[x])[u^{-1}]).
\end{equation*}
By Lemma \ref{lem:HC^-_difficult_computation}, we have
\begin{equation*}
\HC^-(C_n/\Q[x])\cong \Q\{x^i u^j\mid i\geq 0,\,ni+(n+1)j+n\geq 0\}_u^{\wedge},\quad n\geq 1.
\end{equation*}
Note that we have a (chain-level) inclusion of dg $\Q[x][[u]]$-modules
\begin{equation*}
\Q[x,u,\frac{x^{n+1}}{u^n}]_u^{\wedge}\to \Q\{x^i u^j\mid i\geq 0,\,ni+(n+1)j+n\geq 0\}_u^{\wedge}.
\end{equation*}
Its cokernel is annihilated by $u^n,$ hence we have an isomorphism
\begin{equation*}
\Q[x,u,\frac{x^{n+1}}{u^n}]_u^{\wedge}[u^{-1}]\xto{\sim}\Q\{x^i u^j\mid i\geq 0,\,ni+(n+1)j+n\geq 0\}_u^{\wedge}[u^{-1}]
\end{equation*}
in the category $(\Mod_u^{\wedge}\hy\Q[x][[u]])^{\omega_1}[u^{-1}].$

To finish the proof, it suffices to check that for $n\geq 1$ the following square commutes in the homotopy category of $\bE_0$-algebras $\h((\Mod_u^{\wedge}\Q[x][[u]])_{\Q[x][[u]]/}):$
\begin{equation*}
\begin{tikzcd}
\HC^-(C_n/\Q[x]) \ar[r, "\sim"]\ar[d] & \Q\{x^i u^j\mid i\geq 0,\,ni+(n+1)j+n\geq 0\}_u^{\wedge}\ar[d]\\
\HC^-(C_{n+1}/\Q[x]) \ar[r, "\sim"] & \Q\{x^i u^j\mid i\geq 0,\,(n+1)i+(n+2)j+n+1\geq 0\}_u^{\wedge}. 
\end{tikzcd}
\end{equation*}
This is in fact automatic: there is only one morphism from the top right object to the bottom right object in the category $\h((\Mod_u^{\wedge}\Q[x][[u]])_{\Q[x][[u]]/}).$ This proves the proposition.
\end{proof}



\end{document}